\definecolor{pku}{RGB}{139,0,18}
\let\pa=\partial
\let\b=\beta
\let\g=\gamma
\let\d=\delta
\let\z=\zeta
\let\r=\rho
\let\s=\sigma
\let\f=\frac
\let\th=\theta
\let\p=\psi
\let\om=\omega
\let\G= \Gamma
\let\D=\Delta
\let\Om=\Omega
\let\e=\varepsilon
\let\pa=\partial
\let\va=\varphi
\let\na=\nabla
\let\P = \Psi
\def\di{\mathrm{div}\,}
\newcommand{\beq}{\begin{equation}}
\newcommand{\eeq}{\end{equation}}
\newcommand{\beqo}{\begin{equation*}}
\newcommand{\eeqo}{\end{equation*}}
\newcommand{\ben}{\begin{eqnarray}}
\newcommand{\een}{\end{eqnarray}}
\newcommand{\beno}{\begin{eqnarray*}}
\newcommand{\eeno}{\end{eqnarray*}}
\newtheorem{thm}{Theorem}[section]
\newtheorem{lem}{Lemma}[section]
\newtheorem{prop}{Proposition}[section]
\newtheorem{theorem}{Theorem}[section]
\newtheorem{proposition}[theorem]{Proposition}
\newtheorem{corol}[theorem]{Corollary}
\theoremstyle{remark}
\newtheorem{step}{Step}
\newtheorem{case}{Case}
\newtheorem{rmk}{Remark}[section]
\newcommand{\CD}{\mathcal{D}}
\newcommand{\CK}{\mathcal{K}}
\newcommand{\CR}{\mathcal{R}}
\newcommand{\CS}{\mathcal{S}}
\newcommand{\CT}{\mathcal{T}}
\newcommand{\CP}{\mathcal{P}}
\newcommand{\CH}{\mathcal{H}}
\newcommand{\BR}{\mathbb{R}}
\newcommand{\BZ}{\mathbb{Z}}
\newcommand{\BT}{\mathbb{T}}
\numberwithin{equation}{section}
\begin{document}
\title{Interface Dynamics in a Two-phase Tumor Growth Model}

\author{Inwon Kim}
\address{University of California, Los Angeles\\
Box 951555, Los Angeles, CA 90095, USA}
\email{ikim@math.ucla.edu}
\author{Jiajun Tong}
\address{University of California, Los Angeles\\
Box 951555, Los Angeles, CA 90095, USA}
\email{jiajun@math.ucla.edu}

\date{\today}

\begin{abstract}

We study a tumor growth model in two space dimensions, where proliferation of the tumor cells leads to expansion of the tumor domain and migration of surrounding normal tissues into the exterior vacuum. The model features two moving interfaces separating the tumor, the normal tissue, and the exterior vacuum. We prove local-in-time existence and uniqueness of strong solutions for their evolution starting from a nearly radial initial configuration.
It is assumed that the tumor has lower mobility than the normal tissue,
which is in line with the well-known Saffman-Taylor condition in viscous fingering.

\end{abstract}
\maketitle

\setcounter{tocdepth}{1}
\tableofcontents

\section{Introduction}
\label{sec: problem formulation}
In this paper, we study free boundary dynamics arising in a model of avascular tumor growth which is adapted from \cite{LLP}.

\subsection{A two-species model of tumor growth}
Consider two species of cells in $\mathbb{R}^2$, one being actively growing tumor cell and the other being inactive normal cell.
Spatial densities of tumor and normal cells, each denoted by $m$ and $n$, satisfy
\begin{align}
\pa_t m -\di(\mu m\nabla p) =&\; mG(p),\label{eqn: m equation}\\
\pa_t n -\di(\nu n\nabla p) =&\; 0,\label{eqn: n equation}\\
m+n\leq &\;1.\label{eqn: total cell density no greater than 1}
\end{align}
Here $\mu,\nu>0$ denote mobilities of the tumor and normal cells. $p$ is the pressure generated by the cells, serving as a Lagrange multiplier for the constraint $m+n\leq 1$.
It satisfies
\begin{align}
\label{eqn: equation for p}-\di((\mu m+\nu n)\nabla p) = mG(p)&\quad \mbox{ if }m+n=1,\\
p = 0&\quad \mbox{ if }m+n<1.\label{eqn: equation for p outside}
\end{align}
In \eqref{eqn: m equation} and \eqref{eqn: equation for p}, $G(p)$ represents pressure-dependent  proliferation rate of the tumor cell.
In the spirit of \cite{LLP}, we assume that
\begin{enumerate}
  \item $G\in C^1[0,+\infty)$.
  \item $G(p)$ is decreasing.
  \item $G(0)>0$ and $G(p_M) = 0$ for some $p_M>0$.
\end{enumerate}
In short, \eqref{eqn: m equation}-\eqref{eqn: equation for p outside} models the scenario where the tumor keeps growing and where two species of cells migrate with different mobilities, according to the Darcy's law \cite{byrne1996modelling}, under the pressure they generate together.

Mathematical analysis of strongly-coupled competitive systems such as  \eqref{eqn: m equation}-\eqref{eqn: equation for p outside} can be challenging \cite{kim2018nonlinear,bubba2019hele,gwiazda2019two,bertsch1985interacting,bertsch2010free,carrillo2018splitting}.
To the best of our knowledge, existing analyses of such problems are carried out either in one space dimension or with equal mobility of the two species. In contrast, it is suggested in \cite{LLP} that the cells moving with different mobilities is an important feature of the model \eqref{eqn: m equation}-\eqref{eqn: equation for p outside}. Indeed, the numerical results in \cite{LLP} show that when $\mu<\nu$, certain radially symmetric solution is stable, while when $\mu>\nu$ a Saffman-Taylor type instability \cite{SaffmanTaylor} can occur.

\subsection{A free boundary problem }
In this paper, we study \eqref{eqn: m equation}-\eqref{eqn: equation for p outside} with the restriction that $m$ and $n$ are segregated and fully saturated in their regions.
Namely, we assume that $m = \chi_\Omega$ and $n = \chi_{\tilde{\Omega}\backslash\Omega}$,  where $\Omega\subset\subset \tilde{\Omega}$ are two time-varying bounded domains.
This gives rise to a free boundary problem that concerns dynamics of both $\g:=\partial\Omega$ and $\tilde{\g}:=\partial\tilde{\Omega}$.

First, the equation for $p$ reduces to
\begin{align}
\label{eqn: equation for p simplified case}-\di((\mu \chi_\Omega+\nu \chi_{\tilde{\Om}\backslash \Om})\nabla p) = \chi_\Om G(p)&\quad \mbox{in }\tilde{\Omega},\quad p|_{\partial \tilde{\Om}} = 0,\\
p = 0&\quad \mbox{in }\tilde{\Om}^c.
\label{eqn: p boundary condition}
\end{align}
%
%
Then the motion law of the free boundaries are given as follows.
From \eqref{eqn: m equation}, we may derive the normal velocity for $\g$:
\beq
V_{n,\gamma} = -\mu\f{\pa p}{\pa \s_\Om}.
\label{eqn: normal motion of gamma}
\eeq
Here $\s_\Om$ denotes the outward normal of $\g$ with respect to $\Om$. Similarly, the normal speed of $\tilde{\g}$ is given by \eqref{eqn: n equation}:
\beq
V_{n,\tilde{\g}} = -\nu\f{\pa p}{\pa \s_{\tilde{\Om}}}, 
\label{eqn: normal motion of gamma tilde}
\eeq
where $\s_{\tilde{\Om}}$ denotes  outward normal of $\tilde{\g}$ with respect to $\tilde{\Om}$.


Our main result is the local-in-time well-posedness of  the free boundary problem \eqref{eqn: equation for p simplified case}-\eqref{eqn: normal motion of gamma tilde}. Inspired by the numerical results in \cite{LLP}, we assume $\mu<\nu$ for the well-posedness.
Interestingly, we will illustrate later that even with this assumption, instabilities may still occur along $\g$ without further geometric assumptions on $\Om$ and $\tilde{\Om}$ (see Remark \ref{rmk: bad case due to geometry}).
We thus need to restrict ourselves to the case where the initial configuration is nearly radial (see Figure \ref{Figure: tumor geometry}).
More precise statement of our main results can be found in Theorem \ref{thm: local well-posedness} and Theorem \ref{thm: uniqueness} in Section \ref{sec: main results}.

\subsection{Related works and our approach}

The evolution of the inner interface $\g$ is similar to the 2-D Muskat problem \cite{muskat1934two,bear2013dynamics} with viscosity jump \cite{siegel2004global,matioc2018well}, which is concerned with a close-to-flat interface between two fluids driven by the Darcy's law.
In the case when the more viscous fluid is pushed towards the less viscous one, \cite{siegel2004global} establishes global well-posedness for small initial data; in the opposite case, ill-posedness is shown.  With generalized Rayleigh-Taylor condition \cite{escher2011generalized}, \cite{matioc2018well} formulates similar result on the well-posedness in a more general setup allowing density-viscosity jumps.
Note that these rigorous results agree very well with \cite{SaffmanTaylor} and the aforementioned numerical results in \cite{LLP}.
They are obtained by 
exploring the inherent parabolicity in the interface motion with complex analysis \cite{siegel2004global} and functional analytic \cite{matioc2018well} approaches.
However, it is not clear if these approaches can be directly applied here as our model involves a geometry-dependent source term, whose support touches $\g$. 

Notably there is a lot more literature concerning the Muskat problem with density jump \cite{ambrose2004well,cordoba2007contour,constantin2012global,constantin2016muskat,cordoba2018global} or density-viscosity jumps \cite{cordoba2011,cordoba2013porous,gancedo2019muskat,cheng2016well,matioc2018well}.
In both of these cases, the smoothing mechanism is essentially provided by the fact that a heavier fluid sits below a lighter one, where the gravity naturally damps the oscillation of the interface.
In contrast, the smoothing mechanism is much less explicit when there is only jump in the viscosity across the interface \cite{siegel2004global,matioc2018well}.

Motion of the outer interface $\tilde{\g}$ is reminiscent of the free boundary arising in the one-phase Hele-Shaw problem \cite{richardson1972hele}, where a blob of fluid is injected into a Hele-Shaw cell or a porous medium and expands according to the Darcy's law.
Despite its similarity with the Muskat problem in some aspects, it admits a few other treatments.
We direct the readers to \cite{elliott1981variational,dibenedetto1984ill,howison1992complex,escher1997classical,kim2003uniqueness,cheng2012global} and the references therein.
Once again, in our problem, the presence of the source term depending nonlocally on $\tilde{\g}$ and $\g$ may hinder direct applications of these approaches.

In this paper, we study the dynamics of both interfaces $\g$ and $\tilde{\g}$ in a unified framework, adapted from the study of contour equations in the Muskat problem \cite{cordoba2013porous,gancedo2019muskat}.
We first reduce \eqref{eqn: equation for p simplified case}-\eqref{eqn: normal motion of gamma tilde}, which involves an elliptic equation for $p$ in a time-varying domain, partially into contour equations for the interface configurations and quantities along them;
see \eqref{eqn: equation for inner interface new notation}, \eqref{eqn: equation for outer interface new notation}, \eqref{eqn: equation for derivative of jump of potential final form} and \eqref{eqn: equation for derivative of potential on outer interface}.
A key step in this reduction is to represent the transporting velocity over $\tilde{\Om}$ as a sum of three parts, which arise from the discontinuity of the cell mobilities across $\g$, the zero Dirichlet boundary condition of $p$ along $\tilde{\g}$, and the source term in $\Om$, respectively; see \eqref{eqn: velocity in Omega} and also \eqref{eqn: representation of phi}.
Then by linearizing these contour equations around radially symmetric configurations, we show their parabolic nature under suitable conditions (c.f., Section \ref{sec: scheme of the proof}).
In particular, the interfaces can smooth themselves according to a fractional-heat-type equation with source terms.
After deriving good estimates for these source terms, we prove well-posedness of the interface motion by a fixed-point argument.
Smallness of the geometric deviation of $\g$ and $\tilde{\g}$ from radially symmetric configurations helps close the estimates needed in this argument.
See Section \ref{sec: equations of the problem} for more details.

\subsection{Difficulties arising from the source term}
This problem features a geometry-dependent source term $\chi_\Om G(p)$ in \eqref{eqn: equation for p simplified case} that is supported up to the inner interface.
It may be tempting to think of it as an innocuous regular term, but in fact, it changes the dynamics in a crucial way compared to the related problems discussed above.

Firstly, on the technical level, the source term seems to prevent the complex analysis approach in \cite{siegel2004global} from being applied here. Secondly,  the parabolicity of $\g$ relies on the fact that the cell with lower mobility is displacing the other species, i.e., $(\mu-\nu) \f{\pa p}{\pa \s_\Om}|_\g > 0$ (c.f., \eqref{eqn: normal motion of gamma} and Remark \ref{rmk: localized analysis}), in line with the classic Saffman-Taylor condition \cite{SaffmanTaylor}.
Since $\chi_\Om G(p)$ depends on the domain geometry, one can manufacture such $\Om$ and $\tilde{\Om}$, so that the tumor is pushed by the normal tissue along some part of $\g$ under the assumption $\mu<\nu$.
This is possible even and both $\g$ and $\tilde{\g}$ are required to be graphs of functions over $\BT$ in the polar coordinate; see Remark \ref{rmk: bad case due to geometry}.
In this sense, simply assuming $\mu<\nu$ is not enough for proving well-posedness, and it is reasonable to additionally require that $\g$ and $\tilde{\g}$ are close to concentric circles (see Figure \ref{Figure: tumor geometry}).
Then characterization of the parabolicity of $\g$ is based on a good understanding of $p$.
In Section \ref{sec: estimate for pressure in circular geometry}, we apply elliptic regularity theory to justify that given the domain geometry close to a radially symmetric one, the corresponding $p$ should not be far away from a radially solution.
That would be sufficient to guarantee 
parabolicity in the motion of $\g$ as $\mu<\nu$. 
Furthermore, these elliptic estimates together with the results in Section \ref{sec: gradient estimate of growth potential} and Section  \ref{sec: estimates for singular integral operators} will help justify that such parabolicity can be characterized by a fractional heat operator with exponent $\f12$, which plays a central role in our analysis.
See Section \ref{sec: scheme of the proof} and Section \ref{sec: local well-posedness}.

The source term also poses new difficulty in studying global well-posedness and stability properties near the radially symmteric solutions. 
Indeed, as the tumor grows larger, the pressure becomes more sensitive to the interface geometry.
We demonstrate this by a scaling argument.
Suppose at given time $T>0$, $\Om$ and $\tilde{\Omega}$ are close to two concentric discs, and $\tilde{\Om}$ has radius of order $R\gg 1$.
Define $p_R(x,t) := p(Rx,R(t-T))$ and let $\tilde{\Omega}_R$ and $\Omega_R$ denote the corresponding dilated version of $\tilde{\Om}$ and $\Omega$ according to the scaling.
Then \eqref{eqn: equation for p simplified case} becomes
\beq
-\di((\mu \chi_{\Om_R}+\nu \chi_{\tilde{\Om}_R\backslash \Om_R})\nabla p_R) = \chi_{\Om_R} R^2G(p_R),
\eeq
with zero boundary data on $\partial\tilde{\Om}_R$, while the boundary motion laws \eqref{eqn: normal motion of gamma} and \eqref{eqn: normal motion of gamma tilde} remain the same.
In this new problem, the proliferation rate $R^2 G(\cdot)$ can have a large magnitude where $p_R$ is small and it is sensitive to the pressure.
This results in concentration of the source term near the inner interface and a steep growth of $p_R$ there.
On the other hand, the total mass of the normal tissue is preserved due to \eqref{eqn: n equation}, and thus $\tilde{\Om}_R\backslash\Om_R$ is extremely thin as $R\gg 1$.
So in the rescaled problem the source term is close to both the inner and outer interfaces.
It is then conceivable that
$p_R$ will be highly sensitive to the domain geometry
in the sense that even when the domain is pretty close to being radial, $p_R$ may be highly oscillatory and far from being radially symmetric.
Therefore, because of the source term, nonlinear stability of the interface configurations around radially symmetric ones becomes a much more subtle issue when it comes to long time asymptotics.

\subsection{Acknowledgement}
This work is partially supported by National Science Foundation under Award DMS-1900804.

\section{Interface Motion in an Almost Radially Symmetric Geometry}
\label{sec: equations of the problem}

In this section, we will derive equations for the moving interfaces $\g$ and $\tilde{\g}$ in the case when they are close to concentric circles. Our main result will be established in terms of these equations.
Parabolicity of these equations will be revealed, which plays a key role in proving the well-posedness.

\subsection{Problem reformulation}
Define a potential $\va$ to be
\beq
\va : = \mu p\mbox{ in }\Omega,\quad \va := \nu p\mbox{ in }\Om^c.
\label{eqn: def of potential varphi}
\eeq
So $\va$ solves
\beqo
-\Delta \va = G(p)\chi_\Om \quad \mbox{in } \tilde{\Om}\backslash \g, \quad \va|_{\tilde{\g}} = 0,
\eeqo
and $\va\equiv 0$ on $\tilde{\Om}^c$.
When $\mu\not = \nu$, $\va$ has discontinuity across $\g$, denoted by
\beqo
[\va]_\g(x) := \va|_{\g,\Om}(x) - \va|_{\g, \Om^c}(x),\quad x\in\g.
\eeqo
 \eqref{eqn: normal motion of gamma} and \eqref{eqn: normal motion of gamma tilde} yield that each cell phase is transported by the velocity field $u = -\na \va$.
It has discontinuity across $\g$ in the tangential component, but not in its normal component.

Let $\G$ denote the fundamental solution of the Laplace's equation in $\BR^2$,
\beqo
\G(x) := -\f{1}{2\pi}\ln |x|.
\eeqo
%
%
%
Let $\CD_\g$ denote the double layer potential operator associated with $\g$.
Namely, with a boundary potential $\p$ defined on $\g$, we define $\CD_\g \p$ on $\BR^2$ to be
\beq
\CD_\g \p(x) :=\int_{\g} \s_{y}\cdot \na_y(\G(x-y)) \p(y)\,dy.
\label{eqn: def of double layer potential}
\eeq
Note that here the gradient is taken with respect to $y$.
It is well-known that for $\g$ and $\p$ sufficiently smooth, say $C^{1,\alpha}(\BT)$, $[\CD_\g \p]_\gamma = -\p$. 
Then $\va$ admits the following representation
\beq
\va = -\CD_\g[\va]-\CD_{\tilde{\g}}\phi +\G*g\quad \mbox{in }\tilde{\Om}\backslash\g.
\label{eqn: representation of phi}
\eeq
where $\phi$  is some boundary potential defined along $\tilde{\g}$ to be determined in order for the boundary condition $\va|_{\tilde{\g}} = 0$,
and where
\beq
g = G(p)\chi_\Om = G(\mu^{-1}\va)\chi_\Om\geq 0.
\label{eqn: def of source term g}
\eeq

Assume $C^{1,\alpha}(\BT)$-regularity of $\g$ and $[\va]$.
Then the representation \eqref{eqn: representation of phi} along $\g$ takes the average of $\va$ on two sides of $\g$, i.e.,
\beqo
\left.\left(-\CD_\g[\va]-\CD_{\tilde{\g}}\phi +\G*g \right)\right|_\g= \f12(\va|_{\g,\Om}+\va|_{\g, \Om^c}) = \f{\mu+\nu}{2}p = \f{\mu+\nu}{2(\mu-\nu)}[\va].
\eeqo
This implies
\beq
[\va] = 2A(- \CD_\g[\va]-\CD_{\tilde{\g}}\phi +\G*g)|_\g,
\label{eqn: equation for jump of phi}
\eeq
where
\beqo
A = \f{\mu-\nu}{\mu+\nu}.
\eeqo
On the other hand, the zero Dirichlet boundary condition of $\va$ along $\tilde{\g}$ requires that
\beqo
\lim_{x\to \tilde{\g}(\th)\atop x\in \tilde{\Om}}(-\CD_{\tilde{\g}}\phi)(x) = (\CD_\g[\va]-\G*g)|_{\tilde{\g}(\th)}.
\eeqo
Assuming $C^{1,\alpha}(\BT)$-regularity of $\tilde{\g}$ and $\phi$, by the property of the double layer potential, $\phi$ should solve
\beqo
-(\CD_{\tilde{\g}}\phi)|_{\tilde{\g}}+\f12\phi = (\CD_{\g}[\va]-\G*g)|_{\tilde{\g}}
\eeqo
along $\tilde{\g}$, i.e.,
\beq
\phi = 2(\CD_{\tilde{\g}}\phi+\CD_{\g}[\va]-\G*g)|_{\tilde{\g}}.
\label{eqn: equation for boundary potential on outer interface}
\eeq

Finally, \eqref{eqn: normal motion of gamma} and \eqref{eqn: normal motion of gamma tilde} become
\beq
V_{n,\g} = -\f{\pa \va}{\pa \s_{\Om}},\quad V_{n,\tilde{\g}} = -\f{\pa \va}{\pa \s_{\tilde{\Om}}}. 
\label{eqn: normal motion of two interfaces}
\eeq
\eqref{eqn: representation of phi}-
\eqref{eqn: normal motion of two interfaces} readily form a closed system.

\begin{figure}
\centering
\includegraphics[width=0.3\textwidth]{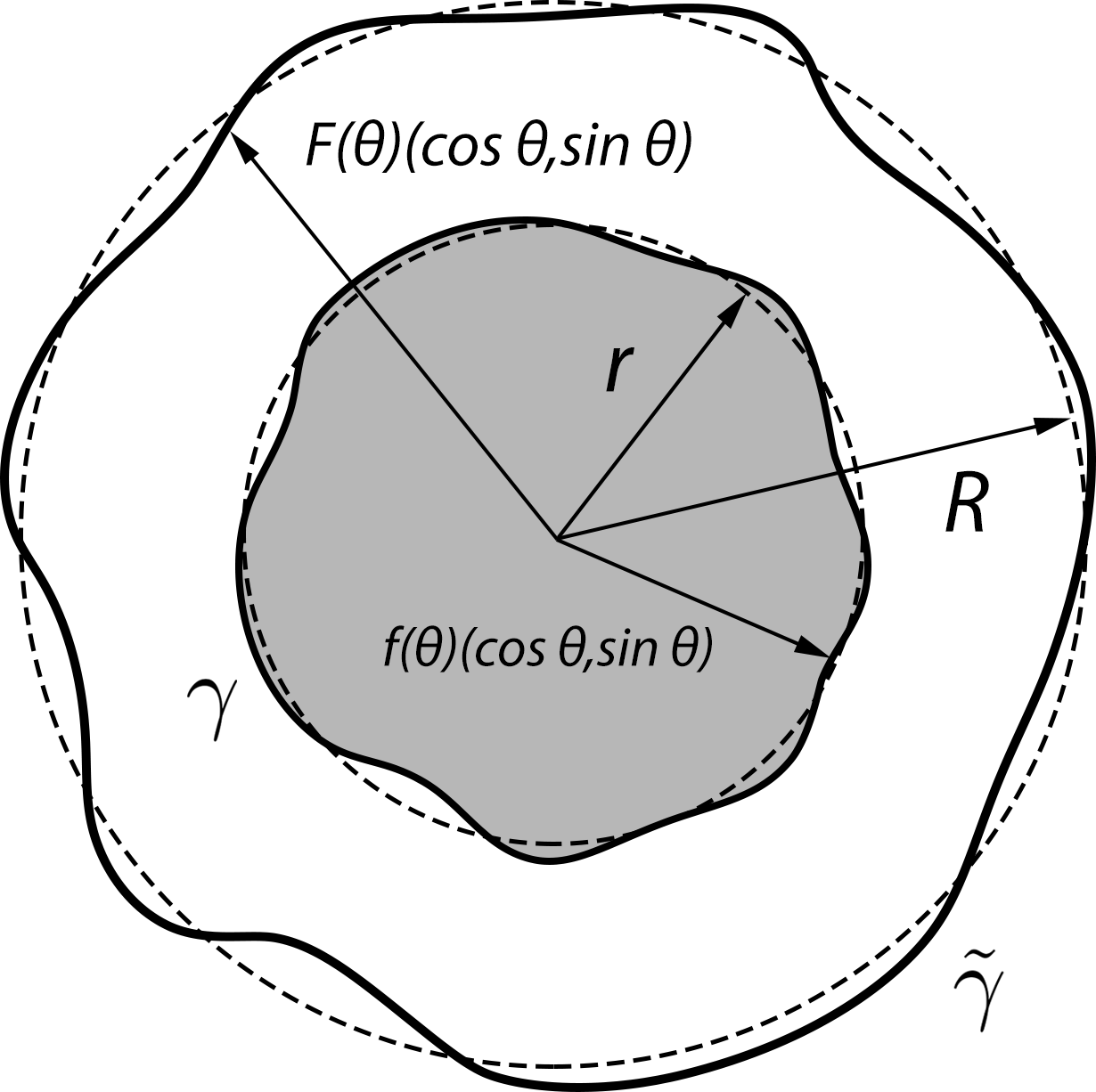}
\caption{An illustration of the geometry. The grey region represents the domain of the tumor cells, while the white region surrounding it is occupied by the normal cells.
The solid curves $\g$ and $\tilde{\g}$ are moving boundaries of the domains. The dashed circles indicate that $\g$ and $\tilde{\g}$ are close to two concentric circles with radii $r$ and $R$, respectively.
$\g$ and $\tilde{\g}$ are parameterized in the polar coordinate as functions of $\th\in \BT = \mathbb{R}/ (2\pi\BZ)$.}
\label{Figure: tumor geometry}
\end{figure}

\subsection{Derivation of contour equations}
We consider the case when $\g$ and $\tilde{\g}$ are close to two concentric circles centered at the origin, with some radii $r<R$, respectively.
See Figure \ref{Figure: tumor geometry}.
We parameterize $\g$ and $\tilde{\g}$ using the polar coordinate,
\begin{align}
\g(\th,t) = &\;f(\th,t)(\cos \th, \sin\th),\label{eqn: parameterization of gamma}\\
\tilde{\g}(\th,t) = &\;F(\th,t)(\cos\th,\sin\th),
\end{align}
where $\th\in \BT = \BR/(2\pi \BZ) = [-\pi,\pi)$.
Then $[\va]$ and $\phi$ can be naturally understood as functions of $\th\in \BT$.
Next we shall derive equations for $\g$ and $\tilde{\g}$ (or equivalently, for $f$ and $F$).



Note that $\s_\Om (\th) = -\g'(\th)^\perp/|\g'(\th)|$, where $v^\perp$ denote a vector $v\in \BR^2$ rotated counter-clockwise by $\pi/2$.
By \eqref{eqn: def of double layer potential}, all $x\in\tilde{\Om}\backslash \g$,
\beq
\CD_\g [\va] = \f{1}{2\pi}\int_\BT \f{(x-\g(\th'))\cdot(-\g'(\th'))^\perp}{|x-\g(\th')|^2}[\va](\th')\,d\th'.
\eeq
By assuming $[\va]\in C^1(\BT)$,
\beq
\begin{split}
\na \CD_\g [\va] 
=&\;\f{1}{2\pi}\int_\BT \f{\pa}{\pa \th'}\left(- \f{(x-\g(\th'))^\perp}{|x-\g(\th')|^2}\right)[\va](\th')\,d\th'\\
=&\;\f{1}{2\pi}\int_\BT \f{(x-\g(\th'))^\perp}{|x-\g(\th')|^2}[\va]'(\th')\,d\th'.
\end{split}
\label{eqn: gradient of double layer away from the boundary}
\eeq
which is a Birkhoff-Rott-type integral \cite{majda2002vorticity}.
%
Hence, by \eqref{eqn: representation of phi}, for $x\in \tilde{\Om}\backslash \g$,
\beq
u(x)= -\na \va(x) = \f{1}{2\pi}\int_\BT \f{(x-\g(\th'))^\perp}{|x-\g(\th')|^2}[\va]'(\th')\,d\th' + \f{1}{2\pi}\int_\BT \f{(x-\tilde{\g}(\th'))^\perp}{|x-\tilde{\g}(\th')|^2}\phi'(\th')\,d\th' -\na(\G*g).
\label{eqn: velocity in Omega}
\eeq
On the other hand, by \eqref{eqn: normal motion of two interfaces} and \eqref{eqn: parameterization of gamma}, 
\beq
\pa_t f = u(\g(\th))\cdot \s_{\Om}(\th)\cdot \f{|\g'(\th)|}{f(\th)} = -\f{1}{f}\cdot u(\g(\th))\cdot \g'(\th)^\perp.
\label{eqn: time derivative of f}
\eeq
Although $u(\g(\th))$ here should be understood as the limit of \eqref{eqn: velocity in Omega} when letting $x\rightarrow \g(\th)$ from the inside of $\g$, it is safe to simply take $x=\g(\th)$ since the normal component of $u$ does not have discontinuity across $\g$.
Define
\begin{align}
\CK_\g \p :=&\;\f{1}{2\pi}\mathrm{p.v.}\int_\BT\f{\g(\th)-\g(\th')}{|\g(\th)-\g(\th')|^2}\cdot \p(\th')\,d\th',
\label{eqn: def of operator K}\\
\mathcal{K}_{\g,\tilde{\g}}\p(\th) := &\; \f{1}{2\pi}\int_\BT \f{ \g(\th)-\tilde{\g}(\th')}{|\g(\th)-\tilde{\g}(\th')|^2}\cdot \p(\th')\,d\th'.
\label{eqn: def of integral operators for a pair of interfaces}
\end{align}
Let $\mathcal{K}_{\tilde{\g},\g}\p(\th)$ be defined symmetrically by interchanging $\g$ and $\tilde{\g}$ in \eqref{eqn: def of integral operators for a pair of interfaces}.
Thanks to \eqref{eqn: gradient of double layer away from the boundary} and
\eqref{eqn: velocity in Omega}, \eqref{eqn: time derivative of f} can be rewritten as
\beq
\pa_t f = -\f{1}{f}\g'(\th)\cdot \CK_\g [\va]'-\f{1}{f}\g'(\th)\cdot \CK_{\g,\tilde{\g}}\phi'+\f{1}{f} \na(\G*g)|_{\g}\cdot \g'(\th)^\perp.
\label{eqn: equation for inner interface new notation}
\eeq
Similarly,
\beq
\pa_t F = -\f{1}{F}\tilde{\g}'(\th)\cdot \CK_{\tilde{\g}} \phi'- \f{1}{F}\tilde{\g}'(\th)\cdot \CK_{\tilde{\g},\g}[\va]'+\f{1}{F} \na(\G*g)|_{\tilde{\g}}\cdot \tilde{\g}'(\th)^\perp.
\label{eqn: equation for outer interface new notation}
\eeq
These equations are coupled with initial conditions
\beq
f(t= 0) = f_0(\th),\quad F(t= 0) = F_0(\th).
\label{eqn: initial condition}
\eeq

For future use, we introduce
\beq
h(\th,t) = \f{f(\th,t)}{r}-1,\quad H(\th,t) = \f{F(\th,t)}{R}-1.
\label{eqn: def of h and H}
\eeq
They are relative deviations of $\g$ and $\tilde{\g}$ from radially symmetric configurations.

\subsection{Main results}
\label{sec: main results}
We first introduce $W^{k-\f1p,p}(\BT)$-space for $k\in \BZ_+$ and $p\in (1,\infty)$ \cite[\S\,2.12.2]{triebel2010theory}.
Let $\{e^{-t(-\D)^{1/2}}\}_{t\geq 0}$ denote the Poisson semi-group on $\BT$ with generator $-(-\D)^{1/2}$.
For $f \in L^p(\BT)$, let
\beq
\|f\|_{\dot{W}^{k-\f1p,p}(\BT)}: = \left\|e^{-t(-\D)^{1/2}}f\right\|_{L^p_{[0,\infty)}\dot{W}^{k,p}(\BT)}.
\label{eqn: def of W s p semi norm}
\eeq
We say $f\in W^{k-\f1p,p}(\BT)$ if and only if $f\in L^p(\BT)$ such that $\|f\|_{\dot{W}^{k-\f1p,p}(\BT)}<+\infty$.

Our main results are as follows.

\begin{thm}
\label{thm: local well-posedness}
Suppose $0<\mu<\nu$.
Let $G$ satisfy the assumptions in Section \ref{sec: problem formulation}.
Suppose $f_0,F_0 \in W^{2-\f1p,p}(\BT)$ for some $p\in (2,\infty)$.
Let
\beq
r = \f{1}{2\pi} \int_{\BT} f_0(\th)\,d\th,\quad
R = \f{1}{2\pi} \int_{\BT} F_0(\th)\,d\th.
\eeq
With $p_*$ be defined by \eqref{eqn: equation for p star}, let $c_*$ and $\tilde{c}_*$ be negative constants
\beq
c_* = -\f{1}{2\pi r}\int_{B_r} G(p_*(X))\,dX,\quad \tilde{c}_* = \f{r}{R}c_*,
\label{eqn: def of c characteristic normal derivative}
\eeq
which corresponds to negative speeds of concentric circular interfaces with radii $r$ and $R$ respectively (see e.g., \eqref{eqn: length of grad p star}).
Take $\d$ such that
\beq
\f{R-r}{100 R}\leq \d\leq \f{R-r}{10R},
\label{eqn: condition on delta}
\eeq
Define $h_0$ and $H_0$ as in \eqref{eqn: def of h and H}.

Suppose $h_0$ and $H_0$ satisfy that, with $\alpha = 1-\f2p$ and for some $\e >0$,
\beq
M:= \d^{-1}(\|h_0\|_{L^\infty(\BT)}+\|H_0\|_{L^\infty(\BT)})
+
\d^{\alpha - \e}\left(\|h_0\|_{\dot{W}^{2-\f1p,p}(\BT)}+\|H_0\|_{\dot{W}^{2-\f1p,p}(\BT)}\right)\leq M_*,
\label{eqn: smallness of fractional Sobolev norm of initial data}
\eeq
where $M_*$ is a small constant depending on $p$, $\e$, $\mu$, $\nu$, $R/|\tilde{c}_*|$, $G$ and $\d R^2$, but not directly on $\d$.
Then there exists $T>0$ depending on the above quantities and additionally on $\d$,
such that  the system \eqref{eqn: equation for inner interface new notation}-\eqref{eqn: initial condition} admits a strong solution
\beq
f,F\in C_{[0,T]}C^{1,\alpha}(\BT) \cap L^p_{[0,T]}W^{2,p}(\BT),
\eeq
with $\pa_t f, \pa_t F \in C_{[0,T]}C^{\alpha''}(\BT)$ for any $\alpha''<\min\{\f14,\alpha\}$.
The solution satisfies that, with $h$ and $H$ defined in \eqref{eqn: def of h and H},
\beq
\d^{-1}(\|h\|_{C_{[0,T]}L^\infty} +\|H\|_{C_{[0,T]}L^\infty} ) +\d^{\alpha-\e}\left(\|h\|_{C_{[0,T]}\dot{C}^{1,\alpha}} +\|H\|_{C_{[0,T]}\dot{C}^{1,\alpha}}\right)
\leq  C(p, G)M,
\label{eqn: bounds for the local solution}
\eeq
\beq
\d^{\alpha-\e}\left(\|\pa_t h\|_{L^p_{[0,T]}\dot{W}^{1,p}}+\|\pa_t H\|_{L^p_{[0,T]}\dot{W}^{1,p}}\right)
\leq C(p,\mu, \nu, G)M,
\label{eqn: bounds for the local solution time derivative}
\eeq
and
\beq
\d^{\alpha-\e}\left(\|h\|_{L^p_{[0,T]}\dot{W}^{2,p}} +\|H\|_{L^p_{[0,T]}\dot{W}^{2,p}}\right) \leq  C(p,\mu, \nu,R/|\tilde{c}_*|, G)M.
\label{eqn: bounds for the local solution L^p W^2,p}
\eeq

\end{thm}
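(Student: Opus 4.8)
The strategy is to cast the coupled system \eqref{eqn: equation for inner interface new notation}--\eqref{eqn: equation for outer interface new notation} as a fixed-point problem for the pair $(h,H)$ in a suitable Banach space, exploiting the parabolic smoothing that arises after linearizing the contour equations around the concentric circular configuration. The key preliminary is to extract the leading-order operator: I would expand the Birkhoff--Rott integrals $\CK_\g[\va]'$, $\CK_{\tilde\g}\phi'$, $\CK_{\g,\tilde\g}\phi'$, $\CK_{\tilde\g,\g}[\va]'$ and the source contribution $\na(\G*g)$ about $f\equiv r$, $F\equiv R$, keeping track of how $[\va]$ and $\phi$ depend linearly on $h,H$ to leading order (using the elliptic estimates of Section~\ref{sec: estimate for pressure in circular geometry} and the representation \eqref{eqn: equation for jump of phi}, \eqref{eqn: equation for boundary potential on outer interface}). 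The outcome should be
\beqo
\pa_t h = c_1 \Lambda h + \mathcal{R}_h(h,H), \qquad \pa_t H = c_2 \Lambda H + \mathcal{R}_H(h,H),
\eeqo
with $\Lambda = (-\D)^{1/2}$ on $\BT$, positive coefficients $c_1,c_2$ proportional to $|c_*|$ and $|\tilde c_*|$ respectively (this is exactly where $\mu<\nu$, i.e. $A<0$, enters to guarantee $c_1>0$), and remainder operators $\mathcal{R}_h,\mathcal{R}_H$ that are either of lower order, quadratically small in $(h,H)$, or carry a small prefactor controlled by $\d$ and the initial smallness $M$.

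With this structure in hand, the core estimate is the maximal-$L^p$-regularity of the fractional heat semigroup $\{e^{-t\Lambda}\}_{t\geq 0}$ on $\BT$: for the model problem $\pa_t v = c\Lambda v + S$ with $v(0)=v_0$, one has
\beqo
\|v\|_{L^p_{[0,T]}\dot W^{2,p}} + \|\pa_t v\|_{L^p_{[0,T]}\dot W^{1,p}} \lesssim \|v_0\|_{\dot W^{2-\f1p,p}(\BT)} + \|S\|_{L^p_{[0,T]}\dot W^{1,p}},
\eeqo
together with the embedding $L^p_{[0,T]}\dot W^{2,p}\cap W^{1,p}_{[0,T]}\dot W^{1,p}\hookrightarrow C_{[0,T]}\dot C^{1,\alpha}$ for $\alpha = 1-\f2p$, and the trace/interpolation bound controlling $\pa_t h$ in $C_{[0,T]}C^{\alpha''}$ for $\alpha''<\min\{\f14,\alpha\}$ (the $\f14$ coming from the half-order gain of $\Lambda$ against the second derivative). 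I would then set up the map $\Phi:(h,H)\mapsto(\bar h,\bar H)$ by solving the linear problems with the remainders $\mathcal{R}_h(h,H),\mathcal{R}_H(h,H)$ frozen as source terms, and show $\Phi$ maps a ball
\beqo
X_T = \left\{(h,H): \d^{-1}\|(h,H)\|_{C_{[0,T]}L^\infty} + \d^{\alpha-\e}\|(h,H)\|_{C_{[0,T]}\dot C^{1,\alpha}} + \d^{\alpha-\e}\|(h,H)\|_{L^p_{[0,T]}\dot W^{2,p}} \leq C M \right\}
\eeqo
into itself and is a contraction, for $T$ small and $M_*$ small. The $L^\infty$ bound on $(h,H)$ is propagated separately using the comparison/transport structure of the original motion law (the normal velocity is close to $c_*$, $\tilde c_*$), which keeps the interfaces in the annular region where all the elliptic estimates are valid; this is why the condition \eqref{eqn: condition on delta} on $\d$ and the scale-invariant combination $\d R^2$ appear in the dependence of $M_*$.

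\textbf{Main obstacle.} The hard part is the control of the source terms $\mathcal{R}_h,\mathcal{R}_H$ in the norm $L^p_{[0,T]}\dot W^{1,p}$, because $[\va]$ and $\phi$ are themselves defined implicitly through the singular integral equations \eqref{eqn: equation for jump of phi}, \eqref{eqn: equation for boundary potential on outer interface}, and $g=G(\mu^{-1}\va)\chi_\Om$ couples back to the pressure nonlinearly and nonlocally, with support touching $\g$. Establishing that the map $(h,H)\mapsto([\va],\phi)$ is well-defined and Lipschitz on the relevant spaces, with the correct smallness constants, requires: (i) the elliptic regularity estimates of Section~\ref{sec: estimate for pressure in circular geometry} showing $p$ stays $W^{2,p}$-close to the radial profile $p_*$; (ii) the gradient estimates for $\G*g$ from Section~\ref{sec: gradient estimate of growth potential}, crucially that $\na(\G*g)$ restricted to $\g$ and $\tilde\g$ is no worse than the leading parabolic term and in fact enters the coefficient $c_*$ at leading order; and (iii) the commutator-type bounds for the singular integral operators $\CK_\g,\CK_{\g,\tilde\g}$ and their linearizations from Section~\ref{sec: estimates for singular integral operators}, to peel off the $\Lambda h$ term and show the remainder is genuinely lower order. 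Tracking the $\d$-powers so that $M_*$ ends up \emph{independent of $\d$} (only depending on $\d R^2$) is the delicate bookkeeping that makes the statement scale-robust; everything else is a by-now-standard quasilinear parabolic fixed-point argument.
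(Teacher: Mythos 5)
Your overall reduction is correct and matches the paper: linearize around the concentric circles, extract the half-order fractional heat operator with positive coefficient $\sim |c_*|/r$ (resp.\ $|\tilde c_*|/R$), freeze the remainders as source terms, and close via maximal $L^p$ regularity of $e^{-t(-\D)^{1/2}}$ plus the embedding $L^p_tW^{2,p}\cap W^{1,p}_tW^{1,p}\hookrightarrow C_t C^{1,\alpha}$. The sign analysis ($A<0\Leftrightarrow\mu<\nu$) and the role of $\d$ and $\d R^2$ in the bookkeeping are also captured correctly.

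However, there is a genuine gap in the proposed fixed-point step. You say you would show the map $\Phi:(h,H)\mapsto(\bar h,\bar H)$ \emph{is a contraction} on the ball $X_T$. To run that argument you need Lipschitz stability of the remainders: the map $(h,H)\mapsto([\va]',\phi')\mapsto(\mathcal{R}_h,\mathcal{R}_H)$ must be Lipschitz from $W^{2,p}\times W^{2,p}$ into $W^{1,p}\times W^{1,p}$. But the source term is $g=G(p)\chi_\Om$ with $p$ depending nonlocally and implicitly on $(h,H)$, and controlling $\|\nabla(g_1-g_2)\|$ requires estimating differences of $G'(\tilde p_i)\nabla\tilde p_i$, which forces $G'$ to be Lipschitz, i.e.\ $G\in C^{1,1}$. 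The theorem as stated assumes only $G\in C^1$ (cf.\ Section \ref{sec: problem formulation}). The paper therefore does \emph{not} use Banach's contraction principle: it proves existence by Schauder's fixed-point theorem on the convex set $Y_{M,T}$, which is compact in the weaker topology $Z\times Z$ with $Z=L^\infty_{[0,T]}C^{1,\alpha'}(\BT)$, $\alpha'<\alpha$ (Aubin--Lions), and needs only \emph{continuity} of the solution map there --- in fact only H\"older continuity is established, via the $C^\alpha$-stability of Lemma \ref{lem: stability of the interface velocities} (valid under $G\in C^1$) and interpolation. The Lipschitz stability and uniqueness are proved separately in Theorem \ref{thm: uniqueness}, precisely under the additional hypothesis $G\in C^{1,1}$ (see Lemma \ref{lem: bounds for generalized source term}). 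So if you insist on a contraction, you are silently strengthening the hypothesis on $G$ and therefore proving a weaker theorem than the one stated; to match the theorem you should replace Banach with Schauder and argue continuity only. A minor additional point: your $L^\infty$ propagation ``via the comparison/transport structure'' is not what the paper does --- the $C_tL^\infty$ bound is obtained directly from Duhamel's formula, the $L^1_tL^\infty$ bound on the source, and the fact that $e^{-t(-\D)^{1/2}}$ is an $L^\infty$-contraction; this is cleaner and avoids the need for any separate comparison principle.
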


\begin{rmk}
In the claim $\pa_t f, \pa_t F \in C_{[0,T]}C^{\alpha''}(\BT)$ $(\alpha''<\min\{\f14,\alpha\})$, we did not pursue the optimal range of the H\"{o}lder exponent $\alpha''$.
\end{rmk}
\begin{rmk}
We use $\d$ to characterize the relative thinness of the gap between $\g$ and $\tilde{\g}$.
Note that requiring $\d^{-1}(\|h_0\|_{L^\infty}+\|H_0\|_{L^\infty})\ll 1$ in \eqref{eqn: smallness of fractional Sobolev norm of initial data} seems very natural, as otherwise the two interfaces may touch or cross each other.
It is worthwhile to remark that the right hand side of \eqref{eqn: smallness of fractional Sobolev norm of initial data} does not deteriorate as $\d$ becomes smaller, in the sense that if all the model parameters and $R$ are fixed and we let $r\to R$ (so that $\d\to 0$), then the right hand side does not decrease to $0$.
Though $\d$ also shows up on the right hand side in the form of $\d R^2$, 
it will be clear later (see \eqref{eqn: derivation of space-time estimate for the source terms in the fixed point argument} in the proof of Theorem \ref{thm: local well-posedness}) that $M_*$ increases as $\d R^2$ decreases. 

In contrast, the smallness of $T$ has to depend on $\d$ directly: when $\d\ll 1$, we may need $T\ll 1$.
\end{rmk}
\begin{rmk}
In the 2-D Muskat problem, $\dot{W}^{1,\infty}$ and $\dot{H}^{3/2}$ are considered to be critical and scaling-invariant semi-norms \cite{gancedo2019muskat}.
Although our problem does not admit any scaling law, considering its similarity with the Muskat problem,
it seems to be the best thing one can do to prove well-posedness with initial data being small in $W^{1,\infty}(\BT)$- or $H^{3/2}(\BT)$-norms.
We note that in Theorem \ref{thm: local well-posedness}, the condition \eqref{eqn: smallness of fractional Sobolev norm of initial data} on the initial data is proposed in the way that, by interpolation, $C^{1,\b'}$-semi-norms of $h_0$ and $H_0$ are small for some $\b'>0$ depending on $p$ and $\e$ (see \eqref{eqn: def of beta'} and \eqref{eqn: L inf in time bound of Holder norm for proposed solution}).
%
In other words, although we are not able to prove well-posedness of our problem with smallness in the ``critical" spaces, partly because of the source term, we manage to do that in all the ``sub-critical" cases, which can be arbitrarily close to the ``critical" one --- note that $p>2$ and $\e>0$ are arbitrary.
\end{rmk}

Thanks to the estimates for the local solution, one can apply Theorem \ref{thm: local well-posedness} iteratively and show that local solutions exist for an arbitrary time period $\tilde{T}>0$ as long as $h_0$ and $H_0$ are correspondingly sufficiently small.
\begin{corol}\label{cor: long time small solution}
Under the assumptions of Theorem \ref{thm: local well-posedness}, for any $\tilde{T}>0$, if $h_0, H_0\in W^{2-\f1p,p}(\BT)$ satisfy $M\ll 1$, where the smallness depends on $p$, $\e$, $\mu$, $\nu$, $G$, $r$, $R$ and $\tilde{T}$, the local strong solution exists up to time $\tilde{T}$.
\end{corol}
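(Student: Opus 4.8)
The plan is to bootstrap Theorem~\ref{thm: local well-posedness} by a standard iteration-in-time argument, where the only real work is checking that the smallness hypothesis \eqref{eqn: smallness of fractional Sobolev norm of initial data} is preserved (with a definite room to spare) at each restart time. Fix $\tilde{T}>0$. Since $r,R$ are determined by the averages of $f_0,F_0$, and since the evolution preserves these averages --- this follows from integrating \eqref{eqn: equation for inner interface new notation} and \eqref{eqn: equation for outer interface new notation} over $\BT$, using that $\na(\G*g)|_\g\cdot\g'^\perp$ and the $\CK$-kernel terms integrate to zero in $\th$ (the Birkhoff--Rott structure), so that $\int_\BT h\,d\th\equiv 0$, $\int_\BT H\,d\th\equiv 0$ --- the parameters $r$, $R$, $\d$, $c_*$, $\tilde{c}_*$, and hence the threshold $M_*$ and the constants $C(p,G)$, $C(p,\mu,\nu,G)$, $C(p,\mu,\nu,R/|\tilde{c}_*|,G)$ appearing in \eqref{eqn: bounds for the local solution}--\eqref{eqn: bounds for the local solution L^p W^2,p}, are all frozen once and for all by the initial data. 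This is the key structural point that makes the iteration work: nothing in the scheme degenerates from step to step except possibly the length of the sub-interval.

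First I would choose the initial smallness. Let $C_0:=\max\{C(p,G),\,C(p,\mu,\nu,G),\,C(p,\mu,\nu,R/|\tilde{c}_*|,G)\}$ be the constant from \eqref{eqn: bounds for the local solution}. Observe from \eqref{eqn: bounds for the local solution} that $M(t):=\d^{-1}(\|h(t)\|_{L^\infty}+\|H(t)\|_{L^\infty})+\d^{\alpha-\e}(\|h(t)\|_{\dot C^{1,\alpha}}+\|H(t)\|_{\dot C^{1,\alpha}})$ --- the same functional as in \eqref{eqn: smallness of fractional Sobolev norm of initial data} but with $W^{2-1/p,p}$ replaced by the weaker $C^{1,\alpha}$ norm, which is dominated by it via the trace/interpolation embedding $W^{2-1/p,p}(\BT)\hookrightarrow C^{1,\alpha}(\BT)$ for $\alpha=1-2/p$ --- satisfies $M(t)\le C_0 M$ for all $t\in[0,T]$. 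However, to restart the theorem I need the full $W^{2-1/p,p}$-norm of the configuration at time $T$ to be controlled, not just the $C^{1,\alpha}$-norm. This is supplied by \eqref{eqn: bounds for the local solution L^p W^2,p}: $\d^{\alpha-\e}(\|h\|_{L^p_{[0,T]}\dot W^{2,p}}+\|H\|_{L^p_{[0,T]}\dot W^{2,p}})\le C_0 M$, which combined with the time-derivative bound \eqref{eqn: bounds for the local solution time derivative} and the parabolic-smoothing/maximal-regularity structure of the contour equations (the fractional heat operator of order $\tfrac12$) yields that at a suitable restart time $t_*\in[T/2,T]$ one has the pointwise-in-time bound $\d^{\alpha-\e}(\|h(t_*)\|_{\dot W^{2-1/p,p}}+\|H(t_*)\|_{\dot W^{2-1/p,p}})\le C_1 M$ for a constant $C_1$ of the same dependence. (Concretely: a space-time $L^p\dot W^{2,p}$ bound plus an $L^p\dot W^{1,p}$ bound on the time derivative gives, by trace in time, a uniform-in-time bound on an intermediate Sobolev norm; this is exactly the regularity class in which Theorem~\ref{thm: local well-posedness} takes its data.) Hence with $M_1:=\max\{C_0,C_1\}$ we get $M(\text{at }t_*)\le M_1 M$ in the norm of \eqref{eqn: smallness of fractional Sobolev norm of initial data}.

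The iteration then proceeds as follows. Require at the outset that $M\le M_*/M_1$, so that the configuration at the first restart time again satisfies hypothesis \eqref{eqn: smallness of fractional Sobolev norm of initial data} --- indeed it satisfies it with the \emph{same} bound $M$, since $M_1 M\le M_*$ and the relevant functional at $t_*$ is $\le M_1M\le M_*$; here it is essential that $M_1$ does not depend on how many steps we have taken, which is guaranteed by the freezing of $r,R,\d$ above. Apply Theorem~\ref{thm: local well-posedness} again on $[t_*,t_*+T']$ with $T'=T'(r,R,\d,\dots)>0$; crucially, because the data norm has not grown (it is still $\le M_*$, and in fact the driving quantity $M$ in the bounds \eqref{eqn: bounds for the local solution}--\eqref{eqn: bounds for the local solution L^p W^2,p} can be taken to be the fixed $M$ throughout, not a growing sequence), the existence time $T'$ at each step is bounded below by a fixed positive number $T_0=T_0(r,R,\d,\mu,\nu,G,p,\e)$. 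Therefore after finitely many steps --- at most $\lceil \tilde{T}/T_0\rceil$ of them --- the solution has been continued past $\tilde{T}$, and on each sub-interval the bounds \eqref{eqn: bounds for the local solution}--\eqref{eqn: bounds for the local solution L^p W^2,p} hold, which patch together (the regularity classes $C_{[0,T]}C^{1,\alpha}\cap L^p_{[0,T]}W^{2,p}$ concatenate across a common endpoint at which the trace matches) to give a strong solution on $[0,\tilde{T}]$. The final smallness threshold is $M\le M_*/M_1$ with $M_*$, $M_1$ depending on $p,\e,\mu,\nu,G,r,R$; since $T_0$ depends on $\d$ (equivalently on $r,R$) the number of steps, and hence implicitly the uniform constants over $[0,\tilde{T}]$, depend on $\tilde{T}$ as well --- consistent with the statement of the corollary.

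The main obstacle is the transfer of regularity from the space-time bounds to a clean pointwise-in-time $W^{2-1/p,p}$ bound at the restart time that is compatible with re-entering Theorem~\ref{thm: local well-posedness}: one must make sure that the constant $C_1$ in that trace-in-time step genuinely has the claimed dependence (in particular does not secretly pick up a negative power of $\d$ beyond the $\d^{-(\alpha-\e)}$ already built into the functional, nor a factor that grows with the step index), and that the restart time $t_*$ can be chosen (by a mean-value/pigeonhole argument on the $L^p$-in-time integrals) so that \emph{both} the $W^{2,p}$-trace and the $W^{1,p}$-time-derivative trace are simultaneously good there. Once the dependence of $C_1$ is pinned down, everything else is the routine bookkeeping of a finite induction. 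I would also note that one should verify the matching/compatibility at the restart time --- i.e.\ that the solution produced is genuinely the continuation, which follows from the uniqueness statement Theorem~\ref{thm: uniqueness} applied on the overlap (or simply from taking the restart data to be the endpoint value of the previous piece).
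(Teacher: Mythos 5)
Your iteration scheme has a fatal structural error at the very beginning: the claim that the evolution preserves the averages of $f$ and $F$, so that $r$, $R$, $\d$ (and hence $c_*$, $\tilde c_*$, $M_*$, and all the constants) can be frozen once and for all. This is false. The term $\frac{1}{f}\,\na(\G*g)|_{\g}\cdot\g'^\perp$ in \eqref{eqn: equation for inner interface new notation} does \emph{not} integrate to zero over $\BT$: it represents the flux of the growth potential across $\g$, and by the divergence theorem (using $-\D(\G*g)=g=G(p)\chi_\Om\ge 0$) its integral equals essentially $\int_\Om G(p)\,dx>0$. Concretely, in the linearized form \eqref{eqn: backbone equation for h} the nonzero constant $-c_*/r>0$ on the left-hand side is precisely the drift of the mean of $h$. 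The tumor is growing; both $|\Om_t|$ and $|\tilde\Om_t|$ strictly increase, and therefore the radii $r$, $R$, and the gap parameter $\d$ must be \emph{re-computed at every restart time} (this is exactly what the paper's Lemma~\ref{lem: growth of local solution after one stage} does by introducing $r_1,R_1,\d_1,h_1,H_1$). The $\CK$-kernel terms do have zero mean, but not the growth term, so your appeal to ``the Birkhoff--Rott structure'' proves only part of the identity you need.

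This error invalidates the two points that make your induction painless. First, since $r_k,R_k,\d_k$ drift, the threshold $M_{*,k}$ and the admissible step length $T_*(\cdot)$ vary from step to step; you need an a priori argument bounding $r_k,R_k,\d_k$ in a compact range on $[0,\tilde T]$ (the paper does this via $\frac{d}{dt}|\tilde\Om|\le G(0)|\tilde\Om|$, monotonicity of $|\Om_t|$, and the conservation of $|\tilde\Om_t\backslash\Om_t|$ from Remark~\ref{rmk: delta R^2 is bounded}), and then invoke continuity to get uniform lower bounds $T_{**}$, $M_{**}$. Second, and more importantly, the recentering itself can \emph{inflate} the smallness measure: Lemma~\ref{lem: growth of local solution after one stage} only guarantees $M^k\le\tilde C(p,\e,G)\,M^{k-1}$ with $\tilde C$ possibly larger than one, not $M^k\le M$. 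Your claim ``the data norm has not grown (it is still $\le M_*$)'' skips over this. The actual smallness requirement on the initial data is therefore of the form $\tilde C^{N_{**}}M\le M_{**}$, where $N_{**}$ bounds the number of steps needed to cover $[0,\tilde T]$; this is where the $\tilde T$-dependence of the smallness enters and it is not just through the step count, it is through a geometric compounding of constants. The trace-in-time part of your argument (getting a pointwise $W^{2-1/p,p}$ bound at a restart time from the space-time bounds) is fine and is essentially the same as the paper's use of the trace theorem in Lemma~\ref{lem: growth of local solution after one stage}, but it cannot rescue the argument without the recentering.
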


Uniqueness of local solutions can be shown if $G$ is more regular.

\begin{thm}\label{thm: uniqueness}
Under the assumptions of Theorem \ref{thm: local well-posedness}, if in addition, $G\in C^{1,1}[0,+\infty)$, then the solution is unique.
\end{thm}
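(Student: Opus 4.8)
The plan is a stability (difference) estimate. Let $(f_1,F_1)$ and $(f_2,F_2)$ be two strong solutions of \eqref{eqn: equation for inner interface new notation}--\eqref{eqn: initial condition} with the same data $(f_0,F_0)$, both lying in the solution class of Theorem~\ref{thm: local well-posedness} and obeying the bounds \eqref{eqn: bounds for the local solution}--\eqref{eqn: bounds for the local solution L^p W^2,p}; by passing to the intersection of their intervals of existence it suffices to show they coincide on a short interval and then reiterate. Write $\bar f=f_1-f_2$, $\bar F=F_1-F_2$, and $\bar h,\bar H$ for the corresponding deviations, and $\overline{[\va]},\bar\phi,\bar g,\bar p,\bar\va$ for the differences of the auxiliary quantities. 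Since the principal operators $\CK_{\g}$, $\CK_{\tilde\g}$, $\CK_{\g,\tilde\g}$, $\CK_{\tilde\g,\g}$ in \eqref{eqn: equation for inner interface new notation}--\eqref{eqn: equation for outer interface new notation} depend on the unknown curves, the difference equation loses one derivative, so I would carry out the estimate one derivative below the existence class --- e.g.\ in an $L^2$-energy space $C_{[0,T]}\dot H^{s}(\BT)\cap L^2_{[0,T]}\dot H^{s+1/2}(\BT)$ for a suitable $s$, equivalently an $L^p$-maximal-regularity space built on $L^p_{[0,T]}\dot{W}^{1,p}(\BT)$ --- exploiting the order-$1/2$ parabolic smoothing identified in Section~\ref{sec: scheme of the proof}.

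First I would record the linear problems for the potential differences. Subtracting \eqref{eqn: equation for jump of phi} and \eqref{eqn: equation for boundary potential on outer interface} gives a system $(\mathrm{Id}-\mathcal T)(\overline{[\va]},\bar\phi)=\mathcal S$, where $\mathcal T$ is assembled from double-layer potentials on near-circles and is a contraction in the relevant norm when $|A|<1$ and the geometry is near-radial (the invertibility already used in constructing the solution), and where $\mathcal S$ is linear in $\bar h$, $\bar H$, $\bar g$ plus terms in which the difference of the curves acts on the fixed potentials $[\va]_2$, $\phi_2$; hence $\|\overline{[\va]}\|$ and $\|\bar\phi\|$ are controlled by $\|\bar h\|$, $\|\bar H\|$, $\|\bar g\|$ at the working regularity. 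For the source difference I would split $\bar g=\big(G(\mu^{-1}\va_1)-G(\mu^{-1}\va_2)\big)\chi_{\Omega_1}+G(\mu^{-1}\va_2)(\chi_{\Omega_1}-\chi_{\Omega_2})$, the second term being small only in $L^q$ via $|\Omega_1\triangle\Omega_2|^{1/q}\lesssim\|\bar f\|_{L^\infty}^{1/q}$, and then estimate $\nabla(\G*\bar g)$ on $\g$ and $\tilde\g$ --- the quantity that actually enters the motion law --- through $\nabla\big(G(\mu^{-1}\va_i)\big)=\mu^{-1}G'(\mu^{-1}\va_i)\nabla\va_i$. The difference of these gradients contains the term $\big(G'(\mu^{-1}\va_1)-G'(\mu^{-1}\va_2)\big)\nabla\va_1$, which forces $|G'(s_1)-G'(s_2)|\le[G']_{\mathrm{Lip}}|s_1-s_2|$: this is exactly where $G\in C^{1,1}$ is used and $C^1$ does not suffice.

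Next I would quantify the dependence of the pressure on the geometry using the elliptic theory of Section~\ref{sec: estimate for pressure in circular geometry}: matching $\tilde\Omega_1$ and $\tilde\Omega_2$ by a near-identity diffeomorphism, $\bar p$ solves an elliptic equation whose coefficient and right-hand-side perturbations are linear in $\bar f$, $\bar F$ modulo the source difference handled above, and the near-radial elliptic estimates give $\|\bar p\|_{W^{1,\infty}}\lesssim\|\bar f\|_{C^{1,\alpha'}}+\|\bar F\|_{C^{1,\alpha'}}$ for a small exponent $\alpha'$. This closes the coupling: $\bar g$, $\overline{[\va]}$, $\bar\phi$ become controlled by $\|\bar h\|$, $\|\bar H\|$ in the energy norm. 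With these inputs I would subtract the contour equations and isolate the leading fractional-diffusion parts, so that $\partial_t\bar h+|c_*|(-\D)^{1/2}\bar h=\mathcal R_h$ and $\partial_t\bar H+|\tilde c_*|(-\D)^{1/2}\bar H=\mathcal R_H$, with $\mathcal R_h,\mathcal R_H$ collecting the inter-interface coupling (a smoothing, hence lower-order, operator), the commutators of the singular integral operators against the difference of the curves, and the potential and source differences above; one bounds $\mathcal R_h,\mathcal R_H$ by the dissipative norm times a factor that is either small (from the smallness $M$ of the configurations) or of subcritical order, tests against $(-\D)^{1/2}\bar h$ and $(-\D)^{1/2}\bar H$, and runs Gronwall with the $\dot H^{s+1/2}$ dissipation absorbing the bad terms. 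This yields $\bar h\equiv\bar H\equiv0$ on a short interval; since the a priori bounds are uniform in time, the argument reiterates up to $T$, and uniqueness of $p$, $[\va]$, $\phi$ follows afterward. The main obstacle is the interplay of this derivative loss with the low regularity of $g=\chi_\Om G(p)$: because $g$ jumps across $\g$, the difference $\chi_{\Omega_1}-\chi_{\Omega_2}$ is only small in $L^q$ and not pointwise, so $\nabla(\G*\bar g)$ must be dissected and estimated by a mixture of $L^q$ bounds and $C^{1,1}$-based bounds, and one must verify that the resulting loss is recovered by the order-$1/2$ smoothing; equivalently, the delicate point is to establish the Lipschitz dependence of $p$ --- hence of $g$, $[\va]$, $\phi$ --- on the curves at precisely the reduced regularity level, and it is here that both $G\in C^{1,1}$ and the near-radial smallness hypotheses are genuinely needed.
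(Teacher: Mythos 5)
Your global strategy --- a stability estimate for the differences $\bar h,\bar H$ driven by the order-$\tfrac12$ parabolic smoothing and closed by absorbing the remainder against the dissipation --- is indeed the one the paper implements, and you have correctly located where $G\in C^{1,1}$ enters (Lipschitz control of $G'$ is what makes the difference of source gradients estimable; cf.\ the remark after Lemma~\ref{lem: bounds for generalized source term}). But two concrete ingredients are missing, and the first one would actually break your argument. Your physical-coordinate split $\bar g=(G(\mu^{-1}\va_1)-G(\mu^{-1}\va_2))\chi_{\Omega_1}+G(\mu^{-1}\va_2)(\chi_{\Omega_1}-\chi_{\Omega_2})$ gives only $\|\chi_{\Omega_1}-\chi_{\Omega_2}\|_{L^q}\lesssim\|\bar f\|_{L^\infty}^{1/q}$, which is \emph{sub-linear} in the difference for any $q>1$; an inequality of the form $\mathcal N\le CM\mathcal N+C\,\mathcal N^{1/q}$ does not force $\mathcal N=0$, so Gronwall will not close. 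The paper never writes $\chi_{\Omega_1}-\chi_{\Omega_2}$: it pulls both sources back to the reference coordinate, where each is $g_0=G(\tilde p_i)\chi_{B_r}$ on the \emph{fixed} ball $B_r$, so the characteristic function is never differenced and the dependence on the geometry is genuinely Lipschitz (through $G(\tilde p_1)-G(\tilde p_2)$ and the kernel variations handled in Lemmas~\ref{lem: difference of kernels}--\ref{lem: estimates involving b}). You flag this as "the delicate point," but the resolution you gesture at --- a mixture of $L^q$ and Lipschitz bounds on $\nabla(\G*\bar g)$ --- is not supplied and does not follow from the estimates you invoke.

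Second, you are missing the cut-off segregation that the paper introduces \emph{specifically} so that the uniqueness proof avoids difference estimates for the interaction operators $\CK_{\g,\tilde\g}$, $\CK_{\tilde\g,\g}$. In Section~\ref{sec: uniqueness} a radial cut-off $\eta$ ($\eta\equiv1$ near $\g$, $\eta\equiv0$ near $\tilde\g$) splits $\va_i$ into $\eta\va_i$ and $(1-\eta)\va_i$, and the resulting integral equations \eqref{eqn: new equation for phi_i' segregated}--\eqref{eqn: def of tilde R Phi' big_i} contain only the single-interface kernels $\CK_{\g_i}$, $\CK_{\tilde\g_i}$; the only singular-integral Lipschitz estimates proved are Lemmas~\ref{lem: Holder estimate of singular integral normal component difference}--\ref{lem: W 1p difference from Hilbert transform difference}, and there are none for $\CK_{\g,\tilde\g}$, $\CK_{\tilde\g,\g}$. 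Differencing $(\mathrm{Id}-\mathcal T)^{-1}$ as you propose would require exactly those, which is the "extremely involved" naive route the paper declines. Finally, a smaller discrepancy: the paper does not actually drop a derivative --- the stability quantity $\mathcal N(T_0)$ in the proof of Theorem~\ref{thm: uniqueness} still carries $\|h_1''-h_2''\|_{L^p_{[0,T_0]}L^p}$, the same $\dot W^{2,p}$-level as existence, with smallness of $M$ doing the absorbing; working in a genuinely lower $\dot H^s$ or $\dot W^{1,p}$ class is plausible but would mean redoing all the commutator and boundary-potential estimates at a regularity the paper never establishes.
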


\subsection{Parabolic nature of the interface motion and scheme of the proof}
\label{sec: scheme of the proof}
To elucidate the hidden parabolicity of \eqref{eqn: equation for inner interface new notation}-\eqref{eqn: initial condition}, we linearize it around the radially symmetric configurations.

It is convenient to first derive equations for $[\va]'$ and $\phi'$ by taking derivative in \eqref{eqn: equation for jump of phi} and \eqref{eqn: equation for boundary potential on outer interface}.
Assuming $\g,[\va]\in C^1(\BT)$, we have
\beq
\f{d}{d\th}(\CD_\g [\va])|_{\g} = -\g'(\th)^\perp\cdot\mathcal{K}_\g[\va]'. 
\label{eqn: derivative of double layer potential}
\eeq
Indeed, by integration by parts,
\beq
\begin{split}
(\CD_\g [\va])|_{\g} 
= &\;-\f{1}{2\pi} \mathrm{p.v.}\int_{\BT} \pa_{\th'}[\mathrm{arg}((\g(\th)-\g(\th'))_1+i(\g(\th)-\g(\th'))_2)] \cdot[\va](\th')\,d\th'\\
= &\;-\f{1}{2\pi}\cdot \pi [\va](\th)+\f{1}{2\pi}\int_{\BT} \mathrm{arg}((\g(\th)-\g(\th'))_1+i(\g(\th)-\g(\th'))_2) \cdot[\va]'(\th')\,d\th'.
\end{split}
\eeq
Here the argument is defined such that its values at $\th=\pm \pi$ coincide.
In the last equality, we need the assumption $\g\in C^1(\BT)$.
Hence, using the fact that $[\va]\in C^1(\BT)$,
\beq
\begin{split}
\f{d}{d\th}(\CD_\g [\va])|_{\g}
= &\;-\f{1}{2}[\va]'+\f{1}{2\pi} \f{d}{d\th}\int_{\BT} \mathrm{arg}((\g(\th)-\g(\th'))_1+i(\g(\th)-\g(\th'))_2) \cdot[\va]'(\th')\,d\th'\\
= &\;\f{1}{2\pi} \mathrm{p.v.}\int_{\BT} \f{d}{d\th}\left( \mathrm{arg}((\g(\th)-\g(\th'))_1+i(\g(\th)-\g(\th'))_2) \right) \cdot[\va]'(\th')\,d\th',
\end{split}
\eeq
This justifies \eqref{eqn: derivative of double layer potential}. Next let
\beq
e_r := (\cos \th,\sin\th),\quad e_\th := (-\sin\th, \cos\th).
\label{eqn: def of e_r e_th}
\eeq
Then $[\va]'$ and $\phi'$ satisfy
\begin{align}
[\va]'
= &\;2A \left((f'(\th) e_r+ f(\th) e_\th)\cdot \na(\G*g)|_{\g}+\g'(\th)^\perp\cdot \CK_\g [\va]'+\g'(\th)^\perp\cdot \CK_{\g,\tilde{\g}} \phi'\right),
\label{eqn: equation for derivative of jump of potential final form}\\
\phi' = &\;-2\left((F'(\th) e_r+ F(\th) e_\th)\cdot \na(\G*g)|_{\tilde{\g}}
+\tilde{\g}'(\th)^\perp\cdot \CK_{\tilde{\g}} \phi'
+\tilde{\g}'(\th)^\perp\cdot \CK_{\tilde{\g},\g} [\va]'\right).
\label{eqn: equation for derivative of potential on outer interface}
\end{align}

Now we shall linearize the equations \eqref{eqn: equation for inner interface new notation},
\eqref{eqn: equation for outer interface new notation},
\eqref{eqn: equation for derivative of jump of potential final form}
and \eqref{eqn: equation for derivative of potential on outer interface} around the radially symmetric configurations, i.e., $f \equiv r$, $F \equiv R$, and  $[\va]' = \phi' \equiv 0$.
The following discussion is only formal and gives an overview of the analysis carried out in the rest of the paper.
Let us begin by collecting a few facts that will be justified in later sections.
\begin{itemize}
  \item It will be clear in Section \ref{sec: gradient estimate of growth potential} and Section \ref{sec: bounds for boundary potentials} that
  \beq
  e_r\cdot \na(\G*g)|_{\g}\approx c_*\quad \mbox{ and }\quad e_r\cdot \na(\G*g)|_{\tilde{\g}} \approx \tilde{c}_* :=\f{c_* r}{R}.
  \eeq
  Here $c_*$ and $\tilde{c}_*$ are constants defined in \eqref{eqn: def of c characteristic normal derivative}.
  \item Let $\CH$ be the Hilbert transform on $\BT$ \cite{grafakos2008classical}, i.e.,
      \beq
      \CH f(\th) := \f{1}{2\pi}\mathrm{p.v.}\int_\BT \cot\left(\f{\th-\th'}{2}\right)f(\th')\,d\th'.
      \eeq
  Then in Section \ref{sec: estimates for singular integral operators} we shall show
  \beq
  \g'\cdot \CK_\g \approx \f12\CH \quad\mbox{ and }\quad \tilde{\g}'\cdot \CK_{\tilde{\g}} \approx \f12\CH.
  \eeq

  \item Define $\CS$ to be a smoothing operator on $\BT$ with a Poisson kernel,
  \beq
  \CS \p(\th) = \f{1}{2\pi}P_{\f{r}{R}}*\p(\th) = \f{1}{2\pi}\int_{\BT}\f{1-\left(\f{r}{R}\right)^2}{1+\left(\f{r}{R}\right)^2-2\left(\f{r}{R}\right)\cos\xi}\p(\th-\xi)\,d\xi.
  \label{eqn: def of operator S}
  \eeq
  The notation $P_{\f{r}{R}}$ will be introduced in Section \ref{sec: estimates for interaction operators}.
  Then in Section \ref{sec: estimates for interaction operators} we shall see
  \begin{align}
  &\g'\cdot \CK_{\g,\tilde{\g}}\phi'\approx \f12\CH \CS\phi',&
  &\g'^\perp \cdot \CK_{\g,\tilde{\g}}\phi'\approx \f12\CS\phi',\\
  &\tilde{\g}'\cdot \CK_{\tilde{\g},\g}[\va]'\approx \f12\CH\CS[\va]',&
  &\tilde{\g}'^\perp \cdot \CK_{\tilde{\g},\g}[\va]'\approx -\f12\CS[\va]'.
  \end{align}
  \item The remaining terms in \eqref{eqn: equation for inner interface new notation},
\eqref{eqn: equation for outer interface new notation},
\eqref{eqn: equation for derivative of jump of potential final form}
and \eqref{eqn: equation for derivative of potential on outer interface} and the error made above are considered to be smaller or more regular, which will be omitted at this moment.
\end{itemize}
Putting these facts together, the linearized system 
can be written as
\begin{align}
\pa_t f+c_* = &\;-\f{1}{2r}\CH ([\va]'+\CS\phi'),
\label{eqn: linearized equation for inner interface}\\
\pa_t F+\f{c_*r}{R} = &\;-\f{1}{2R}\CH (\phi'+\CS[\va]').
\label{eqn: linearized equation for outer interface}\\
[\va]'
= &\; 2Ac_*f'+A\CS\phi',
\label{eqn: linearized equation for derivative of jump of potential}\\
\phi' = &\;-\f{2c_*r}{R}F'+\CS [\va]'.
\label{eqn: linearized equation for derivative of potential on outer interface}
\end{align}
See Section \ref{sec: bounds for boundary potentials} and Section \ref{sec: local well-posedness} for the complete equations.

Combining \eqref{eqn: linearized equation for derivative of jump of potential} and  \eqref{eqn: linearized equation for inner interface}, we obtain
\beq
\pa_t f+c_* =  -\f{Ac_*}{r}(-\D)^{1/2}f-\f{1+A}{2r}\CH \CS\phi'.
\label{eqn: simple linearized f equation}
\eeq
\eqref{eqn: simple linearized f equation} is a fractional heat equation only when $Ac_*>0$.
Note that the last term in \eqref{eqn: simple linearized f equation} and all those omitted ones are supposed to be small or regular source terms.
Since $c_*<0$, it is natural to believe that the motion of $\g$ can be well-posed only when $A<0$, i.e, $\mu<\nu$.


Similarly, by combining \eqref{eqn: linearized equation for outer interface} with \eqref{eqn: linearized equation for derivative of potential on outer interface}, 
\beq
\pa_t F+\f{c_*r}{R} = \f{c_*r}{R^2}(-\D)^{1/2}F-\f{1}{R}\CH \CS[\va]'.
\label{eqn: simple linearized F big equation}
\eeq
Note that it shows the smoothing of the outer interface not to depend on $A$, but only on the fact that $\f{c_*r^2}{R}<0$.

\begin{rmk}
\label{rmk: localized analysis}
The above formal derivation may be localized as long as the interfaces are locally graphs. By doing so we may be able to show that  the local parabolicity condition for the motion of $\g$ is $(\mu-\nu) \f{\pa p}{\pa \s_\Om}|_\g > 0$, while it is $\f{\pa p}{\pa \s_{\tilde{\Om}}}|_{\tilde{\g}} < 0$ for the motion of $\tilde{\g}$.
The former condition implies that when the less mobile cells are locally pushing the other one, we expect well-posedness in the motion of that local segment of $\g$.
This is in the same spirit as the Saffman-Taylor condition \cite{SaffmanTaylor} (see also the condition for well-posedness in \cite{siegel2004global}), and it is formulated in a more general setting in \cite{matioc2018well}.
The parabolicity condition $\f{\pa p}{\pa \s_{\tilde{\Om}}}|_{\tilde{\g}} < 0$ indicates that $\tilde{\g}$ may stay regular when it is pushed towards the vacuum, but otherwise it may lose regularity.
This fact echoes with many well-posedness and ill-posedness results on a variety of free boundary problems arising in, for instance, one-phase Hele-Shaw problems \cite{elliott1981variational,dibenedetto1984ill,howison1992complex,escher1997classical,kim2003uniqueness,cheng2012global} and porous medium equations \cite{caffarelli1980regularity,caffarelli1987lipschitz,caffarelli1990c1,vazquez2007porous,kim2018porous}.

In our problem, under the assumption of the almost radial symmetry, the parabolicity condition $(\mu-\nu)c_*>0$ derived for \eqref{eqn: simple linearized f equation} is an approximation of $(\mu-\nu) \f{\pa p}{\pa \s_\Om}|_\g > 0$, while the condition $\f{c_*r^2}{R}<0$ corresponding to \eqref{eqn: simple linearized F big equation} is an approximation of $\f{\pa p}{\pa \s_{\tilde{\Om}}}|_{\tilde{\g}} < 0$.
\end{rmk}

\begin{rmk}\label{rmk: bad case due to geometry}

\begin{figure}
\centering
\includegraphics[width=0.3\textwidth]{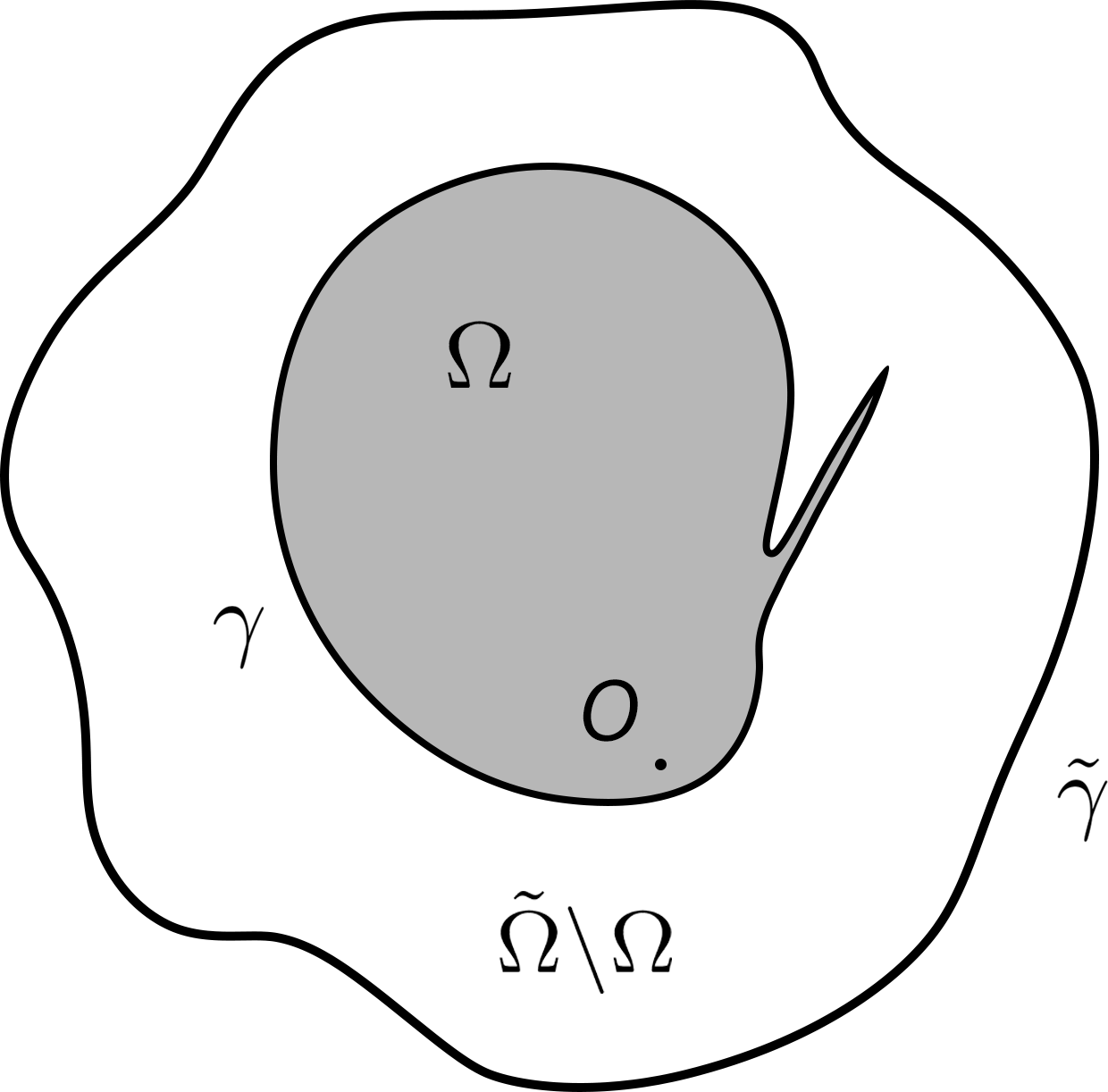}
\caption{A possible example exhibiting ill-posedness of motion of $\g$ when $\mu<\nu$.
Here $\Om$ consists of a big chuck and a thin branch; the latter is expected to move towards right.
Along a part of $\g$, more mobile normal cells are pushing less mobile tumor cells, i.e., $(\mu-\nu) \f{\pa p}{\pa \s_\Om}|_\g < 0$, making the local evolution of $\g$ ill-posed.
Note that both $\Om$ and $\tilde{\Om}$ are star-shaped with respect to the origin, denoted by $O$, so $\g$ and $\tilde{\g}$ are still graphs of functions of $\th$ in the polar coordinate at this moment.}
\label{Figure: tumor geometry bad case}
\end{figure}
From the above discussion, we can tell that $\mu<\nu$ is not sufficient for the parabolicity of the motion of $\g$, since the domain geometry determines how $\g$ moves in a nontrivial way.
Even if both $\Om$ and $\tilde{\Om}$ are assumed to be star-shaped with respect to the same point, which means $\g$ and $\tilde{\g}$ can be realized as graphs of functions of $\th$ in the polar coordinate, we can still manufacture such domain so that the parabolicity fails along some portion of $\g$.
A possible example is shown in Figure \ref{Figure: tumor geometry bad case}, where both $\Om$ and $\tilde{\Om}$ are star-shaped with respect to the origin, denoted by $O$ in the figure.
The tumor domain $\Om$ consists of a big chunk and a thin branch, where the branch is so thin that it does not significantly affect $p$.
Then it is conceivable that the thin branch will be pushed towards right under the expansion of the big chunk.
So along the part of $\g$ where the thin branch faces the main body of $\Om$, the more mobile normal cells are pushing the less mobile tumor cells (since $\mu<\nu$), which potentially gives rise to ill-posedness of the motion of $\g$ locally.

Given this, in order to guarantee well-posedness of the motion of $\g$, it is then reasonable to assume $\g$ and $\tilde{\g}$ are close to concentric circles, in which case the tumor cells should be always pushing the normal ones.
\end{rmk}

The parabolicity of \eqref{eqn: simple linearized f equation} and \eqref{eqn: simple linearized F big equation}
is sufficient to prove existence of local solutions in Section \ref{sec: local well-posedness}, and then uniqueness in Section \ref{sec: uniqueness}.
The proof of the local existence uses two layers of fixed-point arguments.
We sketch it as follows.
\begin{enumerate}
  \item Fix a pair of interface dynamics $f$ and $F$.
  \item First we need to solve for $[\va]'$ and $\phi'$ associated with the domain defined by $f$ and $F$.
      To do that, in Section \ref{sec: bounds for boundary potentials}, we apply a fixed-point argument to static equations \eqref{eqn: introducing remainder in va equation} and \eqref{eqn: introducing remainder in phi equation} (or equivalently, \eqref{eqn: equation for derivative of jump of potential final form} and \eqref{eqn: equation for derivative of potential on outer interface}) with the variable $([\va]',\phi')$.
      In this argument, we need estimates for the remainder terms that are omitted in \eqref{eqn: linearized equation for derivative of jump of potential} and \eqref{eqn: linearized equation for derivative of potential on outer interface}, which turn out to be small.
  \item Once $[\va]'$ and $\phi'$ are well-defined and their estimates are derived, we use them to bound $\CH \CS\phi'$ and $\CH \CS[\va]'$ in \eqref{eqn: simple linearized f equation} and \eqref{eqn: simple linearized F big equation} as well as all the remainder terms omitted there (see \eqref{eqn: backbone equation for h} and \eqref{eqn: backbone equation for H big} for the complete equations).
      They altogether will be put as the source terms in some fractional heat equations similar to \eqref{eqn: simple linearized f equation} and \eqref{eqn: simple linearized F big equation} in order to construct a new pair of interface dynamics, $\tilde{f}$ and $\tilde{F}$.
      See \eqref{eqn: equation for h dag}-\eqref{eqn: decomposition of solution}.
      We then show in Section \ref{sec: local well-posedness} that the map $(f,F)\mapsto (\tilde{f},\tilde{F})$ has a fixed-point, which is a local solution.
  \item In this process, bounds for all the remainder terms will rely on estimates derived in Sections \ref{sec: estimate for pressure in circular geometry}-\ref{sec: estimates for interaction operators}.
      See Section \ref{sec: organization} for what are exactly covered in them.
\end{enumerate}
The proof of the uniqueness boils down to showing that $[\va]'$, $\phi'$ and all the remainder terms above depend in a Lipschitz manner on the interface configurations. Indeed, what we prove is a stability-type estimate for $f$ and $F$ based on that of the fractional heat equation.
We carry out this idea in Section \ref{sec: uniqueness} with a twist in order to slightly reduce complexity of the proof.

\subsection{Organization of the paper}
\label{sec: organization}
In Section \ref{sec: estimate for pressure in circular geometry}, we first study the pressure $p$ in an almost radially symmetric geometry by elliptic regularity theory.
In Section \ref{sec: gradient estimate of growth potential}, we derive estimates concerning gradients of the growth potential $\G*g$ (c.f., \eqref{eqn: representation of phi} and \eqref{eqn: def of source term g}) restricted to inner and outer interfaces.
Section \ref{sec: estimates for singular integral operators} is devoted to proving estimates for singular integral operators $\CK_\g$ and $\CK_{\tilde{\g}}$, while Section \ref{sec: estimates for interaction operators} establishes estimates for integral operators $\CK_{\g,\tilde{\g}}$ and $\CK_{\tilde{\g},\g}$.
Section \ref{sec: bounds for boundary potentials} shows well-definedness of $[\va]$ and $\phi$ as well as their estimates.
Finally, we prove existence of the local solution in Section \ref{sec: local well-posedness}, and uniqueness in Section \ref{sec: uniqueness}.
Some auxiliary estimates and non-essential lengthy proofs are collected in Appendices.

\section{Pressure in an Almost Radially Symmetric Geometry}
\label{sec: estimate for pressure in circular geometry}

In this section, we focus on the elliptic equation \eqref{eqn: equation for p simplified case} and \eqref{eqn: p boundary condition} for the pressure $p$ in $\tilde{\Om}$.  The goal is to quantify the fact that if $\Om$ and $\tilde{\Om}$ are close to two concentric discs then $p$ should be almost radially symmetric.

\subsection{Geometric preliminaries}
First we introduce a diffeomorphism to transform the physical domain into a reference domain that is perfectly radially symmetric.
Given $\d$ satisfying \eqref{eqn: condition on delta}, define a cut-off function $\eta_\d\in C_0^\infty([0,+\infty))$, such that $\eta_\d\in [0,1]$ is only supported on $[1-2\d,1+2\d]$, $\eta_\d=1$ on $[1-\d,1+\d]$, and for some universal constant $C$,
\beq
\d|\eta_\d'|+\d^2 |\eta_\d''|\leq C.
\eeq

Let $X=(\r\cos\om,\r\sin\om)\in \BR^2$ be a point in the reference coordinate, with $\r = |X|$.
Define
\beq
x(X) = \left[1+h(\om)\eta_\d\left(\f{\r}{r}\right)+H(\om)\eta_\d\left(\f{\r}{R}\right)\right]X=: \zeta(X)X.
\label{eqn: change of variables to the reference coordinate}
\eeq
where $h$ and $H$ are given in \eqref{eqn: def of h and H}.
In other words, $x$ deforms the reference domain in the radial direction only in annuli around $\pa B_r$ and $\pa B_R$.
It depends only on $\g$ in the annulus $B_{r(1+2\d)}\backslash B_{r(1-2\d)}$, and only on $\tilde{\g}$ in $B_{R}\backslash B_{R(1-2\d)}$;
$x(X) = X$ elsewhere.
We may also write $\zeta(X)$ as $\zeta(\r,\om)$.
We know that $x(X)$ is a diffeomorphism from $\BR^2$ to itself provided that $\zeta(\r,\om)\r$ is strictly increasing in $\r$ for all $\om\in\BT$.
This is true if
oscillations of $\g$ and $\tilde{\g}$ in the radial direction are small with respect to the gap between them, i.e.,
\beq
\d^{-1}(\|h\|_{L^\infty(\BT)}+\|H\|_{L^\infty(\BT)})\ll 1.
\label{eqn: smallness of oscillation}
\eeq
Under this assumption, it is clear that $x(X)$ maps $B_r$, $B_R$, $\pa B_r$ and $\pa B_R$ to $\Om$, $\tilde{\Om}$, $\g$ and $\tilde{\g}$, respectively.
We denote its inverse to be $X(x)$.

\subsection{Pressure in the reference coordinate}
Define
\beq
\tilde{p}(X) := p(x(X)).
\label{eqn: pressure in the reference coordinate}
\eeq
By \eqref{eqn: equation for p simplified case}, $\tilde{p}$ in the $X$-coordinate satisfies
\beq
-\f{\pa X_k}{\pa x_i}\na_{X_k}\left(a\f{\pa X_j}{\pa x_i}\na_{X_j} \tilde{p}\right) = G(\tilde{p})\chi_{B_r}\quad \mbox{in }B_R,\quad \tilde{p}|_{\pa B_R} = 0.
\label{eqn: equation for p tilde}
\eeq
Here the summation convention applies to repeated indices.
We also used the notations
\beq
a (X)= \mu \chi_{B_r}(X)+\nu \chi_{B_R\backslash B_r}(X)
\label{eqn: def of a}
\eeq
and
\beq
\f{\pa X_k}{\pa x_i} = \left(\f{\pa X}{\pa x}\right)_{ki}=\left[\left(\f{\pa x}{\pa X}\right)^{-1}\right]_{ki},
\eeq
which are both functions in $X$.
We may write $a = a(\r)$.

In order to show $\tilde{p}$ is almost radially symmetric, we shall compare it with a radially symmetric solution $p_*$ defined as follows.
\begin{lem}
\label{lem: estimate for p_*}
Let $p_*$ be the $H^1$-weak solution of
\beq
-\na_{X_i}\left(a\na_{X_i} p_*\right) = G(p_*)\chi_{B_r}\quad \mbox{in }B_R,\quad p_*|_{\pa B_R} = 0.
\label{eqn: equation for p star}
\eeq
Then
\begin{enumerate}
\item $p_*$ is radially symmetric, i.e., $p_* = p_*(\r)$, and $p_*\in W^{1,\infty}(B_R)$.
\item $p_*\in [0,p_M]$ and $p_*$ is decreasing in $\r$.
\item In $\overline{B}_R\backslash B_r$,
\beq
p_*(\r) = -\ln\left(\f{\r}{R}\right)\cdot\f{1}{2\pi\nu}\int_{B_r}G(p_*)\,dx.
\label{eqn: representation of p_* on the annulus}
\eeq

\item For $\r\in [0,r]$, 
\beq
\int_{B_\r}G(p_*)\,dx\leq C\r^2 \min\left\{ 1, \mu^{1/2} r^{-1}\right\},
\label{eqn: bound for the integral of G(p_*)}
\eeq
where $C$ only depends on $G$.

\item For $\r \in [0,r^-]$,
\beq
|\na p_*|(\r)\leq C\min\left\{\mu^{-1}\r, \mu^{-1/2}\right\}.
\label{eqn: bound on grad p_* for inner domain}
\eeq
For $\r\in[r^+,R]$,
\beq
|\na p_*|(\r)\leq C\r^{-1}\min\left\{\nu^{-1}r^2, \mu^{1/2}\nu^{-1}r \right\}.
\label{eqn: bound on grad p_* for outer domain}
\eeq
Here the constants $C$ only depend on $G$.
Note that $\na p_*$ has discontinuity across $\pa B_r$, so we use $|\na p_*|(r^{\pm})$ to distinguish the gradients taken from two sides of $\pa B_r$.
\end{enumerate}
\begin{proof}
The radial symmetry of $p_*$ can be justified by a symmetrization argument in the variational formulation of \eqref{eqn: equation for p star}.
$W^{1,\infty}$-regularity of $p_*$ follows from \cite{li2000gradient}.
The fact that $p_*\in [0,p_M]$ and monotonicity of $p_*$ follows from the maximum principle.
\eqref{eqn: representation of p_* on the annulus} is obvious since $p_*$ is harmonic in $\overline{B}_R\backslash B_r$.

The first bounds in \eqref{eqn: bound for the integral of G(p_*)}-\eqref{eqn: bound on grad p_* for outer domain} follow from the trivial fact $|G|\leq C$ and
\beq
|\na p_*|(\r) = |\pa_\r p_*(\r)| = \f{1}{2\pi a(\r) \r}\int_{B_\r\cap B_r} G(p_*)\,dx. 
\label{eqn: length of grad p star}
\eeq

To show the second bounds in \eqref{eqn: bound on grad p_* for inner domain} and \eqref{eqn: bound on grad p_* for outer domain}, define $\mathcal{G}$ to be the anti-derivative of $G$ with $\mathcal{G}(0) = 0$.
Obviously, $\mathcal{G}\geq 0$ on $[0,p_M]$, attaining its maximum at $p_M$. 
Since in the polar coordinate, $p_*$ solves $-\mu\pa_\r(\r \pa_\r p_*) = \r G(p_*)$ on $[0,r)$, by multiplying with $\r^{-1}\pa_\r p_*$,
\beq
\mu \r^{-1}|\pa_\r p_*|^2+\mu \pa_\r p_*\pa^2_\r p_* + G(p_*)\pa_\r p_* = 0.
\eeq
Taking integral in $\r$ from $0$ to $\tau\in [0,r^-]$ yields
\beq
\mu \int_0^\tau\r^{-1}|\pa_\r p_*|^2\,d\r+\f{\mu}{2} |\pa_\r p_*(\tau)|^2 + \mathcal{G}(p_*(\tau)) = \mathcal{G}(p_*(0)).
\eeq
Hence,
\beq
\|\pa_\r p_*\|_{L^\infty(B_r)}^2\leq 2\mu^{-1}\mathcal{G}(p_M).
\eeq
By the nature of discontinuity of $\pa_\r p_*$ across $\pa B_r$, $a(\r) \pa_\r p_*$ is continuous at $\r = r$.
Hence, for $\r\in [r^+,R]$, $\pa_\r p_*(\r) = \f{\mu r}{\nu \rho}\pa_\r p_*|_{\r = r_-}$.
This gives the second bound in \eqref{eqn: bound on grad p_* for outer domain}. 
%
Finally, the second bound in \eqref{eqn: bound for the integral of G(p_*)} follow from \eqref{eqn: bound on grad p_* for outer domain}, \eqref{eqn: length of grad p star} and the fact that $G(p_*(\r))$ is increasing in $\r$.
\end{proof}
\end{lem}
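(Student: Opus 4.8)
The plan is to use radial symmetry to collapse the PDE for $p_*$ into an ODE in $\r$, and then read off every claimed property from that one-dimensional problem together with classical elliptic estimates. First I would settle radial symmetry: since $G$ is decreasing, $-G(\cdot)$ is nondecreasing, so the operator $p\mapsto -\di(a\na p)-G(p)\chi_{B_r}$ is monotone, and the $H^1$-weak solution of \eqref{eqn: equation for p star} is therefore unique (existence being standard by the direct method). As $a=a(\r)$ and the domains $B_r\subset B_R$ are rotation-invariant, $p_*\circ\mathcal{R}$ solves the same problem for every rotation $\mathcal{R}$ of $\BR^2$, and uniqueness forces $p_*=p_*(\r)$; alternatively one symmetrizes the energy functional directly. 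Equation \eqref{eqn: equation for p star} then becomes $-\f1\r\pa_\r(a(\r)\,\r\,\pa_\r p_*)=G(p_*)\chi_{[0,r)}$ with $\pa_\r p_*(0)=0$, $p_*(R)=0$. The $W^{1,\infty}$-bound is exactly the gradient estimate for divergence-form equations with piecewise-regular coefficients across $\pa B_r$ of \cite{li2000gradient}; in the radial case it also follows directly, since $a\,\r\,\pa_\r p_*$ is absolutely continuous and continuous at $\r=r$.

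Next I would handle the sign, monotonicity, and annulus formula by the maximum principle and an explicit harmonic computation. Testing against $(p_*-p_M)_+$ and using $G(p_M)=0$ with $G$ decreasing gives $p_*\le p_M$; testing against $(p_*)_-$ with $G\ge0$ on $[0,p_M]$ gives $p_*\ge0$. From the ODE, $\pa_\r(a\r\pa_\r p_*)=-\r G(p_*)\le 0$, so $a\r\pa_\r p_*$ is nonincreasing and vanishes at $\r=0$, hence $\pa_\r p_*\le 0$ everywhere. On $\overline B_R\setminus B_r$ the source vanishes, so $p_*$ is harmonic and radial, $p_*(\r)=c_1\ln\r+c_2$; integrating the equation over $B_\r$ yields the flux identity $\nu\,\r\,\pa_\r p_*\equiv -\f1{2\pi}\int_{B_r}G(p_*)$ on the annulus, and matching $p_*(R)=0$ gives \eqref{eqn: representation of p_* on the annulus}; the same computation over $B_\r\cap B_r$ gives \eqref{eqn: length of grad p star}.

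For the quantitative bounds: the first inequality in \eqref{eqn: bound for the integral of G(p_*)} and the first bounds in \eqref{eqn: bound on grad p_* for inner domain}--\eqref{eqn: bound on grad p_* for outer domain} are immediate from $0\le G\le C$ on $[0,p_M]$, $|B_\r|=\pi\r^2$ and \eqref{eqn: length of grad p star}. For the sharper bounds I would run a Rellich-type energy identity: multiply $-\mu\pa_\r(\r\pa_\r p_*)=\r G(p_*)$ on $(0,r)$ by $\r^{-1}\pa_\r p_*$, integrate over $[0,\tau]$ with $\tau\le r^-$, and introduce the antiderivative $\mathcal{G}$ of $G$ with $\mathcal{G}(0)=0$, to get
\beqo
\mu\int_0^\tau \r^{-1}|\pa_\r p_*|^2\,d\r+\f{\mu}{2}|\pa_\r p_*(\tau)|^2+\mathcal{G}(p_*(\tau))=\mathcal{G}(p_*(0))\le \mathcal{G}(p_M),
\eeqo
hence $\|\pa_\r p_*\|_{L^\infty(B_r)}\le (2\mu^{-1}\mathcal{G}(p_M))^{1/2}$, which is the second bound in \eqref{eqn: bound on grad p_* for inner domain}. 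Continuity of $a\pa_\r p_*$ across $\pa B_r$ propagates this to $[r^+,R]$ with the extra factor $\mu r/(\nu\r)$, giving the second bound in \eqref{eqn: bound on grad p_* for outer domain}. Finally, evaluating \eqref{eqn: length of grad p star} at $\r=r^-$ reads $\f1{2\pi\mu r}\int_{B_r}G(p_*)\le C\mu^{-1/2}$, and since $p_*$ and $G$ are both decreasing, $\r\mapsto G(p_*(\r))$ is increasing, so $\int_{B_\r}G(p_*)\le C\r^2\mu^{1/2}r^{-1}$ as well, the second bound in \eqref{eqn: bound for the integral of G(p_*)}.

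The step I expect to require the most care is the Rellich identity: a priori $p_*$ is only $W^{1,\infty}$, so $\pa_\r^2 p_*$ need not exist at $\r=r$. However, on $(0,r)$ the equation is uniformly elliptic with $C^1$ right-hand side $G(p_*)$ (as $p_*\in C^{0,1}$, $G\in C^1$), so interior regularity gives $p_*\in C^2(0,r)$ and the identity is legitimate on any $[a,b]\subset(0,r)$; one then lets $a\downarrow 0$ using $\pa_\r p_*(\r)=O(\r)$ near the origin, which is the first bound in \eqref{eqn: bound on grad p_* for inner domain}. The only other point needing a clean write-up is the symmetrization/uniqueness argument of the first step, but monotonicity of the operator makes it routine.
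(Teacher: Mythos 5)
Your proof is correct and follows essentially the same route as the paper: symmetrization for radial symmetry, the gradient estimate of \cite{li2000gradient} for $W^{1,\infty}$, the maximum principle for the pointwise bounds and monotonicity, the flux identity \eqref{eqn: length of grad p star} for the crude bounds, the Rellich-type energy identity with the antiderivative $\mathcal{G}$ for the refined gradient bound on $B_r$, flux continuity to propagate it to the annulus, and monotonicity of $\rho\mapsto G(p_*(\rho))$ for the refined integral bound. Your remark on justifying the Rellich identity via interior $C^2$ regularity and a limit $a\downarrow 0$ is a sensible addition that the paper leaves implicit.
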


In order to derive a bound for $(\tilde{p}-p_*)$, we need estimates concerning $x(X)$ and its inverse. 
Denote
\begin{align}
m_0 :=&\; \d^{-1}\|h\|_{L^\infty(\BT)}+\|h'\|_{L^\infty(\BT)},\label{eqn: bound assumption on the Lipschitz norm of h}\\
M_0 :=&\; \d^{-1}\|H\|_{L^\infty(\BT)}+\|H'\|_{L^\infty(\BT)}.
\label{eqn: bound assumption on the Lipschitz norm of H}
\end{align}

\begin{lem}\label{lem: L inf estimate of jacobi}
Suppose $h,H\in W^{1,\infty}(\BT)$ satisfy that $m_0+M_0\ll 1$.
%
%
Then
\begin{align}
\left\|\f{\pa X}{\pa x}-Id\right\|_{L^\infty(B_{r(1+2\d)}\backslash B_{r(1-2\d)})}\leq &\; Cm_0,
\label{eqn: L inf bound for small deformation in inner disc}\\
\left\|\f{\pa X}{\pa x}-Id\right\|_{L^\infty(B_R\backslash B_{R(1-2\d)})}\leq &\;
CM_0,
\label{eqn: L inf bound for small deformation}
\end{align}
and
\begin{align}
\left\|\na_{X_k}\f{\pa X_k}{\pa x_i}\right\|_{L^\infty(B_{r(1+2\d)}\backslash B_{r(1-2\d)})}\leq &\;C(\d r)^{-1}m_0,
\label{eqn: L inf for divergence of jacobi in inner disc}\\
\left\|\na_{X_k}\f{\pa X_k}{\pa x_i}\right\|_{L^\infty(B_R\backslash B_{R(1-2\d)})}\leq &\;C(\d R)^{-1}M_0.
\label{eqn: L inf for divergence of jacobi}
\end{align}
The constants $C$ are all universal.
\begin{proof}
The proof is a straightforward calculation.
By \eqref{eqn: change of variables to the reference coordinate}, 
\beq
\f{\pa x}{\pa X} = \zeta \cdot Id + X\otimes \na \zeta.
\eeq
Its inverse is given by
\beq
\begin{split}
\f{\pa X}{\pa x} = &\;(\zeta^2 + \zeta \r\pa_\r \zeta)^{-1}((\zeta+\r\pa_\r \zeta) Id - X\otimes \na \zeta)\\
= &\; \zeta^{-1} Id - (\zeta^2 + \zeta \r\pa_\r \zeta)^{-1} X\otimes \na \zeta.
\end{split}
\label{eqn: inverse of jacobi matrix}
\eeq
On the other hand, since
$\na_{X_k}(\f{\pa X_k}{\pa x_i}\cdot \f{\pa x_i}{\pa X_j}) = \na_{X_k} \d_{kj} = 0$,
we deduce that
\beq
\begin{split}
&\;\na_{X_k}\left(\f{\pa X_k}{\pa x_l}\right)\\
=&\;-\f{\pa X_j}{\pa x_l}\f{\pa X_k}{\pa x_i}\cdot \na_{X_k}\left(\f{\pa x_i}{\pa X_j}\right)\\
=&\;-\f{\pa X_j}{\pa x_l}\cdot
(\zeta^2 + \zeta \r\pa_\r \zeta)^{-1}((\zeta+\r\pa_\r\zeta) \d_{ki}- X_k(\na \zeta)_i)\cdot  \na_{X_k}(\zeta\d_{ij} + X_i \nabla_{X_j} \zeta)\\
=&\;-\f{\pa X_j}{\pa x_l}
(\zeta^2 + \zeta \r \pa_\r \zeta)^{-1}\na_{X_j}(\zeta^2 + \zeta \r \pa_\r\zeta).
\end{split}
\label{eqn: div of jacobi matrix}
\eeq

By \eqref{eqn: change of variables to the reference coordinate}, 
\begin{align}
\zeta - 1 = &\;h\eta_\d\left(\f{\r}{r}\right)+ H\eta_\d\left(\f{\r}{R}\right),\label{eqn: estimate of small deformation}\\
\r\pa_\r \zeta = &\;h(\om)\cdot \f{\r}{r}\eta_\d'\left(\f{\r}{r}\right) +H(\om)\cdot \f{\r}{R}\eta_\d'\left(\f{\r}{R}\right).
\label{eqn: calculation of rho partial rho zeta}
\end{align}
Thanks to the smallness of $m_0$ and $M_0$, 
\beq
|\z-1|+|(\zeta^2 + \zeta \r \pa_\r \zeta)-1|\ll 1.
\label{eqn: denominator of jacobi is close to 1}
\eeq
Hence, by the last line in \eqref{eqn: inverse of jacobi matrix},
\beq
\left|\f{\pa X}{\pa x}-Id\right| \leq C(|1-\z|+\r|\na \z|), 
\label{eqn: bound for deformation tensor}
\eeq
We calculate
\beq
\begin{split}
\na \z
= &\;\left[h(\om)\cdot \f{1}{r}\eta_\d'\left(\f{\r}{r}\right) +H(\om)\cdot \f{1}{R}\eta_\d'\left(\f{\r}{R}\right) \right] e_r\\
&\;+\left[h'(\om)\cdot \eta_\d\left(\f{\r}{r}\right)+H'(\om)\cdot \eta_\d\left(\f{\r}{R}\right)\right]\r^{-1} e_\theta,
\end{split}
\label{eqn: grad zeta}
\eeq
where $e_r$ and $e_\th$ are defined in \eqref{eqn: def of e_r e_th}.
Then \eqref{eqn: L inf bound for small deformation in inner disc} and \eqref{eqn: L inf bound for small deformation} follow easily. 

Similarly, \eqref{eqn: div of jacobi matrix} implies that
\beq
\left|\na_{X_k}\left(\f{\pa X_k}{\pa x_l}\right)\right|
\leq C|\na (\z^2+\z\r\pa_\r \z)|
\leq C(|\na\z|+|\na(\r\pa_\r\z)|).
\label{eqn: pointwise bound for divergence of jacobi}
\eeq
Then \eqref{eqn: L inf for divergence of jacobi in inner disc} and \eqref{eqn: L inf for divergence of jacobi} follow from \eqref{eqn: grad zeta} and the calculation
\beq
\begin{split}
\na (\r\pa_\r \zeta) = &\;\left[h(\om)\cdot \f{\r}{r^2}\eta_\d''\left(\f{\r}{r}\right) +H(\om)\cdot \f{\r}{R^2}\eta_\d''\left(\f{\r}{R}\right)\right]\cdot e_r\\
&\;+\left[h(\om)\cdot \f{1}{r}\eta_\d'\left(\f{\r}{r}\right) +H(\om)\cdot \f{1}{R}\eta_\d'\left(\f{\r}{R}\right)\right]\cdot e_r\\
&\;+\left[h'(\om)\cdot \f{\r}{r}\eta_\d'\left(\f{\r}{r}\right) +H'(\om)\cdot \f{\r}{R}\eta_\d'\left(\f{\r}{R}\right)\right]\cdot \r^{-1}e_\th.
\end{split}
\label{eqn: grad of radial derivative of zeta}
\eeq
\end{proof}
\end{lem}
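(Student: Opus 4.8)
The plan is to reduce everything to the two explicit formulas already recorded in the statement, namely the expression \eqref{eqn: inverse of jacobi matrix} for $\partial X/\partial x$ and the expression \eqref{eqn: div of jacobi matrix} for $\nabla_{X_k}(\partial X_k/\partial x_i)$, and then feed in the elementary bounds coming from \eqref{eqn: change of variables to the reference coordinate} and the choice of cut-off $\eta_\d$. First I would derive \eqref{eqn: inverse of jacobi matrix}: since $x(X)=\zeta(X)X$, the chain rule gives $\partial x/\partial X = \zeta\,Id + X\otimes\nabla\zeta$, which is a rank-one perturbation of a multiple of the identity; inverting by the Sherman–Morrison formula, and using $X\cdot\nabla\zeta = \r\,\partial_\r\zeta$, yields the stated two-line expression. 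Similarly, differentiating the identity $\tfrac{\partial X_k}{\partial x_i}\tfrac{\partial x_i}{\partial X_j}=\delta_{kj}$ in $X_k$ and contracting produces \eqref{eqn: div of jacobi matrix}; here one uses that $\nabla_{X_k}(\partial x_i/\partial X_j)$ contracted with the structured inverse collapses (because the tangential part $X_k(\nabla\zeta)_i$ contracts against $(\nabla\zeta)_i$-type terms to something that recombines into a gradient of $\zeta^2+\zeta\r\partial_\r\zeta$).

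Next I would bound the scalar ingredients. From \eqref{eqn: estimate of small deformation}--\eqref{eqn: calculation of rho partial rho zeta}, the functions $\zeta-1$ and $\r\partial_\r\zeta$ are sums of terms of the form $h(\om)\eta_\d(\r/r)$, $H(\om)\eta_\d(\r/R)$ and $\tfrac{\r}{r}h(\om)\eta_\d'(\r/r)$, $\tfrac{\r}{R}H(\om)\eta_\d'(\r/R)$. Using $\d|\eta_\d'|\le C$, $\|h\|_{L^\infty}\le \d m_0$, $\|H\|_{L^\infty}\le \d M_0$, and the fact that on the support of $\eta_\d(\cdot/r)$ one has $\r\sim r$ (and likewise $\r\sim R$ on the support of $\eta_\d(\cdot/R)$), all of these are $O(m_0+M_0)$. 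Under the smallness hypothesis $m_0+M_0\ll 1$ this gives \eqref{eqn: denominator of jacobi is close to 1}, so the denominators $\zeta$ and $\zeta^2+\zeta\r\partial_\r\zeta$ appearing in \eqref{eqn: inverse of jacobi matrix} and \eqref{eqn: div of jacobi matrix} are bounded below away from zero, and in particular the crude bounds \eqref{eqn: bound for deformation tensor} and \eqref{eqn: pointwise bound for divergence of jacobi} hold.

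Then the four estimates follow by plugging the explicit gradients \eqref{eqn: grad zeta} and \eqref{eqn: grad of radial derivative of zeta} into \eqref{eqn: bound for deformation tensor} and \eqref{eqn: pointwise bound for divergence of jacobi}. For \eqref{eqn: L inf bound for small deformation in inner disc}: from \eqref{eqn: grad zeta}, the radial part of $\r\nabla\zeta$ is controlled by $\tfrac{\r}{r}|h||\eta_\d'(\r/r)|\le C\,\d^{-1}\|h\|_{L^\infty}\le Cm_0$, the angular part by $|h'|\,\eta_\d(\r/r)\le \|h'\|_{L^\infty}\le m_0$, and $|\zeta-1|\le\|h\|_{L^\infty}\le \d m_0\le Cm_0$; summing gives $Cm_0$, and restricting to the annulus $B_{r(1+2\d)}\setminus B_{r(1-2\d)}$ kills the $H$-contributions since $\eta_\d(\cdot/R)$ vanishes there (using $\d\le (R-r)/(10R)$ so the two annuli are disjoint). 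The outer estimate \eqref{eqn: L inf bound for small deformation} is symmetric. For the divergence estimates, \eqref{eqn: grad of radial derivative of zeta} contributes an extra $\d^{-2}$-type term from $\eta_\d''$: the worst term is $\tfrac{\r}{r^2}|h||\eta_\d''(\r/r)|\le C r^{-1}\d^{-2}\|h\|_{L^\infty}\le C(\d r)^{-1}m_0$, and the remaining terms in \eqref{eqn: grad zeta} and \eqref{eqn: grad of radial derivative of zeta} are no larger, giving \eqref{eqn: L inf for divergence of jacobi in inner disc}; again \eqref{eqn: L inf for divergence of jacobi} is the symmetric statement on the outer annulus. The only slightly delicate point — and the place I'd be most careful — is the algebraic collapse in \eqref{eqn: div of jacobi matrix}: one must verify that the cross terms genuinely reassemble into $-\tfrac{\partial X_j}{\partial x_l}(\zeta^2+\zeta\r\partial_\r\zeta)^{-1}\nabla_{X_j}(\zeta^2+\zeta\r\partial_\r\zeta)$ rather than leaving a residual tangential piece; this is a short but bookkeeping-heavy index computation relying on $X_k\nabla_{X_k}=\r\partial_\r$ and on $\zeta$, $\r\partial_\r\zeta$ depending on $X$ only through $\r$ and $\om$. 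Everything else is a direct substitution of the bounds above.
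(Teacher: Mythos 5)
Your proposal is correct and follows essentially the same route as the paper's proof: invert the rank-one perturbation $\zeta\,Id + X\otimes\nabla\zeta$, differentiate the inverse-Jacobian identity to get the divergence formula, and then bound $\zeta-1$, $\rho\partial_\rho\zeta$, $\nabla\zeta$, and $\nabla(\rho\partial_\rho\zeta)$ using $\delta|\eta_\delta'|+\delta^2|\eta_\delta''|\le C$ together with $\delta^{-1}\|h\|_{L^\infty}+\|h'\|_{L^\infty}\le m_0$ (and the analogous bound for $H$). Your explicit remark that the two annuli $B_{r(1+2\delta)}\setminus B_{r(1-2\delta)}$ and $B_R\setminus B_{R(1-2\delta)}$ are disjoint under \eqref{eqn: condition on delta}, which separates the $m_0$- and $M_0$-contributions, and your flagged concern about the algebraic collapse in \eqref{eqn: div of jacobi matrix} (which does check out, using $X\cdot\nabla\zeta=\rho\partial_\rho\zeta$ and symmetry of the Hessian) are both sound; no gaps.
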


By \eqref{eqn: equation for p tilde} and \eqref{eqn: equation for p star}, $(\tilde{p}-p_*)$ solves
\beq
\begin{split}
-\na_{X_k}&\left(a\f{\pa X_k}{\pa x_i}\f{\pa X_j}{\pa x_i}\na_{X_j} (\tilde{p}-p_*)\right) +c(\tilde{p}-p_*)\\
&\;=\na_{X_k}\left[a\left(\f{\pa X_k}{\pa x_i}\f{\pa X_j}{\pa x_i}-\d_{kj}\right)\na_{X_j}p_*\right]-\na_{X_k}\f{\pa X_k}{\pa x_i}\cdot a\f{\pa X_j}{\pa x_i}\na_{X_j}\tilde{p},
\end{split}
\label{eqn: equation for the difference between p tilde p star}
\eeq
in the reference coordinate with boundary condition $(\tilde{p}-p_*)|_{\pa B_R} = 0$.
Here
\beq
c(X) := -\f{G(\tilde{p})-G(p_*)}{\tilde{p}-p_*}\chi_{B_r}\geq 0 
\eeq
due to the assumptions on $G$.
Then we can prove stability of the pressure with respect to the domain geometry around the radially symmetric case.

\begin{lem}\label{lem: estimates for difference between tilde p and p_*}
Under the assumptions of Lemma \ref{lem: L inf estimate of jacobi},
\beq
\|\na(\tilde{p}-p_*)\|_{L^2(B_R)}
\leq C(m_0+M_0) (\d R^2)^{1/2},
\eeq
where $C = C(\mu,\nu,G)$.
\begin{proof}
We take inner product of $(\tilde{p}-p_*)$ and \eqref{eqn: equation for the difference between p tilde p star} and integrate by parts,
\beq
\begin{split}
&\;\int_{B_R} a\left|\f{\pa X_j}{\pa x_i}\na_{X_j}(\tilde{p}-p_*)\right|^2\,dX
+\int_{B_r}c|\tilde{p}-p_*|^2\,dX\\
= &\;-\int_{B_R}\na_{X_k}(\tilde{p}-p_*)\cdot a\left(\f{\pa X_k}{\pa x_i}\f{\pa X_j}{\pa x_i}-\d_{kj}\right)\na_{X_j} p_*\,dX\\
&\;-\int_{B_R}(\tilde{p}-p_*)\na_{X_k}\f{\pa X_k}{\pa x_i}\cdot a\f{\pa X_j}{\pa x_i}[\na_{X_j}(\tilde{p}-p_*)+\na_{X_j}p_*]\,dX.
\end{split}
\eeq
By the definition of $a$ in \eqref{eqn: def of a}, the assumptions on $G$, Lemma \ref{lem: L inf estimate of jacobi} and H\"{older}'s inequality,
\beq
\begin{split}
&\;\|\na(\tilde{p}-p_*)\|_{L^2(B_R)}^2\\
\leq &\;C[m_0(\d r^2)^{1/2}+M_0(\d R^2)^{1/2}]\|\na(\tilde{p}-p_*)\|_{L^2(B_R)}
\|\na p_*\|_{L^\infty(B_R)}
\\
&\;+C(\d r)^{-1}m_0\cdot \|\tilde{p}-p_*\|_{L^2(B_{r(1+2\d)}\backslash B_{r(1-2\d)})}
\|\na(\tilde{p}-p_*)\|_{L^2(B_R)}\\
&\;+C(\d R)^{-1}M_0\cdot \|\tilde{p}-p_*\|_{L^2(B_R\backslash B_{R(1-2\d)})}
\|\na(\tilde{p}-p_*)\|_{L^2(B_R)}\\
&\;+C(\d r)^{-1}m_0\cdot \|\tilde{p}-p_*\|_{L^2(B_{r(1+2\d)}\backslash B_{r(1-2\d)})}\cdot ( \d r^2)^{1/2}
\|\na p_*\|_{L^\infty(B_R)}\\
&\;+C(\d R)^{-1}M_0\cdot \|\tilde{p}-p_*\|_{L^2(B_R\backslash B_{R(1-2\d)})}
\cdot (\d R^2)^{1/2}
\|\na p_*\|_{L^\infty(B_R)},
\end{split}
\label{eqn: H^1 estimate for the difference crude form}
\eeq
where $C = C(\mu,\nu,G)$.
We proceed in two different cases.
\setcounter{case}{0}
\begin{case}
If $ R/2\leq r< R$, by \eqref{eqn: condition on delta} and Poincar\'{e} inequality on thin domains,
\beq
\begin{split}
\|\tilde{p}-p_*\|_{L^2(B_R\backslash B_{r(1-2\d)})}\leq &\;C(R-r(1-2\d))\|\na (\tilde{p}-p_*)\|_{L^2(B_R)}\\
\leq &\;C(\d r)\|\na (\tilde{p}-p_*)\|_{L^2(B_R)}. 
\end{split}
\label{eqn: Poincare inequality on thin domains}
\eeq
%
Combining this with \eqref{eqn: H^1 estimate for the difference crude form} yields
\beq
\begin{split}
&\;\|\na(\tilde{p}-p_*)\|_{L^2(B_R)}^2\\
\leq &\;C(m_0+M_0)(\d R^2)^{1/2}\|\na(\tilde{p}-p_*)\|_{L^2(B_R)}\|\na p_*\|_{L^\infty(B_R)}\\
&\;+C(m_0+M_0) \|\na(\tilde{p}-p_*)\|_{L^2(B_R)}^2. 
\end{split}
\label{eqn: H^1 estimate for the difference}
\eeq
By Young's inequality, smallness of $m_0$ and $M_0$ and Lemma \ref{lem: estimate for p_*}, the desired estimate follows.
\end{case}
\begin{case}
If otherwise $r< R/2$, by \eqref{eqn: condition on delta}, $\d \geq C$ for some universal constant $C>0$.
We shall first derive a bound for $\|\tilde{p}-p_*\|_{L^\infty(B_R)}$.

Recall that $p$ solves \eqref{eqn: equation for p simplified case} and \eqref{eqn: p boundary condition}.
Taking inner product of \eqref{eqn: equation for p simplified case} and $p$, we find that 
\beq
\|\na p\|_{L^2(\tilde{\Om})}^2 \leq C\int_\Om G(p)p\,dx\leq C|\Om|\leq Cr^2,
\eeq
where $C = C(\mu,\nu,G)$.
Hence, by Lemma \ref{lem: L inf estimate of jacobi}, in the reference coordinate,
\beq
\left\|\f{\pa X_j}{\pa x_i}\na_{X_j}\tilde{p}\right\|_{L^2(B_R)}\leq Cr.
\label{eqn: bound for H^1 norm of p tilde}
\eeq

Now consider \eqref{eqn: equation for the difference between p tilde p star}. By boundedness of weak solutions \cite[Theorem 8.16]{gilbarg2015elliptic},
\beq
\begin{split}
&\;\|\tilde{p}-p_*\|_{L^\infty(B_R)}\\
\leq &\; C\left(R^{1/2}\left\|a\left(\f{\pa X_k}{\pa x_i}\f{\pa X_j}{\pa x_i}-\d_{kj}\right)\na_{X_j}p_*\right\|_{L^4(B_R)}+R\left\|\na_{X_k}\f{\pa X_k}{\pa x_i}\cdot a\f{\pa X_j}{\pa x_i}\na_{X_j}\tilde{p}\right\|_{L^2(B_R)}\right).
\end{split}
\eeq
Applying Lemma \ref{lem: estimate for p_*}, Lemma \ref{lem: L inf estimate of jacobi}, \eqref{eqn: bound for H^1 norm of p tilde} and the fact $\d\geq C$,
\beq
\begin{split}
&\;\|\tilde{p}-p_*\|_{L^\infty(B_R)}\\
\leq &\; CR^{1/2} (m_0 (\d r^2)^{1/4}+ M_0 (\d R^2)^{1/4}) \|\na p_*\|_{L^\infty(B_R)}\\
&\;+CR(m_0(\d r)^{-1}+M_0(\d R)^{-1})\cdot r\\
\leq 
&\;C(m_0+M_0)(\d R^2)^{1/2},
\end{split}
\label{eqn: L^inf bound of tilde p - p* when R is way larger than r}
\eeq
where $C = C(\mu,\nu,G)$.

With this estimate and Lemma \ref{lem: estimate for p_*}, \eqref{eqn: H^1 estimate for the difference crude form} becomes
\beq
\begin{split}
&\;\|\na(\tilde{p}-p_*)\|_{L^2(B_R)}^2\\
\leq &\;C(m_0+M_0)(\d R^2)^{1/2}\|\na(\tilde{p}-p_*)\|_{L^2(B_R)}\|\na p_*\|_{L^\infty(B_R)}\\
&\;+C(m_0+ M_0)\|\tilde{p}-p_*\|_{L^\infty(B_R)} \|\na(\tilde{p}-p_*)\|_{L^2(B_R)}\\
&\;+C (m_0+ M_0) \|\tilde{p}-p_*\|_{L^\infty(B_R)}\cdot (\d R^2)^{1/2}\|\na p_*\|_{L^\infty(B_R)}\\
\leq &\;C(m_0+M_0)(\d R^2)^{1/2}\|\na(\tilde{p}-p_*)\|_{L^2(B_R)}\\
&\;+C (m_0+ M_0) ^2  \d R^2.
%
\end{split}
\eeq
Then the desired estimate follows from Young's inequality.
\end{case}
\end{proof}
\begin{rmk}\label{rmk: delta R^2 is bounded}
The above estimate involves $\d R^2$.
If $\d^{-1}(\|h\|_{L^\infty}+\|H\|_{L^\infty})\ll 1$, by \eqref{eqn: condition on delta}, there exist universal constants $0<c_1<c_2$, such that
\beqo
c_1|\tilde{\Om}_0\backslash \Om_0|\leq \d R^2 \leq c_2|\tilde{\Om}_0\backslash \Om_0|.
\eeqo
It is noteworthy that $|\tilde{\Om}_t\backslash \Om_t|$ is constant in time provided that $\g$ and $\tilde{\g}$ have sufficient regularity.
This is because the transporting velocity field $-\na \va$ in $\tilde{\Om}_t\backslash \Om_t$ is divergence-free.
\end{rmk}
\end{lem}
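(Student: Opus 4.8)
The plan is to carry out a weighted energy estimate on equation \eqref{eqn: equation for the difference between p tilde p star} for $w := \tilde{p}-p_*$, testing against $w$ itself. Since $w|_{\pa B_R}=0$, integration by parts produces on the left-hand side the coercive quantity $\int_{B_R} a\,\big|(\pa X/\pa x)^T\na w\big|^2 + \int_{B_r} c\,w^2$, which is bounded below by $\tfrac{\mu}{2}\,\|\na w\|_{L^2(B_R)}^2$: indeed $a\geq\mu$ by ellipticity, $c\geq 0$ by the assumptions on $G$, and by Lemma \ref{lem: L inf estimate of jacobi} the deformation tensor $\pa X/\pa x$ is within $C(m_0+M_0)$ of the identity, so for $m_0+M_0$ small it distorts the Dirichlet form only by a harmless factor. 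On the right-hand side there are two contributions: a ``commutator'' term $\int_{B_R}\na w\cdot a\big((\pa X/\pa x)(\pa X/\pa x)^T-\Id\big)\na p_*$ and a ``lower-order'' term $\int_{B_R} w\,\big(\na\cdot(\pa X/\pa x)\big)\,a\,(\pa X/\pa x)^T\na\tilde{p}$, in the latter of which we split $\na\tilde{p}=\na w+\na p_*$.

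The key structural point is that every factor of $\pa X/\pa x-\Id$ or $\na\cdot(\pa X/\pa x)$ is supported only in the two thin annuli $B_{r(1+2\d)}\setminus B_{r(1-2\d)}$ (of area $\sim\d r^2$) and $B_R\setminus B_{R(1-2\d)}$ (of area $\sim\d R^2$), with $L^\infty$ sizes $Cm_0$, $CM_0$ and $C(\d r)^{-1}m_0$, $C(\d R)^{-1}M_0$ respectively by Lemma \ref{lem: L inf estimate of jacobi}. Combined with the bound $\|\na p_*\|_{L^\infty(B_R)}\leq C(G)$ from Lemma \ref{lem: estimate for p_*}, Hölder's inequality converts the commutator term into $\leq C\big[m_0(\d r^2)^{1/2}+M_0(\d R^2)^{1/2}\big]\|\na p_*\|_{L^\infty}\|\na w\|_{L^2}$ and the lower-order term into $\leq C\big[(\d r)^{-1}m_0\|w\|_{L^2(A_{\mathrm{in}})}+(\d R)^{-1}M_0\|w\|_{L^2(A_{\mathrm{out}})}\big]\big(\|\na w\|_{L^2}+(\d R^2)^{1/2}\|\na p_*\|_{L^\infty}\big)$, where $A_{\mathrm{in}},A_{\mathrm{out}}$ denote the two annuli. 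Thus everything reduces to controlling $\|w\|_{L^2}$ on the two thin annuli by something that beats the dangerous prefactor $(\d r)^{-1}$.

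Here the proof splits according to whether the gap $R-r$ is comparable to $R$ or not. If $R/2\leq r<R$, then by \eqref{eqn: condition on delta} both annuli lie inside $B_R\setminus B_{r(1-2\d)}$, a domain of thickness at most $C\d r$ on whose outer boundary $w$ vanishes, so a Poincaré inequality on thin domains gives $\|w\|_{L^2(B_R\setminus B_{r(1-2\d)})}\leq C\d r\,\|\na w\|_{L^2(B_R)}$; this exactly absorbs the $(\d r)^{-1}$ and leaves $\|\na w\|_{L^2}^2\leq C(m_0+M_0)(\d R^2)^{1/2}\|\na p_*\|_{L^\infty}\|\na w\|_{L^2}+C(m_0+M_0)\|\na w\|_{L^2}^2$, whence the claim follows by Young's inequality and smallness of $m_0+M_0$. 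If instead $r<R/2$, then \eqref{eqn: condition on delta} forces $\d$ to be bounded below by a universal constant, so Poincaré is unavailable near $\pa B_r$; instead one bounds $\|w\|_{L^\infty(B_R)}$ directly. For this, first test \eqref{eqn: equation for p simplified case} with $p$ to get $\|\na p\|_{L^2(\tilde{\Om})}^2\leq C\int_\Om G(p)p\leq Cr^2$, hence $\|(\pa X/\pa x)^T\na\tilde{p}\|_{L^2(B_R)}\leq Cr$; then the De Giorgi--Nash--Moser $L^\infty$ bound for weak solutions \cite[Theorem~8.16]{gilbarg2015elliptic} applied to \eqref{eqn: equation for the difference between p tilde p star}, together with Lemmas \ref{lem: estimate for p_*}--\ref{lem: L inf estimate of jacobi} and $\d\geq C$, yields $\|w\|_{L^\infty(B_R)}\leq C(m_0+M_0)(\d R^2)^{1/2}$; plugging this into the annular $L^2$ norms (each bounded by $\|w\|_{L^\infty}$ times the square root of the annulus area, i.e.\ $\leq C\d r^2$ or $C\d R^2$) again closes the estimate by Young's inequality.

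I expect the genuine difficulty to be the ``fat-gap'' case: without a Dirichlet boundary adjacent to $\pa B_r$ one cannot use Poincaré to absorb the singular $(\d r)^{-1}$ prefactor coming from $\na\cdot(\pa X/\pa x)$, and one is forced to pass through a maximum-principle / $L^\infty$ estimate for the inhomogeneous divergence-form equation, which in turn needs the a priori bound $\|\na p\|_{L^2}\leq Cr$ for the pressure itself. The other delicate point is purely bookkeeping: tracking the $\d$-dependence honestly so that the final constant is independent of $\d$ and the bound comes out in the clean form $C(\mu,\nu,G)(m_0+M_0)(\d R^2)^{1/2}$; the remaining manipulations are routine Hölder and Young estimates.
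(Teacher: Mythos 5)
Your proposal is correct and follows essentially the same route as the paper: test \eqref{eqn: equation for the difference between p tilde p star} against $\tilde{p}-p_*$, exploit that the deformation error is supported in thin annuli of area $\sim\d r^2$ and $\sim\d R^2$, then split into the cases $R/2\leq r<R$ (thin-gap Poincar\'{e} near the outer Dirichlet boundary) and $r<R/2$ (where $\d\gtrsim 1$, so instead one first bounds $\|\tilde{p}-p_*\|_{L^\infty(B_R)}$ via Theorem 8.16 of Gilbarg--Trudinger after establishing $\|\na p\|_{L^2}\leq Cr$). The bookkeeping of the $\d$-weights, the use of Lemma \ref{lem: L inf estimate of jacobi} and Lemma \ref{lem: estimate for p_*}, and the closing by Young's inequality all match the paper's argument.
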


\subsection{More stability results}
\label{sec: stability of interface velocity}
For later use, further stability results are presented here for the interface velocities and the pressure, with respect to the interface configurations.

Fix $0<r<R$ and take $\d$ as in \eqref{eqn: condition on delta}.
Given two pairs of interface configurations $(\g_{1},\tilde{\g}_{1})$ and $(\g_{2},\tilde{\g}_{2})$, let $(h_{1},H_{1}),(h_{2},H_{2})$ be defined as in \eqref{eqn: parameterization of gamma}-\eqref{eqn: def of h and H}.
As in \eqref{eqn: bound assumption on the Lipschitz norm of h} and \eqref{eqn: bound assumption on the Lipschitz norm of H}, 
we define $m_{0,i}$ and $M_{0,i}$ 
that correspond to $h_i$ and $H_i$ $(i=1,2)$.
We additionally introduce for some $\alpha \in (0,1)$,
\begin{align}
m_{\alpha,i} := &\; \d^{-1}\|h_i\|_{L^\infty}+\d^{\alpha}\|h_i'\|_{\dot{C}^\alpha},\label{eqn: def of m_alpha}\\
M_{\alpha,i} := &\; \d^{-1}\|H_i\|_{L^\infty}+\d^{\alpha}\|H_i'\|_{\dot{C}^\alpha}.\label{eqn: def of M_alpha}
\end{align}
Also denote
\begin{align}
\D m_{0} :=&\;\d^{-1}\|h_1-h_2\|_{L^\infty(\BT)}+\|h_1'-h_2'\|_{L^\infty(\BT)},\label{eqn: def of normalized W 1 inf difference of two h}\\
\D M_{0} :=&\;\d^{-1}\|H_1-H_2\|_{L^\infty(\BT)}+\|H_1'-H_2'\|_{L^\infty(\BT)},\label{eqn: def of normalized W 1 inf difference of two H large}\\
\D m_{\alpha} :=&\;\d^{-1}\|h_1-h_2\|_{L^\infty(\BT)}+\d^{\alpha}\|h_1'-h_2'\|_{\dot{C}^\alpha(\BT)},
\label{eqn: def of normalized C 1 alpha difference of two h}\\
\D M_{\alpha} :=&\;\d^{-1}\|H_1-H_2\|_{L^\infty(\BT)}+\d^{\alpha}\|H_1'-H_2'\|_{\dot{C}^\alpha(\BT)}.
\label{eqn: def of normalized C 1 alpha difference of two H large}
\end{align}

Then we can show
\begin{lem}\label{lem: stability of the interface velocities}
Suppose $(h_{1},H_{1}),(h_{2},H_{2})\in C^{1,\alpha}(\BT)\times C^{1,\alpha}(\BT)$ for some $\alpha\in (0,\f14)$, 
satisfying that for $i = 1,2$, $m_{\alpha,i}+M_{\alpha,i}\ll 1$.
Then
\beq
\|\pa_t h_1-\pa_t h_2\|_{C^\alpha(\BT)}+\|\pa_t H_1-\pa_t H_2\|_{C^\alpha(\BT)}
\leq C_*(\D m_\alpha +\D M_\alpha),
\label{eqn: C alpha difference of time derivative of h and H}
\eeq
where $C_* = C_*(\alpha,\mu,\nu, r, R, G)$.
Here $\pa_t h_i$ and $\pa_t H_i$ are the interface velocities in the radial direction, normalized by $r$ and $R$ respectively (see \eqref{eqn: time derivative of f}.)
\end{lem}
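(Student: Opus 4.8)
The plan is to write the interface velocities $\pa_t h_i$ and $\pa_t H_i$ using the contour equations \eqref{eqn: equation for inner interface new notation} and \eqref{eqn: equation for outer interface new notation}, expressing them as combinations of the singular integral operators $\CK_{\g_i}$, $\CK_{\tilde{\g}_i}$, $\CK_{\g_i,\tilde{\g}_i}$, $\CK_{\tilde{\g}_i,\g_i}$ applied to the boundary data $[\va_i]'$ and $\phi_i'$, plus the growth-potential terms $\na(\G*g_i)|_{\g_i}$ and $\na(\G*g_i)|_{\tilde{\g}_i}$. Each of the three groups of quantities on the right-hand side --- (i) the pressure-dependent source pieces $\na(\G*g_i)$, (ii) the boundary potentials $[\va_i]'$ and $\phi_i'$, and (iii) the kernels $\CK$ and the geometric factors $f_i,f_i',F_i,F_i'$ --- has to be shown to depend Lipschitz-continuously on $(h_i,H_i)$ in the norms appearing in the $\D m_\alpha$, $\D M_\alpha$ quantities, with the $\d$-weighting tracked carefully. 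Then \eqref{eqn: C alpha difference of time derivative of h and H} follows by expanding $\pa_t h_1 - \pa_t h_2$ telescopically, replacing one factor at a time, and using the product/composition rule for $C^\alpha(\BT)$ together with the a priori $C^{1,\alpha}$ bounds $m_{\alpha,i}+M_{\alpha,i}\ll 1$ to control the ``unchanged'' factors.

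First I would establish the Lipschitz dependence of the pressure. By Lemma \ref{lem: estimate for p_*} the radial profile $p_*$ is fixed, so it suffices to estimate $\tilde p_1 - \tilde p_2$ where $\tilde p_i(X) = p_i(x_i(X))$ are pulled back to the common reference disc $B_R$ via the diffeomorphisms $x_i$ from \eqref{eqn: change of variables to the reference coordinate}. Subtracting the two copies of \eqref{eqn: equation for p tilde} gives an elliptic equation for $\tilde p_1 - \tilde p_2$ whose right-hand side is a divergence of terms linear in $\frac{\pa X_1}{\pa x} - \frac{\pa X_2}{\pa x}$ (controlled by $\D m_0 + \D M_0$ via the explicit formulae \eqref{eqn: inverse of jacobi matrix}, \eqref{eqn: grad zeta} and the smallness of $m_{0,i}+M_{0,i}$), together with a zeroth-order term coming from the Lipschitz continuity of $G$; this is exactly the same scheme as in Lemma \ref{lem: estimates for difference between tilde p and p_*}, now in difference form. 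This yields $\|\na(\tilde p_1 - \tilde p_2)\|_{L^2(B_R)} \lesssim (\D m_0 + \D M_0)(\d R^2)^{1/2}$ and, via the $L^\infty$ bound of \cite[Theorem 8.16]{gilbarg2015elliptic} (Case 2 of that lemma), an $L^\infty$ difference bound. From there I would pass to the boundary data: $[\va_i]$ is, up to the jump factor $A$, the trace difference of $p_i$ across $\g_i$, and $\phi_i$ solves the second-kind boundary integral equation \eqref{eqn: equation for boundary potential on outer interface}; the difference estimates in the relevant $C^{1,\alpha}$-type norms should follow from the Section~\ref{sec: bounds for boundary potentials} machinery (invoked in difference form) plus the pressure stability just obtained, and the gradient traces $\na(\G*g_i)|_{\g_i}, \na(\G*g_i)|_{\tilde{\g}_i}$ from the Section~\ref{sec: gradient estimate of growth potential} estimates combined with $g_i = G(\mu^{-1}\va_i)\chi_{\Om_i}$.

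Next I would handle the kernels. The operators $\CK_{\g}, \CK_{\tilde\g}$ and $\CK_{\g,\tilde\g}, \CK_{\tilde\g,\g}$ depend on the interfaces through their Birkhoff--Rott kernels; differences $\CK_{\g_1}\psi - \CK_{\g_2}\psi$ (and the mixed ones) are estimated by the commutator/stability bounds from Sections \ref{sec: estimates for singular integral operators} and \ref{sec: estimates for interaction operators}, which I am free to assume. Combining these with the $C^\alpha$ algebra property (valid since $\alpha<\tfrac14<1$, so $C^\alpha(\BT)$ is a Banach algebra) and with the elementary Lipschitz bounds $\|f_1'-f_2'\|_{\dot C^\alpha}\lesssim r\,\D m_\alpha$, etc., the telescoping expansion of $\pa_t h_1 - \pa_t h_2$ term by term produces \eqref{eqn: C alpha difference of time derivative of h and H}. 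The main obstacle is the regularity bookkeeping around the singular-integral difference estimates: $\pa_t h_i$ sits at the level of one derivative of the boundary data, so the difference of the most singular term $-\tfrac1{f_i}\g_i'\cdot\CK_{\g_i}[\va_i]'$ must be controlled in $C^\alpha$ while $[\va_i]'$ is only in $C^\alpha$ itself, meaning one cannot afford to lose any derivative --- the gain has to come entirely from the smoothing already built into the operators $\CS$ (for the mixed terms) and from the cancellation that makes $\g'\cdot\CK_\g \approx \tfrac12 \CH$ bounded on $C^\alpha$, together with the fact that the ``bad'' principal part, when combined with the $[\va]'$ equation, reduces to a bounded multiple of $(-\D)^{1/2}h$, which is not what appears in \eqref{eqn: C alpha difference of time derivative of h and H} but is absorbed because the \emph{difference} of two such principal parts with the \emph{same} operator and different data is still controlled in $C^\alpha$ by $\D m_\alpha$ once we use that the Hilbert transform is bounded on $\dot C^\alpha(\BT)$. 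Tracking all the $\d$-powers so that the final constant $C_*$ is independent of $\d$ (only of $r,R$ and the model parameters) is the secondary source of technical care.
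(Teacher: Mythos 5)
Your strategy is genuinely different from the paper's, and the paper's route is considerably shorter. The paper does not go through the contour-equation representation at all for this lemma. Instead, it works directly with the pressure in the reference coordinate: from the identity $V_{n,\g_i}=-\mu\,\pa p_i/\pa\s_{\Om_i}$ and the pull-back $\tilde p_i=p_i\circ x_i$, the velocities are written explicitly as boundary traces of the transformed gradient, namely
\[
\pa_t h_i= -\f{\mu}{r}\left[\f{(1+h_i)^2+(h'_i)^2}{(1+h_i)^3}\,e_{r}
-\f{h_i'}{(1+h_i)^2}\,e_{\th}\right]\cdot\na\tilde{p}_i\Big|_{\pa B_r},
\qquad
\pa_t H_i= -\f{\nu}{R}\cdot\f{(1+H_i)^2+(H'_i)^2}{(1+H_i)^3}\, e_{r}\cdot \na\tilde{p}_i\Big|_{\pa B_R},
\]
and the proof is then just a $C^\alpha$-algebra computation on the geometric prefactors combined with the $C^{1,\alpha}$ stability of the pressure difference, Lemma \ref{lem: C 1 alpha bounds for two different pressures in reference coordinate}, and the a priori $C^{1,\alpha}$ bound Lemma \ref{lem: C 1 alpha bound for tilde p_2}. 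No singular integral operator ever appears.

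Two remarks on your route. First, your own worry about the most singular term $-\tfrac1{f_i}\g_i'\cdot\CK_{\g_i}[\va_i]'$ is well placed: to close that term in $C^\alpha$ you would need a $C^\alpha$-stability estimate for $\g'\cdot\CK_\g-\tfrac12\CH$ analogous to Lemma \ref{lem: W 1p difference from Hilbert transform difference}, and the paper supplies such estimates only in $\dot W^{1,p}$, not in H\"older spaces; so you would be adding new material, not merely ``invoking Section~\ref{sec: estimates for singular integral operators} in difference form.'' Likewise the boundary-potential stability for $[\va]'$, $\phi'$ in $C^\alpha$ is not Proposition \ref{prop: Holder estimate for jump of potential and potential along outer interface} verbatim; you would need a difference-form version of it. Second, your handling of the principal part — noting that the $(-\D)^{1/2}$ piece, though not visible in the statement, is absorbed because $\D m_\alpha$ already controls a $C^{1,\alpha}$ difference — is correct, but it shows that your route does not actually gain anything over the paper's: both arguments ultimately rest on the $C^{1,\alpha}$ elliptic stability of the pressure (your ``Lemma 3.6'' step), and after that the paper's direct trace computation is substantially less work than telescoping through the Birkhoff--Rott representation.
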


Let $p_i$ $(i =1,2)$ denote the pressure solving \eqref{eqn: equation for p simplified case} and \eqref{eqn: p boundary condition} on the physical domain that is determined by $\g_i$ and $\tilde{\g}_i$, while $\tilde{p}_i$ denotes its pull back into the reference coordinate as in \eqref{eqn: pressure in the reference coordinate}.
An important intermediate result in proving Lemma \ref{lem: stability of the interface velocities} is the following lemma on $C^{1,\alpha}$-bound for $(\tilde{p}_1-\tilde{p}_2)$, which will be also used when proving uniqueness of the local solution in Section \ref{sec: uniqueness}.

\begin{lem}
\label{lem: C 1 alpha bounds for two different pressures in reference coordinate}
Under the assumption of Lemma \ref{lem: stability of the interface velocities},
\beq
\|\tilde{p}_1-\tilde{p}_2\|_{L^\infty(B_R)}\leq  C_*(\D m_0+\D M_0),\label{eqn: L inf difference of two pressures}
\eeq
and
\beq
\|\tilde{p}_1-\tilde{p}_2\|_{C^{1,\alpha}(\overline{B_r})}+\|\tilde{p}_1-\tilde{p}_2\|_{C^{1,\alpha}(\overline{B_R\backslash B_r})}
\leq C_*(\D m_\alpha +\D M_\alpha),
\label{eqn: Holder estimate for difference of two pressures}
\eeq
where $C_* = C_*(\alpha,\mu,\nu, r, R, G)$.
\end{lem}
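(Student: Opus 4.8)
The plan is to derive an elliptic equation for the difference $w := \tilde{p}_1 - \tilde{p}_2$ in the reference coordinate, exactly as \eqref{eqn: equation for the difference between p tilde p star} was derived for $\tilde p - p_*$, and then apply $L^\infty$ and Schauder estimates. Write $A_i^{kj} := \tfrac{\pa X_k}{\pa x_i}\tfrac{\pa X_j}{\pa x_i}$ and $b_i^k := \na_{X_k}\tfrac{\pa X_k}{\pa x_i}$ for the coefficients induced by the two diffeomorphisms $x_i(X)$; subtracting the two instances of \eqref{eqn: equation for p tilde} gives
\beqo
-\na_{X_k}\!\left(a A_1^{kj}\na_{X_j} w\right) + c\, w = \na_{X_k}\!\left[a(A_1^{kj}-A_2^{kj})\na_{X_j}\tilde p_2\right] - a\,b_1^k A_1^{kj}\na_{X_j} w - a\,(b_1^k A_1^{kj} - b_2^k A_2^{kj})\na_{X_j}\tilde p_2,
\eeqo
with $w|_{\pa B_R}=0$, where $c := -\tfrac{G(\tilde p_1)-G(\tilde p_2)}{\tilde p_1 - \tilde p_2}\chi_{B_r}\ge 0$ by monotonicity of $G$. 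The key point is that the differences $A_1-A_2$, $b_1-b_2$ are controlled, via the explicit formulas \eqref{eqn: inverse of jacobi matrix} and \eqref{eqn: div of jacobi matrix} together with \eqref{eqn: estimate of small deformation}--\eqref{eqn: grad of radial derivative of zeta}, by $\D m_0$ (resp.\ $\D m_\alpha$) in the inner annulus $B_{r(1+2\d)}\backslash B_{r(1-2\d)}$ and by $\D M_0$ (resp.\ $\D M_\alpha$) in the outer annulus $B_R\backslash B_{R(1-2\d)}$; everywhere else the coefficients of both problems coincide with those of the flat Laplacian, so the right-hand side is supported in these two thin annuli. One also records, via the same $L^\infty$-estimate argument as in \eqref{eqn: bound for H^1 norm of p tilde}, the a priori bound $\|\tfrac{\pa X_j}{\pa x_i}\na_{X_j}\tilde p_i\|_{L^\infty}\lesssim 1$ (using $W^{1,\infty}$-regularity of $p$ from elliptic theory and the piecewise-constant $a$), which is what lets us treat $\na \tilde p_2$ as an $L^\infty$ factor.

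For \eqref{eqn: L inf difference of two pressures}, I would apply the $L^\infty$ bound for weak solutions (\cite[Theorem 8.16]{gilbarg2015elliptic}) to the equation above, as was done in \eqref{eqn: L^inf bound of tilde p - p* when R is way larger than r}: the first-order term $a\,b_1^k A_1^{kj}\na_{X_j} w$ has coefficient bounded by $C(\d r)^{-1}$ on a set of measure $\sim \d r^2$ and similarly near $\pa B_R$, so it satisfies the smallness/scaling hypotheses of that theorem (the relevant Morrey-type norm of the coefficient is $O(1)$, uniformly in $\d$), and the inhomogeneous data are bounded in $L^q$ ($q>2$) by $C_*(\D m_0 + \D M_0)$ after pulling out $\|\na\tilde p_2\|_{L^\infty}$. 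This yields $\|w\|_{L^\infty(B_R)}\le C_*(\D m_0+\D M_0)$.

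For \eqref{eqn: Holder estimate for difference of two pressures}, the crucial structural fact is that $a$ is constant ($=\mu$) on $B_r$ and constant ($=\nu$) on $B_R\backslash B_r$, so on each of these two pieces the leading operator $-\na(aA_1^{kj}\na\cdot)$ has $C^\alpha$ coefficients (the maps $x_i(X)$ are $C^{1,\alpha}$ since $h_i,H_i\in C^{1,\alpha}$, hence $A_1\in C^\alpha$) — the discontinuity of the diffusion is only across $\pa B_r$, and there the flux is continuous. I would therefore apply interior and boundary Schauder estimates (\cite{gilbarg2015elliptic}) separately on $\overline{B_r}$ and on $\overline{B_R\backslash B_r}$: on each piece, $\|w\|_{C^{1,\alpha}} \le C_*(\|\text{data}\|_{C^\alpha \text{ or } L^\infty \text{ as appropriate}} + \|w\|_{L^\infty})$, where the divergence-form data has $C^\alpha$-norm $\le C_*(\D m_\alpha + \D M_\alpha)\|\na\tilde p_2\|_{C^\alpha}$ and the non-divergence first-order data is bounded using $b_1-b_2$ together with the already-established $C^{1,\alpha}$-regularity of $\tilde p_2$ on each piece (which itself comes from the same Schauder argument applied to \eqref{eqn: equation for p tilde}, using $p\in C^{1,\alpha}$ on each side of $\pa B_r$ — a standard transmission-problem regularity fact). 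Feeding in \eqref{eqn: L inf difference of two pressures} to control $\|w\|_{L^\infty}$ closes the estimate. The main obstacle is bookkeeping: one must verify that the first-order terms with $(\d r)^{-1}$- and $(\d R)^{-1}$-sized coefficients, supported on $\d$-thin annuli, genuinely produce constants independent of $\d$ in both the Aleksandrov–Bakelman–Pucci/De Giorgi $L^\infty$ bound and the Schauder estimates — this is the same scaling miracle already exploited in Lemma \ref{lem: estimates for difference between tilde p and p_*} (note the estimates there are stated with $C$ depending only on $\mu,\nu,G$, not $\d$), and the cleanest route is to rescale each thin annulus to unit thickness, apply the standard interior estimates there, and patch; the transmission interface $\pa B_r$ is handled by the usual reflection/flattening trick since $\pa B_r$ is smooth and the two $C^\alpha$ coefficient fields match the conormal condition.
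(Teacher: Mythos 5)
Your overall strategy matches the paper's: derive the divergence-form equation for $w=\tilde p_1-\tilde p_2$ from \eqref{eqn: equation for p tilde}, control the coefficient and forcing differences by the Jacobian stability estimates (the paper's Lemma~\ref{lem: Holder estimate of jacobi}), establish $C^{1,\alpha}$-regularity of $\tilde p_2$ itself on each side of $\partial B_r$ as a preparatory step (the paper's Lemma~\ref{lem: C 1 alpha bound for tilde p_2}), then apply an $L^\infty$ weak-solution bound and finally a $C^{1,\alpha}$ elliptic estimate, feeding the $L^\infty$ bound back into the latter via interpolation. Two points of comparison are worth flagging.

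First, for the $C^{1,\alpha}$ estimate, the paper does not perform a bare-hands Schauder-plus-reflection argument across the interface $\partial B_r$; instead it invokes \cite[Corollary~1.3]{li2000gradient} (Li--Vogelius), a packaged global $C^{1,\alpha}$ estimate for divergence-form equations whose coefficients are piecewise $C^\alpha$ across a smooth interface, applied once to the whole ball $B_R$. Your reflection/flattening plan is morally equivalent but would need a substantial amount of detail to make rigorous at the transmission interface (the conormal matching is \emph{not} the standard oblique-derivative boundary condition, and the standard interior/boundary Schauder theory does not directly give the piecewise $C^{1,\alpha}$ transmission estimate without additional work); citing Li--Vogelius is cleaner and is what the paper does. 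Note also the restriction $\alpha<\tfrac14$ enters precisely here (it is needed by the Li--Vogelius corollary), and your writeup should make this explicit.

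Second, you spend effort arguing that the first-order terms with $(\delta r)^{-1}$-sized coefficients supported on $\delta$-thin annuli still give $\delta$-uniform constants ("scaling miracle"). That care is needed in the paper's Lemma~\ref{lem: estimates for difference between tilde p and p_*}, whose constant depends only on $\mu,\nu,G$, but it is \emph{not} needed here: the constant $C_*$ in the present lemma is allowed to depend on $r,R$ (hence on $\delta$), and the paper simply bounds $|\na_{X_k}\tfrac{\pa X_k}{\pa x_{1,i}}\cdot a\tfrac{\pa X_j}{\pa x_{1,i}}|\le C(\mu,\nu,r,R,G)$ using Lemma~\ref{lem: L inf estimate of jacobi} and the smallness of $m_{0,i}+M_{0,i}$. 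Dropping the $\delta$-uniformity argument would simplify your $L^\infty$ step and bring it in line with the paper's.
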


Their proofs involve lengthy calculation, while they are relatively independent from the rest of the paper.
So we leave them to Appendix \ref{sec: proof of stability lemma}.

\section{Gradient Estimates for $\G* g$ along Interfaces}
\label{sec: gradient estimate of growth potential}

In this section, we shall derive estimates concerning $e_r\cdot \na (\G*g)$ and $e_\th\cdot\na (\G*g)$ along $\g$ and $\tilde{\g}$, where $e_r = (\cos \th, \sin \th)$ and $e_\th = (-\sin \th,\cos \th)$.
Aiming at greater generality, instead of working with $g$ defined in \eqref{eqn: def of source term g}, here we shall assume $g := g_0(X(x))$ for some $g_0$ defined in the reference coordinate and supported on $\overline{B_{(1+4\d)r}}$, where $X(x)$ is the inverse of $x(X)$ defined by \eqref{eqn: change of variables to the reference coordinate}.
We remark that the support is a slightly larger than the one corresponding to \eqref{eqn: def of source term g} ($\overline{B_r}$ in that case).
The motivation for this will be clear in Section \ref{sec: uniqueness}.
Also note that $\overline{B_{(1+4\d)r}}\subset B_{(1-2\d)R}$.

\subsection{Preliminaries}
We introduce Poisson kernel $P$ on the 2-D unit disc and its conjugate $Q$:
\begin{align}
P(s,\xi) = &\;\f{1-s^2}{1+s^2-2s\cos\xi},
\label{eqn: def of P}\\
Q(s,\xi) = &\;\f{2s\sin\xi}{1+s^2 -2s\cos\xi}.
\label{eqn: def of Q}
\end{align}
Elementary estimates for them as well as their derivatives are collected in Lemma \ref{lem: properties of Poisson kernel}.
Define
\begin{align}
K(s, \xi): =&\; \f{2s^2\sin\xi}{1+s^2-2s\cos\xi} = sQ(s,\xi),
\label{eqn: def of function K}\\
J(s,\xi) := &\;\f{2(s\cos\xi-1)s}{1+s^2-2s\cos\xi} = -s(1+P(s,\xi)).
\label{eqn: def of function J}
\end{align}
See \eqref{eqn: formula for tangent derivative} and \eqref{eqn: formula for normal derivative} for the motivation of defining these kernels.
They have the following properties.

\begin{lem}\label{lem: difference of kernels}
Let $z_i\in [0,2]$ $(i = 1,2,3,4)$.
Suppose for some $w\in [0,2]$ and $\xi\in \BT$, $|z_i-w|\leq c(|\xi|+|1-w|)$. Here $c$ is some universal small constant, whose smallness will be clear in the proof.
Then
\beq
|K(z_i,\xi)|
\leq  \f{C|z_i|} {(1+w^2-2w\cos \xi)^{1/2}},
\label{eqn: bound on K}
\eeq
\beq
\left|\f{\pa K}{\pa s}(z_i,\xi)\right|+\left|\f{\pa K}{\pa \xi}(z_i,\xi)\right|
\leq  \f{C} {1+w^2-2w\cos \xi},
\label{eqn: bound on derivative of K}
\eeq
\beq
|K(z_1,\xi)-K(z_2,\xi)|
\leq  \f{C|z_1-z_2|} {1+w^2-2w\cos \xi},
\eeq
\beq
\left|\f{\pa K}{\pa s}(z_1,\xi)-\f{\pa K}{\pa s}(z_2,\xi) \right|+\left|\f{\pa K}{\pa \xi}(z_1,\xi)-\f{\pa K}{\pa \xi}(z_2,\xi) \right|\leq \f{C|z_1-z_2|}{(1+w^2-2w\cos\xi)^{3/2}},
\label{eqn: difference of K derivatives}
\eeq
and
\beq
\begin{split}
&\;\left|\f{\pa K}{\pa s}(z_1,\xi)-\f{\pa K}{\pa s}(z_2,\xi) -\f{\pa K}{\pa s}(z_3,\xi)+\f{\pa K}{\pa s}(z_4,\xi) \right|\\
\leq &\; \f{C|z_1-z_2-z_3+z_4|}{(1+w^2-2w\cos\xi)^{3/2}}+\f{C(|z_1-z_2|+|z_3-z_4|)(|z_1-z_3|+|z_2-z_4|)}{(1+w^2-2w\cos\xi)^{2}}.
\end{split}
\label{eqn: double difference of K derivatives}
\eeq
Here $C$ are all universal constants.
These estimates also hold if $K$ is replaced by $J$.
\begin{proof}

We derive that
\beq
\left|\f{1+z_i^2 -2z_i\cos\xi}{1+w^2-2w\cos\xi}-1\right|
\leq \f{|z_i-w|+2|w-\cos\xi|}{1+w^2-2w\cos\xi}|z_i-w|.
\label{eqn: ratio of two kernels}
\eeq
When $c$ is suitably small, the right hand side is bounded by $\f12$.
This implies that $(1+z_i^2 -2z_i\cos\xi)$ are comparable with $(1+w^2 -2w\cos\xi)$, and thus they are comparable with each other.
%
Then \eqref{eqn: bound on K} and \eqref{eqn: bound on derivative of K} follow from Lemma \ref{lem: properties of Poisson kernel} and the assumption $z_i \in [0,2]$.
Using the same facts, we can also derive that
\beq
\begin{split}
|K(z_1,\xi)- K(z_2,\xi)|=&\;
\left|\f{2\sin\xi (z_1-z_2) [z_1(1-z_2\cos\xi)+z_2(1-z_1\cos\xi)]} {(1+z_1^2-2z_1\cos\xi)(1+z_2^2-2z_2\cos\xi)}\right|\\
\leq &\;
\f{C |z_1-z_2|} {1+w^2-2w\cos \xi}.
\end{split}
\label{eqn: difference between K}
\eeq
Moreover, by Lemma \ref{lem: properties of Poisson kernel},
\begin{align}
\f{\pa K}{\pa s} = &\; Q+s\pa_s Q = Q+\f{2s\sin \xi(1-s^2)}{(1+s^2-2s\cos\xi)^2} = Q(1+P),\\
\f{\pa K}{\pa \xi} = &\; s\pa_\xi Q = s^2 \pa_s P = \f{K}{\tan \xi}-QK.
\end{align}
Then \eqref{eqn: difference of K derivatives} and \eqref{eqn: double difference of K derivatives} follow from
\begin{align}
P(z_1,\xi)-P(z_2,\xi) = &\; 2(z_1-z_2)\cdot \frac{(1-z_1)(1-z_2)-(1-\cos\xi)(1+z_1z_2)}{(1+z_1^2-2z_1\cos \xi)(1+z_2^2-2z_2\cos \xi)},\\
Q(z_1,\xi)-Q(z_2,\xi) = &\; 2(z_1-z_2)\cdot \frac{\sin \xi ((1-z_1)+z_1(1-z_2))}{(1+z_1^2-2z_1\cos \xi)(1+z_2^2-2z_2\cos \xi)},
\end{align}
and Lemma \ref{lem: properties of Poisson kernel} by a direct calculation as in \eqref{eqn: difference between K}.

The estimates for $J$ can be justified similarly. Indeed,
\beq
J(z_1,\xi)-J(z_2,\xi)= 2(z_1-z_2)\cdot \frac{z_1z_2\sin^2\xi-(1-z_1\cos \xi)(1-z_2\cos\xi)}{(1+z_1^2 -2z_1\cos\xi)(1+z_2^2 -2z_2\cos\xi)},
\eeq
and
\begin{align}
\f{\pa J}{\pa s} = &\; -(1+P)-s\pa_s P = -1-P-\f{Q}{\tan\xi}+Q^2,\\
\f{\pa J}{\pa \xi} = &\; -s\pa_\xi P = s^2 \pa_s Q = PK.
\end{align}
\end{proof}
\end{lem}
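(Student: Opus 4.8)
The plan is to reduce all five estimates to a single structural fact and then feed it into the elementary bounds for $P$, $Q$ and their derivatives recorded in Lemma \ref{lem: properties of Poisson kernel}, using the identities $K=sQ$ and $J=-s(1+P)$ from \eqref{eqn: def of function K}--\eqref{eqn: def of function J}. The structural fact is: under the standing hypothesis the Poisson-type denominators $D(z,\xi):=1+z^{2}-2z\cos\xi$ at the points $z\in\{z_{1},z_{2},z_{3},z_{4},w\}$ are all comparable. I would prove this from the elementary identity $D(z,\xi)-D(w,\xi)=(z-w)(z+w-2\cos\xi)$ together with $|z+w-2\cos\xi|\le|z-w|+2|1-w|+2(1-\cos\xi)\le|z-w|+2|1-w|+\pi|\xi|$; the hypothesis $|z_{i}-w|\le c(|\xi|+|1-w|)$ then gives $|D(z_{i},\xi)-D(w,\xi)|\le Cc(|\xi|+|1-w|)^{2}$ with $C$ universal. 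To upgrade this to $|D(z_{i},\xi)-D(w,\xi)|\le\tfrac{1}{2} D(w,\xi)$ I would split into the regime $w\le\tfrac{1}{4}$ (where the hypothesis forces every $z_{i}$ to be small, so $D(z_{i},\xi)$ and $D(w,\xi)$ are both bounded above and below by positive universal constants) and the regime $w\ge\tfrac{1}{4}$ (where $1-\cos\xi\gtrsim\xi^{2}$ on $\BT$ gives $D(w,\xi)\gtrsim(|1-w|+|\xi|)^{2}$); in both cases the inequality holds once $c$ is a small enough \emph{universal} constant, and this is the $c$ of the statement. Hence $D(z_{i},\xi)\sim D(w,\xi)$ for every $i$, and the $D(z_{i},\xi)$ are mutually comparable.

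With comparability in hand the pointwise bounds \eqref{eqn: bound on K}--\eqref{eqn: bound on derivative of K} are immediate: from $K=sQ$, $\partial_{s}K=Q(1+P)$, $\partial_{\xi}K=\tfrac{2s^{2}\cos\xi}{D}-QK$ and the analogous formulas for $J$, one inserts $|P|,|Q|\lesssim D^{-1/2}$ and $|\partial_{s}P|,|\partial_{\xi}P|,|\partial_{s}Q|,|\partial_{\xi}Q|\lesssim D^{-1}$ from Lemma \ref{lem: properties of Poisson kernel} evaluated at $z_{i}$, and replaces $D(z_{i},\xi)$ by $D(w,\xi)$. For the first differences I would use that $K(z_{1},\xi)-K(z_{2},\xi)$ --- and likewise the differences of $P$, $Q$ and $J$ --- can be written, by putting everything over a common denominator, as $(z_{1}-z_{2})\,N(z_{1},z_{2},\xi)/\big(D(z_{1},\xi)D(z_{2},\xi)\big)$ with $N$ a low-degree polynomial in $z_{1},z_{2},\cos\xi,\sin\xi$; bounding $N$ crudely (again separating $w\le\tfrac{1}{4}$ from $w\ge\tfrac{1}{4}$) by $C\,D(w,\xi)$ and using $D(z_{1},\xi)D(z_{2},\xi)\sim D(w,\xi)^{2}$ gives $|K(z_{1},\xi)-K(z_{2},\xi)|\le C|z_{1}-z_{2}|/D(w,\xi)$, the same for $J$, and also $|P(z_{1},\xi)-P(z_{2},\xi)|,|Q(z_{1},\xi)-Q(z_{2},\xi)|\le C|z_{1}-z_{2}|/D(w,\xi)$. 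Then \eqref{eqn: difference of K derivatives} follows by expanding, e.g., $\partial_{s}K(z_{1},\xi)-\partial_{s}K(z_{2},\xi)=[Q(z_{1},\xi)-Q(z_{2},\xi)](1+P(z_{1},\xi))+Q(z_{2},\xi)[P(z_{1},\xi)-P(z_{2},\xi)]$ (and similarly for $\partial_{\xi}K$ and for $J$) and combining the pointwise and first-difference estimates, which produces exactly one extra factor $D^{-1/2}$.

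The double-difference estimate \eqref{eqn: double difference of K derivatives} is the main obstacle, and I would organize its computation by telescoping rather than by a brute-force expansion. Writing $\delta:=z_{1}-z_{2}$, decompose $[\partial_{s}K(z_{1},\xi)-\partial_{s}K(z_{2},\xi)]-[\partial_{s}K(z_{3},\xi)-\partial_{s}K(z_{4},\xi)]$ as $\big([\partial_{s}K(z_{1},\xi)-\partial_{s}K(z_{2},\xi)]-[\partial_{s}K(z_{3},\xi)-\partial_{s}K(z_{3}-\delta,\xi)]\big)+\big(\partial_{s}K(z_{4},\xi)-\partial_{s}K(z_{3}-\delta,\xi)\big)$. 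The second term has its two arguments differing by exactly $z_{1}-z_{2}-z_{3}+z_{4}$, so the single-difference bound for $\partial_{s}K$ (which carries $D^{-3/2}$) produces the first term on the right of \eqref{eqn: double difference of K derivatives}. The first group is a difference of two increments of $\partial_{s}K$ of the \emph{same} length $\delta$, based at $z_{2}$ and $z_{3}-\delta$, which differ by $z_{1}-z_{3}$; writing it as $\int_{0}^{1}\delta\,[\partial_{s}^{2}K(z_{2}+t\delta,\xi)-\partial_{s}^{2}K(z_{3}-\delta+t\delta,\xi)]\,dt$ and using a single-difference bound for $\partial_{s}^{2}K$ (carrying $D^{-2}$, obtained either from second-order bounds for $P$ and $Q$ in Lemma \ref{lem: properties of Poisson kernel} or from one more common-denominator identity) produces the second term on the right of \eqref{eqn: double difference of K derivatives}. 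The same telescoping applies verbatim with $K$ replaced by $J$.

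The two points needing care are: (i) fixing the universal $c$ of the first step small enough that it simultaneously validates all the crude numerator bounds used in the difference steps; and (ii) checking that the auxiliary arguments produced by the telescoping, such as $z_{3}-\delta$ and $z_{2}+t\delta$ for $t\in[0,1]$, still obey an estimate of the form $|\,\cdot\,-w|\le c'(|\xi|+|1-w|)$ with $c'$ a slightly larger but still universal constant, so that the denominator comparability is available at every point along the telescoping path. Both are routine bookkeeping provided the regimes $w\le\tfrac{1}{4}$ and $w\ge\tfrac{1}{4}$ are kept apart throughout.
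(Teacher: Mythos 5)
Your proof is correct, and for the first four estimates it is essentially the paper's argument: comparability of the Poisson denominators $D(z_i,\xi)$ with $D(w,\xi)$ via the elementary identity (the paper's \eqref{eqn: ratio of two kernels} is your $D(z)-D(w)=(z-w)(z+w-2\cos\xi)$ divided through by $D(w)$), followed by the pointwise and first-difference bounds for $P,Q$ from Lemma \ref{lem: properties of Poisson kernel} plugged into $K=sQ$, $J=-s(1+P)$, $\partial_s K=Q(1+P)$. One small simplification you can make: the split into $w\le\tfrac14$ and $w\ge\tfrac14$ is unnecessary, because the lower bound $D(w,\xi)\gtrsim(|1-w|+|\xi|)^2$ already holds uniformly over $w\in[0,2]$ and $\xi\in\BT$ --- this is exactly what the proof of Lemma \ref{lem: properties of Poisson kernel} establishes via \eqref{eqn: lower bound for denominator of Poisson kernel case 1}--\eqref{eqn: lower bound for denominator of Poisson kernel case 2} --- so comparability follows in one stroke once $c$ is small.

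For the double-difference estimate \eqref{eqn: double difference of K derivatives} your route is genuinely different from the paper's and is an attractive alternative. The paper writes $\partial_s K=Q(1+P)$, records explicit rational-function formulas for $P(z_1,\xi)-P(z_2,\xi)$ and $Q(z_1,\xi)-Q(z_2,\xi)$, and then obtains the double difference by ``a direct calculation'' --- i.e.\ by expanding the product rule twice and controlling each resulting piece via those explicit formulas. This stays entirely at the level of first-order increments but the algebra is substantial. Your telescoping instead writes
$[\partial_s K(z_1)-\partial_s K(z_2)]-[\partial_s K(z_3)-\partial_s K(z_4)]$
as the increment $\partial_s K(z_4)-\partial_s K(z_3-\delta)$ (of length $z_1-z_2-z_3+z_4$, controlled by the single-difference bound with $D^{-3/2}$) plus a difference of two equal-length increments, which you integrate into a single difference of $\partial_s^2 K$; that single difference carries $D^{-2}$ once you expand $\partial_s^2 K=\partial_s Q\,(1+P)+Q\,\partial_s P$ and feed in the second-derivative bounds from \eqref{eqn: bound for mix 2nd derivative of P}. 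The trade-off is that you use second-order information on $P,Q$ (which the paper has anyway) in exchange for a conceptually cleaner decomposition; either way the output is the bound claimed. Two small points of bookkeeping for your version that are worth stating explicitly when writing it up: (i) the auxiliary points $z_3-\delta$ and $z_2+t\delta$, $z_3-(1-t)\delta$ for $t\in[0,1]$ satisfy $|\cdot-w|\le 3c(|\xi|+|1-w|)$, so the comparability of denominators and the validity of Lemma \ref{lem: properties of Poisson kernel} along the whole telescoping path require at most replacing $c$ by $c/3$ in the hypothesis; and (ii) these auxiliary points may fall slightly outside $[0,2]$ when $w$ is near the endpoints, but since they remain in a universally bounded neighborhood of $[0,2]$ the estimates of Lemma \ref{lem: properties of Poisson kernel} extend by the same computation, so this is harmless.
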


Suppose the inner interface $\g$ and the outer interface $\tilde{\g}$ are defined by $h$ and $H$ through \eqref{eqn: parameterization of gamma}-\eqref{eqn: def of h and H}, respectively.
Let $\eta_\d$ be defined as in the beginning of Section \ref{sec: estimate for pressure in circular geometry}.
With $\r = rw$, let
\begin{align}
\tilde{b}(w,\th,\xi):= &\;\f{w(1+h(\th+\xi)\eta_\d(w))}{1+h(\th)},
\label{eqn: def of b tilde}\\
\tilde{B}(w,\th,\xi):= &\;\f{r}{R}\cdot \f{w(1+h(\th+\xi)\eta_\d(w))}{1+H(\th)}.
\label{eqn: def of big b tilde}
\end{align}
Additionally, we define
\begin{align}
b(w,\th):= &\;\tilde{b}(w,\th,0) = \f{w(1+h(\th)\eta_\d(w))}{1+h(\th)},\label{eqn: def of b}\\
B(w,\th):= &\;\tilde{B}(w,\th,0) = \f{r}{R}\cdot \f{w(1+h(\th)\eta_\d(w))}{1+H(\th)}.\label{eqn: def of b big}
\end{align}
The motivation of introducing these quantities will be clear later in \eqref{eqn: formula for tangent derivative} and \eqref{eqn: formula for normal derivative}.
In what follows, we will work with several different configurations of interfaces, determined by $h_i$ and $H_i$ $(i = 1,2)$, respectively.
We define the corresponding quantities $\tilde{b}_i$, $\tilde{B}_i$, $b_i$ and $B_i$ as above, with $h$ and $H$ replaced by $h_i$ and $H_i$.

Recall that $m_{0,i}$ and $M_{0,i}$ are defined in \eqref{eqn: bound assumption on the Lipschitz norm of h} and \eqref{eqn: bound assumption on the Lipschitz norm of H},
while $\D m_0$ and $\D M_0$ are defined in \eqref{eqn: def of normalized W 1 inf difference of two h} and \eqref{eqn: def of normalized W 1 inf difference of two H large}.
It is straightforward to show that
\begin{lem}\label{lem: estimates involving b}
Suppose $h_i,H_i\in W^{1,\infty}(\BT)$ $(i = 1,2)$, with $m_{0,i}+M_{0,i}\ll 1$.
Then with $C$ being universal constants, for all $w\in [0,1+4\d]$ and $\xi\in \BT$,
\begin{align}
|\tilde{b}_i-b_i|\leq &\;C|\eta_\d||\xi|\|h_i'\|_{L^\infty},\\
|\tilde{b}_1-\tilde{b}_2|\leq 
&\;C(|\eta_\d||\xi|+\d|1-\eta_\d|)\D m_0\leq C(|\xi|+|1-w|)\D m_0,\label{eqn: difference between b tilde}\\
|b_1-b_2|\leq &\;C|1-\eta_\d|\|h_1-h_2\|_{L^\infty}\leq C |1-w|\D m_0,\label{eqn: difference between b}\\
|\tilde{b}_1-b_1-\tilde{b}_2+b_2|\leq &\;
C|\eta_\d||\xi|\D m_0,
\end{align}
\begin{align}
|\tilde{B}_i-B_i|\leq &\;\f{Cr}{R}|\eta_\d||\xi|\|h_i'\|_{L^\infty},\\
|\tilde{B}_1-\tilde{B}_2|+|B_1-B_2|
\leq &\;\f{Cr}{R}(|\eta_\d|\|h_1-h_2\|_{L^\infty}+\|H_1-H_2\|_{L^\infty})\leq \f{Cr\d}{R}(\D m_0+\D M_0),\label{eqn: difference between b big}\\
|\tilde{B}_1-B_1-\tilde{B}_2+B_2|\leq &\;\f{Cr}{R}|\eta_\d||\xi|( \|h_1'-h_2'\|_{L^\infty}+\|h_2'\|_{L^\infty}\|H_1-H_2\|_{L^\infty}),
\end{align}
\begin{align}
\left|\f{\pa \tilde{B}_i}{\pa\th}-\f{\pa B_i}{\pa\th}\right|
\leq &\; \f{Cr}{R}
|\eta_\d|(\|h_i'\|_{L^\infty}+\|H_i'\|_{L^\infty}\|h_i'\|_{L^\infty}|\xi|)\leq \f{Cr}{R}|\eta_\d|\|h_i'\|_{L^\infty},
\label{eqn: difference between theta derivatives of b big}\\
\left|\f{\pa B_1}{\pa\th}-\f{\pa B_2}{\pa\th}\right|
\leq &\; \f{Cr}{R}(\D m_0+\D M_0),
\end{align}
and
\beq
\begin{split}
&\;\left|\f{\pa \tilde{B}_1}{\pa\th}-\f{\pa B_1}{\pa\th}-\f{\pa \tilde{B}_2}{\pa\th}+\f{\pa B_2}{\pa\th}\right|\\
\leq&\; \f{Cr}{R} |\eta_\d|(\|h_1'-h_2'\|_{L^\infty}+\|h_2'\|_{L^\infty}
\|H_1-H_2\|_{L^\infty}+\|H_1'-H_2'\|_{L^\infty}\|h_1'\|_{L^\infty}|\xi|).
\end{split}
\label{eqn: double difference between theta derivatives of b big}
\eeq
If in addition, $h_i\in C^{1,\beta}(\BT)$ for some $\beta\in(0,1)$, then
\begin{align}
\left|\f{\pa \tilde{b}_i}{\pa\th}-\f{\pa b_i}{\pa\th}\right|
\leq &\; C
|\eta_\d|(\|h_i'\|_{\dot{C}^\beta}|\xi|^\beta+\|h_i'\|_{L^\infty}^2|\xi|)\leq C|\eta_\d|\|h_i'\|_{\dot{C}^\beta}|\xi|^\beta,\\
\left|\f{\pa b_1}{\pa\th}-\f{\pa b_2}{\pa\th}\right|
\leq &\; C
|1-\eta_\d|(\|h_1'-h_2'\|_{L^\infty}+\|h_2'\|_{L^\infty}\|h_1-h_2\|_{L^\infty})\leq C
|1-\eta_\d|\D m_0,
\label{eqn: difference between theta derivatives of b}
\end{align}
and
\beq
\left|\f{\pa \tilde{b}_1}{\pa\th}-\f{\pa b_1}{\pa\th}-\f{\pa \tilde{b}_2}{\pa\th}+\f{\pa b_2}{\pa\th}\right|
\leq C|\eta_\d||\xi|^\b(\|h_1'-h_2'\|_{\dot{C}^\b}+\|h_2'\|_{\dot{C}^\b}
\|h_1-h_2\|_{L^\infty}).
\label{eqn: double difference between theta derivatives of b}
\eeq
Here all the constants $C$ are universal.

\begin{proof}
These estimates follow directly from \eqref{eqn: def of b tilde}-\eqref{eqn: def of b big} and 
\begin{align}
\f{\pa\tilde{b}_i}{\pa \th} = &\; \f{wh_i'(\th+\xi)\eta_\d(w)(1+h_i(\th))-h_i'(\th)w(1+h_i(\th+\xi)\eta_\d(w))}{(1+h_i(\th))^2},\\
\f{\pa b_i}{\pa\th} = &\;\f{wh_i'(\th)(\eta_\d(w)-1)}{(1+h_i(\th))^2},\label{eqn: theta derivative of b}\\
%
%
%
%
%
%
\f{\pa\tilde{B}_i}{\pa \th} = &\;\f{r}{R}\cdot \f{wh_i'(\th+\xi)\eta_\d(w)(1+H_i(\th))-H_i'(\th)w(1+h_i(\th+\xi)\eta_\d(w))}{(1+H_i(\th))^2},\\
\f{\pa B_i}{\pa\th} = &\;\f{r}{R}\cdot \f{wh_i'(\th)\eta_\d(w)(1+H_i(\th))-H_i'(\th)w(1+h_i(\th)\eta_\d(w))}{(1+H_i(\th))^2}.\label{eqn: theta derivative of big b}
\end{align}
We omit the details.
\end{proof}
\begin{rmk}\label{rmk: tilde b and b satisfy assumptions of Lemma 4.1}
Taking $h_1 = H_1 = 0$ (or $h_2 = H_2 = 0$), we find by \eqref{eqn: difference between b tilde}, \eqref{eqn: difference between b} and \eqref{eqn: difference between b big} that
\begin{align}
|\tilde{b}_i-w|+|b_i-w|\leq &\;C(|\xi|+|1-w|)m_{0,i},\\
\left|\tilde{B}_i-\f{rw}{R}\right|+\left|B_i-\f{rw}{R}\right|\leq &\; \f{Cr \d }{R}(m_{0,i}+M_{0,i})\leq C\left(|\xi|+\left|1-\f{rw}{R}\right|\right)(m_{0,i}+M_{0,i}).
\end{align}
Here we used the fact that $|1-\f{rw}{R}|\geq C\d$ for all $w\in [0,1+4\d]$ (c.f.\;\eqref{eqn: condition on delta}).
If $m_{0,i}+M_{0,i}$ is assumed to be suitably small, $\tilde{b}_i(w,\th,\xi)$ and $b_i(w,\th)$ satisfy the assumption of Lemma \ref{lem: difference of kernels}, while $\tilde{B}_i(w,\th,\xi)$ and $B_i(w,\th)$ satisfy the assumption of Lemma \ref{lem: difference of kernels} with $w$ there replaced by $\f{rw}{R}$.
\end{rmk}
\end{lem}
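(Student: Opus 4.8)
The plan is to verify every inequality by direct computation, starting from the closed-form expressions \eqref{eqn: def of b tilde}--\eqref{eqn: def of b big} together with the formulas for $\pa_\th\tilde b_i,\pa_\th b_i,\pa_\th\tilde B_i,\pa_\th B_i$ displayed just after the statement. The structural fact that makes everything work is that $m_{0,i}+M_{0,i}\ll 1$ forces $\|h_i\|_{L^\infty},\|H_i\|_{L^\infty}\ll\d$, $\|h_i'\|_{L^\infty}\le m_{0,i}$, $\|H_i'\|_{L^\infty}\le M_{0,i}$, so $1+h_i(\th)$ and $1+H_i(\th)$ lie in $[\tfrac12,\tfrac32]$; hence every denominator that appears is uniformly bounded away from zero and may be cleared. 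Since the $\tilde B$, $B$ quantities differ from the $\tilde b$, $b$ quantities only by the harmless prefactor $r/R$ and the replacement of $1+h_i(\th)$ by $1+H_i(\th)$ in the denominator, it suffices to describe the scheme for the $\tilde b$, $b$ estimates.

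For the zeroth-order bounds I would put each difference over a common denominator and expand. For $\tilde b_1-\tilde b_2$, for instance, the numerator over $(1+h_1(\th))(1+h_2(\th))$ splits (up to the bounded factor $w$) into a term $\eta_\d\big(h_1(\th+\xi)-h_2(\th+\xi)\big)-\big(h_1(\th)-h_2(\th)\big)$ and a quadratic remainder $\eta_\d\big(h_1(\th+\xi)h_2(\th)-h_2(\th+\xi)h_1(\th)\big)$. Writing $g=h_1-h_2$, the first piece equals $\eta_\d(g(\th+\xi)-g(\th))-(1-\eta_\d)g(\th)$, bounded by $|\eta_\d||\xi|\|g'\|_{L^\infty}+|1-\eta_\d|\|g\|_{L^\infty}\le(|\eta_\d||\xi|+\d|1-\eta_\d|)\D m_0$; the quadratic remainder, after one further round of adding and subtracting, again exposes in each monomial either a $\theta$-increment (contributing an $|\eta_\d||\xi|\D m_0$ term) or a $(1-\eta_\d)$ weight, the spare factor always being $O(m_{0,i})$ and hence $\ll 1$. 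This gives \eqref{eqn: difference between b tilde}, and the same bookkeeping — expand, then re-group every two-small-factor monomial so as to extract an increment or a $(1-\eta_\d)$ — produces \eqref{eqn: difference between b}, \eqref{eqn: difference between b big}, the $\pa_\th\tilde B_i-\pa_\th B_i$ estimates and \eqref{eqn: difference between theta derivatives of b}. The final form $C(|\xi|+|1-w|)\D m_0$ is then immediate from $|\eta_\d|\le1$ and the fact that $1-\eta_\d\neq0$ only where $|1-w|\ge\d$.

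For the first-$\theta$-derivative quantities one starts from the displayed formulas, whose numerators are low-degree polynomials in $(h_i,h_i',H_i,H_i')$ over $(1+h_i)^2$ or $(1+H_i)^2$; the scheme is identical, the one new ingredient being the Hölder increment $|h_i'(\th+\xi)-h_i'(\th)|\le\|h_i'\|_{\dot C^\beta}|\xi|^\beta$, used precisely whenever a shifted derivative $h_i'(\th+\xi)$ is matched against $h_i'(\th)$ — this is the source of the $|\xi|^\beta$ on the right of \eqref{eqn: difference between theta derivatives of b} and \eqref{eqn: double difference between theta derivatives of b}. The double-difference estimates \eqref{eqn: double difference between theta derivatives of b big} and \eqref{eqn: double difference between theta derivatives of b} are the only mildly delicate ones: after expanding all four terms over the common denominator, one organizes the numerator so that the leading combination is the second difference of the relevant quantity ($h_1'-h_2'$, $H_1-H_2$, $H_1'-H_2'$, as appropriate) evaluated across the two shift levels, while every remaining monomial is a product of one genuine difference factor and one factor small in the relevant norm, hence absorbed by a constant times the leading term. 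Finally, Remark \ref{rmk: tilde b and b satisfy assumptions of Lemma 4.1} follows by specializing \eqref{eqn: difference between b tilde}, \eqref{eqn: difference between b}, \eqref{eqn: difference between b big} to $h_1=H_1=0$ and invoking $|1-\tfrac{rw}{R}|\ge C\d$ on $w\in[0,1+4\d]$, valid by \eqref{eqn: condition on delta}.

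The only genuine difficulty here is bookkeeping. There are about a dozen inequalities, and in the double-difference cases one must retain every cross term of the form (small factor)$\times$(difference) and correctly decide, for each monomial, which factor is a difference, which is a $\theta$- or Hölder-increment, and which is merely an $O(m_{0,i}+M_{0,i})$ smallness. A systematic common-denominator expansion together with a fixed classification of monomials into ``smallness'', ``difference'' and ``increment'' factors reduces the entire verification to routine algebra — which is presumably why the authors omit the details.
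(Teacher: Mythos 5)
Your proposal is correct and follows the same direct-computation route the paper's one-line proof implies: put differences over a common denominator, split each numerator into a leading first-difference (or second-difference) term plus quadratic remainders, and classify every monomial as carrying a $\theta$-increment, a $(1-\eta_\d)$ weight, or an $O(m_{0,i}+M_{0,i})$ smallness factor. One small slip: the $|\xi|^\beta$ produced by the H\"older increment appears in the unlabeled bound for $|\pa_\th\tilde b_i-\pa_\th b_i|$ and in \eqref{eqn: double difference between theta derivatives of b}, not in \eqref{eqn: difference between theta derivatives of b} (that quantity has no $\xi$-dependence at all, so no $|\xi|^\beta$ can arise there); this does not affect the substance of your argument.
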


\subsection{Estimates along $\g$}
Let $x = f(\th)(\cos\th,\sin\th)\in \g$.
With abuse of notations, let $y = x((\r\cos(\th+\xi), \r\sin(\th+\xi)))$ be an arbitrary point in $\BR^2$, where the map $x$ is defined in \eqref{eqn: change of variables to the reference coordinate}.
Then
\beq
\begin{split}
e_\th\cdot \na(\G*g) = &\;\f{1}{2\pi}\int_{\tilde{\Om}} \f{(y-x)\cdot e_\th}{|x-y|^2}g_0(X(y))\,dy\\
=&\;\f{1}{2\pi}\int_{\BT}d\xi \int_0^{r(1+4\d)} \f{|y|\sin\xi \cdot g_0(\r, \th+\xi)}{f(\th)^2+|y|^2-2|y|f(\th)\cos\xi}\cdot\f{\pa |y|}{\pa \r}|y|\,d\r\\
=&\;\f{1}{4\pi}\int_{\BT}d\xi \int_0^{r(1+4\d)} \f{2\left(\f{|y|}{f(\th)}\right)^2\sin\xi }{1+\left(\f{|y|}{f(\th)}\right)^2-2\f{|y|}{f(\th)}\cos\xi}\cdot\f{\pa |y|}{\pa \r}g_0(\r, \th+\xi)\,d\r.
\end{split}
\label{eqn: formula for tangent derivative crude form}
\eeq
For $w\in [0,1+4\d]$, $|y| = |y(\r,\th+\xi)| =  rw[1+h(\th+\xi)\eta_\d(w)]$.
Note that the third term in \eqref{eqn: change of variables to the reference coordinate} does not show up since $\r = rw \leq R(1-2\d)$.
Then \eqref{eqn: formula for tangent derivative crude form} becomes
\beq
(e_\th\cdot \na(\G*g) )_{\g(\th)}
=\f{r}{4\pi}\int_{\BT}d\xi \int_0^{1+4\d} K(\tilde{b},\xi)\cdot\f{\pa |y|}{\pa \r}g_0(rw, \th+\xi)\,dw.
\label{eqn: formula for tangent derivative}
\eeq
Similarly, 
\beq
(e_r\cdot \na(\G*g))_{\g(\th)} 
= \f{r}{4\pi}\int_{\BT}d\xi\int_{0}^{1+4\d} J(\tilde{b},\xi)\cdot \f{\pa|y|}{\pa \r}g_0(rw, \th+\xi)\,dw.
\label{eqn: formula for normal derivative}
\eeq

We first show

\begin{lem}\label{lem: L inf estimate of difference same source}
Suppose for $i = 1,2$, $h_i\in W^{1,\infty}(\BT)$ such that $m_{0,i}\ll 1$.
Let $\D m_0$ be defined in \eqref{eqn: def of normalized W 1 inf difference of two h}.
Let $x_i(X)$ be the map \eqref{eqn: change of variables to the reference coordinate} determined by $h_i$ ($H$ is irrelevant in this context, and one may take $H = 0$ in \eqref{eqn: change of variables to the reference coordinate} without loss of generality.)
Let $X_i(x)$ be its inverse.
Define $g_i = g_0(X_i(x))$.
Then
\beq
\|(e_\th\cdot \na(\G*g_1) )_{\g_1(\th)}-(e_\th\cdot \na(\G*g_2) )_{\g_2(\th)}\|_{L^\infty(\BT)}
\leq  Cr\d |\ln \d| \D m_0\|g_0\|_{L^\infty},
\label{eqn: L inf difference of tangential gradient}
\eeq
where $C$ is a universal constant.

In addition, $\|(e_r\cdot \na(\G*g_1) )_{\g_1(\th)}-(e_r\cdot \na(\G*g_2) )_{\g_2(\th)}\|_{L^\infty(\BT)}$ satisfies an identical estimate.
\begin{proof}
Let $y_i = x_i(\r,\th+\xi)$, with $|y_i| = \r [1+h_i(\th+\xi)\eta_\d(\r/r)]$.
We calculate
\beq
\f{\pa |y_i|}{\pa \r}(\r,\th+\xi)-1
= h_i(\th+\xi)(\eta_\d(w)+w\eta_\d'(w)).
\label{eqn: ingredient 1}
\eeq

By Lemma \ref{lem: difference of kernels}, Lemma \ref{lem: estimates involving b} and Remark \ref{rmk: tilde b and b satisfy assumptions of Lemma 4.1},
\beq
\begin{split}
&\;\left|\int_{\BT}d\xi \int_{0}^{1+4\d} (K(\tilde{b}_1,\xi)-K(\tilde{b}_2,\xi))\cdot \f{\pa |y_1|}{\pa \r}g_0(rw,\th+\xi)\,dw\right|\\
\leq &\;C\D m_0\|g_0\|_{L^\infty}\int_{\BT}d\xi \int_0^{1+4\d}\f{|\eta_\d||\xi|+|1-\eta_\d|\d } {1+w^2-2w\cos \xi}\,dw\\
\leq &\;C\d |\ln \d| \D m_0\|g_0\|_{L^\infty}.
\end{split}
\eeq
On the other hand, by \eqref{eqn: ingredient 1},
\beq
\begin{split}
&\;\left|\int_{\BT}d\xi \int_{0}^{1+4\d} K(\tilde{b}_2,\xi)\cdot \left(\f{\pa |y_1|}{\pa \r}-\f{\pa |y_2|}{\pa \r}\right)g_0(rw,\th+\xi)\,dw\right|\\
\leq &\; C\int_{\BT}d\xi \int_{1-2\d}^{1+2\d} \f{1}{|1-w|+|\xi|} \cdot \|h_1-h_2\|_{L^\infty}\d^{-1}\|g_0\|_{L^\infty}\,dw \\
\leq &\; C\d |\ln \d|\D m_0\|g_0\|_{L^\infty}.
\end{split}
\eeq
Combining these estimates with \eqref{eqn: formula for tangent derivative} yields \eqref{eqn: L inf difference of tangential gradient}.
The estimate concerning $(e_r\cdot \na(\G*g_i) )_{\g_i(\th)}$ can be justified in the same way.
\end{proof}
\end{lem}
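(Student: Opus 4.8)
The plan is to start from the explicit contour representations \eqref{eqn: formula for tangent derivative} and \eqref{eqn: formula for normal derivative}, which express
$$
(e_\th\cdot \na(\G*g_i))_{\g_i(\th)}=\f{r}{4\pi}\int_\BT d\xi\int_0^{1+4\d}K(\tilde b_i,\xi)\,\pa_\r|y_i|\,g_0(rw,\th+\xi)\,dw
$$
and the same quantity with $J$ in place of $K$ for the $e_r$-component, where $\tilde b_i$ is the profile \eqref{eqn: def of b tilde} built from $h_i$ and $\pa_\r|y_i|-1=h_i(\th+\xi)\bigl(\eta_\d(w)+w\eta_\d'(w)\bigr)$ as in \eqref{eqn: ingredient 1}. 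Since the source profile $g_0$ is common to $i=1,2$, I would subtract the two representations and split the difference of the integrands as
$$
\bigl(K(\tilde b_1,\xi)-K(\tilde b_2,\xi)\bigr)\,\pa_\r|y_1|\;+\;K(\tilde b_2,\xi)\,\bigl(\pa_\r|y_1|-\pa_\r|y_2|\bigr),
$$
and estimate the two resulting double integrals separately. The key inputs will be the kernel bounds of Lemma \ref{lem: difference of kernels} --- applicable because, by Remark \ref{rmk: tilde b and b satisfy assumptions of Lemma 4.1} and $m_{0,i}\ll1$, each $\tilde b_i$ lies within a small multiple of $|\xi|+|1-w|$ of the common reference radius $w$ --- together with the pointwise estimates for $\tilde b_i$ supplied by Lemma \ref{lem: estimates involving b}.

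For the first piece I would use $|K(\tilde b_1,\xi)-K(\tilde b_2,\xi)|\le C|\tilde b_1-\tilde b_2|\,(1+w^2-2w\cos\xi)^{-1}$ from Lemma \ref{lem: difference of kernels}, combined with $|\tilde b_1-\tilde b_2|\le C\bigl(|\eta_\d(w)|\,|\xi|+\d\,|1-\eta_\d(w)|\bigr)\D m_0$ from \eqref{eqn: difference between b tilde} and the fact that $\pa_\r|y_1|$ is bounded by $1+Cm_{0,1}\le C$ by \eqref{eqn: ingredient 1}. This reduces the first piece to $Cr\,\D m_0\,\|g_0\|_{L^\infty}$ times
$$
\int_\BT d\xi\int_0^{1+4\d}\f{|\eta_\d(w)|\,|\xi|+\d\,|1-\eta_\d(w)|}{1+w^2-2w\cos\xi}\,dw,
$$
and using $1+w^2-2w\cos\xi\simeq(1-w)^2+\xi^2$ together with the facts that $\eta_\d$ is supported in the $2\d$-annulus around $w=1$ and $1-\eta_\d$ vanishes on the $\d$-annulus around $w=1$, the $\xi$-integration produces a logarithm and the subsequent $w$-integration yields $O(\d|\ln\d|)$. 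For the second piece, \eqref{eqn: ingredient 1} gives $\pa_\r|y_1|-\pa_\r|y_2|=(h_1-h_2)(\th+\xi)\bigl(\eta_\d(w)+w\eta_\d'(w)\bigr)$, which is supported in the $2\d$-annulus and bounded in absolute value by $C\d^{-1}\|h_1-h_2\|_{L^\infty}\le C\,\D m_0$ (since $|\eta_\d'|\le C\d^{-1}$); together with the size bound $|K(\tilde b_2,\xi)|\le C(|1-w|+|\xi|)^{-1}$ following from \eqref{eqn: bound on K}, this piece is $Cr\,\D m_0\,\|g_0\|_{L^\infty}$ times $\int_{1-2\d}^{1+2\d}\int_\BT(|1-w|+|\xi|)^{-1}\,d\xi\,dw$, which is again $O(\d|\ln\d|)$. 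Summing the two contributions yields \eqref{eqn: L inf difference of tangential gradient}.

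The $e_r$-statement will follow by the identical argument with $K$ replaced by $J$, since Lemma \ref{lem: difference of kernels} provides the same size and difference bounds for $J$. I expect the only genuinely delicate point to be the bookkeeping of the cut-off $\eta_\d$: both $\tilde b_1-\tilde b_2$ and $\pa_\r|y_1|-\pa_\r|y_2|$ are concentrated on annuli of radial width $O(\d)$ about $\pa B_r$, and it is exactly this $O(\d)$-localization --- once the $\xi$-integral of the Poisson-type kernel has been carried out and a logarithm extracted --- that upgrades the naive $O(1)$ bound to the sharp $O(\d|\ln\d|)$ factor; along the way one should also record the elementary reductions $\|h_1-h_2\|_{L^\infty}\le\d\,\D m_0$ and $\|h_1'-h_2'\|_{L^\infty}\le\D m_0$ so that everything closes in terms of $\D m_0$.
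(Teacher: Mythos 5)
Your proposal is correct and follows essentially the same route as the paper: the same split of the difference of integrands into a kernel-difference piece and a $\pa_\r|y_i|$-difference piece, the same invocations of Lemma \ref{lem: difference of kernels}, Lemma \ref{lem: estimates involving b} and Remark \ref{rmk: tilde b and b satisfy assumptions of Lemma 4.1}, and the same $O(\d|\ln\d|)$ integral estimates. The extra bookkeeping you supply ($\|h_1-h_2\|_{L^\infty}\le\d\,\D m_0$, the two-region analysis of the $w$-integral) just makes explicit what the paper compresses.
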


\begin{lem}\label{lem: L^inf estimate of derivative of growth potential inner}
Let $h\in W^{1,\infty}(\BT)$ such that $m_{0}\ll 1$, which defines the map $x$ in \eqref{eqn: change of variables to the reference coordinate} and $g = g_0(X(x))$.
Then
\beq
\begin{split}
&\;\|(e_\th\cdot \na(\G*g))_{\g(\th)} \|_{L^\infty(\BT)}+
\|(e_r\cdot \na(\G*g))_{\g(\th)}-c_{g_0} \|_{L^\infty(\BT)}\\
\leq &\;Cr( m_0 \d |\ln \d| \|g_0\|_{L^\infty(B_{(1+4\d)r})}+ \|e_\th\cdot \na g_0 \|_{L^2(B_{r(1+4\d)})}),
\end{split}
\label{eqn: L inf bound of derivative of potential inner}
\eeq
where $C$ is a universal constant and
\beq
c_{g_0} := -\f{1}{2\pi r}\int_{B_r}g_0(X)\,dX.
\label{eqn: def of c characteristic normal derivative inner interface}
\eeq

\begin{proof}
We first derive an $L^\infty$-estimate of
\beq
(e_\th\cdot \na(\G*g_0) )_{\pa B_r}
=\f{r}{4\pi}\int_{\BT}d\xi \int_0^{1+4\d} K(w,\xi)g_0(rw, \th+\xi)\,dw,
\eeq
which corresponds to the case $h = 0$.
Define $\bar{g}_0(rw) = (2\pi)^{-1}\int_\BT g_0(rw,\xi)\,d\xi$.
Since $K(w,\cdot)$ is an odd kernel, by H\"{o}lder's inequality and Sobolev embedding,
\beq
\begin{split}
&\;|(e_\th\cdot \na(\G*g_0) )_{\pa B_r}|\\
= &\; \f{r}{4\pi}\left|\int_{\BT}d\xi \int_0^{1+4\d} K(w,\xi)(g_0(rw, \th+\xi)-\bar{g}_0(rw))\,dw\right|\\
\leq &\; Cr \int_0^{1+4\d}\left\|\f{1}{|1-w|+|\xi|}\right\|_{L^1_\xi(\BT)}\|g_0(rw, \cdot)-\bar{g}_0(rw)\|_{L^\infty_\xi(\BT)}\,dw\\
\leq &\; Cr \int_0^{1+4\d}(1+|\ln|1-w||)\|\pa_\th g_0(rw, \cdot)\|_{L^2(\BT)}\,dw\\
\leq &\; Cr \|1+|\ln|1-w||\|_{L^2([0,1+4\d])} \left(\int_0^{1+4\d} r\|e_\th\cdot \na g_0 \|_{L^2(\pa B_{rw})}^2\,dw\right)^{1/2}\\
\leq &\; Cr \|e_\th\cdot \na g_0 \|_{L^2(B_{r(1+4\d)})}.
\end{split}
\label{eqn: bound tangent derivative of potential for a perfect circle inner}
\eeq
%
Now we take in Lemma \ref{lem: L inf estimate of difference same source} that $h_1= h$ and $h_2 = 0$, and derive
\beq
\begin{split}
&\;\|(e_\th\cdot \na(\G*g))_{\g(\th)} \|_{L^\infty(\BT)}\\
\leq &\;\|(e_\th\cdot \na(\G*g) )_{\g(\th)}-(e_\th\cdot \na(\G*g_0) )_{\pa B_r}\|_{L^\infty(\BT)}+\|(e_\th\cdot \na(\G*g_0) )_{\pa B_r}\|_{L^\infty(\BT)}\\
\leq &\;Cr\d |\ln \d| m_0 \|g_0\|_{L^\infty}+Cr \|e_\th\cdot \na g_0 \|_{L^2(B_{r(1+4\d)})}.
\end{split}
\label{eqn: bounding L inf of tangential derivative of growth potential at inner interface}
\eeq

Next we study
\beq
\begin{split}
&\;(e_r\cdot \na(\G*g_0) )_{\pa B_r}\\
=&\;\f{r}{4\pi}\int_{\BT}d\xi \int_0^{1+4\d} J(w,\xi)(g_0(rw, \th+\xi)-\bar{g}_0(rw))\,dw\\
&\; + \f{r}{4\pi} \int_0^{1+4\d} \int_{\BT}d\xi \,J(w,\xi)\bar{g}_0(rw)\,dw.
\end{split}
\eeq
The first term can be bounded exactly as in \eqref{eqn: bound tangent derivative of potential for a perfect circle inner}.
We use the definition of $J$ in \eqref{eqn: def of function J} to simplify the second term as
\beq
\f{r}{4\pi} \int_0^{1+4\d} \int_{\BT}d\xi \,J(w,\xi)\bar{g}_0(rw)\,dw = -r \int_0^1 w\bar{g}_0(rw)\,dw = -\f{1}{2\pi r}\int_{B_r} g_0(X)\,dX.
\eeq
Then the desired estimate follows.
\end{proof}
\end{lem}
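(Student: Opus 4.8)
The plan is to first prove the estimate in the perfectly radial configuration $h\equiv 0$ (so that $\g=\pa B_r$ and $g=g_0$), and then transfer it to general $h$ by invoking the difference bound in Lemma~\ref{lem: L inf estimate of difference same source}.

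For the radial case, setting $h=0$ in \eqref{eqn: formula for tangent derivative} and \eqref{eqn: formula for normal derivative} gives $\tilde b=w$ and $\pa|y|/\pa\r=1$, so that
\beqo
(e_\th\cdot\na(\G*g_0))_{\pa B_r}=\f{r}{4\pi}\int_\BT d\xi\int_0^{1+4\d}K(w,\xi)\,g_0(rw,\th+\xi)\,dw,
\eeqo
and likewise with $J$ in place of $K$ for the radial component. For the tangential term I would use that $K(w,\cdot)$ is odd in $\xi$, so $\int_\BT K(w,\xi)\,d\xi=0$, which allows replacing $g_0(rw,\th+\xi)$ by $g_0(rw,\th+\xi)-\bar g_0(rw)$ with $\bar g_0(rw):=\f1{2\pi}\int_\BT g_0(rw,\xi)\,d\xi$. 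Combining the pointwise bound $|K(w,\xi)|\le C(|1-w|+|\xi|)^{-1}$ from Lemma~\ref{lem: difference of kernels}, the elementary estimate $\|(|1-w|+|\xi|)^{-1}\|_{L^1_\xi(\BT)}\le C(1+|\ln|1-w||)$, the one-dimensional Sobolev embedding $\|g_0(rw,\cdot)-\bar g_0(rw)\|_{L^\infty_\xi}\le C\|\pa_\th g_0(rw,\cdot)\|_{L^2(\BT)}$, and then Cauchy--Schwarz in $w$ together with $\|1+|\ln|1-w||\|_{L^2([0,1+4\d])}\le C$ and Fubini in the radial variable, one obtains $\|(e_\th\cdot\na(\G*g_0))_{\pa B_r}\|_{L^\infty(\BT)}\le Cr\|e_\th\cdot\na g_0\|_{L^2(B_{r(1+4\d)})}$.

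For the radial component I would instead split $g_0=(g_0-\bar g_0)+\bar g_0$ by hand, since oddness is unavailable ($J$ is even in $\xi$). The oscillatory part is handled exactly as above, because by Lemma~\ref{lem: difference of kernels} the kernel $J$ obeys the same pointwise bound $|J(w,\xi)|\le C(|1-w|+|\xi|)^{-1}$. For the mean part, writing $J=-s(1+P)$ and using $\f1{2\pi}\int_\BT P(w,\xi)\,d\xi=1$ for $w<1$ and $=-1$ for $w>1$ (Lemma~\ref{lem: properties of Poisson kernel}), so that $\int_\BT J(w,\xi)\,d\xi=-4\pi w$ on $w<1$ and $0$ on $w>1$, one computes
\beqo
\f{r}{4\pi}\int_0^{1+4\d}\!\!\int_\BT J(w,\xi)\,d\xi\,\bar g_0(rw)\,dw=-r\int_0^1 w\,\bar g_0(rw)\,dw=-\f1{2\pi r}\int_{B_r}g_0\,dX=c_{g_0},
\eeqo
which is exactly the asserted constant; hence $\|(e_r\cdot\na(\G*g_0))_{\pa B_r}-c_{g_0}\|_{L^\infty(\BT)}\le Cr\|e_\th\cdot\na g_0\|_{L^2(B_{r(1+4\d)})}$.

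Finally, applying Lemma~\ref{lem: L inf estimate of difference same source} with $h_1=h$ and $h_2=0$ (so $\D m_0=m_0$, $g_1=g$, $g_2=g_0$, $\g_2=\pa B_r$) bounds both $\|(e_\th\cdot\na(\G*g))_{\g}-(e_\th\cdot\na(\G*g_0))_{\pa B_r}\|_{L^\infty(\BT)}$ and the analogous $e_r$-difference by $Cr\d|\ln\d|\,m_0\|g_0\|_{L^\infty(B_{(1+4\d)r})}$; adding these to the two radial-case estimates yields \eqref{eqn: L inf bound of derivative of potential inner}. I expect the main obstacle to be the radial case: one must track the Jacobian and the logarithmic integrability in $w$ carefully to land the sharp power of $r$ in front of the $L^2$-gradient norm, and—more delicately—pin down the constant $c_{g_0}$ through the piecewise-in-$w$ evaluation of $\int_\BT J(w,\xi)\,d\xi$ across $w=1$. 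Once the radial case is settled, the passage to general $h$ is a direct application of the already-established difference estimate.
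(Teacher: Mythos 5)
Your proposal matches the paper's own proof in all essential respects: the same reduction to the radial case $h=0$, the same use of oddness of $K$ to subtract $\bar g_0$, the same H\"older/Sobolev chain culminating in $Cr\|e_\th\cdot\na g_0\|_{L^2}$, the same computation of the mean part for $J$ yielding exactly $c_{g_0}$, and the same application of Lemma~\ref{lem: L inf estimate of difference same source} with $h_1=h$, $h_2=0$ for the transfer. The only cosmetic difference is that you spell out the piecewise value of $\int_\BT J(w,\xi)\,d\xi$ via $J=-s(1+P)$ and $\int_\BT P\,d\xi=\pm 2\pi$, while the paper treats this step as a one-line computation.
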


Next we derive $W^{1,p}$-estimates for $(e_\th\cdot \na(\G*g))_{\g(\th)}$ and $(e_r\cdot \na(\G*g))_{\g(\th)}$.

\begin{lem}\label{lem: W 1p estimate of tangent derivative}
Assume $h_1,h_2\in C^{1,\beta}(\mathbb{T})$ for some $\beta\in (0,1)$, such that $m_{0,i} \ll 1$.
Let $\D m_0$ be defined in \eqref{eqn: def of normalized W 1 inf difference of two h}, and let $g_i(x) = g_0(X_i(x))$.
Then for all $p\in [2,\infty)$,
\beq
\begin{split}
&\;\|(e_\th\cdot \na(\G*g_1))_{\g_1(\th)}
-(e_\th\cdot \na(\G*g_2))_{\g_2(\th)}\|_{\dot{W}^{1,p}(\BT)}\\
\leq
&\;Cr\|g_0\|_{L^\infty(B_{(1+4\d)r})}\left[(1+ \d^\b(\|h_1'\|_{\dot{C}^\b}+\|h_2'\|_{\dot{C}^\b}))\D m_0 +\d^\b\|h_1'-h_2'\|_{\dot{C}^\b}\right]\\
&\; +Cr\D m_0\|e_\th\cdot \na g_0\|_{L^2(B_{(1+4\d)r})}.
\label{eqn: W 1p estimate for difference of tangent derivative}
\end{split}
\eeq
where $C = C(p,\b)$.
\begin{proof}

Let $y_i = x_i(\r,\th+\xi)$.
We take $\theta$-derivative in \eqref{eqn: formula for tangent derivative}.
\beq
\begin{split}
&\;
\f{d}{d\th}(e_\th\cdot \na(\G*g_i))_{\g_i(\th)}\\
=&\;
\f{r}{4\pi}\int_{\BT}d\xi \int_0^{1+4\d} dw\,
\f{\pa}{\pa\th}\left[K(\tilde{b}_i,\xi)\cdot \f{\pa |y_i|}{\pa \r}g_0(rw,\xi+\th)\right]\\
= &\;
\f{r}{4\pi}\int_{\BT}d\xi \int_0^{1+4\d} dw\, \left[\f{\pa K}{\pa s}(\tilde{b}_i, \xi)\f{\pa \tilde{b}_i}{\pa \th}-\f{\pa K}{\pa s}(b_i,\xi)\f{\pa b_i}{\pa \th}\right]\left[\f{\pa |y_i|}{\pa \r}g_0\right]_{(rw,\xi+\th)}\\
&\;+\f{r}{4\pi}\int_{\BT}d\xi \int_0^{1+4\d} dw\, \f{\pa K}{\pa s}(b_i,\xi)\f{\pa b_i}{\pa \th}\left(\left[\f{\pa |y_i|}{\pa \r}g_0\right]_{(rw,\xi+\th)}-\left[\f{\pa |y_i|}{\pa \r}g_0\right]_{(rw,\th)}\right)\\
&\;-\f{r}{4\pi}\int_{\BT}d\xi \int_0^{1+4\d} dw\,
 \left[\f{\pa K}{\pa s}(\tilde{b}_i,\xi)\f{\pa \tilde{b}_i}{\pa \xi}+\f{\pa K}{\pa \xi}(\tilde{b}_i,\xi)\right]\left(\left[\f{\pa |y_i|}{\pa \r}g_0\right]_{(rw,\xi+\th)} -\left[\f{\pa |y_i|}{\pa \r}g_0\right]_{(rw,\th)}\right)\\
=:&\;J_{\th,1}^{(i)}+J_{\th,2}^{(i)}+J_{\th,3}^{(i)}.
\label{eqn: decomposition of derivative of tangent derivative}
\end{split}
\eeq

Here we exchanged the integral with the $\th$-derivative, which can be justified rigorously by a limiting  argument.
In $J_{\th,2}^{(i)}$, an extra term is inserted without changing its value, since $\pa_s K(b_i,\xi)$ is odd in $\xi$.
When deriving $J_{\th,3}^{(i)}$, we used the fact that
\beq
\f{\pa}{\pa\th}\left[\f{\pa |y_i|}{\pa \r}g_0\right]_{(rw,\xi+\th)}=\f{\pa}{\pa\xi}\left[\f{\pa |y_i|}{\pa \r}g_0\right]_{(rw,\xi+\th)}
\eeq
and then integrated by parts.
Note that it is not clear a priori whether these integrands are integrable at $(w,\xi) = (1,0)$, so we need to write them as principal value integrals in the $w$-variable in the first place.
Yet, it will be clear in the following that all these integrands are absolutely integrable. 
For this reason, we omitted the notations for the principal value integral.

We start with bounding $J_{\th,1}^{(1)}-J_{\th,1}^{(2)}$.
\beq
\begin{split}
&\;J_{\th,1}^{(1)}-J_{\th,1}^{(2)}\\
=&\;\f{r}{4\pi}\int_{\BT}d\xi \int_0^{1+4\d} dw\,\left(\f{\pa K}{\pa s}(\tilde{b}_1, \xi)-\f{\pa K}{\pa s}(\tilde{b}_2,\xi)\right)\left(\f{\pa \tilde{b}_1}{\pa \th}-\f{\pa b_1}{\pa \th}\right)\left[\f{\pa |y_1|}{\pa \r}g_0\right]_{(rw,\xi+\th)}\\
&\;+\f{r}{4\pi}\int_{\BT}d\xi \int_0^{1+4\d} dw\,\f{\pa K}{\pa s}(\tilde{b}_2,\xi)\left(\f{\pa \tilde{b}_1}{\pa \th}-\f{\pa b_1}{\pa \th}-\f{\pa \tilde{b}_2}{\pa \th}+\f{\pa b_2}{\pa \th}\right)\left[\f{\pa |y_1|}{\pa \r}g_0\right]_{(rw,\xi+\th)}\\
&\;+\f{r}{4\pi}\int_{\BT}d\xi \int_0^{1+4\d} dw\, \left(\f{\pa K}{\pa s}(\tilde{b}_1, \xi)-\f{\pa K}{\pa s}(b_1,\xi) -\f{\pa K}{\pa s}(\tilde{b}_2, \xi)+\f{\pa K}{\pa s}(b_2,\xi)\right)\f{\pa b_1}{\pa \th}\left[\f{\pa |y_1|}{\pa \r}g_0\right]_{(rw,\xi+\th)}\\
&\;+\f{r}{4\pi}\int_{\BT}d\xi \int_0^{1+4\d} dw\,\left(\f{\pa K}{\pa s}(\tilde{b}_2, \xi)-\f{\pa K}{\pa s}(b_2,\xi)\right)\left(\f{\pa b_1}{\pa \th}-\f{\pa b_2}{\pa \th}\right)\left[\f{\pa |y_1|}{\pa \r}g_0\right]_{(rw,\xi+\th)}\\
&\;+\f{r}{4\pi}\int_{\BT}d\xi \int_0^{1+4\d} dw\, \f{\pa K}{\pa s}(\tilde{b}_2,\xi)\left(\f{\pa \tilde{b}_2}{\pa \th}-\f{\pa b_2}{\pa \th}\right)\left[\left(\f{\pa |y_1|}{\pa \r}-\f{\pa |y_2|}{\pa \r}\right)g_0\right]_{(rw,\xi+\th)}\\
&\;+\f{r}{4\pi}\int_{\BT}d\xi \int_0^{1+4\d} dw\, \left(\f{\pa K}{\pa s}(\tilde{b}_2, \xi)-\f{\pa K}{\pa s}(b_2,\xi)\right)\f{\pa b_2}{\pa \th}\left[\left(\f{\pa |y_1|}{\pa \r}-\f{\pa |y_2|}{\pa \r}\right)g_0\right]_{(rw,\xi+\th)}.
\end{split}
\label{eqn: decomposition of difference of J theta 1}
\eeq
By Lemma \ref{lem: difference of kernels}, Lemma \ref{lem: estimates involving b}, Lemma \ref{lem: properties of Poisson kernel} and \eqref{eqn: ingredient 1},
\beq
\begin{split}
&\;|J_{\th,1}^{(1)}-J_{\th,1}^{(2)}|\\
\leq &\;Cr\|g_0\|_{L^\infty}\int_{\BT}d\xi \int_{0}^{1+4\d} dw\,\f{|\tilde{b}_1-\tilde{b}_2|}{(|1-w|+|\xi|)^3}\cdot |\eta_\d| \|h_1'\|_{\dot{C}^\b}|\xi|^\b\\
&\;+Cr\|g_0\|_{L^\infty}\int_{\BT}d\xi \int_{0}^{1+4\d} dw\,\f{|\eta_\d||\xi|^\b(\|h_1'-h_2'\|_{\dot{C}^\b}+\|h_2'\|_{\dot{C}^\b}
\|h_1-h_2\|_{L^\infty})}{(|1-w|+|\xi|)^2} \\
&\;+Cr\|g_0\|_{L^\infty}\int_{\BT}d\xi \int_0^{1+4\d} dw\, \bigg[\f{|\tilde{b}_1-b_1-\tilde{b}_2+b_2|}{(|1-w|+|\xi|)^3}\\
&\;\qquad \qquad \qquad +\f{(|\tilde{b}_1-b_1|+|\tilde{b}_2-b_2|)(|\tilde{b}_1-\tilde{b}_2|+|b_1-b_2|)} {(|1-w|+|\xi|)^4}\bigg] \|h_1'\|_{L^\infty}|1-\eta_\d|\\
&\;+Cr\|g_0\|_{L^\infty}\int_{\BT}d\xi \int_0^{1+4\d} dw\,\f{|\tilde{b}_2-b_2|}{(|1-w|+|\xi|)^3}\cdot|1-\eta_\d|\D m_0\\
&\;+Cr\|g_0\|_{L^\infty}\int_{\BT}d\xi \int_0^{1+4\d} dw\, \f{1}{(|1-w|+|\xi|)^2}\cdot |\eta_\d|\|h_2'\|_{\dot{C}^\beta}|\xi|^\beta\cdot |\eta_\d+w\eta_\d'| \|h_1-h_2\|_{L^\infty}\\
&\;+Cr\|g_0\|_{L^\infty}\int_{\BT}d\xi \int_0^{1+4\d} dw\,\f{|\tilde{b}_2-b_2|}{(|1-w|+|\xi|)^3}\cdot \|h_2'\|_{L^\infty} |1-\eta_\d|\cdot |\eta_\d+w\eta_\d'| \|h_1-h_2\|_{L^\infty}\\
\leq &\;Cr\|g_0\|_{L^\infty}\left[ \d^\b(\|h_1'\|_{\dot{C}^\b}+\|h_2'\|_{\dot{C}^\b})\D m_0 +\d^\b\|h_1'-h_2'\|_{\dot{C}^\b}+(\|h_1'\|_{L^\infty}+\|h_2'\|_{L^\infty})\D m_0\right].
\end{split}
\label{eqn: estimate for difference of J theta 1 from two configurations}
\eeq
In the last inequality, when calculating the integrals, we used the facts that $\eta_\d$ is supported on $[1-2\d,1+2\d]$ and that $\eta_\d(1-\eta_\d)$ is supported on $[1-2\d,1-\d]\cup [1+\d,1+2\d]$.

For $J_{\th,2}^{(i)}$ and $J_{\th,3}^{(i)}$, by \eqref{eqn: decomposition of derivative of tangent derivative},
\beq
\begin{split}
&\;(J_{\th,2}^{(1)} +J_{\th,3}^{(1)} )- (J_{\th,3}^{(2)}+ J_{\th,2}^{(2)})\\
= &\;\f{r}{4\pi}\int_{\BT}d\xi \int_0^{1+4\d} dw\, \left[\f{\pa K}{\pa s}(b_1,\xi)\f{\pa b_1}{\pa \th} - \f{\pa K}{\pa s}(\tilde{b}_1,\xi)\f{\pa \tilde{b}_1}{\pa \xi}-\f{\pa K}{\pa \xi}(\tilde{b}_1,\xi)\right]\\
&\;\qquad \qquad \cdot \left(\left[\left(\f{\pa |y_1|}{\pa \r}-\f{\pa |y_2|}{\pa \r}\right)g_0\right]_{(rw,\xi+\th)}-\left[\left(\f{\pa |y_1|}{\pa \r}-\f{\pa |y_2|}{\pa \r}\right)g_0\right]_{(rw,\th)}\right)\\
&\;+\f{r}{4\pi}\int_{\BT}d\xi \int_0^{1+4\d} dw\, \left(\f{\pa K}{\pa s}(b_1,\xi)-\f{\pa K}{\pa s}(b_2,\xi)\right)\f{\pa b_1}{\pa \th}\left(\left[\f{\pa |y_2|}{\pa \r}g_0\right]_{(rw,\xi+\th)}-\left[\f{\pa |y_2|}{\pa \r}g_0\right]_{(rw,\th)}\right)\\
&\;-\f{r}{4\pi}\int_{\BT}d\xi \int_0^{1+4\d} dw\, \left[\left(\f{\pa K}{\pa s}(\tilde{b}_1,\xi)-\f{\pa K}{\pa s}(\tilde{b}_2,\xi)\right)\f{\pa \tilde{b}_1}{\pa \xi}+ \left(\f{\pa K}{\pa \xi}(\tilde{b}_1,\xi)-\f{\pa K}{\pa \xi}(\tilde{b}_2,\xi)\right)\right]\\
&\;\qquad \qquad \cdot \left(\left[\f{\pa |y_2|}{\pa \r}g_0\right]_{(rw,\xi+\th)}-\left[\f{\pa |y_2|}{\pa \r}g_0\right]_{(rw,\th)}\right)\\
&\;+\f{r}{4\pi}\int_{\BT}d\xi \int_0^{1+4\d} dw\, \left[\f{\pa K}{\pa s}(b_2,\xi)\f{\pa (b_1-b_2)}{\pa \th}-\f{\pa K}{\pa s}(\tilde{b}_2,\xi)\f{\pa (\tilde{b}_1-\tilde{b}_2)}{\pa \xi}\right]\\
&\;\qquad \qquad \cdot \left(\left[\f{\pa |y_2|}{\pa \r}g_0\right]_{(rw,\xi+\th)}-\left[\f{\pa |y_2|}{\pa \r}g_0\right]_{(rw,\th)}\right).
\end{split}
\label{eqn: splitting J theta 2 3 difference}
\eeq
We derive in a similar manner.
\beq
\begin{split}
&\;\left|(J_{\th,2}^{(1)}+J_{\th,3}^{(1)}) - (J_{\th,2}^{(2)}+J_{\th,3}^{(2)})\right|\\
\leq  &\;Cr\int_{\BT}d\xi \int_{0}^{1+4\d}  \f{dw}{(|1-w|+|\xi|)^2}\cdot |\eta_\d+w\eta_\d'|\\
&\;\qquad \qquad \cdot (\|h_1-h_2\|_{L^\infty}|g_0(rw,\xi+\th)-g_0(rw,\th)| +\|h_1-h_2\|_{\dot{C}^{\b}}|\xi|^{\b} \|g_0\|_{L^\infty})\\
&\;+Cr\int_{\BT}d\xi \int_{0}^{1+4\d} dw\, \f{|b_1-b_2|}{(|1-w|+|\xi|)^3}\cdot |1-\eta_\d|\|h_1'\|_{L^\infty}\\
&\;\qquad \qquad \cdot (|g_0(rw,\xi+\th)-g_0(rw,\th)|+|\eta_\d+w\eta_\d'||\xi|^{\b}\|h_2\|_{\dot{C}^{\b}}\|g_0\|_{L^\infty})\\
&\;+Cr\int_{\BT}d\xi \int_0^{1+4\d} dw\, \f{|\tilde{b}_1-\tilde{b}_2|}{(|1-w|+|\xi|)^3}\\
&\;\qquad \qquad \cdot (|g_0(rw,\xi+\th)-g_0(rw,\th)|+|\eta_\d+w\eta_\d'||\xi|^{\b}\|h_2\|_{\dot{C}^{\b}}\|g_0\|_{L^\infty})\\
&\;+Cr \int_{\BT}d\xi \int_0^{1+4\d} \f{dw}{(|1-w|+|\xi|)^2}\cdot \D m_0\\
&\;\qquad \qquad \cdot (|g_0(rw,\xi+\th)-g_0(rw,\th)|+|\eta_\d+w\eta_\d'||\xi|^{\b}\|h_2\|_{\dot{C}^{\b}}\|g_0\|_{L^\infty})\\
\leq  &\;Cr\|g_0\|_{L^\infty}\d^{\b-1} (\|h_1-h_2\|_{\dot{C}^{\b}} +\D m_0 \|h_2\|_{\dot{C}^{\b}})\\
&\;+Cr\D m_0  \int_{\BT}d\xi \int_0^{1+4\d} dw\,\f{|g_0(rw,\xi+\th)-g_0(rw,\th)|}{(|1-w|+|\xi|)^2}.
\end{split}
\label{eqn: bounding J theta 2 3 difference}
\eeq
By Minkowski inequality and H\"{o}lder's inequality, with arbitrary $s\in (\f12,\f12+\f1p)$ (for definiteness, take $s= \f12+\f{1}{2p}$),
\beq
\begin{split}
&\;\left\|(J_{\th,2}^{(1)}+J_{\th,3}^{(1)}) - (J_{\th,2}^{(2)}+J_{\th,3}^{(2)})\right\|_{L^p(\BT)}\\
\leq  &\;Cr\|g_0\|_{L^\infty}\D m_0\\
&\;+Cr\D m_0 \int_0^{1+4\d} dw\left[\int_{\BT}d\xi \,\f{\|g_0(rw,\xi+\cdot)-g_0(rw,\cdot)\|_{L^p_\th(\BT)}^2}{|\xi|^{1+2s}}\right]^{1/2}\left[\int_\BT \f{|\xi|^{1+2s}\,d\xi}{(|1-w|+|\xi|)^4}\right]^{1/2}\\
\leq  &\;Cr\|g_0\|_{L^\infty}\D m_0+Cr\D m_0 \int_0^{1+4\d} dw \,\f{\|g_0(rw,\cdot)\|_{\dot{B}^{s}_{p,2}(\BT)}}{|1-w|^{1-s}}\\
\leq  &\;Cr\|g_0\|_{L^\infty}\D m_0+Cr\D m_0 \int_0^{1+4\d} dw \,\f{\|\pa_\th g_0(rw,\cdot)\|_{L^2(\BT)}}{|1-w|^{1-s}}\\
\leq  &\;Cr\D m_0 (\|g_0\|_{L^\infty}+\|e_\th\cdot \na g_0\|_{L^2(B_{(1+4\d)r})} ).
\end{split}
\label{eqn: L p bound for difference of J theta 2 and J theta 3 from two configurations}
\eeq
See e.g.\;\cite[\S 2.5.12 and \S 2.7.1]{triebel2010theory} for the definition of $B_{p,2}^s(\BT)$-space and the embedding of $H^1(\BT)$ into it.
Combining this with \eqref{eqn: decomposition of derivative of tangent derivative} and \eqref{eqn: estimate for difference of J theta 1 from two configurations}, we conclude with \eqref{eqn: W 1p estimate for difference of tangent derivative}.
\end{proof}
\end{lem}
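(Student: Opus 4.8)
The plan is to differentiate the contour representation~\eqref{eqn: formula for tangent derivative} in $\th$, so that estimating the $\dot W^{1,p}(\BT)$-seminorm of the difference reduces to bounding $\f{d}{d\th}(e_\th\cdot\na(\G*g_i))_{\g_i(\th)}$ in $L^p(\BT)$ and subtracting. Writing $y_i=x_i(\r,\th+\xi)$ and using that $\partial_\th$ and $\partial_\xi$ act identically on functions of $\th+\xi$, the $\th$-derivative splits into three pieces: (A) the term where $\partial_\th$ hits the kernel $K(\tilde{b}_i,\xi)$ through $\tilde{b}_i$, from which I subtract the ``pivot'' $\partial_s K(b_i,\xi)\,\partial_\th b_i$ with $b_i:=\tilde{b}_i|_{\xi=0}$; (B) this pivot term, paired against the increment $[\partial_\r|y_i|\,g_0]_{(rw,\xi+\th)}-[\partial_\r|y_i|\,g_0]_{(rw,\th)}$ --- the $\xi$-independent subtraction is free because $\partial_s K(b_i,\cdot)$ is odd; and (C) the term obtained by trading $\partial_\th$ for $\partial_\xi$ on $[\partial_\r|y_i|\,g_0]_{(rw,\xi+\th)}$ and integrating by parts in $\xi$, which pairs the same increment against minus the total $\xi$-derivative of $\xi\mapsto K(\tilde{b}_i,\xi)$ (i.e.\ $-[\partial_s K(\tilde{b}_i,\xi)\,\partial_\xi\tilde{b}_i+\partial_\xi K(\tilde{b}_i,\xi)]$), the $\xi$-independent subtraction again being free since that total derivative integrates to zero over $\BT$ by periodicity of $K$. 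For the difference of the two configurations I would then telescope each piece into single and ``double'' differences of the basic quantities $\partial_s K(\tilde{b}_1,\xi)-\partial_s K(\tilde{b}_2,\xi)$, $\partial_\th\tilde{b}_1-\partial_\th b_1-\partial_\th\tilde{b}_2+\partial_\th b_2$, $\partial_s K(\tilde{b}_1,\xi)-\partial_s K(b_1,\xi)-\partial_s K(\tilde{b}_2,\xi)+\partial_s K(b_2,\xi)$, $\partial_\r|y_1|-\partial_\r|y_2|$, and so on.

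Each building block is then controlled: kernel differences and double differences by Lemma~\ref{lem: difference of kernels} (denominators comparable to $(|1-w|+|\xi|)^k$, $k\in\{1,2,3\}$, which applies since $\tilde{b}_i,b_i$ meet its hypotheses by Remark~\ref{rmk: tilde b and b satisfy assumptions of Lemma 4.1}), the $\tilde{b}_i,b_i$ differences by Lemma~\ref{lem: estimates involving b} (supplying compensating factors $|\eta_\d||\xi|$, $|\eta_\d||\xi|^\b\|h_i'\|_{\dot{C}^\b}$, $|1-\eta_\d|\d\,\D m_0$, and the like), and $\partial_\r|y_i|$ by~\eqref{eqn: ingredient 1}. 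For piece (A) every surviving integrand carries a factor $|\eta_\d|$ or $|1-\eta_\d|$ together with enough powers of $|\xi|$ or $|\xi|^\b$ to be absolutely integrable on $[0,1+4\d]\times\BT$; using that $\eta_\d$ is supported on $[1-2\d,1+2\d]$ and $\eta_\d(1-\eta_\d)$ on $[1-2\d,1-\d]\cup[1+\d,1+2\d]$, the $w$-integration produces the powers of $\d$ (and of $\d^\b$ from the H\"older seminorms) in~\eqref{eqn: W 1p estimate for difference of tangent derivative}, giving an $L^\infty(\BT)$-bound, hence an $L^p(\BT)$-bound. Pieces (B) and (C) must be handled jointly: after telescoping each term factors as a kernel quantity of size $(|1-w|+|\xi|)^{-2}$ times the increment $g_0(rw,\xi+\th)-g_0(rw,\th)$ (plus lower-order remainders absorbed into $\|g_0\|_{L^\infty}$ times H\"older/$L^\infty$-norms of the $h_i$, which give the $\d^{\b-1}$-type contributions). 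For the increment term I would apply Minkowski's inequality in $\th$, split off $|\xi|^{1+2s}$ with $s\in(\f12,\f12+\f1p)$ (say $s=\f12+\f1{2p}$), use Cauchy--Schwarz in $\xi$ to turn $\int_\BT|\xi|^{-1-2s}\|g_0(rw,\xi+\cdot)-g_0(rw,\cdot)\|_{L^p_\th}^2\,d\xi$ into $\|g_0(rw,\cdot)\|_{\dot{B}^{s}_{p,2}(\BT)}^2\lesssim\|\partial_\th g_0(rw,\cdot)\|_{L^2(\BT)}^2$ via $H^1(\BT)\hookrightarrow B^{s}_{p,2}(\BT)$, while $\int_\BT|\xi|^{1+2s}(|1-w|+|\xi|)^{-4}\,d\xi=C|1-w|^{2s-2}$; a final Cauchy--Schwarz in $w$ against $\||1-w|^{s-1}\|_{L^2([0,1+4\d])}<\infty$, after rewriting $\|\partial_\th g_0(rw,\cdot)\|_{L^2(\BT)}$ as the $L^2(\partial B_{rw})$-norm of $e_\th\cdot\na g_0$, lands on $\|e_\th\cdot\na g_0\|_{L^2(B_{(1+4\d)r})}$. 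Summing the three pieces gives~\eqref{eqn: W 1p estimate for difference of tangent derivative}.

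I expect the main obstacle to be the singularity at $(w,\xi)=(1,0)$: a brute-force differentiation produces integrands of order $(|1-w|+|\xi|)^{-3}$ and even $(|1-w|+|\xi|)^{-4}$, which are far from integrable, so everything hinges on choosing the pivots --- $b_i$ in place of $\tilde{b}_i$, and $g_0(rw,\th)$ in place of $g_0(rw,\xi+\th)$ --- to expose the odd-in-$\xi$ cancellation of $\partial_s K(b_i,\cdot)$, the vanishing $\xi$-mean of the total $\xi$-derivative of $K(\tilde{b}_i,\cdot)$, and the H\"older cancellations of $h_i'$ and of $g_0$, and then on verifying that after the (long) telescoping every remainder still carries a genuine compensating factor $|\xi|$, $|\xi|^\b$, $|1-\eta_\d|\d$, or $|\eta_\d|$ --- otherwise the integral diverges. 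The remaining work is essentially bookkeeping: tracking which of $\|h_i'\|_{L^\infty}$, $\|h_i'\|_{\dot{C}^\b}$, $\|h_1-h_2\|_{L^\infty}$, $\|h_1'-h_2'\|_{\dot{C}^\b}$, $\D m_0$ each remainder carries and extracting the sharp power of $\d$, so that the final bound takes the stated form.
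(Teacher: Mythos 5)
Your proposal matches the paper's proof essentially step for step: the same split into the pivot-subtracted kernel term, the odd-in-$\xi$ pivot paired with the $g_0$-increment, and the integration-by-parts piece; the same telescoping of the difference into single and double differences controlled by Lemma~\ref{lem: difference of kernels}, Lemma~\ref{lem: estimates involving b} and~\eqref{eqn: ingredient 1}; and the same Minkowski/H\"older/$H^1\hookrightarrow B^s_{p,2}$ treatment of the increment term with $s=\tfrac12+\tfrac1{2p}$ leading to $\|e_\th\cdot\na g_0\|_{L^2(B_{(1+4\d)r})}$. The only cosmetic difference is how you justify the free $\xi$-independent subtraction in piece (C) --- via periodicity of the total $\xi$-derivative rather than the paper's phrasing --- but the mechanism is identical.
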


\begin{lem}\label{lem: W 1p estimate of normal derivative difference}
Under the assumptions of Lemma \ref{lem: W 1p estimate of tangent derivative},
\beq
\begin{split}
&\;\|(e_r\cdot \na(\G*g_1))_{\g_1(\th)}
-(e_r\cdot \na(\G*g_2))_{\g_2(\th)}\|_{\dot{W}^{1,p}(\BT)}\\
\leq
&\;Cr\|g_0\|_{L^\infty(B_{(1+4\d)r})}\left[(1+ \d^\b(\|h_1'\|_{\dot{C}^\b}+\|h_2'\|_{\dot{C}^\b}))\D m_0 +\d^\b\|h_1'-h_2'\|_{\dot{C}^\b}\right]\\
&\; +Cr\D m_0\|e_\th\cdot \na g_0\|_{L^2(B_{(1+4\d)r})}.
\label{eqn: W 1p estimate for normal derivative}
\end{split}
\eeq
where $C = C(p,\b)$.
\begin{proof}
We proceed as the proof of Lemma \ref{lem: W 1p estimate of tangent derivative}.
By \eqref{eqn: formula for normal derivative} and integration by parts,
\beq
\begin{split}
&\;\f{d}{d\th}(e_r\cdot \na(\G*g_i))_{\g_i(\th)}\\
=&\;\f{r}{4\pi}\int_{\BT}d\xi\int_{0}^{1+4\d}
\left[\f{\pa J}{\pa s}(\tilde{b}_i,\xi)\f{\pa \tilde{b}_i}{\pa \th}-\f{\pa J}{\pa s}(b_i,\xi)\f{\pa b_i}{\pa \th}\right]\left[\f{\pa|y_i|}{\pa \r}g_0\right]_{(rw, \th+\xi)}\,dw\\
&\;+\f{r}{4\pi}\int_{\BT}d\xi\int_{0}^{1+4\d} \f{\pa J}{\pa s}(b_i,\xi)\f{\pa b_i}{\pa \th}\left(\left[\f{\pa|y_i|}{\pa \r}g_0\right]_{(rw, \th+\xi)}-\left[\f{\pa|y_i|}{\pa \r}g_0\right]_{(rw, \th)}\right) \,dw\\
&\;-\f{r}{4\pi}\int_{\BT}d\xi\int_{0}^{1+4\d} \left[ \f{\pa J}{\pa s}(\tilde{b}_i,\xi)\f{\pa \tilde{b}_i}{\pa \xi}+ \f{\pa J}{\pa\xi}(\tilde{b}_i,\xi)\right]\left(\left[\f{\pa|y_i|}{\pa \r}g_0\right]_{(rw, \th+\xi)}-\left[\f{\pa|y_i|}{\pa \r}g_0\right]_{(rw, \th)}\right)\,dw\\
&\;+\f{r}{4\pi}\int_{\BT}d\xi\int_{0}^{1+4\d}\f{\pa J}{\pa s}(b_i,\xi)\f{\pa b_i}{\pa \th} \left[\f{\pa|y_i|}{\pa \r}g_0\right]_{(rw, \th)}\,dw\\
=:&\;J_{r,1}^{(i)}+J_{r,2}^{(i)}+J_{r,3}^{(i)}+J_{r,4}^{(i)}.
\end{split}
\label{eqn: decomposition of derivative of normal derivative}
\eeq
Estimates concerning $J_{r,1}^{(i)}+J_{r,2}^{(i)}+J_{r,3}^{(i)}$ can be derived exactly as in Lemma \ref{lem: W 1p estimate of tangent derivative}.
It remains to bound $J_{r,4}^{(1)}-J_{r,4}^{(2)}$.
%
By Lemma \ref{lem: properties of Poisson kernel},
\beq
\int_\BT \f{\pa J}{\pa s}(s,\xi)\,d\xi =
\begin{cases}
-4\pi, & \mbox{if } s\in[0,1), \\
0, & \mbox{if } s>1.
\end{cases}
\label{eqn: integral of kernel J}
\eeq
Hence, thanks to Lemma \ref{lem: estimates involving b} and \eqref{eqn: ingredient 1}, 
\beq
\begin{split}
&\;|J_{r,4}^{(1)}-J_{r,4}^{(2)}|\\
= &\;r \left|\int_0^1 \f{\pa b_1}{\pa \th} \left[\f{\pa|y_1|}{\pa \r}g_0\right]_{(rw, \th)}-\f{\pa b_2}{\pa \th} \left[\f{\pa|y_2|}{\pa \r}g_0\right]_{(rw, \th)}\,dw\right|\\
\leq &\;Cr\|g_0\|_{L^\infty} \int_0^1 \left|\f{\pa b_1}{\pa \th}-\f{\pa b_2}{\pa \th}\right| \left|\f{\pa|y_1|}{\pa \r}\right|+\left|\f{\pa b_2}{\pa \th}\right| \left|\f{\pa|y_1|}{\pa \r}-\f{\pa|y_2|}{\pa \r}\right|\,dw\\
\leq &\;Cr\|g_0\|_{L^\infty} \D m_0.
\end{split}
\eeq
This completes the proof.
\end{proof}
\end{lem}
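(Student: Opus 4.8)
The plan is to mirror the proof of Lemma~\ref{lem: W 1p estimate of tangent derivative} almost line by line, with the kernel $K$ replaced by the kernel $J$ everywhere. Starting from the representation \eqref{eqn: formula for normal derivative}, I would differentiate $(e_r\cdot\na(\G*g_i))_{\g_i(\th)}$ in $\th$ (exchanging the $\th$-derivative with the integral, which is legitimate by the same limiting argument as before), apply the chain rule, and use $\pa_\th[\,\cdot\,]_{(rw,\xi+\th)}=\pa_\xi[\,\cdot\,]_{(rw,\xi+\th)}$ followed by an integration by parts in $\xi$. This yields a decomposition $\f{d}{d\th}(e_r\cdot\na(\G*g_i))_{\g_i(\th)}=J_{r,1}^{(i)}+J_{r,2}^{(i)}+J_{r,3}^{(i)}+J_{r,4}^{(i)}$, where $J_{r,1}^{(i)}$, $J_{r,2}^{(i)}$, $J_{r,3}^{(i)}$ have exactly the same form as $J_{\th,1}^{(i)}$, $J_{\th,2}^{(i)}$, $J_{\th,3}^{(i)}$ in \eqref{eqn: decomposition of derivative of tangent derivative} (with $K$ replaced by $J$), and $J_{r,4}^{(i)}$ is an extra term which is present because, unlike $\pa_s K(\,\cdot\,,\xi)$, the kernel $\pa_s J(\,\cdot\,,\xi)$ is not odd in $\xi$ and so its $\xi$-average does not vanish.

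For the difference of the first three groups, the final sentence of Lemma~\ref{lem: difference of kernels} guarantees that all of \eqref{eqn: bound on K}--\eqref{eqn: double difference of K derivatives} remain valid with $K$ replaced by $J$. I would therefore carry over verbatim the telescoping identities that split $J_{r,1}^{(1)}-J_{r,1}^{(2)}$ and $(J_{r,2}^{(1)}+J_{r,3}^{(1)})-(J_{r,2}^{(2)}+J_{r,3}^{(2)})$ into the same six (resp.\ four) pieces as in \eqref{eqn: decomposition of difference of J theta 1} and \eqref{eqn: splitting J theta 2 3 difference}, and estimate them using Lemma~\ref{lem: estimates involving b}, Lemma~\ref{lem: properties of Poisson kernel} and the identity \eqref{eqn: ingredient 1} for $\pa_\r|y_i|-1$, exactly as in \eqref{eqn: estimate for difference of J theta 1 from two configurations} and \eqref{eqn: bounding J theta 2 3 difference}. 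The same Minkowski/H\"older argument with auxiliary exponent $s=\f12+\f1{2p}$ and the embedding $H^1(\BT)\hookrightarrow B^s_{p,2}(\BT)$ then gives the $L^p(\BT)$-bound \eqref{eqn: L p bound for difference of J theta 2 and J theta 3 from two configurations}. Altogether this reproduces the right-hand side of \eqref{eqn: W 1p estimate for normal derivative} up to the contribution of $J_{r,4}$.

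The only genuinely new point is $J_{r,4}^{(1)}-J_{r,4}^{(2)}$. The key computation, which follows from Lemma~\ref{lem: properties of Poisson kernel}, is that $\int_\BT\pa_s J(s,\xi)\,d\xi$ equals $-4\pi$ for $s\in[0,1)$ and $0$ for $s>1$. Using this to collapse the singular $\xi$-integral, $J_{r,4}^{(i)}$ reduces to the elementary one-dimensional integral $-r\int_0^1\pa_\th b_i\,[\pa_\r|y_i|\,g_0]_{(rw,\th)}\,dw$. Writing $\pa_\th b_1\,\pa_\r|y_1|-\pa_\th b_2\,\pa_\r|y_2|=(\pa_\th b_1-\pa_\th b_2)\pa_\r|y_1|+\pa_\th b_2(\pa_\r|y_1|-\pa_\r|y_2|)$ and invoking \eqref{eqn: difference between theta derivatives of b}, the bound $|\pa_\th b_2|\le C\|h_2'\|_{L^\infty}$ and \eqref{eqn: ingredient 1}, one gets $|J_{r,4}^{(1)}-J_{r,4}^{(2)}|\le Cr\|g_0\|_{L^\infty}\D m_0$, which is dominated by the right-hand side of \eqref{eqn: W 1p estimate for normal derivative}.

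I expect the \emph{main obstacle} to be, as in Lemma~\ref{lem: W 1p estimate of tangent derivative}, pure bookkeeping rather than any new analytic difficulty: one must keep track of which combinations of $\tilde b_i$, $b_i$ and their $\th$- and $\xi$-derivatives each of the many error terms depends on, exploit the supports of $\eta_\d$ on $[1-2\d,1+2\d]$ and of $\eta_\d(1-\eta_\d)$ on $[1-2\d,1-\d]\cup[1+\d,1+2\d]$ when carrying out the $w$-integrations, and check throughout that the integrands---a priori written as principal-value integrals in $w$---are in fact absolutely integrable near $(w,\xi)=(1,0)$, so that Fubini and the interchange of $\th$-derivative with the integral are justified.
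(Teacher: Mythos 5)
Your proposal is correct and follows essentially the same route as the paper's proof: decompose the derivative as $J_{r,1}^{(i)}+J_{r,2}^{(i)}+J_{r,3}^{(i)}+J_{r,4}^{(i)}$, handle the first three exactly as in Lemma \ref{lem: W 1p estimate of tangent derivative} via the final clause of Lemma \ref{lem: difference of kernels} (the $K$-estimates also hold for $J$), and treat the genuinely new term $J_{r,4}^{(i)}$ by evaluating $\int_\BT \pa_s J(s,\xi)\,d\xi$ explicitly (this is the content of \eqref{eqn: integral of kernel J}), reducing it to a $w$-integral on $[0,1]$ that one bounds by telescoping $\pa_\th b_i$ and $\pa_\r|y_i|$. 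Your observation that $J_{r,4}^{(i)}$ appears because $\pa_s J(b_i,\xi)$ is not odd in $\xi$ is exactly the right explanation for the structural difference with the tangential case, and your final estimate $|J_{r,4}^{(1)}-J_{r,4}^{(2)}|\le Cr\|g_0\|_{L^\infty}\D m_0$ matches the paper's.
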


\begin{lem}\label{lem: W 1p estimate of derivative of growth potential inner}
Assume $h\in C^{1,\beta}(\mathbb{T})$ for some $\beta\in (0,1)$, such that $m_{0} \ll 1$.
Define $g(x) = g_0(X(x))$.
Then for all $p\in [2,\infty)$,
\beq
\begin{split}
&\;\|(e_\th\cdot \na(\G*g))_{\g(\th)}\|_{\dot{W}^{1,p}(\BT)}+\|(e_r\cdot \na(\G*g))_{\g(\th)}\|_{\dot{W}^{1,p}(\BT)}\\
\leq
&\;Cr(\|g_0\|_{L^\infty(B_{(1+4\d)r})}m_\b+ \|e_\th\cdot \na g_0\|_{L^2(B_{(1+4\d)r})}).
\label{eqn: W 1p estimate of derivative of growth potential inner}
\end{split}
\eeq
where $C = C(p,\b)$.
Here $m_\b$ is defined as in \eqref{eqn: def of m_alpha}.
\begin{proof}
As in Lemma \ref{lem: L^inf estimate of derivative of growth potential inner}, we first study the case with $h = 0$.
By \eqref{eqn: decomposition of derivative of tangent derivative},
\beq
\f{d}{d\th}(e_\th\cdot \na(\G*g_0))_{\pa B_r}
=
-\f{r}{4\pi}\int_{\BT}d\xi \int_0^{1+4\d} dw\,
\f{\pa K}{\pa \xi}(w,\xi)(g_0(rw,\xi+\th)-g_0(rw,\th)).
\eeq
Hence, arguing as in \eqref{eqn: L p bound for difference of J theta 2 and J theta 3 from two configurations},
\beq
\begin{split}
&\;
\|(e_\th\cdot \na(\G*g_0))_{\pa B_r}\|_{\dot{W}^{1,p}}\\
\leq &\;Cr\int_{\BT}d\xi \int_0^{1+4\d} dw\,
\f{\|g_0(rw,\xi+\cdot)-g_0(rw,\cdot)\|_{L^p_\th(\BT)}}{(|1-w|+|\xi|)^2}\\
\leq &\;Cr\|e_\th\cdot \na g_0\|_{L^2(B_{(1+4\d)r})}.
\end{split}
\eeq

Now taking $h_1 = h$ and $h_2 = 0$ in Lemma \ref{lem: W 1p estimate of tangent derivative}, we find that
\beq
\begin{split}
&\;\|(e_\th\cdot \na(\G*g))_{\g(\th)}\|_{\dot{W}^{1,p}(\BT)}\\
\leq &\;\|(e_\th\cdot \na(\G*g))_{\g(\th)}-(e_\th\cdot \na(\G*g_0))_{\pa B_r}\|_{\dot{W}^{1,p}(\BT)}
+\|(e_\th\cdot \na(\G*g_0))_{\pa B_r}\|_{\dot{W}^{1,p}(\BT)}\\
\leq &\;Cr\|g_0\|_{L^\infty(B_{(1+4\d)r})}( m_0 +\d^\b\|h'\|_{\dot{C}^\b})+Cr \|e_\th\cdot \na g_0\|_{L^2(B_{(1+4\d)r})}.
\label{eqn: W 1p estimate of tangent derivative inner}
\end{split}
\eeq
The estimate for $(e_r\cdot \na(\G*g))_{\g(\th)}$ can be derived in exactly the same way.
\end{proof}
\end{lem}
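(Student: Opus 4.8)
\emph{Proof sketch.} My plan is to first prove the estimate for the perfectly circular configuration $h\equiv 0$, and then deduce the general case by comparison with it, invoking Lemma~\ref{lem: W 1p estimate of tangent derivative} and Lemma~\ref{lem: W 1p estimate of normal derivative difference} with $(h_1,h_2)=(h,0)$ --- mirroring the way Lemma~\ref{lem: L^inf estimate of derivative of growth potential inner} was deduced from Lemma~\ref{lem: L inf estimate of difference same source}.

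For the base case $h\equiv 0$ I would use that $\tilde b=b=w$ and $\pa|y|/\pa\r\equiv 1$, so that \eqref{eqn: formula for tangent derivative} collapses to a clean convolution-type integral against $K(w,\xi)$. Differentiating in $\th$, rewriting $\pa_\th g_0(rw,\th+\xi)$ as $\pa_\xi g_0$, integrating by parts in $\xi$, and using $\int_{\BT}\pa_\xi K(w,\xi)\,d\xi=0$, one rewrites $\f{d}{d\th}(e_\th\cdot\na(\G*g_0))_{\pa B_r}$ as $-\f{r}{4\pi}\int_{\BT}d\xi\int_0^{1+4\d}\pa_\xi K(w,\xi)\big(g_0(rw,\th+\xi)-g_0(rw,\th)\big)\,dw$. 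Lemma~\ref{lem: difference of kernels}, applied with $z=w$ so that its hypothesis is vacuous, then gives $|\pa_\xi K(w,\xi)|\le C(|1-w|+|\xi|)^{-2}$, and I would estimate the $L^p_\th$-norm exactly as in \eqref{eqn: bounding J theta 2 3 difference}--\eqref{eqn: L p bound for difference of J theta 2 and J theta 3 from two configurations}: by Minkowski's and H\"older's inequalities, measuring the $\th$-oscillation of $g_0$ in $\dot B^{s}_{p,2}(\BT)$ for some $s\in(\tfrac12,\tfrac12+\tfrac1p)$, embedding $H^1(\BT)$ into that space, and integrating in $w$ against the integrable weight $|1-w|^{s-1}$; this yields $\|(e_\th\cdot\na(\G*g_0))_{\pa B_r}\|_{\dot{W}^{1,p}(\BT)}\le Cr\,\|e_\th\cdot\na g_0\|_{L^2(B_{(1+4\d)r})}$. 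The radial component is handled identically after replacing $K$ by $J$ in \eqref{eqn: formula for normal derivative} --- the constant $c_{g_0}$ being annihilated by $\pa_\th$ --- since Lemma~\ref{lem: difference of kernels} also supplies $|\pa_\xi J(w,\xi)|\le C(|1-w|+|\xi|)^{-2}$.

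For general $h$, I would combine the triangle inequality with Lemma~\ref{lem: W 1p estimate of tangent derivative} (resp.\ Lemma~\ref{lem: W 1p estimate of normal derivative difference}) applied with $(h_1,h_2)=(h,0)$, for which $\D m_0=m_0$ and $\|h_1'-h_2'\|_{\dot{C}^\b}=\|h'\|_{\dot{C}^\b}$; this bounds $\|(e_\th\cdot\na(\G*g))_{\g}\|_{\dot{W}^{1,p}}$ and $\|(e_r\cdot\na(\G*g))_{\g}\|_{\dot{W}^{1,p}}$ by $Cr\big(\|g_0\|_{L^\infty}(m_0+\d^\b\|h'\|_{\dot{C}^\b})+\|e_\th\cdot\na g_0\|_{L^2}\big)$, where $m_0\ll 1$ is used to absorb the prefactor of the $L^2$-term. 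It then remains to replace $m_0+\d^\b\|h'\|_{\dot{C}^\b}$ by $m_\b$ (cf.\ \eqref{eqn: def of m_alpha}): this follows from the interpolation inequality $\|h'\|_{L^\infty(\BT)}\le C\big(\|h\|_{L^\infty}^{\b/(1+\b)}\|h'\|_{\dot{C}^\b(\BT)}^{1/(1+\b)}+\|h\|_{L^\infty}\big)$, Young's inequality with weight $\d$, and $\d\le 1$, which together give $\|h'\|_{L^\infty}\le Cm_\b$, hence $m_0\le Cm_\b$. This establishes \eqref{eqn: W 1p estimate of derivative of growth potential inner}.

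The one genuinely delicate point is the borderline near-diagonal singularity of $\pa_\xi K$ and $\pa_\xi J$ at $(w,\xi)=(1,0)$: a naive $L^\infty$- or $L^p$-bound on the $\th$-increment of $g_0$ would not close, which is why one must measure that increment in a Besov seminorm of exponent just above $\tfrac12$ and then use the $H^1$-embedding. However, this machinery is already in place from the proof of Lemma~\ref{lem: W 1p estimate of tangent derivative}, so I expect the present lemma to be essentially a specialization of it together with a triangle inequality, with no substantial new obstacle.
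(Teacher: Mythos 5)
Your proof is correct and follows essentially the same route as the paper's: treat $h\equiv 0$ first (for which the decomposition \eqref{eqn: decomposition of derivative of tangent derivative} collapses to the single term involving $\partial_\xi K$, estimated in $\dot W^{1,p}$ via Besov/H\"older as in \eqref{eqn: L p bound for difference of J theta 2 and J theta 3 from two configurations}), then apply Lemmas \ref{lem: W 1p estimate of tangent derivative} and \ref{lem: W 1p estimate of normal derivative difference} with $(h_1,h_2)=(h,0)$ and the triangle inequality. The only step you spell out that the paper leaves implicit is the interpolation $\|h'\|_{L^\infty}\le Cm_\beta$ needed to absorb $m_0+\d^\b\|h'\|_{\dot C^\b}$ into $m_\beta$, and that detail is right.
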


\subsection{Estimates along $\tilde{\g}$}
Next, we derive estimates for $e_r\cdot \na (\G*g)$ and $e_\th\cdot \na (\G*g)$ along $\tilde{\g}$, with $g(x) = g_0(X(x))$.
We calculate as in \eqref{eqn: formula for tangent derivative crude form} that
\beq
\begin{split}
(e_\th\cdot \na(\G*g))_{\tilde{\g}(\th)} =&\;\f{1}{2\pi}\int_{\BT}d\xi \int_0^{r(1+4\d)} \f{|y|\sin\xi \cdot g_0(\rho, \th+\xi)}{F(\th)^2+|y|^2-2|y|F(\th)\cos\xi}\cdot\f{\pa |y|}{\pa \r}|y|\,d\r\\
=&\;\f{r}{4\pi}\int_{\BT}d\xi \int_0^{1+4\d} K(\tilde{B},\xi)\cdot\f{\pa |y|}{\pa \r}g_0(rw, \th+\xi)\,dw,
\label{eqn: tangent derivative of growth potential simplified at outer interface}
\end{split}
\eeq
and
\beq
(e_r\cdot \na(\G*g))_{\tilde{\g}(\th)} 
= 
\f{r}{4\pi}\int_{\BT}d\xi\int_{0}^{1+4\d} J(\tilde{B},\xi)\cdot  \f{\pa|y|}{\pa \r}g_0(rw, \th+\xi)\,dw.
\label{eqn: normal derivative of growth potential along outer interface}
\eeq

Arguing as in Lemma \ref{lem: L inf estimate of difference same source}, we can show
\begin{lem}\label{lem: L inf estimate of difference same source at outer interface}
Under the assumptions of Lemma \ref{lem: L inf estimate of difference same source},
\beq
\|(e_\th\cdot \na(\G*g_1) )_{\tilde{\g}_1(\th)}-(e_\th\cdot \na(\G*g_2) )_{\tilde{\g}_2(\th)}\|_{L^\infty(\BT)}
\leq  \f{Cr^2}{R}\d |\ln \d| (\D m_0+\D M_0)\|g_0\|_{L^\infty},
\eeq
where $C$ is universal.
Moreover, $\|(e_r\cdot \na(\G*g_1) )_{\tilde{\g}_1(\th)}-(e_r\cdot \na(\G*g_2) )_{\tilde{\g}_2(\th)}\|_{L^\infty(\BT)}$ satisfies the same estimate.

\end{lem}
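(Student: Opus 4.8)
The plan is to repeat the proof of Lemma~\ref{lem: L inf estimate of difference same source} verbatim, now along the outer interfaces, with $\tilde b_i$ everywhere replaced by $\tilde B_i$ from \eqref{eqn: def of big b tilde}. Concretely, I would start from the representations \eqref{eqn: tangent derivative of growth potential simplified at outer interface} and \eqref{eqn: normal derivative of growth potential along outer interface}, which are the exact analogues of \eqref{eqn: formula for tangent derivative} and \eqref{eqn: formula for normal derivative}, and, writing $y_i$ as in Lemma~\ref{lem: L inf estimate of difference same source}, split the tangential difference as
\beq
\begin{split}
(e_\th\cdot\na(\G*g_1))_{\tilde{\g}_1(\th)}-(e_\th\cdot\na(\G*g_2))_{\tilde{\g}_2(\th)}
=&\;\f{r}{4\pi}\int_\BT d\xi\int_0^{1+4\d}\left(K(\tilde{B}_1,\xi)-K(\tilde{B}_2,\xi)\right)\f{\pa|y_1|}{\pa\r}\,g_0(rw,\th+\xi)\,dw\\
&\;+\f{r}{4\pi}\int_\BT d\xi\int_0^{1+4\d}K(\tilde{B}_2,\xi)\left(\f{\pa|y_1|}{\pa\r}-\f{\pa|y_2|}{\pa\r}\right)g_0(rw,\th+\xi)\,dw,
\end{split}
\eeq
and the radial difference in the same way with $J$ replacing $K$ (Lemma~\ref{lem: difference of kernels} covers $J$ as well).

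For the first integral I would invoke Lemma~\ref{lem: difference of kernels} with the reference point ``$w$'' there taken to be $rw/R$, which is legitimate by Remark~\ref{rmk: tilde b and b satisfy assumptions of Lemma 4.1}, together with $|\tilde{B}_1-\tilde{B}_2|\leq C\f{r}{R}(|\eta_\d|\,\|h_1-h_2\|_{L^\infty}+\|H_1-H_2\|_{L^\infty})\leq C\f{r\d}{R}(\D m_0+\D M_0)$ from \eqref{eqn: difference between b big}; for the second integral I would use $|\tilde{B}_2|\leq C\,rw/R$ to control $K(\tilde{B}_2,\xi)$ and $J(\tilde{B}_2,\xi)$, and $\left|\f{\pa|y_1|}{\pa\r}-\f{\pa|y_2|}{\pa\r}\right|\leq C\|h_1-h_2\|_{L^\infty}\,|\eta_\d+w\eta_\d'|$ from \eqref{eqn: ingredient 1}, the latter being supported on $w\in[1-2\d,1+2\d]$. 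What keeps every resulting $w$- and $\xi$-integral tame --- and this is really the only difference from Lemma~\ref{lem: L inf estimate of difference same source}, where $\tilde b_i\approx w$ can reach $1$ --- is geometric: since $g$ is supported in $\overline{B_{(1+4\d)r}}\subset B_{(1-2\d)R}$ one has $rw/R\leq r(1+4\d)/R<1-2\d$, indeed $1-rw/R\geq 6\d$ by \eqref{eqn: condition on delta}, so $K(\tilde{B},\xi)$ and $J(\tilde{B},\xi)$ are nowhere singular. Each $\xi$-integral then collapses to an elementary Poisson integral such as $\int_\BT\f{d\xi}{1+s^2-2s\cos\xi}=\f{2\pi}{1-s^2}$, and $\int_0^{1+4\d}\f{dw}{1-(rw/R)^2}\leq C\f{R}{r}|\ln\d|$; assembling these yields the claimed $\f{Cr^2}{R}\d|\ln\d|(\D m_0+\D M_0)\|g_0\|_{L^\infty}$ once one further uses that $K$ and $J$ vanish linearly in $s$ at $s=0$ --- equivalently, that the numerator of $K(\tilde{B}_1,\xi)-K(\tilde{B}_2,\xi)$ (and of $J(\tilde{B}_1,\xi)-J(\tilde{B}_2,\xi)$) carries an extra factor comparable to $rw/R$ --- which combines with the $r/R$ already present in $|\tilde{B}_1-\tilde{B}_2|$ to produce the power $r^2/R$ rather than a crude $r$.

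I do not expect a genuine obstacle here: the argument introduces no new analytic ingredient beyond Lemmas~\ref{lem: difference of kernels} and \ref{lem: estimates involving b} and the explicit Poisson-kernel identities, and the whole difficulty is careful bookkeeping of the $r/R$ powers. The one place demanding attention is the regime $R/r$ large (equivalently $r<R/2$), in which the crude estimate $|K(\tilde{B}_1,\xi)-K(\tilde{B}_2,\xi)|\leq C|\tilde{B}_1-\tilde{B}_2|/(1+(rw/R)^2-2(rw/R)\cos\xi)$ loses a factor $R/r$; there I would split into the cases $r\geq R/2$ and $r<R/2$ exactly as in the proof of Lemma~\ref{lem: estimates for difference between tilde p and p_*}, using in the second case that $rw/R$ is bounded away from $1$ by a universal constant so that the linear vanishing of $K,J$ at $s=0$ can be exploited directly. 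The $e_r$-statement is obtained by the same computation with $J$ in place of $K$.
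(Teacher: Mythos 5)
Your proposal is correct and is the argument the paper has in mind: the paper omits this proof, recording only the two hints $|\tilde B_i|\le Cr/R$ and $|\ln(1-(1+4\d)r/R)|\le (Cr/R)|\ln\d|$, and you have reconstructed the computation from \eqref{eqn: tangent derivative of growth potential simplified at outer interface}, \eqref{eqn: normal derivative of growth potential along outer interface}, Lemma~\ref{lem: difference of kernels}, and Lemma~\ref{lem: estimates involving b} exactly as intended. One remark on bookkeeping: your estimate $\int_0^{1+4\d}\f{dw}{1-(rw/R)^2}\le C(R/r)|\ln\d|$ is lossy by a factor $R/r$, which is what forces your subsequent case split and the appeal to linear vanishing of $K,J$ at $s=0$. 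The paper's second hint is precisely designed to avoid this: after the substitution $s=rw/R$ it yields $\int_0^{1+4\d}\f{dw}{1-(rw/R)^2}\le \f{R}{r}\,|\ln(1-(1+4\d)r/R)|\le C|\ln\d|$ uniformly in $r/R$, so the first term is bounded by $Cr\cdot\f{r\d}{R}(\D m_0+\D M_0)\|g_0\|_{L^\infty}\cdot|\ln\d|$ in one stroke, with no case distinction. Relatedly, in your $r<R/2$ branch the linear vanishing of $K,J$ at $s=0$ is more than you need: there $1-rw/R$ is bounded below by a universal constant, hence so is the Poisson denominator, and the crude bound $|K(\tilde B_1,\xi)-K(\tilde B_2,\xi)|\le C|\tilde B_1-\tilde B_2|\le C(r\d/R)(\D m_0+\D M_0)$ from Lemma~\ref{lem: difference of kernels} already gives $\f{Cr^2\d}{R}(\D m_0+\D M_0)\|g_0\|_{L^\infty}$ after multiplying by the outer $r$ and integrating over an $(w,\xi)$-region of measure $O(1)$; the bounded-below $|\ln\d|$ (since $\d\le 1/10$) then absorbs the missing logarithm. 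So your route closes, but is heavier than the uniform logarithmic estimate the paper suggests.
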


We omit its proof here, but only note that $|\tilde{B}_i|\leq \f{Cr}{R}$ and $|\ln (1-\f{(1+4\d)r}{R})|\leq \f{Cr}{R}|\ln \d|$.

Then we prove as in Lemma \ref{lem: L^inf estimate of derivative of growth potential inner} that
\begin{lem}\label{lem: L^inf estimate of derivative of growth potential outer}
Let $h,H\in W^{1,\infty}(\BT)$ such that $m_{0},M_{0}\ll 1$, which define the map $x$ in \eqref{eqn: change of variables to the reference coordinate} and $g = g_0(X(x))$.
Then
\beq
\begin{split}
&\;\|(e_\th\cdot \na(\G*g))_{\tilde{\g}(\th)} \|_{L^\infty(\BT)}+
\|(e_r\cdot \na(\G*g))_{\tilde{\g}(\th)}-\tilde{c}_{g_0} \|_{L^\infty(\BT)}\\
\leq &\;\f{Cr^2}{R}( (m_0+M_0) \d |\ln \d| \|g_0\|_{L^\infty(B_{(1+4\d)r})}+ \|e_\th\cdot \na g_0 \|_{L^2(B_{r(1+4\d)})}),
\end{split}
\label{eqn: L inf bound of derivative of potential outer}
\eeq
where $C$ is universal and
\beq
\tilde{c}_{g_0} := -\f{1}{2\pi R}\int_{B_{r(1+4\d)}}g_0(X)\,dX.
\label{eqn: def of c characteristic normal derivative outer interface}
\eeq
\begin{proof}

Let $\bar{g}_0$ be as in Lemma \ref{lem: L^inf estimate of derivative of growth potential inner}.
We proceed as in \eqref{eqn: bound tangent derivative of potential for a perfect circle inner} by noticing that $K(\f{rw}{R},\cdot)$ is an odd kernel.
\beq
\begin{split}
&\;|(e_\th\cdot \na(\G*g_0) )_{\pa B_R}|\\
= &\; \f{r}{4\pi}\left|\int_{\BT}d\xi \int_0^{1+4\d} K\left(\f{rw}{R},\xi\right)(g_0(rw, \th+\xi)-\bar{g}_0(rw))\,dw\right|\\
\leq &\; Cr \int_0^{1+4\d}\left\|\f{\f{r}{R}}{|1-\f{rw}{R}|+|\xi|}\right\|_{L^1_\xi(\BT)}\|g_0(rw, \cdot)-\bar{g}_0(rw)\|_{L^\infty_\xi(\BT)}\,dw\\
\leq &\; \f{Cr^2}{R} \|e_\th\cdot \na g_0 \|_{L^2(B_{r(1+4\d)})}.
\end{split}
\label{eqn: bound tangent derivative of potential for a perfect circle outer}
\eeq
Combining this and Lemma \ref{lem: L inf estimate of difference same source at outer interface} with $h_1= h$, $H_1 = H$ and $h_2 = H_2 = 0$, we argue as in \eqref{eqn: bounding L inf of tangential derivative of growth potential at inner interface} to find that $\|(e_\th\cdot \na(\G*g))_{\tilde{\g}(\th)} \|_{L^\infty(\BT)}$ satisfies the desired bound.

Similarly,
\beq
\begin{split}
&\;(e_r\cdot \na(\G*g_0) )_{\pa B_R}\\
=&\;\f{r}{4\pi}\int_{\BT}d\xi \int_0^{1+4\d} J\left(\f{rw}{R},\xi\right)(g_0(rw, \th+\xi)-\bar{g}_0(rw))\,dw \\
&\;+ \f{r}{4\pi} \int_0^{1+4\d} \int_{\BT}d\xi \,J\left(\f{rw}{R},\xi\right)\bar{g}_0(rw)\,dw.
\end{split}
\eeq
The first term can be bounded exactly as in \eqref{eqn: bound tangent derivative of potential for a perfect circle outer}.
For the second term, we notice that $\f{r(1+4\d)}{R}\leq 1$.
By \eqref{eqn: def of function J},
\beq
\begin{split}
&\;\f{r}{4\pi} \int_0^{1+4\d} \int_{\BT}d\xi \,J\left(\f{rw}{R},\xi\right)\bar{g}_0(rw)\,dw \\
= &\;-\f{r^2}{R} \int_0^{1+4\d} w\bar{g}_0(rw)\,dw = -\f{1}{2\pi R}\int_{B_{r(1+4\d)}} g_0(X)\,dX.
\end{split}
\eeq
Then the desired estimate follows.
\end{proof}
\end{lem}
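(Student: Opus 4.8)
The plan is to mirror the two-step structure of the proof of Lemma~\ref{lem: L^inf estimate of derivative of growth potential inner}: first establish the estimate in the perfectly circular configuration $h=H=0$, where $\tilde{\g}=\pa B_R$, $g=g_0$, and formulas \eqref{eqn: tangent derivative of growth potential simplified at outer interface} and \eqref{eqn: normal derivative of growth potential along outer interface} hold with $\tilde{B}=rw/R$ and $\pa|y|/\pa\r\equiv 1$; then transfer to a general almost-radial pair $(h,H)$ by means of the difference estimate of Lemma~\ref{lem: L inf estimate of difference same source at outer interface}.

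The feature that distinguishes the circular case here from the one along $\g$ is that the kernels are evaluated at $\tilde{B}=rw/R$, which by \eqref{eqn: condition on delta} satisfies $rw/R\le r(1+4\d)/R<1$ uniformly for $w\in[0,1+4\d]$ (with a gap $1-r(1+4\d)/R$ of order $\d$). This not only removes any genuine singularity but, crucially, supplies an extra factor of size $r/R$. Concretely, for $(e_\th\cdot\na(\G*g_0))_{\pa B_R}$ I would integrate by parts in $\xi$ via the identity $K(s,\xi)=s\,\pa_\xi\ln(1+s^2-2s\cos\xi)$, which moves the angular derivative onto $g_0$ and exposes the prefactor $s=rw/R$, and then apply Cauchy--Schwarz in $\xi$ together with the uniform bound $\|\ln(1+s^2-2s\cos\xi)\|_{L^2_\xi(\BT)}\le C$ for $s\in[0,1)$. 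Integrating the resulting pointwise bound $C(r/R)\|\pa_\th g_0(rw,\cdot)\|_{L^2(\BT)}$ over $w$ and converting it into $\|e_\th\cdot\na g_0\|_{L^2(B_{r(1+4\d)})}$ exactly as in \eqref{eqn: bound tangent derivative of potential for a perfect circle inner} turns the prefactor $Cr$ of \eqref{eqn: tangent derivative of growth potential simplified at outer interface} into $Cr^2/R$, as needed.

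For $(e_r\cdot\na(\G*g_0))_{\pa B_R}$ I would separate off the angular mean $\bar g_0(rw)$. Since $rw/R<1$ throughout $w\in[0,1+4\d]$, one has $\int_{\BT}J(rw/R,\xi)\,d\xi=-4\pi rw/R$ (from $J(s,\xi)=-s(1+P(s,\xi))$ and $\int_{\BT}P(s,\xi)\,d\xi=2\pi$ for $s<1$, Lemma~\ref{lem: properties of Poisson kernel}), so the mean part of \eqref{eqn: normal derivative of growth potential along outer interface} equals $-\frac{r^2}{R}\int_0^{1+4\d}w\,\bar g_0(rw)\,dw=-\frac{1}{2\pi R}\int_{B_{r(1+4\d)}}g_0(X)\,dX=\tilde{c}_{g_0}$; this is the source of the precise constant $\tilde{c}_{g_0}$ in \eqref{eqn: L inf bound of derivative of potential outer}, and also explains why the relevant ball is $B_{r(1+4\d)}$ rather than $B_r$ (unlike along $\g$, the value $-4\pi s$ of $\int_{\BT}J(s,\cdot)\,d\xi$ persists for all $w$ in the integration range instead of vanishing past $w=1$). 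The oscillatory remainder is controlled by $\|J(rw/R,\cdot)\|_{L^1_\xi(\BT)}\,\|g_0(rw,\cdot)-\bar g_0(rw)\|_{L^\infty(\BT)}$; since $P\ge0$ gives $\|J(rw/R,\cdot)\|_{L^1_\xi(\BT)}=4\pi rw/R\le Cr/R$ and the one-dimensional Sobolev embedding gives $\|g_0(rw,\cdot)-\bar g_0(rw)\|_{L^\infty(\BT)}\le C\|\pa_\th g_0(rw,\cdot)\|_{L^2(\BT)}$, integrating in $w$ again yields the $Cr^2/R$ scaling.

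It remains to transfer to general $(h,H)$: applying Lemma~\ref{lem: L inf estimate of difference same source at outer interface} with $(h_1,H_1)=(h,H)$ and $(h_2,H_2)=(0,0)$ (so that $\D m_0=m_0$, $\D M_0=M_0$, and its hypotheses hold under $m_0,M_0\ll1$) bounds $\|(e_\th\cdot\na(\G*g))_{\tilde{\g}}-(e_\th\cdot\na(\G*g_0))_{\pa B_R}\|_{L^\infty(\BT)}$, and likewise the $e_r$-difference, by $C(r^2/R)\,\d|\ln\d|(m_0+M_0)\|g_0\|_{L^\infty}$; the triangle inequality together with the circular estimates then gives \eqref{eqn: L inf bound of derivative of potential outer}. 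The one point requiring care, and the only real departure from the argument along $\g$, is extracting the gain $r/R$: it rests entirely on $\tilde{B}=rw/R$ staying strictly below $1$ uniformly in $w\in[0,1+4\d]$ (a consequence of \eqref{eqn: condition on delta}), which is what makes available both the integration-by-parts estimate with a uniformly $L^2_\xi$-bounded logarithm and the clean identity $\int_{\BT}J(rw/R,\xi)\,d\xi=-4\pi rw/R$; everything else is a direct transcription.
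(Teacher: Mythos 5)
Your proof is correct and follows the same two-step strategy as the paper's: first treat the perfectly circular configuration $h=H=0$ (isolating the mean contribution $\tilde{c}_{g_0}$ via $\int_\BT J(rw/R,\xi)\,d\xi=-4\pi rw/R$ for $rw/R<1$ and controlling the oscillatory part by $\|e_\th\cdot\na g_0\|_{L^2}$), then transfer to general $(h,H)$ by invoking Lemma~\ref{lem: L inf estimate of difference same source at outer interface} with $(h_1,H_1)=(h,H)$, $(h_2,H_2)=(0,0)$. The only deviation from the paper is a minor one in the tangential circular estimate: you integrate by parts in $\xi$ using $K(s,\xi)=s\,\pa_\xi\ln(1+s^2-2s\cos\xi)$ and then apply Cauchy--Schwarz with the uniform $L^2_\xi$ bound on the logarithm, whereas the paper subtracts the angular mean $\bar g_0$ (using oddness of $K$) and bounds the $L^1_\xi$ norm of the kernel before applying a one-dimensional Sobolev embedding to $g_0(rw,\cdot)-\bar g_0(rw)$; both routes produce the same $Cr^2/R\cdot\|e_\th\cdot\na g_0\|_{L^2}$ bound. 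This is a harmless technical substitution rather than a genuinely different approach, and the rest of the argument (including the explanation of why the integration range $[0,1+4\d]$, hence the ball $B_{r(1+4\d)}$, appears in $\tilde{c}_{g_0}$) matches the paper's reasoning.
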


We shall follow Lemma \ref{lem: W 1p estimate of tangent derivative} and Lemma \ref{lem: W 1p estimate of normal derivative difference} to prove $W^{1,p}$-estimates concerning $(e_\th\cdot \na(\G*g))_{\tilde{\g}(\th)}$ and $(e_r\cdot \na(\G*g))_{\tilde{\g}(\th)}$.

\begin{lem}\label{lem: W 1p estimate of difference of tangent derivative outer interface}
Assume $h_i,H_i\in W^{1,\infty}(\BT)$ $(i = 1,2)$ such that $m_{0,i}+M_{0,i} \ll 1$.
Let $\D m_0$ and $\D M_0$ be defined in \eqref{eqn: def of normalized W 1 inf difference of two h} and \eqref{eqn: def of normalized W 1 inf difference of two H large}, respectively.
Define $g_i(x) = g_0(X_i(x))$ as before.
Then for all $p\in [2,\infty)$, 
\beq
\begin{split}
&\;\|(e_\th\cdot \na(\G*g_1))_{\tilde{\g}_1(\th)}-(e_\th\cdot \na(\G*g_2))_{\tilde{\g}_2(\th)}\|_{\dot{W}^{1,p}(\BT)}\\
\leq  &\;\f{Cr^2}{R}\|g_0\|_{L^\infty(B_{(1+4
\d)r})}(\D m_0 +(m_{0,1}+m_{0,2})\D M_0)\\
&\;+\f{Cr^2}{R} (\D m_0+\D M_0)\|e_\th\cdot \na g_0\|_{L^2(B_{(1+4\d)r})},
\end{split}
\label{eqn: W 1p estimate for difference of tangent derivative outer interface}
\eeq
where $C = C(p)$.
\begin{proof}

Following \eqref{eqn: decomposition of derivative of tangent derivative} and \eqref{eqn: tangent derivative of growth potential simplified at outer interface},
\beq
\begin{split}
&\;
\f{d}{d\th}(e_\th\cdot \na(\G*g_i))_{\tilde{\g}_i(\th)}\\
= &\;
\f{r}{4\pi}\int_{\BT}d\xi \int_0^{1+4\d} dw\, \left[\f{\pa K}{\pa s}(\tilde{B}_i, \xi)\f{\pa \tilde{B}_i}{\pa \th}-\f{\pa K}{\pa s}(B_i,\xi)\f{\pa B_i}{\pa \th}\right]\left[\f{\pa |y_i|}{\pa \r}g_0\right]_{(rw,\xi+\th)}\\
&\;+\f{r}{4\pi}\int_{\BT}d\xi \int_0^{1+4\d} dw\, \f{\pa K}{\pa s}(B_i,\xi)\f{\pa B_i}{\pa \th}\left(\left[\f{\pa |y_i|}{\pa \r}g_0\right]_{(rw,\xi+\th)}-\left[\f{\pa |y_i|}{\pa \r}g_0\right]_{(rw,\th)}\right)\\
&\;-\f{r}{4\pi}\int_{\BT}d\xi \int_0^{1+4\d} dw\,
 \left[\f{\pa K}{\pa s}(\tilde{B}_i,\xi)\f{\pa \tilde{B}_i}{\pa \xi}+\f{\pa K}{\pa \xi}(\tilde{B}_i,\xi)\right]\left(\left[\f{\pa |y_i|}{\pa \r}g_0\right]_{(rw,\xi+\th)} -\left[\f{\pa |y_i|}{\pa \r}g_0\right]_{(rw,\th)}\right)\\
=:&\;\tilde{J}_{\th,1}^{(i)}+\tilde{J}_{\th,2}^{(i)}+\tilde{J}_{\th,3}^{(i)}.
\end{split}
\label{eqn: decomposition of derivative of tangent derivative outer interface}
\eeq
Then we derive as in \eqref{eqn: decomposition of difference of J theta 1} and \eqref{eqn: estimate for difference of J theta 1 from two configurations} to find that
\beq
\begin{split}
&\;|\tilde{J}_{\th,1}^{(1)}-\tilde{J}_{\th,1}^{(2)}|\\
\leq &\;\f{Cr^2}{R}\|g_0\|_{L^\infty}(\D m_0+\D M_0) (\|h_1'\|_{L^\infty}+\|h_2'\|_{L^\infty})+\f{Cr^2}{R}\|g_0\|_{L^\infty} \|h_1'-h_2'\|_{L^\infty}.
\end{split}
\label{eqn: estimate for difference of tilde J theta 1 from two configurations}
\eeq
Here we used the fact that $|1-\f{rw}{R}|\geq C\d$ for all $w\in [0,1+4\d]$.
Moreover, as in \eqref{eqn: splitting J theta 2 3 difference} and \eqref{eqn: bounding J theta 2 3 difference},
\beq
\begin{split}
&\;\left|(\tilde{J}_{\th,2}^{(1)}+\tilde{J}_{\th,3}^{(1)}) - (\tilde{J}_{\th,2}^{(2)}+\tilde{J}_{\th,3}^{(2)})\right|\\
\leq  &\;\f{Cr^2}{R}\|g_0\|_{L^\infty}(\D m_0 +\d^{\b-1}\|h_2\|_{\dot{C}^\b}\D M_0)\\
&\;+\f{Cr^2}{R} (\D m_0+\D M_0)\int_{\BT}d\xi \int_0^{1+4\d} dw\,\f{|g_0(rw,\xi+\th)-g_0(rw,\th)|}{(|1-\f{rw}{R}|+|\xi|)^2}.
\end{split}
\eeq
We proceed as in \eqref{eqn: L p bound for difference of J theta 2 and J theta 3 from two configurations} to obtain that
\beq
\begin{split}
&\;\left\|(\tilde{J}_{\th,2}^{(1)}+\tilde{J}_{\th,3}^{(1)}) - (\tilde{J}_{\th,2}^{(2)}+\tilde{J}_{\th,3}^{(2)})\right\|_{L^p(\BT)}\\
\leq  &\;\f{Cr^2}{R}\|g_0\|_{L^\infty}(\D m_0 +\d^{\b-1}\|h_2\|_{\dot{C}^\b}\D M_0)\\
&\;+\f{Cr^2}{R} (\D m_0+\D M_0)\|e_\th\cdot \na g_0\|_{L^2(B_{(1+4\d)r})} .
\end{split}
\eeq
Combining this with \eqref{eqn: decomposition of derivative of tangent derivative outer interface} and \eqref{eqn: estimate for difference of tilde J theta 1 from two configurations}, we prove \eqref{eqn: W 1p estimate for difference of tangent derivative outer interface}.
\end{proof}
\end{lem}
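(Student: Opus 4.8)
The plan is to follow verbatim the template of Lemma~\ref{lem: W 1p estimate of tangent derivative} (the inner-interface estimate), the one structural simplification being that along $\tilde{\g}$ the kernel argument $\tilde{B}$ stays comparable to $r/R$, hence bounded away from the singular value $1$ by a distance $\gtrsim\d$. This removes the principal-value subtlety of the inner case and, more importantly, means that only $W^{1,\infty}$-control of $h_i,H_i$ is needed rather than $C^{1,\b}$: every appearance of $\d^\b\|h'\|_{\dot{C}^\b}$ in the inner estimate is replaced by a harmless power of $\d$ produced by the $w$-integration over $\mathrm{supp}\,\eta_\d$. First I would differentiate \eqref{eqn: tangent derivative of growth potential simplified at outer interface} in $\th$: using $\pa_\th[K(\tilde{B}_i,\xi)]=\pa_s K(\tilde{B}_i,\xi)\pa_\th\tilde{B}_i$, adding and subtracting $\pa_s K(B_i,\xi)\pa_\th B_i$ (legitimate because $\pa_s K(B_i,\cdot)$ is odd in $\xi$), and transferring the $\th$-derivative falling on $g_0$ into a $\xi$-derivative followed by integration by parts, one reaches the decomposition \eqref{eqn: decomposition of derivative of tangent derivative outer interface} into $\tilde{J}_{\th,1}^{(i)}+\tilde{J}_{\th,2}^{(i)}+\tilde{J}_{\th,3}^{(i)}$, mirroring \eqref{eqn: decomposition of derivative of tangent derivative}.

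Next I would bound $\tilde{J}_{\th,1}^{(1)}-\tilde{J}_{\th,1}^{(2)}$ pointwise. Expanding this difference into telescoping pieces as in \eqref{eqn: decomposition of difference of J theta 1}, one applies Lemma~\ref{lem: difference of kernels} with the role of $w$ there played by $rw/R$ (legitimate by Remark~\ref{rmk: tilde b and b satisfy assumptions of Lemma 4.1}, which checks the hypotheses for $\tilde{B}_i,B_i$), together with the $\tilde{B}$-estimates of Lemma~\ref{lem: estimates involving b} and identity \eqref{eqn: ingredient 1}. Since $1+(rw/R)^2-2(rw/R)\cos\xi\gtrsim(|1-\tfrac{rw}{R}|+|\xi|)^2\gtrsim(\d+|\xi|)^2$, each resulting $\xi$-integral of the form $\int_\BT(\d+|\xi|)^{-k}|\xi|^{(\cdots)}\,d\xi$ is a negative power of $\d$, while each $w$-integral runs over $\mathrm{supp}\,\eta_\d$ (or $\mathrm{supp}\,\eta_\d(1-\eta_\d)$) and contributes a factor $\d$; together with the prefactor $r$ and a factor $r/R$ carried by the $\tilde{B}$-terms, the $\d$-powers cancel and one gets the pointwise bound \eqref{eqn: estimate for difference of tilde J theta 1 from two configurations}.

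For $(\tilde{J}_{\th,2}^{(1)}+\tilde{J}_{\th,3}^{(1)})-(\tilde{J}_{\th,2}^{(2)}+\tilde{J}_{\th,3}^{(2)})$ I would telescope as in \eqref{eqn: splitting J theta 2 3 difference}; again the $w$-integration localizes to $\mathrm{supp}\,\eta_\d$ except for the genuinely singular contribution carrying the factor $g_0(rw,\xi+\th)-g_0(rw,\th)$, which persists for all $w\in[0,1+4\d]$. For that piece one takes the $L^p_\th$-norm, applies Minkowski in $w$ and $\xi$ and Hölder with a fractional exponent $s\in(\tfrac12,\tfrac12+\tfrac1p)$ exactly as in \eqref{eqn: L p bound for difference of J theta 2 and J theta 3 from two configurations}, uses the embedding $H^1(\BT)\hookrightarrow\dot B^s_{p,2}(\BT)$ to replace $\|g_0(rw,\cdot)\|_{\dot B^s_{p,2}}$ by $\|\pa_\th g_0(rw,\cdot)\|_{L^2}$, and finally estimates $\int_0^{1+4\d}|1-w|^{s-1}\|\pa_\th g_0(rw,\cdot)\|_{L^2}\,dw\le C\|e_\th\cdot\na g_0\|_{L^2(B_{(1+4\d)r})}$. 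Combining this with the pointwise bound from the previous paragraph and \eqref{eqn: decomposition of derivative of tangent derivative outer interface} yields \eqref{eqn: W 1p estimate for difference of tangent derivative outer interface}.

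The main obstacle is bookkeeping rather than any new idea. The quantity $\tilde{J}_{\th,1}^{(1)}-\tilde{J}_{\th,1}^{(2)}$ is a double difference — a difference of two configurations of an object that is itself a difference $\pa_s K\,\pa_\th\tilde{B}-\pa_s K\,\pa_\th B$ — so it must be split into many terms, and for each one must correctly single out which of $|\tilde{B}_1-\tilde{B}_2|$, $|B_1-B_2|$, $|\tilde{B}_1-B_1-\tilde{B}_2+B_2|$, the corresponding $\pa_\th$-versions, or the differences of $\pa|y_i|/\pa\r$ is the relevant small factor, then verify via Lemma~\ref{lem: estimates involving b} that the $\d$-powers close. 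In particular one must be careful to reproduce the cross term $(m_{0,1}+m_{0,2})\D M_0$ in the claimed bound, which reflects that varying $H$ multiplies the $h$-variation through the $\tilde{B}$-dependence; this is where the scaling differs most visibly from the inner-interface argument. Everything else is a mechanical transcription of Lemmas~\ref{lem: W 1p estimate of tangent derivative}--\ref{lem: W 1p estimate of normal derivative difference} with the cleaner ``away-from-singularity'' kernel estimates.
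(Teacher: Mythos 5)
Your proposal is correct and follows essentially the same route as the paper: differentiate \eqref{eqn: tangent derivative of growth potential simplified at outer interface} into the three pieces $\tilde{J}_{\th,1}^{(i)},\tilde{J}_{\th,2}^{(i)},\tilde{J}_{\th,3}^{(i)}$, telescope the configuration difference and apply Lemma~\ref{lem: difference of kernels} with $rw/R$ in the role of $w$ (via Remark~\ref{rmk: tilde b and b satisfy assumptions of Lemma 4.1}) and Lemma~\ref{lem: estimates involving b}, exploiting $|1-rw/R|\gtrsim\d$ to close the $\d$-powers, and finish the singular piece with the Minkowski--H\"older--Besov argument of \eqref{eqn: L p bound for difference of J theta 2 and J theta 3 from two configurations}. (A minor slip: the $w$-integral in the last step should carry $|1-\tfrac{rw}{R}|^{s-1}$, not $|1-w|^{s-1}$, but since $|1-\tfrac{rw}{R}|\gtrsim\d$ this is even easier to control and does not change the argument.)
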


\begin{lem}\label{lem: W 1p estimate of difference of normal derivative outer interface}
Under the assumptions of Lemma \ref{lem: W 1p estimate of difference of tangent derivative outer interface},
\beq
\begin{split}
&\;\|(e_r\cdot \na(\G*g_1))_{\tilde{\g}_1(\th)}-(e_r\cdot \na(\G*g_2))_{\tilde{\g}_2(\th)}\|_{\dot{W}^{1,p}(\BT)}\\
\leq  &\;\f{Cr^2}{R}(\D m_0 +\D M_0)(\|g_0\|_{L^\infty(B_{(1+4
\d)r})}+\|e_\th\cdot \na g_0\|_{L^2(B_{(1+4\d)r})}),
\end{split}
\label{eqn: W 1p estimate for difference of normal derivative outer interface}
\eeq
where $C = C(p)$.
\begin{proof}
Following the proofs of Lemma \ref{lem: W 1p estimate of normal derivative difference} and Lemma \ref{lem: W 1p estimate of difference of tangent derivative outer interface}, we know that it remains to bound $\tilde{J}_{r,4}^{(1)}-\tilde{J}_{r,4}^{(2)}$, where
\beq
\tilde{J}_{r,4}^{(i)} := \f{r}{4\pi}\int_{\BT}d\xi\int_{0}^{1+4\d}\f{\pa J}{\pa s}(B_i,\xi)\f{\pa B_i}{\pa \th} \left[\f{\pa|y_i|}{\pa \r}g_0\right]_{(rw, \th)}\,dw.
\eeq
Since for all $w\in [0,1+4\d]$ and $\xi\in \BT$, $B_i\leq 1$.
By Lemma \ref{lem: estimates involving b}, \eqref{eqn: ingredient 1} and \eqref{eqn: integral of kernel J}, 
\beq
\begin{split}
&\;|\tilde{J}_{r,4}^{(1)}-\tilde{J}_{r,4}^{(2)}| \\
\leq &\;Cr\|g_0\|_{L^\infty}\int_{0}^{1+4\d}\left|\f{\pa B_1}{\pa \th} -\f{\pa B_2}{\pa \th} \right| \left|\f{\pa|y_1|}{\pa \r}\right|
+\left|\f{\pa B_2}{\pa \th} \right| \left|\f{\pa|y_1|}{\pa \r}-\f{\pa|y_2|}{\pa \r}\right|\,dw\\
\leq &\;\f{Cr^2}{R}\|g_0\|_{L^\infty}(\D m_0+\D M_0).
\end{split}
\eeq
Then by Lemma \ref{lem: W 1p estimate of difference of tangent derivative outer interface}, \eqref{eqn: W 1p estimate for difference of normal derivative outer interface} follows.
\end{proof}
\end{lem}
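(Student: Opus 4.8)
The plan is to run the argument of Lemma~\ref{lem: W 1p estimate of normal derivative difference} (the $\dot W^{1,p}$-bound for $e_r\cdot\na(\G*g)$ along $\g$) essentially verbatim, with the inner quantities $b,\tilde b$ replaced by the outer ones $B,\tilde B$, reusing the kernel bookkeeping already carried out in Lemma~\ref{lem: W 1p estimate of difference of tangent derivative outer interface}. Concretely, I would start from the closed form \eqref{eqn: normal derivative of growth potential along outer interface} of $(e_r\cdot\na(\G*g_i))_{\tilde\g_i(\th)}$ and differentiate in $\th$, as in \eqref{eqn: decomposition of derivative of normal derivative}, to get a splitting
\[
\f{d}{d\th}(e_r\cdot\na(\G*g_i))_{\tilde\g_i(\th)}
=\tilde J_{r,1}^{(i)}+\tilde J_{r,2}^{(i)}+\tilde J_{r,3}^{(i)}+\tilde J_{r,4}^{(i)},
\]
where $\tilde J_{r,1}^{(i)},\tilde J_{r,2}^{(i)},\tilde J_{r,3}^{(i)}$ have the structure of $\tilde J_{\th,1}^{(i)},\tilde J_{\th,2}^{(i)},\tilde J_{\th,3}^{(i)}$ in \eqref{eqn: decomposition of derivative of tangent derivative outer interface} with $K$ replaced by $J$, and
\[
\tilde J_{r,4}^{(i)}=\f{r}{4\pi}\int_{\BT}d\xi\int_{0}^{1+4\d}\f{\pa J}{\pa s}(B_i,\xi)\,\f{\pa B_i}{\pa\th}\,\Big[\f{\pa|y_i|}{\pa\r}\,g_0\Big]_{(rw,\th)}\,dw .
\]
Since Lemma~\ref{lem: difference of kernels} explicitly states that all of its kernel bounds remain valid for $J$, and Remark~\ref{rmk: tilde b and b satisfy assumptions of Lemma 4.1} says $\tilde B_i,B_i$ satisfy its hypotheses with $w$ replaced by $rw/R$ (so that $|1-\f{rw}{R}|\ge C\d$ provides the gain $r/R$), the estimates for $\tilde J_{r,1}^{(1)}-\tilde J_{r,1}^{(2)}$ and for $(\tilde J_{r,2}^{(1)}+\tilde J_{r,3}^{(1)})-(\tilde J_{r,2}^{(2)}+\tilde J_{r,3}^{(2)})$ are obtained word for word as in Lemma~\ref{lem: W 1p estimate of difference of tangent derivative outer interface}, using Lemma~\ref{lem: estimates involving b}, \eqref{eqn: ingredient 1}, and the Besov-embedding step of \eqref{eqn: L p bound for difference of J theta 2 and J theta 3 from two configurations}; together they contribute $\f{Cr^2}{R}(\D m_0+\D M_0)(\|g_0\|_{L^\infty}+\|e_\th\cdot\na g_0\|_{L^2})$.

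The only term needing a separate argument is $\tilde J_{r,4}^{(i)}$, whose integrand depends on $\xi$ only through $\f{\pa J}{\pa s}(B_i,\xi)$. Here I would use the exact identity \eqref{eqn: integral of kernel J}: because $B_i(w,\th)<1$ for every $w\in[0,1+4\d]$ — a consequence of $(1+4\d)r<(1-2\d)R$ and the smallness of $m_{0,i},M_{0,i}$ — the $\xi$-integral collapses to the constant $-4\pi$, so $\tilde J_{r,4}^{(i)}=-r\int_0^{1+4\d}\f{\pa B_i}{\pa\th}\,[\f{\pa|y_i|}{\pa\r}\,g_0]_{(rw,\th)}\,dw$. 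Splitting $\tilde J_{r,4}^{(1)}-\tilde J_{r,4}^{(2)}$ into $(\f{\pa B_1}{\pa\th}-\f{\pa B_2}{\pa\th})$ against a bounded factor plus $\f{\pa B_2}{\pa\th}$ against $\f{\pa|y_1|}{\pa\r}-\f{\pa|y_2|}{\pa\r}$, and using $|\f{\pa B_1}{\pa\th}-\f{\pa B_2}{\pa\th}|\le\f{Cr}{R}(\D m_0+\D M_0)$ and $|\f{\pa B_2}{\pa\th}|\le\f{Cr}{R}$ from Lemma~\ref{lem: estimates involving b}, together with $|\f{\pa|y_i|}{\pa\r}|\le C$ and $\int_0^{1+4\d}|\f{\pa|y_1|}{\pa\r}-\f{\pa|y_2|}{\pa\r}|\,dw\le C\d\,\D m_0$ from \eqref{eqn: ingredient 1}, gives $|\tilde J_{r,4}^{(1)}-\tilde J_{r,4}^{(2)}|\le\f{Cr^2}{R}\|g_0\|_{L^\infty}(\D m_0+\D M_0)$. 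Combining the four pieces with the $L^\infty$-bound of Lemma~\ref{lem: L inf estimate of difference same source at outer interface} then yields \eqref{eqn: W 1p estimate for difference of normal derivative outer interface}.

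The main, and essentially only, subtlety is the treatment of $\tilde J_{r,4}^{(i)}$: one must observe that its $J$-factor integrates out exactly via \eqref{eqn: integral of kernel J}, which rests on the geometric inequality $B_i<1$ over the whole range $w\in[0,1+4\d]$; the same mechanism — in the weaker form that $\f{\pa J}{\pa s}(s,\cdot)$ has vanishing mean for $s>1$ — already appeared for the inner-interface analogue in Lemma~\ref{lem: W 1p estimate of normal derivative difference}. Everything else is a routine transcription of the outer tangential-derivative estimate with the kernel $K$ replaced by $J$, for which Lemma~\ref{lem: difference of kernels} has already prepared all the needed inequalities.
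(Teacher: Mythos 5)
Your proposal is correct and follows the same route as the paper: differentiate \eqref{eqn: normal derivative of growth potential along outer interface}, handle $\tilde J_{r,1}^{(i)}, \tilde J_{r,2}^{(i)}, \tilde J_{r,3}^{(i)}$ by transcribing the $K$-kernel argument of Lemma~\ref{lem: W 1p estimate of difference of tangent derivative outer interface} to $J$ (using that Lemma~\ref{lem: difference of kernels} already covers $J$), and isolate $\tilde J_{r,4}^{(i)}$ via the exact identity \eqref{eqn: integral of kernel J} together with $B_i<1$ and the estimates of Lemma~\ref{lem: estimates involving b} and \eqref{eqn: ingredient 1}. Your closing mention of the $L^\infty$-bound of Lemma~\ref{lem: L inf estimate of difference same source at outer interface} is superfluous since the conclusion is only a $\dot W^{1,p}$ seminorm estimate, but this does not affect correctness.
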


\begin{lem}\label{lem: W 1p estimate of derivative of growth potential outer}
Assume $h, H\in W^{1,\infty}(\mathbb{T})$, such that $m_{0}+M_0 \ll 1$.
Define $g(x) = g_0(X(x))$.
Then for all $p\in [2,\infty)$,
\beq
\begin{split}
&\;\|(e_\th\cdot \na(\G*g))_{\tilde{\g}(\th)}\|_{\dot{W}^{1,p}(\BT)} +\|(e_r\cdot \na(\G*g))_{\tilde{\g}(\th)}\|_{\dot{W}^{1,p}(\BT)}\\
\leq  &\;\f{Cr^2}{R}((m_0 + M_0)\|g_0\|_{L^\infty(B_{(1+4
\d)r})}+\|e_\th\cdot \na g_0\|_{L^2(B_{(1+4\d)r})}),
\end{split}
\label{eqn: W 1p estimate for normal derivative outer interface}
\eeq
where $C = C(p)$.
\begin{proof}
We first study the case with $h = H = 0$.
By \eqref{eqn: decomposition of derivative of tangent derivative outer interface},
\beq
\begin{split}
&\;
\f{d}{d\th}(e_\th\cdot \na(\G*g_0))_{\pa B_R}\\
=&\;-\f{r}{4\pi}\int_{\BT}d\xi \int_0^{1+4\d} dw\,
\f{\pa K}{\pa \xi}\left(\f{rw}{R},\xi\right)(g_0(rw,\xi+\th)-g_0(rw,\th)).
\end{split}
\eeq
Hence, arguing as in \eqref{eqn: L p bound for difference of J theta 2 and J theta 3 from two configurations},
\beq
\begin{split}
&\;
\|(e_\th\cdot \na(\G*g_0))_{\pa B_R}\|_{\dot{W}^{1,p}}\\
\leq &\;Cr\int_{\BT}d\xi \int_0^{1+4\d} dw\,\f{r}{R}\cdot
\f{\|g_0(rw,\xi+\cdot)-g_0(rw,\cdot)\|_{L^p_\th(\BT)}}{(|1-\f{rw}{R}|+|\xi|)^2}\\
\leq &\;\f{Cr^2}{R}\|e_\th\cdot \na g_0\|_{L^2(B_{(1+4\d)r})}.
\end{split}
\eeq
The rest of the proof is the same as that of Lemma \ref{lem: W 1p estimate of derivative of growth potential inner}.
\end{proof}
\end{lem}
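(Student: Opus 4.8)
The plan is to prove the estimate first in the radially symmetric case $h\equiv H\equiv0$ and then transfer it to the general case by the triangle inequality together with the difference estimates already established along $\tilde\g$. Concretely, I would write, for the tangential component,
\[
\|(e_\th\cdot \na(\G*g))_{\tilde{\g}(\th)}\|_{\dot{W}^{1,p}(\BT)}\leq \|(e_\th\cdot \na(\G*g))_{\tilde{\g}(\th)}-(e_\th\cdot \na(\G*g_0))_{\pa B_R}\|_{\dot{W}^{1,p}(\BT)}+\|(e_\th\cdot \na(\G*g_0))_{\pa B_R}\|_{\dot{W}^{1,p}(\BT)},
\]
and bound the first term by Lemma~\ref{lem: W 1p estimate of difference of tangent derivative outer interface} applied with $h_1=h$, $H_1=H$, $h_2=H_2=0$ (so that $\D m_0=m_0$, $\D M_0=M_0$, $m_{0,2}=0$); since $m_0\ll1$ one has $m_{0,1}\D M_0=m_0 M_0\le M_0$, which collapses the right-hand side of \eqref{eqn: W 1p estimate for difference of tangent derivative outer interface} to the asserted form. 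The normal component $e_r$ is handled identically using Lemma~\ref{lem: W 1p estimate of difference of normal derivative outer interface}. Thus everything reduces to estimating $(e_\th\cdot\na(\G*g_0))_{\pa B_R}$ and $(e_r\cdot\na(\G*g_0))_{\pa B_R}$, the case of perfectly concentric circles.

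For that base case I would specialize the derivative decomposition \eqref{eqn: decomposition of derivative of tangent derivative outer interface} (and its $J$-analogue for $e_r$, as in \eqref{eqn: decomposition of derivative of normal derivative}) to $h\equiv H\equiv0$: then $\tilde B_i=B_i=rw/R$ is constant in $\th$ and $\xi$, $\pa|y_i|/\pa\r\equiv1$, so $\tilde{J}_{\th,1}^{(i)}=\tilde{J}_{\th,2}^{(i)}=0$ (and in the $e_r$ case the extra term $\tilde{J}_{r,4}^{(i)}$ also vanishes since $\pa_\th(rw/R)=0$), leaving only a single term which after integration by parts in $\xi$ — legitimate because $\int_\BT\pa_\xi K(rw/R,\xi)\,d\xi=\int_\BT\pa_\xi J(rw/R,\xi)\,d\xi=0$ — reads
\[
\f{d}{d\th}(e_\th\cdot\na(\G*g_0))_{\pa B_R}=-\f{r}{4\pi}\int_{\BT}d\xi\int_0^{1+4\d}dw\,\pa_\xi K\!\left(\tfrac{rw}{R},\xi\right)\bigl(g_0(rw,\xi+\th)-g_0(rw,\th)\bigr),
\]
and likewise with $\pa_\xi J$ for $e_r$. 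Because $rw/R\le r(1+4\d)/R\le1-c\d$ stays bounded away from $1$, Lemma~\ref{lem: properties of Poisson kernel} gives the decay $|\pa_\xi K(rw/R,\xi)|+|\pa_\xi J(rw/R,\xi)|\le C(r/R)^2\bigl(|1-\tfrac{rw}{R}|+|\xi|\bigr)^{-2}$, the square reflecting that $K$ and $J$ vanish to second order in their first argument at the origin. From here the estimate proceeds exactly as in \eqref{eqn: L p bound for difference of J theta 2 and J theta 3 from two configurations} and the proof of Lemma~\ref{lem: W 1p estimate of derivative of growth potential inner}, with $|1-w|$ replaced by $|1-\tfrac{rw}{R}|$ throughout: Minkowski's inequality, Hölder in $\xi$ against a $\dot B^{s}_{p,2}(\BT)$-seminorm with $s\in(\tfrac12,\tfrac12+\tfrac1p)$, the embedding $H^1(\BT)\hookrightarrow B^{s}_{p,2}(\BT)$, the identity $\|\pa_\th g_0(rw,\cdot)\|_{L^2}=rw\,\|(e_\th\cdot\na g_0)(rw,\cdot)\|_{L^2}$, and the one-dimensional weighted integral $\int_0^{1+4\d}|1-\tfrac{rw}{R}|^{2(s-1)}w\,dw\le C(R/r)^2$ (a change of variable reducing it to a Beta integral), which together yield $\|(e_\th\cdot\na(\G*g_0))_{\pa B_R}\|_{\dot W^{1,p}}+\|(e_r\cdot\na(\G*g_0))_{\pa B_R}\|_{\dot W^{1,p}}\le \tfrac{Cr^2}{R}\|e_\th\cdot\na g_0\|_{L^2(B_{(1+4\d)r})}$.

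Combining the two paragraphs gives \eqref{eqn: W 1p estimate for normal derivative outer interface}. I do not expect a genuine obstacle here: the whole argument is the outer-interface mirror of Lemma~\ref{lem: W 1p estimate of derivative of growth potential inner}, and the interaction/difference part is already packaged in Lemma~\ref{lem: W 1p estimate of difference of tangent derivative outer interface} and Lemma~\ref{lem: W 1p estimate of difference of normal derivative outer interface} (which, unlike their $\g$-counterparts, need only $W^{1,\infty}$-regularity of $h,H$ because the two interfaces are separated by a distance comparable to $\d$, so no H\"older hypothesis is required). The only point demanding care is bookkeeping the power of $r/R$ in the base case: one must retain the full second-order decay of $K$ and $J$ near $s=0$ rather than crudely bounding it by one power, for otherwise the prefactor would come out as $Cr$ instead of $Cr^2/R$. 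All the remaining computations are the same singular-integral estimates carried out several times already in Section~\ref{sec: gradient estimate of growth potential}.
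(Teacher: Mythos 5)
Your proposal follows the paper's own strategy exactly: prove the base case $h=H=0$ by specializing the decomposition \eqref{eqn: decomposition of derivative of tangent derivative outer interface} (and its $J$-analogue), use $\partial_\xi K(s,\xi)=s^2\partial_s P$ and $\partial_\xi J(s,\xi)=s^2\partial_s Q$ together with Lemma \ref{lem: properties of Poisson kernel} and a Besov/Cauchy--Schwarz argument as in \eqref{eqn: L p bound for difference of J theta 2 and J theta 3 from two configurations}, then pass to general $(h,H)$ by the triangle inequality and Lemmas \ref{lem: W 1p estimate of difference of tangent derivative outer interface}--\ref{lem: W 1p estimate of difference of normal derivative outer interface} with $h_2=H_2=0$. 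This is exactly the paper's proof (which simply says ``the rest is the same as Lemma \ref{lem: W 1p estimate of derivative of growth potential inner}'').

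Your remark about the power of $r/R$ in the base case is in fact a genuine improvement in precision over the paper's displayed bound: the paper writes $\frac{r}{R}(|1-\tfrac{rw}{R}|+|\xi|)^{-2}$ for the kernel, but $|\partial_\xi K(rw/R,\xi)|=(rw/R)^2|\partial_s P|\lesssim (r/R)^2(|1-\tfrac{rw}{R}|+|\xi|)^{-2}$, and keeping the full square is what balances the factor $C(R/r)$ coming out of the Beta-type integral $\bigl(\int_0^{1+4\delta}|1-\tfrac{rw}{R}|^{2(s-1)}w\,dw\bigr)^{1/2}$ after the change of variable $u=rw/R$; with only one power of $r/R$, the argument as written would give $Cr^{3/2}R^{-1/2}$ rather than $Cr^2/R$. (Your phrasing ``the prefactor would come out as $Cr$'' undercounts by this $(R/r)^{1/2}$ factor, but the substance of the observation---that the quadratic decay of $\partial_\xi K$ and $\partial_\xi J$ at $s=0$ must be retained---is both correct and necessary.) Everything else, including the dispensability of H\"older regularity in the difference lemmas for $\tilde\g$, is accurate.
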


\section{Estimates for Singular Integral Operators $\CK_{\g}$ and $\CK_{\tilde{\g}}$}\label{sec: estimates for singular integral operators}
In this section, we shall derive estimates for singular integrals of type $\g'(\th)^\perp\cdot\mathcal{K}_\g\p$ and $\g'(\th)\cdot\mathcal{K}_\g\p$ (see the definition in \eqref{eqn: def of operator K}.)
Singular integrals involving $\CK_{\tilde{\g}}$ then follow similar estimates.

For convenience, for $\xi\in \BT\backslash\{ 0\}$, denote
\beq
\D f(\th) := \f{f(\th+\xi)-f(\th)}{2\sin\f{\xi}{2}},
\eeq
and
\beq
l(\th,\th+\xi) := \f{(\D f)^2}{f(\th)f(\th+\xi)} = \f{(\D h)^2}{(1+h(\th))(1+h(\th+\xi))}.
\label{eqn: def of l}
\eeq

We first derive a H\"{o}lder estimate for $\g'^\perp\cdot\mathcal{K}_\g\p$ for future use.

\begin{lem}\label{lem: Holder estimate of singular integral normal component}
Fix $\b\in (0,1)$.
Assume $h\in C^{1,\b}(\mathbb{T})$, such that $m_0 \ll 1$.
Then
\beq
\|\g'(\th)^\perp\cdot\mathcal{K}_\g\p\|_{\dot{C}^\b}\\
\leq  C\|h'\|_{\dot{C}^\beta} (\|\p\|_{C^\b}+\|\p\|_{L^\infty}\|h'\|_{\dot{C}^\beta}\|h'\|_{L^\infty}),
\label{eqn: Holder estimate of singular integral normal component}
\eeq
where $C = C(\b)$.
\begin{proof}
Using $\g(\th) = f(\th)(\cos\th,\sin\th)$, 
\beq
2\pi\g'(\th)^\perp\cdot\mathcal{K}_\g\p
=\mathrm{p.v.}\int_\BT \f{-f(\th)^2+f(\th)f(\th+\xi)\cos\xi-f'(\th)f(\th+\xi)\sin\xi} {f(\th)^2+f(\th+\xi)^2-2f(\th)f(\th+\xi)\cos\xi}\p(\th+\xi)\,d\xi.
\eeq
With $f(\th) = r(1+h(\th))$, it can be rewritten as
\beq
\begin{split}
&\;2\pi\g'(\th)^\perp\cdot\mathcal{K}_\g\p\\
= &\;-\f{1}{2}\int_{\BT}\p\,d\xi-\f{1}{2}\int_\BT \f{(f(\th+\xi)-f(\th))^2} {(f(\th)-f(\th+\xi))^2+f(\th)f(\th+\xi)\cdot 4\sin^2\f{\xi}{2}}\p(\th+\xi)\,d\xi\\
&\;+\mathrm{p.v.}\int_\BT \f{(f(\th+\xi)-f(\th))f(\th+\xi)-f'(\th)f(\th+\xi)\sin\xi} {(f(\th)-f(\th+\xi))^2+f(\th)f(\th+\xi)\cdot 4\sin^2\f{\xi}{2}}\p(\th+\xi)\,d\xi\\
= &\;-\f{1}{2}\int_{\BT}\p\,d\xi-\f{1}{2}\int_\BT \f{l(\th,\th+\xi)} {1+l(\th,\th+\xi)}\p(\th+\xi)\,d\xi\\
&\;+\f{1}{1+h(\th)}\mathrm{p.v.}\int_\BT \f{\D h}{2\sin\f{\xi}{2}}\cdot \f{\p(\th+\xi)} {1+l(\th,\th+\xi)}\,d\xi\\
&\;+\f{1}{1+h(\th)}\mathrm{p.v.}\int_\BT -\f{h'(\th)}{2\tan\f{\xi}{2}}\cdot \f{\p(\th+\xi)} {1+l(\th,\th+\xi)}\,d\xi\\
%
%
=:&\;L_0+L_1(\th)+L_2(\th)+L_3(\th).
\end{split}
\label{eqn: crude form of singular integral dot with normal}
\eeq

Since $\|fg\|_{\dot{C}^\b}\leq \|f\|_{\dot{C}^\b}\|g\|_{L^\infty}+\|f\|_{L^\infty}\|g\|_{\dot{C}^\b}$,
\beq
\begin{split}
\|L_1\|_{\dot{C}^\b}\leq &\;C\sup_{\xi\in \BT}\left\|\f{l} {1+l}\p(\th+\xi)\right\|_{\dot{C}^\b_\th}\\
\leq &\;C\sup_{\xi\in \BT}\left\|\f{l} {1+l}\right\|_{\dot{C}^\b_\th}\|\p\|_{L^\infty}
+C\sup_{\xi\in \BT}\left\|\f{l} {1+l}\right\|_{L^\infty_\th}\|\p\|_{\dot{C}^\b}.
\end{split}
\label{eqn: Holder bound for L_1 crude form}
\eeq
By the Lipschitz continuity of $\f{x}{1+x}$ on $[0,+\infty)$ and the smallness of $h$,
\beq
\begin{split}
\|L_1\|_{\dot{C}^\b}
\leq &\;C\sup_{\xi\in \BT}\left\|\f{(\D h)^2}{(1+h(\th))(1+h(\th+\xi))}\right\|_{\dot{C}^\b_\th}\|\p\|_{L^\infty}
+C\|h'\|_{L^\infty}^2\|\p\|_{\dot{C}^\b}\\
\leq &\;C(\|h'\|_{\dot{C}^\beta}\|h'\|_{L^\infty}\|\p\|_{L^\infty}+\|h'\|_{L^\infty}^2\|\p\|_{\dot{C}^\b}).
\end{split}
\label{eqn: estimate finite difference of L1}
\eeq
%
%
Here we used
\beq
\|\D h\|_{\dot{C}^\b_\th}= \f{\|h(\th+\xi)-h(\th)\|_{\dot{C}^\b_\th}}{\left|2\sin\f{\xi}{2}\right|}\leq \left|\f{1}{2\sin\f{\xi}{2}}\int_0^\xi \|h'(\th+\eta)\|_{\dot{C}^\b_\th}\,d\eta\right|
\leq C\|h'\|_{\dot{C}^\beta}.
\label{eqn: bounding difference between two finite differences}
\eeq

Take $\e\in \BT$ and $\e\geq 0$ without loss of generality.
Write
\beq
\begin{split}
&\;(L_2+L_3)(\th+\e)-(L_2+L_3)(\th)\\
=&\;\left(\f{1}{1+h(\th+\e)}-\f{1}{1+h(\th)}\right)\int_\BT \f{\D h(\th+\e)-\cos\f{\xi}{2}h'(\th+\e)}{2\sin\f{\xi}{2}}\cdot \f{\p(\th+\e+\xi)} {1+l(\th+\e,\th+\e+\xi)}\,d\xi\\
&\;+\f{1}{1+h(\th)}\int_\BT\f{\D h(\th+\e)-\cos\f{\xi}{2}\cdot h'(\th+\e)}{2\sin\f{\xi}{2}} \\
&\;\qquad \cdot \left(\f{\p(\th+\e+\xi)} {1+l(\th+\e,\th+\e+\xi)}-\f{\p(\th+\xi)} {1+l(\th,\th+\xi)}\right)\,d\xi\\
&\;+\f{1}{1+h(\th)} \int_\BT\f{\D h(\th+\e)-\D h(\th)-\cos\f{\xi}{2}(h'(\th+\e)- h'(\th))}{2\sin\f{\xi}{2}} \f{\p(\th)} {1+\f{ h'(\th)^2}{(1+h(\th))^2}}\,d\xi\\
&\;+\f{1}{1+h(\th)}\int_\BT\f{\D h(\th+\e)-\D h(\th)-\cos\f{\xi}{2}(h'(\th+\e)- h'(\th))}{2\sin\f{\xi}{2}} \\
&\;\qquad \cdot\left(\f{\p(\th+\xi)} {1+l(\th,\th+\xi)}-\f{\p(\th)} {1+\f{ h'(\th)^2}{(1+h(\th))^2}}\right)\,d\xi.
\end{split}
\label{eqn: sum of L_2 and L_3}
\eeq
We derive that
\beq
\begin{split}
&\;\left|\D h(\th+\e)-\cos\f{\xi}{2}\cdot h'(\th+\e)\right|\\
\leq &\;\left|\f{ \int_0^\xi h'(\th+\e+\eta)- h'(\th+\e)\,d\eta}{2\sin\f{\xi}{2}}\right|+\left|\f{\xi-\sin\xi}{2\sin\f{\xi}{2}}h'(\th+\e)\right|\\
\leq &\;C|\xi|^\b\|h'\|_{\dot{C}^\b},
\end{split}
\label{eqn: difference of difference quotient and the derivative}
\eeq
and
\beq
\begin{split}
&\;\left|\D h(\th+\e)-\D h(\th)-\cos\f{\xi}{2}(h'(\th+\e)-h'(\th))\right|\\
\leq &\;
\left|\f{1}{2\sin\f{\xi}{2}}\int_0^\xi h'(\th+\e+\eta)-h'(\th+\eta)\,d\eta\right|+| h'(\th+\e)- h'(\th)|\\
\leq&\; C\e^\b\|h'\|_{\dot{C}^\b}.
\end{split}
\label{eqn: absolute difference of Taylor expansion type error}
\eeq
Thanks to \eqref{eqn: estimate finite difference of L1} and \eqref{eqn: bounding difference between two finite differences},
\beq
\begin{split}
&\;\left|\f{\p(\th+\e+\xi)} {1+l(\th+\e,\th+\e+\xi)}-\f{\p(\th+\xi)} {1+l(\th,\th+\xi)}\right|\\
\leq &\;C\e^\b\|\p\|_{\dot{C}^\b}\\
&\;+C\|\p\|_{L^\infty}\left|\f{(\D h(\th+\e))^2}{(1+h(\th+\e))(1+h(\th+\e+\xi))}-\f{(\D h)^2}{(1+h(\th))(1+h(\th+\xi))}\right|\\
\leq &\;C\e^\b(\|\p\|_{\dot{C}^\b}+\|\p\|_{L^\infty}\|h'\|_{\dot{C}^\beta}\|h'\|_{L^\infty}),
\end{split}
\eeq
and similarly,
\beq
\left|\f{\p(\th+\xi)} {1+l(\th,\th+\xi)}-\f{\p(\th)} {1+\f{h'(\th)^2}{(1+h(\th))^2}}\right|
\leq C|\xi|^\b(\|\p\|_{\dot{C}^\b}+\|\p\|_{L^\infty}\|h'\|_{\dot{C}^\b}\|h'\|_{L^\infty}).
\eeq
Lastly, 
\beq
\begin{split}
&\;\left|\int_\BT\f{\D h(\th+\e)-\D h(\th)-\cos\f{\xi}{2}(h'(\th+\e)- h'(\th))}{2\sin\f{\xi}{2}} \,d\xi\right|\\
=&\;\left|\mathrm{p.v.}\int_\BT\f{h(\th+\e+\xi)-h(\th+\e)-h(\th+\xi)+h(\th)}{4\sin^2\f{\xi}{2}} \,d\xi\right|\\
= &\;C|\CH h'(\th+\e)-\CH h'(\th)|\\
\leq &\;C\e^\b\|h'\|_{\dot{C}^\b}.
\end{split}
\label{eqn: Holder estimate using Hilbert transform}
\eeq
Note that Hilbert transform is bounded in $C^\b(\BT)$.

Combining these estimates with \eqref{eqn: sum of L_2 and L_3}, we obtain that
\beq
\begin{split}
&\;|(L_2+L_3)(\th+\e)-(L_2+L_3)(\th)|\\
\leq &\;C\e^\b\|h'\|_{\dot{C}^\b}(\|\p\|_{C^\b}+\|\p\|_{L^\infty}\|h'\|_{\dot{C}^\b}\|h'\|_{L^\infty}).
\end{split}
\label{eqn: estimate finite difference of L2+L3}
\eeq
Then \eqref{eqn: Holder estimate of singular integral normal component} follows from \eqref{eqn: crude form of singular integral dot with normal}, \eqref{eqn: estimate finite difference of L1} and \eqref{eqn: estimate finite difference of L2+L3}.
\end{proof}
\end{lem}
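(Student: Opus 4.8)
The plan is to obtain the bound by writing $2\pi\g'(\th)^\perp\cdot\CK_\g\p$ in a form where the singular integral is tamed, and then carrying out a standard first-difference estimate. First I would use the parameterization $\g(\th)=f(\th)(\cos\th,\sin\th)$ with $f=r(1+h)$ to expand the numerator and denominator of the Birkhoff--Rott-type kernel, and observe that $f(\th)^2+f(\th+\xi)^2-2f(\th)f(\th+\xi)\cos\xi=(f(\th)-f(\th+\xi))^2+f(\th)f(\th+\xi)\cdot 4\sin^2\tfrac{\xi}{2}$, which never vanishes for $\xi\neq 0$. Dividing numerator and denominator by $f(\th)f(\th+\xi)\cdot 4\sin^2\tfrac{\xi}{2}$ and using the notation $\D h$ and $l(\th,\th+\xi)$ from \eqref{eqn: def of l}, I would decompose the expression as $L_0+L_1+L_2+L_3$ as in \eqref{eqn: crude form of singular integral dot with normal}: a constant average term $L_0$, a ``quadratic'' term $L_1$ with integrand $\tfrac{l}{1+l}\p(\th+\xi)$, and two terms $L_2,L_3$ each containing a single difference quotient $\tfrac{\D h}{2\sin(\xi/2)}$ or the odd kernel $-\tfrac{h'(\th)}{2\tan(\xi/2)}$ times $\tfrac{\p(\th+\xi)}{1+l}$. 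The point of this algebraic rearrangement is that $L_1$ is genuinely a nicely-behaved integral (no principal value needed), while the only true principal-value contributions in $L_2+L_3$ are governed by the Hilbert transform, which is bounded on $C^\b(\BT)$.

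For $L_1$, the key step is to estimate $\|l/(1+l)\|_{\dot C^\b_\th}$ and $\|l/(1+l)\|_{L^\infty_\th}$ uniformly in $\xi$. Using Lipschitz continuity of $x\mapsto x/(1+x)$ on $[0,\infty)$, the smallness of $h$ (so $1+h$ is bounded below), and the bound $\|\D h\|_{\dot C^\b_\th}\leq C\|h'\|_{\dot C^\b}$ from \eqref{eqn: bounding difference between two finite differences} together with $\|\D h\|_{L^\infty}\leq\|h'\|_{L^\infty}$, one gets $\|L_1\|_{\dot C^\b}\leq C(\|h'\|_{\dot C^\b}\|h'\|_{L^\infty}\|\p\|_{L^\infty}+\|h'\|_{L^\infty}^2\|\p\|_{\dot C^\b})$, which is within the claimed bound. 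For $L_2+L_3$ I would take $\e\geq 0$ and write the difference $(L_2+L_3)(\th+\e)-(L_2+L_3)(\th)$ as a sum of four pieces: (i) the variation of the prefactor $1/(1+h)$, times the full integral; (ii) $1/(1+h(\th))$ times the variation of the integrand with the difference-quotient weight of $\th+\e$ fixed; (iii) the variation of the difference-quotient weight paired with the frozen value $\p(\th)/(1+h'(\th)^2/(1+h(\th))^2)$; and (iv) the same weight-variation paired with the ``remainder'' of the $\p/(1+l)$ factor. For pieces (i), (ii), (iv) one uses the pointwise bounds $|\D h(\th+\e)-\cos\tfrac\xi2\,h'(\th+\e)|\leq C|\xi|^\b\|h'\|_{\dot C^\b}$ and $|\D h(\th+\e)-\D h(\th)-\cos\tfrac\xi2(h'(\th+\e)-h'(\th))|\leq C\e^\b\|h'\|_{\dot C^\b}$, plus the $\e^\b$- and $|\xi|^\b$-Hölder bounds on $\p/(1+l)$ obtained exactly as for $L_1$; the $\xi$-integrals then converge absolutely. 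Piece (iii) is the only one where the difference quotients do not individually give an integrable integrand, and there I would recognize $\mathrm{p.v.}\int_\BT\tfrac{h(\th+\e+\xi)-h(\th+\e)-h(\th+\xi)+h(\th)}{4\sin^2(\xi/2)}\,d\xi = C(\CH h'(\th+\e)-\CH h'(\th))$, which is $\leq C\e^\b\|h'\|_{\dot C^\b}$ by boundedness of $\CH$ on $\dot C^\b(\BT)$.

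Collecting these estimates yields $|(L_2+L_3)(\th+\e)-(L_2+L_3)(\th)|\leq C\e^\b\|h'\|_{\dot C^\b}(\|\p\|_{C^\b}+\|\p\|_{L^\infty}\|h'\|_{\dot C^\b}\|h'\|_{L^\infty})$, and combining with the $L_1$ bound and the constancy of $L_0$ gives \eqref{eqn: Holder estimate of singular integral normal component}. I expect the main obstacle to be the bookkeeping in piece (iii): one must split off precisely the frozen value $\p(\th)/(1+h'(\th)^2/(1+h(\th))^2)$ so that what remains is exactly a second difference of $h$ against $1/\sin^2(\xi/2)$, identifiable with a Hilbert-transform difference; any other splitting leaves a non-integrable tail. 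A secondary technical point is justifying that the principal-value integrals may be manipulated term by term and that the various $\xi$-integrals of the form $\int_\BT |\xi|^{\b-1}\,d\xi$ and $\int_\BT \e^\b/|\sin(\xi/2)|^{?}\,d\xi$ indeed converge, which is routine once the $|\xi|^\b$ gain from the Hölder bounds is in place.
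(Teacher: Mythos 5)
Your proposal matches the paper's proof essentially step for step: the same decomposition into $L_0+L_1+L_2+L_3$ via the identity for the denominator, the same Lipschitz-of-$x/(1+x)$ and $\|\D h\|_{\dot C^\b_\th}$ argument for $L_1$, the same four-piece splitting of $(L_2+L_3)(\th+\e)-(L_2+L_3)(\th)$ (prefactor variation, variation of $\p/(1+l)$ against the frozen weight, weight-variation against the frozen value via the Hilbert transform identity, and weight-variation against the $|\xi|^\b$-small remainder), and the same two pointwise bounds on the difference quotient and its $\e$-increment. The one obstacle you flag — that piece (iii) must be split off exactly so the $\xi$-integral becomes a second difference of $h$ against $1/\sin^2(\xi/2)$, i.e.\ a Hilbert-transform difference $\CH h'(\th+\e)-\CH h'(\th)$ — is indeed the crux, and you resolve it the same way the paper does.
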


Now we turn to a $\dot{W}^{1,p}$-estimate of $\g'^\perp\cdot\mathcal{K}_\g\p$.

\begin{lem}\label{lem: W 1p estimate of normal component of singular integral}
Fix $p\in [2,\infty)$.
Assume $h\in C^{1,\beta}(\mathbb{T})$ for some $\beta\in (0,1)$, such that $m_0 \ll 1$ with the needed smallness depending on $p$.
Then
\beq
\begin{split}
&\;\|\g'(\th)^\perp\cdot\mathcal{K}_\g\p\|_{\dot{W}^{1,p}}\\
\leq &\;C\| h''\|_{L^p}\|\p\|_{L^\infty}(1+\|h'\|_{\dot{C}^\b})+C( \|h''\|_{L^p}\|\p\|_{\dot{C}^\b} +\|h'\|_{L^\infty}\|\p'\|_{L^p}),
\end{split}
\eeq
where $C = C(p,\b)$.
\begin{proof}
Let $C_*$ and $C_\dag$ be the constants introduced in Lemma \ref{lem: Lp bound for multi-term commutator} and Lemma \ref{lem: W 1p estimate for multi-term commutator}, respectively, both of which only depend on $p$.
Without loss of generality, we may assume $C_\dag\geq C_*\geq 1$.
We also recall that $l$ is defined in \eqref{eqn: def of l}.

Using the notation in \eqref{eqn: crude form of singular integral dot with normal}, we take $\th$-derivative of $L_1$ to derive that 
\beq
\begin{split}
\|L_1\|_{\dot{W}^{1,p}}\leq &\;C\left\|\int_{\BT}\|h'\|_{L^\infty}^2|\p'(\th+\xi)|\,d\xi\right\|_{L^p}+C\left\|\int_{\BT}(\|h'\|_{L^\infty}|\D h'|+\|h'\|_{L^\infty}^3)\|\p\|_{L^\infty}\,d\xi\right\|_{L^p}\\
\leq &\;C\|h'\|_{L^\infty}^2\|\p'\|_{L^p}+C\|h'\|_{L^\infty}\| h''\|_{L^p}\|\p\|_{L^\infty}.
\end{split}
\label{eqn: W1p estimate for L1}
\eeq

Thanks to the smallness of $h$, we may assume $|l| <1$.
Hence, by Taylor expanding $(1+l)^{-1}$, 
we may rewrite $L_2$ in \eqref{eqn: crude form of singular integral dot with normal} as
\beq
\begin{split}
L_2 = &\;\sum_{j = 0}^\infty(-1)^j(1+h(\th))^{-(j+1)}
\mathrm{p.v.}\int_\BT(\D h)^{2j+1}(1+h(\th+\xi))^{-j}\cdot \f{\p(\th+\xi)}{2\sin\f{\xi}{2}}\,d\xi=:\sum_{j=0}^\infty L_{2,j}.
\end{split}
\label{eqn: decomposition of L2}
\eeq
%
%
By virtue of Lemma \ref{lem: Lp bound for multi-term commutator}, 
\beq
\begin{split}
&\;\left\|\mathrm{p.v.}\int_\BT(\D h)^{2j+1}(1+h(\th+\xi))^{-j}\cdot \f{\p(\th+\xi)}{2\sin\f{\xi}{2}}\,d\xi\right\|_{L^p}\\
\leq &\;C_*^{2j+3}\|h'\|_{L^\infty}^{2j+1}\|(1+h)^{-j}\p\|_{L^p}\\
\leq &\;C (C_*^{2}C_2\|h'\|_{L^\infty}^{2})^j\|h'\|_{L^\infty}\|\p\|_{L^p}.
\end{split}
\label{eqn: Lp estimate of L2j}
\eeq
Here $C_2$ is a universal constant such that $\|(1+h)^{-1}\|_{L^\infty}\leq C_2$.
Similarly, by Lemma \ref{lem: W 1p estimate for multi-term commutator},
\beq
\begin{split}
&\;\left\|\mathrm{p.v.}\int_\BT(\D h)^{2j+1}(1+h(\th+\xi))^{-j}\cdot \f{\p(\th+\xi)}{2\sin\f{\xi}{2}}\,d\xi\right\|_{\dot{W}^{1,p}}\\
\leq &\;(2j+2)C_\dag^{2j+2}\|h'\|_{L^\infty}^{2j}(\|((1+h)^{-j}\p)'\|_{L^p}\|h'\|_{L^\infty}+\|(1+h)^{-j}\p\|_{L^\infty}\|h''\|_{L^p})\\
\leq &\;C(j+1) (C_\dag^{2}C_2\|h'\|_{L^\infty}^{2})^j(j\|h'\|_{L^\infty}^2\|\p\|_{L^p} +\|h'\|_{L^\infty}\|\p'\|_{L^p}+\|\p\|_{L^\infty}\|h''\|_{L^p}).
\end{split}
\label{eqn: W1p estimate of L2j}
\eeq
Hence, with the assumption $C_\dag\geq C_*$,
\beq
\begin{split}
&\;\|L_{2,j}\|_{\dot{W}^{1,p}}\\
\leq &\;\|(1+h)^{-(j+1)}\|_{\dot{W}^{1,\infty}}\left\|\mathrm{p.v.}
\int_\BT(\D h)^{2j+1}(1+h(\th+\xi))^{-j}\cdot \f{\p(\th+\xi)}{2\sin\f{\xi}{2}}\,d\xi
\right\|_{L^p}\\
&\;+\|(1+h)^{-(j+1)}\|_{L^\infty}\left\|\mathrm{p.v.}
\int_\BT(\D h)^{2j+1}(1+h(\th+\xi))^{-j}\cdot \f{\p(\th+\xi)}{2\sin\f{\xi}{2}}\,d\xi
\right\|_{\dot{W}^{1,p}}\\
\leq &\;C(j+1)(C_\dag C_2\|h'\|_{L^\infty})^{2j} ((j+1)\|h'\|_{L^\infty}^2\|\p\|_{L^p} +\|h'\|_{L^\infty}\|\p'\|_{L^p}+\|\p\|_{L^\infty}\|h''\|_{L^p}).
\end{split}
\label{eqn: W 1 p estimate for L 2 j}
\eeq
To this end, by assuming $\|h'\|_{L^\infty}\ll 1$, where the smallness depends on $p$, we derive from \eqref{eqn: decomposition of L2} that
\beq
\|L_2\|_{\dot{W}^{1,p}}
\leq 
C (\|h'\|_{L^\infty}\|\p'\|_{L^p}+\|\p\|_{L^\infty}\|h''\|_{L^p}).
\label{eqn: W1p estimate for L2}
\eeq

Similarly, we write 
\beq
L_3=\sum_{j = 0}^\infty h'(\th)(-1-h(\th))^{-(j+1)}\mathrm{p.v.}\int_\BT (\D h)^{2j}(1+h(\th+\xi))^{-j}\cdot \f{\p(\th+\xi)}{2\tan\f{\xi}{2}}\,d\xi=:\sum_{j=0}^\infty L_{3,j}.
\label{eqn: decomposition of L3}
\eeq
In order to bound $\dot{W}^{1,p}$-semi-norm of $L_{3,j}$, we need an $L^\infty$-bound of the integral above.
This is possible thanks to the H\"{o}lder regularity of $h'$ and $\p$.
Indeed, by the mean value theorem,
\beq
\begin{split}
&\;\left|\mathrm{p.v.}\int_\BT (\D h)^{2j}(1+h(\th+\xi))^{-j} \f{\p(\th+\xi)}{2\tan\f{\xi}{2}}\,d\xi\right|\\
=&\;\left|\int_\BT [(\D h)^{2j}(1+h(\th+\xi))^{-j}\p(\th+\xi)
-h'(\th)^{2j}(1+h(\th))^{-j} \p(\th)]\f{1}{2\tan\f{\xi}{2}}\,d\xi\right|\\
\leq &\;C\int_\BT 2j(C_1\|h'\|_{L^\infty})^{2j-1}|\D h-h'(\th)|\cdot C_2^{j}\|\p\|_{L^\infty}|\xi|^{-1}\,d\xi\\
&\;+C\int_\BT \|h'\|_{L^\infty}^{2j}\cdot jC_2^{j+1}|h(\th+\xi)-h(\th)|\cdot \|\p\|_{L^\infty}|\xi|^{-1}\,d\xi\\
&\;+C\int_\BT \|h'\|_{L^\infty}^{2j}\cdot C_2^{j}|\p(\th+\xi)-\p(\th)| |\xi|^{-1}\,d\xi\\
\leq &\;C(2jC_1^{2j}C_2^{j}\|h'\|_{L^\infty}^{2j-1}\|h'\|_{\dot{C}^\b} \|\p\|_{L^\infty}
+C_2^{j}\|h'\|_{L^\infty}^{2j}\|\p\|_{\dot{C}^\b}).
\end{split}
\label{eqn: L inf bound of a term in L_3}
\eeq
Here $C_1 = \f{\pi}{2}$ introduced in the proof of Lemma \ref{lem: Lp bound for multi-term commutator}; note that $|\D h|\leq C_1 \|h'\|_{L^\infty}$.
Arguing as in \eqref{eqn: Lp estimate of L2j}-\eqref{eqn: W 1 p estimate for L 2 j},
\beq
\left\|\mathrm{p.v.}\int_\BT (\D h)^{2j}(1+h(\th+\xi))^{-j}\cdot \f{\p(\th+\xi)}{2\tan\f{\xi}{2}}\,d\xi\right\|_{L^p}
\leq 
C(C_*^{2}C_2\|h'\|_{L^\infty}^{2})^j\|\p\|_{L^p},
\eeq
\beq
\begin{split}
&\;\left\|\mathrm{p.v.}\int_\BT (\D h)^{2j}(1+h(\th+\xi))^{-j}\cdot \f{\p(\th+\xi)}{2\tan\f{\xi}{2}}\,d\xi\right\|_{\dot{W}^{1,p}}\\
\leq &\; C (2j+1) (C_\dag^{2}C_2\|h'\|_{L^\infty}^{2})^j(j\|h'\|_{L^\infty}\|\p\|_{L^p}+\|\p'\|_{L^p}+\mathds{1}_{\{j>0\}}\|h'\|_{L^\infty}^{-1}\|h''\|_{L^p}\|\p\|_{L^\infty}).
\end{split}
\eeq
and hence, 
\beq
\begin{split}
&\;\|L_{3,j}\|_{\dot{W}^{1,p}}\\
\leq &\;\|h''\|_{L^p}\|(1+h(\th))^{-(j+1)}\|_{L^\infty}\left\|\mathrm{p.v.}\int_\BT (\D h)^{2j}(1+h(\th+\xi))^{-j}\cdot \f{\p(\th+\xi)}{2\tan\f{\xi}{2}}\,d\xi\right\|_{L^\infty}\\
&\;+\|h'\|_{L^\infty}\|(1+h(\th))^{-(j+1)}\|_{\dot{W}^{1,\infty}}\left\|\mathrm{p.v.}\int_\BT (\D h)^{2j}(1+h(\th+\xi))^{-j}\cdot \f{\p(\th+\xi)}{2\tan\f{\xi}{2}}\,d\xi\right\|_{L^p}\\
&\;+\|h'\|_{L^\infty}\|(1+h(\th))^{-(j+1)}\|_{L^\infty}\left\|\mathrm{p.v.}\int_\BT (\D h)^{2j}(1+h(\th+\xi))^{-j}\cdot \f{\p(\th+\xi)}{2\tan\f{\xi}{2}}\,d\xi\right\|_{\dot{W}^{1,p}}\\
\leq &\;C\cdot (C_2\|h'\|_{L^\infty})^{2j-1}\cdot
(j C_1^{2j}\|h'\|_{\dot{C}^\b} \|\p\|_{L^\infty}+\|h'\|_{L^\infty}\|\p\|_{\dot{C}^\b})\|h''\|_{L^p}\\
&\;+C\cdot (j+1)(C_\dag C_2\|h'\|_{L^\infty})^{2j}\\
&\;\qquad\cdot ((j+1)\|h'\|_{L^\infty}^2\|\p\|_{L^p}+\|h'\|_{L^\infty}\|\p'\|_{L^p}+\mathds{1}_{\{j>0\}}\|h''\|_{L^p}\|\p\|_{L^\infty}).
%
\end{split}
\eeq
By \eqref{eqn: decomposition of L3}, provided that $\|h'\|_{L^\infty}\ll 1$, 
\beq
\|L_3\|_{\dot{W}^{1,p}}
%
\leq 
C(\|h''\|_{L^p}\|h'\|_{\dot{C}^\b}\|\p\|_{L^\infty}+ \|h''\|_{L^p}\|\p\|_{\dot{C}^\b} +\|h'\|_{L^\infty}\|\p'\|_{L^p}).
\label{eqn: W1p estimate for L3}
\eeq

Combining \eqref{eqn: W1p estimate for L1}, \eqref{eqn: W1p estimate for L2} and \eqref{eqn: W1p estimate for L3}, we prove the desired estimate.
\end{proof}
\end{lem}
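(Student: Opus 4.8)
The plan is to follow the decomposition already used in the proof of Lemma~\ref{lem: Holder estimate of singular integral normal component}. Writing $\g(\th)=f(\th)(\cos\th,\sin\th)$ with $f=r(1+h)$, I would first expand $2\pi\,\g'(\th)^\perp\cdot\CK_\g\p$ explicitly, complete the square in the denominator to pull out the $4\sin^2\frac{\xi}{2}$ factor, and arrive at the decomposition $2\pi\,\g'(\th)^\perp\cdot\CK_\g\p=L_0+L_1+L_2+L_3$ of \eqref{eqn: crude form of singular integral dot with normal}: here $L_0=-\frac12\int_\BT\p\,d\xi$ is constant in $\th$ and contributes nothing to the $\dot{W}^{1,p}$-semi-norm, $L_1$ carries the quadratic factor $\frac{l}{1+l}\,\p(\th+\xi)$ with $l$ as in \eqref{eqn: def of l}, while $L_2$ and $L_3$ are the genuinely singular pieces built from $\frac{\D h}{2\sin(\xi/2)}\cdot\frac{\p(\th+\xi)}{1+l}$ and $-\frac{h'(\th)}{2\tan(\xi/2)}\cdot\frac{\p(\th+\xi)}{1+l}$ respectively.

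The term $L_1$ is lower order: differentiating in $\th$ and using $\|\D h'\|_{L^p}\lesssim\|h''\|_{L^p}$ together with the Lipschitz continuity of $x\mapsto x/(1+x)$ gives $\|L_1\|_{\dot{W}^{1,p}}\lesssim\|h'\|_{L^\infty}^2\|\p'\|_{L^p}+\|h'\|_{L^\infty}\|h''\|_{L^p}\|\p\|_{L^\infty}$, which is absorbed into the claimed bound. For $L_2$, by the smallness of $h$ I would Taylor-expand $(1+l)^{-1}=\sum_{j\ge0}(-l)^j$, producing $L_2=\sum_j L_{2,j}$ with each $L_{2,j}$ equal to $(-1)^j(1+h)^{-(j+1)}$ times a $(2j+1)$-fold multi-term commutator $\mathrm{p.v.}\int_\BT(\D h)^{2j+1}(1+h(\th+\xi))^{-j}\frac{\p(\th+\xi)}{2\sin(\xi/2)}\,d\xi$. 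Applying the $L^p$- and $\dot{W}^{1,p}$-bounds for such commutators (Lemma~\ref{lem: Lp bound for multi-term commutator} and Lemma~\ref{lem: W 1p estimate for multi-term commutator}, with their $p$-dependent constants $C_*$, $C_\dag$), multiplying by $\|(1+h)^{-(j+1)}\|_{W^{1,\infty}}$, and summing the resulting geometric series in $j$ under the assumption $\|h'\|_{L^\infty}\ll1$ (smallness depending on $p$, so that $C_\dag^2C_2\|h'\|_{L^\infty}^2<1$), I would obtain $\|L_2\|_{\dot{W}^{1,p}}\lesssim\|h'\|_{L^\infty}\|\p'\|_{L^p}+\|h''\|_{L^p}\|\p\|_{L^\infty}$.

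The term $L_3$ is where the $C^{1,\beta}$-hypothesis is genuinely needed, and I expect this to be the main obstacle. Writing $L_3=\sum_j L_{3,j}$ in the same way, each $L_{3,j}$ now has the factor $h'(\th)$ pulled outside the integral, so when estimating its $\dot{W}^{1,p}$-semi-norm the $\th$-derivative may fall on this $h'(\th)$, producing $h''$ paired with the \emph{undifferentiated} singular integral; one therefore needs an $L^\infty$-bound for $\mathrm{p.v.}\int_\BT(\D h)^{2j}(1+h(\th+\xi))^{-j}\frac{\p(\th+\xi)}{2\tan(\xi/2)}\,d\xi$, not merely an $L^p$-bound. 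This is available precisely because $h'$ and $\p$ are H\"older continuous: subtracting the value of the integrand at $\xi=0$ (the odd kernel $\frac{1}{2\tan(\xi/2)}$ annihilates constants) and using the mean value theorem on $(\D h)^{2j}$ and on $(1+h(\th+\xi))^{-j}$ together with $|\D h-h'(\th)|\lesssim|\xi|^\b\|h'\|_{\dot{C}^\b}$ and $|\p(\th+\xi)-\p(\th)|\lesssim|\xi|^\b\|\p\|_{\dot{C}^\b}$ yields an $L^\infty$-estimate whose prefactor grows only polynomially in $j$ against $(C_1\|h'\|_{L^\infty})^{2j}$. Combining this with the $L^p$- and $\dot{W}^{1,p}$-commutator bounds for the same integral and summing over $j$ gives $\|L_3\|_{\dot{W}^{1,p}}\lesssim\|h''\|_{L^p}\|h'\|_{\dot{C}^\b}\|\p\|_{L^\infty}+\|h''\|_{L^p}\|\p\|_{\dot{C}^\b}+\|h'\|_{L^\infty}\|\p'\|_{L^p}$. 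Adding the contributions of $L_1$, $L_2$, $L_3$ and discarding the constant $L_0$ produces exactly the stated estimate. The only delicate point beyond routine computation is the bookkeeping of the constants: one must check that $C_*$, $C_\dag$ enter only through convergent geometric series and that the polynomial-in-$j$ prefactors coming from differentiating $(\D h)^{2j+1}$, $(1+h)^{-j}$ and $(1+h)^{-(j+1)}$ do not destroy summability, which is guaranteed by taking $\|h'\|_{L^\infty}$ small depending on $p$.
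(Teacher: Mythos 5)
Your proposal follows the paper's proof essentially line by line: the same decomposition $L_0+L_1+L_2+L_3$ from \eqref{eqn: crude form of singular integral dot with normal}, the same Taylor expansion of $(1+l)^{-1}$ into multi-term commutators handled via Lemmas \ref{lem: Lp bound for multi-term commutator} and \ref{lem: W 1p estimate for multi-term commutator}, the same geometric-series summation under $\|h'\|_{L^\infty}\ll 1$, and the same key observation that $L_3$ requires an $L^\infty$-bound of the inner singular integral (obtained from the H\"older regularity of $h'$ and $\p$) because the $\th$-derivative can land on the prefactor $h'(\th)$. This matches the paper's argument both in structure and in the delicate points you flag.
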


We also prove a $\dot{W}^{1,p}$-estimate for $\g'\cdot\mathcal{K}_\g\p-\f{1}{2}\CH\p$.

\begin{lem}\label{lem: W 1p difference from Hilbert transform}
Under the assumptions of Lemma \ref{lem: W 1p estimate of normal component of singular integral},
\beq
\begin{split}
&\;\left\|\g'(\th)\cdot \CK_\g \p - \f{1}{2}\CH \p\right\|_{\dot{W}^{1,p}}\\
\leq &\;C\| h''\|_{L^p}\|\p\|_{L^\infty}(1+\|h'\|_{\dot{C}^\b})+C(\|h'\|_{L^\infty}\|h''\|_{L^p}\|\p\|_{\dot{C}^\b}+\|h'\|_{L^\infty}^2\|\p'\|_{L^p}),
\end{split}
\label{eqn: W1p difference estimate of the singular integral tangent component with Hilbert transform}
\eeq
where $C = C(p,\b)$.
\begin{proof}

Using $\g(\th) = f(\th)(\cos\th,\sin\th)$, by definition,
\beq
2\pi\g'(\th)\cdot\mathcal{K}_\g\p
=\mathrm{p.v.}\int_\BT \f{f'(\th)f(\th)-f'(\th)f(\th+\xi)\cos\xi-f(\th)f(\th+\xi)\sin\xi} {f(\th)^2+f(\th+\xi)^2-2f(\th)f(\th+\xi)\cos\xi}\p(\th+\xi)\,d\xi.
\eeq
With $f(\th) = r(1+h(\th))$ and $l(\th,\th+\xi)$ defined in \eqref{eqn: def of l}, it can be rewritten as
\beq
\begin{split}
&\;2\pi\g'(\th)\cdot\mathcal{K}_\g\p\\
=&\;f'(\th)\int_\BT \f{f(\th+\xi)\cdot 2\sin^2\f{\xi}{2}} {(f(\th+\xi)-f(\th))^2+f(\th)f(\th+\xi)\cdot 4\sin^2\f{\xi}{2}}\p(\th+\xi)\,d\xi\\
&\;-f'(\th)\mathrm{p.v.}\int_\BT \f{f(\th+\xi)-f(\th)} {(f(\th+\xi)-f(\th))^2+f(\th)f(\th+\xi)\cdot 4\sin^2\f{\xi}{2}}\p(\th+\xi)\,d\xi\\
&\;-\mathrm{p.v.}\int_\BT \f{f(\th)f(\th+\xi)\sin\xi} {(f(\th+\xi)-f(\th))^2+f(\th)f(\th+\xi)\cdot 4\sin^2\f{\xi}{2}}\p(\th+\xi)\,d\xi\\
%
%
=&\;\f{h'(\th)}{2(1+h(\th))}
\left(\int_\BT \p\,d\xi-\int_\BT \f{l(\th,\th+\xi)} {1+l(\th,\th+\xi)}\p(\th+\xi)\,d\xi\right)\\
&\;-\f{h'(\th)}{1+h(\th)}\mathrm{p.v.}\int_\BT \f{\f{\D h}{2\sin\f{\xi}{2}}} {1+l(\th,\th+\xi)}\f{\p(\th+\xi)}{1+h(\th+\xi)}\,d\xi\\
&\; +\mathrm{p.v.}\int_\BT \f{l(\th,\th+\xi)} {1+l(\th,\th+\xi)} \f{\p(\th+\xi)}{2\tan\f{\xi}{2}}\,d\xi+\pi \CH \p\\
=:&\;\tilde{L}_1(\th)+\tilde{L}_2(\th)+\tilde{L}_3(\th)+\pi\CH \p.
\end{split}
\label{eqn: decomposition of singular integral tangent}
\eeq

Since
\beq
\tilde{L}_1  = \f{h'(\th)}{1+h(\th)}\left(\f12 \int_\BT \p\,d\xi + L_1\right),
\label{eqn: relation between L_1 and tilde L_1}
\eeq
we derive by \eqref{eqn: W1p estimate for L1} that
\beq
\begin{split}
\|\tilde{L}_1\|_{\dot{W}^{1,p}}\leq &\; C\left\|\f{h'}{1+h}\right\|_{\dot{W}^{1,p}}\|\p\|_{L^\infty}+C\|h'\|_{L^\infty}\|L_1\|_{\dot{W}^{1,p}}\\
\leq &\;C(\|h''\|_{L^p}\|\p\|_{L^\infty}+\|h'\|_{L^\infty}^3\|\p'\|_{L^p}).
\end{split}
\label{eqn: W1p estimate for L1 tilde}
\eeq
For $\tilde{L}_2$,
\beq
\tilde{L}_2
= \sum_{j = 0}^\infty h'(\th)(-1-h(\th))^{-(j+1)}\mathrm{p.v.}\int_\BT (\D h)^{2j+1}(1+h(\th+\xi))^{-(j+1)}\f{\p(\th+\xi)}{2\sin\f{\xi}{2}}.
\eeq
Arguing as in \eqref{eqn: L inf bound of a term in L_3},
\beq
\begin{split}
&\;\left\|\mathrm{p.v.}\int_\BT (\D h)^{2j+1}(1+h(\th+\xi))^{-(j+1)}\f{\p(\th+\xi)}{2\sin\f{\xi}{2}}\,d\xi\right\|_{L^\infty}\\
\leq &\;C(C_2\|h'\|_{L^\infty}^2)^j((2j+1)C_1^{2j}\|h'\|_{\dot{C}^\beta}\|\p\|_{L^\infty}+\|h'\|_{L^\infty}\|\p\|_{\dot{C}^\b}).
\end{split}
\eeq
Moreover, by Lemma \ref{lem: Lp bound for multi-term commutator} and Lemma \ref{lem: W 1p estimate for multi-term commutator},
\beq
\begin{split}
&\;\left\|\mathrm{p.v.}\int_\BT (\D h)^{2j+1}(1+h(\th+\xi))^{-(j+1)}\f{\p(\th+\xi)}{2\sin\f{\xi}{2}}\,d\xi\right\|_{L^p}\\
\leq &\;C(C_*^2 C_2 \|h'\|_{L^\infty}^2)^j\|h'\|_{L^\infty}\|\p\|_{L^\infty},
\end{split}
\eeq
and
\beq
\begin{split}
&\;\left\|\mathrm{p.v.}\int_\BT (\D h)^{2j+1}(1+h(\th+\xi))^{-(j+1)}\f{\p(\th+\xi)}{2\sin\f{\xi}{2}}\,d\xi\right\|_{\dot{W}^{1,p}}\\
\leq &\; C(2j+2)(C_\dag^2 C_2 \|h'\|_{L^\infty}^2)^j((j+1)\|h'\|_{L^\infty}^2\|\p\|_{L^\infty}+\|h''\|_{L^p}\|\p\|_{L^\infty}+\|h'\|_{L^\infty}\|\p'\|_{L^p}).
\end{split}
\eeq
Hence,
\beq
\|\tilde{L}_2\|_{\dot{W}^{1,p}}\leq C(\|h''\|_{L^p}\|h'\|_{\dot{C}^\b}\|\p\|_{L^\infty}+\|h'\|_{L^\infty}\|h''\|_{L^p}\|\p\|_{\dot{C}^\b}+\|h'\|_{L^\infty}^2\|\p'\|_{L^p}).
\label{eqn: W1p estimate for L2 tilde}
\eeq

For $\tilde{L}_3$,
\beq
\tilde{L}_3
= \sum_{j = 0}^\infty (-1)^j(1+h(\th))^{-(j+1)}\mathrm{p.v.}\int_\BT (\D h)^{2j+2}(1+h(\th+\xi))^{-(j+1)}\f{\p(\th+\xi)}{2\tan\f{\xi}{2}}\,d\xi.
\eeq
Since
\beq
\begin{split}
&\;\left\|\mathrm{p.v.}\int_\BT (\D h)^{2j+2}(1+h(\th+\xi))^{-(j+1)}\f{\p(\th+\xi)}{2\tan\f{\xi}{2}}\,d\xi
\right\|_{L^p}\\
\leq &\;C(C_*^2 C_2 \|h'\|_{L^\infty}^2)^j\|h'\|_{L^\infty}^2\|\p\|_{L^\infty},
\end{split}
\eeq
and
%
\beq
\begin{split}
&\;\left\|\mathrm{p.v.}\int_\BT (\D h)^{2j+2}(1+h(\th+\xi))^{-(j+1)}\f{\p(\th+\xi)}{2\tan\f{\xi}{2}}\,d\xi
\right\|_{\dot{W}^{1,p}}\\
\leq &\; C(2j+3)(C_\dag^2 C_2 \|h'\|_{L^\infty}^2)^j\|h'\|_{L^\infty}\\
&\;\quad\cdot((j+1)\|h'\|_{L^\infty}^2\|\p\|_{L^\infty}+\|h'\|_{L^\infty}\|\p'\|_{L^p}+\|h''\|_{L^p}\|\p\|_{L^\infty}),
\end{split}
\eeq
we find that
\beq
\|\tilde{L}_3\|_{\dot{W}^{1,p}}
\leq C(\|h'\|_{L^\infty}\|h''\|_{L^p}\|\p\|_{L^\infty}+\|h'\|_{L^\infty}^2 \|\p'\|_{L^p}).
\label{eqn: W1p estimate for L3 tilde}
\eeq

Combining \eqref{eqn: decomposition of singular integral tangent}, \eqref{eqn: W1p estimate for L1 tilde}, \eqref{eqn: W1p estimate for L2 tilde} and \eqref{eqn: W1p estimate for L3 tilde}, we obtain \eqref{eqn: W1p difference estimate of the singular integral tangent component with Hilbert transform}.
\end{proof}
\end{lem}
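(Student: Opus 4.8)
The plan is to expand $\g'(\th)\cdot\CK_\g\p$ explicitly, extract the principal part — which turns out to be exactly $\tfrac12\CH\p$ — and bound the remainder using the same commutator machinery already exploited in the proof of Lemma~\ref{lem: W 1p estimate of normal component of singular integral}.

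First I would write, using $\g(\th)=f(\th)(\cos\th,\sin\th)$ with $f=r(1+h)$, the expression $2\pi\,\g'(\th)\cdot\CK_\g\p$ as a principal-value integral over $\BT$ with denominator $(f(\th+\xi)-f(\th))^2+f(\th)f(\th+\xi)\cdot 4\sin^2\tfrac\xi2$ and numerator $f'(\th)f(\th)-f'(\th)f(\th+\xi)\cos\xi-f(\th)f(\th+\xi)\sin\xi$. Splitting it, the last numerator term, after extracting the leading factor $\tfrac{\sin\xi}{4\sin^2(\xi/2)}=\tfrac12\cot\tfrac\xi2$, yields $\pi\CH\p$ together with $\tilde{L}_3:=\mathrm{p.v.}\!\int_\BT\tfrac{l}{1+l}\,\tfrac{\p(\th+\xi)}{2\tan(\xi/2)}\,d\xi$, where $l(\th,\th+\xi)$ is as in \eqref{eqn: def of l}; the two numerator terms proportional to $f'(\th)$ reorganize into $\tilde{L}_1:=\tfrac{h'(\th)}{1+h(\th)}\bigl(\tfrac12\int_\BT\p\,d\xi+L_1\bigr)$, with $L_1$ the operator from \eqref{eqn: crude form of singular integral dot with normal}, and $\tilde{L}_2:=-\tfrac{h'(\th)}{1+h(\th)}\,\mathrm{p.v.}\!\int_\BT\tfrac{\D h}{2\sin(\xi/2)}\,(1+l)^{-1}\,\tfrac{\p(\th+\xi)}{1+h(\th+\xi)}\,d\xi$. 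The $\dot{W}^{1,p}$-bound for $\tilde{L}_1$ is then immediate from \eqref{eqn: W1p estimate for L1} together with $\|h'/(1+h)\|_{\dot{W}^{1,p}}\lesssim\|h''\|_{L^p}$.

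To handle $\tilde{L}_2$ and $\tilde{L}_3$ I would Taylor-expand $(1+l)^{-1}=\sum_{j\ge0}(-l)^j$, legitimate because $m_0\ll1$ forces $|l|<1$, turning each into a convergent series of multi-term commutators $\mathrm{p.v.}\!\int_\BT(\D h)^{m}(1+h(\th+\xi))^{-k}\,\tfrac{\p(\th+\xi)}{2\sin(\xi/2)}\,d\xi$ (or with $2\tan(\xi/2)$ in place of $2\sin(\xi/2)$). I would then apply the $L^p$ and $\dot{W}^{1,p}$ estimates for multi-term commutators already invoked for Lemma~\ref{lem: W 1p estimate of normal component of singular integral} (namely Lemma~\ref{lem: Lp bound for multi-term commutator} and Lemma~\ref{lem: W 1p estimate for multi-term commutator}). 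The one genuinely new ingredient, forced by the outer factor $h'(\th)$ in $\tilde{L}_2$ and by the factors $(1+h(\th))^{-(j+1)}$, is that differentiating them produces $h''$, so I also need an $L^\infty$-bound on the accompanying commutator integral; this I would obtain exactly as in \eqref{eqn: L inf bound of a term in L_3}, by subtracting the value at $\xi=0$ and using the H\"{o}lder continuity of $h'$ and $\p$ — which is precisely where $\|\p\|_{\dot{C}^\b}$ and $\|h'\|_{\dot{C}^\b}$ enter the right-hand side. Summing the geometric series in $j$ (convergent once $\|h'\|_{L^\infty}$ is small enough depending on $p$) gives $\dot{W}^{1,p}$-bounds for $\tilde{L}_2$ and $\tilde{L}_3$ of the claimed form, and combining everything with the identity $2\pi\,\g'(\th)\cdot\CK_\g\p=\tilde{L}_1+\tilde{L}_2+\tilde{L}_3+\pi\CH\p$ concludes.

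The main obstacle I expect is the bookkeeping of the two layers of series: keeping track of the combinatorial weights $(j+1)$ arising both from differentiating $(1+h)^{-(j+1)}$ and from the $\dot{W}^{1,p}$-commutator estimates, verifying they are absorbed by the geometric decay $(C\|h'\|_{L^\infty}^2)^j$, and — crucially — arranging that every summand carrying a factor $\|h''\|_{L^p}$ is paired with an $L^\infty$-controlled commutator integral rather than merely an $L^p$-controlled one, so that no illegitimate $\|h''\|_{L^p}\|\p'\|_{L^p}$-type product is generated.
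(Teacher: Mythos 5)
Your proposal matches the paper's proof essentially step for step: the same decomposition into $\tilde{L}_1+\tilde{L}_2+\tilde{L}_3+\pi\CH\p$, the identity $\tilde{L}_1=\tfrac{h'}{1+h}\bigl(\tfrac12\int_\BT\p\,d\xi+L_1\bigr)$, the Taylor expansion of $(1+l)^{-1}$ into multi-term commutator series, the appeal to Lemmas~\ref{lem: Lp bound for multi-term commutator} and~\ref{lem: W 1p estimate for multi-term commutator}, and the use of an $L^\infty$-commutator bound (as in \eqref{eqn: L inf bound of a term in L_3}) to pair with $\|h''\|_{L^p}$ factors. You also correctly identified the bookkeeping concern about combinatorial weights versus geometric decay, which is exactly how the paper closes the series.
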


In order to show uniqueness of the solution in Section \ref{sec: uniqueness}, we need the following three lemmas, which are generalizations of Lemmas \ref{lem: Holder estimate of singular integral normal component}-\ref{lem: W 1p difference from Hilbert transform}, respectively.

\begin{lem}
\label{lem: Holder estimate of singular integral normal component difference}
Fix $\b\in (0,1)$.
Assume $h_1,h_2\in C^{1,\b}(\mathbb{T})$, such that $m_{0,1},m_{0,2} \ll 1$.
Here $m_{0,i}$ are defined for $i=1,2$ as in \eqref{eqn: bound assumption on the Lipschitz norm of h}.
Then
\beq
\begin{split}
&\;\|\g'_1(\th)^\perp\cdot\mathcal{K}_{\g_1}\p-\g'_2(\th)^\perp\cdot\mathcal{K}_{\g_2}\p\|_{\dot{C}^\b}\\
\leq &\; C\|h_1-h_2\|_{C^{1,\b}} (1+\|h_1'\|_{\dot{C}^\b}+\|h_2'\|_{\dot{C}^\b})^2\|\p\|_{C^\b},
\end{split}
\eeq
where $C = C(\b)$.
\end{lem}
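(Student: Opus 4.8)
The plan is to follow the decomposition \eqref{eqn: crude form of singular integral dot with normal} used in Lemma \ref{lem: Holder estimate of singular integral normal component}, applied simultaneously to $\g_1$ and $\g_2$, and then to estimate the difference term by term. Write $2\pi\g_i'(\th)^\perp\cdot\mathcal{K}_{\g_i}\p = L_0 + L_1^{(i)}(\th) + L_2^{(i)}(\th) + L_3^{(i)}(\th)$, where the superscript $(i)$ indicates that the quantities $h$, $l$, $f$ are replaced by $h_i$, $l_i$, $f_i$. The term $L_0 = -\frac12\int_\BT\p\,d\xi$ is independent of $i$, so it cancels in the difference. For the remaining three, the key observation is that each of $L_1^{(i)}$, $L_2^{(i)}$, $L_3^{(i)}$ is built out of the ``building blocks'' $\D h_i$, $h_i'(\th)$, $\frac{1}{1+h_i(\th)}$ and the quantity $\frac{l_i}{1+l_i}$, and each of these depends in a Lipschitz fashion on $h_i$ in the relevant norms. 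So I would insert and subtract telescoping terms that replace $h_1$ by $h_2$ one building block at a time, exactly mirroring the structure already set up in the proof of Lemma \ref{lem: Holder estimate of singular integral normal component}.

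First I would handle $L_1^{(1)}-L_1^{(2)}$. By the Lipschitz continuity of $x\mapsto \frac{x}{1+x}$ on $[0,\infty)$ and the smallness of $h_i$, one has
\[
\left|\frac{l_1}{1+l_1}-\frac{l_2}{1+l_2}\right| \leq C|l_1-l_2|,
\]
and $l_1-l_2$ can be written, using the definition \eqref{eqn: def of l}, as a sum of terms each containing a factor $\D h_1 - \D h_2$ or $h_1-h_2$ times bounded factors; this gives the pointwise and $\dot{C}^\b$ control needed. The bound \eqref{eqn: bounding difference between two finite differences}, applied to $h_1-h_2$, supplies $\|\D h_1-\D h_2\|_{\dot{C}^\b_\th}\leq C\|h_1'-h_2'\|_{\dot{C}^\b}$, so that following the computation \eqref{eqn: Holder bound for L_1 crude form}--\eqref{eqn: estimate finite difference of L1} verbatim yields
\[
\|L_1^{(1)}-L_1^{(2)}\|_{\dot{C}^\b}\leq C\|h_1-h_2\|_{C^{1,\b}}(1+\|h_1'\|_{\dot{C}^\b}+\|h_2'\|_{\dot{C}^\b})\|\p\|_{C^\b}.
\]
For $L_2^{(1)}+L_3^{(1)} - L_2^{(2)}-L_3^{(2)}$ I would reuse the splitting \eqref{eqn: sum of L_2 and L_3}: there the difference $(L_2+L_3)(\th+\e)-(L_2+L_3)(\th)$ was already organized into four integrals whose integrands are of the form
\[
\frac{\D h(\th+\e)-\cos\tfrac{\xi}{2}\,h'(\th+\e)}{2\sin\tfrac{\xi}{2}}\quad\text{or}\quad\frac{\D h(\th+\e)-\D h(\th)-\cos\tfrac{\xi}{2}(h'(\th+\e)-h'(\th))}{2\sin\tfrac{\xi}{2}}
\]
times factors like $\frac{\p(\cdot)}{1+l(\cdot,\cdot)}$ and $\frac{1}{1+h(\cdot)}$. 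I would take the difference of the $i=1$ and $i=2$ versions of each of these four integrals and split via telescoping into (a) pieces where the ``numerator'' quantity is differenced, controlled by \eqref{eqn: difference of difference quotient and the derivative}, \eqref{eqn: absolute difference of Taylor expansion type error} applied to $h_1-h_2$, and (b) pieces where the ``background'' factor $\frac{\p}{1+l}$ or $\frac{1}{1+h}$ is differenced, controlled by the Lipschitz estimates above together with \eqref{eqn: Holder estimate using Hilbert transform} (with $h'$ replaced by $h_1'-h_2'$ and using boundedness of $\CH$ on $C^\b(\BT)$). The structure is exactly that of the single-interface proof with one extra ``Lipschitz-in-$h$'' layer, so each of the finitely many resulting terms is bounded by $C\e^\b$ times a product of $C^{1,\b}$-norms; collecting and using Young's inequality on the cross terms gives $\|(L_2^{(1)}+L_3^{(1)})-(L_2^{(2)}+L_3^{(2)})\|_{\dot{C}^\b}\leq C\|h_1-h_2\|_{C^{1,\b}}(1+\|h_1'\|_{\dot{C}^\b}+\|h_2'\|_{\dot{C}^\b})^2\|\p\|_{C^\b}$. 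Summing the $L_1$ and $L_2+L_3$ contributions and dividing by $2\pi$ completes the proof.

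The main obstacle is purely bookkeeping: keeping the telescoping clean so that every difference term carries exactly one factor of $\|h_1-h_2\|_{C^{1,\b}}$ (or $\|h_1'-h_2'\|_{\dot{C}^\b}$), with all other factors bounded by $1+\|h_1'\|_{\dot{C}^\b}+\|h_2'\|_{\dot{C}^\b}$. The power $2$ on the latter in the statement is a hint that cross terms of the form (difference of numerator quantity)$\times$(difference of background factor), or differences of the $L_1$-type quadratic-in-$h'$ pieces, produce two such factors; I expect no genuinely new analytic input beyond Lemma \ref{lem: difference of kernels}-style algebra and the boundedness of $\CH$ and of $x\mapsto x/(1+x)$, all of which are already available. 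The absolute integrability of every integrand at $(\xi,w)=(0,\cdot)$ — needed to justify the principal value manipulations — follows as in the proof of Lemma \ref{lem: Holder estimate of singular integral normal component}, since differencing in $h$ does not worsen the $\xi\to0$ behavior.
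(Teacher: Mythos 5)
Your decomposition and telescoping plan match the paper's actual proof (Appendix C): the paper writes $2\pi(\g_1'^\perp\cdot\CK_{\g_1}\p-\g_2'^\perp\cdot\CK_{\g_2}\p)=J_1+J_2+J_3+J_4$ where $J_1=L_1^{(1)}-L_1^{(2)}$ and $J_2,J_3,J_4$ are precisely the telescoped pieces of $(L_2+L_3)^{(1)}-(L_2+L_3)^{(2)}$ that you describe, and it then estimates each by the same machinery you invoke — $C^{1,\b}$ Lipschitz control of $l_1-l_2$, the difference-quotient bounds applied to $h_1-h_2$, and boundedness of $\CH$ on $C^\b$. The one step that is not a literal repeat of the single-configuration argument is the $J_4$-type term, where a double difference (once in $\xi$, once in the $1\leftrightarrow2$ index) of $1/(1+l)$ must gain $|\xi|^\b$ and $\|h_1-h_2\|_{C^{1,\b}}$ simultaneously; the paper supplies a dedicated algebraic identity for this (equation \eqref{eqn: double difference of l and its value at theta}), and it is exactly where the second power of $(1+\|h_1'\|_{\dot{C}^\b}+\|h_2'\|_{\dot{C}^\b})$ arises, consistent with the cross-term heuristic you gave.
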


\begin{lem}\label{lem: W 1p estimate of normal component of singular integral difference}
Fix $p\in [2,\infty)$ and $\beta\in (0,1)$.
Assume $h_i\in C^{1,\beta}\cap W^{2,p}(\mathbb{T})$ $(i = 1,2)$, such that $m_{0,i} \ll 1$ with the needed smallness depending only on $p$.
Then
\beq
\begin{split}
&\;\|\g_1'(\th)^\perp\cdot\mathcal{K}_{\g_1}\p-\g_2'(\th)^\perp\cdot\mathcal{K}_{\g_2}\p\|_{\dot{W}^{1,p}}\\
\leq &\; C \|h_1''-h_2''\|_{L^p} (1+\|h_1'\|_{\dot{C}^\b} +\|h_2'\|_{\dot{C}^\b}) \|\p\|_{C^\b}\\
&\;+C(\|h_1''\|_{L^p}+\|h_2''\|_{L^p} ) \|h_1-h_2\|_{C^{1,\b}}(1+\|h_1'\|_{\dot{C}^\b}+\|h_2'\|_{\dot{C}^\b}) \|\p\|_{C^\b}\\
&\;+C\|h_1-h_2\|_{W^{1,\infty}}\|\p'\|_{L^p}.
\end{split}
\eeq
where $C = C(p,\b)$.
\end{lem}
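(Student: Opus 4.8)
The strategy is to revisit the proof of Lemma~\ref{lem: W 1p estimate of normal component of singular integral} and estimate the difference term by term along the same decomposition. For each configuration $h_i$ ($i=1,2$), writing $l_i$ for the quantity in \eqref{eqn: def of l} with $h$ replaced by $h_i$, the identity \eqref{eqn: crude form of singular integral dot with normal} gives
\[
2\pi\,\g_i'(\th)^\perp\cdot\CK_{\g_i}\p = L_0 + L_1^{(i)}(\th) + L_2^{(i)}(\th) + L_3^{(i)}(\th),
\]
where $L_0=-\tfrac12\int_\BT\p$ is a constant independent of $i$ and hence contributes nothing to the $\dot{W}^{1,p}$-seminorm. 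It therefore suffices to bound $\|L_k^{(1)}-L_k^{(2)}\|_{\dot{W}^{1,p}}$ for $k=1,2,3$.

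For $k=1$, I would write $\frac{l_1}{1+l_1}-\frac{l_2}{1+l_2}=\frac{l_1-l_2}{(1+l_1)(1+l_2)}$ and expand $l_1-l_2$ via the bilinear identity $a_1b_1-a_2b_2=(a_1-a_2)b_1+a_2(b_1-b_2)$ with $a_i=(\D h_i)^2$ and $b_i=[(1+h_i(\th))(1+h_i(\th+\xi))]^{-1}$; each summand then carries exactly one difference factor ($\D h_1-\D h_2$ or $h_1-h_2$) and two factors small by $m_{0,i}\ll1$, so, after differentiating in $\th$ and using $\|\D h_1-\D h_2\|_{L^\infty}+\|\D(h_1'-h_2')\|_{L^\infty}\le C\|h_1-h_2\|_{W^{1,\infty}}$ as in \eqref{eqn: bounding difference between two finite differences}, one obtains a bound dominated by $C\|h_1-h_2\|_{W^{1,\infty}}\bigl((\|h_1''\|_{L^p}+\|h_2''\|_{L^p})\|\p\|_{L^\infty}+\|\p'\|_{L^p}\bigr)+C\|h_1''-h_2''\|_{L^p}\|\p\|_{L^\infty}$, which is consistent with the claim.

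For $k=2,3$, I would Taylor-expand $(1+l_i)^{-1}=\sum_{j\ge0}(-l_i)^j$ as in \eqref{eqn: decomposition of L2}--\eqref{eqn: decomposition of L3}, reducing to the differences of the multi-term commutators
\[
\mathrm{p.v.}\int_\BT (\D h_i)^{n}(1+h_i(\th+\xi))^{-j}\,\frac{\p(\th+\xi)}{2\sin\frac{\xi}{2}}\,d\xi
\]
($n=2j+1$ for $L_2$, and the analogue with $\frac{\p(\th+\xi)}{2\tan\frac{\xi}{2}}$ and $n=2j$ for $L_3$), together with the differences of the explicit prefactors $(1+h_i(\th))^{-(j+1)}$ and $h_i'(\th)$. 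Telescoping the product $(\D h_i)^{n}(1+h_i(\th+\xi))^{-j}$ isolates a single difference factor in each term, after which Lemma~\ref{lem: Lp bound for multi-term commutator} and Lemma~\ref{lem: W 1p estimate for multi-term commutator}, applied with one commutator entry equal to $h_1-h_2$, supply the $L^p$- and $\dot{W}^{1,p}$-bounds; for the $L_3$-piece one also needs the $L^\infty$-bound on the commutator integral, obtained as in \eqref{eqn: L inf bound of a term in L_3} and its difference analogue, exploiting the H\"older regularity of $h_i'$ and $\p$. Every such term is controlled by $\|h_1-h_2\|$ in the appropriate norm — $\|h_1''-h_2''\|_{L^p}$ when the surviving $\th$-derivative lands on $h_1-h_2$, and $\|h_1-h_2\|_{C^{1,\beta}}$ or $\|h_1-h_2\|_{W^{1,\infty}}$ otherwise — times a factor $(j+1)^2(C\|h_i'\|_{L^\infty})^{2j}$ and norms of $\p$, so the $j$-series converges once $m_{0,i}$ is below a ($p$-dependent, $\beta$-independent) threshold; summing over $j$ and $k$ and combining with the $k=1$ estimate yields the stated inequality.

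The main obstacle is the bookkeeping: one must verify that in every term of the telescoping/multilinear expansion exactly one difference factor appears and carries the correct norm — in particular that the lone non-small factor $h_i''$ is never multiplied by a second $h''$-factor, so the top-order contribution is genuinely linear in $h_1''-h_2''$ — while the remaining factors provide enough powers of $m_{0,i}$ for absolute convergence of the series in $j$, with this convergence requiring only $\|h_i'\|_{L^\infty}$ small independently of $\beta$, matching the smallness hypothesis of Lemma~\ref{lem: W 1p estimate of normal component of singular integral}.
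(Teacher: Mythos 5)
Your proposal follows essentially the same route as the paper's proof: the same $L_0+L_1^{(i)}+L_2^{(i)}+L_3^{(i)}$ decomposition from \eqref{eqn: crude form of singular integral dot with normal}, the direct computation for $L_1$, the Taylor expansion of $(1+l_i)^{-1}$ and telescoping for $L_2$ and $L_3$ reducing to Lemmas~\ref{lem: Lp bound for multi-term commutator} and~\ref{lem: W 1p estimate for multi-term commutator}, and the $L^\infty$-bounds (via H\"older regularity of $h_i'$ and $\p$) needed to pair with the $h_i''$-term produced by the $L_3$ prefactor. The ``main obstacle'' you flag---that exactly one difference factor appears in each telescoped term and that $h_i''$ is never hit twice---is precisely the bookkeeping the paper carries out (introducing the auxiliary quantities $A_i$, $B_i$ to control the $L^\infty$-norms of the $L_3$-differences), so the strategy is sound and matches.
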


\begin{lem}\label{lem: W 1p difference from Hilbert transform difference}
Under the assumptions of Lemma \ref{lem: W 1p estimate of normal component of singular integral difference},
\beq
\begin{split}
&\;\|\g_1'(\th)\cdot\mathcal{K}_{\g_1}\p-\g_2'(\th)\cdot\mathcal{K}_{\g_2}\p\|_{\dot{W}^{1,p}}\\
\leq &\;C\|h_1''-h_2''\|_{L^p} \left|\int_\BT \p\,d\xi\right|\\
&\;+C\|h_1''-h_2''\|_{L^p}\|\p\|_{C^\b} (\|h_1\|_{C^{1,\b}}+\|h_2\|_{C^{1,\b}})(1+\|h_1\|_{C^{1,\b}}+\|h_2\|_{C^{1,\b}})^2\\
&\;+C(\|h_1''\|_{L^p}+\|h_2''\|_{L^p})\|\p\|_{C^\b} \|h_1-h_2\|_{C^{1,\b}}(1+\|h_1\|_{C^{1,\b} } +\|h_2\|_{C^{1,\b}})^3\\
&\;+C\|h_1-h_2\|_{W^{1,\infty}}
\|\p'\|_{L^p} (\|h_1\|_{W^{1,\infty}}+\|h_2\|_{W^{1,\infty}}),
\end{split}
\label{eqn: W1p difference estimate of the singular integral tangent component}
\eeq
where $C = C(p,\b)$.
\end{lem}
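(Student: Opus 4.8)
The plan is to mimic the proof of Lemma \ref{lem: W 1p difference from Hilbert transform}, now run simultaneously for the two configurations and subtracted term by term. Applying the decomposition \eqref{eqn: decomposition of singular integral tangent} to $\g_1$ and to $\g_2$ and subtracting, one obtains $2\pi(\g_1'(\th)\cdot\CK_{\g_1}\p-\g_2'(\th)\cdot\CK_{\g_2}\p)=\sum_{k=1}^{3}(\tilde L_k^{(1)}-\tilde L_k^{(2)})$, where $\tilde L_k^{(i)}$ denotes the quantity $\tilde L_k$ built from $h_i$; the key point is that the term $\pi\CH\p$ does not depend on the interface and cancels exactly, which is precisely why no Hilbert transform survives on the left-hand side of the claimed estimate. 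It then suffices to produce $\dot W^{1,p}$-bounds for the three differences.

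First I would treat $\tilde L_1^{(1)}-\tilde L_1^{(2)}$ via the identity \eqref{eqn: relation between L_1 and tilde L_1}, writing $\tilde L_1^{(i)}=\frac{h_i'}{1+h_i}\bigl(\frac12\int_\BT\p\,d\xi+L_1^{(i)}\bigr)$ and splitting the difference into a term carrying $\frac{h_1'}{1+h_1}-\frac{h_2'}{1+h_2}$ and a term carrying $L_1^{(1)}-L_1^{(2)}$. The $\dot W^{1,p}$-norm of $\frac{h_1'}{1+h_1}-\frac{h_2'}{1+h_2}$ produces $\|h_1''-h_2''\|_{L^p}$ together with lower-order terms controlled by $\|h_1-h_2\|_{W^{1,\infty}}$, and multiplying the leading piece by the constant $\frac12\int_\BT\p\,d\xi$ yields exactly the first line of the claimed bound. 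The difference $L_1^{(1)}-L_1^{(2)}$ is handled by the same elementary Lipschitz-continuity argument behind \eqref{eqn: W1p estimate for L1}, now applied to differences, and gives contributions absorbed into the remaining lines.

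For $\tilde L_2^{(1)}-\tilde L_2^{(2)}$ and $\tilde L_3^{(1)}-\tilde L_3^{(2)}$ I would expand both in their geometric series and estimate the difference of the $j$-th summands. Each summand is a product of a scalar prefactor ($h_i'(\th)(-1-h_i(\th))^{-(j+1)}$, or $(1+h_i(\th))^{-(j+1)}$ in the case of $\tilde L_3$) and a multi-term commutator integral such as $\mathrm{p.v.}\int_\BT(\D h_i)^{2j+1}(1+h_i(\th+\xi))^{-(j+1)}\tfrac{\p(\th+\xi)}{2\sin\frac\xi2}\,d\xi$ (and its $2\tan\frac\xi2$ analogue). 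For the difference of two such integrals I would telescope $(\D h_1)^{m}-(\D h_2)^{m}=(\D h_1-\D h_2)\sum_{l}(\D h_1)^{l}(\D h_2)^{m-1-l}$, and similarly for the powers of $(1+h_i(\th+\xi))^{-1}$, and then apply the difference versions of Lemma \ref{lem: Lp bound for multi-term commutator} and Lemma \ref{lem: W 1p estimate for multi-term commutator} (exactly as in the proof of Lemma \ref{lem: W 1p estimate of normal component of singular integral difference}) to bound each resulting multilinear integral in $L^p$ and $\dot W^{1,p}$. As in \eqref{eqn: L inf bound of a term in L_3}, one also needs $L^\infty$-bounds on these integrals, obtained from the mean value theorem and the $\dot C^\b$-regularity of $h_i'$ and $\p$, so that one can control the $\dot W^{1,p}$-norm of the products (the step in which $\|(1+h_i)^{-(j+1)}\|_{\dot W^{1,\infty}}$ multiplies an $L^\infty$-bound). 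Every $j$-sum converges because $m_{0,i}\ll1$ forces the geometric ratios to be strictly below one; collecting the surviving terms yields the last three lines of the asserted estimate, the powers of $(1+\|h_i\|_{C^{1,\b}})$ coming precisely from those $L^\infty$-bounds.

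The main obstacle will be the bookkeeping in the series step rather than any genuinely new analytic input: one must keep track, for each of the finitely many types of terms created by the two telescoping expansions, of which of $\|h_1-h_2\|_{W^{1,\infty}}$, $\|h_1-h_2\|_{C^{1,\b}}$, $\|h_1''-h_2''\|_{L^p}$ it carries, and of which powers of $\|h_i\|_{C^{1,\b}}$, $\|\p\|_{C^\b}$, $\|\p'\|_{L^p}$ it contributes, so that after summing in $j$ the estimate closes to the stated right-hand side and not to something larger. All the needed ingredients — the multi-term commutator estimates and their difference versions, convergence of the geometric series, and the $L^\infty$-bounds on the principal value integrals — are already available from the earlier lemmas and the appendix, so the argument is a longer but essentially routine elaboration of the computations behind Lemmas \ref{lem: W 1p difference from Hilbert transform} and \ref{lem: W 1p estimate of normal component of singular integral difference}.
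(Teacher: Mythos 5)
Your proposal is essentially the paper's argument: the $\pi\CH\p$ term cancels, $\tilde L_1^{(1)}-\tilde L_1^{(2)}$ is handled via \eqref{eqn: relation between L_1 and tilde L_1} so that differentiating $\tfrac{h_i'}{1+h_i}$ against the constant $\tfrac12\int_\BT\p\,d\xi$ produces the $\|h_1''-h_2''\|_{L^p}|\int_\BT\p\,d\xi|$ line, and $\tilde L_2,\tilde L_3$ are handled by geometric-series expansion, telescoping, and the commutator lemmas \ref{lem: Lp bound for multi-term commutator}, \ref{lem: W 1p estimate for multi-term commutator}. The one structural variation worth noting is that the paper does not directly re-expand $\tilde L_2$ in a series; instead it exploits $\tilde L_2^{(i)}=-h_i'L_{2,\p/(1+h_i)}^{(i)}$ to reuse the $L_2$-difference bounds already proved in Lemma \ref{lem: W 1p estimate of normal component of singular integral difference}, at the cost of an extra term carrying $\p/(1+h_1)-\p/(1+h_2)$ and of first establishing an $L^\infty$-bound for $L_2^{(1)}-L_2^{(2)}$ (eq.\ \eqref{eqn: L inf bound for difference of L_2}). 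Your direct expansion is equivalent; the only step you underweight is the $L^\infty$-control of the \emph{difference} of the principal-value integrals appearing when $\|(1+h_i)^{-(j+1)}\|_{\dot W^{1,\infty}}$ hits them (the analogue of \eqref{eqn: L inf estimate of difference of L3j first term}): this requires the ``double difference'' bound on $A_1^j-A_2^j-B_1^j+B_2^j$ and is the one genuinely fiddly estimate in the whole proof, so it merits more than a nod to the mean value theorem.
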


These estimates can be justified by following similar arguments as those in Lemmas \ref{lem: Holder estimate of singular integral normal component}-\ref{lem: W 1p difference from Hilbert transform}.
However, since their proofs turn out to be extremely lengthy and somewhat tedious,
we shall leave them to Appendix \ref{sec: proofs of estimates for difference of singular integral operators}.

\section{Estimates for Integral Operators $\CK_{\g,\tilde{\g}}$ and $\CK_{\tilde{\g},\g}$}
\label{sec: estimates for interaction operators}
%
%
Recall that the integral operators $\CK_{\g,\tilde{\g}}$ and $\CK_{\tilde{\g},\g}$ are defined in \eqref{eqn: def of integral operators for a pair of interfaces}, while the Poisson kernel $P$ on the 2-D unit disc and its conjugate $Q$ are defined in \eqref{eqn: def of P} and \eqref{eqn: def of Q}.
For convenience, we denote
\beq
P_{\f{r}{R}} := P\left(\f{r}{R},\cdot \right)\quad \mbox{ and }\quad Q_{\f{r}{R}} := Q\left(\f{r}{R},\cdot \right).
\eeq

\begin{lem}\label{lem: L inf norm of derivative of interaction kernel from outer to inner}
Assume $h,H\in W^{1,\infty}(\BT)$, such that $\d^{-1}(\|h\|_{L^\infty}+\|H\|_{L^\infty})\ll 1$.
Denote $\bar{\p} = (2\pi)^{-1}\int_{\BT}\p(\th)\,d\th$.
Then
\beq
\begin{split}
&\;\left\|fe_r(\th)\cdot \mathcal{K}_{\g,\tilde{\g}}\p+\f{1}{4\pi}P_{\f{r}{R}}*(\p-\bar{\p})\right\|_{L^\infty(\BT)}\\
&\;+\left\|fe_\th(\th)\cdot \mathcal{K}_{\g,\tilde{\g}}\p-\f{1}{4\pi}Q_{\f{r}{R}}*(\p-\bar{\p})\right\|_{L^\infty(\BT)}\\
\leq &\;
\f{Cr}{R}\d^{-1}(\|h\|_{L^\infty}+\|H\|_{L^\infty})\|\p\|_{L^\infty},
\end{split}
\label{eqn: L inf norm of derivative of interaction kernel from outer to inner}
\eeq
where $C$ is a universal constant.

\begin{proof}
With $\th' = \th+\xi$ and $D(\th,\th+\xi) := f(\th)/F(\th+\xi)$,
we calculate that
\beq
\begin{split}
&\;2\pi e_r(\th)\cdot \mathcal{K}_{\g,\tilde{\g}}\p\\
=&\;\int_\BT \f{e_r(\th)\cdot (\g(\th)-\tilde{\g}(\th'))}{|\g(\th)-\tilde{\g}(\th')|^2}\p(\th')\,d\th'\\
=&\;f(\th)^{-1}\int_\BT \left[\f12-\f12\cdot\f{1 - D(\th,\th+\xi)^{2}}{1+D(\th,\th+\xi)^{2}-2 D(\th,\th+\xi)\cos\xi}\right]\p(\th+\xi)\,d\xi\\
=:&\; f(\th)^{-1}(I_{r,1}+I_{r,2}),
\end{split}
\label{eqn: decomposition of I r}
\eeq
where
\begin{align}
I_{r,1}=&\;-\f{1}{2}\int_\BT P\left(\f{r}{R},\xi\right)(\p(\th+\xi)-\bar{\p})\,d\xi = -\f{1}{2}P_{\f{r}{R}}*(\p-\bar{\p}),\label{eqn: I r 1}\\
I_{r,2}=&\;\f{1}{2}\int_\BT \left[P\left(\f{r}{R},\xi\right)
-P(D,\xi)\right]\p(\th+\xi)\,d\xi.
\end{align}
Here we used the fact that $P_{\f{r}{R}}$ is an even function and has integral $2\pi$ on $\BT$. 
%
%
%
$I_{r,1}$ is already in the desired shape.
For $I_{r,2}$, since 
\beq
\left|\f{r}{R}-D(\th,\th+\xi)\right| = \f{r}{R}\left|1-\f{1+h(\th)}{1+H(\th+\xi)}\right|\leq \f{Cr}{R}(\|h\|_{L^\infty}+\|H\|_{L^\infty}),
\label{eqn: deviation of B inv from a constant}
\eeq
we may assume that $D\in [0,1-C\d]$ for some universal $C>0$.
Hence, by the mean value theorem and Lemma \ref{lem: properties of Poisson kernel},
\beq
\begin{split}
\|I_{r,2}\|_{L^\infty}
\leq &\; \f{Cr}{R}(\|h\|_{L^\infty}+\|H\|_{L^\infty})\|\p\|_{L^\infty}\int_\BT (\d^2+\xi^2)^{-1}\,d\xi\\
\leq &\;\f{Cr}{R}\d^{-1}(\|h\|_{L^\infty}+\|H\|_{L^\infty})\|\p\|_{L^\infty}.
\end{split}
\label{eqn: L inf estimate of I r 2}
\eeq
The estimate for $fe_r \cdot \mathcal{K}_{\g,\tilde{\g}}\p$ in \eqref{eqn: L inf norm of derivative of interaction kernel from outer to inner} follows.

Similarly, since $Q_{\f{r}{R}}$ is an odd kernel,
\beq
\begin{split}
&\;2\pi e_\th(\th)\cdot \mathcal{K}_{\g,\tilde{\g}}\p\\
=&\;-f(\th)^{-1}\int_\BT \f{D(\th,\th+\xi)\cdot \sin\xi} {1+D(\th,\th+\xi)^2-2D(\th,\th+\xi)\cos\xi}\p(\th+\xi)\,d\xi. \\
=:&\;f(\th)^{-1}(I_{\th,1}+I_{\th,2}),
\end{split}
\label{eqn: decomposition of I theta}
\eeq
where
%
\begin{align}
I_{\th,1}=&\;-\f12\int_\BT Q\left(\f{r}{R},\xi\right)(\p(\th+\xi)-\bar{\p})\,d\xi = \f12 Q_{\f{r}{R}}*(\p-\bar{\p}),\label{eqn: I theta 1}\\
I_{\th,2}=&\;\f12\int_\BT \left[Q\left(\f{r}{R},\xi\right)
-Q(D,\xi)\right]\p(\th+\xi)\,d\xi.
\end{align}
Then the estimate for $fe_\th\cdot \mathcal{K}_{\g,\tilde{\g}}\p$ in \eqref{eqn: L inf norm of  derivative of interaction kernel from outer to inner} can be derived as before.
\end{proof}
\end{lem}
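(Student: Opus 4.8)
The plan is to reduce each of the two quantities ($f\,e_r\cdot\CK_{\g,\tilde{\g}}\p$ and $f\,e_\th\cdot\CK_{\g,\tilde{\g}}\p$) to an integral of the difference between a Poisson-type kernel evaluated at the variable ratio $D(\th,\th+\xi):=f(\th)/F(\th+\xi)$ and the same kernel evaluated at the reference ratio $r/R$, and then to control that kernel difference by the mean value theorem together with the elementary bounds on $P$ and $Q$ from Lemma~\ref{lem: properties of Poisson kernel}.

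First I would write $\g(\th)=f(\th)e_r(\th)$, $\tilde{\g}(\th')=F(\th')e_r(\th')$ with $f=r(1+h)$, $F=R(1+H)$, set $\th'=\th+\xi$, and record $e_r(\th)\cdot(\g(\th)-\tilde{\g}(\th+\xi))=f(\th)-F(\th+\xi)\cos\xi$, $e_\th(\th)\cdot(\g(\th)-\tilde{\g}(\th+\xi))=-F(\th+\xi)\sin\xi$, and $|\g(\th)-\tilde{\g}(\th+\xi)|^2=f(\th)^2+F(\th+\xi)^2-2f(\th)F(\th+\xi)\cos\xi$. Dividing numerator and denominator by $f(\th)^2$, multiplying the resulting expressions by $f(\th)$, and using $\tfrac{1-s\cos\xi}{1+s^2-2s\cos\xi}=\tfrac12(1+P(s,\xi))$ together with $P(1/s,\xi)=-P(s,\xi)$ and $Q(1/s,\xi)=Q(s,\xi)$ (with $s=F(\th+\xi)/f(\th)=1/D$), I obtain the clean representations $f\,e_r(\th)\cdot\CK_{\g,\tilde{\g}}\p=\tfrac{1}{4\pi}\int_\BT(1-P(D,\xi))\p(\th+\xi)\,d\xi$ and $f\,e_\th(\th)\cdot\CK_{\g,\tilde{\g}}\p=-\tfrac{1}{4\pi}\int_\BT Q(D,\xi)\p(\th+\xi)\,d\xi$. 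Then, using $\int_\BT P(s,\xi)\,d\xi=2\pi$, $\int_\BT Q(s,\xi)\,d\xi=0$, and the parity of $P_{\f{r}{R}}$ (even) and $Q_{\f{r}{R}}$ (odd), the constant ($\bar{\p}$) contributions cancel exactly and one is left with
\[
f\,e_r(\th)\cdot\CK_{\g,\tilde{\g}}\p+\frac{1}{4\pi}P_{\f{r}{R}}*(\p-\bar{\p})=\frac{1}{4\pi}\int_\BT\big(P(r/R,\xi)-P(D,\xi)\big)\p(\th+\xi)\,d\xi,
\]
and the analogous identity for the tangential component with $P$ replaced by $Q$.

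Next I would note that $D(\th,\th+\xi)-\tfrac rR=\tfrac rR\cdot\tfrac{h(\th)-H(\th+\xi)}{1+H(\th+\xi)}$, so $|D-\tfrac rR|\le\tfrac{Cr}{R}(\|h\|_{L^\infty}+\|H\|_{L^\infty})$; and, crucially, since $1-\tfrac rR=\tfrac{R-r}{R}\ge 10\d$ by \eqref{eqn: condition on delta} while $\d^{-1}(\|h\|_{L^\infty}+\|H\|_{L^\infty})\ll1$, both $D$ and $r/R$, hence the whole segment joining them, stay in $[0,1-c\d]$ for a universal $c>0$. On this range $1+s^2-2s\cos\xi=(1-s)^2+4s\sin^2(\xi/2)\ge c(\d^2+\xi^2)$, so the mean value theorem and the bounds on $\pa_s P$, $\pa_s Q$ in Lemma~\ref{lem: properties of Poisson kernel} give
\[
|P(D,\xi)-P(r/R,\xi)|+|Q(D,\xi)-Q(r/R,\xi)|\le \frac{Cr}{R}(\|h\|_{L^\infty}+\|H\|_{L^\infty})(\d^2+\xi^2)^{-1}.
\]

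Finally I would estimate each remainder integral pointwise in $\th$, pulling out $\|\p\|_{L^\infty}$ and using $\int_\BT(\d^2+\xi^2)^{-1}\,d\xi\le\int_\BR(\d^2+\xi^2)^{-1}\,d\xi=\pi\d^{-1}$; this yields $\tfrac{Cr}{R}\d^{-1}(\|h\|_{L^\infty}+\|H\|_{L^\infty})\|\p\|_{L^\infty}$ for each of the two terms, and adding them gives the claimed bound. The only genuinely delicate point is the algebraic bookkeeping in the first two steps — performing the reduction to a kernel difference without sign errors (the $1/D$ versus $D$ reciprocal identities and the cancellation of the $\bar{\p}$ terms) and verifying the uniform separation $D\in[0,1-c\d]$ that licenses the Poisson-kernel derivative bounds with the $\d$-dependent constant; everything downstream is a routine one-variable integral.
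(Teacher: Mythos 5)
Your proposal is correct and follows essentially the same route as the paper: you identify the ratio $D(\th,\th+\xi)=f(\th)/F(\th+\xi)$, reduce both quantities to the Poisson-kernel representations $\tfrac{1}{4\pi}\int_\BT(1-P(D,\xi))\p\,d\xi$ and $-\tfrac{1}{4\pi}\int_\BT Q(D,\xi)\p\,d\xi$, decompose into the reference piece at $r/R$ plus a kernel-difference remainder, and control the remainder by the mean value theorem using $|D-r/R|\le\tfrac{Cr}{R}(\|h\|_{L^\infty}+\|H\|_{L^\infty})$, the uniform bound $D,\,r/R\in[0,1-c\d]$, and $\int_\BT(\d^2+\xi^2)^{-1}\,d\xi\lesssim\d^{-1}$. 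Your detour through the reciprocal identities $P(1/s,\xi)=-P(s,\xi)$, $Q(1/s,\xi)=Q(s,\xi)$ is an equivalent algebraic path to the same representations the paper states directly.
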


\begin{lem}\label{lem: Holder norm of derivative of interaction kernel from outer to inner}

Assume $h,H\in C^{1,\alpha}(\BT)$ for some $\alpha\in (0,1)$, such that $m_0+M_0\ll 1$.
Then for $\b\in (0,\f{\alpha}{1+\alpha})$,
\beq
\begin{split}
&\;\left\|fe_r(\th)\cdot \mathcal{K}_{\g,\tilde{\g}}\p+\f{1}{4\pi}P_{\f{r}{R}}*(\p-\bar{\p})\right\|_{\dot{C}^\b(\BT)}\\
&\;+\left\|fe_\th(\th)\cdot \mathcal{K}_{\g,\tilde{\g}}\p-\f{1}{4\pi}Q_{\f{r}{R}}*(\p-\bar{\p})\right\|_{\dot{C}^\b(\BT)}\\
\leq &\;
\f{Cr}{R}(m_0+M_0)\|\p\|_{\dot{C}^\beta}\\
&\;+\f{Cr}{R}\|\p\|_{L^\infty}(\d^{-1}(\|h\|_{L^\infty}+\|H\|_{L^\infty})+\|h'\|_{\dot{C}^{\alpha}}+\|H'\|_{\dot{C}^{\alpha}}),
\end{split}
\label{eqn: Holder norm of derivative of interaction kernel from outer to inner}
\eeq
where $C = C(\alpha,\b)$.
\begin{proof}

Let $I_{r,1}$, $I_{r,2}$, $I_{\th,1}$ and $I_{\th,2}$ be defined as in the proof of Lemma \ref{lem: L inf norm of derivative of interaction kernel from outer to inner}.


Consider $I_{r,2}$.
For $\th_1,\th_2\in \BT$,
\beq
\begin{split}
&\;I_{r,2}(\th_1)-I_{r,2}(\th_2)\\
=&\;\f{1}{2}\int_\BT \left[P(D(\th_2,\th_2+\xi),\xi)
-P(D(\th_1,\th_1+\xi),\xi)\right](\p(\th_1+\xi)-\p(\th_1))\,d\xi\\
&\;+\f{1}{2}\p(\th_1)\int_\BT P(D(\th_2,\th_2+\xi),\xi)
-P(D(\th_1,\th_1+\xi),\xi)\,d\xi\\
&\;-\f{1}{2}\int_\BT \left[P\left(\f{r}{R},\xi\right)
-P(D(\th_2,\th_2+\xi),\xi)\right](\p(\th_2+\xi)-\p(\th_1+\xi))\,d\xi\\
=:&\;I_{r,2,1}+I_{r,2,2}+I_{r,2,3}.
\label{eqn: splitting I r 2}
\end{split}
\eeq
Following the argument of \eqref{eqn: deviation of B inv from a constant} and \eqref{eqn: L inf estimate of I r 2},
\beq
\begin{split}
|I_{r,2,1}|\leq &\;C\int_{\BT} \f{1}{\d^2+|\xi|^2}\cdot \left|\f{f(\th_1)}{F(\th_1+\xi)}-\f{f(\th_2)}{F(\th_2+\xi)}\right|\cdot |\xi|^\beta\|\p\|_{\dot{C}^\beta}\,d\xi\\
\leq &\;C\|\p\|_{\dot{C}^\beta}\int_{\BT} \f{|\xi|^\beta}{\d^2+|\xi|^2}\cdot \f{r}{R}|\th_1-\th_2|^\beta( \|h\|_{\dot{C}^\beta}+\|H\|_{\dot{C}^\beta})\,d\xi\\
\leq &\;C|\th_1-\th_2|^\beta\cdot \f{r}{R}(m_0+M_0)\|\p\|_{\dot{C}^\beta},
\end{split}
\label{eqn: estimate for I r 2 1}
\eeq
and similarly,
\beq
|I_{r,2,3}|
\leq C |\th_1-\th_2|^\beta\cdot \f{r}{R}(m_0+M_0)\|\p\|_{\dot{C}^\beta}.
\label{eqn: estimate for I r 2 3}
\eeq

To handle $I_{r,2,2}$, we first note that
\beq
\begin{split}
&\;\int_\BT P(D(\th_2,\th_2+\xi),\xi)
-P(D(\th_1,\th_1+\xi),\xi)\,d\xi\\
=&\; \int_\BT P(D(\th_2,\th_2+\xi),\xi)-P(D(\th_2,\th_2),\xi)-P(D(\th_1,\th_1+\xi),\xi)+P(D(\th_1,\th_1),\xi)\,d\xi.
\end{split}
\label{eqn: two representation of the difference}
\eeq
We may bound the integrands in \eqref{eqn: two representation of the difference} as follows.
By the mean value theorem and Lemma \ref{lem: properties of Poisson kernel},
\beq
\begin{split}
&\;|P(D(\th_2,\th_2+\xi),\xi)-P(D(\th_2,\th_2),\xi)-P(D(\th_1,\th_1+\xi),\xi)+P(D(\th_1,\th_1),\xi)|\\
\leq &\; \f{C}{\d^2+\xi^2}(|D(\th_2,\th_2+\xi)-D(\th_2,\th_2)|+|D(\th_1,\th_1+\xi)-D(\th_1,\th_1)|)\\
\leq &\; \f{C|\xi|^{\beta'}}{\d^2+\xi^2}\cdot \f{r}{R}\|H\|_{\dot{C}^{\beta'}},
\label{eqn: bound the difference approach 1}
\end{split}
\eeq
where $\beta'\in(0,1)$ is to be determined.
Here we used the bound $|\pa_s P|\leq C(\d^2+\xi^2)^{-1}$ since $D\leq 1-C\d$ (see the proof of Lemma \ref{lem: L inf norm of derivative of interaction kernel from outer to inner}).
Alternatively,
\beq
\begin{split}
&\;|P(D(\th_2,\th_2+\xi),\xi)-P(D(\th_1,\th_1+\xi),\xi)-P(D(\th_2,\th_2),\xi)+P(D(\th_1,\th_1),\xi)|\\
\leq &\; \f{C}{\d^2+\xi^2}(|D(\th_2,\th_2+\xi)-D(\th_1,\th_1+\xi)|+|D(\th_2,\th_2)-D(\th_1,\th_1)|)\\
\leq &\; \f{C}{\d^2+\xi^2}\cdot \f{r}{R}|\th_1-\th_2|(\|h'\|_{L^\infty}+\|H'\|_{L^\infty}).
\end{split}
\label{eqn: bound the difference approach 2}
\eeq
If $|\th_1 -\th_2 |\geq \d$, by \eqref{eqn: two representation of the difference} and \eqref{eqn: bound the difference approach 1},
\beq
\begin{split}
&\;\left|\int_\BT P(D(\th_2,\th_2+\xi),\xi)
-P(D(\th_1,\th_1+\xi),\xi)\,d\xi\right|\\
\leq &\;\f{Cr}{R}\|H\|_{\dot{C}^{\beta'}}\d^{\beta'-\beta-1}|\th_1-\th_2|^\beta.
\end{split}
\eeq
Otherwise, if $|\th_1 -\th_2 |\leq \d$, we deduce by \eqref{eqn: two representation of the difference} and \eqref{eqn: bound the difference approach 2} that
\beq
\begin{split}
&\;\left|\int_\BT P(D(\th_2,\th_2+\xi),\xi)
-P(D(\th_1,\th_1+\xi),\xi)\,d\xi\right|\\
\leq &\;\f{Cr}{R}|\th_1-\th_2|^\beta \d^{-\beta}(\|h'\|_{L^\infty}+\|H'\|_{L^\infty}).
\end{split}
\eeq
Recall that $\beta<\f{\alpha}{1+\alpha}$, so we take $\beta' = \f{\beta(1+\alpha)}{\alpha}$.
Combining these estimates with the definition of $I_{r,2,2}$ in \eqref{eqn: splitting I r 2}, by interpolation inequality,
\beq
|I_{r,2,2}|\leq \f{Cr}{R}|\th_1-\th_2|^\beta \|\p\|_{L^\infty}(\d^{-1}(\|h\|_{L^\infty}+\|H\|_{L^\infty})+\|h'\|_{\dot{C}^{\alpha}}+\|H'\|_{\dot{C}^{\alpha}}).
\eeq
Combining this with \eqref{eqn: splitting I r 2}-\eqref{eqn: estimate for I r 2 3}, we obtain that
\beq
\|I_{r,2}\|_{\dot{C}^\beta}\leq \f{Cr}{R}(m_0+M_0)\|\p\|_{\dot{C}^\beta}+\f{Cr}{R}\|\p\|_{L^\infty}(\d^{-1}(\|h\|_{L^\infty}+\|H\|_{L^\infty})+\|h'\|_{\dot{C}^{\alpha}}+\|H'\|_{\dot{C}^{\alpha}}).
\eeq

The estimate for $I_{\th,2}$ can be derived in the same manner.
\end{proof}
\end{lem}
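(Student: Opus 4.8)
The plan is to follow the structure of the proof of Lemma~\ref{lem: L inf norm of derivative of interaction kernel from outer to inner} and upgrade each $L^\infty$-bound appearing there to a $\dot{C}^\b$-bound. Writing $\g(\th)=f(\th)e_r(\th)$, $\tilde\g(\th)=F(\th)e_r(\th)$, setting $\xi=\th'-\th$ and $D(\th,\th+\xi)=f(\th)/F(\th+\xi)$, one has the algebraic identities
\beqo
fe_r(\th)\cdot\CK_{\g,\tilde\g}\p+\f{1}{4\pi}P_{\f{r}{R}}*(\p-\bar\p)=\f{1}{2\pi}I_{r,2},\qquad I_{r,2}=\f12\int_\BT\left[P\Big(\f{r}{R},\xi\Big)-P(D(\th,\th+\xi),\xi)\right]\p(\th+\xi)\,d\xi,
\eeqo
and similarly $fe_\th(\th)\cdot\CK_{\g,\tilde\g}\p-\f{1}{4\pi}Q_{\f{r}{R}}*(\p-\bar\p)=\f{1}{2\pi}I_{\th,2}$ with $Q$ in place of $P$. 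Thus the whole statement reduces to estimating $\|I_{r,2}\|_{\dot{C}^\b}$ and $\|I_{\th,2}\|_{\dot{C}^\b}$. The smallness hypothesis $\d^{-1}(\|h\|_{L^\infty}+\|H\|_{L^\infty})\ll1$ forces $D\leq1-c\d$, so Lemma~\ref{lem: properties of Poisson kernel} supplies the only kernel input needed, namely $|P(D,\xi)|+|\pa_sP(D,\xi)|\leq C(\d^2+\xi^2)^{-1}$ (and the same for $Q$).

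To bound the seminorm I fix $\th_1,\th_2$ and split $I_{r,2}(\th_1)-I_{r,2}(\th_2)$ into three pieces. In two of them the integrand carries an increment of $\p$ --- namely $\p(\th_1+\xi)-\p(\th_1)$, respectively $\p(\th_2+\xi)-\p(\th_1+\xi)$ --- which I pair against $\|\p\|_{\dot{C}^\b}$; the remaining factor is controlled by $(\d^2+\xi^2)^{-1}$ together with $|D(\th_1,\cdot)-D(\th_2,\cdot)|\leq C\f{r}{R}|\th_1-\th_2|^\b(\|h\|_{\dot{C}^\b}+\|H\|_{\dot{C}^\b})$, and after integrating in $\xi$ and a routine interpolation of $\dot{C}^\b$-seminorms between $L^\infty$ and Lipschitz norms one gets $C\f{r}{R}(m_0+M_0)\|\p\|_{\dot{C}^\b}\,|\th_1-\th_2|^\b$. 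The crux is the remaining piece $I_{r,2,2}=\tfrac12\p(\th_1)\int_\BT[P(D(\th_2,\th_2+\xi),\xi)-P(D(\th_1,\th_1+\xi),\xi)]\,d\xi$, where $\p$ offers no spare regularity. Here I exploit that $P(\cdot,\xi)$ is even in $\xi$ with $\xi$-integral $2\pi$ independent of its first argument, so subtracting $P(D(\th_2,\th_2),\xi)-P(D(\th_1,\th_1),\xi)$ under the integral changes nothing and rewrites the integrand as the double difference $P(D(\th_2,\th_2+\xi),\xi)-P(D(\th_2,\th_2),\xi)-P(D(\th_1,\th_1+\xi),\xi)+P(D(\th_1,\th_1),\xi)$; the identical device handles $I_{\th,2,2}$ because $Q(\cdot,\xi)$ is odd in $\xi$ with zero integral.

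I then estimate this double difference in two complementary ways: via $|\pa_sP|\leq C(\d^2+\xi^2)^{-1}$ and the H\"older-in-$\xi$ bound $|D(\th,\th+\xi)-D(\th,\th)|\leq C\f{r}{R}|\xi|^{\b'}\|H\|_{\dot{C}^{\b'}}$, or via $|D(\th_2,\th_2+\xi)-D(\th_1,\th_1+\xi)|\leq C\f{r}{R}|\th_1-\th_2|(\|h'\|_{L^\infty}+\|H'\|_{L^\infty})$. Using the first estimate when $|\th_1-\th_2|\geq\d$ (whose $\xi$-integral is $\leq C\d^{\b'-1}\leq C\d^{\b'-\b-1}|\th_1-\th_2|^\b$) and the second when $|\th_1-\th_2|\leq\d$ (whose $\xi$-integral is $\leq C|\th_1-\th_2|\d^{-1}\leq C\d^{-\b}|\th_1-\th_2|^\b$) yields a $|\th_1-\th_2|^\b$ bound. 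Choosing $\b'=\b(1+\al)/\al$, which is admissible precisely because $\b<\al/(1+\al)$, makes the exponents line up so that $\d^{\b'-\b-1}\|H\|_{\dot{C}^{\b'}}$ interpolates exactly between $\d^{-1}\|H\|_{L^\infty}$ and $\|H'\|_{\dot{C}^\al}$ (and similarly $\d^{-\b}\|H'\|_{L^\infty}$ between the same two quantities), using also $\d<1$, and likewise for $h$. Combining everything gives the claimed bound for $\|I_{r,2}\|_{\dot{C}^\b}$, and running the same argument with $Q$ gives it for $\|I_{\th,2}\|_{\dot{C}^\b}$. The main obstacle is exactly this term $I_{r,2,2}$ and its $\th$-analogue: a near-singular integral in which no derivative can be transferred onto $\p$, so the argument must rely on the parity-driven cancellation of $P$ resp.\ $Q$ and on the two-regime interpolation in $|\th_1-\th_2|$ with the intermediate exponent $\b'$ forced by the hypothesis $\b<\al/(1+\al)$.
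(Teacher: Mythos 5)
Your proposal is correct and follows essentially the same route as the paper: the same reduction to $I_{r,2}$ and $I_{\theta,2}$, the same three-way split of the increment of $I_{r,2}$, the same cancellation trick (using that $\int_\BT P(\cdot,\xi)\,d\xi$ is constant) to convert $I_{r,2,2}$ into a double difference, the same two-regime estimate split at $|\theta_1-\theta_2|=\delta$, the same choice $\beta'=\beta(1+\alpha)/\alpha$, and the same interpolation at the end.
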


\begin{lem}\label{lem: W 1p norm of derivative of interaction kernel from outer to inner}

Assume $h\in W^{1,\infty}(\BT)$ and $H\in W^{2,p}(\BT)$ for some $p\in (1,\infty)$,
satisfying that $m_0+M_0\ll 1$.
Then
\beq
\begin{split}
&\;\left\|fe_r(\th)\cdot \mathcal{K}_{\g,\tilde{\g}}\p+\f{1}{4\pi}P_{\f{r}{R}}*(\p-\bar{\p})\right\|_{\dot{W}^{1,p}(\BT)}\\
&\;+\left\|fe_\th(\th)\cdot \mathcal{K}_{\g,\tilde{\g}}\p-\f{1}{4\pi}Q_{\f{r}{R}}*(\p-\bar{\p})\right\|_{\dot{W}^{1,p}(\BT)}\\
\leq &\;
\f{Cr}{R}(\|H''\|_{L^p}\|\p\|_{L^\infty}+(m_0+M_0)\|\p'\|_{L^p}),
\end{split}
\label{eqn: W 1 p norm of derivative of interaction kernel from outer to inner}
\eeq
where $C = C(p)$.
\begin{proof}
Let $I_{r,1}$, $I_{r,2}$, $I_{\th,1}$ and $I_{\th,2}$ be defined as in the proof of Lemma \ref{lem: L inf norm of derivative of interaction kernel from outer to inner}.

We calculate that
\beq
I_{r,2}'(\th)=\f{1}{2}\int_\BT \left[P\left(\f{r}{R},\xi\right)
-P(D,\xi)\right]\p'(\th+\xi)\,d\xi-\f{1}{2}\int_\BT \pa_s P(D,\xi)\f{\pa D}{\pa \th}\p(\th+\xi)\,d\xi.
\eeq
Arguing as in \eqref{eqn: deviation of B inv from a constant} and \eqref{eqn: L inf estimate of I r 2}, 
%
\beq
\left\|\int_\BT \left[P\left(\f{r}{R},\xi\right)
-P(D,\xi)\right]\p'(\th+\xi)\,d\xi\right\|_{L^p}
\leq \f{Cr}{R}\d^{-1}(\|h\|_{L^\infty}+\|H\|_{L^\infty})\|\p'\|_{L^p}.
\label{eqn: easy term in I 2 r derivative}
\eeq
For the second term in $I_{r,2}'$, we derive by Lemma \ref{lem: properties of Poisson kernel} that
\beq
\begin{split}
&\;\int_\BT \pa_s P(D,\xi)\f{\pa D}{\pa \th}\p(\th+\xi)\,d\xi\\
=&\;\f{f'(\th)}{f(\th)}\int_\BT D\pa_s P(D,\xi)\p(\th+\xi)\,d\xi
+\int_\BT \pa_s P(D,\xi)\f{\pa D}{\pa \xi}\p(\th+\xi)\,d\xi\\
=&\;\f{f'(\th)}{f(\th)}\int_\BT \f{\pa Q(D,\xi)}{\pa \xi}\p(\th+\xi)\,d\xi+\int_\BT \pa_s P(D,\xi)\f{\pa D}{\pa \xi}\p(\th+\xi)\,d\xi\\
&\;-\f{f'(\th)}{f(\th)}\int_\BT \pa_s Q(D,\xi)\f{\pa D}{\pa \xi}\p(\th+\xi)\,d\xi\\
=:&\;I_{r,2,a}+I_{r,2,b}+I_{r,2,c}.
\end{split}
\eeq
Here $\f{\pa Q(D,\xi)}{\pa \xi}$ denotes total derivative of $Q(D(\th,\th+\xi),\xi)$ with respect to $\xi$. 

We integrate by parts in $I_{2,r,a}$.
Arguing as in \eqref{eqn: easy term in I 2 r derivative},
\beq
\begin{split}
\|I_{r,2,a}\|_{L^p}
\leq &\;C\|h'\|_{L^\infty}\left(\left\|\int_\BT \left[Q(D,\xi)-Q\left(\f{r}{R},\xi\right)\right]\p'(\th+\xi)\,d\xi\right\|_{L^p}+\| Q_{\f{r}{R}}*\p'\|_{L^p}\right)\\
\leq &\;\f{Cr}{R}\|h'\|_{L^\infty}\d^{-1}(\|h\|_{L^\infty}+\|H\|_{L^\infty})\|\p'\|_{L^p}+C\|h'\|_{L^\infty}\| P_{\f{r}{R}}*\CH\p'\|_{L^p}.
\end{split}
\label{eqn: estimate for I r 2 a crude form}
\eeq
Using the fact that $\CH\p'$ has mean zero on $\BT$, we derive that
\beq
P_{\f{r}{R}}*\CH\p' = \int_{\BT}\left(P_{\f{r}{R}}(\xi)-P_{\f{r}{R}}(\pi)\right)\CH\p'(\th-\xi)\,d\xi.
\eeq
By Young's inequality,
\beq
\|P_{\f{r}{R}}*\CH\p' \|_{L^p}\leq \int_{\BT}\left|P_{\f{r}{R}}(\xi)-P_{\f{r}{R}}(\pi)\right|\,d\xi \cdot \|\CH\p'\|_{L^p}\leq \f{Cr}{R}\|\p'\|_{L^p}.
\eeq
Therefore,
\beq
\|I_{r,2,a}\|_{L^p}\leq \f{Cr}{R}\|h'\|_{L^\infty}\|\p'\|_{L^p}.
\label{eqn: estimate for I r 2 a}
\eeq

Next we deal with $I_{r,2,b}$. 
Since
\beq
\f{\pa D}{\pa \xi} = -D\f{F'(\th+\xi)}{F(\th+\xi)},
\eeq
we find by Lemma \ref{lem: properties of Poisson kernel} that
\beq
\begin{split}
I_{r,2,b} =&\; -\int_{\BT}D\pa_s P(D,\xi)\f{F'(\th+\xi)}{F(\th+\xi)}\p(\xi+\th)\,d\xi\\
=&\; -\int_{\BT}\left[\f{\pa Q(D,\xi)}{\pa \xi}-\pa_s Q(D,\xi)\f{\pa D}{\pa \xi}\right]\cdot \f{F'\p}{F}(\xi+\th)\,d\xi\\
=&\; -\int_{\BT}\f{\pa Q(D,\xi)}{\pa \xi}\cdot \f{F'\p}{F}(\xi+\th)\,d\xi
-\int_{\BT}D\pa_s Q(D,\xi)\cdot \f{F'^2\p}{F^2}(\xi+\th)\,d\xi\\
=&\; -\int_{\BT}\f{\pa Q(D,\xi)}{\pa \xi}\cdot \f{F'\p}{F}(\xi+\th)\,d\xi
+\int_{\BT}\f{\pa P(D,\xi)}{\pa \xi}\cdot \f{F'^2\p}{F^2}(\xi+\th)\,d\xi\\
&\;-\int_{\BT}\pa_s P(D,\xi)\f{\pa D}{\pa \xi}\cdot \f{F'^2\p}{F^2}(\xi+\th)\,d\xi.
\end{split}
\label{eqn: rewriting I r 2 b}
\eeq
Arguing as in \eqref{eqn: estimate for I r 2 a crude form}-\eqref{eqn: estimate for I r 2 a},
\beq
\begin{split}
&\;\left\|\int_{\BT}\f{\pa Q(D,\xi)}{\pa \xi}\cdot \f{F'\p}{F}(\xi+\th)\,d\xi\right\|_{L^p}
+\left\|\int_{\BT}\f{\pa P(D,\xi)}{\pa \xi}\cdot \f{F'^2\p}{F^2}(\xi+\th)\,d\xi\right\|_{L^p}\\
\leq &\;\f{Cr}{R}\left\|\f{F'\p}{F}\right\|_{\dot{W}^{1,p}}
+\f{Cr}{R}\left\|\f{F'^2\p}{F^2}\right\|_{\dot{W}^{1,p}}\\
\leq &\;\f{Cr}{R}(\|H''\|_{L^p}\|\p\|_{L^\infty}+\|H'\|_{L^\infty}\|\p'\|_{L^p}).
\end{split}
\label{eqn: bound nice terms in I r 2 b}
\eeq
We notice that the last term in \eqref{eqn: rewriting I r 2 b}, which has not been bounded, is in a similar form as the original $I_{r,2,b}$.
Following \eqref{eqn: rewriting I r 2 b} and \eqref{eqn: bound nice terms in I r 2 b}, it is not difficult to argue by induction that for all $k\in \mathbb{N}$,
\beq
\begin{split}
&\;\|I_{r,2,b}\|_{L^p}\\
\leq &\; \f{Cr}{R}(\|H''\|_{L^p}\|\p\|_{L^\infty}+\|H'\|_{L^\infty}\|\p'\|_{L^p})+\left\|\int_{\BT}\pa_s P(D,\xi)\f{\pa D}{\pa \xi} \f{F'^{2k}\p}{F^{2k}}(\xi+\th)\,d\xi\right\|_{L^p}\\
\leq &\; \f{Cr}{R}(\|H''\|_{L^p}\|\p\|_{L^\infty}+\|H'\|_{L^\infty}\|\p'\|_{L^p})+\f{Cr}{R}\int_{\BT}\f{d\xi}{\d^2+\xi^2}\left(\f{\|H'\|_{L^\infty}}{1-\|H\|_{L^\infty}}\right)^{2k+1}\|\p\|_{L^p}.
\end{split}
\eeq
Here the constants $C$ are uniformly bounded in $k$ provided the smallness of $H$.
Since $M_0\ll 1$, we take $k\to \infty$ and obtain
\beq
\|I_{r,2,b}\|_{L^p}
\leq \f{Cr}{R}(\|H''\|_{L^p}\|\p\|_{L^\infty}+\|H'\|_{L^\infty}\|\p'\|_{L^p}).
\eeq

$\|I_{r,2,c}\|_{L^p}$ can be estimated in a similar manner, so is $\|I'_{\th,2}\|_{L^p}$.
\end{proof}
\end{lem}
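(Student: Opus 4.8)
The plan is to recycle the decomposition already set up in the proof of Lemma~\ref{lem: L inf norm of derivative of interaction kernel from outer to inner}. There one writes $fe_r(\th)\cdot\CK_{\g,\tilde\g}\p = I_{r,1}+I_{r,2}$ and $fe_\th(\th)\cdot\CK_{\g,\tilde\g}\p = I_{\th,1}+I_{\th,2}$, where $I_{r,1} = -\f12 P_{\f{r}{R}}*(\p-\bar\p)$ and $I_{\th,1} = \f12 Q_{\f{r}{R}}*(\p-\bar\p)$ are exactly the leading terms subtracted off in \eqref{eqn: W 1 p norm of derivative of interaction kernel from outer to inner}, while
\[
I_{r,2} = \f12\int_\BT\Big[P\Big(\f{r}{R},\xi\Big)-P(D,\xi)\Big]\p(\th+\xi)\,d\xi, \qquad D=D(\th,\th+\xi):=\f{f(\th)}{F(\th+\xi)},
\]
and $I_{\th,2}$ is the same with $P$ replaced by $Q$. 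Hence it suffices to bound $\|I_{r,2}\|_{\dot W^{1,p}}$ and $\|I_{\th,2}\|_{\dot W^{1,p}}$. Throughout I would use, as in Lemma~\ref{lem: L inf norm of derivative of interaction kernel from outer to inner}, that $\big|\f{r}{R}-D\big|=\f{r}{R}\big|1-\f{1+h(\th)}{1+H(\th+\xi)}\big|\le\f{Cr}{R}(\|h\|_{L^\infty}+\|H\|_{L^\infty})$ and $D\le 1-C\d$, so that by Lemma~\ref{lem: properties of Poisson kernel} the kernels $P(D,\cdot)$, $Q(D,\cdot)$ and their $s$- and $\xi$-derivatives are dominated by the appropriate powers of $(\d^2+\xi^2)^{-1}$.

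The first step is to differentiate $I_{r,2}$ in $\th$, which produces two pieces. In the first, the $\th$-derivative falls on $\p$: $\f12\int_\BT[P(\f{r}{R},\xi)-P(D,\xi)]\p'(\th+\xi)\,d\xi$; by the mean value theorem, $\int_\BT(\d^2+\xi^2)^{-1}\,d\xi\le C\d^{-1}$ and Minkowski's inequality this is bounded by $\f{Cr}{R}\d^{-1}(\|h\|_{L^\infty}+\|H\|_{L^\infty})\|\p'\|_{L^p}\le\f{Cr}{R}(m_0+M_0)\|\p'\|_{L^p}$, of the desired form. The second and essential piece is $-\f12\int_\BT\pa_s P(D,\xi)\,\pa_\th D\,\p(\th+\xi)\,d\xi$. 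Using $\pa_\xi D=-D\,F'(\th+\xi)/F(\th+\xi)$, hence $\pa_\th D=\f{f'(\th)}{f(\th)}D+\pa_\xi D$, I would split this into a part carrying $\f{f'(\th)}{f(\th)}D$ and a part carrying $\pa_\xi D$ (the latter coming with the small factor $F'/F$). For the $\f{f'(\th)}{f(\th)}D$ part the key is the Cauchy--Riemann-type identity $(\pa_\xi Q)(s,\xi)=s\,\pa_s P(s,\xi)$ recorded in the proof of Lemma~\ref{lem: difference of kernels}, which gives $D\,\pa_s P(D,\xi)=\f{d}{d\xi}\big[Q(D(\th,\th+\xi),\xi)\big]-\pa_s Q(D,\xi)\,\pa_\xi D$; integrating the total-$\xi$-derivative term by parts moves it onto $\p$, yielding $-\int_\BT Q(D,\xi)\p'(\th+\xi)\,d\xi$, which I would compare with $Q_{\f{r}{R}}*\p'$ up to an error handled just like the first piece. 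Since $\p'$ has zero mean, $Q_{\f{r}{R}}*\p' = P_{\f{r}{R}}*\CH\p'$, and $\|P_{\f{r}{R}}*\CH\p'\|_{L^p}\le\int_\BT|P_{\f{r}{R}}(\xi)-P_{\f{r}{R}}(\pi)|\,d\xi\cdot\|\CH\p'\|_{L^p}\le\f{Cr}{R}\|\p'\|_{L^p}$; the last bound holds because $P(s,\cdot)\ge P(s,\pi)$ and $\int_\BT P(s,\cdot)=2\pi$ force $\int_\BT|P(s,\xi)-P(s,\pi)|\,d\xi=\f{4\pi s}{1+s}$, while $\CH$ is bounded on $L^p(\BT)$.

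After these integrations by parts, every remaining term carries at least one extra factor $F'/F$; moreover, whenever a $\xi$- or $\th$-derivative subsequently lands on such a factor it produces one $F''$. Iterating the same identity and integrating by parts then generates a geometric series in $\|F'/F\|_{L^\infty}\le CM_0$, convergent because $M_0\ll1$, whose sum is controlled by $\f{Cr}{R}(\|H''\|_{L^p}\|\p\|_{L^\infty}+\|H'\|_{L^\infty}\|\p'\|_{L^p})$; since $\|H'\|_{L^\infty}\le M_0$, this is $\le\f{Cr}{R}(\|H''\|_{L^p}\|\p\|_{L^\infty}+(m_0+M_0)\|\p'\|_{L^p})$, as required for $I_{r,2}$. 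The term $I_{\th,2}$ is treated identically with the roles of $P$ and $Q$ interchanged, now using $(\pa_\xi P)(s,\xi)=-s\,\pa_s Q(s,\xi)$. The main obstacle is precisely the factor $\pa_\th D$ sitting inside the order-$\d^{-2}$ kernel $\pa_s P(D,\cdot)$: estimated crudely it would cost an extra power of $\d^{-1}$ (a loss of the shape $\f{Cr}{R}\d^{-1}\|H'\|_{L^\infty}$), and it is exactly the Cauchy--Riemann cancellation, which trades $\pa_s P$ for a $\xi$-derivative that can be integrated by parts, together with the sharp value $\f{4\pi s}{1+s}$ of $\int_\BT|P_s-P_s(\pi)|\,d\xi$ (so that convolving with the Poisson kernel costs only a factor $r/R$ rather than $|\ln\d|$), that recovers the lost power; keeping the constants uniform through the iteration is the remaining bookkeeping point.
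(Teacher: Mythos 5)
Your proposal reproduces the paper's argument essentially verbatim: same decomposition into $I_{r,1}+I_{r,2}$, the same splitting of $\pa_\th D$ into $\f{f'}{f}D$ and $\pa_\xi D$ parts, the same Cauchy--Riemann identity $(\pa_\xi Q)(s,\xi)=s\pa_s P(s,\xi)$ (formally Lemma \ref{lem: properties of Poisson kernel}, though also used in Lemma \ref{lem: difference of kernels}) to convert $D\pa_s P$ into a total $\xi$-derivative, the same integration by parts and use of $Q_{\f{r}{R}}*\p' = P_{\f{r}{R}}*\CH\p'$ together with $\int_\BT|P_{\f{r}{R}}-P_{\f{r}{R}}(\pi)|\,d\xi\le Cr/R$, and the same iteration producing a geometric series in $\|F'/F\|_{L^\infty}$. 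No substantive difference from the paper's proof.
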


Estimates for the operator $\CK_{\tilde{\g},\g}$ can be derived in a similar manner.

\begin{lem}\label{lem: interaction kernel from inner to outer}
\begin{enumerate}
\item Under the assumptions of Lemma \ref{lem: L inf norm of derivative of interaction kernel from outer to inner}, 
\beq
\begin{split}
&\;\left\|Fe_r(\th)\cdot \mathcal{K}_{\tilde{\g},\g}\p-\bar{\p}-\f{1}{4\pi}P_{\f{r}{R}}*(\p-\bar{\p})\right\|_{L^\infty(\BT)}\\
&\;+\left\|Fe_\th(\th)\cdot \mathcal{K}_{\tilde{\g},\g}\p-\f{1}{4\pi}Q_{\f{r}{R}}*(\p-\bar{\p})\right\|_{L^\infty(\BT)}\\
\leq &\;
\f{Cr}{R}\d^{-1}(\|h\|_{L^\infty}+\|H\|_{L^\infty})\|\p\|_{L^\infty},
\end{split}
\label{eqn: L inf norm of derivative of interaction kernel from inner to outer}
\eeq
where $C$ is a universal constant.
\item Under the assumptions of Lemma \ref{lem: Holder norm of derivative of interaction kernel from outer to inner},
\beq
\begin{split}
&\;\left\|Fe_r(\th)\cdot \mathcal{K}_{\tilde{\g},\g}\p-\f{1}{4\pi}P_{\f{r}{R}}*(\p-\bar{\p})\right\|_{\dot{C}^\b(\BT)}\\
&\;+\left\|Fe_\th(\th)\cdot \mathcal{K}_{\tilde{\g},\g}\p-\f{1}{4\pi}Q_{\f{r}{R}}*(\p-\bar{\p})\right\|_{\dot{C}^\b(\BT)}\\
\leq &\;
\f{Cr}{R}(m_0+M_0)\|\p\|_{\dot{C}^\beta}\\
&\;+\f{Cr}{R}\|\p\|_{L^\infty}(\d^{-1}(\|h\|_{L^\infty}+\|H\|_{L^\infty})+\|h'\|_{\dot{C}^{\alpha}}+\|H'\|_{\dot{C}^{\alpha}}),
\end{split}
\label{eqn: Holder norm of derivative of interaction kernel from inner to outer}
\eeq
where $C = C(\alpha,\b)$.
\item
Assume $h\in W^{2,p}(\BT)$ for some $p\in (1,\infty)$ and $H\in W^{1,\infty}(\BT)$,
satisfying that $m_0+M_0\ll 1$.
Then
%
%
\beq
\begin{split}
&\;\left\|Fe_r(\th)\cdot \mathcal{K}_{\tilde{\g},\g}\p-\f{1}{4\pi}P_{\f{r}{R}}*(\p-\bar{\p})\right\|_{\dot{W}^{1,p}(\BT)}\\
&\;+\left\|Fe_\th(\th)\cdot \mathcal{K}_{\tilde{\g},\g}\p-\f{1}{4\pi}Q_{\f{r}{R}}*(\p-\bar{\p})\right\|_{\dot{W}^{1,p}(\BT)}\\
\leq &\;\f{Cr}{R}(\|h''\|_{L^p}\|\p\|_{L^\infty}+(m_0+M_0)\|\p'\|_{L^p}),
\end{split}
\label{eqn: W 1 p norm of derivative of interaction kernel from inner to outer}
\eeq
where $C = C(p)$.
\end{enumerate}

\begin{proof}
We derive as in Lemma \ref{lem: L inf norm of derivative of interaction kernel from outer to inner}.
\beq
\begin{split}
&\;2\pi F(\th)e_r(\th)\cdot \CK_{\tilde{\g},\g}\p-2\pi \bar{\p}\\
=&\;\f12\int_{\BT} P\left(\f{r}{R},\xi\right)(\p(\th+\xi)-\bar{\p})\,d\xi+\f12\int_{\BT} \left[P\left(\f{f(\th+\xi)}{F(\th)},\xi\right)-P\left(\f{r}{R},\xi\right)\right]\p(\th+\xi)\,d\xi,
\end{split}
\eeq
and
\beq
\begin{split}
&\;2\pi F(\th)e_\th (\th)\cdot \CK_{\tilde{\g},\g}\p\\
=&\;-\f12\int_\BT Q\left(\f{r}{R},\xi\right)(\p(\th+\xi)-\bar{\p})\,d\xi+\f12\int_\BT \left[Q\left(\f{r}{R},\xi\right)
-Q\left(\f{f(\th+\xi)}{F(\th)},\xi\right)\right]\p(\th+\xi)\,d\xi.
\end{split}
\eeq
Then the desired estimate can be proved by arguing as in Lemmas \ref{lem: L inf norm of derivative of interaction kernel from outer to inner}-\ref{lem: W 1p norm of derivative of interaction kernel from outer to inner}.
\end{proof}
\end{lem}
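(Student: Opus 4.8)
The plan is to reduce the three estimates to Lemmas \ref{lem: L inf norm of derivative of interaction kernel from outer to inner}--\ref{lem: W 1p norm of derivative of interaction kernel from outer to inner}, exploiting the near-symmetry between $\CK_{\g,\tilde{\g}}$ and $\CK_{\tilde{\g},\g}$. First I would set $\th' = \th+\xi$ and introduce $D(\th,\th+\xi):= f(\th+\xi)/F(\th)$; using $\tilde{\g}(\th) = F(\th)e_r(\th)$, $\g(\th')=f(\th')e_r(\th')$ and the kernels in \eqref{eqn: def of P}--\eqref{eqn: def of Q}, a direct computation yields, in parallel with \eqref{eqn: decomposition of I r} and \eqref{eqn: decomposition of I theta},
\beqo
2\pi F(\th)e_r(\th)\cdot\CK_{\tilde{\g},\g}\p - 2\pi\bar{\p} = \f12 P_{\f{r}{R}}*(\p-\bar{\p}) + \f12\int_\BT\left[P(D,\xi)-P\left(\f{r}{R},\xi\right)\right]\p(\th+\xi)\,d\xi,
\eeqo
together with the analogous identity for the $e_\th$-component with $Q$ in place of $P$ and no constant term (cf.\ \eqref{eqn: I theta 1}). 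Compared with \eqref{eqn: decomposition of I r}, the coefficient of the Poisson-kernel term in the $e_r$-identity has changed sign and an extra $2\pi\bar{\p}$ has appeared, because the observation point $\tilde{\g}(\th)$ lies on the \emph{outer} circle; this is precisely the source of the $\bar{\p}$ subtracted on the left-hand side of \eqref{eqn: L inf norm of derivative of interaction kernel from inner to outer}. Since $|D-\f{r}{R}|\le \f{Cr}{R}(\|h\|_{L^\infty}+\|H\|_{L^\infty})$ and hence $D\le 1-C\d$ by \eqref{eqn: condition on delta} (argued as in \eqref{eqn: deviation of B inv from a constant}), Lemma \ref{lem: properties of Poisson kernel} yields $|P(D,\xi)|+|Q(D,\xi)|+|\pa_s P(D,\xi)|+|\pa_s Q(D,\xi)|\le C(\d^2+\xi^2)^{-1}$, which is the bound that underlies every estimate in Section \ref{sec: estimates for interaction operators}.

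Granting this decomposition, each of the three parts follows by repeating the corresponding argument for $\CK_{\g,\tilde{\g}}$. For part (1) I would apply the mean value theorem to $P(\cdot,\xi)-P(\f{r}{R},\xi)$ and $Q(\cdot,\xi)-Q(\f{r}{R},\xi)$ and use the bound on $|D-\f{r}{R}|$ exactly as in \eqref{eqn: L inf estimate of I r 2}. For part (2) I would split the remainder integral as in \eqref{eqn: splitting I r 2}, control the two ``outer'' pieces by $\|\p\|_{\dot{C}^\b}$ as in \eqref{eqn: estimate for I r 2 1}--\eqref{eqn: estimate for I r 2 3}, and handle the middle piece by combining the two complementary pointwise bounds \eqref{eqn: bound the difference approach 1}--\eqref{eqn: bound the difference approach 2} (distinguishing $|\th_1-\th_2|\ge\d$ from $|\th_1-\th_2|<\d$) with the same interpolation using $\beta' = \f{\b(1+\alpha)}{\alpha}$. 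For part (3) I would differentiate in $\th$, integrate by parts in $\xi$ to move derivatives off the Poisson kernel, and run the induction of Lemma \ref{lem: W 1p norm of derivative of interaction kernel from outer to inner}; here $\pa_\xi D = D\,f'(\th+\xi)/f(\th+\xi)$ and $\pa_\th D = -D\,F'(\th)/F(\th)+\pa_\xi D$, so each inductive step trades one more factor of $f'/f$ for an integration by parts and terminates because $m_0\ll 1$.

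The one genuine point of care --- the ``hard part,'' such as it is --- is the bookkeeping forced by the swap $h\leftrightarrow H$. Because the inner interface $\g$ is now the integration variable, it is $f'$ (hence $h'$ and $h''$) rather than $F'$ that accumulates in the induction of part (3); this is why part (3) requires $h\in W^{2,p}(\BT)$ with the bound carrying $\|h''\|_{L^p}$, while only $H\in W^{1,\infty}(\BT)$ is needed --- the mirror image of the hypotheses in Lemma \ref{lem: W 1p norm of derivative of interaction kernel from outer to inner}. Once one tracks this swap, the sign change and extra constant in the $e_r$-identity, and the dependence of the constants (on $p$ for part (3), on $\alpha,\b$ for part (2), on nothing for part (1)), no genuinely new estimate is required.
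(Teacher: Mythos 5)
Your proposal is correct and takes essentially the same route as the paper: the same decomposition into a leading $P_{r/R}$- or $Q_{r/R}$-convolution plus a remainder involving $P(D,\xi)-P(r/R,\xi)$ with $D=f(\th+\xi)/F(\th)$, and then a verbatim rerun of the arguments from Lemmas \ref{lem: L inf norm of derivative of interaction kernel from outer to inner}--\ref{lem: W 1p norm of derivative of interaction kernel from outer to inner}. Your two explicit observations — the sign flip and the extra $2\pi\bar\p$ in the $e_r$-identity coming from $\int_\BT P(D,\xi)\,d\xi = 2\pi$ since $D<1$, and the exchange $h\leftrightarrow H$ forcing $h\in W^{2,p}$ and $\|h''\|_{L^p}$ in part (3) because $\pa_\xi D$ now carries $f'$ instead of $F'$ — are exactly the points the paper's one-line proof leaves implicit.
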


Lastly, for those convolution terms on the left hand sides of the estimates in Lemmas \ref{lem: L inf norm of derivative of interaction kernel from outer to inner}-\ref{lem: interaction kernel from inner to outer}, we have that
\begin{lem}\label{lem: estimates for the potential kernel part in interaction kernels}
For $\beta\in(0,1)$, we have
\begin{align}
\|P_{\f{r}{R}}*(\p-\bar{\p})\|_{L^\infty}\leq &\;\f{4\pi r}{R+r}\|\p-\bar{\p}\|_{L^\infty},\\
\|Q_{\f{r}{R}}*(\p-\bar{\p})\|_{L^\infty}\leq &\;C\|\p\|_{\dot{C}^\beta},
\end{align}
and
\begin{align}
\|P_{\f{r}{R}}*(\p-\bar{\p})\|_{\dot{C}^{\beta}}\leq &\; \f{4\pi r}{R+r}\|\p\|_{\dot{C}^\beta},\\
\|Q_{\f{r}{R}}*(\p-\bar{\p})\|_{\dot{C}^{\beta}}\leq &\; C\|\p\|_{\dot{C}^\beta}.
\end{align}
where these two constants $C$ depend on $\b$.
Moreover, for $p\in(1,\infty)$,
\begin{align}
\|P_{\f{r}{R}}*(\p-\bar{\p})\|_{\dot{W}^{1,p}}\leq &\; \f{4\pi r}{R+r} \|\p'\|_{L^p},\\
\|Q_{\f{r}{R}}*(\p-\bar{\p})\|_{\dot{W}^{1,p}}\leq &\; C\|\p'\|_{L^p},
\end{align}
where $C$ depends on $p$.

\begin{proof}
Since
\beq
P_{\f{r}{R}}*(\p-\bar{\p}) = \int_{\BT} (P_{\f{r}{R}}(\xi)-P_{\f{r}{R}}(\pi))(\p(\th-\xi)-\bar{\p})\,d\xi,
\eeq
and $P_{\f{r}{R}}(\xi)\geq P_{\f{r}{R}}(\pi)$, we have that
\beq
\|P_{\f{r}{R}}*(\p-\bar{\p})\|_{L^\infty}\leq \|\p-\bar{\p}\|_{L^\infty}\int_\BT P_{\f{r}{R}}(\xi)-P_{\f{r}{R}}(\pi)\,d\xi
=\f{4\pi r}{R+r}\|\p-\bar{\p}\|_{L^\infty}.
\eeq
Since $Q_{\f{r}{R}}$ has integral zero over $\BT$, by Lemma \ref{lem: properties of Poisson kernel},
\beq
|Q_{\f{r}{R}}*(\p-\bar{\p})| = \left|\int_{\BT} Q_{\f{r}{R}}(\xi)(\p(\th-\xi)- \p(\th))\,d\xi\right|\leq C\|\p\|_{\dot{C}^\beta}\int_{\BT}\f{|\xi|^\beta}{\d+|\xi|}\,d\xi\leq C\|\p\|_{\dot{C}^\beta}.
\eeq

It is straightforward to derive that for $\th_1,\th_2\in \BT$,
\beq
\begin{split}
&\;|P_{\f{r}{R}}*(\p-\bar{\p})(\th_1)-P_{\f{r}{R}}*(\p-\bar{\p})(\th_2)|\\
= &\;\left|\int_\BT (P_{\f{r}{R}}(\xi)-P_{\f{r}{R}}(\pi))(\p(\th_1-\xi)-\p(\th_2-\xi))\,d\xi\right|\\
\leq &\;\f{4\pi r}{R+r}\|\p\|_{\dot{C}^\beta}|\th_1-\th_2|^\beta.
\end{split}
\eeq
%
Moreover, by Young's inequality,
\beq
\|P_{\f{r}{R}}*(\p-\bar{\p})\|_{\dot{W}^{1,p}}= \|(P_{\f{r}{R}}-P_{\f{r}{R}}(\pi))*\p'\|_{L^p}\leq \f{4\pi r}{R+r}\|\p'\|_{L^p}.
\eeq
The estimates involving $Q_{\f{r}{R}}$ follows from the fact $Q_{\f{r}{R}} = \CH P_{\f{r}{R}}$.
Note that the boundedness of Hilbert transform on $C^\b(\BT)$ can be justified by that of its counterpart on $C^\b(\mathbb{R})$ with some adaptation.
\end{proof}
\end{lem}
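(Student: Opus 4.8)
The plan is to reduce all six estimates to two elementary facts about the Poisson kernel on $\BT$ recorded in Lemma~\ref{lem: properties of Poisson kernel}. First, $P_{\f{r}{R}}$ is nonnegative, even, has total mass $\int_\BT P_{\f{r}{R}} = 2\pi$, and attains its minimum at $\xi = \pi$ with $P_{\f{r}{R}}(\pi) = \f{R-r}{R+r}$, so that $\int_\BT (P_{\f{r}{R}}(\xi) - P_{\f{r}{R}}(\pi))\,d\xi = \f{4\pi r}{R+r}$. Second, $Q_{\f{r}{R}} = \CH P_{\f{r}{R}}$ is odd, has mean zero, and satisfies the pointwise bound $|Q_{\f{r}{R}}(\xi)| \le C(\d + |\xi|)^{-1}$, valid because $\f{r}{R} \le 1 - C\d$ by \eqref{eqn: condition on delta}.

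For the three estimates involving $P_{\f{r}{R}}$, I would exploit that $\bar\p$ is constant together with the mass identity to subtract $P_{\f{r}{R}}(\pi)$ for free, writing $P_{\f{r}{R}}*(\p - \bar\p)(\th) = \int_\BT (P_{\f{r}{R}}(\xi) - P_{\f{r}{R}}(\pi))(\p(\th - \xi) - \bar\p)\,d\xi$. The $L^\infty$ bound then follows by pulling $\|\p - \bar\p\|_{L^\infty}$ out and using nonnegativity of $P_{\f{r}{R}} - P_{\f{r}{R}}(\pi)$ to replace the $L^1$ norm of the kernel difference by $\f{4\pi r}{R+r}$. The $\dot C^\b$ bound follows identically after differencing in $\th$ inside the integral, $\bigl|\p(\th_1 - \xi) - \p(\th_2 - \xi)\bigr| \le \|\p\|_{\dot C^\b}|\th_1 - \th_2|^\b$. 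The $\dot W^{1,p}$ bound follows from $\f{d}{d\th}\bigl(P_{\f{r}{R}}*(\p - \bar\p)\bigr) = (P_{\f{r}{R}} - P_{\f{r}{R}}(\pi))*\p'$ — here $\p'$ already has mean zero — together with Young's inequality and $\|P_{\f{r}{R}} - P_{\f{r}{R}}(\pi)\|_{L^1} = \f{4\pi r}{R+r}$.

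For the three estimates involving $Q_{\f{r}{R}}$, I would treat the $L^\infty$ case directly: using $\int_\BT Q_{\f{r}{R}} = 0$, write $Q_{\f{r}{R}}*(\p - \bar\p)(\th) = \int_\BT Q_{\f{r}{R}}(\xi)(\p(\th - \xi) - \p(\th))\,d\xi$, and combine $|Q_{\f{r}{R}}(\xi)| \le C(\d + |\xi|)^{-1}$ with $|\p(\th-\xi)-\p(\th)| \le \|\p\|_{\dot C^\b}|\xi|^\b$ to bound the integral by $C\|\p\|_{\dot C^\b}\int_\BT |\xi|^{\b-1}\,d\xi$, which is finite and $\d$-uniform since $\b > 0$. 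For the $\dot C^\b$ and $\dot W^{1,p}$ estimates I would instead commute the Hilbert transform through the convolution, $Q_{\f{r}{R}}*(\p - \bar\p) = P_{\f{r}{R}}*(\CH\p) = P_{\f{r}{R}}*(\CH\p - \overline{\CH\p})$ since $\CH\p$ has mean zero, and apply the already-established $P_{\f{r}{R}}$ estimates to $\CH\p$; then $\|\CH\p\|_{\dot C^\b} \le C\|\p\|_{\dot C^\b}$ and $\|(\CH\p)'\|_{L^p} = \|\CH\p'\|_{L^p} \le C\|\p'\|_{L^p}$ by boundedness of the Hilbert transform on $C^\b(\BT)$ and on $L^p(\BT)$ for $p \in (1,\infty)$; these two boundedness facts produce the constants $C$ depending on $\b$ and on $p$.

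I do not anticipate any genuine obstacle here: each estimate is a one-line convolution bound once the mass and minimum identities for $P_{\f{r}{R}}$ are in hand. The only points requiring a little care are verifying $P_{\f{r}{R}}(\pi) = \f{R-r}{R+r}$ so that the constant $\f{4\pi r}{R+r}$ comes out sharp, and invoking the $C^\b(\BT)$-boundedness of the Hilbert transform, which is slightly less standard than its $L^p$ counterpart but reduces to the corresponding statement on $\BR$ after a routine periodization argument.
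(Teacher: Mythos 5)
Your proposal is correct and follows essentially the same route as the paper's proof: the same subtraction of $P_{\f{r}{R}}(\pi)$ via the mass identity for the three $P$-estimates, the same direct cancellation argument using $\int_\BT Q_{\f{r}{R}} = 0$ for the $Q$-$L^\infty$ bound, and the same reduction of the remaining $Q$-estimates to $Q_{\f{r}{R}} = \CH P_{\f{r}{R}}$ together with boundedness of the Hilbert transform on $C^\b(\BT)$ and $L^p(\BT)$. The only difference is cosmetic — you spell out the commutation $Q_{\f{r}{R}}*(\p-\bar\p) = P_{\f{r}{R}}*(\CH\p - \overline{\CH\p})$ and then reuse the $P$-estimates, while the paper merely cites $Q_{\f{r}{R}}=\CH P_{\f{r}{R}}$ and leaves that step implicit.
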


\section{Existence, Uniqueness and Estimates for $[\va]$ and $\phi$}
\label{sec: bounds for boundary potentials}
This section aims at establishing well-definedness, regularity and estimates for $[\va]_\g$ and $\phi$.
The main approach is to apply a fixed-point argument to static equations \eqref{eqn: equation for derivative of jump of potential final form} and \eqref{eqn: equation for derivative of potential on outer interface}, by using many estimates in Sections \ref{sec: estimate for pressure in circular geometry}-\ref{sec: estimates for interaction operators}.

With the domain determined by $r$, $R$, $h$ and $H$, let $\tilde{p}$ be defined by \eqref{eqn: pressure in the reference coordinate} and \eqref{eqn: equation for p tilde}, and let the radially symmetric solution $p_*$ be defined as in \eqref{eqn: equation for p star}.
Recall that $c_*$ and $\tilde{c}_*$ are defined in \eqref{eqn: def of c characteristic normal derivative}.
In fact, $c_* = -\mu|\nabla p_*(r^-)|$ and $\tilde{c}_* = -\nu |\na p_*(R)|$, so their estimates can be found in Lemma \ref{lem: estimate for p_*}.
Also recall that $\CS\p:=\f{1}{2\pi}P_{\f{r}{R}}*\p$ defined in \eqref{eqn: def of operator S}.
Then $\CH\CS\p = \f{1}{2\pi}Q_{\f{r}{R}}*\p$ thanks to Lemma \ref{lem: properties of Poisson kernel}.

In the spirit of the linearized equations \eqref{eqn: linearized equation for derivative of jump of potential} and \eqref{eqn: linearized equation for derivative of potential on outer interface}, we rewrite \eqref{eqn: equation for derivative of jump of potential final form} and \eqref{eqn: equation for derivative of potential on outer interface} as
\begin{align}
[\va]'-2Ac_* f'-A\CS\phi' = &\;\CR_{[\va]'},\label{eqn: introducing remainder in va equation}\\
\phi' +2\tilde{c}_* F'-\CS[\va]' = &\; \CR_{\phi'},\label{eqn: introducing remainder in phi equation}
\end{align}
where
\beq
\begin{split}
\CR_{[\va]'}:= &\;2A f'(\th) (e_r\cdot \na(\G*g)|_{\g}-c_*)
+ 2Af(\th) e_\th\cdot \na(\G*g)|_{\g}\\
&\;+ 2A\g'(\th)^\perp\cdot \CK_\g [\va]'
+ 2A\left(\g'(\th)^\perp\cdot \CK_{\g,\tilde{\g}} \phi'-\frac{1}{2}\CS\phi'\right),
\end{split}
\label{eqn: def of remainder in va equation}
\eeq
\beq
\begin{split}
\CR_{\phi'}:= &\;-2F' (e_r\cdot \na(\G*g)|_{\tilde{\g}}-\tilde{c}_*)-2 F e_\th\cdot \na(\G*g)|_{\tilde{\g}}\\
&\;-2\tilde{\g}'(\th)^\perp\cdot \CK_{\tilde{\g}} \phi'
-2\left(\tilde{\g}'(\th)^\perp\cdot \CK_{\tilde{\g},\g} [\va]'+\f{1}{2}\CS[\va]'\right).
\end{split}
\label{eqn: def of remainder in phi equation}
\eeq

In what follows, we will need to apply the lemmas in Section \ref{sec: gradient estimate of growth potential} with $g_0 = G(\tilde{p}(X))\chi_{B_r}(X)$.
For that purpose, according to \eqref{eqn: def of c characteristic normal derivative inner interface} and \eqref{eqn: def of c characteristic normal derivative outer interface}, we define
\beq
c = -\f{1}{2\pi r}\int_{B_r} G(\tilde{p}(X))\,dX,\quad \tilde{c} 
= \f{r}{R}c.
\label{eqn: def of c characteristic normal derivative for actual solution}
\eeq
We can show the following relation between $c$ and $c_*$.

\begin{lem}\label{lem: bound error between c and c_*}
Let $c_*$ and $c$ be defined in \eqref{eqn: def of c characteristic normal derivative} and \eqref{eqn: def of c characteristic normal derivative for actual solution}, respectively.
Then under the assumption $m_0+M_0\ll 1$,
\beq
|c-c_*|\leq Cr (m_0+M_0)(\d R^2)^{1/2},
\eeq
where $C =C(\mu,\nu, G)$.
\begin{proof}
Thanks to the $C^1$-smoothness of $G$,
\beq
|c-c_*|\leq Cr^{-1}\int_{B_r}|\tilde{p}-p_*|\,dX.
\eeq

If $r\geq R/2$, by Lemma \ref{lem: estimates for difference between tilde p and p_*}, H\"{o}lder's inequality and Poincar\'{e} inequality,
\beq
|c-c_*|\leq C\|\tilde{p}-p_*\|_{L^2(B_R)}\leq CR\|\na (\tilde{p}-p_*)\|_{L^2(B_R)}\leq CR(m_0+M_0)(\d R^2)^{1/2}.
\eeq
Since $r$ and $R$ are comparable, the desired estimate follows.

Otherwise, the estimate follows from \eqref{eqn: L^inf bound of tilde p - p* when R is way larger than r}.
\end{proof}
\end{lem}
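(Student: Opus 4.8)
The plan is to estimate $|c - c_*|$ by reducing it, via the smoothness assumption on $G$, to an $L^1$-norm of $\tilde p - p_*$ on $B_r$, and then to invoke the stability estimate for the pressure already established in Lemma \ref{lem: estimates for difference between tilde p and p_*}. Since $G \in C^1[0,+\infty)$ is Lipschitz on the bounded range $[0,p_M]$ where both $\tilde p$ and $p_*$ take values, we have $|G(\tilde p) - G(p_*)| \le C|\tilde p - p_*|$ pointwise, whence
\[
|c - c_*| = \left|\frac{1}{2\pi r}\int_{B_r}\bigl(G(\tilde p(X)) - G(p_*(X))\bigr)\,dX\right| \le \frac{C}{r}\int_{B_r}|\tilde p - p_*|\,dX.
\]

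Next I would pass from the $L^1$-bound on $B_r$ to an $L^2$-bound on $B_R$: by H\"older's inequality, $\int_{B_r}|\tilde p - p_*|\,dX \le |B_r|^{1/2}\|\tilde p - p_*\|_{L^2(B_R)} \le Cr\|\tilde p - p_*\|_{L^2(B_R)}$. The remaining task is to control $\|\tilde p - p_*\|_{L^2(B_R)}$. Here I would split into two cases mirroring the structure of the excerpt. When $r \ge R/2$, the radii $r$ and $R$ are comparable, so a Poincar\'e inequality on $B_R$ (using the zero boundary condition of $\tilde p - p_*$ on $\partial B_R$) gives $\|\tilde p - p_*\|_{L^2(B_R)} \le CR\|\nabla(\tilde p - p_*)\|_{L^2(B_R)}$, and Lemma \ref{lem: estimates for difference between tilde p and p_*} then yields $\|\nabla(\tilde p - p_*)\|_{L^2(B_R)} \le C(m_0+M_0)(\delta R^2)^{1/2}$; combining these and using $r \sim R$ gives the claimed bound. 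When $r < R/2$, one instead uses the $L^\infty$-estimate \eqref{eqn: L^inf bound of tilde p - p* when R is way larger than r}, which directly gives $\|\tilde p - p_*\|_{L^\infty(B_R)} \le C(m_0+M_0)(\delta R^2)^{1/2}$, and then $\int_{B_r}|\tilde p - p_*|\,dX \le |B_r|\|\tilde p - p_*\|_{L^\infty} \le Cr^2(m_0+M_0)(\delta R^2)^{1/2}$, so that after dividing by $r$ the factor $r$ is absorbed into the universal constant.

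There is no real obstacle here; the lemma is a short corollary of the pressure stability results already proved. The only point requiring a moment's care is bookkeeping the powers of $r$ and $R$ so that the final bound is genuinely $Cr(m_0+M_0)(\delta R^2)^{1/2}$ with $C = C(\mu,\nu,G)$ in both cases: in the first case one exploits $r \sim R$ to trade factors freely, and in the second case one uses that $\delta \ge C$ is bounded below by a universal constant (a consequence of \eqref{eqn: condition on delta} when $r < R/2$) together with the crude $L^\infty$-to-$L^1$ passage. The constant picks up dependence on $G$ through its Lipschitz constant on $[0,p_M]$ and on $\mu,\nu,G$ through Lemma \ref{lem: estimates for difference between tilde p and p_*}.
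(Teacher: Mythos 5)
Your proof follows essentially the same route as the paper's: reduce $|c-c_*|$ to an $L^1$-bound on $\tilde p - p_*$ via the Lipschitz bound on $G$, then split into the cases $r\geq R/2$ (H\"older, Poincar\'e, and Lemma \ref{lem: estimates for difference between tilde p and p_*}) and $r<R/2$ (the $L^\infty$-bound \eqref{eqn: L^inf bound of tilde p - p* when R is way larger than r}). The bookkeeping of $r$ and $R$ powers you carry out is correct and matches what the paper leaves implicit.
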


Then we turn to prove that the static equations \eqref{eqn: equation for derivative of jump of potential final form} and \eqref{eqn: equation for derivative of potential on outer interface} have solutions $[\va]'$ and $\phi'$.
\begin{prop}\label{prop: Holder estimate for jump of potential and potential along outer interface}
Let $\b'\in (0,1)$ and $\b\in(0,\f{\b'}{1+\b'})$.
Suppose $h,H\in C^{1,\beta'}(\BT)$, such that
\beq
m_0+M_0+\|h'\|_{\dot{C}^{\beta'}}+\|H'\|_{\dot{C}^{\beta'}}\ll 1,
\label{eqn: smallness Holderassumption for the main estimates}
\eeq
where the smallness depends on $\mu$, $\nu$, $\b$ and $\b'$.
Then there exist unique $[\va]',\phi'\in C^{\b}(\BT)$ solving \eqref{eqn: equation for derivative of jump of potential final form} and \eqref{eqn: equation for derivative of potential on outer interface}, or equivalently \eqref{eqn: introducing remainder in va equation}-\eqref{eqn: def of remainder in phi equation}.
They satisfy that 
\beq
\begin{split}
&\;\|[\va]'\|_{\dot{C}^\beta}+\|\phi'\|_{\dot{C}^\beta}\\
\leq &\; C|c_*|r(\|h'\|_{\dot{C}^\beta}+\|H'\|_{\dot{C}^\beta})+Cr^2 (\d^{\b}\|h'\|_{\dot{C}^\b}+(m_0+M_0) (1+\d R^2)^{1/2})\\
=:&\; N_{1,\b},
\end{split}
\label{eqn: Holder estimate for derivative of potentials}
\eeq
where $C = C(\mu,\nu, G, \b, \b')$.
\begin{proof}
We will first derive a priori estimates for $[\va]'$ and $\phi'$, and then briefly discuss the proof of their existence and uniqueness at the end.

By Lemmas \ref{lem: estimates for difference between tilde p and p_*}, \ref{lem: L^inf estimate of derivative of growth potential inner} and
\ref{lem: W 1p estimate of derivative of growth potential inner} (with $p=(1-\beta)^{-1}$), the $C^1$-smoothness of $G$ and the smallness of $h$,
\beq
\begin{split}
&\;\|f'(e_r\cdot \na (\G* g)|_{\g}-c)\|_{\dot{C}^{\beta}}+\|fe_\th\cdot \na (\G* g)|_{\g}\|_{\dot{C}^{\beta}}\\
\leq &\; \|f'\|_{\dot{C}^{\beta}} \|e_r\cdot \na(\G*g)|_{\g}-c\|_{L^\infty}
+\|f'\|_{L^\infty} \|e_r\cdot \na(\G*g)|_{\g}\|_{\dot{W}^{1,p}}\\
&\;+\|f\|_{\dot{C}^\beta} \|e_\th\cdot \na(\G*g)|_{\g}\|_{L^\infty}
+\|f\|_{L^\infty}\|e_\th\cdot \na(\G*g)|_{\g}\|_{\dot{W}^{1,p}}\\
\leq &\;Cr^2\|h'\|_{\dot{C}^\beta} ( m_0 \d |\ln \d| + \|\na (\tilde{p}-p_*) \|_{L^2(B_r)})\\
&\;+Cr^2 ( m_\b+ \|\na (\tilde{p}-p_*)\|_{L^2(B_r)})\\
\leq &\;Cr^2 (m_\b+(m_0+M_0) (\d R^2)^{1/2}).
\end{split}
\label{eqn: Holder estimate for source terms inner interface}
\eeq
On the other hand, for $\beta\in(0, \f{\beta'}{1+\beta'})$, by Lemmas \ref{lem: Holder estimate of singular integral normal component}, \ref{lem: L inf norm of derivative of interaction kernel from outer to inner},
\ref{lem: Holder norm of derivative of interaction kernel from outer to inner} and \ref{lem: estimates for the potential kernel part in interaction kernels},
\beq
\begin{split}
&\;\|\g'(\th)^\perp\cdot \CK_\g [\va]'\|_{\dot{C}^{\beta}}
+\left\|\g'(\th)^\perp\cdot \CK_{\g,\tilde{\g}} \phi'-\f12 \CS\phi'\right\|_{\dot{C}^{\beta}}\\
\leq &\;C\|h'\|_{\dot{C}^\beta} (\|[\va]'\|_{C^\b}+\|[\va]'\|_{L^\infty}\|h'\|_{\dot{C}^\beta}\|h'\|_{L^\infty})\\
&\;+\|f'/f\|_{\dot{C}^{\beta}}\|fe_\th\cdot \CK_{\g,\tilde{\g}} \phi'\|_{L^\infty}+\|f'/f\|_{L^\infty}\|fe_\th\cdot \CK_{\g,\tilde{\g}} \phi'\|_{\dot{C}^{\beta}}\\
&\;+\left\|fe_r\cdot \CK_{\g,\tilde{\g}} \phi'+\f12\CS\phi'\right\|_{\dot{C}^{\beta}}\\
%
\leq &\;C\|h'\|_{\dot{C}^{\beta}}\|[\va]'\|_{\dot{C}^\b}
+C(m_0+M_0+\|h'\|_{\dot{C}^{\beta'}}+\|H'\|_{\dot{C}^{\beta'}})\|\phi'\|_{\dot{C}^\beta},
\end{split}
\label{eqn: Holder estimate for interaction potentials inner interface}
\eeq
where $C = C(\b,\b')$.
Hence, by \eqref{eqn: def of remainder in va equation}, Lemma \ref{lem: bound error between c and c_*} and the fact that $|A|\leq 1$,
\beq
\begin{split}
\|\CR_{[\va]'}\|_{\dot{C}^\beta}
\leq &\; |A|C(\b,\b')(m_0+M_0+\|h'\|_{\dot{C}^{\beta'}}+\|H'\|_{\dot{C}^{\beta'}})\|\phi'\|_{\dot{C}^\beta}\\
&\;+C(\b,\b')\|h'\|_{\dot{C}^{\beta}}\|[\va]'\|_{\dot{C}^\beta}\\
&\;+Cr^2 (m_\b+(m_0+M_0) (\d R^2)^{1/2}),
\end{split}
\label{eqn: Holder estimate for remainder of derivative of jump of potential at inner interface}
\eeq
and thus by \eqref{eqn: introducing remainder in va equation},
\beq
\begin{split}
\|[\va]'\|_{\dot{C}^\beta}
\leq &\; |A|\left(\f{2r}{R+r}+C(\b,\b')(m_0+M_0+\|h'\|_{\dot{C}^{\beta'}}+\|H'\|_{\dot{C}^{\beta'}})\right)\|\phi'\|_{\dot{C}^\beta}\\
&\;+C(\b,\b')\|h'\|_{\dot{C}^{\beta}}\|[\va]'\|_{\dot{C}^\beta}\\
&\;+C|c_*|r\|h'\|_{\dot{C}^\beta}+Cr^2 (m_\b+(m_0+M_0) (\d R^2)^{1/2}),
\end{split}
\label{eqn: Holder estimate for derivative of jump of potential at inner interface}
\eeq
where $C = C(\mu,\nu, G, \b, \b')$ unless otherwise stated.

Similarly, by Lemmas \ref{lem: estimates for difference between tilde p and p_*}, \ref{lem: L^inf estimate of derivative of growth potential outer} and \ref{lem: W 1p estimate of derivative of growth potential outer},
\beq
\begin{split}
&\;\|F' (e_r\cdot \na(\G*g)-\tilde{c})|_{\tilde{\g}}\|_{\dot{C}^{\beta}} +\|F e_\th\cdot \na(\G*g)|_{\tilde{\g}}\|_{\dot{C}^{\beta}}\\
\leq &\; \|F'\|_{\dot{C}^{\beta}} \|e_r\cdot \na(\G*g)|_{\tilde{\g}}-\tilde{c}\|_{L^\infty}
+\|F'\|_{L^\infty} \|e_r\cdot \na(\G*g)|_{\tilde{\g}}\|_{\dot{W}^{1,p}}\\
&\;+\|F\|_{\dot{C}^\beta} \|e_\th\cdot \na(\G*g)|_{\tilde{\g}}\|_{L^\infty}
+\|F\|_{L^\infty}\|e_\th\cdot \na(\G*g)|_{\tilde{\g}}\|_{\dot{W}^{1,p}}\\
\leq &\;Cr^2 (m_0 + M_0)(1+\d R^2)^{1/2}.
\end{split}
\label{eqn: Holder estimate for source terms outer interface}
\eeq
By Lemmas \ref{lem: Holder estimate of singular integral normal component}, \ref{lem: interaction kernel from inner to outer} and \ref{lem: estimates for the potential kernel part in interaction kernels},
\beq
\begin{split}
&\;\|\tilde{\g}'(\th)^\perp\cdot \CK_{\tilde{\g}} \phi'\|_{\dot{C}^{\beta}}
+\left\|\tilde{\g}'(\th)^\perp\cdot \CK_{\tilde{\g},\g} [\va]'+\f12 \CS [\va]\right\|_{\dot{C}^{\beta}}\\
\leq &\;C\|H'\|_{\dot{C}^\beta} (\|\phi'\|_{C^\b}+\|\phi'\|_{L^\infty}\|H'\|_{\dot{C}^\beta}\|H'\|_{L^\infty})\\
&\;+\|F'/F\|_{\dot{C}^{\beta}}\|Fe_\th\cdot \CK_{\tilde{\g},\g} [\va]'\|_{L^\infty}+\|F'/F\|_{L^\infty}\|Fe_\th\cdot \CK_{\tilde{\g},\g} [\va]'\|_{\dot{C}^{\beta}}\\
&\;+\left\|Fe_r\cdot \CK_{\tilde{\g},\g} [\va]'-\f12\CS[\va]'\right\|_{\dot{C}^{\beta}}\\
\leq &\;C\|H'\|_{\dot{C}^{\beta}}\|\phi'\|_{\dot{C}^\b}
+C(m_0+M_0+\|h'\|_{\dot{C}^{\beta'}}+\|H'\|_{\dot{C}^{\beta'}})\|[\va]'\|_{\dot{C}^\beta},
\end{split}
\label{eqn: Holder estimate for interaction potentials outer interface}
\eeq
where $C = C(\b,\b')$.
Combining them with \eqref{eqn: introducing remainder in phi equation}, \eqref{eqn: def of remainder in phi equation} and Lemma \ref{lem: bound error between c and c_*}, we obtain that
\beq
\begin{split}
\|\CR_{\phi'}\|_{\dot{C}^\beta}
\leq &\;
C(\b,\b')(m_0+M_0+\|h'\|_{\dot{C}^{\beta'}}+\|H'\|_{\dot{C}^{\beta'}})\|[\va]'\|_{\dot{C}^\beta}\\
&\;+C(\b,\b')\|H'\|_{\dot{C}^{\beta}}\|\phi'\|_{\dot{C}^\b}+Cr^2 (m_0 + M_0)(1+\d R^2)^{1/2},
\end{split}
\label{eqn: Holder estimate for remainder of derivative of outer potential}
\eeq
and
\beq
\begin{split}
\|\phi'\|_{\dot{C}^\beta}
\leq &\;
\left(\f{2r}{R+r}+C(\b,\b')(m_0+M_0+\|h'\|_{\dot{C}^{\beta'}}+\|H'\|_{\dot{C}^{\beta'}})\right)\|[\va]'\|_{\dot{C}^\beta}\\
&\;+C(\b,\b')\|H'\|_{\dot{C}^{\beta}}\|\phi'\|_{\dot{C}^\b}\\
&\;+C|\tilde{c}_*|R\|H'\|_{\dot{C}^\beta}+Cr^2 (m_0 + M_0)(1+\d R^2)^{1/2},
\end{split}
\label{eqn: Holder estimate for derivative of outer potential}
\eeq
where $C = C(\mu,\nu, G,\b,\b')$.

Since $|A|<1$ and $\tilde{c}_* = \f{r}{R}c_*$, by the smallness assumption \eqref{eqn: smallness Holderassumption for the main estimates},
we combine \eqref{eqn: Holder estimate for derivative of jump of potential at inner interface} and \eqref{eqn: Holder estimate for derivative of outer potential} to obtain \eqref{eqn: Holder estimate for derivative of potentials}. 

Let us briefly explain the proof of existence and uniqueness of $[\va]'$ and $\phi'$.
Let $V$ denote the space of $C^\beta(\BT)$-functions with mean zero.
Take $h$ and $H$ satisfying the assumptions.
According to \eqref{eqn: introducing remainder in va equation} and \eqref{eqn: introducing remainder in phi equation}, define a map from $V\times V$ to itself by
\beq
([\va]', \phi')
\mapsto \left(2Ac_* f'+A\CS\phi' +\CR_{[\va]'},-2\tilde{c}_* F'+\CS[\va]' +\CR_{\phi'}\right).
\label{eqn: mapping T}
\eeq
Thanks to the estimates above, one can easily show that the map is well-defined and it is a contraction mapping provided the smallness of $h$ and $H$.
Then the existence and uniqueness of $([\va]',\phi')$ follow.
\end{proof}
\end{prop}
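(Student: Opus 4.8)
The plan is to run a contraction-mapping argument on the pair $([\va]',\phi')$ in the space $V\times V$ of mean-zero $C^\beta(\BT)$ functions, with the map $\mathcal T$ defined in \eqref{eqn: mapping T} coming directly from \eqref{eqn: introducing remainder in va equation}--\eqref{eqn: introducing remainder in phi equation}. First I would record that under \eqref{eqn: smallness Holderassumption for the main estimates} all the geometric hypotheses needed in Sections \ref{sec: estimate for pressure in circular geometry}--\ref{sec: estimates for interaction operators} are met: in particular $m_0+M_0\ll 1$, so Lemma \ref{lem: estimates for difference between tilde p and p_*} and Lemma \ref{lem: bound error between c and c_*} apply, $\tilde p$ is well-defined via \eqref{eqn: equation for p tilde}, and $g_0 = G(\tilde p(X))\chi_{B_r}(X)\in L^\infty$ with $\|e_\th\cdot\na g_0\|_{L^2}$ controlled through the $C^1$-smoothness of $G$ and the gradient bounds on $\tilde p$ (again via Lemma \ref{lem: estimates for difference between tilde p and p_*}). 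This lets me replace $c$ by $c_*$ and $\tilde c$ by $\tilde c_*$ at the cost of a term of size $Cr(m_0+M_0)(\d R^2)^{1/2}$, which is absorbed into $N_{1,\b}$.

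Next I would establish the \emph{a priori} estimate, which is the heart of the proof. The source terms in \eqref{eqn: def of remainder in va equation} and \eqref{eqn: def of remainder in phi equation} split into three groups. The growth-potential terms $f'(e_r\cdot\na(\G*g)|_\g - c)$ and $fe_\th\cdot\na(\G*g)|_\g$ (and their $\tilde\g$-analogues) are bounded in $\dot C^\beta$ by interpolating the $L^\infty$-bounds of Lemma \ref{lem: L^inf estimate of derivative of growth potential inner} / Lemma \ref{lem: L^inf estimate of derivative of growth potential outer} against the $\dot W^{1,p}$-bounds of Lemma \ref{lem: W 1p estimate of derivative of growth potential inner} / Lemma \ref{lem: W 1p estimate of derivative of growth potential outer} with $p=(1-\beta)^{-1}$, using $\|uv\|_{\dot C^\beta}\le\|u\|_{\dot C^\beta}\|v\|_{L^\infty}+\|u\|_{L^\infty}\|v\|_{\dot C^\beta}$; this produces the terms $C|c_*|r(\|h'\|_{\dot C^\beta}+\|H'\|_{\dot C^\beta})$ and $Cr^2(\d^\beta\|h'\|_{\dot C^\beta}+(m_0+M_0)(1+\d R^2)^{1/2})$ in \eqref{eqn: Holder estimate for derivative of potentials}. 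The self-interaction terms $\g'^\perp\cdot\CK_\g[\va]'$ and $\tilde\g'^\perp\cdot\CK_{\tilde\g}\phi'$ are handled by Lemma \ref{lem: Holder estimate of singular integral normal component}, giving a factor $\|h'\|_{\dot C^\beta}$ (resp. $\|H'\|_{\dot C^\beta}$) times $\|[\va]'\|_{C^\beta}$ (resp. $\|\phi'\|_{C^\beta}$) — small by hypothesis. The cross-interaction terms $\g'^\perp\cdot\CK_{\g,\tilde\g}\phi'-\tfrac12\CS\phi'$ and $\tilde\g'^\perp\cdot\CK_{\tilde\g,\g}[\va]'+\tfrac12\CS[\va]'$ are the subtle ones: by Lemma \ref{lem: L inf norm of derivative of interaction kernel from outer to inner}, Lemma \ref{lem: Holder norm of derivative of interaction kernel from outer to inner}, Lemma \ref{lem: interaction kernel from inner to outer} and Lemma \ref{lem: estimates for the potential kernel part in interaction kernels} (with $\beta<\tfrac{\beta'}{1+\beta'}$ to match the Hölder constraint there), they contribute $C(m_0+M_0+\|h'\|_{\dot C^{\beta'}}+\|H'\|_{\dot C^{\beta'}})$ times $\|\phi'\|_{\dot C^\beta}$ resp. $\|[\va]'\|_{\dot C^\beta}$. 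Meanwhile the explicit smoothing operators give the benign prefactor $\tfrac{2r}{R+r}<1$ on the \emph{other} interface's unknown (Lemma \ref{lem: estimates for the potential kernel part in interaction kernels}). Assembling these into \eqref{eqn: Holder estimate for derivative of jump of potential at inner interface} and \eqref{eqn: Holder estimate for derivative of outer potential}, I would add the two inequalities: the coefficients of $\|[\va]'\|_{\dot C^\beta}+\|\phi'\|_{\dot C^\beta}$ on the right are $|A|\tfrac{2r}{R+r}<1$ and $\tfrac{2r}{R+r}<1$ plus small corrections, so the combined coefficient is $<1$ and the unknown norms can be absorbed to the left, yielding \eqref{eqn: Holder estimate for derivative of potentials}.

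Finally, for existence and uniqueness I would observe that every estimate above is bilinear/linear and stable, so the same computations show $\mathcal T$ maps a sufficiently large ball of $V\times V$ into itself and, applied to differences $([\va]'_1-[\va]'_2,\phi'_1-\phi'_2)$, is a contraction with ratio strictly below one — again because the amplification factors are $|A|\tfrac{2r}{R+r}$ and $\tfrac{2r}{R+r}$ up to terms made small by \eqref{eqn: smallness Holderassumption for the main estimates}. The Banach fixed-point theorem then yields a unique $([\va]',\phi')\in V\times V$, which automatically satisfies \eqref{eqn: equation for derivative of jump of potential final form}--\eqref{eqn: equation for derivative of potential on outer interface}, and regularity $[\va]',\phi'\in C^\beta(\BT)$ is built into the space. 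I expect the main obstacle to be the bookkeeping in the cross-interaction estimates: one must verify that all the "small" prefactors there are genuinely controlled by the left side of \eqref{eqn: smallness Holderassumption for the main estimates} (which requires the constraint $\beta<\tfrac{\beta'}{1+\beta'}$ and the $p=(1-\beta)^{-1}$ choice to be mutually consistent), and that after summing the two inequalities the diagonal block of the resulting $2\times2$ system is invertible with the right quantitative dependence; this is where the structure $|A|<1$ and $\tfrac{2r}{R+r}<1$ is used in an essential way.
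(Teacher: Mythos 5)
Your proposal follows the paper's own proof very closely: the same splitting of the remainder terms (growth potential, self-interaction, cross-interaction), the same lemmas in each slot, the same Leibniz-plus-Sobolev-embedding treatment with $p=(1-\beta)^{-1}$, the same mapping $\mathcal T$ on mean-zero $C^\beta$ functions, and the same identification of $|A|<1$ and $\frac{2r}{R+r}<1$ as the structural ingredients.

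The one place you should be more careful is the step where you "add the two inequalities" \eqref{eqn: Holder estimate for derivative of jump of potential at inner interface} and \eqref{eqn: Holder estimate for derivative of outer potential}. Writing $\theta=\frac{2r}{R+r}$, these are (after absorbing the tiny self-coefficients)
\[
\|[\va]'\|_{\dot C^\beta}\le |A|\theta\,\|\phi'\|_{\dot C^\beta}+N_a,\qquad
\|\phi'\|_{\dot C^\beta}\le \theta\,\|[\va]'\|_{\dot C^\beta}+N_b.
\]
If you add with unit weights, the coefficient of $\|[\va]'\|_{\dot C^\beta}$ on the right is $\theta$, whose distance to $1$ is $\frac{R-r}{R+r}\sim\d$; absorbing it to the left then costs a factor $\sim\d^{-1}$, which contradicts the claim that $C$ in \eqref{eqn: Holder estimate for derivative of potentials} depends only on $\mu,\nu,G,\beta,\beta'$ (this $\delta$-independence is used downstream, e.g.\ in the derivation of \eqref{eqn: space-time estimate for the source terms in the fixed point argument}). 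The correct way to "combine" is to substitute one inequality into the other (or solve the $2\times2$ system, as you hint at in your last paragraph): this replaces the damping factor $\theta$ by $|A|\theta^2$, and $1-|A|\theta^2\ge 1-|A|>0$ uniformly in $\d$ and $r/R$. The fact that $|A|<1$ \emph{strictly}, not merely $\theta<1$, is thus essential for the quantitative form of the estimate. Your closing remark about the "diagonal block of the resulting $2\times2$ system" being "invertible with the right quantitative dependence" is exactly this observation; just make sure the earlier "add the two inequalities" is read as "solve the coupled system" rather than as a literal unit-weight sum.
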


\begin{prop}\label{prop: W 1p estimate for jump of potential and potential along outer interface}
Let $\beta'\in (0,1)$, $\beta\in(0,\f{\beta'}{1+\beta'})$ and $p\in [2,\infty)$.
Suppose $h,H\in C^{1,\beta'}\cap W^{2,p}(\BT)$, such that
\beq
m_0+M_0+\|h'\|_{\dot{C}^{\beta'}}+\|H'\|_{\dot{C}^{\beta'}}\ll 1,
\eeq
where the smallness depends on $\mu$, $\nu$, $p$, $\b$ and $\b'$.
Then $[\va]'$ and $\phi'$ obtained in Proposition \ref{prop: Holder estimate for jump of potential and potential along outer interface} also belong to $W^{1,p}(\BT)$.
They satisfy 
\beq
\begin{split}
&\;\|[\va]''\|_{L^p}+\|\phi''\|_{L^p}\\
\leq
&\;C|c_*|r(\|h''\|_{L^p}+\|H''\|_{L^p})\\
&\;+Cr^2(1+\|h''\|_{L^p}+\|H''\|_{L^p})( \d^\b\|h'\|_{\dot{C}^\b}+(m_0+M_0) (1+\d R^2)^{1/2})\\
=:&\;N_{2,p},
\end{split}
\label{eqn: W 2 p estimate of boundary potentials final form}
\eeq
where $C = C(\mu,\nu,p,G,\b,\b')$.
\begin{proof}
The proof is similar to that of Proposition \ref{prop: Holder estimate for jump of potential and potential along outer interface}.

Let $c$ and $\tilde{c}$ be defined as in \eqref{eqn: def of c characteristic normal derivative for actual solution}. 
We proceed as before.
\beq
\begin{split}
&\;\|f'(e_r\cdot \na (\G* g)|_{\g}-c)\|_{\dot{W}^{1,p}}+\|fe_\th\cdot \na (\G* g)|_{\g}\|_{\dot{W}^{1,p}}\\
\leq &\; \|f'\|_{\dot{W}^{1,p}} \|e_r\cdot \na(\G*g)|_{\g}-c\|_{L^\infty}
+\|f'\|_{L^\infty} \|e_r\cdot \na(\G*g)|_{\g}\|_{\dot{W}^{1,p}}\\
&\;+\|f'\|_{L^p} \|e_\th\cdot \na(\G*g)|_{\g}\|_{L^\infty}
+\|f\|_{L^\infty}\|e_\th\cdot \na(\G*g)|_{\g}\|_{\dot{W}^{1,p}}\\
\leq &\;Cr^2\|h''\|_{L^p}( m_0 \d |\ln \d| + \|\na (\tilde{p}-p_*)\|_{L^2(B_r)})\\
&\;+Cr^2 (m_\b+ \|\na (\tilde{p}-p_*)\|_{L^2(B_r)})\\
\leq &\;Cr^2 \d^\b\|h'\|_{\dot{C}^\b}+ Cr^2(1+\|h''\|_{L^p})(m_0+M_0) (1+\d R^2)^{1/2},
\end{split}
\label{eqn: W 1p estimate for source term on inner interface}
\eeq
%
where $C = C(\mu,\nu,p, G, \b)$, and by Lemma \ref{lem: W 1p estimate of normal component of singular integral} and Lemma \ref{lem: W 1p norm of derivative of interaction kernel from outer to inner},
\beq
\begin{split}
&\;\|\g'(\th)^\perp\cdot \CK_\g [\va]'\|_{\dot{W}^{1,p}}+\left\|\g'(\th)^\perp\cdot \CK_{\g,\tilde{\g}} \phi'-\f{1}{2}\CS\phi'\right\|_{\dot{W}^{1,p}}\\
\leq &\;C\| h''\|_{L^p}\|[\va]'\|_{L^\infty}(1+\|h'\|_{\dot{C}^\b})+C( \|h''\|_{L^p}\|[\va]'\|_{\dot{C}^\b} +\|h'\|_{L^\infty}\|[\va]''\|_{L^p})\\
&\;+\|f'/f\|_{\dot{W}^{1,p}}\|fe_\th\cdot \CK_{\g,\tilde{\g}} \phi'\|_{L^\infty}
+\|f'/f\|_{L^\infty}\|fe_\th\cdot \CK_{\g,\tilde{\g}} \phi'\|_{\dot{W}^{1,p}}\\
&\;+\left\|fe_r\cdot \CK_{\g,\tilde{\g}} \phi'
+\f{1}{2}\CS \phi'\right\|_{\dot{W}^{1,p}}\\
\leq &\;C( \|h''\|_{L^p}\|[\va]'\|_{\dot{C}^\b} +\|h'\|_{L^\infty}\|[\va]''\|_{L^p})\\
&\;+C\|h''\|_{L^p}\|\phi'\|_{\dot{C}^\beta}+C\|h'\|_{L^\infty}\|\phi''\|_{L^p}\\
&\;+C(\|H''\|_{L^p}\|\phi'\|_{L^\infty}+(m_0+M_0)\|\phi''\|_{L^p})
\\
\leq &\;C(m_0+M_0)\|\phi''\|_{L^p}+C\|h'\|_{L^\infty}\|[\va]''\|_{L^p} \\
&\;+C(\|h''\|_{L^p}+\|H''\|_{L^p})(\|[\va]'\|_{\dot{C}^\b} +\|\phi'\|_{\dot{C}^\beta}),
\label{eqn: W 1p bound for interaction potentials inner interface}
\end{split}
\eeq
where $C = C(p,\b)$.
Combining them with \eqref{eqn: introducing remainder in va equation} and \eqref{eqn: def of remainder in va equation}, by Lemma \ref{lem: estimates for the potential kernel part in interaction kernels} and Lemma \ref{lem: bound error between c and c_*}
\beq
\begin{split}
\|\CR_{[\va]'}'\|_{L^p}
\leq
&\;C(p,\b)(m_0+M_0)\|\phi''\|_{L^p}+C(p,\b)\|h'\|_{L^\infty}\|[\va]''\|_{L^p} \\
&\;+C(\|h''\|_{L^p}+\|H''\|_{L^p})(\|[\va]'\|_{\dot{C}^\b} +\|\phi'\|_{\dot{C}^\beta})\\
&\;+Cr^2 \d^\b\|h'\|_{\dot{C}^\b}+ Cr^2(1+\|h''\|_{L^p})(m_0+M_0) (1+\d R^2)^{1/2},
\end{split}
\label{eqn: W 2 p estimate of remainder of jump of potential crude form}
\eeq
and thus
\beq
\begin{split}
&\;\|[\va]''\|_{L^p}\\
\leq
&\;\left(\f{2|A|r}{R+r}+C(p,\b)(m_0+M_0)\right)\|\phi''\|_{L^p}+C(p,\b)\|h'\|_{L^\infty}\|[\va]''\|_{L^p} \\
&\;+C(\|h''\|_{L^p}+\|H''\|_{L^p})(\|[\va]'\|_{\dot{C}^\b} +\|\phi'\|_{\dot{C}^\beta})\\
&\;+C|c_*|r\|h''\|_{L^p}+Cr^2 \d^\b\|h'\|_{\dot{C}^\b}+ Cr^2(1+\|h''\|_{L^p})(m_0+M_0) (1+\d R^2)^{1/2},
\end{split}
\label{eqn: W 2 p estimate of jump of potential crude form}
\eeq
where $C = C(\mu,\nu,p,G,\b)$ unless otherwise stated.

Moreover,
\beq
\begin{split}
&\;\|F' (e_r\cdot \na(\G*g)-\tilde{c})|_{\tilde{\g}}\|_{\dot{W}^{1,p}} +\|F e_\th\cdot \na(\G*g)|_{\tilde{\g}}\|_{\dot{W}^{1,p}}\\
\leq &\; \|F'\|_{\dot{W}^{1,p}} \|e_r\cdot \na(\G*g)|_{\tilde{\g}}-\tilde{c}\|_{L^\infty}
+\|F'\|_{L^\infty} \|e_r\cdot \na(\G*g)|_{\tilde{\g}}\|_{\dot{W}^{1,p}}\\
&\;+\|F\|_{\dot{W}^{1,p}} \|e_\th\cdot \na(\G*g)|_{\tilde{\g}}\|_{L^\infty}
+\|F\|_{L^\infty}\|e_\th\cdot \na(\G*g)|_{\tilde{\g}}\|_{\dot{W}^{1,p}}\\
\leq &\;Cr^2(1+\|H''\|_{L^p}) (m_0+M_0) (1+\d R^2)^{1/2},
\end{split}
\label{eqn: W 1p estimate for source term on outer interface}
\eeq
where $C = C(p,G,\b)$,
and 
\beq
\begin{split}
&\;\|\tilde{\g}'(\th)^\perp\cdot \CK_{\tilde{\g}} \phi'\|_{\dot{W}^{1,p}}
+\left\|\tilde{\g}'(\th)^\perp\cdot \CK_{\tilde{\g},\g} [\va]'+\f{1}{2}\CS[\va]'\right\|_{\dot{W}^{1,p}}\\
\leq &\;C\| H''\|_{L^p}\|\phi'\|_{L^\infty}(1+\|H'\|_{\dot{C}^\b})+C( \|H''\|_{L^p}\|\phi'\|_{\dot{C}^\b} +\|H'\|_{L^\infty}\|\phi''\|_{L^p})\\
&\;+\|F'/F\|_{\dot{W}^{1,p}}\|Fe_\th\cdot \CK_{\tilde{\g},\g} [\va]'\|_{L^\infty}+\|F'/F\|_{L^\infty}\|Fe_\th\cdot \CK_{\tilde{\g},\g} [\va]'\|_{\dot{W}^{1,p}}\\
&\;+\left\|Fe_r\cdot \CK_{\tilde{\g},\g} [\va]'-\f{1}{2}\CS[\va]'\right\|_{\dot{W}^{1,p}}\\
\leq &\;C(m_0+M_0)\|[\va]''\|_{L^p}+C\|H'\|_{L^\infty}\|\phi''\|_{L^p}\\
&\;+C(\|h''\|_{L^p}+\|H''\|_{L^p})(\|\phi'\|_{\dot{C}^\b}+\|[\va]'\|_{\dot{C}^\b}),
\end{split}
\label{eqn: W 1p bound for interaction potentials outer interface}
\eeq
where $C = C(p,\b)$.
Hence, by \eqref{eqn: introducing remainder in phi equation}, \eqref{eqn: def of remainder in phi equation}, Lemma \ref{lem: estimates for the potential kernel part in interaction kernels} and Lemma \ref{lem: bound error between c and c_*}, with $C = C(p,G,\b)$,
\beq
\begin{split}
\|\CR_{\phi'}'\|_{L^p}
\leq  &\;C(p,\b)(m_0+M_0)\|[\va]''\|_{L^p}+C(p,\b)\|H'\|_{L^\infty}\|\phi''\|_{L^p}\\
&\;+C(\|h''\|_{L^p}+\|H''\|_{L^p})(\|\phi'\|_{\dot{C}^\b}+\|[\va]'\|_{\dot{C}^\b} )\\
&\;+Cr^2(1+\|H''\|_{L^p}) (m_0+M_0) (1+\d R^2)^{1/2},
\end{split}
\label{eqn: W 2 p estimate of remainder of potential on outer interface crude form}
\eeq
and
\beq
\begin{split}
\|\phi''\|_{L^p}
\leq  &\;\left(\f{2r}{R+r}+C(p,\b)(m_0+M_0)\right)\|[\va]''\|_{L^p}+C(p,\b)\|H'\|_{L^\infty}\|\phi''\|_{L^p}\\
&\;+C(\|h''\|_{L^p}+\|H''\|_{L^p})(\|\phi'\|_{\dot{C}^\b}+\|[\va]'\|_{\dot{C}^\b} )\\
&\;+C|\tilde{c}_*|R\|H''\|_{L^p}+Cr^2(1+\|H''\|_{L^p}) (m_0+M_0) (1+\d R^2)^{1/2}.
\end{split}
\label{eqn: W 2 p estimate of potential on outer interface crude form}
\eeq
Since $|A|<1$ and $m_0+M_0\ll 1$, we combine \eqref{eqn: W 2 p estimate of jump of potential crude form}
and \eqref{eqn: W 2 p estimate of potential on outer interface crude form} to obtain that
\beq
\begin{split}
&\;\|[\va]''\|_{L^p}+\|\phi''\|_{L^p}\\
\leq
&\;C(\|h''\|_{L^p}+\|H''\|_{L^p})(\|[\va]'\|_{\dot{C}^\b} +\|\phi'\|_{\dot{C}^\beta})+C|c_*|r(\|h''\|_{L^p}+\|H''\|_{L^p})\\
&\;+Cr^2 \d^\b\|h'\|_{\dot{C}^\b}+ Cr^2(1+\|h''\|_{L^p}+\|H''\|_{L^p})(m_0+M_0) (1+\d R^2)^{1/2},
\end{split}
\eeq
where $C = C(\mu,\nu,p,G,\b)$.
Applying Proposition \ref{prop: Holder estimate for jump of potential and potential along outer interface} yields the desired estimate.

To prove $[\va]',\phi'\in W^{1,p}(\BT)$, we simply define $\tilde{V}$ to be the space of mean-zero $C^{\beta}\cap W^{1,p}(\BT)$-functions.
One can show that the map in \eqref{eqn: mapping T} is well-defined from $\tilde{V}\times \tilde{V}$ to itself and it is a contraction mapping, provided smallness of $h$ and $H$.
\end{proof}
\end{prop}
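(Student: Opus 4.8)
Proposition \ref{prop: W 1p estimate for jump of potential and potential along outer interface} is the $W^{1,p}$-analogue of the $C^\beta$-estimate already proved in Proposition \ref{prop: Holder estimate for jump of potential and potential along outer interface}, so the natural plan is to mirror that proof line by line, now controlling $\dot W^{1,p}$-seminorms of $[\va]'$ and $\phi'$ instead of $\dot C^\beta$-seminorms. The strategy is to take $\th$-derivatives in the static system \eqref{eqn: introducing remainder in va equation}--\eqref{eqn: introducing remainder in phi equation}, estimate the $L^p$-norm of $\CR_{[\va]'}'$ and $\CR_{\phi'}'$ term by term using the $\dot W^{1,p}$-estimates collected in Sections \ref{sec: gradient estimate of growth potential}--\ref{sec: estimates for interaction operators}, observe that the two ``leading'' coupling terms $A\CS\phi'$ and $\CS[\va]'$ contribute operator norms $\le \frac{2r}{R+r}<1$ via Lemma \ref{lem: estimates for the potential kernel part in interaction kernels}, and close the resulting pair of coupled inequalities by absorbing small terms and then invoking the already-established $\dot C^\beta$-bound $N_{1,\beta}$ for the lower-order contributions.

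Concretely, first I would differentiate \eqref{eqn: def of remainder in va equation}. The growth-potential terms $f'(e_r\cdot\na(\G*g)|_\g-c)$ and $f e_\th\cdot\na(\G*g)|_\g$ are handled by Lemma \ref{lem: W 1p estimate of derivative of growth potential inner} together with the $L^\infty$-bounds of Lemma \ref{lem: L^inf estimate of derivative of growth potential inner} and the pressure stability Lemma \ref{lem: estimates for difference between tilde p and p_*} (to control $c-c_*$ via Lemma \ref{lem: bound error between c and c_*} and $\|\na(\tilde p-p_*)\|_{L^2}$), yielding a bound of the form $Cr^2\d^\beta\|h'\|_{\dot C^\beta}+Cr^2(1+\|h''\|_{L^p})(m_0+M_0)(1+\d R^2)^{1/2}$, exactly as in \eqref{eqn: W 1p estimate for source term on inner interface}. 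The singular-integral term $\g'^\perp\cdot\CK_\g[\va]'$ is controlled by Lemma \ref{lem: W 1p estimate of normal component of singular integral}, which produces $\|h''\|_{L^p}\|[\va]'\|_{C^\beta}$ plus a small multiple $\|h'\|_{L^\infty}\|[\va]''\|_{L^p}$ of the quantity we are bounding. The interaction term $\g'^\perp\cdot\CK_{\g,\tilde\g}\phi'-\tfrac12\CS\phi'$ is handled by Lemma \ref{lem: W 1p norm of derivative of interaction kernel from outer to inner} (after splitting off the normal/tangential components $fe_r\cdot\CK_{\g,\tilde\g}\phi'$ and $fe_\th\cdot\CK_{\g,\tilde\g}\phi'$ as in the $C^\beta$ proof), giving $\|H''\|_{L^p}\|\phi'\|_{L^\infty}+(m_0+M_0)\|\phi''\|_{L^p}$. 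Assembling these, $\|\CR_{[\va]'}'\|_{L^p}$ is bounded by small multiples of $\|[\va]''\|_{L^p}$ and $\|\phi''\|_{L^p}$ plus $(\|h''\|_{L^p}+\|H''\|_{L^p})$ times the already-known $\dot C^\beta$-data $N_{1,\beta}$ plus the source terms. The analogous computation for $\CR_{\phi'}'$ uses Lemma \ref{lem: W 1p estimate of derivative of growth potential outer} and part (3) of Lemma \ref{lem: interaction kernel from inner to outer}, together with Lemma \ref{lem: W 1p estimate of normal component of singular integral} applied to $\tilde\g$.

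Once these estimates are in hand, \eqref{eqn: introducing remainder in va equation} and \eqref{eqn: introducing remainder in phi equation} read $\|[\va]''\|_{L^p}\le (\tfrac{2|A|r}{R+r}+C(m_0+M_0))\|\phi''\|_{L^p}+C\|h'\|_{L^\infty}\|[\va]''\|_{L^p}+\cdots$ and symmetrically for $\|\phi''\|_{L^p}$, where $\cdots$ consists of $C|c_*|r(\|h''\|_{L^p}+\|H''\|_{L^p})$, $Cr^2\d^\beta\|h'\|_{\dot C^\beta}$, $Cr^2(1+\|h''\|_{L^p}+\|H''\|_{L^p})(m_0+M_0)(1+\d R^2)^{1/2}$, and $C(\|h''\|_{L^p}+\|H''\|_{L^p})N_{1,\beta}$. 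Since $|A|<1$ so that $\tfrac{2|A|r}{R+r}+\tfrac{2r}{R+r}<2<$ the combined small coefficient is strictly less than $1$ after adding the two inequalities, and since $m_0+M_0$ and $\|h'\|_{L^\infty}$ are assumed small, standard absorption yields $\|[\va]''\|_{L^p}+\|\phi''\|_{L^p}\le C(\cdots)$. Bounding the $N_{1,\beta}$ contribution by Proposition \ref{prop: Holder estimate for jump of potential and potential along outer interface} and collecting terms gives exactly $N_{2,p}$ as in \eqref{eqn: W 2 p estimate of boundary potentials final form}. Finally, the claim $[\va]',\phi'\in W^{1,p}(\BT)$ follows by running the fixed-point argument of Proposition \ref{prop: Holder estimate for jump of potential and potential along outer interface} on the space $\tilde V\times\tilde V$ of mean-zero $C^\beta\cap W^{1,p}$ functions: the map \eqref{eqn: mapping T} maps this space to itself (by the estimates above) and is a contraction (by the same estimates applied to differences, or simply because contraction in $C^\beta$ plus the bound above forces the $W^{1,p}$ fixed point to coincide with the $C^\beta$ one).

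The main obstacle I expect is bookkeeping rather than conceptual: one must carefully track which terms are genuinely ``small'' (multiplied by $m_0+M_0$, $\|h'\|_{L^\infty}$, or $\tfrac{r}{R+r}$ that can be absorbed) versus which are ``data'' involving $\|h''\|_{L^p},\|H''\|_{L^p}$, $c_*$, and $N_{1,\beta}$ — and in particular to make sure that every appearance of $\|[\va]''\|_{L^p}$ or $\|\phi''\|_{L^p}$ on the right-hand side comes with a coefficient that can be made small, which is where the hypothesis $|A|<1$ (i.e. $\mu\ne\nu$) and the smallness of $m_0+M_0$ are both essential. The splitting of each interaction operator into its normal and tangential pieces before applying Lemma \ref{lem: W 1p norm of derivative of interaction kernel from outer to inner} or Lemma \ref{lem: interaction kernel from inner to outer}(3), exactly as done in \eqref{eqn: W 1p bound for interaction potentials inner interface} and \eqref{eqn: W 1p bound for interaction potentials outer interface}, is the one place where a little care is needed so that the $\|h''\|_{L^p}\|[\va]'\|_{\dot C^\beta}$-type cross terms are routed to the $N_{1,\beta}$ bucket and not mistaken for something requiring a $W^{2,p}$ bound on the potential itself.
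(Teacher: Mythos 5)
Your proposal is correct and follows essentially the same route as the paper: differentiate the static system \eqref{eqn: introducing remainder in va equation}--\eqref{eqn: introducing remainder in phi equation}, bound each piece of $\CR_{[\va]'}'$ and $\CR_{\phi'}'$ in $L^p$ using Lemmas \ref{lem: W 1p estimate of derivative of growth potential inner}, \ref{lem: W 1p estimate of derivative of growth potential outer}, \ref{lem: W 1p estimate of normal component of singular integral}, \ref{lem: W 1p norm of derivative of interaction kernel from outer to inner}, \ref{lem: interaction kernel from inner to outer}, \ref{lem: estimates for the potential kernel part in interaction kernels}, \ref{lem: bound error between c and c_*} together with the $C^\b$-data from Proposition \ref{prop: Holder estimate for jump of potential and potential along outer interface}, and close the coupled pair of $L^p$-inequalities by absorption, invoking $|A|<1$ and $r<R$ so that the product $\f{2|A|r}{R+r}\cdot\f{2r}{R+r}<1$, and invoking smallness of $m_0+M_0$ and $\|h'\|_{L^\infty}, \|H'\|_{L^\infty}$ to soak up the remaining perturbations; the membership $[\va]',\phi'\in W^{1,p}$ is then obtained by running the contraction argument on $\tilde V\times\tilde V$ exactly as the paper does. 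The one sentence about adding the two inequalities is garbled and does not quite articulate the absorption mechanism (it is the \emph{product} of the two leading coefficients, not their sum, that must lie below $1$, which is where $|A|<1$ enters), but the surrounding reasoning makes clear you have the right idea, and the alternative ``a posteriori $W^{1,p}$-regularity'' justification at the end is hand-wavy and should be replaced by the contraction argument you give first.
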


\begin{lem}\label{lem: estimate for principle part in R_f and R_F}
Under the assumptions of Proposition \ref{prop: W 1p estimate for jump of potential and potential along outer interface},
\beq
\begin{split}
&\;\|\CR_{[\va]'}\|_{\dot{C}^\beta}+\|\CR_{\phi'}\|_{\dot{C}^\beta}\\
\leq &\; C|c_*|r(\|h'\|_{\dot{C}^{\beta'}}+\|H'\|_{\dot{C}^{\beta'}})^2\\
&\;+Cr^2 (\d^{\b}\|h'\|_{\dot{C}^\b}+(m_0+M_0) (1+\d R^2)^{1/2}),
\end{split}
\eeq
and
\beq
\begin{split}
&\;\|\CR_{[\va]'}\|_{\dot{W}^{1,p}}+\|\CR_{\phi'}\|_{\dot{W}^{1,p}}\\
\leq &\;C|c_*|r(\|h''\|_{L^p}+\|H''\|_{L^p})(\|h'\|_{\dot{C}^\beta}+\|H'\|_{\dot{C}^\beta})\\
&\;+Cr^2(1+\|h''\|_{L^p}+\|H''\|_{L^p}) (\d^{\b}\|h'\|_{\dot{C}^\b}+(m_0+M_0) (1+\d R^2)^{1/2})\\
%
=:&\;\tilde{N}_{2,p},
\end{split}
\eeq
where $C = C(\mu,\nu,p,G,\b,\b')$.
\begin{proof}
The estimates immediately follow by combining \eqref{eqn: Holder estimate for remainder of derivative of jump of potential at inner interface}, \eqref{eqn: Holder estimate for remainder of derivative of outer potential},
\eqref{eqn: W 2 p estimate of remainder of jump of potential crude form} and \eqref{eqn: W 2 p estimate of remainder of potential on outer interface crude form} with Proposition \ref{prop: Holder estimate for jump of potential and potential along outer interface}, Proposition \ref{prop: W 1p estimate for jump of potential and potential along outer interface} and Lemma \ref{lem: estimate for p_*}.
\end{proof}
\end{lem}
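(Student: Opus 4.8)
\emph{Plan of proof.} The statement is a direct corollary of the a priori estimates for $[\va]'$ and $\phi'$ already obtained in Propositions \ref{prop: Holder estimate for jump of potential and potential along outer interface} and \ref{prop: W 1p estimate for jump of potential and potential along outer interface}, fed back into the intermediate inequalities produced along the way. Recall that $\CR_{[\va]'}$ and $\CR_{\phi'}$, defined in \eqref{eqn: def of remainder in va equation} and \eqref{eqn: def of remainder in phi equation}, split into three families of terms: the growth-potential sources $f'(e_r\cdot\na(\G*g)|_\g-c_*)$, $f\,e_\th\cdot\na(\G*g)|_\g$ and their $\tilde{\g}$-counterparts; the self-interaction singular integrals $\g'(\th)^\perp\cdot\CK_\g[\va]'$ and $\tilde{\g}'(\th)^\perp\cdot\CK_{\tilde{\g}}\phi'$; and the interaction errors $\g'(\th)^\perp\cdot\CK_{\g,\tilde{\g}}\phi'-\tfrac12\CS\phi'$ and $\tilde{\g}'(\th)^\perp\cdot\CK_{\tilde{\g},\g}[\va]'+\tfrac12\CS[\va]'$. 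Each of these was already bounded in $\dot{C}^\b$ and in $\dot{W}^{1,p}$ during the proofs of the two propositions, culminating in \eqref{eqn: Holder estimate for remainder of derivative of jump of potential at inner interface}, \eqref{eqn: Holder estimate for remainder of derivative of outer potential}, \eqref{eqn: W 2 p estimate of remainder of jump of potential crude form} and \eqref{eqn: W 2 p estimate of remainder of potential on outer interface crude form}. These inequalities already have the shape of the claimed bounds, except that $\|[\va]'\|_{\dot{C}^\b}$, $\|\phi'\|_{\dot{C}^\b}$, $\|[\va]''\|_{L^p}$ and $\|\phi''\|_{L^p}$ still appear on their right-hand sides, so the whole proof reduces to substituting the explicit bounds for these four quantities.

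Concretely, for the Hölder estimate I would insert $\|[\va]'\|_{\dot{C}^\b}+\|\phi'\|_{\dot{C}^\b}\le N_{1,\b}$ from Proposition \ref{prop: Holder estimate for jump of potential and potential along outer interface} into \eqref{eqn: Holder estimate for remainder of derivative of jump of potential at inner interface} and \eqref{eqn: Holder estimate for remainder of derivative of outer potential}. In those inequalities the coefficient multiplying $\|[\va]'\|_{\dot{C}^\b}+\|\phi'\|_{\dot{C}^\b}$ is a sum of a genuinely small factor, controlled by $m_0+M_0+\|h'\|_{\dot{C}^{\b'}}+\|H'\|_{\dot{C}^{\b'}}\ll1$, and a factor of the type $C(\b,\b')(\|h'\|_{\dot{C}^\b}+\|H'\|_{\dot{C}^\b})$. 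Multiplying the latter against the leading piece $C|c_*|r(\|h'\|_{\dot{C}^\b}+\|H'\|_{\dot{C}^\b})$ of $N_{1,\b}$, and using the embedding $\dot{C}^{\b'}\hookrightarrow\dot{C}^\b$ on $\BT$, reassembles exactly the quadratic term $C|c_*|r(\|h'\|_{\dot{C}^{\b'}}+\|H'\|_{\dot{C}^{\b'}})^2$; multiplying the small factor against all of $N_{1,\b}$, together with the elementary bound $|c_*|\le Cr$ from Lemma \ref{lem: estimate for p_*} to trade any leftover $|c_*|r$ for $r^2$, yields only terms dominated by the error $Cr^2(\d^\b\|h'\|_{\dot{C}^\b}+(m_0+M_0)(1+\d R^2)^{1/2})$ already present. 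The $\dot{W}^{1,p}$ estimate is handled in the same way: one inserts $\|[\va]''\|_{L^p}+\|\phi''\|_{L^p}\le N_{2,p}$ from Proposition \ref{prop: W 1p estimate for jump of potential and potential along outer interface} and $\|[\va]'\|_{\dot{C}^\b}+\|\phi'\|_{\dot{C}^\b}\le N_{1,\b}$ into \eqref{eqn: W 2 p estimate of remainder of jump of potential crude form} and \eqref{eqn: W 2 p estimate of remainder of potential on outer interface crude form}; the product $C(\|h''\|_{L^p}+\|H''\|_{L^p})\cdot N_{1,\b}$ supplies the head term $C|c_*|r(\|h''\|_{L^p}+\|H''\|_{L^p})(\|h'\|_{\dot{C}^\b}+\|H'\|_{\dot{C}^\b})$ of $\tilde{N}_{2,p}$, while the small-coefficient contributions and the products with the second, $Cr^2$-weighted part of $N_{2,p}$ are absorbed into the remainder $Cr^2(1+\|h''\|_{L^p}+\|H''\|_{L^p})(\d^\b\|h'\|_{\dot{C}^\b}+(m_0+M_0)(1+\d R^2)^{1/2})$, again using the smallness hypothesis and $|c_*|\le Cr$.

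The computation is entirely mechanical; the only point that needs care — and which I expect to be the (mild) main obstacle — is the bookkeeping of weights. One must check that every coefficient multiplying $N_{1,\b}$ or $N_{2,p}$ is either small in the sense of \eqref{eqn: smallness Holderassumption for the main estimates} or is of the form (a H\"older norm of $h'$, $H'$) times $|c_*|r$, so that its product with the $|c_*|r$-weighted part of $N_{1,\b}$ recombines into the stated quadratic term and no extraneous top-order expression such as $\|h''\|_{L^p}\|\phi''\|_{L^p}$ survives; the latter cannot occur because its coefficient is small and was already moved to the left-hand side inside the proof of Proposition \ref{prop: W 1p estimate for jump of potential and potential along outer interface}. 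One should also verify that the exponent constraint $\b<\b'/(1+\b')$ imposed here is compatible with all the interaction-operator estimates of Section \ref{sec: estimates for interaction operators} that are invoked through the two propositions, and that the source-term lemmas of Section \ref{sec: gradient estimate of growth potential} are applied with $g_0=G(\tilde{p})\chi_{B_r}$ satisfying $\|g_0\|_{L^\infty}\le C(G)$ (from $p_*\in[0,p_M]$ and the maximum principle, Lemma \ref{lem: estimate for p_*}) and $\|e_\th\cdot\na g_0\|_{L^2}\lesssim\|\na(\tilde{p}-p_*)\|_{L^2}+\|\na p_*\|_{L^2}$, so that Lemma \ref{lem: estimates for difference between tilde p and p_*} turns these norms into the $(m_0+M_0)(\d R^2)^{1/2}$ appearing in the final bounds. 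Once these consistency checks are in place, the two displayed estimates follow by direct substitution.
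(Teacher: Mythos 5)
Your proposal is correct and follows the same route as the paper's one-line proof: substitute the bounds $N_{1,\b}$ and $N_{2,p}$ from Propositions \ref{prop: Holder estimate for jump of potential and potential along outer interface} and \ref{prop: W 1p estimate for jump of potential and potential along outer interface} into the crude inequalities \eqref{eqn: Holder estimate for remainder of derivative of jump of potential at inner interface}, \eqref{eqn: Holder estimate for remainder of derivative of outer potential}, \eqref{eqn: W 2 p estimate of remainder of jump of potential crude form}, \eqref{eqn: W 2 p estimate of remainder of potential on outer interface crude form}, use $|c_*|\le Cr$ from Lemma \ref{lem: estimate for p_*}, and absorb small coefficients using the smallness hypothesis. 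Your accounting of which products produce the quadratic head term and which are swallowed by the $Cr^2$-weighted remainder is accurate.
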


\section{Local Existence}\label{sec: local well-posedness}
In this section, we prove existence of local solutions of \eqref{eqn: equation for inner interface new notation}-\eqref{eqn: initial condition}.

\subsection{Preliminaries}
Inspired by \eqref{eqn: simple linearized f equation} and \eqref{eqn: simple linearized F big equation}, 
we may rewrite \eqref{eqn: equation for inner interface new notation} and \eqref{eqn: equation for outer interface new notation} as
\begin{align}
\pa_t h+\f{c_*}{r}=&\;-\f{Ac_*}{r}(-\D)^{1/2}h -\f{1+A}{2r^2}\CH\CS\phi'+\f{1}{r}\CR_h, \label{eqn: backbone equation for h}\\
\pa_t H+\f{\tilde{c}_*}{R}=&\;\f{\tilde{c}_*}{R}(-\D)^{1/2}H -\f{1}{R^2}\CH\CS[\va]' +\f{1}{R}\CR_H,\label{eqn: backbone equation for H big}
\end{align}
where
\beq
\begin{split}
\CR_h := &\;-\f{1}{f}\g'(\th)\cdot \CK_{\g}\CR_{[\va]'}\\
&\;-\left(\f{1}{f}\g'(\th)\cdot \CK_{\g}(2Ac_*f'+A\CS\phi')-\f{1}{2r}\CH(2Ac_*f'+A\CS\phi')\right)\\
&\;+\left(\f{1}{f}\na (\G* g)|_{\g}\cdot \g'(\th)^\perp+c_*\right)-\left(\f{1}{f}\g'(\th)\cdot \CK_{\g,\tilde{\g}}\phi'-\f{1}{2r}\CH\CS\phi'\right),
\end{split}
\label{eqn: def of R_F inner}
\eeq
and
\beq
\begin{split}
\CR_H := &\;-\f{1}{F}\tilde{\g}'(\th)\cdot \CK_{\tilde{\g}}\CR_{\phi'}\\
&\;-\left(\f{1}{F}\tilde{\g}'(\th)\cdot \CK_{\tilde{\g}}(-2\tilde{c}_*F'+\CS[\va]')-\f{1}{2R}\CH(-2\tilde{c}_*F'+\CS[\va]')\right)\\
&\;+\left(\f{1}{F}\na (\G* g)|_{\tilde{\g}}\cdot \tilde{\g}'(\th)^\perp+\tilde{c}_*\right)-\left(\f{1}{F}\tilde{\g}'(\th)\cdot \CK_{\tilde{\g},\g}[\va]'-\f{1}{2R}\CH\CS[\va]'\right).
\end{split}
\label{eqn: def of R_F outer}
\eeq
For future use, we also denote
\beq
\tilde{\CR}_h := -\f{1+A}{2r}\CH\CS\phi'+\CR_h,\quad \tilde{\CR}_H := -\f{1}{R}\CH\CS[\va]'+\CR_H.
\label{eqn: def of remainder tilde terms in h and H equations}
\eeq

We need estimates for $\CR_h$ and $\CR_H$.

\begin{lem}\label{lem: W 1p estimate for remainder terms}
Under the assumptions of Proposition \ref{prop: W 1p estimate for jump of potential and potential along outer interface}, 
\beq
r\|\CR_h\|_{\dot{W}^{1,p}}+R\|\CR_H\|_{\dot{W}^{1,p}}\leq C\tilde{N}_{2,p},
\label{eqn: W 1p estimate of residuals} 
\eeq
where $C = C(\mu,\nu,p,G,\b,\b')$.
\begin{proof}
By \eqref{eqn: introducing remainder in va equation}, $\CR_{[\va]'}$ has zero integral on $\BT$.
By Lemma \ref{lem: W 1p difference from Hilbert transform} and Lemma \ref{lem: estimate for principle part in R_f and R_F}, 
\beq
\begin{split}
\|\g'(\th)\cdot \CK_{\g}\CR_{[\va]'}\|_{\dot{W}^{1,p}}
\leq &\;C\|\CR_{[\va]'}\|_{\dot{W}^{1,p}}+C\|h''\|_{L^p}\|\CR_{[\va]'}\|_{\dot{C}^\b}\leq C\tilde{N}_{2,p}.
\label{eqn: W 1 p estimate for transform of R varphi'}
\end{split}
\eeq
When $\g'$ and $\p$ are H\"{o}lder continuous on $\BT$ and $h$ satisfies the smallness assumption, one can rigorously show that
\beq
\g'\cdot \CK_{\g}\p = \f{d}{d\th}\left[\f{1}{2\pi}\int_{\BT}\ln|\g(\th)-\g(\th')|\p(\th')\,d\th'\right],
\label{eqn: singular integral is derivative of something}
\eeq
and thus it has mean zero on $\BT$.
Hence, by Poincar\'{e} inequality and \eqref{eqn: W 1 p estimate for transform of R varphi'},
\beq
\|f^{-1}\g'(\th)\cdot \CK_{\g}\CR_{[\va]'}\|_{\dot{W}^{1,p}}\leq Cr^{-1}\tilde{N}_{2,p}.
\eeq
Similarly,
\beq
\begin{split}
&\;\left\|\f{1}{f}\g'(\th)\cdot \CK_{\g}(2Ac_*f'+A\CS\phi')-\f{1}{2r}\CH(2Ac_*f'+A\CS\phi')\right\|_{\dot{W}^{1,p}}\\
\leq &\;\left\|\f{1}{f}\left(\g'(\th)\cdot \CK_{\g}(2Ac_*f'+A\CS\phi')-\f{1}{2}\CH(2Ac_*f'+A\CS\phi')\right)\right\|_{\dot{W}^{1,p}}\\
&\;+\left\|\left(\f{1}{2f}-\f{1}{2r}\right)\CH(2Ac_*f'+A\CS\phi')\right\|_{\dot{W}^{1,p}}\\
\leq &\;Cr^{-1}\| h''\|_{L^p}\|2Ac_*f'+A\CS\phi'\|_{\dot{C}^\b}+Cr^{-1}m_0\|2Ac_*f'+A\CS\phi'\|_{\dot{W}^{1,p}}\\
\leq &\;Cr^{-1}\tilde{N}_{2,p}.
\end{split}
\eeq
By Lemmas \ref{lem: estimates for difference between tilde p and p_*}, \ref{lem: L^inf estimate of derivative of growth potential inner} and \ref{lem: W 1p estimate of derivative of growth potential inner},
\beq
\begin{split}
&\;\|f^{-1}\na (\G* g)|_{\g}\cdot \g'(\th)^\perp+c_*\|_{\dot{W}^{1,p}}\\
\leq &\;C\|f'/f\|_{\dot{W}^{1,p}}\|e_\th\cdot \na (\G* g)|_{\g}\|_{L^\infty}+C\|f'/f\|_{L^\infty}\|e_\th\cdot \na (\G* g)|_{\g}\|_{\dot{W}^{1,p}}\\
&\;+\|e_r\cdot \na (\G* g)|_{\g}\|_{\dot{W}^{1,p}}\\
%
\leq &\;Cr\|h''\|_{L^p}( m_0 \d |\ln \d| + \|\na (\tilde{p}-p_*) \|_{L^2(B_{r})})\\
&\;+Cr(m_\b+ \|\na (\tilde{p}-p_*)\|_{L^2(B_r)})\\
\leq &\;Cr^{-1}\tilde{N}_{2,p}.
\end{split}
\eeq
Finally, by Lemmas \ref{lem: L inf norm of derivative of interaction kernel from outer to inner},
\ref{lem: W 1p norm of derivative of interaction kernel from outer to inner} and \ref{lem: estimates for the potential kernel part in interaction kernels},
\beq
\begin{split}
&\;\left\|\f{1}{f}\g'(\th)\cdot \CK_{\g,\tilde{\g}}\phi'-\f{1}{2r}\CH\CS\phi'\right\|_{\dot{W}^{1,p}}\\
\leq &\;Cr^{-1}\left\|fe_\th\cdot \CK_{\g,\tilde{\g}}\phi'-\f12\CH\CS\phi'\right\|_{\dot{W}^{1,p}}
+Cr^{-1}\|f'e_r\cdot \CK_{\g,\tilde{\g}}\phi'\|_{\dot{W}^{1,p}}\\
&\;+Cr^{-1}\|h\|_{W^{1,\infty}}\|\CH\CS\phi'\|_{\dot{W}^{1,p}}\\
\leq &\;Cr^{-1}(\|H''\|_{L^p}\|\phi'\|_{L^\infty}+(m_0+M_0)\|\phi''\|_{L^p})+Cr^{-1}\|h''\|_{L^p}\|\phi'\|_{L^\infty}\\
\leq &\;Cr^{-1}\tilde{N}_{2,p}.
\end{split}
\eeq
Combining these estimates with \eqref{eqn: def of R_F inner}, we obtain the estimate for $\CR_h$ in \eqref{eqn: W 1p estimate of residuals}.

The estimate for $\CR_H$ can be derived in a similar manner.
\end{proof}
\end{lem}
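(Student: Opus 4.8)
The plan is to estimate each of the four groups of terms in the definitions \eqref{eqn: def of R_F inner} and \eqref{eqn: def of R_F outer} of $\CR_h$ and $\CR_H$ separately, invoking the singular-integral estimates of Section \ref{sec: estimates for singular integral operators}, the interaction-operator estimates of Section \ref{sec: estimates for interaction operators}, the gradient estimates of Section \ref{sec: gradient estimate of growth potential}, the pressure stability of Section \ref{sec: estimate for pressure in circular geometry}, and finally Proposition \ref{prop: Holder estimate for jump of potential and potential along outer interface}, Proposition \ref{prop: W 1p estimate for jump of potential and potential along outer interface} and Lemma \ref{lem: estimate for principle part in R_f and R_F} to convert the bounds on $[\va]'$, $\phi'$, $\CR_{[\va]'}$, $\CR_{\phi'}$ into the quantity $\tilde N_{2,p}$. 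Since $\CR_h$ and $\CR_H$ are defined symmetrically (swapping the roles of $\g,\tilde\g$, of $r,R$, of $[\va]',\phi'$, and of the interaction directions), I only write out the argument for $\CR_h$ in detail; $\CR_H$ is entirely analogous, using Lemma \ref{lem: interaction kernel from inner to outer}, Lemma \ref{lem: L^inf estimate of derivative of growth potential outer}, Lemma \ref{lem: W 1p estimate of derivative of growth potential outer}, and the outer-interface singular-integral difference lemmas, with the extra observation that $|1-\tfrac{rw}{R}|\ge C\d$ makes all kernels on $\tilde\g$ manifestly non-singular.

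First I would handle the double-layer term $f^{-1}\g'(\th)\cdot\CK_\g\CR_{[\va]'}$: by \eqref{eqn: introducing remainder in va equation} the source $\CR_{[\va]'}$ has mean zero, and a limiting argument gives the identity \eqref{eqn: singular integral is derivative of something}, so $\g'\cdot\CK_\g\CR_{[\va]'}$ is itself a total $\th$-derivative and hence mean zero; applying Lemma \ref{lem: W 1p difference from Hilbert transform} with $\p=\CR_{[\va]'}$ and $h=0$ (or rather bounding $\g'\cdot\CK_\g\p$ by $\tfrac12\CH\p$ plus the error there), together with Lemma \ref{lem: estimate for principle part in R_f and R_F} for $\|\CR_{[\va]'}\|_{\dot W^{1,p}}$ and $\|\CR_{[\va]'}\|_{\dot C^\b}$, gives the $Cr^{-1}\tilde N_{2,p}$ bound after using Poincar\'e's inequality to absorb the $f^{-1}$. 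Next, for the second line of \eqref{eqn: def of R_F inner} I split the difference as $\tfrac1f(\g'\cdot\CK_\g\p-\tfrac12\CH\p)+(\tfrac{1}{2f}-\tfrac{1}{2r})\CH\p$ with $\p=2Ac_*f'+A\CS\phi'$, bound the first piece by Lemma \ref{lem: W 1p difference from Hilbert transform}, the second by the $W^{1,\infty}$-smallness of $\tfrac1f-\tfrac1r\sim m_0$ together with boundedness of $\CH$ on $\dot W^{1,p}$, and estimate $\|\p\|_{\dot C^\b}$ and $\|\p\|_{\dot W^{1,p}}$ through Proposition \ref{prop: Holder estimate for jump of potential and potential along outer interface}, Proposition \ref{prop: W 1p estimate for jump of potential and potential along outer interface} and Lemma \ref{lem: estimates for the potential kernel part in interaction kernels} (the last for $\|\CS\phi'\|$). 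The third term, $f^{-1}\na(\G*g)|_\g\cdot\g'^\perp+c_*$, I rewrite using $\g'^\perp=-(f'e_r+fe_\th)$ so that it equals $-(e_r\cdot\na(\G*g)|_\g-c)-\tfrac{f'}{f}e_\th\cdot\na(\G*g)|_\g+(c-c_*)$, and then apply Lemma \ref{lem: L^inf estimate of derivative of growth potential inner}, Lemma \ref{lem: W 1p estimate of derivative of growth potential inner}, Lemma \ref{lem: estimates for difference between tilde p and p_*} (noting $g_0=G(\tilde p)\chi_{B_r}$, so $\|e_\th\cdot\na g_0\|_{L^2}\lesssim\|\na(\tilde p-p_*)\|_{L^2}$ since $p_*$ is radial), and Lemma \ref{lem: bound error between c and c_*}. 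The fourth term, $\tfrac1f\g'\cdot\CK_{\g,\tilde\g}\phi'-\tfrac1{2r}\CH\CS\phi'$, is decomposed into $\tfrac1r(fe_\th\cdot\CK_{\g,\tilde\g}\phi'-\tfrac12\CH\CS\phi')$, a term with $fe_r\cdot\CK_{\g,\tilde\g}\phi'$ carrying the factor $f'/f\sim h'$, and a commutator $(\tfrac1{2f}-\tfrac1{2r})\CH\CS\phi'$, all controlled by Lemma \ref{lem: L inf norm of derivative of interaction kernel from outer to inner}, Lemma \ref{lem: W 1p norm of derivative of interaction kernel from outer to inner} and Lemma \ref{lem: estimates for the potential kernel part in interaction kernels}. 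Summing the four contributions with \eqref{eqn: def of R_F inner} yields $r\|\CR_h\|_{\dot W^{1,p}}\le C\tilde N_{2,p}$, and the symmetric argument gives the $\CR_H$ bound.

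The main obstacle is bookkeeping rather than a single hard estimate: one must consistently track that every term carrying a derivative of $h$ or $H$ comes with a smallness factor ($m_0,M_0,\|h'\|_{\dot C^{\beta'}}$, etc.) so that nothing is lost, and one must be careful that the ``difference from the model operator'' structure is preserved through the $f^{-1}$ prefactors — the trick each time is to add and subtract the model term ($\tfrac12\CH$, $\tfrac12\CS$, $c_*$) before multiplying by $f^{-1}=r^{-1}(1+h)^{-1}$, so that the genuinely singular part is handled by the Section \ref{sec: estimates for singular integral operators}/\ref{sec: estimates for interaction operators} lemmas while the leftover $(\tfrac1f-\tfrac1r)\times(\text{model operator})$ is a product of a small Lipschitz factor with something already bounded in $\dot W^{1,p}$. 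A secondary subtlety is justifying the identity \eqref{eqn: singular integral is derivative of something} and the resulting mean-zero property, which requires the $C^{1,\b}$-regularity of $\g$ and $\p$ supplied by the hypotheses via Proposition \ref{prop: W 1p estimate for jump of potential and potential along outer interface}; I would state this as a standard consequence of integration by parts in the layer-potential representation and relegate the verification to the estimates already collected.
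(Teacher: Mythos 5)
Your plan coincides with the paper's proof: the same term-by-term decomposition of $\CR_h$ via \eqref{eqn: def of R_F inner}, the same use of the mean-zero identity \eqref{eqn: singular integral is derivative of something} together with Poincar\'e to absorb the $f^{-1}$ prefactor, the same ``add-and-subtract the model operator'' trick for the $\CH$- and $\CS$-comparisons, and the same invocation of Lemmas \ref{lem: W 1p difference from Hilbert transform}, \ref{lem: estimate for principle part in R_f and R_F}, \ref{lem: L^inf estimate of derivative of growth potential inner}, \ref{lem: W 1p estimate of derivative of growth potential inner}, \ref{lem: L inf norm of derivative of interaction kernel from outer to inner}, \ref{lem: W 1p norm of derivative of interaction kernel from outer to inner}, \ref{lem: estimates for the potential kernel part in interaction kernels}, the pressure estimates of Section \ref{sec: estimate for pressure in circular geometry}, and Propositions \ref{prop: Holder estimate for jump of potential and potential along outer interface}--\ref{prop: W 1p estimate for jump of potential and potential along outer interface}. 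The only slip is the intermediate formula $\g'^\perp=-(f'e_r+fe_\th)$; with $\g(\th)=f(\th)e_r$ one has $\g'=f'e_r+fe_\th$ and $\g'^\perp=f'e_\th-fe_r$, but since you only estimate $\dot W^{1,p}$-seminorms this sign/component error is immaterial and your resulting decomposition into an $e_r\cdot\na(\G*g)|_\g$ piece and a $(f'/f)\,e_\th\cdot\na(\G*g)|_\g$ piece still gives exactly the paper's bound.
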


We shall also need bounds for integrals of $\CR_h$ and $\CR_H$ on $\BT$.
\begin{lem}\label{lem: estimate for the integral of R_h and R_H}
Under the assumptions of Proposition \ref{prop: W 1p estimate for jump of potential and potential along outer interface},
\beq
\begin{split}
&\;r\left|\int_\BT \CR_h\,d\th\right|+R\left|\int_\BT \CR_H\,d\th\right|\\
\leq &\;C(\|h\|_{L^\infty}+\|H\|_{L^\infty})N_{2,p}+Cr^{2} (m_0+M_0) (1+\d R^2)^{1/2},
\end{split}
\eeq
where $C = C(\mu,\nu, p, G, \b,\b')$.
\begin{proof}
We shall again use the fact that, provided $\g'$, $\tilde{\g}'$ and $\p$ to be H\"{o}lder continuous on $\BT$, 
\beq
 (\g'\cdot \CK_{\g}\p),\;(\tilde{\g}'\cdot \CK_{\tilde{\g}}\p), \;(\g'\cdot \CK_{\g,\tilde{\g}}\p),\; (\tilde{\g}'\cdot \CK_{\tilde{\g},\g}\p)\mbox{ have integrals }0\mbox{ on }\BT.
 \label{eqn: singular integrals have mean zero}
\eeq
This is because they all can be represented as $\th$-derivatives of certain quantities as in \eqref{eqn: singular integral is derivative of something}.

Applying this fact to \eqref{eqn: def of R_F inner},
\beq
\begin{split}
\int_\BT\CR_h\,d\th = &\;\int_\BT\left(\f{1}{r}-\f{1}{f}\right)(\g'(\th)\cdot \CK_{\g}(\CR_{[\va]'}+2Ac_*f'+A\CS\phi'))\,d\th\\
&\;+\int_\BT (-e_r\cdot \na (\G* g)|_{\g}+c_*)\,d\th+\int_\BT \f{f'}{f}e_\th\cdot \na (\G* g)|_{\g}\,d\th\\
&\;+\int_\BT\left(\f{1}{r}-\f{1}{f}\right)\g'(\th)\cdot \CK_{\g,\tilde{\g}}\phi'\,d\th.
\end{split}
\eeq
By \eqref{eqn: introducing remainder in va equation}, Poincar\'{e} inequality, Lemmas \ref{lem: estimates for difference between tilde p and p_*}, \ref{lem: L^inf estimate of derivative of growth potential inner}, \ref{lem: W 1p difference from Hilbert transform}, \ref{lem: L inf norm of derivative of interaction kernel from outer to inner}, \ref{lem: estimates for the potential kernel part in interaction kernels} and \ref{lem: bound error between c and c_*}, as well as Propositions \ref{prop: Holder estimate for jump of potential and potential along outer interface} and \ref{prop: W 1p estimate for jump of potential and potential along outer interface}, we derive that
\beq
\begin{split}
\left|\int_\BT\CR_h\,d\th \right|\leq  &\; Cr^{-1}\|h\|_{L^\infty}\|\g'(\th)\cdot \CK_{\g}[\va]'\|_{\dot{W}^{1,p}}\\
&\;+C\|e_r\cdot \na (\G* g)|_{\g}-c_*\|_{L^\infty}+C\|h'\|_{L^\infty}\|e_\th\cdot \na (\G* g)|_{\g}\|_{L^\infty}\\
&\;+Cr^{-1}\|h\|_{L^\infty}(\|h'\|_{L^\infty}\|fe_r\cdot \CK_{\g,\tilde{\g}}\phi'\|_{L^\infty}+\|fe_\th\cdot \CK_{\g,\tilde{\g}}\phi'\|_{L^\infty})\\
\leq  &\; Cr^{-1}\|h\|_{L^\infty}(\| h''\|_{L^p}\|[\va]'\|_{\dot{C}^\b}+\|[\va]''\|_{L^p})\\
&\;+Cr( m_0 \d |\ln \d|+ (m_0+M_0) (\d R^2)^{1/2})\\
&\;+Cr^{-1}\|h\|_{L^\infty}(\d^{-1}(\|h\|_{L^\infty}+\|H\|_{L^\infty})\|\phi'\|_{L^\infty}+\|\phi'\|_{\dot{C}^\b})\\
\leq  &\; Cr^{-1}\|h\|_{L^\infty}N_{2,p}+Cr (m_0+M_0) (1+\d R^2)^{1/2},
\end{split}
\eeq
where $C = C(\mu,\nu, p, G, \b,\b')$.

The estimate for the $\int_\BT \CR_H$ can be derived similarly.
%
\end{proof}
\end{lem}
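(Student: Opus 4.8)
The plan is to integrate the explicit expressions \eqref{eqn: def of R_F inner}--\eqref{eqn: def of R_F outer} for $\CR_h$ and $\CR_H$ over $\BT$, exploiting systematically that every singular integral occurring there has vanishing mean. Indeed $\g'(\th)\cdot\CK_\g\p$, $\tilde{\g}'(\th)\cdot\CK_{\tilde{\g}}\p$, $\g'(\th)\cdot\CK_{\g,\tilde{\g}}\p$ and $\tilde{\g}'(\th)\cdot\CK_{\tilde{\g},\g}\p$ are all total $\th$-derivatives of periodic functions (cf.~\eqref{eqn: singular integral is derivative of something}), as is $\CH\p$; and $f'$, $F'$, $[\va]'$, $\phi'$, $\CR_{[\va]'}$, $\CR_{\phi'}$ all integrate to zero on $\BT$, by periodicity and by \eqref{eqn: introducing remainder in va equation}--\eqref{eqn: introducing remainder in phi equation}. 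Freezing the weight $1/f$ in \eqref{eqn: def of R_F inner} to the constant $1/r$ therefore annihilates the singular-integral and Hilbert-transform contributions upon integration, and a short computation, using $\na(\G*g)|_\g\cdot\g'(\th)^\perp=f'\,e_\th\cdot\na(\G*g)-f\,e_r\cdot\na(\G*g)$, reduces the identity to
\begin{align*}
\int_\BT\CR_h\,d\th=&\;\int_\BT\Bigl(\f1r-\f1f\Bigr)\g'(\th)\cdot\CK_\g\bigl(\CR_{[\va]'}+2Ac_*f'+A\CS\phi'\bigr)\,d\th+\int_\BT\Bigl(\f1r-\f1f\Bigr)\g'(\th)\cdot\CK_{\g,\tilde{\g}}\phi'\,d\th\\
&\;+\int_\BT\bigl(c_*-e_r\cdot\na(\G*g)|_\g\bigr)\,d\th+\int_\BT\f{f'}{f}\,e_\th\cdot\na(\G*g)|_\g\,d\th .
\end{align*}

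The first two integrals carry the weight $1/r-1/f$, which is $O(\|h\|_{L^\infty}/r)$ since $f=r(1+h)$ with $h$ small, so it remains to bound the other factors in $L^1(\BT)$. For $\g'\cdot\CK_\g\CR_{[\va]'}$ and $\g'\cdot\CK_\g(2Ac_*f'+A\CS\phi')$, the Poincar\'{e} inequality (these singular integrals have mean zero) reduces matters to their $\dot{W}^{1,p}(\BT)$-norms, which are controlled by Lemma \ref{lem: W 1p difference from Hilbert transform} together with Lemma \ref{lem: estimate for principle part in R_f and R_F} and Propositions \ref{prop: Holder estimate for jump of potential and potential along outer interface}--\ref{prop: W 1p estimate for jump of potential and potential along outer interface}; for $\g'\cdot\CK_{\g,\tilde{\g}}\phi'=\f{f'}{f}(fe_r\cdot\CK_{\g,\tilde{\g}}\phi')+(fe_\th\cdot\CK_{\g,\tilde{\g}}\phi')$, the $L^\infty$-bound comes from Lemma \ref{lem: L inf norm of derivative of interaction kernel from outer to inner} and Lemma \ref{lem: estimates for the potential kernel part in interaction kernels} (here $\phi'$ has mean zero, so $\|\phi'\|_{L^\infty}\le C\|\phi'\|_{\dot{C}^\b}$ and the $\bar{\phi'}$-terms drop out). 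Absorbing the $\dot{C}^\b$-norms into $\|h''\|_{L^p}$, $\|H''\|_{L^p}$ by Morrey's embedding $W^{1,p}(\BT)\hookrightarrow C^{0,1-1/p}(\BT)$ (note $\b<\f{\b'}{1+\b'}<1-\f1p$) and using the smallness hypothesis, all of these reduce to $C\,N_{2,p}$, so the two frozen-weight integrals contribute at most $Cr^{-1}\|h\|_{L^\infty}N_{2,p}$. The third integral is the genuinely new term: writing $c_*=c+(c_*-c)$ with $c=-\f{1}{2\pi r}\int_{B_r}G(\tilde{p})\,dX$, I would bound $\|e_r\cdot\na(\G*g)|_\g-c\|_{L^\infty}$ by Lemma \ref{lem: L^inf estimate of derivative of growth potential inner} applied with $g_0=G(\tilde{p})\chi_{B_r}$ --- the point being that $e_\th\cdot\na g_0=G'(\tilde{p})\,e_\th\cdot\na(\tilde{p}-p_*)$ because $p_*$ is radial, so $\|e_\th\cdot\na g_0\|_{L^2}\le C\|\na(\tilde{p}-p_*)\|_{L^2(B_R)}=O((m_0+M_0)(\d R^2)^{1/2})$ by Lemma \ref{lem: estimates for difference between tilde p and p_*} --- and $|c-c_*|$ by Lemma \ref{lem: bound error between c and c_*}, which yields $O(r(m_0+M_0)(1+\d R^2)^{1/2})$ after $\d|\ln\d|\le C$. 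The fourth integral is $O(\|h'\|_{L^\infty})$ times $\|e_\th\cdot\na(\G*g)|_\g\|_{L^\infty}$, again estimated by Lemma \ref{lem: L^inf estimate of derivative of growth potential inner}, hence of the same order. Multiplying through by $r$ gives the $\CR_h$-part of the claimed bound.

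The estimate for $\int_\BT\CR_H\,d\th$ follows by the symmetric argument, interchanging $(\g,h,r,c_*,[\va]')$ with $(\tilde{\g},H,R,\tilde{c}_*,\phi')$ and invoking the outer-interface gradient estimates of Lemma \ref{lem: L^inf estimate of derivative of growth potential outer} and Lemma \ref{lem: interaction kernel from inner to outer}; these carry a prefactor $r^2/R$ (rather than $r^2$ or $R^2$), so after multiplying by $R$ one recovers precisely the $Cr^2(m_0+M_0)(1+\d R^2)^{1/2}$ in the statement. The main difficulty I anticipate is not any single inequality but the bookkeeping: one must verify that each intermediate estimate --- stated variously in terms of $N_{1,\b}$, $\tilde{N}_{2,p}$, $\|[\va]'\|_{\dot{C}^\b}$, $\|\phi''\|_{L^p}$ and the like --- really collapses into a single multiple of $N_{2,p}$, and that the error terms lacking an $\|h\|_{L^\infty}$ or $\|H\|_{L^\infty}$ prefactor come solely from the two potential-gradient integrals, where the almost-radial geometry furnishes the needed smallness through Lemma \ref{lem: estimates for difference between tilde p and p_*}. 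The lone conceptual input is the mean-zero cancellation of the singular integrals, combined with the vanishing of the tangential gradient of a radial function.
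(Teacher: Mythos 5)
Your proposal is essentially the paper's proof: you arrive at the same four-term decomposition of $\int_\BT\CR_h\,d\th$ via the same mean-zero cancellation, freeze the weight $1/f$ to $1/r$ to produce the $O(\|h\|_{L^\infty}/r)$ factor, and then call on the same suite of lemmas (Poincar\'{e}, the growth-potential gradient estimates, the interaction-kernel $L^\infty$-bounds, and the pressure-stability Lemma \ref{lem: estimates for difference between tilde p and p_*}) to bound the remaining factors, correctly identifying the two potential-gradient integrals as the source of the term $Cr^2(m_0+M_0)(1+\d R^2)^{1/2}$ that lacks an $\|h\|_{L^\infty}$ prefactor. Two small comments. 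First, you handle $\g'\cdot\CK_\g\CR_{[\va]'}$ and $\g'\cdot\CK_\g(2Ac_*f'+A\CS\phi')$ separately via Lemma \ref{lem: estimate for principle part in R_f and R_F}; the paper instead collapses $\CR_{[\va]'}+2Ac_*f'+A\CS\phi'$ to $[\va]'$ by \eqref{eqn: introducing remainder in va equation}, which shortens the bookkeeping (both routes land in the same place). Second, the Morrey-embedding remark is a bit of a red herring and would not quite close the argument as stated: using $\|[\va]'\|_{\dot{C}^\b}\lesssim\|[\va]''\|_{L^p}\le N_{2,p}$ would leave you with $\|h''\|_{L^p}N_{2,p}$, which is quadratic in $\|h''\|_{L^p}$ and \emph{not} controlled by $N_{2,p}$, since $\|h''\|_{L^p}$ is not assumed small. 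The correct mechanism (which the paper uses) is to bound $\|[\va]'\|_{\dot{C}^\b}\le N_{1,\b}$ via Proposition \ref{prop: Holder estimate for jump of potential and potential along outer interface} and then observe that $\|h''\|_{L^p}N_{1,\b}\le CN_{2,p}$ follows directly from the structure of $N_{1,\b}$ and $N_{2,p}$ together with the smallness of $\|h'\|_{\dot{C}^\b}$, $\|H'\|_{\dot{C}^\b}$ — in particular, the factor $(1+\|h''\|_{L^p}+\|H''\|_{L^p})$ already built into $N_{2,p}$ is precisely what absorbs the extra $\|h''\|_{L^p}$.
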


\subsection{Proof of existence of local solutions}
Now we are ready to show existence of local solutions.

\begin{proof}[Proof of Theorem \ref{thm: local well-posedness}]
The proof is an application of the Schauder fixed-point theorem.

\setcounter{step}{0}
\begin{step}[Setup]
Let $\d$ be chosen according to \eqref{eqn: condition on delta}.
Also recall that $\alpha = 1-\f2p$, and $\varepsilon>0$ and $M$ are given in \eqref{eqn: smallness of fractional Sobolev norm of initial data}.
We assume $M\leq 1$. The exact smallness of $M$ will be specified later.

With $0<T\leq  \min\{1,\d M\}$ to be determined, we define
\beq
\begin{split}
X_{M,T}:=&\;\Big\{v\in L^p_{[0,T]}W^{2,p}\cap C_{[0,T]}C^{1,\alpha}(\BT):\;v_t \in L^p_{[0,T]}W^{1,p}(\BT),\\
&\;\qquad \qquad  v|_{t = 0} = 0,\;\|v\|_{C_{[0,T]}L^{\infty}(\BT)}\leq \d M,\\
&\;\qquad \qquad \|v\|_{L^p_{[0,T]}\dot{W}^{2,p}(\BT)}+\|v\|_{C_{[0,T]}\dot{C}^{1,\alpha}(\BT)}+\|v_t\|_{L^p_{[0,T]}\dot{W}^{1,p}(\BT)}\leq \d^{-\alpha+\e}M\Big\}.
\end{split}
\label{eqn: def of X_MT}
\eeq
$X_{M,T}$ is a non-empty, convex, closed subset of $\{v\in C_{[0,T]}C^{1,\alpha}(\BT):\;v_t \in L^p_{[0,T]}W^{1,p}(\BT)\}$.
Take $\alpha'\in (0,\alpha)$ to be determined.
Denote
\beq
Z: = L^\infty_{[0,T]}C^{1,\alpha'}(\BT).
\label{eqn: def of space Z}
\eeq
By Aubin-Lions Lemma \cite{temam1984navier}, the embedding
\beq
\{v\in C_{[0,T]}C^{1,\alpha}(\BT):\;v_t \in L^p_{[0,T]}W^{1,p}(\BT)\}\hookrightarrow Z
\eeq
is compact, so $X_{M,T}$ is compact in $Z$.
In what follows, we shall apply Schauder fixed-point theorem on
\beq
Y_{M,T}:=\left(e^{-\f{Ac_*}{r}t(-\D)^{1/2}}h_0-\f{c_*t}{r}+X_{M,T}\right)\times \left(e^{\f{\tilde{c}_*}{R}t(-\D)^{1/2}}H_0-\f{\tilde{c}_*t}{R}+X_{M,T}\right),
\eeq
which is a non-empty, convex, compact subset of $Z\times Z$.
\end{step}

\begin{step}[Estimates for elements in $Y_{M,T}$]\label{step: estimate of elements in the set}
%

Take $(h,H)\in Y_{M,T}$.
By the definition of $X_{M,T}$ and Lemma \ref{lem: estimate for p_*},
\beq
\|h\|_{C_{[0,T]}L^\infty(\BT)}
\leq
\left\|e^{-\f{Ac_*}{r}t(-\D)^{1/2}}h_0\right\|_{L^\infty(\BT)}+\f{|c_*| T}{r}+\d M\leq C(G)\d M.
\label{eqn: L inf bound for the proposed solution h}
\eeq
By the definition of the $\dot{W}^{2-\f1p,p}(\BT)$-seminorm in \eqref{eqn: def of W s p semi norm},
\beq
\|h\|_{L^p_{[0,T]}\dot{W}^{2,p}(\BT)}\leq \left(\f{r}{|Ac_*|}\right)^\f{1}{p}\|h_0\|_{\dot{W}^{2-\f1p,p}(\BT)}+\d^{-\alpha+\e}M\leq C(p,\mu, \nu, r/|c_*|)\d^{-\alpha+\e}M.
\label{eqn: W 2p bound for the proposed solution h}
\eeq
Moreover, $W^{2-\f1p,p}(\BT)\hookrightarrow h^{1,\alpha}(\BT)$ \cite[\S\,2.7]{triebel2010theory}, where $h^{1,\alpha}(\BT)$ is the closure of $C^\infty(\BT)$ in the $C^{1,\alpha}$-topology.
So
$e^{-\f{Ac_*}{r}t(-\D)^{1/2}}h_0$
is continuous in $t$ valued in $C^{1,\alpha}(\BT)$ and hence
\beq
\|h\|_{C_{[0,T]}\dot{C}^{1,\alpha}(\BT)}\leq \|h_0\|_{\dot{C}^{1,\alpha}(\BT)}+\d^{-\alpha+\e}M\leq C(p)\d^{-\alpha+\e}M.
\label{eqn: big C 1 alpha bound for the proposed solution h}
\eeq
Applying interpolation to \eqref{eqn: L inf bound for the proposed solution h} and \eqref{eqn: big C 1 alpha bound for the proposed solution h} yields
\beq
\|h\|_{C_{[0,T]}\dot{C}^{1,\b'}(\BT)}\leq C(G,p)\d^{1-\f{1+\alpha-\e}{1+\alpha}(1+\b')}M.
\eeq
Hence, taking
\beq
\b' = \f{\e}{1+\alpha-\e},
\label{eqn: def of beta'}
\eeq
we find that
\beq
\|h\|_{C_{[0,T]}\dot{C}^{1,\b'}(\BT)}\leq C(G,p)M.
\label{eqn: C 1 beta norm of the proposed solution h}
\eeq

Similarly,
\begin{align}
\|H\|_{C_{[0,T]}L^\infty(\BT)}
\leq&\; C(G)\d M,\label{eqn: L inf bound for the proposed solution H big}\\
\|H\|_{L^p_{[0,T]}\dot{W}^{2,p}(\BT)}\leq &\; C(p, R/|\tilde{c}_*|)\d^{-\alpha+\e}M,
\label{eqn: W 2p bound for the proposed solution H big}\\
\|H\|_{C_{[0,T]}\dot{C}^{1,\alpha}(\BT)}\leq &\; C(p)\d^{-\alpha+\e}M,
\label{eqn: big C 1 alpha bound for the proposed solution H big}
\end{align}
and, with the same $\b'$ as above, 
\beq
\|H\|_{C_{[0,T]}\dot{C}^{1,\b'}(\BT)}\leq C(G,p)M.
\label{eqn: C 1 beta norm of the proposed solution H big}
\eeq
In what follows, we shall assume $M$ to be suitably small, which depends on $p$ and $G$, so that \eqref{eqn: L inf bound for the proposed solution h}, \eqref{eqn: big C 1 alpha bound for the proposed solution h},
\eqref{eqn: C 1 beta norm of the proposed solution h}, \eqref{eqn: L inf bound for the proposed solution H big}, \eqref{eqn: big C 1 alpha bound for the proposed solution H big}
and \eqref{eqn: C 1 beta norm of the proposed solution H big} implies that for $(h,H)\in Y_{M,T}$,
\beq
\sup_{t\in [0,T]}(m_{1,\alpha}+M_{1,\alpha}+\|h'\|_{\dot{C}^{\b'}}+\|H'\|_{\dot{C}^{\b'}})\leq C(G,p)M\ll 1.
\label{eqn: L inf in time bound of Holder norm for proposed solution}
\eeq

\end{step}

\begin{step}[Construction of a map on $Y_{M,T}$]
Inspired by \eqref{eqn: backbone equation for h} and \eqref{eqn: backbone equation for H big}, for given $(h,H)\in Y_{M,T}$, we let $(h_\dag, H_\dag)$ solve
\begin{align}
\pa_t h_\dag=&\;-\f{Ac_*}{r}(-\D)^{1/2}h_\dag+ \f{1}{r}\tilde{\CR}_h, \quad h_\dag|_{t = 0} = 0,\label{eqn: equation for h dag}\\
\pa_t H_\dag=&\;\f{\tilde{c}_*}{R}(-\D)^{1/2}H_\dag +\f{1}{R}\tilde{\CR}_H,\quad H_\dag|_{t = 0} = 0.\label{eqn: equation for H big dag}
\end{align}
Recall that $\tilde{\CR}_h$ and $\tilde{\CR}_H$ are defined in \eqref{eqn: def of remainder tilde terms in h and H equations}, which are uniquely determined by $(h,H)$ via \eqref{eqn: equation for derivative of jump of potential final form} (c.f., Proposition \ref{prop: W 1p estimate for jump of potential and potential along outer interface}), \eqref{eqn: equation for derivative of potential on outer interface}, \eqref{eqn: def of R_F inner} and \eqref{eqn: def of R_F outer}.
Then let
%
\beq
(\tilde{h}, \tilde{H}) = \left(e^{-\f{Ac_*}{r}t(-\D)^{1/2}}h_0-\f{c_*t}{r}+h_\dag, \; e^{\f{\tilde{c}_*}{R}t(-\D)^{1/2}}H_0-\f{\tilde{c}_*t}{R}+H_\dag\right).
\label{eqn: decomposition of solution}
\eeq
A fixed-point of the map $\CT:\,(h,H)\mapsto (\tilde{h}, \tilde{H})$ is then a solution of \eqref{eqn: backbone equation for h} and \eqref{eqn: backbone equation for H big}.

We shall show that $\CT$ is continuous from $Y_{M,T}$ to itself in the topology of $Z\times Z$ and then apply Schauder fixed-point theorem.
It suffices to prove that:
\begin{itemize}
  \item the map $\CT':\,(h,H)\mapsto (h_\dag, H_\dag)$ is well-defined as a continuous function on $Y_{M,T}$ in the topology of $Z\times Z$, and
  \item $(h_\dag,H_\dag)\in X_{M,T}\times X_{M,T}$ for properly chosen $M$ and $T$.
\end{itemize}

\end{step}

\begin{step}[Continuity of $\CT'$]
\label{step: continuity of the map}
We choose $\alpha'<\alpha''<\min\{\f14,\alpha\}$.
By \eqref{eqn: backbone equation for h} and \eqref{eqn: backbone equation for H big},
\beq
(\tilde{\CR}_h, \tilde{\CR}_H) = (r\pa_t h+c_*+Ac_*(-\D)^{1/2}h, R\pa_t H+\tilde{c}_*-\tilde{c}_*(-\D)^{1/2}H).
\eeq
%
By \eqref{eqn: L inf in time bound of Holder norm for proposed solution} and Lemma \ref{lem: stability of the interface velocities}, provided that $M \ll 1$ which depends on $p$, $G$ and $\alpha''$, for any pair $(h_1,H_1),(h_2,H_2)\in Y_{M,T}$,
\beq
\begin{split}
&\;\|\tilde{\CR}_{h_1}-\tilde{\CR}_{h_2}\|_{L^\infty_{[0,T]}C^{\alpha''}(\BT)} +\|\tilde{\CR}_{H_1}-\tilde{\CR}_{H_2}\|_{L^\infty_{[0,T]}C^{\alpha''}(\BT)}\\
\leq &\;C(\alpha'',\mu,\nu,r,R,G)\cdot d_{\alpha''}((h_1,H_1),(h_2,H_2)), 
\end{split}
\label{eqn: Lipschitz continuity of remainder term}
\eeq
where
\beq
d_{\alpha''}((h_1,H_1),(h_2,H_2)) : =\|h_1-h_2\|_{L^\infty_{[0,T]}C^{1,\alpha''}(\BT)}+\|H_1-H_2\|_{L^\infty_{[0,T]}C^{1,\alpha''}(\BT)}.
\label{eqn: def of metric of Z times Z}
\eeq
We abbreviate it as $d_{\alpha''}$ if it incurs no confusion.
By taking $h_2 = H_2 = 0$ in \eqref{eqn: Lipschitz continuity of remainder term} which corresponds to $\tilde{\CR}_{h_2} = \tilde{\CR}_{H_2} =0$, we show that $\tilde{\CR}_{h_1}, \tilde{\CR}_{H_1} \in L^\infty_{[0,T]}C^{\alpha''}(\BT)$; so are $\tilde{\CR}_{h_2}$ and $\tilde{\CR}_{H_2}$.
Following a similar argument, we may apply Lemma \ref{lem: stability of the interface velocities} to different time slices of $h_i$ and $H_i$, and use the time continuity $h_i, H_i \in C_{[0,T]}C^{1,\alpha''}(\BT)$ to prove  $\tilde{\CR}_{h_i}, \tilde{\CR}_{H_i} \in C_{[0,T]}C^{\alpha''}(\BT)$.

Let $(h_{i,\dag},H_{i,\dag})$ $(i = 1,2)$ be the unique solution of \eqref{eqn: equation for h dag} and \eqref{eqn: equation for H big dag} in $Z\times Z$ corresponding to $(h_i,H_i)\in Y_{M,T}$.
By Lemma \ref{lem: Holder bound for the fractional heat equation} and \eqref{eqn: Lipschitz continuity of remainder term},
\beq
\|h_{1,\dag}-h_{2,\dag}\|_{C_{[0,T]}\dot{C}^{1,\alpha'}(\BT)}
\leq  C(\alpha',\alpha'', \mu,\nu,r, R, G)\cdot d_{\alpha''}.
%
\label{eqn: bound C 1 alpha semi norm of difference of two new solutions}
\eeq
%
On the other hand, let 
$\bar{h}_{i,\dag} = \f{1}{2\pi}\int_\BT h_{i,\dag}\,d\th$.
By \eqref{eqn: equation for h dag} and \eqref{eqn: Lipschitz continuity of remainder term},
\beq
\| \bar{h}_{1,\dag}-\bar{h}_{2,\dag}\|_{C_{[0,T]}L^\infty(\BT)}\leq Cr^{-1}\|\tilde{\CR}_{h_1}-\tilde{\CR}_{h_2}\|_{C_{[0,T]}C^{\alpha''}(\BT)}\leq C(\alpha'',\mu,\nu,r,R,G)\cdot d_{\alpha''}.
\eeq
Combining this with \eqref{eqn: bound C 1 alpha semi norm of difference of two new solutions}, we use interpolation as well as \eqref{eqn: L inf bound for the proposed solution h}, \eqref{eqn: big C 1 alpha bound for the proposed solution h}, \eqref{eqn: L inf bound for the proposed solution H big} and \eqref{eqn: big C 1 alpha bound for the proposed solution H big} to derive that
\beq
\begin{split}
\|h_{1,\dag}-h_{2,\dag}\|_{C_{[0,T]}C^{1,\alpha'}(\BT)}
\leq &\; C(\alpha',\alpha'',\mu,\nu,r,R,G)\cdot d_{\alpha'}^{\th}d_{\alpha}^{1-\th}\\
\leq &\; C(\alpha',\alpha'',p,\mu,\nu,r,R,G)\cdot d_{\alpha'}^{\th},
\end{split}
\eeq
where $\th = \f{\alpha-\alpha''}{\alpha-\alpha'}$.
Similarly, $\|H_{1,\dag}-H_{2,\dag}\|_{C_{[0,T]}C^{1,\alpha'}(\BT)}$ enjoys the same bound.
This proves (H\"{o}lder) continuity of $\CT'$ in $Y_{M,T}$ in the topology of $Z\times Z$.

In fact, if one improves Lemma \ref{lem: Holder bound for the fractional heat equation}, it can be shown that $\CT'$ is log-Lipschitz continuous in $Y_{M,T}$ in the topology of $Z\times Z$.
We omit the details although it may be of independent interest.

\end{step}

\begin{step}[Justification of $(h_\dag,H_\dag)\in X_{M,T}\times X_{M,T}$]
\label{step: justification image is still in the set}
Let $\b'$ be taken as before, and let $\b=\f{\b'}{4}<\f{\b'}{1+\b'}$.
It is not difficult to show that
\beq
\|\CH\CS\p'\|_{\dot{W}^{1,p}}\leq C\|\CS\p'\|_{\dot{W}^{1,p}}\leq C\d^{\b-1+\f1p}\|\p'\|_{\dot{C}^\b}.
\eeq
Combining with Lemma \ref{lem: W 1p estimate for remainder terms},
\beq
\|\tilde{\CR}_h\|_{\dot{W}^{1,p}}+\|\tilde{\CR}_H\|_{\dot{W}^{1,p}}\leq Cr^{-1}(\tilde{N}_{2,p}+\d^{\b-1+\f1p}N_{1,\b}),
\label{eqn: spatial estimate for the source terms in the fixed point argument}
\eeq
Then we derive by Lemma \ref{lem: estimate for p_*}, Proposition \ref{prop: Holder estimate for jump of potential and potential along outer interface}, Lemma \ref{lem: estimate for principle part in R_f and R_F}, \eqref{eqn: W 2p bound for the proposed solution h}, \eqref{eqn: W 2p bound for the proposed solution H big} and \eqref{eqn: L inf in time bound of Holder norm for proposed solution} that
\beq
\begin{split}
&\;\|r^{-1}\tilde{\CR}_{h}\|_{L^p_{[0,T]}\dot{W}^{1,p}}+\|R^{-1}\tilde{\CR}_{H}\|_{L^p_{[0,T]}\dot{W}^{1,p}}\\
\leq
&\;C|c_*|r^{-1}(\|h''\|_{L^p_{[0,T]}L^p}+\|H''\|_{L^p_{[0,T]}L^p})\sup_{t\in [0,T]}(\|h'\|_{\dot{C}^\beta}+\|H'\|_{\dot{C}^\beta})\\
&\;+C(T^{1/p}+\|h''\|_{L^p_{[0,T]}L^p}+\|H''\|_{L^p_{[0,T]}L^p}) \sup_{t\in [0,T]}(\d^{\b}\|h'\|_{\dot{C}^\b}+(m_0+M_0) (1+\d R^2)^{1/2})\\
&\;+C\d^{\b-1+\f1p}T^{1/p}|c_*|r^{-1}\sup_{t\in [0,T]}(\|h'\|_{\dot{C}^\beta}+\|H'\|_{\dot{C}^\beta})\\
&\;+C\d^{\b-1+\f1p}T^{1/p} \sup_{t\in [0,T]}(\d^{\b}\|h'\|_{\dot{C}^\b}+(m_0+M_0) (1+\d R^2)^{1/2})\\
\leq
&\;C \d^{-\alpha+\e}M^2(1+\d R^2)^{1/2}+C\d^{\b-1+\f1p}T^{1/p} M(1+\d R^2)^{1/2},
\end{split}
\label{eqn: derivation of space-time estimate for the source terms in the fixed point argument}
\eeq
where $C = C(p,\e,\mu,\nu,R/|\tilde{c}_*|, G)$.
Here we rewrote the $\b$- and $\b'$-dependence into dependence on $p$ and $\e$, and used the fact that $r/|c_*|\leq R/|\tilde{c}_*|$.
In particular, $C$ does not deteriorate as $\d$ becomes smaller.
Hence, 
\beq
\|r^{-1}\tilde{\CR}_{h}\|_{L^p_{[0,T]}\dot{W}^{1,p}}+\|R^{-1}\tilde{\CR}_{H}\|_{L^p_{[0,T]}\dot{W}^{1,p}}
\leq
C (\d^{\b-1+\f1p}T^{1/p}+\d^{-\alpha+\e}M) M,
\label{eqn: space-time estimate for the source terms in the fixed point argument}
\eeq
where $C = C(p,\e,\mu,\nu,R/|\tilde{c}_*|,G, \d R^2)$. 

To this end, applying Lemma \ref{lem: L p regularity theory of fractional heat equation} and Lemma \ref{lem: spatial Holder estimate for solution with Sobolev source term} to \eqref{eqn: equation for h dag} and \eqref{eqn: equation for H big dag}, we obtain that
\beq
\begin{split}
&\;\|h_\dag\|_{L^p_{[0,T]}\dot{W}^{2,p}}+\|\pa_t h_\dag\|_{L^p_{[0,T]}\dot{W}^{1,p}}+\|h_\dag\|_{C_{[0,T]}\dot{C}^{1,\alpha}}\\
&\;+\|H_\dag\|_{L^p_{[0,T]}\dot{W}^{2,p}}+\|\pa_t H_\dag\|_{L^p_{[0,T]}\dot{W}^{1,p}}+\|H_\dag\|_{C_{[0,T]}\dot{C}^{1,\alpha}}\\
\leq &\;C (\d^{\b-1+\f1p}T^{1/p}+\d^{-\alpha+\e}M) M,
\end{split}
\label{eqn: sobolev bound for new solution}
\eeq
%
Here the universal constant $C$ has the same dependence as above.
Now we take
\begin{align}
M\leq &\; M_*(p,\e,\mu,\nu,R/|\tilde{c}_*|,G, \d R^2 
)\ll 1,\label{eqn: def of M_*}\\
T\leq &\;T_*(\d, p,\e,\mu,\nu,R/|\tilde{c}_*|,G, \d R^2 
)\ll 1,\label{eqn: upper bound for T}
\end{align}
so that \eqref{eqn: sobolev bound for new solution} becomes
\beq
\begin{split}
&\;\|h_\dag\|_{L^p_{[0,T]}\dot{W}^{2,p}}+\|\pa_t h_\dag\|_{L^p_{[0,T]}\dot{W}^{1,p}}+\|h_\dag\|_{C_{[0,T]}\dot{C}^{1,\alpha}}\\
&\; +\|H_\dag\|_{L^p_{[0,T]}\dot{W}^{2,p}}+\|\pa_t H_\dag\|_{L^p_{[0,T]}\dot{W}^{1,p}}+\|H_\dag\|_{C_{[0,T]}\dot{C}^{1,\alpha}}\\
\leq &\;\d^{-\alpha+\e}M.
\end{split}
\label{eqn: bound for the new solutions final}
\eeq
Note that the smallness needed for $M$ will not be more stringent as $\d$ becomes smaller.

Finally, we show $(h_\dag,H_\dag)$ satisfies the $C_{[0,T]}L^\infty(\BT)$-bound in the definition \eqref{eqn: def of X_MT} of $X_{M,T}$.
By Lemma \ref{lem: estimate for the integral of R_h and R_H}, Sobolev inequality and \eqref{eqn: spatial estimate for the source terms in the fixed point argument},
\beq
\begin{split}
&\;\|r^{-1}\tilde{\CR}_{h}\|_{L^\infty}+\|R^{-1}\tilde{\CR}_{H}\|_{L^\infty}\\
\leq
&\;Cr^{-2}(\|h\|_{L^\infty}+\|H\|_{L^\infty})N_{2,p}+C (m_0+M_0) (1+\d R^2)^{1/2}\\
&\;+Cr^{-2}(\tilde{N}_{2,p}+\d^{\b-1+\f1p}N_{1,\b})\\
\leq
&\;Cr^{-2}(\tilde{N}_{2,p}+\d^{\b-1+\f1p}N_{1,\b}).
\end{split}
\eeq
Following \eqref{eqn: derivation of space-time estimate for the source terms in the fixed point argument} and \eqref{eqn: space-time estimate for the source terms in the fixed point argument},
\beq
\|r^{-1}\tilde{\CR}_{h}\|_{L^1_{[0,T]}L^\infty} +\|R^{-1}\tilde{\CR}_{H}\|_{L^1_{[0,T]}L^\infty}
\leq
C T^{1-\f1p}(\d^{\b-1+\f1p}T^{1/p}+\d^{-\alpha+\e}M) M.
\eeq
Combining this with \eqref{eqn: equation for h dag} and \eqref{eqn: equation for H big dag}, we use the fact $\|e^{-t(-\D)^{1/2}}\|_{L^\infty\to L^\infty}\leq 1$ to obtain that
%
\beq
\|h_\dag\|_{C_{[0,T]}L^\infty(\BT)}+\|H_\dag\|_{C_{[0,T]}L^\infty(\BT)}\leq C T^{1-\f1p}(\d^{\b-1+\f1p}T^{1/p}+\d^{-\alpha+\e}M) M.
\label{eqn: L inf bound for part of local solution}
\eeq
where $C = C(p,\e,\mu,\nu,R/|\tilde{c}_*|,G, \d R^2 
)$.
Take $T_*$ in \eqref{eqn: upper bound for T} even smaller if necessary, 
so that the required $C_{[0,T]}L^\infty(\BT)$-bound for $(h_\dag,H_\dag)$ in \eqref{eqn: def of X_MT} is achieved.

This shows that $\CT'$ has its image $(h_\dag,H_\dag)$ in $X_{M,T}\times X_{M,T}$.

\end{step}

\begin{step}[Existence and estimates]
By Schauder fixed-point theorem, the map $\CT$ has a fixed-point $(h,H)\in Y_{M,T}$,
which is a mild solution of \eqref{eqn: backbone equation for h} and \eqref{eqn: backbone equation for H big}.
Moreover, the pointwise well-definedness of $\pa_t h$ and $\pa_t H$ has been readily shown in Step \ref{step: continuity of the map}, as they are at least in $C_{[0,T]}C^{\alpha''}(\BT)$, where $\alpha''<\min\{\f14,\alpha\}$ is arbitrary.
Therefore, $(h,H)$ is a strong solution of \eqref{eqn: backbone equation for h} and \eqref{eqn: backbone equation for H big}.

Estimates for $h$ and $H$ follow from \eqref{eqn: L inf bound for the proposed solution h}-\eqref{eqn: big C 1 alpha bound for the proposed solution h} and \eqref{eqn: L inf bound for the proposed solution H big}-\eqref{eqn: big C 1 alpha bound for the proposed solution H big}.
For $\pa_t h$ and $\pa_t H$, we derive by \eqref{eqn: decomposition of solution}, \eqref{eqn: bound for the new solutions final} and the definition of $W^{2-\f1p,p}(\BT)$-space \eqref{eqn: def of W s p semi norm},
\beq
\begin{split}
\|\pa_t h\|_{L^p_{[0,T]}\dot{W}^{1,p}}
\leq &\;\|\pa_t e^{-\f{Ac_*}{r}t(-\D)^{1/2}}h_0\|_{L^p_{[0,T]}\dot{W}^{1,p}}+\|\pa_t h_\dag\|_{L^p_{[0,T]}\dot{W}^{1,p}}\\
\leq &\; C(\mu, \nu,p, G)\|h_0\|_{\dot{W}^{2-\f1p,p}}+\d^{-\alpha+\e}M,
\end{split}
\eeq
and similarly,
\beq
\|\pa_t H\|_{L^p_{[0,T]}\dot{W}^{1,p}}
\leq  C(\mu, \nu,p, G)\|H_0\|_{\dot{W}^{2-\f1p,p}}+\d^{-\alpha+\e}M.
\eeq
\end{step}
\end{proof}

\subsection{Continuation of the local solutions}
A local solution can be extended to longer time intervals as long as $f(T)$ and $F(T)$ still satisfy the smallness assumption \eqref{eqn: smallness of fractional Sobolev norm of initial data} on the initial data.
We start with the following lemma that links estimates for $f(T)$ and $F(T)$ when they are treated as new initial datum, with the estimates for $f_0$ and $F_0$.

\begin{lem}\label{lem: growth of local solution after one stage}
Under the assumptions of Theorem \ref{thm: local well-posedness} with $M_*$ suitably small, let
$f$ and $F$ be a local solution over $[0,T]$.
Define $f_1(\th) = f(\th,T)$ and $F_1(\th) = F(\th,T)$.
Let
\beq
r_{1} := \f{1}{2\pi}\int_\BT f_1(\th)\,d\th,\quad R_{1} := \f{1}{2\pi}\int_\BT F_1(\th)\,d\th,
\eeq
and according to \eqref{eqn: def of h and H},
\beq
h_{1}(\th) := \f{f_1}{r_1}-1,\quad H_{1}(\th) := \f{F_1}{R_1}-1.
\eeq
Let
\beq
\d_1 = \f{1-\f{r_1}{R_1}}{1-\f{r}{R}}\cdot \d.
\eeq
Then $r_1$, $R_1$ and $\d_1$ satisfy \eqref{eqn: condition on delta}.
Moreover, with some universal constant $\tilde{C} = \tilde{C}(p,\e,G)$,
\beq
\d_1^{-1}(\|h_1\|_{L^\infty(\BT)}+\|H_1\|_{L^\infty(\BT)}) +\d_1^{\alpha-\e}\left(\|h_1\|_{\dot{W}^{2-\f1p,p}(\BT)}+\|H_1\|_{\dot{W}^{2-\f1p,p}(\BT)}\right)
\leq \tilde{C}(p,\e,G)M,
\label{eqn: estimate for new initial data}
\eeq
where $M$ is defined in \eqref{eqn: smallness of fractional Sobolev norm of initial data} with $h_0$, $H_0$ and $\d$.

\begin{proof}
That $r_1$, $R_1$ and $\d_1$ satisfy \eqref{eqn: condition on delta} is obvious since $r$, $R$ and $\d$ satisfy \eqref{eqn: condition on delta}.

To show \eqref{eqn: estimate for new initial data}, we first study $h(T)$ and $H(T)$.
Note that \eqref{eqn: bounds for the local solution} readily provides 
\beq
\d^{-1}(\|h(T)\|_{L^\infty}+\|H(T)\|_{L^\infty})\leq
C(p,G) M.
\label{eqn: L^inf bound at the terminal time}
\eeq
A bound for $W^{2-\f1p,p}$-seminorm of $h(T)$ and $H(T)$ may be derived as follows.
Denote $h_* = h-e^{-\f{Ac_*}{r}t(-\D)^{1/2}}h_0$ and $H_* = H-e^{\f{\tilde{c}_*}{R}t(-\D)^{1/2}}H_0$.
By \eqref{eqn: decomposition of solution} and \eqref{eqn: bound for the new solutions final}, they satisfy
\beq
\pa_{x} h_*, \pa_{x} H_* \in  W^{1,p}([0,T]\times \BT)
\eeq
and
\beq
h_*|_{t = 0} = H_*|_{t = 0} = \pa_x h_*|_{t = 0} = \pa_x H_*|_{t = 0} = 0.
\eeq
We make zero extension of $h_*$ and $H_*$ to the region $t<0$ while still denote the extension to be $h_*$ and $H_*$.
Then the above properties imply that $\pa_x h_*,\pa_x H_*\in W^{1,p}((-\infty,T]\times \BT)$.
By trace theorem (see e.g., \cite[\S 2.7.2]{triebel2010theory}) and \eqref{eqn: bound for the new solutions final},
\beq
\begin{split}
&\;\|\pa_x h_*(T)\|_{\dot{W}^{1-\f1p,p}(\BT)}+\|\pa_x H_*(T)\|_{\dot{W}^{1-\f1p,p}(\BT)}\\
\leq &\;C\left(\|\pa_x h_*\|_{\dot{W}^{1,p}((-\infty,T]\times \BT)}
+\|\pa_x H_*\|_{\dot{W}^{1,p}((-\infty,T]\times \BT)}\right)\\
\leq &\;C\left(\|h_*\|_{L^p_{[0,T]}\dot{W}^{2,p}(\BT)}
+\|H_*\|_{L^p_{[0,T]}\dot{W}^{2,p}(\BT)}\right)\\
&\;+C\left(\|\pa_t h_*\|_{L^p_{[0,T]}\dot{W}^{1,p}(\BT)}
+\|\pa_t H_*\|_{L^p_{[0,T]}\dot{W}^{1,p}(\BT)}\right)\\
\leq &\;C\d^{-\alpha+\e}M.
\end{split}
\label{eqn: trace bound for inhomogeneous part of the local solution}
\eeq
It is noteworthy that the constants $C$ may only depend on $p$ but not on $T$.
On the other hand, by the definition \eqref{eqn: def of W s p semi norm} of the $W^{2-\f1p,p}(\BT)$-seminorm,
\beq
\begin{split}
&\;\|e^{-\f{Ac_*}{r}t(-\D)^{1/2}}h_0(T)\|_{\dot{W}^{2-\f1p,p}(\BT)}+\|e^{-\f{\tilde{c}_*}{R}t(-\D)^{1/2}}H_0(T)\|_{\dot{W}^{2-\f1p,p}(\BT)}\\
\leq &\;\|h_0\|_{\dot{W}^{2-\f1p,p}(\BT)}+\|H_0\|_{\dot{W}^{2-\f1p,p}(\BT)}.
\end{split}
\eeq
Combining this with \eqref{eqn: smallness of fractional Sobolev norm of initial data} and \eqref{eqn: trace bound for inhomogeneous part of the local solution}, we conclude that
\beq
\|h(T)\|_{\dot{W}^{2-\f1p,p}(\BT)}+\|H(T)\|_{\dot{W}^{2-\f1p,p}(\BT)}\leq C(p)\d^{-\alpha+\e}M.
\label{eqn: fractional Sobolev bound at the terminal time}
\eeq

Thanks to \eqref{eqn: L^inf bound at the terminal time} and the way $r_1$ and $R_1$ are defined
\beq
\left|\f{r_1}{r}-1\right|+\left|\f{R_1}{R}-1\right|\leq C(p,G)\d M.
\eeq
Assume $M_*$ is already small enough, depending on $p$ and $G$, to guarantee that the right hand side of the above inequality is sufficiently small and that 
\beq
c_1\d \leq \d_1 \leq c_2\d
\eeq
for some universal $0<c_1<1<c_2$.
Hence,
\beq
\d_1^{-1}(\|h_1\|_{L^\infty(\BT)}+\|H_1\|_{L^\infty(\BT)})\leq C\d^{-1}(\|h(T)\|_{L^\infty(\BT)}+\|H(T)\|_{L^\infty(\BT)})+C(G)M,
\eeq
and
\beq
\begin{split}
&\;\d_1^{\alpha-\e}\left(\|h_1\|_{\dot{W}^{2-\f1p,p}(\BT)}+\|H_1\|_{\dot{W}^{2-\f1p,p}(\BT)}\right)\\
\leq &\; C(p,\e)\d^{\alpha-\e}\left(\|h(T)\|_{\dot{W}^{2-\f1p,p}(\BT)}+\|H(T)\|_{\dot{W}^{2-\f1p,p}(\BT)}\right).
\end{split}
\eeq
They combined with \eqref{eqn: L^inf bound at the terminal time} and \eqref{eqn: fractional Sobolev bound at the terminal time} imply \eqref{eqn: estimate for new initial data}. 
%
%
%
\end{proof}
\end{lem}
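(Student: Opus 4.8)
The plan is to transfer the estimates for $h(\cdot,T)$ and $H(\cdot,T)$ provided by Theorem~\ref{thm: local well-posedness} onto the renormalized quantities $h_1,H_1$, using that replacing the base radii $(r,R)$ by $(r_1,R_1)$ is only an $O(M)$ perturbation once $M_*$ is small. The claim that $(r_1,R_1,\d_1)$ satisfies \eqref{eqn: condition on delta} requires no work: since $(r,R,\d)$ satisfies it, $\f{1-r/R}{100}\le\d\le\f{1-r/R}{10}$, and multiplying through by $\f{1-r_1/R_1}{1-r/R}$ gives precisely $\f{1-r_1/R_1}{100}\le\d_1\le\f{1-r_1/R_1}{10}$.

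First I would estimate $h(T)$ and $H(T)$ in the two norms on the left of \eqref{eqn: estimate for new initial data}. The bound $\d^{-1}\bigl(\|h(T)\|_{L^\infty}+\|H(T)\|_{L^\infty}\bigr)\le C(p,G)M$ is read off directly from \eqref{eqn: bounds for the local solution}. For the $\dot{W}^{2-\f1p,p}(\BT)$-seminorm I would use the decomposition \eqref{eqn: decomposition of solution}, $h=e^{-\f{Ac_*}{r}t(-\D)^{1/2}}h_0-\f{c_*t}{r}+h_\dag$. The term $-\f{c_*t}{r}$ is constant in $\th$, so it drops out of any spatial seminorm; since $0<\mu<\nu$ forces $A<0$ and $c_*<0$, we have $\f{Ac_*}{r}>0$, so the exponential factor is a Poisson semigroup and hence a contraction on $\dot{W}^{2-\f1p,p}(\BT)$ by the definition \eqref{eqn: def of W s p semi norm}, contributing at most $\|h_0\|_{\dot{W}^{2-\f1p,p}}$; and for $h_\dag$ I would extend it by zero across $t=0$, use $h_\dag|_{t=0}=\pa_\th h_\dag|_{t=0}=0$ together with \eqref{eqn: bound for the new solutions final} to place $\pa_\th h_\dag\in W^{1,p}\bigl((-\infty,T]\times\BT\bigr)$, and apply the trace theorem (\cite[\S\,2.7.2]{triebel2010theory}) to obtain $\pa_\th h_\dag(T)\in\dot{W}^{1-\f1p,p}(\BT)$ with a bound $C\bigl(\|h_\dag\|_{L^p_{[0,T]}\dot{W}^{2,p}}+\|\pa_t h_\dag\|_{L^p_{[0,T]}\dot{W}^{1,p}}\bigr)\le C\d^{-\alpha+\e}M$ in which $C$ is \emph{independent of $T$}. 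Together with the analogous bound for $H$, this gives $\|h(T)\|_{\dot{W}^{2-\f1p,p}}+\|H(T)\|_{\dot{W}^{2-\f1p,p}}\le C(p)\d^{-\alpha+\e}M$.

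Then I would push these estimates to $(h_1,H_1,\d_1)$. Since $f(\cdot,T)=r\bigl(1+h(T)\bigr)$, we have $\f{r_1}{r}-1=\f{1}{2\pi}\int_\BT h(T)\,d\th$, whence $\bigl|\f{r_1}{r}-1\bigr|\le\|h(T)\|_{L^\infty}\le C(p,G)\d M$ and likewise $\bigl|\f{R_1}{R}-1\bigr|\le C(p,G)\d M$; consequently $\bigl|(1-r_1/R_1)-(1-r/R)\bigr|$ is $O(\d M)$, and dividing by $1-r/R\ge10\d$ gives $\f{1-r_1/R_1}{1-r/R}=1+O(M)$, hence $c_1\d\le\d_1\le c_2\d$ for universal $c_1,c_2$ once $M_*$ is small. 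Writing $h_1=\f{r}{r_1}\bigl(1+h(T)\bigr)-1=\bigl(\f{r}{r_1}-1\bigr)+\f{r}{r_1}h(T)$, the constant shift drops out of the seminorm, so $\|h_1\|_{\dot{W}^{2-\f1p,p}}=\f{r}{r_1}\|h(T)\|_{\dot{W}^{2-\f1p,p}}\le2\|h(T)\|_{\dot{W}^{2-\f1p,p}}$, while $\|h_1\|_{L^\infty}\le\bigl|\f{r}{r_1}-1\bigr|+\f{r}{r_1}\|h(T)\|_{L^\infty}\le C\|h(T)\|_{L^\infty}$, and the same bounds hold for $H_1$. Substituting these into $\d_1^{-1}\bigl(\|h_1\|_{L^\infty}+\|H_1\|_{L^\infty}\bigr)+\d_1^{\alpha-\e}\bigl(\|h_1\|_{\dot{W}^{2-\f1p,p}}+\|H_1\|_{\dot{W}^{2-\f1p,p}}\bigr)$ and using $\d_1\sim\d$ along with the two bounds on $h(T),H(T)$ from the previous step yields a right-hand side $\le\tilde{C}(p,\e,G)M$, which is exactly \eqref{eqn: estimate for new initial data}.

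The step I expect to be the main obstacle is the trace argument. One must verify that the zero extension of $\pa_\th h_\dag$ across $t=0$ genuinely lies in $W^{1,p}$ of the space--time slab — this is precisely where the vanishing of $h_\dag$ and $\pa_\th h_\dag$ at $t=0$ is used, so that no spurious distributional time-derivative is produced at $t=0$ — and, more importantly, that the resulting trace bound at $t=T$ carries a constant that does not degenerate as $T\to0$. This uniformity in $T$ is exactly what makes \eqref{eqn: estimate for new initial data} iterable, as required for Corollary~\ref{cor: long time small solution}.
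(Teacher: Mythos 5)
Your proposal is correct and follows essentially the same route as the paper's proof: you read the $L^\infty$ bound off \eqref{eqn: bounds for the local solution}, handle the $\dot{W}^{2-\f1p,p}$-seminorm by splitting $h$ into the Poisson-semigroup evolution of $h_0$ (a contraction by \eqref{eqn: def of W s p semi norm}) plus a remainder that vanishes together with its spatial derivative at $t=0$, extend by zero and apply the $T$-uniform trace theorem, and finally absorb the $(r,R)\mapsto(r_1,R_1)$ rescaling using that $\d_1\sim\d$ once $M_*$ is small. The only cosmetic difference is that you work with $h_\dag$ directly while the paper uses $h_*=h_\dag-\f{c_*t}{r}$; since the two differ by a spatially constant function, the trace and seminorm estimates coincide, so the arguments are identical.
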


\begin{proof}[Proof of Corollary \ref{cor: long time small solution}]
We would like to construct a local solution over $[0,\tilde{T}]$ by making successive continuations.

\setcounter{step}{0}
\begin{step}[Setup]
We can always start with $f_0$ and $F_0$ satisfying the smallness condition of Theorem \ref{thm: local well-posedness}.
To make the notations more systematic, we rewrite $r$, $R$ and $\d$ in Theorem \ref{thm: local well-posedness} as $r_0$, $R_0$ and $\d_0$, respectively.
Let $h_0$ and $H_0$ be defined as in \eqref{eqn: def of h and H}.
Since
\beq
M^0:= \d_0^{-1}(\|h_0\|_{L^\infty(\BT)}+\|H_0\|_{L^\infty(\BT)})
+
\d_0^{\alpha - \e}\left(\|h_0\|_{\dot{W}^{2-\f1p,p}(\BT)}+\|H_0\|_{\dot{W}^{2-\f1p,p}(\BT)}\right)\leq M_{*,0},
\label{eqn: smallness condition for initial data}
\eeq
where according to \eqref{eqn: def of M_*},
\beq
M_{*,0} := M_*(p,\e,\mu,\nu,R_0/|\tilde{c}_*(r_0,R_0)|,G, \d_0 R_0^2 ),
\eeq
by Theorem \ref{thm: local well-posedness}, there exists a solution $(f^0, F^0)$ on $[0,t_0]$, where by \eqref{eqn: upper bound for T},
\beq
t_0 \leq T_*(\d_0, p,\e,\mu,\nu,R_0/|\tilde{c}_*(r_0,R_0)|,G, \d_0 R_0^2).
\eeq
Define $T_0 = t_0$.

Suppose we have obtained a solution on $[0,T_{k-1}]$ for some $k\in \mathbb{Z}_+$.
We define
\beq
f_{k} = f(T_{k-1}),\quad F_{k}(t= 0) = F(T_{k-1}),
\eeq
\beq
r_k = \f{1}{2\pi} \int_{\BT} f_{k}(\th)\,d\th,\quad
R_k = \f{1}{2\pi} \int_{\BT} F_{k}(\th)\,d\th.
\eeq
Also let
\beq
\d_{k} = \f{1-\f{r_k}{R_k}}{1-\f{r_{k-1}}{R_{k-1}}}\cdot \d_{k-1}.
\eeq
With this choice, $r_k$, $R_k$ and $\d_k$ satisfy \eqref{eqn: condition on delta}.
Let $h_k$ and $H_k$ be defined by $f_k$, $F_k$, $r_k$ and $R_k$ as in \eqref{eqn: def of h and H}.
Then if
\beq
M^k:=\d_k^{-1}(\|h_k\|_{L^\infty(\BT)}+\|H_k\|_{L^\infty(\BT)}) +\d_k^{\alpha-\e}\left(\|h_k\|_{\dot{W}^{2-\f1p,p}(\BT)}+\|H_k\|_{\dot{W}^{2-\f1p,p}(\BT)}\right)\leq M_{*,k},
\label{eqn: smallness for the k-th stage}
\eeq
where
\beq
M_{*,k} := M_*(p,\e,\mu,\nu,R_k/|\tilde{c}_*(r_k,R_k)|,G, \d_k R_k^2),
\label{eqn: smallness threshold for the k-th stage}
\eeq
Theorem \ref{thm: local well-posedness} claims that
there exists a solution $(f^k, F^k)$ on $[0,t_k]$, where by \eqref{eqn: upper bound for T},
\beq
t_k\leq T_*(\d_k, p,\e,\mu,\nu,R_k/|\tilde{c}_*(r_k, R_k)|,G, \d_k R_k^2).
\eeq
To this end, we let $T_k = T_{k-1}+t_k$, and define $f(t) = f^k(t-T_{k-1})$ and $F(t) = F^k(t-T_{k-1})$ for $t\in [T_{k-1},T_k]$.
Then it is easy to verify that $(f,F)$ is a local strong solution on $[0,T_k]$.

Starting from the initial data, if we are able to make such continuation until $T_K \geq \tilde{T}$ for some finite $K$, then we prove the existence of a strong solution on $[0,\tilde{T}]$.
Otherwise,
\begin{enumerate}
  \item \label{case: bad case blow up before T} either \eqref{eqn: smallness for the k-th stage} is first violated for some finite $K_*$ (depending on the initial data) with $T_{K_*}<\tilde{T}$;
  \item \label{case: bad case infinite continuations} or we are able to make continuation for infinitely many times but still can not reach $\tilde{T}$.
      This implies that for all $k\in \mathbb{N}$, $T_k<\tilde{T}$ and \eqref{eqn: smallness for the k-th stage} holds, while
      \beq
      \lim_{k\to \infty} T_*(\d_k, p,\e,\mu,\nu,R_k/|\tilde{c}_*(r_k, R_k)|,G, \d_k R_k^2)= 0.
      \label{eqn: infinite continuation}
      \eeq
\end{enumerate}
We are going to show that both of them would not occur if
we take initial datum $h_0$ and $H_0$ to be sufficiently small.

\end{step}

\begin{step}[A priori estimates for configurations staying almost circular]
Consider an arbitrary $k$ such that $T_k< \tilde{T}$ and \eqref{eqn: smallness for the k-th stage} holds for all numbers from $0$ to $k$.
We shall first derive upper and lower bounds for $r_k$ and $R_k$.

Since \eqref{eqn: smallness for the k-th stage} holds, in which $M_*$ is sufficiently small, the inner and outer interfaces at times $T_{-1},\cdots, T_{k-1}$ are all sufficiently close to circles (we use the convention $T_{-1} = 0$).
In this case, we must have $r_k<R_k$ as the interfaces can not cross by the proof of Theorem \ref{thm: local well-posedness}.
Moreover, with some universal constants $c$ and $C$,
\beq
c|\Om_{T_{k-1}}|^{1/2}\leq r_k < R_k \leq C|\tilde{\Om}_{T_{k-1}}|^{1/2}.
\label{eqn: bounding r_k and R_k}
\eeq
The increment of $|\tilde{\Omega}|$ is due to the growth of the tumor, which provides a naive bound for $|\tilde{\Om}|$
\beq
\frac{d}{dt}|\tilde{\Omega}|\leq G(0)|\tilde{\Omega}|.
\eeq
Therefore, for all such $k$, $r_k$ and $R_k$ admit an upper bound that only depends on $G$, $|\tilde{\Om}_0|$ and $\tilde{T}$.
Since the initial data is assumed to satisfy the smallness condition  \eqref{eqn: smallness condition for initial data}, $|\tilde{\Om}_0|$ is comparable with $R_0^2$ up to universal constants.
Hence, the $|\tilde{\Om}_0|$-dependence can be rewritten as $R_0$-dependence.
We note that Lemma \ref{lem: estimate for p_*} may provide a better upper bound that depends linearly on $T$, but the naive bound here is enough for this qualitative discussion.
On the other hand, because of the growth of the tumor, $|\Om_{T_{k-1}}|\geq |\Om_0|$.
This gives a positive lower bound for $r_k$ and $R_k$ that only depends on $|\Om_0|$, and thus only on $r_0$ by the same reasoning as above.

To this end, we note that $R/|\tilde{c}_*(r,R)|$ is a continuous function in $r,R\in \BR_+$.
The continuity can be justified using Lemma \ref{lem: stability of the interface velocities} with $h_1= H_1 = 0$ and $h_2$ and $H_2$ being small constants.
Indeed, $|\tilde{c}_*(r,R)|$ is the speed of the outer interface when the interfaces are concentric circles with radii $r$ and $R$, respectively.
Therefore, for all such $k$, $R_k/|\tilde{c}_*(r_k,R_k)|$ admits positive lower and upper bounds depending only on $\mu$, $\nu$, $G$, $r_0$, $R_0$ and $\tilde{T}$.

By Remark \ref{rmk: delta R^2 is bounded}, $\d_k R_k^2$ has lower and upper bounds that only depend on $|\tilde{\Om}_0\backslash \Om_0|$.
This together with the bound for $R_k$ implies that $\d_k$ has positive lower and upper bounds only depending on $G$, $r_0$, $R_0$ and $\tilde{T}$.

By the proof of Theorem \ref{thm: local well-posedness} (c.f., \eqref{eqn: upper bound for T} and \eqref{eqn: L inf bound for part of local solution}), $T_*$ has continuous dependence on $\d$, $R/|\tilde{c}_*|$ and $\d R^2$.
Combining all the facts above, there is a universal $T_{**}=T_{**}(\mu, \nu, G, r_0, R_0,\tilde{T})>0$, such that for all such $k$,
\beq
T_*(\d_k, p,\e,\mu,\nu,R_k/|\tilde{c}_*(r_k, R_k)|,G, \d_k R_k^2)\geq T_{**}.
\label{eqn: lower bound for life span}
\eeq
This contradicts with \eqref{eqn: infinite continuation}, so case \eqref{case: bad case infinite continuations} above is ruled out.

Similarly, there exists a universal $M_{**} = M_{**}(\mu, \nu, G, r_0, R_0,\tilde{T})>0$ such that for all such $k$,
\beq
M_*(p,\e,\mu,\nu,R_k/|\tilde{c}_*(r_k, R_k)|,G, \d_k R_k^2)\geq M_{**}.
\label{eqn: lower bound for the smallness}
\eeq
\end{step}

\begin{step}[Estimates for total number of continuations]
It suffices to consider the case \eqref{case: bad case blow up before T} above.

Thanks to \eqref{eqn: lower bound for life span}, if \eqref{eqn: smallness for the k-th stage} always holds, we only need to make continuation for finitely many times to cover the time interval $[0,\tilde{T}]$.
To be more precise, by choosing the longest possible lifespan of the local solution in each stage of continuation, we can have $T_N\geq \tilde{T}$ for some $N$ that admits an upper bound
\beq
N \leq N_{**}(\mu, \nu, G, r_0, R_0,\tilde{T}),
\eeq
provided that \eqref{eqn: smallness for the k-th stage} is not violated along the way.
In order to make \eqref{eqn: smallness for the k-th stage} hold for $N_{**}$ times, we take $M$ sufficiently small (recall that $M$ is defined by $h_0$, $H_0$ and $\d_0$ in \eqref{eqn: smallness of fractional Sobolev norm of initial data}), such that
\beq
\tilde{C}(p,\e,G)^{N_{**}}\cdot M\leq M_{**},
\label{eqn: smallness for multiple stages}
\eeq
where $\tilde{C}$ is given in Lemma \ref{lem: growth of local solution after one stage} and $M_{**}$ is introduced in \eqref{eqn: lower bound for the smallness}.
Note that the required smallness for $M$ only depends on $\mu$, $\nu$, $G$, $r_0$, $R_0$ and $\tilde{T}$.
With \eqref{eqn: smallness for multiple stages}, it is easy to justify by Lemma \ref{lem: growth of local solution after one stage} that \eqref{eqn: smallness for the k-th stage} will always be satisfied before the solution is extended beyond $\tilde{T}$.
\end{step}

This completes the proof.
\end{proof}

\section{Uniqueness}
\label{sec: uniqueness}
In this section, we prove uniqueness of the local solution under the additional assumption $G\in C^{1,1}$.

\subsection{Basic setup}
We start with basic setups that will be used throughout this section.
Let $p\in (2,\infty)$ and $\e>0$ as in Theorem \ref{thm: local well-posedness}, and $\alpha = 1-\f2p$.
Let $\b'$ be defined in \eqref{eqn: def of beta'} and $\b = \b'/4$ as in the proof of local existence (see step \ref{step: justification image is still in the set}).
In particular, $\b<\f{\b'}{1+\b'}$ and $\b<\f14$.

Suppose there are two solutions $f_i$ and $F_i$ $(i = 1,2)$ of \eqref{eqn: equation for inner interface new notation}-\eqref{eqn: initial condition} with regularity and estimates given in Theorem \ref{thm: local well-posedness}.
We define $h_i$ and $H_i$ $(i = 1,2)$ as in \eqref{eqn: def of h and H}.
Let $m_{0,i}$, $M_{0,i}$, $m_{\alpha,i}$ and $M_{\alpha,i}$ be defined as in \eqref{eqn: bound assumption on the Lipschitz norm of h}, \eqref{eqn: bound assumption on the Lipschitz norm of H}, \eqref{eqn: def of m_alpha} and \eqref{eqn: def of M_alpha}, respectively, and let $\D m_0$, $\D M_0$, $\D m_\alpha$ and $\D M_\alpha$ be defined in \eqref{eqn: def of normalized W 1 inf difference of two h}-\eqref{eqn: def of normalized C 1 alpha difference of two H large}.
By virtue of \eqref{eqn: bounds for the local solution}, by imposing sufficient smallness in \eqref{eqn: smallness of fractional Sobolev norm of initial data} that depends on $G$, $p$ and $\e$, we may assume that for all $t\in [0,T]$, $\g_i(t)\subset B_{r(1+\d)}$ and $\tilde{\g}_i(t)\subset B_{r(1+5\d)}^c$, and
\beq
m_{0,i}+M_{0,i}+\|h_i\|_{\dot{C}^{\b'}}+\|H_i\|_{\dot{C}^{\b'}}\ll 1,
\label{eqn: smallness of two solutions}
\eeq
Later we shall see the smallness needs to depend on $p$ and $\e$.

Let $p_i$ solve \eqref{eqn: equation for p simplified case} and \eqref{eqn: p boundary condition} in the (time-varying) physical domain that is determined by $f_i$ and $F_i$.
Let $x_i(X)$ be the diffeomorphism between the physical and the (time-invariant) reference domains, determined by $h_i$ and $H_i$ via \eqref{eqn: change of variables to the reference coordinate}, and let $X_i(x)$ be its inverse.
Define $\tilde{p}_i(X) := p_i(x_i(X))$ as the pull-back of $p_i$ to the reference domain.
Let $\va_i$ be the potential defined in \eqref{eqn: def of potential varphi} corresponding to $p_i$.
Let $c_i$ and $\tilde{c}_i$ be defined as in \eqref{eqn: def of c characteristic normal derivative for actual solution}.

The idea of proving uniqueness is to first derive bounds for $\tilde{\CR}_{h_1}-\tilde{\CR}_{h_2}$ and $\tilde{\CR}_{H_1}-\tilde{\CR}_{H_2}$ (see \eqref{eqn: def of remainder tilde terms in h and H equations}) in terms of $h_1-h_2$ and $H_1-H_2$ by following the arguments in previous sections, and then use regularity theory of \eqref{eqn: backbone equation for h} and \eqref{eqn: backbone equation for H big} to conclude that $h_1-h_2$ and $H_1-H_2$ can only be zero if they initially are.
Such a process would be extremely involved if carried out naively, requiring more estimates than we currently have.
To slightly reduce the complexity, we shall segregate inner and outer interfaces by a cut-off function in space, which decouples their dynamics in some sense.

With abuse of notation, let $\eta(x)$ be a time-independent, radially symmetric, smooth cut-off function on the physical domain, such that $\eta\in [0,1]$ in $\BR^2$, $\eta \equiv 1$ on $B_{r(1+3\d)}$, and $\eta \equiv 0$ outside $B_{r(1+4\d)}$.
Moreover, we need $|\na \eta|\leq C(r\d)^{-1}$ and $|\na^2 \eta|\leq C(r\d)^{-2}$ for some universal $C$.
For $i = 1,2$, define
\beqo
\p_i = \eta\va_i,\quad \P_i = (1-\eta)\va_i.
\eeqo
The equation satisfied by $\p_i$ can be derived from \eqref{eqn: equation for p simplified case}, \eqref{eqn: p boundary condition} and \eqref{eqn: def of potential varphi}.
Proceeding as in Section \ref{sec: equations of the problem},
\beq
\begin{split}
\p_i = &\;-\CD_{\g_i}[\va_i]+ \G*(G(p_i)\chi_{\Om_i}-2\na\va_i\na \eta-\va_i\D\eta)\\
= &\;-\CD_{\g_i}[\va_i]+\G*(g_{\p,i}(X_i))\quad \mbox{in }\BR^2\backslash \g_i,
\end{split}
\label{eqn: representation of psi inner}
\eeq
where we define in the reference coordinate
\beq
g_{\p,i}(X) = G(\tilde{p}_i(X))\chi_{B_r}(X)-2\nu \na \tilde{p}_i(X) \na \eta(X)-\nu \tilde{p}_i(X)\D \eta(X).
\eeq
Note that the last two terms above are only supported on $\overline{B}_{r(1+4\d)}\backslash B_{r(1+3\d)}$, where the diffeomorphism is
identity.
Comparing \eqref{eqn: representation of phi} and \eqref{eqn: representation of psi inner}, we find
\beq
-\CD_{\tilde{\g}_i}\phi_i+\G*(2\na\va_i\na \eta +\va_i\D \eta) = 0\quad\mbox{in }B_{r(1+3\d)}.
\label{eqn: cut-off potential to replace the double layer potential on outer boundary}
\eeq
Hence, we claim that
\beq
\P_i = -\CD_{\tilde{\g}_i}\phi_i + \G*(2\na \va_i\na\eta+\va_i\D \eta)\quad \mbox{in }\tilde{\Om}_i.
\label{eqn: representation of psi outer}
\eeq
Indeed, we may first assume $\P_i = -\CD_{\tilde{\g}_i}\Phi_i + \G*(2\na \va_i\na\eta+\va_i\D \eta)$ for some boundary potential $\Phi_i$ to be determined along $\tilde{\g}_i$.
Then we observe $\CD_{\tilde{\g}_i}\Phi_i$ and $\CD_{\tilde{\g}_i}\phi_i$ have to coincide in $B_{r(1+3\d)}$ because of \eqref{eqn: cut-off potential to replace the double layer potential on outer boundary} and the fact $\P_i = 0$ there.
Since $\CD_{\tilde{\g}_i}\Phi_i$ and $\CD_{\tilde{\g}_i}\phi_i$ are harmonic inside $\tilde{\Om}_i$, this proves $\Phi_i = \phi_i$.
For convenience, we also introduce
\beq
g_{\P,i}(X) = 2\nu \na \tilde{p}_i(X) \na \eta(X)+\nu \tilde{p}_i(X)\D \eta(X).
\label{eqn: outer potential segregated}
\eeq
Then \eqref{eqn: representation of psi outer} becomes
\beq
\P_i = -\CD_{\tilde{\g}_i}\phi_i + \G*g_{\P,i}(X_i(x))\quad \mbox{in }\tilde{\Om}_i.
\eeq
This also implies
\beq
\G*g_{\P,i}(X_i(x)) = \G*(G(p_i)\chi_{\Om_i})-\CD_{\g_i}[\va_i]\quad\mbox{in }B_{r(1+4\d)}^c.
\label{eqn: cut-off potential to replace the source and double layer potential on inner boundary}
\eeq

Recall that $[\va_i]$ and $\phi_i$ satisfy 
%
%
%
%
\eqref{eqn: introducing remainder in va equation}-\eqref{eqn: def of remainder in phi equation}.
They can be rewritten as (see \eqref{eqn: equation for derivative of jump of potential final form} and \eqref{eqn: equation for derivative of potential on outer interface})
%
\begin{align}
[\va_i]'-2Ac_* f_i' = &\; \tilde{\CR}_{[\va_i]'},\label{eqn: new equation for phi_i' segregated}\\
\phi_i' +2\tilde{c}_* F_i' = &\; \tilde{\CR}_{\phi_i'},
\label{eqn: new equation for Phi big_i' segregated}
\end{align}
where
\beq
\begin{split}
\tilde{\CR}_{[\va_i]'}:= &\;2A f_i'(\th) (e_r\cdot \na(\G*g_{\p,i}(X_i))|_{\g_i}-c_*)\\
&\;+ 2Af_i(\th) e_\th\cdot \na(\G*g_{\p,i}(X_i))|_{\g_i}+ 2A\g_i'^\perp\cdot \CK_{\g_i} [\va_i]',
\end{split}
\label{eqn: def of tilde R varphi'_i}
\eeq
and
\beq
\begin{split}
\tilde{\CR}_{\phi_i'}:= &\;-2F_i'(\th) (e_r\cdot \na(\G*g_{\psi,i}(X_i))|_{\tilde{\g}_i}-\tilde{c}_*)\\
&\;-2 F_i(\th) e_\th\cdot \na(\G*g_{\psi,i}(X_i))|_{\tilde{\g}_i}-2\tilde{\g}_i'^\perp\cdot \CK_{\tilde{\g}_i} \phi_i'.
\end{split}
\label{eqn: def of tilde R Phi' big_i}
\eeq
On the other hand, following the derivation of \eqref{eqn: equation for inner interface new notation} and \eqref{eqn: equation for outer interface new notation}, \eqref{eqn: backbone equation for h}-\eqref{eqn: def of remainder tilde terms in h and H equations} admit the following new representations,
\begin{align}
\pa_t h_i+\f{c_*}{r}=&\;-\f{Ac_*}{r}(-\D)^{1/2}h_i +\f{1}{r}\tilde{\CR}_{h_i}, \label{eqn: reformulation of equation for h_i}\\
\pa_t H_i+\f{\tilde{c}_*}{R}=&\;\f{\tilde{c}_*}{R}(-\D)^{1/2}H_i +\f{1}{R}\tilde{\CR}_{H_i}, \label{eqn: reformulation of equation for H_i big}
\end{align}
where 
\beq
\begin{split}
\tilde{\CR}_{h_i} = &\;-\f{1}{f_i}\g_i'\cdot \CK_{\g_i}\tilde{\CR}_{[\va_i]'}-2Ac_*\left(\f{1}{f_i}\g_i'\cdot \CK_{\g_i}f_i'-\f{1}{2r}\CH f_i'\right)\\
&\;+\left(\f{f_i'}{f_i}e_\th\cdot \na (\G* g_{\p,i}(X_i))|_{\g_i} - e_r \cdot \na (\G* g_{\p,i}(X_i))|_{\g_i}+c_*\right),
\end{split}
\label{eqn: new representation of tilde R h_i}
\eeq
and
\beq
\begin{split}
\tilde{\CR}_{H_i} = &\;-\f{1}{F_i}\tilde{\g}_i'\cdot \CK_{\tilde{\g}_i}\tilde{\CR}_{\phi_i'}+2\tilde{c}_*\left(\f{1}{F_i}\tilde{\g}_i'\cdot \CK_{\tilde{\g}_i}F_i'-\f{1}{2R}\CH F_i'\right)\\
&\;+\left(\f{F_i'}{F_i}e_\th\cdot \na (\G* g_{\P,i}(X_i))|_{\tilde{\g}_i} - e_r \cdot \na (\G* g_{\P,i}(X_i))|_{\tilde{\g}_i} +\tilde{c}_*\right).
\end{split}
\label{eqn: new representation of tilde R H big_i}
\eeq
\eqref{eqn: reformulation of equation for h_i} and \eqref{eqn: reformulation of equation for H_i big} are coupled with initial data $h_i(t= 0) = h_0$ and $H_i(t = 0) = H_0$.

\subsection{Estimates for differences of two solutions}
Next we shall bound $\tilde{R}_{h_1}-\tilde{R}_{h_2}$ and $\tilde{R}_{H_1}-\tilde{R}_{H_2}$.

\begin{lem}\label{lem: bounds for generalized source term}
$g_{\p,i}$ and $g_{\P,i}$ are supported in $B_{r(1+4\d)}$, satisfying that
\begin{align}
\|g_{\p,i}\|_{L^\infty}+\|g_{\P,i}\|_{L^\infty}\leq &\;C(\nu,r,R,G),\\
\|g_{\p,1}-g_{\p,2}\|_{L^\infty}+\|g_{\P,1}-g_{\P,2}\|_{L^\infty}\leq &\;C(\b,\mu,\nu, r, R, G)(\D m_0+\D M_0),
\end{align}
and
\begin{align}
\|e_\th\cdot \na g_{\p,i}\|_{L^2}+\|e_\th\cdot \na g_{\P,i}\|_{L^2}\leq &\;C(\mu,\nu,r,R,G)(m_{0,i}+M_{0,i}),\\
\|e_\th\cdot \na (g_{\p,1}-g_{\p,2})\|_{L^2}+\|e_\th\cdot \na (g_{\P,1}-g_{\P,2})\|_{L^2}\leq &\;C(\b,\mu,\nu, r, R, G)(\D m_\b+\D M_\b).
\end{align}
\begin{proof}
Note that $\tilde{p}_i$ and $p_*$ are harmonic in a neighborhood (whose size depends on $r$ and $R$) of the support of $\na \eta$, so gradient estimates apply.
Then the desired estimates follow from Lemma \ref{lem: estimates for difference between tilde p and p_*} and Lemma \ref{lem: C 1 alpha bounds for two different pressures in reference coordinate}.
The assumption $G\in C^{1,1}$ is used when proving the last inequality.
\end{proof}
\end{lem}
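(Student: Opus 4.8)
The plan is to establish Lemma \ref{lem: bounds for generalized source term} by reducing everything to the elliptic stability estimates already at hand, namely Lemma \ref{lem: estimates for difference between tilde p and p_*} and Lemma \ref{lem: C 1 alpha bounds for two different pressures in reference coordinate}, together with interior elliptic regularity. The key observation is that $g_{\p,i}$ and $g_{\P,i}$ only depend on $\tilde{p}_i$ through $G(\tilde{p}_i)\chi_{B_r}$ and through terms of the form $\na\tilde{p}_i\cdot\na\eta$ and $\tilde{p}_i\,\D\eta$, whose last two pieces are supported in the annulus $\overline{B}_{r(1+4\d)}\backslash B_{r(1+3\d)}$. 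On that annulus the diffeomorphism $x_i$ is the identity, so $\tilde{p}_i=p_i$ there, and moreover $\tilde{p}_i$ (and also $p_*$) is \emph{harmonic} in a full neighborhood of the support of $\na\eta$ (the size of the neighborhood depending only on $r$ and $R$), because the source term $G(p_i)\chi_{\Om_i}$ and the mobility jump both live strictly inside $B_{r(1+\d)}$. Hence interior estimates for harmonic functions upgrade any $L^2$- or $L^\infty$-control of $\tilde{p}_i-p_*$ (respectively $\tilde{p}_1-\tilde{p}_2$) to control of $\na\tilde{p}_i-\na p_*$ (respectively $\na\tilde{p}_1-\na\tilde{p}_2$) in $L^\infty$ of the slightly smaller neighborhood containing $\mathrm{supp}\,\na\eta$.

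First I would record the support statement: since $\eta\equiv 1$ on $B_{r(1+3\d)}$ and $\eta\equiv 0$ outside $B_{r(1+4\d)}$, and $\chi_{B_r}$ is supported in $B_r\subset B_{r(1+4\d)}$, every term in $g_{\p,i}$ and $g_{\P,i}$ vanishes outside $B_{r(1+4\d)}$. Next, for the $L^\infty$-bounds of $g_{\p,i}$ and $g_{\P,i}$ themselves: $\|G(\tilde p_i)\chi_{B_r}\|_{L^\infty}\le \sup_{[0,p_M]}|G|$ by the maximum principle for $p_i$, while the remaining terms are bounded by $C(\nu)(r\d)^{-1}\|\na\tilde p_i\|_{L^\infty(\mathrm{supp}\,\na\eta)}+C(\nu)(r\d)^{-2}\|\tilde p_i\|_{L^\infty}$; using harmonicity of $\tilde p_i$ near $\mathrm{supp}\,\na\eta$ and the a priori $H^1$-bound $\|\na p\|_{L^2(\tilde\Om)}\le Cr$ (established in Case 2 of the proof of Lemma \ref{lem: estimates for difference between tilde p and p_*}), one controls $\|\na\tilde p_i\|_{L^\infty(\mathrm{supp}\,\na\eta)}$ and $\|\tilde p_i\|_{L^\infty}$ by a constant depending on $\mu,\nu,r,R,G$ — the $\d$-powers are absorbed into this $r,R$-dependence since $\d$ is bounded above and below in terms of $R-r$ by \eqref{eqn: condition on delta}. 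For the difference bound $\|g_{\p,1}-g_{\p,2}\|_{L^\infty}+\|g_{\P,1}-g_{\P,2}\|_{L^\infty}$, I split into the $G$-term and the cutoff terms. For the $G$-term I use the $C^1$-smoothness of $G$ so that $|G(\tilde p_1)-G(\tilde p_2)|\le C(G)\|\tilde p_1-\tilde p_2\|_{L^\infty}\le C_*(\D m_0+\D M_0)$ by \eqref{eqn: L inf difference of two pressures}. For the cutoff terms I use interior elliptic estimates for the harmonic function $\tilde p_1-\tilde p_2$ on $\mathrm{supp}\,\na\eta$ to pass from \eqref{eqn: L inf difference of two pressures} to an $L^\infty$-bound on $\na\tilde p_1-\na\tilde p_2$ there, again with constants absorbing the $\d$-weights.

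For the last two estimates, which involve $e_\th\cdot\na g_{\p,i}$ and its difference in $L^2$, the delicate point is that differentiating $g_{\p,i}$ tangentially produces a term $e_\th\cdot\na(G(\tilde p_i))\chi_{B_r}=G'(\tilde p_i)\,e_\th\cdot\na\tilde p_i\,\chi_{B_r}$, plus terms involving $\na^2\tilde p_i$ and $\na\tilde p_i$ paired against derivatives of $\eta$ on the annulus. The first type is bounded in $L^2(B_r)$ by $\|G'\|_{L^\infty}\|\na\tilde p_i\|_{L^2(B_r)}$; using the a priori $H^1$-bound this gives $C(\mu,\nu,r,R,G)(m_{0,i}+M_{0,i})$ — here the smallness factor $m_{0,i}+M_{0,i}$ must come from the fact that when $h_i=H_i=0$ one has $e_\th\cdot\na\tilde p_i\equiv 0$ since $p_*$ is radial, so in fact one applies this with the difference $\tilde p_i-p_*$ rather than $\tilde p_i$, bounding $\|e_\th\cdot\na\tilde p_i\|_{L^2(B_r)}=\|e_\th\cdot\na(\tilde p_i-p_*)\|_{L^2(B_r)}\le\|\na(\tilde p_i-p_*)\|_{L^2(B_R)}\le C(m_{0,i}+M_{0,i})(\d R^2)^{1/2}$ via Lemma \ref{lem: estimates for difference between tilde p and p_*}, again with $\d R^2$ absorbed. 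The annulus terms are controlled via interior $C^1$-estimates for the harmonic functions $\tilde p_i$. For the difference estimate $\|e_\th\cdot\na(g_{\p,1}-g_{\p,2})\|_{L^2}+\|e_\th\cdot\na(g_{\P,1}-g_{\P,2})\|_{L^2}\le C(\D m_\b+\D M_\b)$ one needs to bound $G'(\tilde p_1)e_\th\cdot\na\tilde p_1-G'(\tilde p_2)e_\th\cdot\na\tilde p_2$ in $L^2(B_r)$; writing this as $G'(\tilde p_1)e_\th\cdot\na(\tilde p_1-\tilde p_2)+(G'(\tilde p_1)-G'(\tilde p_2))e_\th\cdot\na\tilde p_2$, the first summand is handled by \eqref{eqn: Holder estimate for difference of two pressures} (whence the $\D m_\b+\D M_\b$ appears), and the second requires $G'$ to be Lipschitz, i.e.\ the hypothesis $G\in C^{1,1}$, so that $|G'(\tilde p_1)-G'(\tilde p_2)|\le C\|\tilde p_1-\tilde p_2\|_{L^\infty}\le C_*(\D m_0+\D M_0)\le C_*(\D m_\b+\D M_\b)$, paired against $e_\th\cdot\na\tilde p_2=e_\th\cdot\na(\tilde p_2-p_*)\in L^2$. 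The main obstacle, and the only genuinely nontrivial ingredient, is ensuring the tangential derivative of the $G$-term is controlled with the \emph{correct small factor}: one must consistently subtract the radial reference solution $p_*$ before estimating tangential derivatives, so that all the quantities $e_\th\cdot\na(G\text{-related})$ are measured by $\na(\tilde p_i-p_*)$ or $\na(\tilde p_1-\tilde p_2)$ and pick up the desired smallness from Lemmas \ref{lem: estimates for difference between tilde p and p_*} and \ref{lem: C 1 alpha bounds for two different pressures in reference coordinate}; the cutoff terms are routine once interior regularity is invoked. I would close by noting that all constants may be written in the claimed form after using \eqref{eqn: condition on delta} to absorb the powers of $\d$ (and $\d R^2$) into dependence on $r$ and $R$.
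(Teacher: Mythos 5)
Your proposal is correct and follows the same route as the paper's (very terse) proof: exploit harmonicity of $\tilde p_i$ (and $p_*$) in a neighborhood of $\mathrm{supp}\,\na\eta$ whose size depends only on $r,R$, invoke interior estimates for harmonic functions to upgrade the $L^2$/$L^\infty$ control of $\tilde p_i - p_*$ and $\tilde p_1 - \tilde p_2$ coming from Lemma \ref{lem: estimates for difference between tilde p and p_*} and Lemma \ref{lem: C 1 alpha bounds for two different pressures in reference coordinate} to pointwise control of higher derivatives on the annulus, and note that the Lipschitz continuity of $G'$ (i.e.\ $G\in C^{1,1}$) is only needed for the final difference estimate. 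You also correctly identify the structural point the paper leaves implicit: the factors $m_{0,i}+M_{0,i}$ and $\D m_\b +\D M_\b$ arise because $p_*$ and $\eta$ are radial, so $e_\th\cdot\na$ annihilates anything built purely from them, forcing every tangential derivative to see $\tilde p_i - p_*$ or $\tilde p_1-\tilde p_2$. One small imprecision worth tightening: in your treatment of the third estimate you describe the annulus (cutoff) terms as ``controlled via interior $C^1$-estimates for $\tilde p_i$'' and later as ``routine,'' but to get the factor $m_{0,i}+M_{0,i}$ you must apply the interior estimates to $\tilde p_i - p_*$ (not $\tilde p_i$), and since the cutoff terms contain $\na\tilde p_i\cdot\na\eta$, taking $e_\th\cdot\na$ of them produces second derivatives, so one needs the interior $C^2$-estimate for the harmonic function $\tilde p_i-p_*$; your own observation that $e_\th\cdot\na$ kills all purely radial quantities is exactly what makes the $p_*$-subtraction automatic here, but it deserves to be stated for the cutoff terms as explicitly as you state it for the $G$-term.
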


\begin{proposition}\label{prop: estimates for difference of varphi_i segregated}
Assume \eqref{eqn: smallness of two solutions} with the smallness depending on $p$ and $\b$ (and thus on $p$ and $\e$.)
\beq
\|[\va_1]'-[\va_2]'\|_{\dot{C}^\b}
\leq C r^2 (\|h_1'-h_2'\|_{\dot{C}^\b}+ \D m_0 + \D M_\b),
\label{eqn: Holder norm of difference of varphi' jump}
\eeq
and
\beq
\begin{split}
&\;\|[\va_1]''-[\va_2]''\|_{L^p}\\
\leq &\;C(p,\e,\mu,\nu,G)r^2\|h_1''-h_2''\|_{L^p}(1+\d R^2)^{1/2}\\
&\;+Cr^2(\|h_1'-h_2'\|_{\dot{C}^\b}+ \D m_0 + \D M_\b)
(1+\|h_1''\|_{L^p}+\|h_2''\|_{L^p}+\|H_1''\|_{L^p}).
\end{split}
\label{eqn: W 2 p estimate for difference of varphi_i jump}
\eeq
where $C = C(p,\e,\mu,\nu,r,R, G)$ unless otherwise stated.
\begin{proof}
We proceed as in Proposition \ref{prop: Holder estimate for jump of potential and potential along outer interface} and Proposition \ref{prop: W 1p estimate for jump of potential and potential along outer interface}.
By Lemmas \ref{lem: C 1 alpha bounds for two different pressures in reference coordinate}, \ref{lem: L inf estimate of difference same source}-
\ref{lem: W 1p estimate of derivative of growth potential inner}, \ref{lem: bound error between c and c_*} and \ref{lem: bounds for generalized source term},
\beq
\begin{split}
&\;\|f_1'(\th) (e_r\cdot \na(\G*g_{\p,1}(X_1))|_{\g_1}-c_*)
-f_2'(\th) (e_r\cdot \na(\G*g_{\p,2}(X_2))|_{\g_2}-c_*)\|_{\dot{C}^\b}\\
\leq &\;\|(f_1'-f_2') (e_r\cdot \na(\G*g_{\p,1}(X_1))|_{\g_1}-c_*)\|_{\dot{C}^\b}\\
&\;+
\|f_2'(\th)( e_r\cdot \na(\G*g_{\p,1}(X_1))|_{\g_1}-e_r\cdot \na(\G*g_{\p,1}(X_2))|_{\g_2})\|_{\dot{C}^\b}\\
&\;+
\|f_2'(\th) (e_r\cdot \na(\G*(g_{\p,1}-g_{\p,2})(X_2))|_{\g_2}\|_{\dot{C}^\b}\\
\leq &\;Cr^2 \|h_1'-h_2'\|_{\dot{C}^\b} ( m_{0,1} \d |\ln \d| \|g_{\p,1}\|_{L^\infty(B_{(1+4\d)r})}\\
&\;\qquad + \|e_\th\cdot \na g_{\p,1} \|_{L^2(B_{r(1+4\d)})}+(m_{0,1}+M_{0,1})(\d R^2)^{1/2})\\
&\;+Cr^2\|h_1'-h_2'\|_{L^\infty} (\|g_{\p,1}\|_{L^\infty(B_{(1+4\d)r})} m_{\b,1}+ \|e_\th\cdot \na g_{\p,1}\|_{L^2(B_{(1+4\d)r})})\\
&\;+Cr^2
\|h_2'\|_{\dot{C}^\b} \cdot \d |\ln \d| \D m_0\|g_{\p,1}\|_{L^\infty(B_{r(1+4\d)})}\\
&\;+Cr^2\|h_2'\|_{L^\infty} (\|g_{\p,1}\|_{L^\infty(B_{(1+4\d)r})}\D m_\b +\D m_0\|e_\th\cdot \na g_{\p,1}\|_{L^2(B_{(1+4\d)r})})\\
&\;+Cr^2
\|h_2'\|_{\dot{C}^\b} ( m_{0,2} \d |\ln \d| \|g_{\p,1}-g_{\p,2}\|_{L^\infty(B_{(1+4\d)r})}\\
&\;\qquad + \|e_\th\cdot \na (g_{\p,1}-g_{\p,2}) \|_{L^2(B_{r(1+4\d)})}+|c_1-c_2|)\\
&\;+Cr^2\|h_2'\|_{L^\infty} (\|g_{\p,1}-g_{\p,2}\|_{L^\infty(B_{(1+4\d)r})} m_{\b,2}+ \|e_\th\cdot \na (g_{\p,1}-g_{\p,2})\|_{L^2(B_{(1+4\d)r})})\\
\leq &\;Cr^2 \|h_1'-h_2'\|_{\dot{C}^\b} (m_{\b,1}+M_{0,1})
+Cr^2
\|h_2'\|_{\dot{C}^\b} (\D m_\b+\D M_\b),
\end{split}
\eeq
where $C = C(\b,\mu,\nu,r,R,G)$.
Here we used the estimate by \eqref{eqn: def of c characteristic normal derivative for actual solution} and Lemma \ref{lem: C 1 alpha bounds for two different pressures in reference coordinate} that
\beq
|c_1-c_2|\leq \f{C}{r}\int_{B_r}|G(\tilde{p}_1)-G(\tilde{p}_2)|\,dX \leq Cr \|\tilde{p}_1-\tilde{p}_2\|_{L^\infty(B_r)}\leq Cr(\D m_0+\D M_0),
\label{eqn: stability of normal velocity of inner interface}
\eeq
where $C = C(\b,\mu,\nu, r, R, G)$.
Similarly,
\beq
\begin{split}
&\;\|f_1(\th) e_\th\cdot \na(\G*g_{\p,1}(X_1))|_{\g_1}- f_2(\th) e_\th\cdot \na(\G*g_{\p,2}(X_2))|_{\g_2}\|_{\dot{C}^\b}\\
\leq &\;Cr^2(\D m_\b + \D M_\b).
\end{split}
\eeq
On the other hand, by \eqref{eqn: smallness of two solutions}, Lemma \ref{lem: Holder estimate of singular integral normal component} and Lemma \ref{lem: Holder estimate of singular integral normal component difference}, 
\beq
\begin{split}
&\;\|\g_1'^\perp\cdot \CK_{\g_1} [\va_1]' - \g_2'^\perp\cdot \CK_{\g_2} [\va_2]'\|_{\dot{C}^{\b}}\\
\leq &\; C(\b)(\|h_1-h_2\|_{C^{1,\b}}\|[\va_1]'\|_{\dot{C}^\b} + \|h_2'\|_{\dot{C}^\b}\|[\va_1]'-[\va_2]'\|_{\dot{C}^\b}).
\end{split}
\eeq
Combining these estimates with \eqref{eqn: smallness of two solutions}, \eqref{eqn: new equation for phi_i' segregated}, \eqref{eqn: def of tilde R varphi'_i} and Proposition \ref{prop: Holder estimate for jump of potential and potential along outer interface} yields
\beq
\begin{split}
&\;\|\tilde{\CR}_{[\va_1]'}-\tilde{\CR}_{[\va_2]'}\|_{\dot{C}^\b}\\
\leq &\; Cr^2\|h_1'-h_2'\|_{\dot{C}^\b} (m_{0,1}+M_{0,1}+\|h_1'\|_{\dot{C}^\beta}+\|H_1'\|_{\dot{C}^\beta}) +Cr^2(\D m_\b + \D M_\b)\\
&\; + C(\b)\|h_2'\|_{\dot{C}^\b}\|[\va_1]'-[\va_2]'\|_{\dot{C}^\b},
\end{split}
\label{eqn: Holder estimate for difference of R varphi'}
\eeq
and
\beq
\|[\va_1]'-[\va_2]'\|_{\dot{C}^\b}
\leq C r^2 \|h_1'-h_2'\|_{\dot{C}^\b}+ Cr^2(\D m_\b + \D M_\b),
\eeq
where $C = C(\b, \b',\mu,\nu,r,R, G)$.
Note that $\b$ and $\b'$ essentially depend on $p$ and $\e$.

To show \eqref{eqn: W 2 p estimate for difference of varphi_i jump}, we derive as in \eqref{eqn: W 1p estimate for source term on inner interface} that
\beq
\begin{split}
&\;\|f_1'(\th) (e_r\cdot \na(\G*g_{\p,1}(X_1))|_{\g_1}-c_*)
-f_2'(\th) (e_r\cdot \na(\G*g_{\p,2}(X_2))|_{\g_2}-c_*)\|_{\dot{W}^{1,p}}\\
\leq &\;\|f_1'-f_2'\|_{\dot{W}^{1,p}} \|e_r\cdot \na(\G*g_{\p,1}(X_1))|_{\g_1}-c_*\|_{L^\infty}\\
&\;+\|f_1'-f_2'\|_{L^\infty} \|e_r\cdot \na(\G*g_{\p,1}(X_1))|_{\g_1}\|_{\dot{W}^{1,p}}\\
&\;+
\|f_2'(\th)( e_r\cdot \na(\G*g_{\p,1}(X_1))|_{\g_1}-e_r\cdot \na(\G*g_{\p,1}(X_2))|_{\g_2})\|_{\dot{W}^{1,p}}\\
&\;+
\|f_2'(\th) e_r\cdot \na(\G*(g_{\p,1}-g_{\p,2})(X_2))|_{\g_2}\|_{\dot{W}^{1,p}}\\
%
\leq &\;\|f_1'-f_2'\|_{\dot{W}^{1,p}} \|e_r\cdot \na(\G*g_{\p,1}(X_1))|_{\g_1}-c_*\|_{L^\infty}\\
&\;+Cr^2\|h_1'-h_2'\|_{L^\infty} ( m_{\b,1}+ M_{0,1})+Cr^2
\|h_2''\|_{L^p}(\D m_\b+\D M_\b).
%
\end{split}
\label{eqn: W 1p difference of a component of growth potential crude form}
\eeq
We shall need an estimate for $\|e_r\cdot \na(\G*g_{\p,1}(X_1))|_{\g_1}-c_*\|_{L^\infty}$ with explicit $r$- and $R$-dependence.
By \eqref{eqn: cut-off potential to replace the double layer potential on outer boundary}, and then \eqref{eqn: gradient of double layer away from the boundary}, Lemmas \ref{lem: estimates for difference between tilde p and p_*}, \ref{lem: L^inf estimate of derivative of growth potential inner}, \ref{lem: L inf norm of derivative of interaction kernel from outer to inner}, \ref{lem: estimates for the potential kernel part in interaction kernels}, \ref{lem: bound error between c and c_*} and Proposition \ref{prop: Holder estimate for jump of potential and potential along outer interface},
\beq
\begin{split}
&\;\|e_r\cdot \na(\G*g_{\p,1}(X_1))|_{\g_1}-c_*\|_{L^\infty}\\
\leq &\; \|e_r\cdot \na(\G*(G(p_1)\chi_{\Om_1}))|_{\g_1}-c_*\|_{L^\infty}+\|e_r\cdot \na (\CD_{\tilde{\g}_1}\phi_1)|_{\g_1}\|_{L^\infty}\\
\leq &\; \|e_r\cdot \na(\G*(G(p_1)\chi_{\Om_1}))|_{\g_1}-c_1\|_{L^\infty}+|c_1-c_*|+\|e_\th\cdot \CK_{\g_1,\tilde{\g}_1}\phi_1'\|_{L^\infty}\\
\leq &\;C(\mu,\nu,G, \b,\b')r(\|h_1'\|_{\dot{C}^\b}+\|H_1'\|_{\dot{C}^\b}+(m_{0,1}+M_{0,1})(1+\d R^2)^{1/2}).
\end{split}
\label{eqn: special handling of L^inf norm of gradient of the growth potential}
\eeq
Hence,
\beq
\begin{split}
&\;\|f_1'(\th) (e_r\cdot \na(\G*g_{\p,1}(X_1))|_{\g_1}-c_*)
-f_2'(\th) (e_r\cdot \na(\G*g_{\p,2}(X_2))|_{\g_2}-c_*)\|_{\dot{W}^{1,p}}\\
\leq &\;C(\mu,\nu,G, \b,\b')r^2\|h_1''-h_2''\|_{L^p}(\|h_1'\|_{\dot{C}^\b}+\|H_1'\|_{\dot{C}^\b}+(m_{0,1}+M_{0,1})(1+\d R^2)^{1/2})\\
&\;+Cr^2\|h_1'-h_2'\|_{L^\infty} ( m_{\b,1}+ M_{0,1})+Cr^2
\|h_2''\|_{L^p}(\D m_\b+\D M_\b).
\end{split}
\label{eqn: W 1p difference of a component of growth potential}
\eeq
On the other hand,
\beq
\begin{split}
&\;\|f_1(\th) e_\th\cdot \na(\G*g_{\p,1}(X_1))|_{\g_1}- f_2(\th) e_\th\cdot \na(\G*g_{\p,2}(X_2))|_{\g_2}\|_{\dot{W}^{1,p}}\\
\leq &\; Cr^2(\D m_\b + \D M_\b),
\end{split}
\eeq
and by Lemma \ref{lem: W 1p estimate of normal component of singular integral} and Lemma \ref{lem: W 1p estimate of normal component of singular integral difference},
\beq
\begin{split}
&\;\|\g_1'^\perp\cdot\mathcal{K}_{\g_1}[\va_1]'-\g_2'^\perp\cdot\mathcal{K}_{\g_2}[\va_2]'\|_{\dot{W}^{1,p}}\\
\leq &\; C(\D m_0+\|h_1'-h_2'\|_{\dot{C}^\b})(\|h_1''\|_{L^p}+\|h_2''\|_{L^p})\|[\va_1]'\|_{\dot{C}^\b}
\\
&\;+C\|h_1''-h_2''\|_{L^p} \| [\va_1]'\|_{\dot{C}^\b}+C\D m_0 \|[\va_1]''\|_{L^p}\\
&\;+C( \|h_2''\|_{L^p}\|[\va_1]'-[\va_2]'\|_{\dot{C}^\b} +\|h_2'\|_{L^\infty}\|[\va_1]''-[\va_2]''\|_{L^p}),
\end{split}
\eeq
where $C = C(p,\b)$.
Combining these estimates with \eqref{eqn: smallness of two solutions}, \eqref{eqn: new equation for phi_i' segregated}, \eqref{eqn: def of tilde R varphi'_i}, \eqref{eqn: Holder norm of difference of varphi' jump} as well as Propositions \ref{prop: Holder estimate for jump of potential and potential along outer interface} and \ref{prop: W 1p estimate for jump of potential and potential along outer interface}, we can show
\beq
\begin{split}
&\;\|\tilde{\CR}_{[\va_1]'}-\tilde{\CR}_{[\va_2]'}\|_{\dot{W}^{1,p}}\\
\leq &\;C(p,\e,\mu,\nu,G)r^2\|h_1''-h_2''\|_{L^p}(\|h_1'\|_{\dot{C}^\b}+\|H_1'\|_{\dot{C}^\b}+(m_{0,1}+M_{0,1})(1+\d R^2)^{1/2})\\
&\;+Cr^2(\|h_1'-h_2'\|_{\dot{C}^\b}+ \D m_0 + \D M_\b)
(1+\|h_1''\|_{L^p}+\|h_2''\|_{L^p}+\|H_1''\|_{L^p})\\
&\;+C(p,\b)\|h_2'\|_{L^\infty}\|[\va_1]''-[\va_2]''\|_{L^p},
\end{split}
\label{eqn: W 1 p estimate for difference of R varphi'}
\eeq
and thus \eqref{eqn: W 2 p estimate for difference of varphi_i jump}.
\end{proof}
\end{proposition}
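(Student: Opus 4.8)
The plan is to repeat, now for two solutions, the term‑by‑term analysis already used in the proofs of Proposition~\ref{prop: Holder estimate for jump of potential and potential along outer interface} and Proposition~\ref{prop: W 1p estimate for jump of potential and potential along outer interface}, exploiting that the segregation by $\eta$ has decoupled the inner‑interface equation from $\phi_i$. The starting point is the identity \eqref{eqn: new equation for phi_i' segregated}, $[\va_i]'-2Ac_* f_i'=\tilde{\CR}_{[\va_i]'}$, with $\tilde{\CR}_{[\va_i]'}$ given by \eqref{eqn: def of tilde R varphi'_i}. Subtracting the two copies gives $[\va_1]'-[\va_2]'=2Ac_*(f_1'-f_2')+\big(\tilde{\CR}_{[\va_1]'}-\tilde{\CR}_{[\va_2]'}\big)$, so the whole task is to estimate $\tilde{\CR}_{[\va_1]'}-\tilde{\CR}_{[\va_2]'}$ first in $\dot{C}^\b$ and then in $\dot{W}^{1,p}$, arranging that the only contribution depending on $[\va_1]'-[\va_2]'$ itself carries a small prefactor that can be moved to the left‑hand side and absorbed.

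For the two growth‑potential terms $f_i'(e_r\cdot\na(\G*g_{\p,i}(X_i))|_{\g_i}-c_*)$ and $f_ie_\th\cdot\na(\G*g_{\p,i}(X_i))|_{\g_i}$ I would insert intermediate quantities and split the difference into: (i) the change of the prefactor $f_i'$ (resp.\ $f_i$); (ii) the change of the map $X_i$ and of the interface $\g_i$ with the source held at $g_{\p,1}$; and (iii) the change of the source from $g_{\p,1}$ to $g_{\p,2}$. Piece (ii) is handled by the difference‑type gradient estimates of Section~\ref{sec: gradient estimate of growth potential} (Lemmas~\ref{lem: L inf estimate of difference same source}, \ref{lem: W 1p estimate of tangent derivative}, \ref{lem: W 1p estimate of normal derivative difference}), piece (iii) by the single‑configuration estimates (Lemmas~\ref{lem: L^inf estimate of derivative of growth potential inner}, \ref{lem: W 1p estimate of derivative of growth potential inner}) applied to the source $g_{\p,1}-g_{\p,2}$, and the needed control of $g_{\p,i}$, $g_{\p,1}-g_{\p,2}$ and their tangential gradients comes from Lemma~\ref{lem: bounds for generalized source term}, which itself rests on the pressure‑stability estimates of Lemma~\ref{lem: estimates for difference between tilde p and p_*} and Lemma~\ref{lem: C 1 alpha bounds for two different pressures in reference coordinate}; the shift in the constant is absorbed using Lemma~\ref{lem: bound error between c and c_*} and a Lipschitz bound $|c_1-c_2|\le Cr(\D m_0+\D M_0)$ obtained in the same way. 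For the $\dot{W}^{1,p}$ step one additionally needs $\|e_r\cdot\na(\G*g_{\p,1}(X_1))|_{\g_1}-c_*\|_{L^\infty}$ to be small with explicit $r,R$‑dependence; I would obtain this by rewriting that quantity via \eqref{eqn: cut-off potential to replace the double layer potential on outer boundary} and then applying the interaction‑kernel bounds of Lemma~\ref{lem: L inf norm of derivative of interaction kernel from outer to inner} and Lemma~\ref{lem: estimates for the potential kernel part in interaction kernels} together with Proposition~\ref{prop: Holder estimate for jump of potential and potential along outer interface}.

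The singular‑integral term $\g_i'^\perp\cdot\CK_{\g_i}[\va_i]'$ I would split as $\big(\g_1'^\perp\cdot\CK_{\g_1}-\g_2'^\perp\cdot\CK_{\g_2}\big)[\va_1]'+\g_2'^\perp\cdot\CK_{\g_2}\big([\va_1]'-[\va_2]'\big)$, bounding the first summand by the difference Lemmas~\ref{lem: Holder estimate of singular integral normal component difference} and \ref{lem: W 1p estimate of normal component of singular integral difference} and the second by Lemmas~\ref{lem: Holder estimate of singular integral normal component} and \ref{lem: W 1p estimate of normal component of singular integral}; the latter contributes exactly a $\|h_2'\|_{\dot{C}^\b}\|[\va_1]'-[\va_2]'\|_{\dot{C}^\b}$ term and, at the $\dot{W}^{1,p}$ level, a $\|h_2'\|_{L^\infty}\|[\va_1]''-[\va_2]''\|_{L^p}$ term, both absorbable thanks to the smallness \eqref{eqn: smallness of two solutions}. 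Feeding all the term‑by‑term bounds back into \eqref{eqn: new equation for phi_i' segregated}, using the a priori bounds $N_{1,\b}$ and $N_{2,p}$ on $[\va_i]'$ from Propositions~\ref{prop: Holder estimate for jump of potential and potential along outer interface} and \ref{prop: W 1p estimate for jump of potential and potential along outer interface} and the fact $|A|<1$, yields \eqref{eqn: Holder norm of difference of varphi' jump}; plugging \eqref{eqn: Holder norm of difference of varphi' jump} into the $\dot{W}^{1,p}$ version of the same chain then gives \eqref{eqn: W 2 p estimate for difference of varphi_i jump}.

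The main obstacle is the bookkeeping in the commutator‑type difference estimates: one must ensure the dependence on $h_1-h_2$ and $H_1-H_2$ enters linearly through $\D m_0$, $\D m_\b$, $\D M_\b$ (never as a product of two differences), that every self‑referential term picks up a genuinely small factor — a norm of $h_i$ or $H_i$ controlled by \eqref{eqn: smallness of two solutions} — so it can be absorbed, and that the $r,R$‑powers come out as in \eqref{eqn: Holder norm of difference of varphi' jump}--\eqref{eqn: W 2 p estimate for difference of varphi_i jump}; the $r,R$‑careful handling of the $L^\infty$ norm of the interface gradient of the growth potential in the $\dot{W}^{1,p}$ step is the one place where a non‑routine rewriting through the segregated representation is required, everything else being a faithful but lengthy transcription of the machinery of Sections~\ref{sec: gradient estimate of growth potential}, \ref{sec: estimates for singular integral operators} and \ref{sec: bounds for boundary potentials}.
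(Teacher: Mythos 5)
Your proposal follows essentially the same route as the paper: subtract the segregated identity \eqref{eqn: new equation for phi_i' segregated}, split the growth-potential differences into prefactor / diffeomorphism-and-interface / source changes, use Lemma \ref{lem: bounds for generalized source term} together with the pressure-stability Lemmas \ref{lem: estimates for difference between tilde p and p_*} and \ref{lem: C 1 alpha bounds for two different pressures in reference coordinate} and the Lipschitz bound on $c_1-c_2$, split $\g_i'^\perp\cdot\CK_{\g_i}[\va_i]'$ into a difference-of-operators piece plus an operator-on-difference piece, and absorb the self-referential term by the smallness in \eqref{eqn: smallness of two solutions}. Your identification of the one non-routine step — obtaining an $L^\infty$ bound on $e_r\cdot\na(\G*g_{\p,1}(X_1))|_{\g_1}-c_*$ with explicit $r,R$ scaling via \eqref{eqn: cut-off potential to replace the double layer potential on outer boundary} and the interaction-kernel lemmas — is exactly the device the paper uses (its display \eqref{eqn: special handling of L^inf norm of gradient of the growth potential}), so the argument is complete and matches the paper's proof.
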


\begin{proposition}\label{prop: estimates for difference of Phi_big i segregated}
Under the assumption of Proposition \ref{prop: estimates for difference of varphi_i segregated}, 
\beq
\|\phi_1'-\phi_2'\|_{\dot{C}^{\b}}\leq  Cr^2 (\|H_1'-H_2'\|_{\dot{C}^\b} +\D m_\b+\D M_0),
\label{eqn: Holder estimate for difference of Phi_i'}\\
\eeq
%
and
\beq
\begin{split}
\|\phi_1''-\phi_2''\|_{L^p}\leq&\;C(p,\e,\mu,\nu,G)r^2 \|H_1''-H_2''\|_{L^p}(1+\d R^2)^{1/2}\\
&\; +Cr^2(\D m_\b+\D M_0+\|H_1'-H_2'\|_{\dot{C}^\b}) (1+\|h_1''\|_{L^p}+\|H_1''\|_{L^p}+\|H_2''\|_{L^p}),
\end{split}
\label{eqn: W 2 p estimate for difference of Phi_i'}
\eeq
%
%
%
%
where $C = C(p,\e,\mu,\nu,r,R,G)$ unless otherwise stated.
\begin{proof}
We justify as before.
By Lemmas \ref{lem: C 1 alpha bounds for two different pressures in reference coordinate}, \ref{lem: L inf estimate of difference same source at outer interface}-\ref{lem: W 1p estimate of derivative of growth potential outer},
\ref{lem: bound error between c and c_*} and \ref{lem: bounds for generalized source term},
\beq
\begin{split}
&\;\|F_1'(\th) (e_r\cdot \na(\G*g_{\P,1}(X_1))|_{\tilde{\g}_1}-\tilde{c}_*)
-F_2'(\th) (e_r\cdot \na(\G*g_{\P,2}(X_2))|_{\tilde{\g}_2}-\tilde{c}_*)\|_{\dot{C}^\b}\\
\leq &\;Cr^2 \|H_1'-H_2'\|_{\dot{C}^\b}  (m_{0,1}+M_{0,1})
+Cr^2\|H_2'\|_{\dot{C}^\b} (\D m_\b+\D M_\b),
\end{split}
\label{eqn: Holder estimate for difference of ingredient 1 in R Phi'}
\eeq
where $C = C(\b,\mu,\nu,r,R,G)$.
Here we used the fact that, by \eqref{eqn: def of c characteristic normal derivative outer interface}, \eqref{eqn: def of c characteristic normal derivative for actual solution} and \eqref{eqn: outer potential segregated}, with $\sigma$ being the unit outer normal vector of $\pa B_{r(1+3\d)}$,
\beqo
\tilde{c}_{g_{\P,i}} = -\f{1}{2\pi R} \int_{B_{r(1+4\d)}\backslash B_{r(1+3\d)}} \nu \D (\eta \tilde{p}_i)\,dX = \f{\nu}{2\pi R}\int_{\pa B_{r(1+3\d)}}\f{\pa \tilde{p}_i}{\pa \sigma} \,dy = \tilde{c}_i,
\eeqo
which yields by \eqref{eqn: stability of normal velocity of inner interface} that
\beq
|\tilde{c}_1-\tilde{c}_2| \leq \f{Cr^2}{R}(\D m_0 + \D M_0).
\label{eqn: difference of tilde c outer interface segregated}
\eeq
Similarly,
\beq
\begin{split}
&\;\|F_1(\th) e_\th\cdot \na(\G*g_{\P,1}(X_1))|_{\tilde{\g}_1}
-F_2(\th) e_\th\cdot \na(\G*g_{\P,2}(X_2))|_{\tilde{\g}_2}\|_{\dot{C}^\b}\\
\leq &\;Cr^2 (\D m_\b+\D M_\b).
\end{split}
\label{eqn: Holder estimate for difference of ingredient 2 in R Phi'}
\eeq
Again by \eqref{eqn: smallness of two solutions}, Lemma \ref{lem: Holder estimate of singular integral normal component} and Lemma \ref{lem: Holder estimate of singular integral normal component difference}, 
\beq
\begin{split}
&\;\|\tilde{\g}_1'^\perp\cdot \CK_{\tilde{\g}_1} \phi_1' - \tilde{\g}_2'^\perp\cdot \CK_{\tilde{\g}_2} \phi_2'\|_{\dot{C}^{\b}}\\
\leq &\; C(\b)(\|H_1-H_2\|_{C^{1,\b}}\|\phi_1'\|_{\dot{C}^\b} + \|H_2'\|_{\dot{C}^\b}\|\phi_1'-\phi_2'\|_{\dot{C}^\b}).
\end{split}
\label{eqn: Holder estimate for difference of ingredient 3 in R Phi'}
\eeq
By \eqref{eqn: def of tilde R Phi' big_i}, \eqref{eqn: Holder estimate for difference of ingredient 1 in R Phi'}, \eqref{eqn: Holder estimate for difference of ingredient 2 in R Phi'}, \eqref{eqn: Holder estimate for difference of ingredient 3 in R Phi'} and Proposition \ref{prop: Holder estimate for jump of potential and potential along outer interface},
\beq
\begin{split}
&\;\|\tilde{\CR}_{\phi_1'}-\tilde{\CR}_{\phi_2'}\|_{\dot{C}^\b} \\
\leq&\; Cr^2 \|H_1'-H_2'\|_{\dot{C}^\b}  (\|h_1'\|_{\dot{C}^\beta}+\|H_1'\|_{\dot{C}^\beta}+m_{0,1}+M_{0,1})
+Cr^2 (\D m_\b+\D M_\b)\\
&\;+C(\b) \|H_2'\|_{\dot{C}^\b}\|\phi_1'-\phi_2'\|_{\dot{C}^\b},
\end{split}
\label{eqn: Holder estimate for difference of R Phi'}
\eeq
where $C = C(p,\b,\mu,\nu,r,R,G)$.
Combining this with \eqref{eqn: new equation for Phi big_i' segregated} yields \eqref{eqn: Holder estimate for difference of Phi_i'}.

In addition, thanks to \eqref{eqn: cut-off potential to replace the source and double layer potential on inner boundary},
\beq
\begin{split}
&\;\|F_1'(\th) (e_r\cdot \na(\G*g_{\P,1}(X_1))|_{\tilde{\g}_1}-\tilde{c}_*)
-F_2'(\th) (e_r\cdot \na(\G*g_{\P,2}(X_2))|_{\tilde{\g}_2}-\tilde{c}_*)\|_{\dot{W}^{1,p}}\\
\leq &\;R \|H_1''-H_2''\|_{L^p}\|e_r\cdot \na(\G*g_{\P,1}(X_1))|_{\tilde{\g}_1}-\tilde{c}_*\|_{L^\infty}\\
&\;+Cr^2\|H_1'-H_2'\|_{L^\infty} (m_{0,1} + M_{0,1})+Cr^2
\|H_2''\|_{L^p}(\D m_\b+\D M_\b)\\
\leq &\;C(\mu,\nu,G,\b,\b')r^2 \|H_1''-H_2''\|_{L^p}(\|h_1'\|_{\dot{C}^\b}+\|H_1'\|_{\dot{C}^\b}+(m_{0,1}+M_{0,1})(1+\d R^2)^{1/2})\\
&\;+Cr^2\|H_1'-H_2'\|_{L^\infty} (m_{0,1} + M_{0,1})+Cr^2
\|H_2''\|_{L^p}(\D m_\b+\D M_\b),
\end{split}
\label{eqn: W 1 p estimate for difference of ingredient 1 in R Phi'}
\eeq
and
\beq
\begin{split}
&\;\|F_1(\th) e_\th\cdot \na(\G*g_{\P,1}(X_1))|_{\tilde{\g}_1}
-F_2(\th) e_\th\cdot \na(\G*g_{\P,2}(X_2))|_{\tilde{\g}_2}\|_{\dot{W}^{1,p}}\\
\leq &\;Cr^2 (\D m_\b+\D M_\b).
\end{split}
\label{eqn: W 1 p estimate for difference of ingredient 2 in R Phi'}
\eeq
By Lemma \ref{lem: W 1p estimate of normal component of singular integral} and Lemma \ref{lem: W 1p estimate of normal component of singular integral difference},
\beq
\begin{split}
&\;\|\tilde{\g}_1'^\perp\cdot \CK_{\tilde{\g}_1} \phi_1' - \tilde{\g}_2'^\perp\cdot \CK_{\tilde{\g}_2} \phi_2'\|_{\dot{W}^{1,p}}\\
\leq &\; C(\D M_0+\|H_1'-H_2'\|_{\dot{C}^\b}) (\|H_1''\|_{L^p}+\|H_2''\|_{L^p})\|\phi_1'\|_{\dot{C}^\b}\\
&\;+C(p,\b)\|H_1''-H_2''\|_{L^p} \| \phi_1'\|_{\dot{C}^\b}+C\D M_0 \|\phi_1''\|_{L^p}\\
&\;+C(p,\b)(\| H_2''\|_{L^p}\|\phi_1'-\phi_2'\|_{\dot{C}^\b} +\|H_2'\|_{L^\infty}\|\phi_1''-\phi_2''\|_{L^p}).
\end{split}
\label{eqn: W 1 p estimate for difference of ingredient 3 in R Phi'}
\eeq
Combining \eqref{eqn: W 1 p estimate for difference of ingredient 1 in R Phi'}-\eqref{eqn: W 1 p estimate for difference of ingredient 3 in R Phi'} with \eqref{eqn: Holder estimate for difference of Phi_i'} and Propositions \ref{prop: Holder estimate for jump of potential and potential along outer interface} and \ref{prop: W 1p estimate for jump of potential and potential along outer interface}, we find
\beq
\begin{split}
&\;\|\tilde{\CR}_{\phi_1'}-\tilde{\CR}_{\phi_2'}\|_{\dot{W}^{1,p}} \\
\leq &\;C(p,\e,\mu,\nu,G)r^2 \|H_1''-H_2''\|_{L^p}(\|h_1'\|_{\dot{C}^\b}+\|H_1'\|_{\dot{C}^\b}+(m_{0,1}+M_{0,1})(1+\d R^2)^{1/2})\\
&\;+Cr^2(\|H_1'-H_2'\|_{\dot{C}^\b}+\D m_\b+\D M_0)(1+\|h_1''\|_{L^p}+
\|H_1''\|_{L^p}+\|H_2''\|_{L^p})\\
&\;+C(p,\b)\|H_2'\|_{L^\infty}\|\phi_1''-\phi_2''\|_{L^p}.
\end{split}
\label{eqn: W 1 p estimate for difference of R Phi'}
\eeq
Then \eqref{eqn: W 2 p estimate for difference of Phi_i'} follows from \eqref{eqn: new equation for Phi big_i' segregated} and \eqref{eqn: W 1 p estimate for difference of R Phi'}.
%
%
\end{proof}
\end{proposition}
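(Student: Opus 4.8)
The plan is to prove Proposition~\ref{prop: estimates for difference of Phi_big i segregated} by closely mirroring the argument of Proposition~\ref{prop: estimates for difference of varphi_i segregated}, but with the roles of inner and outer interfaces interchanged. The strategy is to control $\tilde{\CR}_{\phi_1'}-\tilde{\CR}_{\phi_2'}$ in both the $\dot{C}^\b$- and $\dot{W}^{1,p}$-norms in terms of the differences $h_1-h_2$, $H_1-H_2$ and $\phi_1'-\phi_2'$, and then absorb the small $\phi_1'-\phi_2'$ contributions using the $L^\infty$-smallness of $H_2'$ coming from \eqref{eqn: smallness of two solutions}. Because $\tilde{\CR}_{\phi_i'}$ decomposes into three pieces --- a normal-derivative term $F_i'(e_r\cdot\na(\G*g_{\P,i})|_{\tilde\g_i}-\tilde{c}_*)$, a tangential-derivative term $F_ie_\th\cdot\na(\G*g_{\P,i})|_{\tilde\g_i}$, and a singular-integral term $\tilde\g_i'^\perp\cdot\CK_{\tilde\g_i}\phi_i'$ --- each piece can be estimated by invoking the relevant lemmas from Sections~\ref{sec: gradient estimate of growth potential} and \ref{sec: estimates for singular integral operators}, together with the pressure-difference bounds of Lemma~\ref{lem: C 1 alpha bounds for two different pressures in reference coordinate} and the generalized source-term bounds of Lemma~\ref{lem: bounds for generalized source term}.

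First I would handle the $\dot{C}^\b$-estimate. For the normal-derivative term I would split the difference into three: one where $F_1'-F_2'$ appears with a single potential, one measuring the difference of $\na(\G*g_{\P,1})$ evaluated along $\tilde\g_1$ versus $\tilde\g_2$, and one with the source-term difference $g_{\P,1}-g_{\P,2}$; Lemma~\ref{lem: L inf estimate of difference same source at outer interface}, Lemma~\ref{lem: L^inf estimate of derivative of growth potential outer} and Lemma~\ref{lem: W 1p estimate of difference of tangent derivative outer interface} (applied with $p=(1-\b)^{-1}$, plus interpolation) supply the needed bounds. A subtlety specific to the outer interface is the identification $\tilde{c}_{g_{\P,i}}=\tilde{c}_i$: because $g_{\P,i}$ is $\nu\,\D(\eta\tilde p_i)$ on the annulus $\overline{B}_{r(1+4\d)}\backslash B_{r(1+3\d)}$ where the diffeomorphism is the identity, the divergence theorem turns the defining integral of $\tilde{c}_{g_{\P,i}}$ into a flux of $\na\tilde p_i$ across $\pa B_{r(1+3\d)}$, which equals $\tilde c_i$; this yields \eqref{eqn: difference of tilde c outer interface segregated} via \eqref{eqn: stability of normal velocity of inner interface}. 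For the tangential term and the singular-integral term the same lemmas plus Lemma~\ref{lem: Holder estimate of singular integral normal component} and Lemma~\ref{lem: Holder estimate of singular integral normal component difference} give estimates in which $\phi_1'-\phi_2'$ appears only multiplied by $\|H_2'\|_{\dot{C}^\b}$, which is small. Plugging into \eqref{eqn: new equation for Phi big_i' segregated} and using Proposition~\ref{prop: Holder estimate for jump of potential and potential along outer interface} to bound $\phi_1'$ gives \eqref{eqn: Holder estimate for difference of Phi_i'}.

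For the $\dot{W}^{1,p}$-estimate, I would again split each of the three pieces, now also using Lemma~\ref{lem: W 1p estimate of difference of normal derivative outer interface}, Lemma~\ref{lem: W 1p estimate of derivative of growth potential outer}, Lemma~\ref{lem: W 1p estimate of normal component of singular integral} and Lemma~\ref{lem: W 1p estimate of normal component of singular integral difference}. As in \eqref{eqn: special handling of L^inf norm of gradient of the growth potential}, the key refinement is that the $L^\infty$-bound for $e_r\cdot\na(\G*g_{\P,1}(X_1))|_{\tilde\g_1}-\tilde{c}_*$ must carry explicit $r,R$-dependence; this is obtained through \eqref{eqn: cut-off potential to replace the source and double layer potential on inner boundary}, which re-expresses $\G*g_{\P,1}$ near $\tilde\g_1$ via $\G*(G(p_1)\chi_{\Om_1})-\CD_{\g_1}[\va_1]$, followed by Lemma~\ref{lem: L^inf estimate of derivative of growth potential outer}, Lemma~\ref{lem: interaction kernel from inner to outer}, Lemma~\ref{lem: bound error between c and c_*} and Proposition~\ref{prop: Holder estimate for jump of potential and potential along outer interface}. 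The singular-integral difference contributes a term proportional to $\|H_2'\|_{L^\infty}\|\phi_1''-\phi_2''\|_{L^p}$, which is absorbed into the left side by smallness, while all the remaining terms are controlled by $\|h_i''\|_{L^p}$, $\|H_i''\|_{L^p}$ and the difference quantities. Combining with \eqref{eqn: Holder estimate for difference of Phi_i'} and Proposition~\ref{prop: W 1p estimate for jump of potential and potential along outer interface} then yields \eqref{eqn: W 2 p estimate for difference of Phi_i'}.

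The main obstacle I anticipate is purely bookkeeping: the number of terms generated by the three-way splittings of three separate pieces, each needing a distinct lemma and each of which must be tracked with the correct powers of $r$, $R$ and $\d$ so that the absorbed $\phi_1'-\phi_2'$ and $\phi_1''-\phi_2''$ terms genuinely come with small coefficients. There is no new analytical difficulty beyond what Proposition~\ref{prop: estimates for difference of varphi_i segregated} already required; the essential new input --- the flux identity $\tilde{c}_{g_{\P,i}}=\tilde c_i$ and the re-representation \eqref{eqn: cut-off potential to replace the source and double layer potential on inner boundary} near $\tilde\g_i$ --- is what makes the outer-interface estimates close with the same structure. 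I would therefore present the proof by stating the three sub-estimates for each norm, citing the relevant lemmas, and indicating the absorption step, leaving the routine term-by-term integration to the reader as was done for the earlier propositions.
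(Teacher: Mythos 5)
Your proposal is correct and follows essentially the same route as the paper: the same three-way decomposition of $\tilde{\CR}_{\phi_i'}$, the same flux identity $\tilde{c}_{g_{\P,i}}=\tilde{c}_i$, the same use of \eqref{eqn: cut-off potential to replace the source and double layer potential on inner boundary} to obtain the refined $L^\infty$-bound for $e_r\cdot\na(\G*g_{\P,1}(X_1))|_{\tilde\g_1}-\tilde{c}_*$ with explicit $r,R$-dependence, and the same absorption of the $\|H_2'\|$-weighted $\phi_1'-\phi_2'$ and $\phi_1''-\phi_2''$ contributions by smallness. The lemmas you invoke and the structure of the two sub-estimates match the paper's proof.
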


\begin{lem}\label{lem: difference of residue in phi and Phi}
Under the assumption of Proposition \ref{prop: estimates for difference of varphi_i segregated},
\beq
\begin{split}
&\;\|\tilde{\CR}_{[\va_1]'}-\tilde{\CR}_{[\va_2]'}\|_{\dot{C}^\b}\\
\leq &\; Cr^2(\D m_\b + \D M_\b)\\
&\;+Cr^2\|h_1-h_2\|_{\dot{C}^{1,\b}}(\|h_1'\|_{\dot{C}^\b}+ \|h_2'\|_{\dot{C}^\b}+ \|H_1'\|_{\dot{C}^\b}+ m_{0,1} + M_{0,1}),
\end{split}
\eeq
\beq
\begin{split}
&\;\|\tilde{\CR}_{[\va_1]'}-\tilde{\CR}_{[\va_2]'}\|_{\dot{W}^{1,p}}\\
\leq &\;C(p,\e,\mu,\nu,G)r^2\|h_1''-h_2''\|_{L^p}(\|h_1'\|_{\dot{C}^\b}+\|H_1'\|_{\dot{C}^\b}+(m_{0,1}+m_{0,2}+M_{0,1})(1+\d R^2)^{1/2})\\
&\;+Cr^2(\|h_1'-h_2'\|_{\dot{C}^\b}+ \D m_0 + \D M_\b)
(1+\|h_1''\|_{L^p}+\|h_2''\|_{L^p}+\|H_1''\|_{L^p}),
\end{split}
\eeq
\beq
\begin{split}
&\;\|\tilde{\CR}_{\phi_1'}-\tilde{\CR}_{\phi_2'}\|_{\dot{C}^\b} \\
\leq&\; Cr^2 (\D m_\b+\D M_\b)\\
&\;+Cr^2\|H_1-H_2\|_{\dot{C}^{1,\b}}(\|h_1'\|_{\dot{C}^\b}+\|H_1'\|_{\dot{C}^\b}+\|H_2'\|_{\dot{C}^\b} +m_{0,1}+ M_{0,1}),
\end{split}
\eeq
and
\beq
\begin{split}
&\;\|\tilde{\CR}_{\phi_1'}-\tilde{\CR}_{\phi_2'}\|_{\dot{W}^{1,p}} \\
\leq &\;C(p,\e,\mu,\nu,G)r^2 \|H_1''-H_2''\|_{L^p}(\|h_1'\|_{\dot{C}^\b}+\|H_1'\|_{\dot{C}^\b}+(m_{0,1}+M_{0,1}+M_{0,2})(1+\d R^2)^{1/2})\\
&\;+Cr^2(\|H_1'-H_2'\|_{\dot{C}^\b}+\D m_\b+\D M_0)(1+\|h_1''\|_{L^p}+
\|H_1''\|_{L^p}+\|H_2''\|_{L^p}),
\end{split}
\eeq
where $C = C(p,\e,\mu,\nu,r,R, G)$ unless otherwise stated.
\begin{proof}
It suffices to apply Proposition \ref{prop: estimates for difference of varphi_i segregated} and Proposition \ref{prop: estimates for difference of Phi_big i segregated} to \eqref{eqn: Holder estimate for difference of R varphi'},
\eqref{eqn: W 1 p estimate for difference of R varphi'}, \eqref{eqn: Holder estimate for difference of R Phi'}
and \eqref{eqn: W 1 p estimate for difference of R Phi'}.
\end{proof}
\end{lem}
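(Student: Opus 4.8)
The plan is to obtain the four bounds by straightforward substitution, with no new analytic input. During the proofs of Proposition \ref{prop: estimates for difference of varphi_i segregated} and Proposition \ref{prop: estimates for difference of Phi_big i segregated} we already derived the intermediate estimates \eqref{eqn: Holder estimate for difference of R varphi'}, \eqref{eqn: W 1 p estimate for difference of R varphi'}, \eqref{eqn: Holder estimate for difference of R Phi'} and \eqref{eqn: W 1 p estimate for difference of R Phi'}, which control $\tilde{\CR}_{[\va_1]'}-\tilde{\CR}_{[\va_2]'}$ and $\tilde{\CR}_{\phi_1'}-\tilde{\CR}_{\phi_2'}$ in $\dot{C}^\b$ and $\dot{W}^{1,p}$ in terms of (i) quantities built directly from $h_i, H_i$ and their differences (the combinations $\D m_\b, \D M_\b, \D m_0, \D M_0$, as well as $m_{0,i}, M_{0,i}$, $\|h_i'\|_{\dot{C}^\b}$, $\|H_i'\|_{\dot{C}^\b}$, $\|h_i''\|_{L^p}$, $\|H_i''\|_{L^p}$) and (ii) the potential differences $\|[\va_1]'-[\va_2]'\|_{\dot{C}^\b}$, $\|[\va_1]''-[\va_2]''\|_{L^p}$, $\|\phi_1'-\phi_2'\|_{\dot{C}^\b}$, $\|\phi_1''-\phi_2''\|_{L^p}$, which enter only with small prefactors of the form $C(\b)\|h_2'\|_{\dot{C}^\b}$, $C(p,\b)\|h_2'\|_{L^\infty}$, $C(\b)\|H_2'\|_{\dot{C}^\b}$, $C(p,\b)\|H_2'\|_{L^\infty}$. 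I would insert into these the bounds \eqref{eqn: Holder norm of difference of varphi' jump}, \eqref{eqn: W 2 p estimate for difference of varphi_i jump} from Proposition \ref{prop: estimates for difference of varphi_i segregated} and \eqref{eqn: Holder estimate for difference of Phi_i'}, \eqref{eqn: W 2 p estimate for difference of Phi_i'} from Proposition \ref{prop: estimates for difference of Phi_big i segregated}, which already express the potential differences purely in terms of the $h_i, H_i$ data, with the self-referential contributions having been absorbed in the proofs of those Propositions; hence there is no circularity and the substitution closes in one step.

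After this substitution, every term on the right-hand side is a product of a power of $r$, at most one of the small geometric quantities $m_{0,i}, M_{0,i}, \|h_i'\|_{\dot{C}^\b}, \|H_i'\|_{\dot{C}^\b}$ (or, in the $\dot{W}^{1,p}$ case, a factor $1+\|h_i''\|_{L^p}+\|H_i''\|_{L^p}$), and a difference quantity among $\D m_\b, \D M_\b, \D m_0, \D M_0, \|h_1'-h_2'\|_{\dot{C}^\b}, \|h_1''-h_2''\|_{L^p}$, and similarly for $H$. I would then simplify using: the smallness hypothesis \eqref{eqn: smallness of two solutions}, so that the small prefactors multiplying $\D m_\b, \D M_\b$ may be dropped into the stated $Cr^2(\D m_\b+\D M_\b)$ term; the fact that $h_1'-h_2'$ and $H_1'-H_2'$ have mean zero on $\BT$ together with interpolation, so that the $\D m_0, \D M_0$ produced by the $\dot{C}^\b$-substitutions can be traded for $\D m_\b, \D M_\b$; the identity $\|h_1-h_2\|_{\dot{C}^{1,\b}}=\|h_1'-h_2'\|_{\dot{C}^\b}$; and Lemma \ref{lem: estimate for p_*} to fold the remaining $|c_*|r$-type factors into constants depending only on $\mu,\nu,r,R,G$. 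Since $\b$ and $\b'$ are fixed multiples of quantities determined by $p$ and $\e$, all constants written $C(\b)$ or $C(p,\b,\b')$ may be rewritten as $C(p,\e,\mu,\nu,r,R,G)$. This produces exactly the four displayed inequalities.

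The only delicate part, and therefore the place I would spell out one representative computation rather than leave it implicit, is the bookkeeping of the cross terms of the shape ``geometric deviation $\times$ difference'': one must check that, for instance, the self-referential term $C(\b)\|h_2'\|_{\dot{C}^\b}\|[\va_1]'-[\va_2]'\|_{\dot{C}^\b}$ in \eqref{eqn: Holder estimate for difference of R varphi'}, once \eqref{eqn: Holder norm of difference of varphi' jump} is inserted, splits into a part that adds the extra $\|h_2'\|_{\dot{C}^\b}$ factor appearing next to $\|h_1-h_2\|_{\dot{C}^{1,\b}}$ in the first estimate and a part absorbed into $Cr^2(\D m_\b+\D M_\b)$ by smallness, and likewise that in the $\dot{W}^{1,p}$ estimates the term $C(p,\b)\|h_2'\|_{L^\infty}\|[\va_1]''-[\va_2]''\|_{L^p}$ is dominated, after using \eqref{eqn: W 2 p estimate for difference of varphi_i jump} and smallness, by the claimed right-hand side. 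No singular-integral or elliptic estimate beyond those already established is needed, so the proof is simply the assembly recorded in the one-line argument.
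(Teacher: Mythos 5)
Your proposal is correct and is exactly the paper's one-line argument: substitute the conclusions of Propositions \ref{prop: estimates for difference of varphi_i segregated} and \ref{prop: estimates for difference of Phi_big i segregated} into the intermediate bounds \eqref{eqn: Holder estimate for difference of R varphi'}, \eqref{eqn: W 1 p estimate for difference of R varphi'}, \eqref{eqn: Holder estimate for difference of R Phi'}, \eqref{eqn: W 1 p estimate for difference of R Phi'} and absorb the small prefactors. Two cosmetic slips do not affect correctness: no residual $|c_*|r$ factors actually appear in those intermediate estimates (they were already absorbed when proving the propositions), and the passage $\D m_0 \lesssim \D m_\b$ is most cleanly seen from the $\d$-dependence being allowed in $C=C(p,\e,\mu,\nu,r,R,G)$, rather than from mean-zero interpolation alone.
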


\begin{lem}\label{lem: difference of remainder terms in h and H equations}
Under the assumption of Proposition \ref{prop: estimates for difference of varphi_i segregated},
\beq
\begin{split}
&\;\|\tilde{\CR}_{h_1}-\tilde{\CR}_{h_2}\|_{W^{1,p}}\\
\leq &\;C(p,\e,\mu,\nu,G)r\|h_1''-h_2''\|_{L^p}\\
&\;\quad \cdot (\|h_1'\|_{\dot{C}^\b}+\|h_2'\|_{\dot{C}^\b}+\|H_1'\|_{\dot{C}^\b}+(m_{0,1}+m_{0,2}+M_{0,1})(1+\d R^2)^{1/2})\\
&\;+Cr(\|h_1'-h_2'\|_{\dot{C}^\b}+ \D m_0 + \D M_\b)
(1+\|h_1''\|_{L^p}+\|h_2''\|_{L^p}+\|H_1''\|_{L^p}).
\end{split}
\label{eqn: difference of R h_i}
\eeq
and
\beq
\begin{split}
&\;\|\tilde{\CR}_{H_1}-\tilde{\CR}_{H_2}\|_{W^{1,p}}\\
\leq &\;C(p,\e,\mu,\nu,G)R^{-1}r^2  \|H_1''-H_2''\|_{L^p}\\
&\;\quad \cdot (\|h_1'\|_{\dot{C}^\b}+\|H_1'\|_{\dot{C}^\b}+\|H_2'\|_{\dot{C}^\b}+(m_{0,1}+M_{0,1}+M_{0,2})(1+\d R^2)^{1/2})\\
&\;+CR^{-1}r^2(\|H_1'-H_2'\|_{\dot{C}^\b}+\D m_\b+\D M_0) (1+\|h_1''\|_{L^p}+
\|H_1''\|_{L^p}+\|H_2''\|_{L^p}),
\end{split}
\label{eqn: difference of R H big_i}
\eeq
where $C = C(p,\e,\mu,\nu,r,R, G)$ unless otherwise stated.

\begin{proof}
We argue as in Lemma \ref{lem: W 1p estimate for remainder terms}.
Note that $\tilde{\CR}_{[\va_i]'}$ has mean zero on $\BT$.
By Poincar\'{e} inequality and Lemmas \ref{lem: W 1p difference from Hilbert transform} and \ref{lem: W 1p difference from Hilbert transform difference},
\beq
\begin{split}
&\;\left\|\f{1}{f_1}\g_1'\cdot \CK_{\g_1}\tilde{\CR}_{[\va_1]'}-\f{1}{f_2}\g_2'\cdot \CK_{\g_2}\tilde{\CR}_{[\va_2]'}\right\|_{W^{1,p}}\\
\leq &\;C\left\|\f{1}{f_1}-\f{1}{f_2}\right\|_{W^{1,\infty}}\|\g_1'\cdot \CK_{\g_1}\tilde{\CR}_{[\va_1]'}\|_{\dot{W}^{1,p}}\\
&\;+C\|f_2^{-1}\|_{W^{1,\infty}}\|\g_1'\cdot \CK_{\g_1}\tilde{\CR}_{[\va_1]'}-\g_2'\cdot \CK_{\g_2}\tilde{\CR}_{[\va_1]'}\|_{\dot{W}^{1,p}}\\
&\;+C\|f_2^{-1}\|_{W^{1,\infty}}\|\g_2'\cdot \CK_{\g_2}(\tilde{\CR}_{[\va_1]'}-\tilde{\CR}_{[\va_2]'})\|_{\dot{W}^{1,p}}\\
\leq &\;Cr^{-1}\D m_0 \|\tilde{\CR}_{[\va_1]'}\|_{\dot{W}^{1,p}}\\
&\;+C(p,\b)r^{-1}\|\tilde{\CR}_{[\va_1]'}\|_{\dot{C}^\b}(\|h_1''-h_2''\|_{L^p}+(\|h_1''\|_{L^p}+\|h_2''\|_{L^p}) ( \|h_1'-h_2'\|_{\dot{C}^\b}+\D m_0))\\
&\;+C(p,\b)r^{-1}(\|\tilde{\CR}_{[\va_1]'}-\tilde{\CR}_{[\va_2]'}\|_{\dot{W}^{1,p}}+\|\tilde{\CR}_{[\va_1]'}-\tilde{\CR}_{[\va_2]'}\|_{\dot{C}^{\b}}\|h_2''\|_{L^p}).
\end{split}
\eeq
By \eqref{eqn: new equation for phi_i' segregated} and \eqref{eqn: new equation for Phi big_i' segregated},
\begin{align}
\|\tilde{\CR}_{[\va_i]'}\|_{\dot{C}^\b}+\|\tilde{\CR}_{\phi_i'}\|_{\dot{C}^\b}\leq &\; \|[\va_i]'\|_{\dot{C}^\b}+\|\phi_i'\|_{\dot{C}^\b}+C(\mu,\nu,G)r^2(\|h_i'\|_{\dot{C}^\b}+\|H_i'\|_{\dot{C}^\b}),\\
\|\tilde{\CR}_{[\va_i]'}\|_{\dot{W}^{1,p}}+\|\tilde{\CR}_{\phi_i'}\|_{\dot{W}^{1,p}}\leq &\; \|[\va_i]'\|_{\dot{W}^{1,p}}+\|\phi_i'\|_{\dot{W}^{1,p}}+C(\mu,\nu,G)r^2(\|h_i'\|_{\dot{W}^{1,p}}+\|H_i'\|_{\dot{W}^{1,p}}).
\end{align}
So by 
Proposition \ref{prop: Holder estimate for jump of potential and potential along outer interface}, Proposition \ref{prop: W 1p estimate for jump of potential and potential along outer interface} 
and Lemma \ref{lem: difference of residue in phi and Phi},
\beq
\begin{split}
&\;\left\|\f{1}{f_1}\g_1'\cdot \CK_{\g_1}\tilde{\CR}_{[\va_1]'}-\f{1}{f_2}\g_2'\cdot \CK_{\g_2}\tilde{\CR}_{[\va_2]'}\right\|_{W^{1,p}}\\
\leq &\;Cr^{-1}\D m_0 (\|[\va_1]''\|_{L^p}+\|\phi_1''\|_{L^p}+r^2(\|h_1''\|_{L^p}+\|H_1''\|_{L^p}))\\
&\;+C(p,\e,\mu,\nu,G)r^{-1}(\|[\va_1]'\|_{\dot{C}^\b}+\|\phi_1'\|_{\dot{C}^\b}+r^2(\|h_1'\|_{\dot{C}^\b}+\|H_1'\|_{\dot{C}^\b}))\\
&\;\qquad \cdot (\|h_1''-h_2''\|_{L^p}+(\|h_1''\|_{L^p}+\|h_2''\|_{L^p}) ( \|h_1'-h_2'\|_{\dot{C}^\b}+\D m_0))\\
&\;+C(p,\e,\mu,\nu,G)r\|h_1''-h_2''\|_{L^p}(\|h_1'\|_{\dot{C}^\b}+\|H_1'\|_{\dot{C}^\b}+(m_{0,1}+m_{0,2}+M_{0,1})(1+\d R^2)^{1/2})\\
&\;+Cr(\|h_1'-h_2'\|_{\dot{C}^\b}+ \D m_0 + \D M_\b)
(1+\|h_1''\|_{L^p}+\|h_2''\|_{L^p}+\|H_1''\|_{L^p})\\
&\;+Cr(\D m_\b + \D M_\b)\|h_2''\|_{L^p}\\
&\;+Cr\|h_1-h_2\|_{\dot{C}^{1,\b}}(\|h_1'\|_{\dot{C}^\b}+ \|h_2'\|_{\dot{C}^\b}+ \|H_1'\|_{\dot{C}^\b}+ m_{0,1} + M_{0,1})\|h_2''\|_{L^p},\\
%
\leq &\;C(p,\e,\mu,\nu,G)r\|h_1''-h_2''\|_{L^p}(\|h_1'\|_{\dot{C}^\b}+\|H_1'\|_{\dot{C}^\b}+(m_{0,1}+m_{0,2}+M_{0,1})(1+\d R^2)^{1/2})\\
&\;+Cr(\|h_1'-h_2'\|_{\dot{C}^\b}+ \D m_0 + \D M_\b)
(1+\|h_1''\|_{L^p}+\|h_2''\|_{L^p}+\|H_1''\|_{L^p}),
%
\end{split}
\eeq
where $C = C(p,\e,\mu,\nu,r,R, G)$ unless otherwise stated.
%
%
Similarly,
\beq
\begin{split}
&\;\left\|\left(\f{1}{f_1}\g_1'\cdot \CK_{\g_1}f_1'-\f{1}{2r}\CH f_1'\right)-\left(\f{1}{f_2}\g_2'\cdot \CK_{\g_2}f_2'-\f{1}{2r}\CH f_2'\right)\right\|_{W^{1,p}}\\
\leq &\;\left\|\f{1}{f_1}-\f{1}{f_2}\right\|_{W^{1,\infty}}\left\|\g_1'\cdot \CK_{\g_1}f_1'\right\|_{\dot{W}^{1,p}}+\left\|\f{1}{f_2}\right\|_{W^{1,\infty}}\left\|\g_1'\cdot \CK_{\g_1}f_1'-\g_2'\cdot \CK_{\g_2}f_1'\right\|_{\dot{W}^{1,p}}\\
&\;+\left\|\f{1}{f_2}-\f1r\right\|_{W^{1,\infty}}\left\|\g_2'\cdot \CK_{\g_2}(f_1-f_2)'\right\|_{\dot{W}^{1,p}}+\f1r\left\|\g_2'\cdot \CK_{\g_2}(f_1-f_2)'-\f{1}{2}\CH (f_1-f_2)'\right\|_{\dot{W}^{1,p}}\\
\leq &\;C(p,\b)\|h_1''-h_2''\|_{L^p} (m_{0,1}+m_{0,2}+\|h_1'\|_{\dot{C}^\b}+\|h_2'\|_{\dot{C}^\b})\\
&\;+C(\|h_1''\|_{L^p}+\|h_2''\|_{L^p})( \|h_1'-h_2'\|_{\dot{C}^\b}+\D m_0).
\end{split}
\eeq
%
%
By \eqref{eqn: stability of normal velocity of inner interface} and Lemmas \ref{lem: L inf estimate of difference same source}-\ref{lem: W 1p estimate of derivative of growth potential inner} and \ref{lem: bounds for generalized source term},
\beq
\begin{split}
&\;\|e_r \cdot \na (\G* g_{\p,1}(X_1))|_{\g_1} -e_r \cdot \na (\G* g_{\p,2}(X_2))|_{\g_2}\|_{L^\infty}\\
\leq &\;\|e_r \cdot \na (\G* g_{\p,1}(X_1))|_{\g_1} -e_r \cdot \na (\G* g_{\p,1}(X_2))|_{\g_2}\|_{L^\infty}\\
&\;+\|e_r \cdot \na (\G* (g_{\p,1}-g_{\p,2})(X_2))|_{\g_2}\|_{L^\infty}\\
\leq &\;Cr(\D m_\b+\D M_\b)+|c_1-c_2|\\
\leq &\;Cr(\D m_\b+\D M_\b).
\end{split}
\eeq
and
\beq
\begin{split}
&\;\|e_r \cdot \na (\G* g_{\p,1}(X_1))|_{\g_1} -e_r \cdot \na (\G* g_{\p,2}(X_2))|_{\g_2}\|_{\dot{W}^{1,p}}\\
\leq &\;Cr(\D m_\b+\D M_\b).
\end{split}
\eeq
Note that this term is not of mean zero on $\BT$, so we have to bound its $L^\infty$-norm and $\dot{W}^{1,p}$-seminorm in order to prove \eqref{eqn: difference of R h_i}.
Finally,
\beq
\begin{split}
&\;\left\|\f{f_1'}{f_1}e_\th\cdot \na (\G* g_{\p,1}(X_1))|_{\g_1}-\f{f_2'}{f_2}e_\th\cdot \na (\G* g_{\p,2}(X_2))|_{\g_2}\right\|_{L^\infty}\\
\leq &\;C\left\|\f{f_1'}{f_1}-\f{f_2'}{f_2}\right\|_{L^\infty}\|e_\th\cdot \na (\G* g_{\p,1}(X_1))|_{\g_1}\|_{L^\infty}\\
&\;+C\left\|\f{f_2'}{f_2}\right\|_{L^\infty}\|(e_\th\cdot \na (\G* g_{\p,1}(X_1))|_{\g_1}-e_\th\cdot \na (\G* g_{\p,2}(X_2))|_{\g_2})\|_{L^\infty}\\
%
%
\leq &\;Cr(m_{0,1}+m_{0,2}+ M_{0,1})(\D m_\b+\D M_\b),
\end{split}
\eeq
and by proceeding as in \eqref{eqn: W 1p difference of a component of growth potential crude form}-\eqref{eqn: W 1p difference of a component of growth potential},
%
%
%
\beq
\begin{split}
&\;\left\|\f{f_1'}{f_1}e_\th\cdot \na (\G* g_{\p,1}(X_1))|_{\g_1}-\f{f_2'}{f_2}e_\th\cdot \na (\G* g_{\p,2}(X_2))|_{\g_2}\right\|_{\dot{W}^{1,p}}\\
\leq &\;C(p,\e,\mu,\nu,G) r\|h_1''-h_2''\|_{L^p}(\|h_1'\|_{\dot{C}^\b}+\|H_1'\|_{\dot{C}^\b}+(m_{0,1}+M_{0,1})(1+\d R^2)^{1/2})\\
&\;+Cr(m_{\b,1}+M_{0,1}+\|h_2''\|_{L^p})(\D m_\b+\D M_\b).
\end{split}
\eeq
Combining these estimates with \eqref{eqn: new representation of tilde R h_i}, we use the fact $|c_*|\leq Cr$ by Lemma \ref{lem: estimate for p_*} to prove \eqref{eqn: difference of R h_i}.

To show \eqref{eqn: difference of R H big_i}, we derive as before.
%
%
%
\beq
\begin{split}
&\;\left\|\f{1}{F_1}\tilde{\g}_1'\cdot \CK_{\tilde{\g}_1}\tilde{\CR}_{\phi_1'}-\f{1}{F_2}\tilde{\g}_2'\cdot \CK_{\tilde{\g}_2}\tilde{\CR}_{\phi_2'}\right\|_{W^{1,p}}\\
\leq &\;C(p,\e,\mu,\nu,G)R^{-1}r^2  \|H_1''-H_2''\|_{L^p}\\
&\;\quad \cdot (\|h_1'\|_{\dot{C}^\b}+\|H_1'\|_{\dot{C}^\b}+(m_{0,1}+M_{0,1}+M_{0,2})(1+\d R^2)^{1/2})\\
&\;+CR^{-1}r^2(\|H_1'-H_2'\|_{\dot{C}^\b}+\D m_\b+\D M_0) (1+\|h_1''\|_{L^p}+
\|H_1''\|_{L^p}+\|H_2''\|_{L^p})
\end{split}
\eeq
and
\beq
\begin{split}
&\;\left\|\left(\f{1}{F_1}\tilde{\g}_1'\cdot \CK_{\tilde{\g}_1}F_1'-\f{1}{2R}\CH F_1'\right)-\left(\f{1}{F_2}\tilde{\g}_2'\cdot \CK_{\tilde{\g}_2}F_2'-\f{1}{2R}\CH F_2'\right)\right\|_{W^{1,p}}\\
\leq &\;C(p,\b)\|H_1''-H_2''\|_{L^p} (M_{0,1}+M_{0,2}+\|H_1'\|_{\dot{C}^\b}+\|H_2'\|_{\dot{C}^\b})\\
&\;+C (\|H_1''\|_{L^p}+\|H_2''\|_{L^p})(\|H_1'-H_2'\|_{\dot{C}^\b}+ \D M_0).
\end{split}
\eeq
By \eqref{eqn: difference of tilde c outer interface segregated} and Lemmas \ref{lem: L inf estimate of difference same source at outer interface}-\ref{lem: W 1p estimate of derivative of growth potential outer} and \ref{lem: bounds for generalized source term},
\beq
\begin{split}
&\;\|e_r \cdot \na (\G* g_{\P,1}(X_1))|_{\tilde{\g}_1} -e_r \cdot \na (\G* g_{\P,2}(X_2))|_{\tilde{\g}_2}\|_{L^\infty}\\
&\;+\|e_r \cdot \na (\G* g_{\P,1}(X_1))|_{\tilde{\g}_1} -e_r \cdot \na (\G* g_{\P,2}(X_2))|_{\tilde{\g}_2}\|_{\dot{W}^{1,p}}\\
\leq &\;CR^{-1}r^2(\D m_\b+\D M_\b),
\end{split}
\eeq
\beq
\begin{split}
&\;\left\|\f{F_1'}{F_1}e_\th\cdot \na (\G* g_{\P,1}(X_1))|_{\tilde{\g}_1}-\f{F_2'}{F_2}e_\th\cdot \na (\G* g_{\P,2}(X_2))|_{\tilde{\g}_2}\right\|_{L^\infty}\\
\leq
&\;C R^{-1}r^2(m_{0,1}+M_{0,1}+M_{0,2})(\D m_\b+\D M_\b),
\end{split}
\eeq
and
\beq
\begin{split}
&\;\left\|\f{F_1'}{F_1}e_\th\cdot \na (\G* g_{\P,1}(X_1))|_{\tilde{\g}_1}-\f{F_2'}{F_2}e_\th\cdot \na (\G* g_{\P,2}(X_2))|_{\tilde{\g}_2}\right\|_{\dot{W}^{1,p}}\\
\leq &\;C(p,\e,\mu,\nu,G)R^{-1}r^2\|H_1''-H_2''\|_{L^p}\\
&\;\quad \cdot (\|h_1'\|_{\dot{C}^\b}+\|H_1'\|_{\dot{C}^\b}+(m_{0,1}+M_{0,1})(1+\d R^2)^{1/2})\\
&\;+CR^{-1}r^2(m_{0,1}+M_{0,1}+\|H_2''\|_{L^p})(\D m_\b+\D M_\b).
\end{split}
\eeq
Combining these estimates and the fact $\tilde{c}_*\leq C(\mu,\nu,G)R^{-1}r^2$ with \eqref{eqn: new representation of tilde R H big_i} yields \eqref{eqn: difference of R H big_i}.
\end{proof}
\end{lem}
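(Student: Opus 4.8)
The plan is to mimic the proof of Lemma~\ref{lem: W 1p estimate for remainder terms}, starting from the representations \eqref{eqn: new representation of tilde R h_i} and \eqref{eqn: new representation of tilde R H big_i}. Each of $\tilde{\CR}_{h_i}$, $\tilde{\CR}_{H_i}$ decomposes into three pieces: a singular transform of $\tilde{\CR}_{[\va_i]'}$ (resp.\ $\tilde{\CR}_{\phi_i'}$) by $\CK_{\g_i}$ (resp.\ $\CK_{\tilde{\g}_i}$); a ``Hilbert-transform commutator'' piece built from $f_i'$ (resp.\ $F_i'$); and a growth-potential piece involving $e_\th\cdot\na(\G*g_{\p,i}(X_i))|_{\g_i}$ and $e_r\cdot\na(\G*g_{\p,i}(X_i))|_{\g_i}$ (resp.\ the analogues on $\tilde\g_i$). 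I would estimate the difference of the two copies of each piece separately. The structural observation driving the first two pieces is \eqref{eqn: singular integrals have mean zero}: $\g'\cdot\CK_{\g}\p$, $\tilde\g'\cdot\CK_{\tilde\g}\p$, $\g'\cdot\CK_{\g,\tilde\g}\p$, $\tilde\g'\cdot\CK_{\tilde\g,\g}\p$ all have zero mean on $\BT$, being $\th$-derivatives of logarithmic potentials as in \eqref{eqn: singular integral is derivative of something}; so Poincar\'e's inequality reduces the $W^{1,p}$-bound of those pieces to a $\dot W^{1,p}$-seminorm bound, leaving only the non-mean-zero growth-potential piece to be controlled in both $L^\infty$ and $\dot W^{1,p}$.

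For $\f{1}{f_i}\g_i'\cdot\CK_{\g_i}\tilde{\CR}_{[\va_i]'}$, I would telescope the difference into (a) $\big(\f1{f_1}-\f1{f_2}\big)\,\g_1'\cdot\CK_{\g_1}\tilde{\CR}_{[\va_1]'}$, (b) $\f1{f_2}\big(\g_1'\cdot\CK_{\g_1}-\g_2'\cdot\CK_{\g_2}\big)\tilde{\CR}_{[\va_1]'}$, and (c) $\f1{f_2}\,\g_2'\cdot\CK_{\g_2}(\tilde{\CR}_{[\va_1]'}-\tilde{\CR}_{[\va_2]'})$, with (a) and (c) handled by Lemma~\ref{lem: W 1p difference from Hilbert transform} and (b) by Lemma~\ref{lem: W 1p difference from Hilbert transform difference}; the $\dot C^\b$- and $\dot W^{1,p}$-norms of $\tilde{\CR}_{[\va_i]'}$ and of $\tilde{\CR}_{[\va_1]'}-\tilde{\CR}_{[\va_2]'}$ come from \eqref{eqn: new equation for phi_i' segregated}, Propositions~\ref{prop: Holder estimate for jump of potential and potential along outer interface}, \ref{prop: W 1p estimate for jump of potential and potential along outer interface}, and Lemma~\ref{lem: difference of residue in phi and Phi}. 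The commutator piece $\f{1}{f_i}\g_i'\cdot\CK_{\g_i}f_i'-\f{1}{2r}\CH f_i'$ is treated the same way, now using Lemmas~\ref{lem: W 1p estimate of normal component of singular integral} and \ref{lem: W 1p estimate of normal component of singular integral difference}. For the growth-potential piece I would split the difference into a prefactor difference ($f_1'/f_1$ against $f_2'/f_2$) times $e_\th\cdot\na(\G*g_{\p,1}(X_1))|_{\g_1}$, plus the fixed prefactor times $e_\th\cdot\na(\G*g_{\p,1}(X_1))|_{\g_1}-e_\th\cdot\na(\G*g_{\p,2}(X_2))|_{\g_2}$, and similarly for the $e_r$-component; the differences of the gradient terms are bounded by Lemmas~\ref{lem: L inf estimate of difference same source}, \ref{lem: W 1p estimate of tangent derivative}, \ref{lem: W 1p estimate of normal derivative difference} together with the source bounds of Lemma~\ref{lem: bounds for generalized source term} and the normal-speed stability $|c_1-c_2|\le Cr(\D m_0+\D M_0)$ of \eqref{eqn: stability of normal velocity of inner interface}. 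A delicate point is that the $L^\infty$-norm of $e_r\cdot\na(\G*g_{\p,1}(X_1))|_{\g_1}-c_*$ must be taken in the sharpened form \eqref{eqn: special handling of L^inf norm of gradient of the growth potential}, obtained through the cut-off identity \eqref{eqn: cut-off potential to replace the double layer potential on outer boundary}, so that its dependence on $r$ and $R$ is explicit; the naive bound gives the wrong powers. Collecting the contributions, absorbing the smallness factors from \eqref{eqn: smallness of two solutions}, and using $|c_*|\le Cr$ from Lemma~\ref{lem: estimate for p_*} yields \eqref{eqn: difference of R h_i}.

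The outer bound \eqref{eqn: difference of R H big_i} follows by running the identical argument on \eqref{eqn: new representation of tilde R H big_i}, now invoking Lemma~\ref{lem: interaction kernel from inner to outer} in place of Lemmas~\ref{lem: L inf norm of derivative of interaction kernel from outer to inner}--\ref{lem: W 1p norm of derivative of interaction kernel from outer to inner}, the gradient estimates on $\tilde\g$ from Lemmas~\ref{lem: L inf estimate of difference same source at outer interface}--\ref{lem: W 1p estimate of derivative of growth potential outer}, Proposition~\ref{prop: estimates for difference of Phi_big i segregated}, and the stability $|\tilde c_1-\tilde c_2|\le\f{Cr^2}{R}(\D m_0+\D M_0)$ from \eqref{eqn: difference of tilde c outer interface segregated}; the extra overall factor $R^{-1}r^2$ reflects the normalization by $R$ and the thinness of the annulus (so $\tilde c_*\le CR^{-1}r^2$). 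I expect the main obstacle to be organizational rather than conceptual: no new estimate is required, but one must telescope each product structure correctly, match each factor with the precise lemma it needs, keep the $r$- and $R$-dependence sharp (in particular replacing the naive bound for the non-mean-zero growth-potential term by the refined identity), and verify that every quadratic-in-smallness remainder is genuinely lower order and can be absorbed into the stated right-hand sides.
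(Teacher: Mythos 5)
Your proposal is correct and follows essentially the same route as the paper: the same telescoping of each product into a prefactor difference, an operator difference, and an argument difference; the same use of the mean-zero identity \eqref{eqn: singular integrals have mean zero} together with Poincar\'e to handle the singular-integral pieces; the same invocation of Propositions \ref{prop: Holder estimate for jump of potential and potential along outer interface}, \ref{prop: W 1p estimate for jump of potential and potential along outer interface}, and Lemma \ref{lem: difference of residue in phi and Phi} to control $\tilde{\CR}_{[\va_i]'}$, $\tilde{\CR}_{\phi_i'}$ and their differences; and the same recognition that the growth-potential piece is not mean-zero and that the sharp $r,R$-dependent $L^\infty$ estimate \eqref{eqn: special handling of L^inf norm of gradient of the growth potential} (obtained from the cut-off identity) must replace the naive bound. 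One small slip: for the Hilbert-transform commutator piece $\f{1}{f_i}\g_i'\cdot\CK_{\g_i}f_i'-\f{1}{2r}\CH f_i'$, which involves the tangential component $\g_i'\cdot\CK_{\g_i}$, the relevant tools are Lemmas \ref{lem: W 1p difference from Hilbert transform} and \ref{lem: W 1p difference from Hilbert transform difference} (exactly as for the first singular-integral piece), not Lemmas \ref{lem: W 1p estimate of normal component of singular integral} and \ref{lem: W 1p estimate of normal component of singular integral difference}, which treat the perpendicular component $\g_i'^\perp\cdot\CK_{\g_i}$; the latter pair enters only indirectly through the bounds on $\tilde{\CR}_{[\va_i]'}$ inherited from Section \ref{sec: bounds for boundary potentials}.
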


\subsection{Proof of the uniqueness}
Now we are ready to prove uniqueness.
\begin{proof}[Proof of Theorem \ref{thm: uniqueness}]
In this proof, we always assume that the constant $C$ has the dependence $C = C(p,\e,\mu,\nu,r,R, G)$ unless otherwise stated.

As stated at the beginning of this section, suppose there are two solutions $f_i$ and $F_i$ $(i = 1,2)$ of \eqref{eqn: equation for inner interface new notation}-\eqref{eqn: initial condition} with regularity and estimates given in Theorem \ref{thm: local well-posedness}.
By \eqref{eqn: reformulation of equation for h_i} and \eqref{eqn: reformulation of equation for H_i big}, $(h_1-h_2)$ and $(H_1-H_2)$ solve
\begin{align}
\pa_t (h_1-h_2)=&\;-\f{Ac_*}{r}(-\D)^{1/2}(h_1-h_2) +\f{1}{r}(\tilde{\CR}_{h_1}-\tilde{\CR}_{h_2}), \label{eqn: equation for difference of h}\\
\pa_t (H_1-H_2)=&\;\f{\tilde{c}_*}{R}(-\D)^{1/2}(H_1-H_2) +\f{1}{R}(\tilde{\CR}_{H_1}-\tilde{\CR}_{H_2}),\label{eqn: equation for difference of H big}
\end{align}
with initial condition $(h_1 - h_2)|_{t = 0} = (H_1 - H_2)|_{t = 0} = 0$.

Let $T_0\in (0,T)$, $T_0<1$ to be chosen.
By virtue of Lemma 
\ref{lem: L p regularity theory of fractional heat equation} and Lemma \ref{lem: spatial Holder estimate for solution with Sobolev source term}, with $\alpha = 1-\f2p$,
\beq
\begin{split}
&\;\|h_1''-h_2''\|_{L^p_{[0,T_0]}L^p(\BT)}+\|H_1''-H_2''\|_{L^p_{[0,T_0]}L^p(\BT)}\\
&\;+\|h_1'-h_2'\|_{C_{[0,T_0]}\dot{C}^\alpha(\BT)}+\|H_1'-H_2'\|_{C_{[0,T_0]}\dot{C}^\alpha(\BT)}\\
\leq &\;C(p, \mu, \nu, G)\left(\f{r}{|Ac_*|}\cdot \f1r\|\tilde{\CR}_{h_1}-\tilde{\CR}_{h_2}\|_{L^p_{[0,T_0]}\dot{W}^{1,p}(\BT)}+\f{R}{|\tilde{c}_*|}\cdot \f1R\|\tilde{\CR}_{H_1}-\tilde{\CR}_{H_2}\|_{L^p_{[0,T_0]}\dot{W}^{1,p}(\BT)}\right).
\end{split}
\eeq
Here we first applied change of time variables to normalize the coefficients of fractional Laplacians in \eqref{eqn: equation for difference of h} and \eqref{eqn: equation for difference of H big}, and then applied Lemma 
\ref{lem: L p regularity theory of fractional heat equation} and Lemma \ref{lem: spatial Holder estimate for solution with Sobolev source term} to obtain these estimates.
To fulfill the condition of Lemma \ref{lem: spatial Holder estimate for solution with Sobolev source term}, we need
\beq
T_0\leq \min\left\{\f{r}{Ac_*},\f{R}{|\tilde{c}_*|}\right\}.
\eeq
Note that by Lemma \ref{lem: estimate for p_*}, the right hand side is bounded from below by some constant depending only on $\mu$, $\nu$ and $G$.

On the other hand, by Sobolev embedding (in space) and H\"{o}lder's inequality (in time)
\beq
\begin{split}
&\;\|h_1-h_2\|_{C_{[0,T_0]}L^\infty(\BT)}+\|H_1-H_2\|_{C_{[0,T_0]}L^\infty(\BT)}\\
\leq &\;C(p)T_0^{1-\f1p}\left( \f1r\|\tilde{\CR}_{h_1}-\tilde{\CR}_{h_2}\|_{L^p_{[0,T_0]}W^{1,p}(\BT)}
+\f1R\|\tilde{\CR}_{H_1}-\tilde{\CR}_{H_2}\|_{L^p_{[0,T_0]}W^{1,p}(\BT)}\right).
\end{split}
\eeq

Denote
\beq
\begin{split}
\mathcal{N}(T_0):= &\;\|h_1''-h_2''\|_{L^p_{[0,T_0]}L^p(\BT)}+\|H_1''-H_2''\|_{L^p_{[0,T_0]}L^p(\BT)}\\
&\;+\|h_1'-h_2'\|_{C_{[0,T_0]}\dot{C}^\b(\BT)}+\|H_1'-H_2'\|_{C_{[0,T_0]}\dot{C}^\b(\BT)}\\
&\;+\d^{-1}\|h_1-h_2\|_{C_{[0,T_0]}L^\infty(\BT)}+\d^{-1}\|H_1-H_2\|_{C_{[0,T_0]}L^\infty(\BT)}.
\end{split}
\eeq
By interpolation and Lemma \ref{lem: difference of remainder terms in h and H equations}, with $\th = (1-\f1p)\cdot \f{\alpha-\b}{1+\alpha}$,
\beq
\begin{split}
&\;\mathcal{N}(T_0)\\
\leq &\;C(p,\mu,\nu,G)\left(\f{r}{|Ac_*|}\cdot \f1r\|\tilde{\CR}_{h_1}-\tilde{\CR}_{h_2}\|_{L^p_{[0,T_0]}\dot{W}^{1,p}(\BT)}+\f{R}{|\tilde{c}_*|}\cdot \f1R\|\tilde{\CR}_{H_1}-\tilde{\CR}_{H_2}\|_{L^p_{[0,T_0]}\dot{W}^{1,p}(\BT)}\right)\\
&\;+CT_0^{\th}\left( \f1r\|\tilde{\CR}_{h_1}-\tilde{\CR}_{h_2}\|_{L^p_{[0,T_0]}\dot{W}^{1,p}(\BT)}+ \f1R\|\tilde{\CR}_{H_1}-\tilde{\CR}_{H_2}\|_{L^p_{[0,T_0]}\dot{W}^{1,p}(\BT)}\right)\\
\leq &\;\left[C(p,\e,\mu,\nu,G)\cdot \f{r}{|c_*|}+C_1T_0^{\th}\right]\mathcal{N}(T_0)\\
&\;\cdot \sup_{t\in [0,T_0]} (\|h_1'\|_{\dot{C}^\b}+\|h_2'\|_{\dot{C}^\b}+\|H_1'\|_{\dot{C}^\b}+\|H_2'\|_{\dot{C}^\b}\\
&\;\qquad \quad +(m_{0,1}+m_{0,2}+M_{0,1}+M_{0,2})(1+\d R^2)^{1/2})\\
&\;+C_2\mathcal{N}(T_0) (T_0^{1/p}+\|h_1''\|_{L^p_{[0,T_0]}L^p}+\|h_2''\|_{L^p_{[0,T_0]}L^p}+\|H_1''\|_{L^p_{[0,T_0]}L^p}+\|H_2''\|_{L^p_{[0,T_0]}L^p}).
\end{split}
\label{eqn: inequality for N(T_0)}
\eeq
Here the constants $C_1$ and $C_2$ have the same dependence as $C$ introduced above.

Now we take $T_0$ such that $C_1T_0^\th\leq \f12$ and
\beq
C_2(T_0^{1/p}+\|h_1''\|_{L^p_{[0,T_0]}L^p}+\|h_2''\|_{L^p_{[0,T_0]}L^p}+\|H_1''\|_{L^p_{[0,T_0]}L^p}+\|H_2''\|_{L^p_{[0,T_0]}L^p})\leq \f12.
\eeq
Such $T_0$ relies on $p$, $\e$, $\mu$, $\nu$, $r$, $R$, $G$ as well as the fixed solutions $h_i$ and $H_i$.
Then \eqref{eqn: inequality for N(T_0)} becomes
\beq
\begin{split}
\mathcal{N}(T_0)\leq &\;\left[C(p,\e,\mu,\nu,G)\cdot \f{r}{|c_*|}+1\right]\mathcal{N}(T_0)\\
&\; \cdot \sup_{t\in [0,T_0]} (\|h_1'\|_{\dot{C}^\b}+\|h_2'\|_{\dot{C}^\b}+\|H_1'\|_{\dot{C}^\b}+\|H_2'\|_{\dot{C}^\b}\\
&\;\qquad \quad +(m_{0,1}+m_{0,2}+M_{0,1}+M_{0,2})(1+\d R^2)^{1/2})\\
\leq &\;C(p,\e,\mu,\nu,G,r/|c_*|,\d R^2 
)M\cdot \mathcal{N}(T_0).
\end{split}
\eeq
In the last inequality, we used the estimate \eqref{eqn: bounds for the local solution}.
If we assume $M$ to be suitably small, depending only on $p$, $\e$, $\mu$, $\nu$, $G$, $r/|c_*|$ and $\d R^2$
, we obtain that $\mathcal{N}(T_0) = 0$.
Note that here the smallness of $M$ has no additional dependence on other parameters compared to that in the proof of existence of local solutions.

We can continue this process starting from $t = T_0$ and find a second time interval $[T_0, T_0+T_1]$ on which uniqueness holds.
By repeating this argument for finitely many times (see \eqref{eqn: bounds for the local solution L^p W^2,p} and the way we chose $T_0$ above), we can prove the uniqueness of local solution on $[0,T]$.
\end{proof}

\newpage
\appendix
\section{Some Auxiliary Estimates}

\subsection{Estimates for the Poisson kernel and its conjugate}
\begin{lem}\label{lem: properties of Poisson kernel}
Let Poisson kernel $P$ on the 2-D unit disc and its conjugate $Q$ be defined as in \eqref{eqn: def of P} and \eqref{eqn: def of Q}, respectively.
\begin{enumerate}
\item Let $\CH_\xi$ denote the Hilbert transform on $\BT$ with respect to $\xi$.
Then for $s\not = 1$,
\beq
Q(s,\xi)= \mathrm{sgn}(1-s) \CH_{\xi}P(s,\xi).
\label{eqn: Q is the hilbert transform of P}
\eeq
  \item For all $\xi\in \BT$ and all $s \in [0,2]$,
\beq
|P(s,\xi)|+|Q(s,\xi)|\leq C(|1-s|^2+\xi^2)^{-1/2}.\label{eqn: bound for P and Q}
\eeq
\item For derivatives of $P$ and $Q$, we have
\begin{align}
\left|\f{\pa P}{\pa s}(s,\xi)\right|+\left|\f{\pa Q}{\pa \xi}(s,\xi)\right| \leq &\;C((1-s)^2+\xi^2)^{-1},
\label{eqn: bound for derivative of Poisson kernel}\\
\left|\f{\pa P}{\pa \xi}(s,\xi)\right|+\left|\f{\pa Q}{\pa s}(s,\xi)\right| \leq &\; C|\sin\xi|((1-s)^2+\xi^2)^{-3/2},
\end{align}
and
\beq
\left|\f{\pa^2 P}{\pa s^2}(s,\xi) \right|+\left|\f{\pa^2 P}{\pa \xi \pa s}(s,\xi) \right|+\left|\f{\pa^2 Q}{\pa s^2}(s,\xi)\right| \leq  C((1-s)^2+\xi^2)^{-3/2}.
\label{eqn: bound for mix 2nd derivative of P}
\eeq
Moreover,
\beq
\f{\pa P}{\pa \xi}(s,\xi) =-s\f{\pa Q}{\pa s},\quad
\f{\pa Q}{\pa \xi}(s,\xi) =s\f{\pa P}{\pa s}.
\label{eqn: conjugacy between P and Q}
\eeq
\end{enumerate}
\begin{proof}
\eqref{eqn: Q is the hilbert transform of P} can be proved by calculating Fourier transforms of $P(s,\cdot)$ and $Q(s,\cdot)$.

For any $s\geq 0$, 
\beq
1+s^2-2s\cos\xi = (1-s\cos\xi)^2+(s\sin\xi)^2 = (s-\cos\xi)^2+\sin^2\xi\geq 0.
\label{eqn: rewriting denominator of Poisson kernel}
\eeq
If $\cos\xi\geq \f{1}{2}$,
\beq
\begin{split}
1+s^2-2s\cos\xi
= &\; (1+s^2)(1-\cos\xi)+\cos\xi(1-s)^2\\
\geq &\;C(|\xi|^2+|1-s|^2).
\end{split}
\label{eqn: lower bound for denominator of Poisson kernel case 1}
\eeq
Otherwise,
\beq
1+s^2-2s\cos\xi\geq C(1+s^2)\geq C(|\xi|^2+|1-s|^2).
\label{eqn: lower bound for denominator of Poisson kernel case 2}
\eeq
Then \eqref{eqn: bound for P and Q} follows easily.

Finally, we calculate that
\begin{align}
\f{\pa P}{\pa s}(s,\xi) = &\;\f{2(1+s^2)\cos\xi -4s}{(1+s^2-2s\cos\xi)^2}= \f{2\cos\xi}{1+s^2-2s\cos\xi}-\f{4s\sin^2\xi}{(1+s^2-2s\cos\xi)^2},
\label{eqn: spatial derivative of poisson kernel}\\
\f{\pa Q}{\pa s}(s,\xi) =&\;\f{2(1-s^2)\sin\xi}{(1+s^2 -2s\cos\xi)^{2}},
\label{eqn: spatial derivative of conjugate poisson kernel}\\
\f{\pa^2 P}{\pa \xi \pa s}(s,\xi) = &\;-\f{2(1+s^2)\sin \xi}{(1+s^2-2s\cos \xi)^2} - \f{\pa P}{\pa s}\cdot \f{4s\sin \xi}{1+s^2-2s\cos\xi},\\
\f{\pa^2 P}{\pa s^2}(s,\xi) = &\;\f{4s\cos \xi-4}{(1+s^2-2s\cos \xi)^2} - \f{8(s-\cos\xi)((1+s^2)\cos\xi-2s)}{(1+s^2-2s\cos\xi)^3},
\end{align}
and
\begin{align}
\f{\pa P}{\pa \xi}(s,\xi) =&\;-s\f{\pa Q}{\pa s},\\
\f{\pa Q}{\pa \xi}(s,\xi) =&\;s\f{\pa P}{\pa s},
\label{eqn: angular derivative of conjugate poisson kernel}\\
\f{\pa^2 Q}{\pa s^2}(s,\xi) =&\;-\f{1}{s}\left(\f{\pa^2 P}{\pa \xi\pa s}+\f{\pa Q}{\pa s}\right).
\label{eqn: spatial 2nd derivative of conjugate poisson kernel}
\end{align}
Then \eqref{eqn: bound for derivative of Poisson kernel}-
\eqref{eqn: conjugacy between P and Q} follow.
\end{proof}
\end{lem}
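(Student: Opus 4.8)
The plan is to handle the three parts of Lemma~\ref{lem: properties of Poisson kernel} in order, the unifying observation being that $P$ and $Q$ are the real and imaginary parts of a single holomorphic function: rationalizing the right-hand side gives the identity
\beq
P(s,\xi)+iQ(s,\xi) = \f{1+se^{i\xi}}{1-se^{i\xi}},
\eeq
valid whenever $se^{i\xi}\neq 1$. For part~(1), expanding $\f{1+w}{1-w}=1+2\sum_{k\geq1}w^k$ in $w=se^{i\xi}$ for $s<1$ yields the Fourier series $P(s,\cdot)=\sum_{n\in\BZ}s^{|n|}e^{in\xi}$ and $Q(s,\cdot)=\sum_{n\neq0}(-i\,\mathrm{sgn}(n))\,s^{|n|}e^{in\xi}$; since $\CH_\xi e^{in\xi}=-i\,\mathrm{sgn}(n)e^{in\xi}$ with the paper's convention for $\CH$, this is exactly $Q=\CH_\xi P$ when $s<1$. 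For $s>1$ I would invoke the elementary symmetries $P(1/s,\xi)=-P(s,\xi)$ and $Q(1/s,\xi)=Q(s,\xi)$ — both checked by multiplying numerator and denominator by $s^2$ — to reduce to $1/s<1$, which flips the sign and produces the factor $\mathrm{sgn}(1-s)$.

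Parts~(2) and~(3) all rest on the single lower bound
\beq
D:=1+s^2-2s\cos\xi\ \geq\ c\,(|1-s|^2+\xi^2)\qquad(s\in[0,2],\ \xi\in\BT),
\eeq
which I would establish by splitting into the case $\cos\xi\geq\tfrac12$, where $D=(1+s^2)(1-\cos\xi)+\cos\xi(1-s)^2$ and $1-\cos\xi\sim\xi^2$, and the case $\cos\xi<\tfrac12$, where $|\xi|$ is bounded below, $D\geq 1+s^2-s\geq c>0$, and the bound is trivial because $|1-s|^2+\xi^2$ is bounded above. Part~(2) then follows immediately from $|1-s^2|+|2s\sin\xi|\leq C(|1-s|^2+\xi^2)^{1/2}$ (the first factor since $|1+s|\leq3$, the second since $|\sin\xi|\leq|\xi|$), giving $|P|+|Q|\leq C(|1-s|^2+\xi^2)^{1/2}/D\leq C(|1-s|^2+\xi^2)^{-1/2}$.

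For part~(3) the method is essentially mechanical: compute $\partial_sP$ and $\partial_sQ$ explicitly, write each as a finite sum of terms of the shape (numerator)$\cdot D^{-k}$, and bound each term using the denominator estimate after rewriting the numerator so as to expose its vanishing at $(s,\xi)=(1,0)$ — for instance $(1+s^2)\cos\xi-2s=(1+s^2)(\cos\xi-1)+(1-s)^2$ is $O(|1-s|^2+\xi^2)$ and $s-\cos\xi=(s-1)+(1-\cos\xi)$ is $O((|1-s|^2+\xi^2)^{1/2})$. The angular derivatives come for free from the conjugacy relations $\partial_\xi P=-s\,\partial_sQ$ and $\partial_\xi Q=s\,\partial_sP$, which I would derive by differentiating the holomorphic identity (from $\partial_\xi w=iw$ one gets $\partial_\xi(P+iQ)=is\,\partial_s(P+iQ)$, then take real and imaginary parts). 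The three second derivatives follow by one further differentiation of these formulas, using $\partial_s^2Q=-\tfrac1s(\partial_\xi\partial_sP+\partial_sQ)$, whose apparent singularity at $s=0$ cancels because $\partial_\xi\partial_sP+\partial_sQ$ vanishes to first order there (equivalently, $\partial_s^2Q$ extends smoothly past $s=0$, where $D\sim1$).

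The hard part is purely bookkeeping: each differentiation lowers the exponent in $D^{-k}$ by one, so in every single term one must verify that the accompanying powers of $\sin\xi$, of $1-s^2$, or of the rewritten numerators supply precisely the compensating half-powers of $|1-s|^2+\xi^2$ needed to land on the claimed exponents $-\tfrac12$, $-1$ and $-\tfrac32$. There is no conceptual obstacle beyond organizing these elementary algebraic identities carefully; the only place demanding a genuinely separate idea is the $s>1$ reduction in part~(1).
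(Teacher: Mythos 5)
Your proposal is correct and follows essentially the same path as the paper: Fourier series for part (1), the same case split on $\cos\xi\geq\tfrac12$ versus $\cos\xi<\tfrac12$ for the denominator bound in part (2), and explicit differentiation with numerator rewriting for part (3). Packaging $P+iQ=\tfrac{1+se^{i\xi}}{1-se^{i\xi}}$ to derive the conjugacy relations \eqref{eqn: conjugacy between P and Q} in one step is a pleasant streamlining of the paper's direct calculation, but not a different route.
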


\subsection{Some Calder\'{o}n-commutator-type estimates}
In this part we shall establish some Calder\'{o}n-commutator-type estimates used in Section \ref{sec: estimates for singular integral operators}.
Recall that
\beq
\D f (\th): = \f{f(\th+\xi)-f(\xi)}{2\sin\f{\xi}{2}}.
\eeq

\begin{lem}\label{lem: Lp bound for multi-term commutator}
Let $\mathbf{k} = (k_1,\cdots,k_n)$ be a multi-index of length $n\in \mathbb{Z}_+$.
Assume $h_1,\cdots, h_n\in W^{1,\infty}(\BT)$ and $\p\in L^p(\BT)$ for some $p\in [2,\infty)$.
Define
\begin{align}
M_{\mathbf{k},\p}(\th) = &\;\mathrm{p.v.}\int_\BT \prod_{i = 1}^n(\D h_i)^{k_i}\cdot\f{\p(\th+\xi)}{2\tan\f{\xi}{2}}\,d\xi,
\label{eqn: def of multi-term Calderon commutator tan}\\
N_{\mathbf{k},\p}(\th) = &\;\mathrm{p.v.}\int_\BT \prod_{i = 1}^n(\D h_i)^{k_i}\cdot\f{\p(\th+\xi)}{2\sin\f{\xi}{2}}\,d\xi.
\label{eqn: def of multi-term Calderon commutator sin}
\end{align}
Then 
\beq
\|M_{\mathbf{k},\p}\|_{L^p}+\|N_{\mathbf{k},\p}\|_{L^p}\leq C_*^{|\mathbf{k}|+2}\|\psi\|_{L^p}\prod_{i=1}^n\|h_i'\|_{L^\infty}^{k_i},
\label{eqn: L^p bound for multi-term Calderon commutator}
\eeq
where $C_*$ is a universal constant depending only on $p$.
Here $|\mathbf{k}| := \sum_{i = 1}^n k_i$.
\begin{proof}
The proof essentially follows the classic argument of $L^p$-boundedness of the Calder\'{o}n commutator 
\cite[$\S\,9.3$]{duoandikoetxea2001fourier}.
For completeness, we elaborate it as follows.

First we notice that $\sin (\xi/2)$ is not continuous on $\BT$ at $\pm \pi$.
For this technical reason, with abuse of notations, we introduce an even cut-off function $\eta\in C_0^\infty([-2,2])$, such that $\eta \equiv 1$ on $[-1,1]$, $\eta\in [0,1]$ on $[-2,2]$, and $|\eta'| \leq C$.
Write \eqref{eqn: def of multi-term Calderon commutator tan} as
\beq
M_{\mathbf{k},\p} =  \mathrm{p.v.}\int_\BT \prod_{i = 1}^n (\D h_i)^{k_i}\cdot\f{\p(\th+\xi)}{2\tan\f{\xi}{2}}[\eta(\xi)+(1-\eta(\xi))]\,d\xi =: M_{\mathbf{k},\p}^{(1)}+M_{\mathbf{k},\p}^{(2)}.
\label{eqn: decomposition of multi-term commutator tan}
\eeq
It is straightforward to bound $M_{\mathbf{k},\p}^{(2)}$ as it involves no singularity,
\beq
\|M_{\mathbf{k},\p}^{(2)}\|_{L^p}\leq CC^{|\mathbf{k}|}_1\|\p\|_{L^p}\prod_{i = 1}^n\|h_i'\|_{L^\infty}^{k_i}.
\label{eqn: Lp bound form Kkl2}
\eeq
Here $C_1 = \pi/2$ comes from the fact that
\beq
\left|2\sin\f{\xi}{2}\right|^{-1}\leq C_1|\xi|^{-1}\quad \mbox{ on }\BT.
\eeq
To derive an $L^p$-bound for $M_{\mathbf{k},\p}^{(1)}$, we first show that $M_{\mathbf{k},1}^{(1)}\in BMO$ by mathematical induction. 

\setcounter{step}{0}
\begin{step}
For $\mathbf{k} = \mathbf{0}$, $M_{\mathbf{0},1}^{(1)} = -\pi \mathcal{H}\eta(0) = 0$ since $\eta$ is even.

\end{step}
\begin{step}
Suppose for some $N\geq 1$ and any multi-index $\mathbf{k}$ such that $|\mathbf{k}|\leq N-1$, we have shown that $M_{\mathbf{k},1}^{(1)}\in BMO$ and, with some constant $C_*$ that will be specified later,
\beq
\|M_{\mathbf{k},1}^{(1)}\|_{BMO}\leq C_*^{|\mathbf{k}|+1}\prod_{i = 1}^n\|h_i'\|_{L^\infty}^{k_i}.
\label{eqn: induction hypo}
\eeq
It is known that the map $\p\mapsto M_{\mathbf{k},\p}^{(1)}$ is associated with the kernel
\beq
\prod_{ i =1}^n\left(\f{h_i(x)-h_i(y)}{2\sin\f{x-y}{2}}\right)^{k_i} \cdot \f{\eta(x-y)}{2\tan\f{x-y}{2}},
\label{eqn: kernel associated with M kl1}
\eeq
which is a standard anti-symmetric kernel, vanishing whenever $|x-y|>2$.
It can be naturally understand as a kernel on
$\mathbb{R}$ with a bound similar to \eqref{eqn: induction hypo}.
Hence, by the $T1$ Theorem, it is $(2,2)$-bounded.
Its operator norm depends linearly \cite[$\S\,9.3$]{duoandikoetxea2001fourier} on the constant in \eqref{eqn: induction hypo} and the kernel constant of \eqref{eqn: kernel associated with M kl1}, which is bounded by
\beq
CC_1^{|\mathbf{k}|+1}(|\mathbf{k}|+1)\prod_{ i =1}^n \|h_i'\|_{L^\infty}^{k_i}.
\label{eqn: constant of the standard kernel}
\eeq
This further implies that \cite[Theorem 6.6]{duoandikoetxea2001fourier} for all $\mathbf{k}$ satisfying $|\mathbf{k}|\leq N-1$, and $\p\in L^\infty$,
\beq
\|M_{\mathbf{k},\p}^{(1)}\|_{BMO}\leq  C(C_1^{|\mathbf{k}|+1}(|\mathbf{k}|+1)+C_*^{|\mathbf{k}|+1})\|\p\|_{L^\infty}\prod_{i = 1}^n\|h_i'\|_{L^\infty}^{k_i}.
\label{eqn: L inf to BMO boundedness}
\eeq

Now consider the case when $|\mathbf{k}|=N$.
Observe that
\beq
\left(\f{1}{2\sin\f{\xi}{2}}\right)^{|\mathbf{k}|}\f{1}{2\tan\f{\xi}{2}} = - \f{1}{|\mathbf{k}|}\cdot\f{d}{d\xi}\left(\f{1}{2\sin\f{\xi}{2}}\right)^{|\mathbf{k}|}.
\eeq
We integrate by parts in $M_{\mathbf{k},1}^{(1)}$. 
For almost all $\th\in \BT$,
\beq
\begin{split}
M_{\mathbf{k},1}^{(1)}(\th) = &\;\f{1}{|\mathbf{k}|}\mathrm{p.v.}\int_{[-2,2]} \left(\f{1}{2\sin\f{\xi}{2}}\right)^{|\mathbf{k}|}\,d\left[\prod(h_i(\th+\xi)-h_i(\th))^{k_i} \eta(\xi)\right]\\
= &\;\f{1}{|\mathbf{k}|}\int_{[-2,2]} \prod_{i = 1}^n(\D h_i)^{k_i} \cdot \eta'(\xi)\,d\xi\\
&\;+\sum_{i = 1}^n\f{k_i}{|\mathbf{k}|}\mathrm{p.v.}\int_{[-2,2]} (\D h_1)^{k_1}\cdots (\D h_i)^{k_i-1}\cdots (\D h_n)^{k_n} \cdot \f{\eta(\xi)h_i'(\th+\xi)}{2\sin\f{\xi}{2}}\,d\xi\\
=:&\;M_{\mathbf{k},1}^{(1,0)}+\sum_{ i = 1}^n M_{\mathbf{k},1}^{(1,i)}.
\end{split}
\label{eqn: intergration by parts in proving commutator estimate}
\eeq
Indeed, this can be rigorously justified by the fact that $h_i$ are differentiable almost everywhere.
It is straightforward to derive that
\beq
\|M_{\mathbf{k},1}^{(1,0)}\|_{L^\infty}\leq C C_1^{|\mathbf{k}|}\prod_{i = 1}^n\|h_i'\|_{L^\infty}^{k_i}.
\eeq
On the other hand, by \eqref{eqn: L inf to BMO boundedness},
\beq
\begin{split}
&\;\|M_{\mathbf{k},1}^{(1,i)}\|_{BMO}\\
\leq &\;\frac{k_i}{|\mathbf{k}|}\|M_{(k_1,\cdots,k_i-1,\cdots, k_n), h_i'}^{(1)}\|_{BMO}+ \frac{k_i}{|\mathbf{k}|}C_1^{|\mathbf{k}|-1}\prod_{j = 1}^n\|h_j'\|_{L^\infty}^{k_j}\left\|\frac{1}{2\sin\f{\xi}{2}}-\frac{1}{2\tan\f{\xi}{2}}\right\|_{L^\infty([-2,2])}\\
\leq &\;\f{Ck_i}{|\mathbf{k}|}(C_1^{|\mathbf{k}|}|\mathbf{k}|+C_*^{|\mathbf{k}|})\prod_{j = 1}^n\|h_j'\|_{L^\infty}^{k_j}.
\end{split}
\eeq
%
%
Hence, with some universal constant $C$,
\beq
\|M_{\mathbf{k},1}^{(1)}\|_{BMO}\leq C(C_1^{|\mathbf{k}|}|\mathbf{k}|+C_*^{|\mathbf{k}|})\prod_{i = 1}^n\|h_i'\|_{L^\infty}^{k_i}.
\eeq
Now assuming $C_*$ sufficiently large but still universal, 
such that
\beq
C\left[\left(\f{C_1}{C_*}\right)^{|\mathbf{k}|}|\mathbf{k}|+1\right]\leq C_*,
\eeq
we conclude 
with \eqref{eqn: induction hypo} for $|\mathbf{k}| = N$.
By induction, \eqref{eqn: induction hypo} holds for all multi-indices $\mathbf{k}$.
\end{step}

To this end, we argue as in \eqref{eqn: kernel associated with M kl1}-\eqref{eqn: L inf to BMO boundedness} to find that 
$\p \mapsto M_{\mathbf{k},\p}^{(1)}$ is bounded from $L^2$ to $L^2$, and also from
$L^\infty$ to $BMO$.
By interpolation, it is $(p,p)$-bounded as well.
In particular,
\beq
\|M_{\mathbf{k},\p}^{(1)}\|_{L^p}\leq C_p(C_1^{|\mathbf{k}|+1}(|\mathbf{k}|+1)+C_*^{|\mathbf{k}|+1})\|\psi\|_{L^p}\prod_{ i =1}^n\|h_i'\|_{L^\infty}^{k_i}.
\label{eqn: Lp bound form Kkl1}
\eeq
Combining \eqref{eqn: Lp bound form Kkl2} and \eqref{eqn: Lp bound form Kkl1} yields a bound for $\|M_{\mathbf{k},\p}\|_{L^p}$ that has the same form as in \eqref{eqn: Lp bound form Kkl1}. 
A bound for $\|N_{\mathbf{k},\p}\|_{L^p}$ can be derived easily since $(M_{\mathbf{k},\p}-N_{\mathbf{k},\p})$ is an integral with no singularity.

Assuming $C_*$ to be even larger if needed, we obtain the desired estimate from \eqref{eqn: Lp bound form Kkl1}.
\end{proof}
\end{lem}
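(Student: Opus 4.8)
The plan is to prove the $L^p$-bounds for the multi-term commutators $M_{\mathbf{k},\p}$ and $N_{\mathbf{k},\p}$ by adapting the classical argument for $L^p$-boundedness of the Calder\'{o}n commutator; the new feature is only the bookkeeping of how the constant depends on the multi-index $\mathbf{k}$, since later applications (e.g.\;in Lemma \ref{lem: W 1p estimate of normal component of singular integral}) involve summing over all $\mathbf{k}$ and thus require the bound to be at most exponential in $|\mathbf{k}|$, with a base constant $C_*$ depending only on $p$.

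First I would dispose of the technical nuisance that $2\sin(\xi/2)$ fails to be smooth at $\xi = \pm\pi$ by inserting an even cutoff $\eta\in C_0^\infty([-2,2])$ with $\eta\equiv 1$ on $[-1,1]$, splitting $M_{\mathbf{k},\p} = M_{\mathbf{k},\p}^{(1)}+M_{\mathbf{k},\p}^{(2)}$ according to $\eta(\xi)$ and $1-\eta(\xi)$. The piece $M_{\mathbf{k},\p}^{(2)}$ has no singularity (the kernel $1/(2\tan(\xi/2))$ is bounded on $\{|\xi|\geq 1\}$), so a crude estimate using $|\D h_i|\leq C_1\|h_i'\|_{L^\infty}$ with $C_1=\pi/2$ gives the claimed bound immediately. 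For the singular piece, the key is to show that the "constant" function input gives $M_{\mathbf{k},1}^{(1)}\in BMO$ with $\|M_{\mathbf{k},1}^{(1)}\|_{BMO}\leq C_*^{|\mathbf{k}|+1}\prod\|h_i'\|_{L^\infty}^{k_i}$, which I would establish by induction on $|\mathbf{k}|$. The base case $\mathbf{k}=\mathbf{0}$ is $-\pi\mathcal{H}\eta(0)=0$ because $\eta$ is even. For the inductive step, one integrates by parts using $\bigl(2\sin(\xi/2)\bigr)^{-|\mathbf{k}|}\bigl(2\tan(\xi/2)\bigr)^{-1} = -|\mathbf{k}|^{-1}\tfrac{d}{d\xi}\bigl(2\sin(\xi/2)\bigr)^{-|\mathbf{k}|}$; the derivative hits either $\eta$ (producing a non-singular, $L^\infty$-bounded term) or one of the factors $h_i(\th+\xi)-h_i(\th)$, lowering the total degree by one and producing a term of the form $M_{\mathbf{k}',h_i'}^{(1)}$ with $|\mathbf{k}'| = |\mathbf{k}|-1$. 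To bound the latter in $BMO$ one uses the $T1$ theorem: the kernel $\prod_i\bigl(\tfrac{h_i(x)-h_i(y)}{2\sin((x-y)/2)}\bigr)^{k_i}\tfrac{\eta(x-y)}{2\tan((x-y)/2)}$ is a standard antisymmetric Calder\'{o}n--Zygmund kernel on $\mathbb{R}$ whose kernel constant is $\leq C C_1^{|\mathbf{k}|+1}(|\mathbf{k}|+1)\prod\|h_i'\|_{L^\infty}^{k_i}$, and combined with the inductive $BMO$-bound on $M_{\mathbf{k}',1}^{(1)}$ this yields $L^2$-boundedness with operator norm controlled linearly by these constants, hence $L^\infty\to BMO$ boundedness. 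Assembling the integration-by-parts identity gives $\|M_{\mathbf{k},1}^{(1)}\|_{BMO}\leq C(C_1^{|\mathbf{k}|}|\mathbf{k}|+C_*^{|\mathbf{k}|})\prod\|h_i'\|_{L^\infty}^{k_i}$, and choosing $C_*$ large enough (but universal, then depending on $p$ at the interpolation step) that $C[(C_1/C_*)^{|\mathbf{k}|}|\mathbf{k}|+1]\leq C_*$ closes the induction.

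With the $BMO$-bound in hand for constant inputs, I would invoke the $T1$ theorem once more (now for general $\mathbf{k}$, not just $|\mathbf{k}|\leq N-1$) to get $L^2\to L^2$ boundedness, and the argument above gives $L^\infty\to BMO$ boundedness; interpolation then yields $(p,p)$-boundedness of $\p\mapsto M_{\mathbf{k},\p}^{(1)}$ for $p\in[2,\infty)$ with the stated exponential-in-$|\mathbf{k}|$ constant. Adding back $M_{\mathbf{k},\p}^{(2)}$ gives the bound for $\|M_{\mathbf{k},\p}\|_{L^p}$. The estimate for $N_{\mathbf{k},\p}$ follows since $M_{\mathbf{k},\p}-N_{\mathbf{k},\p}$ has kernel $\prod(\D h_i)^{k_i}\cdot\bigl(\tfrac{1}{2\tan(\xi/2)}-\tfrac{1}{2\sin(\xi/2)}\bigr)$, which is non-singular and hence trivially $L^p$-bounded. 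Finally, enlarging $C_*$ once more absorbs all the accumulated universal prefactors into the stated form $C_*^{|\mathbf{k}|+2}$.

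The main obstacle is the careful tracking of the $\mathbf{k}$-dependence through the $T1$/interpolation machinery: one must verify that the kernel constant and the $BMO$ norm both grow at most like $|\mathbf{k}|$ times an exponential, and that the inductive choice of $C_*$ remains uniform over all $\mathbf{k}$ (this is why one writes $C[(C_1/C_*)^{|\mathbf{k}|}|\mathbf{k}|+1]\leq C_*$, exploiting that $(C_1/C_*)^{|\mathbf{k}|}|\mathbf{k}|$ is bounded once $C_*>C_1$). The analytic content is entirely classical; the bookkeeping is what needs care, and for that reason I would follow the exposition in \cite[\S\,9.3]{duoandikoetxea2001fourier} closely rather than reworking it from scratch.
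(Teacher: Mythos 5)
Your proposal follows the same strategy as the paper's proof in every essential respect: the same cutoff decomposition $M_{\mathbf{k},\p}=M_{\mathbf{k},\p}^{(1)}+M_{\mathbf{k},\p}^{(2)}$, the same induction on $|\mathbf{k}|$ to establish the $BMO$ bound for $M_{\mathbf{k},1}^{(1)}$ via integration by parts, the same appeal to the $T1$ theorem and interpolation, and the same treatment of $N_{\mathbf{k},\p}$ via the non-singular difference kernel. The bookkeeping of the constant's growth in $|\mathbf{k}|$ is also handled in the same way, so this is a faithful reconstruction rather than a genuinely different route.
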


\begin{lem}\label{lem: Lp bound for extra-term commutator}
Let $\mathbf{k} = (k_1,\cdots,k_n)$ be a multi-index of length $n\in \mathbb{Z}_+$.
With $p\in [2,\infty)$, assume that $h_1,\cdots, h_n\in W^{2,p}(\BT)$, and $h_{n+1},\psi\in W^{1,p}(\BT)$.
Define
\begin{align}
\tilde{M}_{\mathbf{k},\p}(\th) = &\;\mathrm{p.v.}\int_\BT \prod_{i =1}^n(\D h_i)^{k_i}\cdot \D h_{n+1} \cdot\f{\p(\th+\xi)}{2\tan\f{\xi}{2}}\,d\xi,
\label{eqn: def of extra-term Calderon commutator tan}\\
\tilde{N}_{\mathbf{k},\p}(\th) = &\;\mathrm{p.v.}\int_\BT \prod_{i = 1}^n(\D h_i)^{k_i}\cdot \D h_{n+1} \cdot\f{\p(\th+\xi)}{2\sin\f{\xi}{2}}\,d\xi.
\label{eqn: def of extra-term Calderon commutator sin}
\end{align}
Then 
\beq
\begin{split}
&\;\|\tilde{M}_{\mathbf{k},\p}\|_{L^p}+\|\tilde{N}_{\mathbf{k},\p}\|_{L^p}\\
\leq &\;C_{**}^{|\mathbf{k}|+1}(\|h_{n+1}'\|_{L^p}\|\p\|_{L^\infty}+\|h_{n+1}\|_{L^\infty}\|\p'\|_{L^p})\prod_{ i =1}^n\|h_i'\|_{L^\infty}^{k_i}\\
&\;+C_{**}^{|\mathbf{k}|+1}\|h_{n+1}\|_{L^\infty}\|\p\|_{L^\infty}\sum_{i = 1}^n \|h_1'\|_{L^\infty}^{k_1}\cdots \|h_i'\|_{L^\infty}^{k_i-1}\cdots \|h_n'\|_{L^\infty}^{k_n}\cdot \mathds{1}_{\{k_i>0\}}\|h_i''\|_{L^p},
\label{eqn: Lp bound for extra-term Calderon commutator}
\end{split}
\eeq
where $C_{**}$ is a universal constant depending only on $p$.
\begin{proof}
We shall prove \eqref{eqn: Lp bound for extra-term Calderon commutator} by induction.
It suffices to prove it for $h_{n+1}$ and $\p$ being smooth.

\setcounter{step}{0}
\begin{step}
Consider $\mathbf{k} = \mathbf{0}$.
Note that even in this simple case, the estimate \eqref{eqn: Lp bound for extra-term Calderon commutator} does not trivially follow from Lemma \ref{lem: Lp bound for multi-term commutator}.

By integration by parts as in \eqref{eqn: intergration by parts in proving commutator estimate}, 
\beq
\begin{split}
&\;\tilde{M}_{\mathbf{0},\p}(\th)\\
= &\;\mathrm{p.v.}\int_\BT  \f{1}{2\sin\f{\xi}{2}}\,d[(h_{n+1}(\th+\xi)- h_{n+1}(\th))\p(\th+\xi)]\\
&\;-[(h_{n+1}(\th+\pi)- h_{n+1}(\th))\p(\th+\pi)]\\
=&\;\int_\BT  \f{1-\cos\f{\xi}{2}}{2\sin\f{\xi}{2}}[h_{n+1}'(\th+\xi)\p(\th+\xi)+(h_{n+1}(\th+\xi)- h_{n+1}(\th))\p'(\th+\xi)]\,d\xi\\
&\;+\mathrm{p.v.}\int_\BT  \f{1}{2\tan\f{\xi}{2}}[h_{n+1}'(\th+\xi)\p(\th+\xi)+(h_{n+1}(\th+\xi)- h_{n+1}(\th))\p'(\th+\xi)]\,d\xi\\
&\;-[(h_{n+1}(\th+\pi)- h_{n+1}(\th))\p(\th+\pi)].
\end{split}
\eeq
By Sobolev embedding and $L^p$-boundedness of the Hilbert transform,
\beq
\begin{split}
\|\tilde{M}_{\mathbf{0},\p}\|_{L^p}\leq C(\|h_{n+1}'\|_{L^p}\|\p\|_{L^\infty}+\|h_{n+1}\|_{L^\infty}\|\p'\|_{L^p}).
\end{split}
\label{eqn: bound for tilde M 00}
\eeq
Since
\beq
|\tilde{N}_{\mathbf{0},\p}-\tilde{M}_{\mathbf{0},\p}|\leq C\int_\BT|h_{n+1}(\th+\xi)-h_{n+1}(\th)||\p(\th+\xi)|\,d\xi,
\eeq
it is easy to show that $\tilde{N}_{\mathbf{0},\p}$ satisfies the same estimate as \eqref{eqn: bound for tilde M 00}.
\end{step}

\begin{step}
Suppose \eqref{eqn: Lp bound for extra-term Calderon commutator} holds for all multi-indices $\mathbf{k}$ satisfying $|\mathbf{k}|\leq N-1$, where $C_{**}>0$ is some constant to be chosen later.
Then consider the case with $|\mathbf{k}| = N$.
By integration by parts as in \eqref{eqn: intergration by parts in proving commutator estimate}, for almost all $\th\in \BT$,
\beq
\begin{split}
&\;\tilde{M}_{\mathbf{k},\p}(\th)\\
= &\;\sum_{i = 1}^n\f{k_i}{|\mathbf{k}|+1}\mathrm{p.v.}\int_\BT  (\D h_1)^{k_1}\cdots (\D h_i)^{k_i-1}\cdots (\D h_n)^{k_n} \cdot \D h_{n+1}\f{h_i'(\th+\xi)\p(\th+\xi)}{2\sin\f{\xi}{2}}\,d\xi\\
&\;+\f{1}{|\mathbf{k}|+1}\mathrm{p.v.}\int_\BT  \prod_{i = 1}^n(\D h_i)^{k_i}\f{h_{n+1}'(\th+\xi)\p(\th+\xi)}{2\sin\f{\xi}{2}}\,d\xi\\
&\;+\f{1}{|\mathbf{k}|+1}\mathrm{p.v.}\int_\BT  \prod_{i = 1}^n(\D h_i)^{k_i}(h_{n+1}(\th+\xi)-h_{n+1}(\th))\cdot\f{ \p'(\th+\xi)}{2\sin\f{\xi}{2}}\,d\xi\\
&\; - \f{1}{|\mathbf{k}|+1}\cdot \f{1-(-1)^{|\mathbf{k}|+1}}{2^{|\mathbf{k}|+1}}\prod_{i = 1}^n(h_i(\th+\pi)-h_i(\th))^{k_i}(h_{n+1}(\th+\pi)-h_{n+1}(\th))\p(\th+\pi)\\
=&\;\sum_{i = 1}^n \f{k_i}{|\mathbf{k}|+1}\tilde{N}_{(k_1,\cdots, k_i-1,\cdots, k_n),h_i'\p}
+\f{1}{|\mathbf{k}|+1} (N_{\mathbf{k},(h_{n+1}\p)'} - h_{n+1}(\th)N_{\mathbf{k},\p'})\\
&\; - \f{1}{|\mathbf{k}|+1}\cdot \f{1-(-1)^{|\mathbf{k}|+1}}{2^{|\mathbf{k}|+1}}\prod_{i = 1}^n(h_i(\th+\pi)-h_i(\th))^{k_i}(h_{n+1}(\th+\pi)-h_{n+1}(\th))\p(\th+\pi).
\end{split}
\label{eqn: rewrite extra-term commutator}
\eeq
By the induction hypothesis \eqref{eqn: Lp bound for extra-term Calderon commutator},
\beq
\begin{split}
&\;\|k_i\tilde{N}_{(k_1,\cdots, k_i-1,\cdots, k_n),h_i'\p}\|_{L^p}\\
\leq &\;k_i
C_{**}^{|\mathbf{k}|} (\|h_{n+1}'\|_{L^p}\|h_i'\p\|_{L^\infty}+\|h_{n+1}\|_{L^\infty}\|(h_i'\p)'\|_{L^p})\cdot \|h_1'\|_{L^\infty}^{k_1}\cdots \|h_i'\|_{L^\infty}^{k_i-1}\cdots \|h_n'\|_{L^\infty}^{k_n}\\
&\;+ k_i C_{**}^{|\mathbf{k}|}\|h_{n+1}\|_{L^\infty}\|h_i'\p\|_{L^\infty} \\
&\;\qquad \cdot \sum_{j = 1,\cdots n\atop j\not = i} \|h_1'\|_{L^\infty}^{k_1}\cdots \|h_i'\|_{L^\infty}^{k_i-1}\cdots \|h_j'\|_{L^\infty}^{k_j-1}\cdots \|h_n'\|_{L^\infty}^{k_n}\cdot \mathds{1}_{\{k_j> 0\}}\|h_j''\|_{L^p}\\
&\;+ k_i C_{**}^{|\mathbf{k}|} \|h_{n+1}\|_{L^\infty}\|h_i'\p\|_{L^\infty} \cdot \|h_1'\|_{L^\infty}^{k_1}\cdots \|h_i'\|_{L^\infty}^{k_i-2}\cdots \|h_n'\|_{L^\infty}^{k_n}\cdot \mathds{1}_{\{k_i>1\}}\|h_i''\|_{L^p}\\
\leq &\;k_i
C_{**}^{|\mathbf{k}|} (\|h_{n+1}'\|_{L^p}\|\p\|_{L^\infty}+ \|h_{n+1}\|_{L^\infty}\|\p'\|_{L^p})\cdot \prod_{j = 1}^n\|h_j'\|_{L^\infty}^{k_j}\\
&\;+Ck_i C_{**}^{|\mathbf{k}|} \|h_{n+1}\|_{L^\infty}\|\p\|_{L^\infty} \sum_{j = 1}^n \|h_1'\|_{L^\infty}^{k_1}\cdots \|h_j'\|_{L^\infty}^{k_j-1}\cdots \|h_n'\|_{L^\infty}^{k_n}\cdot \mathds{1}_{\{k_j> 0\}}\|h_j''\|_{L^p}.
\end{split}
\eeq
By Lemma \ref{lem: Lp bound for multi-term commutator},
\beq
\begin{split}
&\;\|(N_{\mathbf{k},(h_{n+1}\p)'} - h_{n+1}(\th)N_{\mathbf{k},\p'})\|_{L^p}\\
\leq &\;C_*^{|\mathbf{k}|+2}(\|h_{n+1}'\|_{L^p}\|\p\|_{L^\infty}+\|h_{n+1}\|_{L^\infty}\|\p'\|_{L^p})\prod_{i = 1}^n \|h'_i\|_{L^\infty}^{k_i}.
\end{split}
\eeq
Combining these estimates with \eqref{eqn: rewrite extra-term commutator}, we obtain by Sobolev embedding that
\beq
\begin{split}
&\;\|\tilde{M}_{\mathbf{k},\p}\|_{L^p}\\
\leq &\;
C_{**}^{|\mathbf{k}|} (\|h_{n+1}'\|_{L^p}\|\p\|_{L^\infty}+ \|h_{n+1}\|_{L^\infty}\|\p'\|_{L^p}) \prod_{i = 1}^n\|h_i'\|_{L^\infty}^{k_i}\\
&\;+C C_{**}^{|\mathbf{k}|} \|h_{n+1}\|_{L^\infty}\|\p\|_{L^\infty} \sum_{i = 1}^n \|h_1'\|_{L^\infty}^{k_1}\cdots \|h_i'\|_{L^\infty}^{k_i-1}\cdots \|h_n'\|_{L^\infty}^{k_n}\cdot \mathds{1}_{\{k_i> 0\}}\|h_i''\|_{L^p}\\
&\;+C_*^{|\mathbf{k}|+2}(\|h_{n+1}'\|_{L^p}\|\p\|_{L^\infty}+\|h_{n+1}\|_{L^\infty}\|\p'\|_{L^p})\prod_{i = 1}^n \|h'_i\|_{L^\infty}^{k_i}\\
&\;+CC_1^{|\mathbf{k}|}\prod_{i = 1}^n\|h_i'\|_{L^\infty}^{k_i}\cdot \|h_{n+1}'\|_{L^p}\|\p\|_{L^\infty}\\
\leq &\;
(C_{**}^{|\mathbf{k}|} +C_*^{|\mathbf{k}|+2}+CC_1^{|\mathbf{k}|}) (\|h_{n+1}'\|_{L^p}\|\p\|_{L^\infty}+ \|h_{n+1}\|_{L^\infty}\|\p'\|_{L^p})\prod_{i = 1}^n\|h_i'\|_{L^\infty}^{k_i}\\
&\;+C C_{**}^{|\mathbf{k}|} \|h_{n+1}\|_{L^\infty}\|\p\|_{L^\infty} \sum_{i = 1}^n \|h_1'\|_{L^\infty}^{k_1}\cdots \|h_i'\|_{L^\infty}^{k_i-1}\cdots \|h_n'\|_{L^\infty}^{k_n}\cdot \mathds{1}_{\{k_i> 0\}}\|h_i''\|_{L^p}\\
\end{split}
\eeq
The estimate for $\tilde{N}_{\mathbf{k},\p}$ can be derived easily, since
\beq
|\tilde{N}_{\mathbf{k},\p}-\tilde{M}_{\mathbf{k},\p}| \leq CC_1^{|\mathbf{k}|}\prod_{i = 1}^n\|h_i'\|_{L^\infty}^{k_i} \int_{\BT} |h_{n+1}(\th+\xi)-h_{n+1}(\th)||\p(\th+\xi)|\,d\xi.
\eeq
Taking $C_{**}>0$ to be suitably large, we prove \eqref{eqn: Lp bound for extra-term Calderon commutator} when $|\mathbf{k}| = N$.
\end{step}
This completes the proof.
\end{proof}
\end{lem}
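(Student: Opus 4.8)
The plan is to prove \eqref{eqn: Lp bound for extra-term Calderon commutator} by induction on $|\mathbf{k}|$, at each step reducing $\tilde{M}_{\mathbf{k},\psi}$ and $\tilde{N}_{\mathbf{k},\psi}$ to commutators of strictly lower order together with operators of the type $M_{\mathbf{k},\psi}$, $N_{\mathbf{k},\psi}$ that are already controlled by Lemma \ref{lem: Lp bound for multi-term commutator}. Throughout I will use two elementary observations: first, $\tilde{M}_{\mathbf{k},\psi}-\tilde{N}_{\mathbf{k},\psi}$ is an integral with no singularity (the difference of the kernels $\frac{1}{2\tan(\xi/2)}-\frac{1}{2\sin(\xi/2)}$ is bounded), so it suffices to estimate $\tilde{M}_{\mathbf{k},\psi}$; and second, each $h_i$ is differentiable a.e., which legitimizes the integrations by parts below exactly as in the proof of Lemma \ref{lem: Lp bound for multi-term commutator}. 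As there, I will first smoothly cut off the non-smoothness of $\sin(\xi/2)$ at $\xi=\pm\pi$ by an even bump function, the contribution of the complementary region being a harmless regular integral.

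For the base case $\mathbf{k}=\mathbf{0}$ the integral carries only the single difference quotient $\Delta h_{n+1}$, so Lemma \ref{lem: Lp bound for multi-term commutator} does not apply directly. Instead I would use $\frac{1}{2\sin(\xi/2)}\cdot\frac{1}{2\tan(\xi/2)}=-\frac{d}{d\xi}\frac{1}{2\sin(\xi/2)}$ and integrate by parts, transferring the $\xi$-derivative onto $(h_{n+1}(\theta+\xi)-h_{n+1}(\theta))\psi(\theta+\xi)$. This produces a principal-value integral of $h_{n+1}'(\theta+\xi)\psi(\theta+\xi)+(h_{n+1}(\theta+\xi)-h_{n+1}(\theta))\psi'(\theta+\xi)$ against a Hilbert-transform-type kernel, plus a regular remainder and a boundary term; $L^p$-boundedness of the Hilbert transform on $\BT$ together with the embedding $W^{1,p}(\BT)\hookrightarrow L^\infty(\BT)$ (recall $p\ge 2$) then yields $\|\tilde{M}_{\mathbf{0},\psi}\|_{L^p}\le C(\|h_{n+1}'\|_{L^p}\|\psi\|_{L^\infty}+\|h_{n+1}\|_{L^\infty}\|\psi'\|_{L^p})$, which is \eqref{eqn: Lp bound for extra-term Calderon commutator} for $\mathbf{k}=\mathbf{0}$.

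For the inductive step, assume the bound holds with some universal constant $C_{**}$ for all multi-indices of length $\le N-1$, and take $|\mathbf{k}|=N$. The idea is to group all singular factors: write $\prod_i(\Delta h_i)^{k_i}\cdot\Delta h_{n+1}=(2\sin(\xi/2))^{-(|\mathbf{k}|+1)}\prod_i(h_i(\theta+\xi)-h_i(\theta))^{k_i}(h_{n+1}(\theta+\xi)-h_{n+1}(\theta))$ and use $(2\sin(\xi/2))^{-(|\mathbf{k}|+1)}\cdot\frac{1}{2\tan(\xi/2)}=-\frac{1}{|\mathbf{k}|+1}\frac{d}{d\xi}(2\sin(\xi/2))^{-(|\mathbf{k}|+1)}$. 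Integrating by parts and expanding $\frac{d}{d\xi}$ of the product by Leibniz, the terms where the derivative hits a factor $(h_i(\theta+\xi)-h_i(\theta))^{k_i}$ become $\tilde{N}$-operators with multi-index $(k_1,\dots,k_i-1,\dots,k_n)$ and new symbol $h_i'\psi$ — bounded by the induction hypothesis (it is here that the norms $\|h_i''\|_{L^p}$ enter, through $(h_i'\psi)'$); the terms where the derivative hits $h_{n+1}$ or $\psi$ combine exactly into $N_{\mathbf{k},(h_{n+1}\psi)'}-h_{n+1}(\theta)N_{\mathbf{k},\psi'}$, bounded by Lemma \ref{lem: Lp bound for multi-term commutator}; and the endpoint value at $\xi=\pm\pi$ is an elementary term. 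Summing, $\|\tilde{M}_{\mathbf{k},\psi}\|_{L^p}$ is bounded by $(C_{**}^{|\mathbf{k}|}+C_*^{|\mathbf{k}|+2}+CC_1^{|\mathbf{k}|})$ times the right-hand structure of \eqref{eqn: Lp bound for extra-term Calderon commutator}, and choosing $C_{**}$ universal and large enough closes the induction; transferring back from $\tilde{M}$ to $\tilde{N}$ by the non-singular difference completes the proof. The main obstacle is not analytic but bookkeeping: tracking the combinatorial factors $k_i/(|\mathbf{k}|+1)$, the $W^{2,p}$-terms generated by $(h_i'\psi)'$, and verifying that a single $C_{**}=C_{**}(p)$ makes the recursive constant inequality valid for all $N$, so that the estimate does not degenerate as $|\mathbf{k}|\to\infty$.
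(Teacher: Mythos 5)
Your proposal follows essentially the same route as the paper's proof: the base case is handled by the identity $\tfrac{1}{2\sin(\xi/2)}\cdot\tfrac{1}{2\tan(\xi/2)}=-\tfrac{d}{d\xi}\tfrac{1}{2\sin(\xi/2)}$ and one integration by parts; the inductive step uses the generalized identity $(2\sin(\xi/2))^{-(|\mathbf{k}|+1)}\tfrac{1}{2\tan(\xi/2)}=-\tfrac{1}{|\mathbf{k}|+1}\tfrac{d}{d\xi}(2\sin(\xi/2))^{-(|\mathbf{k}|+1)}$, Leibniz, the induction hypothesis for the $\tilde N$ terms with symbol $h_i'\psi$, Lemma~\ref{lem: Lp bound for multi-term commutator} for the combination $N_{\mathbf{k},(h_{n+1}\psi)'}-h_{n+1}(\theta)N_{\mathbf{k},\psi'}$, plus endpoint terms and the nonsingular $\tilde M$--$\tilde N$ difference. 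The bookkeeping of constants and the closure of the recursive inequality are exactly the concerns the paper addresses, so the argument matches.
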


\begin{lem}\label{lem: W 1p estimate for multi-term commutator}
Under the hypotheses of Lemma \ref{lem: Lp bound for multi-term commutator}, we additionally assume $h_1,\cdots, h_n \in W^{2,p}(\BT)$ and $\p\in W^{1,p}(\BT)$.
Then 
\beq
\begin{split}
&\;\|M_{\mathbf{k},\p}\|_{\dot{W}^{1,p}}+\|N_{\mathbf{k},\p}\|_{\dot{W}^{1,p}}\\
\leq &\;(|\mathbf{k}|+1)C_\dag^{|\mathbf{k}|+1}\|\p'\|_{L^p} \prod_{i = 1}^n \|h_i'\|_{L^\infty}^{k_i}\\
&\; +(|\mathbf{k}|+1)C_\dag^{|\mathbf{k}|+1}\|\p\|_{L^\infty}\sum_{i = 1}^n \|h_1'\|_{L^\infty}^{k_1}\cdots \|h_i'\|_{L^\infty}^{k_i-1} \cdots \|h_n'\|_{L^\infty}^{k_n}\cdot \mathds{1}_{\{k_i>0\}}\|h_i''\|_{L^p}.
\end{split}
\label{eqn: W 1p bound for multi-term Calderon commutator}
\eeq
where $C_\dag$ is a universal constant depending only on $p$.

\begin{proof}
Instead of studying weak derivatives of $M_{\mathbf{k},\p}$ and $N_{\mathbf{k},\p}$ directly, we turn to difference quotients first.
Without loss of generality, let $\e>0$ be arbitrary and sufficiently small.
It suffices to prove uniform-in-$\e$ $L^p$-bounds for $\e^{-1}(M_{\mathbf{k},\p}(\th+\e)-M_{\mathbf{k},\p}(\th))$ and $\e^{-1}(N_{\mathbf{k},\p}(\th+\e)-N_{\mathbf{k},\p}(\th))$.
Write
\beq
\begin{split}
&\;\e^{-1}(M_{\mathbf{k},\p}(\th+\e)-M_{\mathbf{k},\p}(\th))\\
= &\;\sum_{i = 1}^n \int_\BT (\D h_1(\th))^{k_1}\cdots (\D h_{i-1}(\th))^{k_{i-1}}(\D h_{i+1}(\th+\e))^{k_{i+1}}\cdot (\D h_n(\th+\e))^{k_n}\\
&\;\qquad \cdot \sum_{l = 0}^{k_i-1}(\D h_i(\th))^l(\D h_i(\th+\e))^{k_i-1-l} \D\left(\f{h_i(\th+\e)-h_i(\th)}{\e}\right)\f{\p(\th+\e+\xi)}{2\tan\f{\xi}{2}}\,d\xi\\
&\;+\int_\BT\prod_{i = 1}^n(\D h_i(\th))^{k_i} \cdot \f{\e^{-1}(\p(\th+\e+\xi)-\p(\th+\xi))}{2\tan\f{\xi}{2}}\,d\xi.
\end{split}
\eeq
Applying Lemma \ref{lem: Lp bound for multi-term commutator} and Lemma \ref{lem: Lp bound for extra-term commutator},
\beq
\begin{split}
&\;\|\e^{-1}(M_{\mathbf{k},\p}(\th+\e)-M_{\mathbf{k},\p}(\th))\|_{L^p}\\
\leq
&\;\sum_{i = 1}^n k_i C_{**}^{|\mathbf{k}|}(\|\e^{-1}(h_i(\th+\e)-h_i(\th))'\|_{L^p}\|\p\|_{L^\infty}+\|\e^{-1}(h_i(\th+\e)-h_i(\th))\|_{L^\infty}\|\p'\|_{L^p})\\
&\;\qquad \cdot \|h_1'\|_{L^\infty}^{k_1}\cdots \|h_i'\|_{L^\infty}^{k_i-1}\cdots \|h_n'\|_{L^\infty}^{k_n}\\
&\;+\sum_{i = 1}^n C_{**}^{|\mathbf{k}|}\|\e^{-1}(h_i(\th+\e)-h_i(\th))\|_{L^\infty}\|\p\|_{L^\infty}\\
&\;\qquad \cdot k_i\sum_{j = 1,\cdots, n\atop j \not = i} \|h_1'\|_{L^\infty}^{k_1}\cdots \|h_i'\|_{L^\infty}^{k_i-1}\cdots \|h_j'\|_{L^\infty}^{k_j-1} \cdots \|h_n'\|_{L^\infty}^{k_n}\cdot \mathds{1}_{\{k_j>0\}}\|h_j''\|_{L^p}\\
&\;+\sum_{i = 1}^n C_{**}^{|\mathbf{k}|}\|\e^{-1}(h_i(\th+\e)-h_i(\th))\|_{L^\infty}\|\p\|_{L^\infty}\\
&\;\qquad \cdot \|h_1'\|_{L^\infty}^{k_1}\cdots \|h_i'\|_{L^\infty}^{k_i-2}\cdots \|h_n'\|_{L^\infty}^{k_n}\cdot C(k_i-1)\mathds{1}_{\{k_i>1\}} \|h_i''\|_{L^p}\\
&\;+C_*^{|\mathbf{k}|+2}\prod_{i = 1}^n \|h_i'\|_{L^\infty}^{k_i}\cdot \|\e^{-1}(\p(\cdot+\e)-\p(\cdot))\|_{L^p}\\
\leq &\;C (|\mathbf{k}|C_{**}^{|\mathbf{k}|}+C_*^{|\mathbf{k}|+2})\|\p'\|_{L^p} \prod_{i = 1}^n \|h_i'\|_{L^\infty}^{k_i}\\
&\; +C |\mathbf{k}| C_{**}^{|\mathbf{k}|}\|\p\|_{L^\infty}\sum_{i = 1}^n \|h_1'\|_{L^\infty}^{k_1}\cdots \|h_i'\|_{L^\infty}^{k_i-1} \cdots \|h_n'\|_{L^\infty}^{k_n}\cdot \mathds{1}_{\{k_i>0\}}\|h_i''\|_{L^p}.
\end{split}
\eeq
Note that this bound is uniform in $\e$.
Hence, $M_{\mathbf{k},\p}(\th)$ has weak derivative, with an identical $L^p$-bound as above.
The estimate for $N_{\mathbf{k},\p}$ can be derived similarly.
Therefore, \eqref{eqn: W 1p bound for multi-term Calderon commutator} holds if $C_{\dag}$ is taken to be suitably large.
\end{proof}
\end{lem}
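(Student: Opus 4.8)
\textbf{Proof strategy for Lemma \ref{lem: W 1p estimate for multi-term commutator}.}

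The plan is to avoid differentiating the principal-value integrals $M_{\mathbf{k},\p}$ and $N_{\mathbf{k},\p}$ directly, and instead to estimate their difference quotients in $L^p$, uniformly in the shift. Fix a small $\e>0$ and consider $\e^{-1}(M_{\mathbf{k},\p}(\th+\e)-M_{\mathbf{k},\p}(\th))$. Changing variables $\xi\mapsto\xi$ in the shifted integral and using the telescoping identity
\[
\prod_i a_i^{k_i}-\prod_i b_i^{k_i}
=\sum_{i=1}^n\Big(\prod_{j<i}a_j^{k_j}\Big)\Big(\prod_{j>i}b_j^{k_j}\Big)\Big(\sum_{l=0}^{k_i-1}a_i^l b_i^{k_i-1-l}\Big)(a_i-b_i),
\]
applied with $a_i=\D h_i(\th)$ and $b_i=\D h_i(\th+\e)$, together with a separate term coming from the shift landing on $\p$, one rewrites the difference quotient as a sum of two structurally familiar pieces. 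First, each $i$-summand has the form of an \emph{extra-term commutator} $\tilde M_{\mathbf{k}',\cdot}$ from Lemma \ref{lem: Lp bound for extra-term commutator}, with the special factor $\D h_{n+1}$ replaced by $\D\big(\e^{-1}(h_i(\th+\e)-h_i(\th))\big)$, whose associated ``$h_{n+1}$'' is the difference quotient $\e^{-1}(h_i(\th+\e)-h_i(\th))$. Second, the remaining piece is $N_{\mathbf{k},\cdot}$ (a multi-term commutator, Lemma \ref{lem: Lp bound for multi-term commutator}) applied to the difference quotient $\e^{-1}(\p(\th+\e+\cdot)-\p(\th+\cdot))$.

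Next I would apply the two commutator bounds already established: Lemma \ref{lem: Lp bound for multi-term commutator} to the $N_{\mathbf{k},\cdot}$ piece and Lemma \ref{lem: Lp bound for extra-term commutator} to the $\tilde M$ pieces. The crucial point is uniformity in $\e$: one uses $\|\e^{-1}(h_i(\cdot+\e)-h_i(\cdot))'\|_{L^p}\le\|h_i''\|_{L^p}$, $\|\e^{-1}(h_i(\cdot+\e)-h_i(\cdot))\|_{L^\infty}\le\|h_i'\|_{L^\infty}$, and $\|\e^{-1}(\p(\cdot+\e)-\p(\cdot))\|_{L^p}\le\|\p'\|_{L^p}$, all of which follow from the fundamental theorem of calculus. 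Collecting the resulting bounds and tracking the combinatorial factors (the $\sum_{l=0}^{k_i-1}$ gives $k_i$, the outer sum over $i$ together with $|\mathbf{k}|$ from Lemma \ref{lem: Lp bound for extra-term commutator} yields the overall $(|\mathbf{k}|+1)$ prefactor) produces a bound of the claimed shape with constants $C_{**},C_*,C_1$; absorbing these into a single sufficiently large universal $C_\dag=C_\dag(p)$ gives \eqref{eqn: W 1p bound for multi-term Calderon commutator}. Since the difference-quotient bound is uniform in $\e$, a standard weak-compactness argument shows $M_{\mathbf{k},\p}\in\dot W^{1,p}$ with the same $L^p$ bound on its weak derivative; the estimate for $N_{\mathbf{k},\p}$ is identical (the two differ by an integral with a bounded, non-singular kernel $\tfrac1{2\sin(\xi/2)}-\tfrac1{2\tan(\xi/2)}$, which is harmless under differentiation).

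The main obstacle is organizational rather than conceptual: one must expand the telescoping identity carefully enough that every term is recognized as an instance of either Lemma \ref{lem: Lp bound for multi-term commutator} or Lemma \ref{lem: Lp bound for extra-term commutator} with the correct multi-index and the correct ``$h_{n+1}$''/``$\p$'' slots, and then bookkeep the several sources of combinatorial growth (the inner geometric-series sum, the outer index sum, the $|\mathbf{k}|$ weights inside the extra-term estimate, and the $\mathds 1_{\{k_i>1\}}$ versus $\mathds 1_{\{k_i>0\}}$ indicators when a second derivative is extracted from an already-depleted factor) so that they combine into the single clean factor $(|\mathbf{k}|+1)C_\dag^{|\mathbf{k}|+1}$. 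No new analytic input beyond the two preceding lemmas is needed; the work is in arranging the algebra and invoking the FTC bounds with the right uniformity. I would not write out the full expansion — a representative term from each of the two families, with its estimate, suffices to make the pattern transparent.
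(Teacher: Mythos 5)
Your proposal is correct and follows essentially the same route as the paper: estimate the difference quotient $\e^{-1}(M_{\mathbf{k},\p}(\th+\e)-M_{\mathbf{k},\p}(\th))$ via the telescoping product identity, recognize each resulting piece as an instance of Lemma \ref{lem: Lp bound for extra-term commutator} (with $h_{n+1}$ the difference quotient of some $h_i$) or Lemma \ref{lem: Lp bound for multi-term commutator} (applied to the difference quotient of $\p$), use the fundamental theorem of calculus for uniform-in-$\e$ bounds, and conclude by weak compactness. The bookkeeping of combinatorial factors and indicators you describe matches the paper's computation.
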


\subsection{Regularity theory of fractional heat equations}
We focus on the following Cauchy problem of fractional heat equation on $\BT$ with special exponent $\f12$.
\beq
\pa_t v = -(-\D)^{1/2}v +f(t,\th),\quad v(0,\th) = 0.
\label{eqn: fractional heat equation}
\eeq

For our purpose, we have that
\begin{lem}\label{lem: L p regularity theory of fractional heat equation}
Suppose $f \in L^p_{[0,T]}L^p(\BT)$ for some $p\in [2,\infty)$.
Then there exists $v\in L^p_{[0,T]}W^{1,p}(\BT)$ solving \eqref{eqn: fractional heat equation}, satisfying that
\beq
\|v_t \|_{L^p_{[0,T]}L^p(\BT)}+\|(-\D)^{1/2}v\|_{L^p_{[0,T]}L^p(\BT)} \leq C\|f \|_{L^p_{[0,T]}L^p(\BT)},
\eeq
where $C = C(p)$.
\end{lem}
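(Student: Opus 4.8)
The plan is to obtain the two bounds as a consequence of the maximal $L^p$-regularity of the generator $-(-\D)^{1/2}$ on $L^p(\BT)$. Recall that $A := (-\D)^{1/2}$, with domain $W^{1,p}(\BT)$, generates the Poisson semigroup $\{e^{-tA}\}_{t\geq 0}$, which is a bounded analytic $C_0$-semigroup on $L^p(\BT)$ for every $p\in(1,\infty)$; on the Fourier side it acts by $\widehat{e^{-tA}w}(k) = e^{-t|k|}\widehat{w}(k)$. The natural candidate solution is given by the Duhamel formula
\[
v(t) := (\CL f)(t) = \int_0^t e^{-(t-s)A}f(s)\,ds,
\]
which indeed solves \eqref{eqn: fractional heat equation}. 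Since $L^p(\BT)$ is a UMD space and the Fourier symbols $\lambda(\lambda+|k|)^{-1}$, $|k|(\lambda+|k|)^{-1}$ (together with their $\lambda$-derivatives, appropriately weighted) satisfy Mikhlin-type bounds in the $k$-variable uniformly for $\lambda$ in a sector containing the right half-plane, $A$ is $R$-sectorial of angle $0$, so by the Dore--Venni / Weis characterization of maximal regularity one gets
\[
\|\partial_t \CL f\|_{L^p_{[0,T]}L^p(\BT)} + \|A\,\CL f\|_{L^p_{[0,T]}L^p(\BT)} \leq C(p)\,\|f\|_{L^p_{[0,T]}L^p(\BT)},
\]
with a constant independent of $T$ (this last point being a consequence of the exact self-similarity of the Poisson kernel, under which the estimate is scale-invariant). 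Setting $v = \CL f$ gives the asserted bounds for $\partial_t v$ and $(-\D)^{1/2}v$.

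It then remains to pass from $\|A v\|_{L^p_{[0,T]}L^p}$ to the membership $v\in L^p_{[0,T]}W^{1,p}(\BT)$. Decompose $v = \bar v + v^\flat$ into its mean and its mean-zero part. For the mean, \eqref{eqn: fractional heat equation} gives $\partial_t\bar v(t) = \bar f(t)$ with $\bar v(0)=0$, hence $\sup_{[0,T]}|\bar v| \le \int_0^T|\bar f|\,ds \le C(p,T)\|f\|_{L^p_{[0,T]}L^p}$, a harmless lower-order term, while $\partial_\theta\bar v = 0$. For the mean-zero part, write $\partial_\theta v^\flat = \partial_\theta A^{-1}(A v^\flat)$; the operator $\partial_\theta A^{-1} = \partial_\theta(-\D)^{-1/2}$ is, up to a sign, the periodic conjugate-function operator, which coincides with $\pm\CH$ on mean-zero functions and is a Calder\'on--Zygmund operator bounded on $L^p(\BT)$. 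Therefore $\|\partial_\theta v^\flat\|_{L^p(\BT)} \le C(p)\|A v^\flat\|_{L^p(\BT)} \le C(p)\|A v\|_{L^p(\BT)}$; integrating the $p$-th power over $t\in[0,T]$ yields $v\in L^p_{[0,T]}W^{1,p}(\BT)$, which is all that is claimed, the sharp (and $T$-independent) estimates being precisely those already obtained for $\partial_t v$ and $(-\D)^{1/2}v$.

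A reader preferring to avoid the abstract machinery may instead argue by an explicit singular-integral computation, which would be the backup route. Extending $f$ by zero to $t<0$, one has $A v(t) = \int_{\BR} K(t-s)f(s)\,ds$ with $K(\tau) = \mathds{1}_{\{\tau>0\}}A e^{-\tau A}$; the integral kernel of this operator on $\BT$ is the periodization of $\partial_\theta^2$ applied to the Poisson kernel, and because $(-\D)^{1/2}$ scales in $\theta$ exactly as $\partial_t$ scales in $t$, the resulting space--time kernel $K(t-s,\theta-\sigma)$ is a standard Calder\'on--Zygmund kernel on $\BR\times\BT$ with respect to the \emph{isotropic} metric $|t-s|+|\theta-\sigma|$, obeying the size bound $\lesssim(|t-s|+|\theta-\sigma|)^{-2}$ and the associated H\"ormander condition — both read off from the explicit Poisson-kernel formulas collected in Lemma \ref{lem: properties of Poisson kernel}. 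The $L^2_{t,\theta}$-boundedness is immediate from Plancherel, the full space--time symbol being $|k|(i\tau+|k|)^{-1}$, which is bounded by $1$; the Calder\'on--Zygmund theorem upgrades this to $L^p_{t,\theta}$-boundedness for $1<p<\infty$, and the same argument with the symbol $i\tau(i\tau+|k|)^{-1}$ controls $\partial_t v$. Restricting to $t\in[0,T]$ finishes the proof. The only genuinely delicate points in either route are bookkeeping ones: the non-invertibility of $A=(-\D)^{1/2}$ forces one to work on the mean-zero subspace and treat the zero Fourier mode separately, and, along the elementary route, verifying the H\"ormander condition for the space--time kernel is a somewhat tedious but entirely routine computation essentially contained in Lemma \ref{lem: properties of Poisson kernel}. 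I expect no substantive obstacle; the estimate is classical.
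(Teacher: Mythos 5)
Your primary route --- abstract maximal $L^p$-regularity for the generator of the Poisson semigroup --- is the same approach the paper takes: its one-line proof cites Lamberton's theorem on maximal regularity for sub-Markovian analytic semigroups on $L^q$ spaces, of which the Poisson semigroup on $\BT$ is an instance. You spell out two points the paper leaves implicit and which are worth making: that the constant is $T$-independent, and how to pass from $\|(-\D)^{1/2}v\|_{L^p}$ to $v\in L^p_{[0,T]}W^{1,p}(\BT)$ despite the non-invertibility of $(-\D)^{1/2}$ on $L^p(\BT)$, by separating the zero mode and using boundedness of $\partial_\theta(-\D)^{-1/2}=\pm\CH$ on mean-zero functions. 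Your elementary backup via a space--time Calder\'on--Zygmund argument is a genuinely different, self-contained route. One small slip there: the kernel of $\tau\mapsto(-\D)^{1/2}e^{-\tau(-\D)^{1/2}}$ is $-\partial_\tau$ applied to the Poisson kernel, by the semigroup equation $\partial_\tau\CP=-(-\D)^{1/2}\CP$, not $\partial_\theta^2$ applied to it --- by harmonicity $\partial_\theta^2\CP=-\partial_\tau^2\CP$, which is a second rather than a first $\tau$-derivative, so the two do not coincide. The corrected kernel $-\partial_\tau\CP$ still satisfies the isotropic size bound $\lesssim(|\tau|+|\theta|)^{-2}$ and the H\"ormander condition (read off from Lemma \ref{lem: properties of Poisson kernel}), so the backup argument goes through unchanged; only the description of the kernel needs fixing.
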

This immediately follows from \cite[Theorem 1]{lamberton1987equations}; see also \cite[Theorem 4.1]{biccari2018local}.

\begin{lem}\label{lem: spatial Holder estimate for solution with Sobolev source term}
Suppose $T\leq 1$ and $p\in (2,\infty)$.
Under the assumption of Lemma \ref{lem: L p regularity theory of fractional heat equation}, $v\in C_{[0,T]}C^\alpha(\BT)$ with $\alpha = 1-\f2p$, satisfying that
\beq
\|v\|_{C_{[0,T]}\dot{C}^{\alpha}(\BT)}\leq C\|f \|_{L^p_{[0,T]}L^p(\BT)},
\eeq
where $C = C(p)$.
\begin{proof}
Let $\CP(t,\th)$ be the Poisson kernel on $\BT$, with $t$ being the time variable, solving
\beq
\pa_t \CP = -(-\D)^{1/2}\CP,\quad \CP(0,\th) = \d_0
\eeq
in the sense of distribution.
Here $\d_0$ is the delta measure at $0\in \BT$.
Note that $\CP(t,\th)$ is related to $P(s,\xi)$, which is defined in Section \ref{sec: gradient estimate of growth potential}, in the following sense
\beq
\CP(t,\th) = \f{1}{2\pi}P(e^{-t},\th).
\label{eqn: relation between two types of poisson kernels}
\eeq
Then $v$ can be represented by
\beq
v(t,\th) = \int_0^t \int_\BT\CP(t-\tau,\th-\xi)f (\tau,\xi)\,d\xi d\tau.
\label{eqn: Duhamel's formula}
\eeq

Take arbitrary $\th_1,\th_2\in \BT$, such that $d_\th := |\th_1-\th_2|\leq 1$.
Denote $\bar{\th} = (\th_1+\th_2)/2$.
Then
\beq
\begin{split}
&\;|v(t,\th_1)-v(t,\th_2)|\\
\leq &\;\int_{[0,t]\times \BT\cap \{(\tau,\xi):\,|t-\tau|+|\bar{\th}-\xi|\leq d_\th\}} (|\CP(t-\tau,\th_1-\xi)|+|\CP(t-\tau,\th_2-\xi)|)|f (\tau,\xi)|\,d\xi d\tau\\
&\;+\int_{[0,t]\times \BT\cap \{(\tau,\xi):\,|t-\tau|+|\bar{\th}-\xi|\geq d_\th\}} |\CP(t-\tau,\th_1-\xi)-\CP(t-\tau,\th_2-\xi)||f (\tau,\xi)|\,d\xi d\tau.
\end{split}
\eeq
By the mean value theorem, Lemma \ref{lem: properties of Poisson kernel} and H\"{o}lder's inequality,
\beq
\begin{split}
&\;|v(t,\th_1)-v(t,\th_2)|\\
\leq &\;C\int_{[0,t]\times \BT\cap \{(\tau,\xi):\,|t-\tau|+|\th_1-\xi|\leq 2d_\th\}} \f{|f (\tau,\xi)|}{|t-\tau|+|\th_1-\xi|}\,d\xi d\tau\\
&\;+C\int_{[0,t]\times \BT\cap \{(\tau,\xi):\,|t-\tau|+|\th_2-\xi|\leq 2d_\th\}} \f{|f (\tau,\xi)|}{|t-\tau|+|\th_2-\xi|}\,d\xi d\tau\\
&\;+C|\th_1-\th_2|\int_{[0,t]\times \BT\cap \{(\tau,\xi):\,|t-\tau|+|\bar{\th}-\xi|\geq d_\th\}} \f{|f (\tau,\xi)|}{|t-\tau|^2+|\bar{\th}-\xi|^2}\,d\xi d\tau\\
\leq &\;C\|f \|_{L^p([0,T]\times \BT)}\left(\int_{0}^{ 2d_\th}\r^{1-p'}\,d\r\right)^{1/p'}\\
&\;+C|\th_1-\th_2|\|f \|_{L^p([0,T]\times \BT)}\left(\int_{d_\th/\sqrt{2}}^\infty\r^{1-2p'}\,d\r\right)^{1/p'}.
\end{split}
\eeq
Here $p'= (1-\f1p)^{-1}\in (1,2)$.
Calculating the integral above yields
\beq
|v(t,\th_1)-v(t,\th_2)|\leq C|\th_1-\th_2|^\alpha\|f \|_{L^p_{[0,T]}L^p(\BT)}.
\eeq
It is then straightforward to justify the case $|\th_1-\th_2|>1$.

The time-continuity of $v$ in $C^{1,\alpha}$ follows from the absolute continuity of the Lebesgue integral with respect to translation.
\end{proof}
\end{lem}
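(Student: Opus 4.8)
The plan is to work directly from the Duhamel formula. Lemma~\ref{lem: L p regularity theory of fractional heat equation} already furnishes the solution $v$, and since $\CP(t,\theta)$ --- the fundamental solution of $\pa_t+(-\D)^{1/2}$ on $\BT$ --- is related to the disc Poisson kernel $P$ of \eqref{eqn: def of P} by $\CP(t,\theta)=\tfrac1{2\pi}P(e^{-t},\theta)$ (see \eqref{eqn: relation between two types of poisson kernels}), I may use the representation \eqref{eqn: Duhamel's formula}, $v(t,\theta)=\int_0^t\int_\BT\CP(t-\tau,\theta-\xi)f(\tau,\xi)\,d\xi\,d\tau$, together with the pointwise bounds of Lemma~\ref{lem: properties of Poisson kernel}. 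The only facts I need about the kernel are that, for $t\in[0,1]$ so that $|1-e^{-t}|$ is comparable to $t$, one has $|\CP(t,\theta)|\le C(t+|\theta|)^{-1}$ from \eqref{eqn: bound for P and Q} and $|\pa_\theta\CP(t,\theta)|\le C(t^2+|\theta|^2)^{-1}$ from \eqref{eqn: bound for derivative of Poisson kernel} (after bounding $|\sin\theta|\le|\theta|\le\sqrt{t^2+|\theta|^2}$), so that near the origin of space-time $\CP$ behaves like $(t+|\theta|)^{-1}$.

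Fix $\theta_1,\theta_2\in\BT$, set $d_\theta:=|\theta_1-\theta_2|$, and first treat $d_\theta\le1$; let $\bar\theta$ be the midpoint. I split the $(\tau,\xi)$-integration into a near region $\{|t-\tau|+|\bar\theta-\xi|\le d_\theta\}$ and its complement. On the near region I bound $|v(t,\theta_1)-v(t,\theta_2)|$ by integrating $|\CP(t-\tau,\theta_1-\xi)|+|\CP(t-\tau,\theta_2-\xi)|$ against $|f|$, using the pointwise bound above; on the far region I apply the mean value theorem in $\theta$, producing a factor $|\theta_1-\theta_2|$ times $\sup|\pa_\theta\CP(t-\tau,\cdot)|\le C(|t-\tau|^2+|\bar\theta-\xi|^2)^{-1}$ along the connecting segment. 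Hölder's inequality in $(\tau,\xi)$ with conjugate exponent $p'=(1-\tfrac1p)^{-1}\in(1,2)$ then reduces the two pieces to radial integrals: a near-field term controlled by $\big(\int_0^{2d_\theta}\rho^{1-p'}\,d\rho\big)^{1/p'}$, finite exactly because $p'<2$, and a far-field term controlled by $d_\theta\big(\int_{d_\theta/\sqrt2}^{\infty}\rho^{1-2p'}\,d\rho\big)^{1/p'}$, always finite. Both evaluate to a constant multiple of $d_\theta^{\,2/p'-1}=d_\theta^{\,1-2/p}=d_\theta^{\,\alpha}$, giving $|v(t,\theta_1)-v(t,\theta_2)|\le C(p)\,d_\theta^{\,\alpha}\|f\|_{L^p_{[0,T]}L^p(\BT)}$ with the bound uniform in $t$. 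The case $d_\theta>1$ follows at once (one may replace $d_\theta^{\,\alpha}$ by $1$ and argue the same way on the full integration region), and continuity of $t\mapsto v(t,\cdot)$ in $C^\alpha(\BT)$ is then a consequence of the uniformity in $t$ and the absolute continuity of the Lebesgue integral under space-time translation.

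I expect the only real work to be organizational: arranging the near/far decomposition so that, on each piece, the entire singular behaviour of $\CP$ is captured by the single scalar $|t-\tau|+|\bar\theta-\xi|$ (or its square), and then checking that the two radial integrals genuinely produce the same power of $d_\theta$. The one structural remark, as opposed to a computation, is that the hypothesis $p>2$ is essential rather than cosmetic: it is precisely $p'<2$ that makes the near-field integral converge, and the surplus $2-p'$ is exactly what is converted into the Hölder exponent $\alpha=1-2/p$; no regularity beyond $C^\alpha$ is available at this borderline, which is why the statement excludes $p=2$.
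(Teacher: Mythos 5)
Your proof is correct and follows the paper's argument exactly: the same Duhamel representation, the same near/far decomposition around the midpoint $\bar\theta$ with the mean value theorem on the far region, the same Hölder with conjugate exponent $p'\in(1,2)$, and the same radial integrals producing $d_\theta^\alpha$. The closing remark on why $p>2$ is essential is a nice sanity check but does not change the route; one small point in your favour is that you correctly say time-continuity is in $C^\alpha$, where the paper has a typographical slip.
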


\begin{lem}\label{lem: Holder bound for the fractional heat equation}
Suppose $T\leq 1$ and $f  \in L^\infty_{[0,T]}C^{\alpha}(\BT)$ for some $\alpha\in(0,1)$.
Then for all $\b\in (0,\alpha)$, there exists a unique $v\in C_{[0,T]}C^{1,\b}(\BT)$ solving \eqref{eqn: fractional heat equation}, satisfying that
\beq
\|v\|_{C_{[0,T]}\dot{C}^{1,\b}(\BT)}\leq C \|f \|_{L^\infty_{[0,T]}\dot{C}^\alpha(\BT)},
\label{eqn: estimate for uniform in time Holder norm of solution of fractional heat equation}
\eeq
where $C = C(\alpha,\b)$.
\begin{proof}
Once again, $v$ can be represented by \eqref{eqn: Duhamel's formula}.
It then suffices to bound its $\dot{C}^{1,\b}$-seminorm, which also implies the uniqueness.

For arbitrary $\th_1,\th_2\in \BT$, 
\beq
\begin{split}
&\;\pa_\th v(t,\th_1)-\pa_\th v(t,\th_2)\\
=&\;\int_0^t \int_\BT\pa_\th\CP(t-\tau,\xi)(f (\tau,\th_1-\xi)-f (\tau,\th_1)-f (\tau,\th_2-\xi)+f (\tau,\th_2))\,d\xi d\tau\\
\end{split}
\label{eqn: decomposition of difference of derivative of solution of fractional heat equation}
\eeq
Since
\beq
\begin{split}
&\;|f (\tau,\th_1-\xi)-f (\tau,\th_1)-f (\tau,\th_2-\xi)+f (\tau,\th_2)|\\
\leq &\;C\|f (\tau,\cdot)\|_{\dot{C}^\alpha}\min\{|\xi|^\alpha,|\th_1-\th_2|^\alpha\},
%
\end{split}
\eeq
by \eqref{eqn: relation between two types of poisson kernels} and Lemma \ref{lem: properties of Poisson kernel}, we have that
\beq
\begin{split}
&\;|\pa_\th v(t,\th_1)-\pa_\th v(t,\th_2)|\\
\leq &\;\int_0^t \int_\BT|\pa_\th\CP(t-\tau,\xi)||\xi|^{\alpha-\b}\,d\xi d\tau\cdot |\th_1-\th_2|^\b \|f \|_{L^\infty_{[0,T]}\dot{C}^\alpha(\BT)}\\
\leq&\; C\int_0^t(1-e^{-(t-\tau)})^{\alpha-\b-1}\,d\tau\cdot |\th_1-\th_2|^\b \|f \|_{L^\infty_{[0,T]}\dot{C}^\alpha(\BT)}\\
\leq &\;C|\th_1-\th_2|^\b \|f \|_{L^\infty_{[0,T]}\dot{C}^\alpha(\BT)}.
\end{split}
\label{eqn: bound for V_2}
\eeq

Finally, the time continuity of $v$ can be justified by interpolating between the facts that $v\in C_{[0,T]} C^\alpha(\BT)$ and $v\cap L^\infty_{[0,T]} C^{1,\b'}(\BT)$ for some $\b'\in (\b,\alpha)$.
%
\end{proof}
\end{lem}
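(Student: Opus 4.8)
The plan is to work directly from the Duhamel representation \eqref{eqn: Duhamel's formula}, $v(t,\th)=\int_0^t\int_\BT \CP(t-\tau,\th-\xi)f(\tau,\xi)\,d\xi\,d\tau$, where $\CP$ is the Poisson kernel on $\BT$ generating $\{e^{-t(-\D)^{1/2}}\}$. Existence of such a $v$ — a priori only in $L^p_{[0,T]}W^{1,p}(\BT)$ for every finite $p$ — is free from Lemma \ref{lem: L p regularity theory of fractional heat equation}, since $f\in L^\infty_{[0,T]}C^{\alpha}(\BT)\subset L^p_{[0,T]}L^p(\BT)$; uniqueness is immediate, as the difference of two solutions solves the homogeneous equation with zero initial data. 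Hence the whole content of the lemma is the a priori bound \eqref{eqn: estimate for uniform in time Holder norm of solution of fractional heat equation}, after which the claimed $C_{[0,T]}C^{1,\b}(\BT)$ regularity comes by a bootstrap.

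To estimate the $\dot{C}^{1,\b}$ seminorm of $v(t,\cdot)$, I would differentiate in $\th$ under the integral, change variables so that the kernel acts on the increment variable, and use $\int_\BT \pa_\th\CP(s,\cdot)\,d\th=0$ to symmetrize: for $\th_1,\th_2\in\BT$,
\[
\pa_\th v(t,\th_1)-\pa_\th v(t,\th_2)=\int_0^t\!\!\int_\BT \pa_\th\CP(t-\tau,\xi)\big[f(\tau,\th_1-\xi)-f(\tau,\th_1)-f(\tau,\th_2-\xi)+f(\tau,\th_2)\big]\,d\xi\,d\tau.
\]
The bracketed double difference is controlled in two ways, by $C\|f(\tau,\cdot)\|_{\dot{C}^\alpha}|\xi|^\alpha$ and by $C\|f(\tau,\cdot)\|_{\dot{C}^\alpha}|\th_1-\th_2|^\alpha$, hence by $C\|f(\tau,\cdot)\|_{\dot{C}^\alpha}\,|\xi|^{\alpha-\b}|\th_1-\th_2|^\b$ for any $\b\in(0,\alpha)$. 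It then remains to show the kernel integral $\int_\BT|\pa_\th\CP(s,\xi)|\,|\xi|^{\alpha-\b}\,d\xi$ is dominated by $C(1-e^{-s})^{\alpha-\b-1}$ and that this is integrable over $s\in(0,t)$.

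That kernel estimate is the crux of the argument, and it reduces to Lemma \ref{lem: properties of Poisson kernel} via the identification $\CP(s,\th)=\frac{1}{2\pi}P(e^{-s},\th)$: from $|\pa_\xi P(z,\xi)|\le C|\sin\xi|\big((1-z)^2+\xi^2\big)^{-3/2}$ with $z=e^{-s}$ and $|\sin\xi|\lesssim|\xi|$, the rescaling $\xi=(1-e^{-s})u$ yields $\int_\BT|\pa_\th\CP(s,\xi)|\,|\xi|^{\alpha-\b}\,d\xi\le C(1-e^{-s})^{\alpha-\b-1}\int_0^\infty u^{1+\alpha-\b}(1+u^2)^{-3/2}\,du$, where the last integral converges precisely because $\alpha-\b<1$. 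Since $1-e^{-s}\sim s$ for small $s$ and $T\le 1$, one gets $\int_0^t(1-e^{-(t-\tau)})^{\alpha-\b-1}\,d\tau\le C(\alpha,\b)$, closing the estimate for $|\th_1-\th_2|\le 1$; the regime $|\th_1-\th_2|>1$ is absorbed crudely using the $L^\infty$-in-time $\dot{C}^1$ bound the same computation already provides. Finally, continuity of $v$ in time with values in $C^{1,\b}(\BT)$ follows by interpolating the uniform-in-time bound in $C^{1,\b'}(\BT)$ for some $\b'\in(\b,\alpha)$ (rerun the above with $\b'$ in place of $\b$) against the time continuity of $v$ valued in $C^\alpha(\BT)$ furnished by Lemma \ref{lem: spatial Holder estimate for solution with Sobolev source term}.
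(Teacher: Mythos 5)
Your proposal is correct and follows essentially the same route as the paper: Duhamel representation, symmetrization of $\pa_\th\CP$ using its zero mean, the two-sided bound on the double increment of $f$ interpolated to $|\xi|^{\alpha-\b}|\th_1-\th_2|^\b$, and the kernel estimate $\int_\BT|\pa_\th\CP(s,\xi)||\xi|^{\alpha-\b}\,d\xi\lesssim(1-e^{-s})^{\alpha-\b-1}$ obtained from $\CP(s,\cdot)=\frac{1}{2\pi}P(e^{-s},\cdot)$ and Lemma \ref{lem: properties of Poisson kernel}. You merely supply details the paper leaves implicit (the rescaling $\xi=(1-e^{-s})u$ and the large-gap regime), so this is the same proof.
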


\section{Proofs of Lemma \ref{lem: stability of the interface velocities} and Lemma \ref{lem: C 1 alpha bounds for two different pressures in reference coordinate}}
\label{sec: proof of stability lemma}

We need several preparatory results.

Let $h_i$ and $H_i$ be given as in Section \ref{sec: stability of interface velocity}.
Let $x_i(X)$ $(i = 1,2)$ denote the diffeomorphism \eqref{eqn: change of variables to the reference coordinate} defined by $h_i$ and $H_i$,
\beq
x_i(X) = \z_i(X)X,\quad \z_i(X) := 1+h_i(\om)\eta_\d\left(\f{\r}{r}\right)+H_i(\om)\eta_\d\left(\f{\r}{R}\right).
\label{eqn: def of zeta i}
\eeq
Let $p_i$ denote the pressure on the physical domain that is determined by $\g_i$ and $\tilde{\g}_i$, while $\tilde{p}_i$ denotes its pull back into the reference coordinate as in \eqref{eqn: pressure in the reference coordinate}.
By \eqref{eqn: equation for p tilde}, $(\tilde{p}_{1}-\tilde{p}_{2})$ solves
\beq
\begin{split}
&\;-\na_{X_k}\left(a\f{\pa X_k}{\pa x_{1,i}}\f{\pa X_j}{\pa x_{1,i}}\na_{X_j}( \tilde{p}_{1}-\tilde{p}_{2})\right)\\
=&\;\na_{X_k}\left[a\left(\f{\pa X_k}{\pa x_{1,i}}\f{\pa X_j}{\pa x_{1,i}}-\f{\pa X_k}{\pa x_{2,i}}\f{\pa X_j}{\pa x_{2,i}}\right)\na_{X_j}\tilde{p}_{2}\right]\\
&\; +(G(\tilde{p}_{1})-G(\tilde{p}_{2}))\chi_{B_r}-\na_{X_k}\f{\pa X_k}{\pa x_{1,i}}\cdot a\f{\pa X_j}{\pa x_{1,i}}\na_{X_j}(\tilde{p}_{1}-\tilde{p}_{2})\\
&\;-a\left[\na_{X_k}\f{\pa X_k}{\pa x_{1,i}}\cdot \f{\pa X_j}{\pa x_{1,i}}-\na_{X_k}\f{\pa X_k}{\pa x_{2,i}}\cdot \f{\pa X_j}{\pa x_{2,i}}\right]\na_{X_j}\tilde{p}_{2}
\end{split}
\label{eqn: equation for difference between two p tilde solutions}
\eeq
in $B_R$, with $(\tilde{p}_1-\tilde{p}_2)|_{\pa B_R} = 0$.
Here $a =a(X)$ is given in \eqref{eqn: def of a}, and $x_{1,i}$ and $x_{2,i}$ denote $i$-th components of $x_1$ and $x_2$, respectively.

We first derive estimates for several ingredients in \eqref{eqn: equation for difference between two p tilde solutions}.

\begin{lem}\label{lem: Holder estimate of jacobi}
Assume $h_i,H_i\in W^{1,\infty}(\BT)$ satisfy that $m_{0,i}+M_{0,i}\ll 1$.
Then
\begin{align}
\left\|\f{\pa X}{\pa x_{1}}-\f{\pa X}{\pa x_{2}}\right\|_{L^\infty(B_R)}\leq &\; C(\D m_0+\D M_0),\label{eqn: L inf difference of two jacobi}\\
\left\|\f{\pa X_k}{\pa x_{1,i}}\f{\pa X_j}{\pa x_{1,i}}-\f{\pa X_k}{\pa x_{2,i}}\f{\pa X_j}{\pa x_{2,i}}\right\|_{L^\infty(B_R)}\leq &\; C(\D m_0+\D M_0),
\label{eqn: L inf difference of two coefficient matrices}\\
\left\|\na_{X_k}\f{\pa X_k}{\pa x_{1,i}}-\na_{X_k}\f{\pa X_k}{\pa x_{2,i}}\right\|_{L^\infty(B_R)}\leq &\;C(\d r)^{-1}(\D m_0+\D M_0),
\label{eqn: L inf difference of derivative of jacobi matrices}
\end{align}
where the constants $C$ are all universal.

If in addition, $h_i,H_i\in C^{1,\alpha}(\BT)$ for some $\alpha\in (0,1)$, such that $m_{\alpha,i}+M_{\alpha,i}\ll 1$, then
\beq
\left\|\f{\pa X}{\pa x_{1}}-\f{\pa X}{\pa x_{2}}\right\|_{\dot{C}^\alpha(B_R)}\leq C(\d r)^{-\alpha}(\D m_\alpha +\D M_\alpha),
\label{eqn: Holder difference of two jacobi matrices}
\eeq
and
\beq
\left\|\f{\pa X_k}{\pa x_{1,i}}\f{\pa X_j}{\pa x_{1,i}}-\f{\pa X_k}{\pa x_{2,i}}\f{\pa X_j}{\pa x_{2,i}}\right\|_{\dot{C}^\alpha(B_R)}\leq C(\d r)^{-\alpha}(\D m_\alpha +\D M_\alpha).
\label{eqn: Holder difference of two coefficient matrices}
\eeq
Here $C$ are universal constants only depending on $\alpha$.
All the quantities above are only supported on $\overline{B_{r(1+2\d)}}\backslash B_{ r(1-2\d)}$ and $\overline{B_R}\backslash B_{R(1-2\d)}$.
\begin{proof}
The proof is once again a straightforward calculation.

We derive by \eqref{eqn: inverse of jacobi matrix} that
\beq
\begin{split}
\f{\pa X}{\pa x_{1}}-\f{\pa X}{\pa x_{2}} =&\; (\zeta^2_1 + \zeta_1 \r\pa_\r \zeta_1)^{-1}((\zeta_1-\z_2) \cdot Id+ (\na(\zeta_1-\z_2))^{\perp}\otimes X^\perp)\\
&\; +\f{(\zeta^2_2 + \zeta_2 \r\pa_\r \zeta_2)-(\zeta^2_1 + \zeta_1 \r\pa_\r \zeta_1)}{(\zeta^2_1 + \zeta_1 \r\pa_\r \zeta_1)(\zeta^2_2 + \zeta_2 \r\pa_\r \zeta_2)}(\z_2 \cdot Id + (\na\z_2)^{\perp}\otimes X^\perp).
\end{split}
\label{eqn: formula for difference of jacobi matrices}
\eeq
By \eqref{eqn: calculation of rho partial rho zeta},
\beq
\begin{split}
&\;|(\zeta^2_2 + \zeta_2 \r\pa_\r \zeta_2)-(\zeta^2_1 + \zeta_1 \r\pa_\r \zeta_1)|\\
\leq &\; |\z_1-\z_2||\z_1+\z_2+\r\pa_\r\z_2|+|\z_1||\r\pa_\r (\z_1-\z_2)|\\
\leq &\; C\d^{-1}(\|h_1-h_2\|_{L^\infty}+\|H_1-H_2\|_{L^\infty})\\
\leq &\; C(\D m_0+\D M_0).
\end{split}
\label{eqn: L inf different two denominators}
\eeq
Combining \eqref{eqn: denominator of jacobi is close to 1}, \eqref{eqn: grad zeta} and \eqref{eqn: L inf different two denominators} with \eqref{eqn: formula for difference of jacobi matrices}, we find that
\beq
\begin{split}
&\;\left|\f{\pa X}{\pa x_{1}}-\f{\pa X}{\pa x_{2}} \right|\\
\leq &\; C(|\zeta_1-\z_2| + \r|\na(\zeta_1-\z_2)|)\\
&\; +C|(\zeta^2_2 + \zeta_2 \r\pa_\r \zeta_2)-(\zeta^2_1 + \zeta_1 \r\pa_\r \zeta_1)|(|\z_2|+\r|\na\z_2|)\\
\leq &\; C(\D m_0+\D M_0),
\end{split}
\eeq
which proves \eqref{eqn: L inf difference of two jacobi}.
It is easy to derive \eqref{eqn: L inf difference of two coefficient matrices} from \eqref{eqn: L inf difference of two jacobi} and Lemma \ref{lem: L inf estimate of jacobi}.

To show \eqref{eqn: L inf difference of derivative of jacobi matrices}, we use \eqref{eqn: div of jacobi matrix} to derive that
%
\beq
\begin{split}
&\;\na_{X_k}\f{\pa X_k}{\pa x_{1,i}}-\na_{X_k}\f{\pa X_k}{\pa x_{2,i}}\\
=&\;\left(\f{\pa X_j}{\pa x_{2,i}}-\f{\pa X_j}{\pa x_{1,i}}\right)
(\zeta^2_1 + \zeta_1 \r \pa_\r \zeta_1)^{-1}\na_{X_j}(\zeta^2_1 + \zeta_1 \r\cdot\pa_\r\zeta_1)\\
&\;+\f{\pa X_j}{\pa x_{2,i}}
\f{(\zeta^2_1 + \zeta_1 \r \pa_\r \zeta_1)-(\zeta^2_2 + \zeta_2 \r \pa_\r \zeta_2)}{(\zeta^2_1 + \zeta_1 \r \pa_\r \zeta_1)(\zeta^2_2 + \zeta_2 \r \pa_\r \zeta_2)}\na_{X_j}(\zeta^2_1 + \zeta_1 \r\cdot\pa_\r\zeta_1)\\
&\;+\f{\pa X_j}{\pa x_{2,i}}
(\zeta^2_2 + \zeta_2 \r \pa_\r \zeta_2)^{-1}\na_{X_j}[(\zeta^2_2 + \zeta_2 \r\cdot\pa_\r\zeta_2)-(\zeta^2_1 + \zeta_1 \r\cdot\pa_\r\zeta_1)].
\end{split}
\eeq
Then by \eqref{eqn: calculation of rho partial rho zeta}, \eqref{eqn: denominator of jacobi is close to 1}, \eqref{eqn: grad zeta}, \eqref{eqn: grad of radial derivative of zeta}, \eqref{eqn: L inf different two denominators} and Lemma \ref{lem: L inf estimate of jacobi},
\beq
\begin{split}
&\;\left|\na_{X_k}\f{\pa X_k}{\pa x_{1,i}}-\na_{X_k}\f{\pa X_k}{\pa x_{2,i}}\right|\\
\leq &\;C\left|\f{\pa X}{\pa x_{2}}-\f{\pa X}{\pa x_{1}}\right|
|\na_{X_j}(\zeta^2_1 + \zeta_1 \r \pa_\r\zeta_1)|\\
&\;+
C|(\zeta^2_1 + \zeta_1 \r \pa_\r \zeta_1)-(\zeta^2_2 + \zeta_2 \r \pa_\r \zeta_2)||\na_{X_j}(\zeta^2_1 + \zeta_1 \r \pa_\r\zeta_1)|\\
&\;+C|\na_{X_j}[(\zeta^2_2 + \zeta_2 \r \pa_\r\zeta_2)-(\zeta^2_1 + \zeta_1 \r \pa_\r\zeta_1)]|\\
\leq &\;C(\d r)^{-1}(\D m_0+\D M_0).
\end{split}
\eeq

To prove \eqref{eqn: Holder difference of two jacobi matrices}, we start with a H\"{o}lder estimate of $(\z_1^2+\z_1\r\pa_\r \z_1)-(\z_2^2+\z_2\r\pa_\r \z_2)$.
Using the fact that $\|fg\|_{\dot{C}^\alpha}\leq \|f\|_{\dot{C}^\alpha}\|g\|_{L^\infty}+\|f\|_{L^\infty}\|g\|_{\dot{C}^\alpha}$,
\beq
\begin{split}
&\;\|(\z_1^2+\z_1\r\pa_\r \z_1)-(\z_2^2+\z_2\r\pa_\r \z_2)\|_{\dot{C}^\alpha(B_R)}\\
\leq &\;\|\z_1-\z_2\|_{\dot{C}^\alpha(B_R)}(\|\z_1+\z_2\|_{L^\infty}+\|\r \pa_\r \z_1\|_{L^\infty})\\
&\;+\|\z_1-\z_2\|_{L^\infty}(\|\z_1+\z_2\|_{\dot{C}^\alpha(B_R)}+\|\r \pa_\r \z_1\|_{\dot{C}^\alpha(B_R)})\\
&\;+\|\z_2\|_{\dot{C}^\alpha(B_R)}\|\r \pa_\r \z_1-\r \pa_\r \z_2\|_{L^\infty}
+\|\z_2\|_{L^\infty}\|\r \pa_\r \z_1-\r \pa_\r \z_2\|_{\dot{C}^\alpha(B_R)}. 
%
\end{split}
\label{eqn: Holder estimate of difference of denominators crude form}
\eeq
Note that the H\"{o}lder semi-norms are taken over $B_R$ with respect to the Euclidean distance in $X$-coordinate instead of the $(\r,\om)$-coordinate.
Using
\beq
(\z_1-\z_2)(X) = (h_1-h_2)(\om)\eta_\d\left(\f{\r}{r}\right)+(H_1-H_2)(\om)\eta_\d\left(\f{\r}{R}\right),
\eeq
and the fact that $\eta_\d(\f{\r}{r})$ and $\eta_\d(\f{\r}{R})$ are supported near $\pa B_r$ and $\pa B_R$, respectively, we find that
\beq
\begin{split}
&\;\|\z_1-\z_2\|_{\dot{C}^\alpha(B_R)}\\
\leq &\; Cr^{-\alpha}\|h_1-h_2\|_{\dot{C}^\alpha(\BT)}
+C\|h_1-h_2\|_{L^\infty}\left\|\eta_\d\left(\f{\r}{r}\right)\right\|_{\dot{C}^\alpha(B_R)}\\
&\;+CR^{-\alpha}\|H_1-H_2\|_{\dot{C}^\alpha(\BT)}
+C\|H_1-H_2\|_{L^\infty}\left\|\eta_\d\left(\f{\r}{R}\right)\right\|_{\dot{C}^\alpha(B_R)}\\
\leq &\; Cr^{-\alpha}\|h_1-h_2\|_{\dot{C}^\alpha(\BT)}+C\|h_1-h_2\|_{L^\infty}(\d r)^{-\alpha}\\
&\;+CR^{-\alpha}\|H_1-H_2\|_{\dot{C}^\alpha(\BT)}+C\|H_1-H_2\|_{L^\infty}(\d R)^{-\alpha}\\
\leq &\; C\d^{1-\alpha}r^{-\alpha}\D m_0+C\d^{1-\alpha}R^{-\alpha}\D M_0.
\end{split}
\label{eqn: Holder bound for difference of zetas}
\eeq
In the last line, we applied interpolation inequalities.
Setting $h_1=H_1 = 0$ (or $h_2=H_2= 0$), we obtain estimates for $\|\z_i\|_{\dot{C}^\alpha(B_R)}$.

Similarly, since
\beq
\r\pa_\r (\z_1-\z_2) = (h_1-h_2)(\om)\cdot \f{\r}{r}\eta_\d'\left(\f{\r}{r}\right) +(H_1-H_2)(\om)\cdot \f{\r}{R}\eta_\d'\left(\f{\r}{R}\right),
\eeq
we deduce that
\beq
\|\r \pa_\r \z_1-\r \pa_\r \z_2\|_{\dot{C}^\alpha(B_R)}
\leq 
C(\d r)^{-\alpha}\D m_0+C(\d R)^{-\alpha}\D M_0.
\label{eqn: Holder bound for difference of radial derivatives of zetas}
\eeq
Combining \eqref{eqn: Holder bound for difference of zetas} and \eqref{eqn: Holder bound for difference of radial derivatives of zetas} with \eqref{eqn: Holder estimate of difference of denominators crude form} yields that
\beq
\begin{split}
&\;\|(\z_1^2+\z_1\r\pa_\r \z_1)-(\z_2^2+\z_2\r\pa_\r \z_2)\|_{\dot{C}^\alpha(B_R)}\\
\leq &\;C\|\z_1-\z_2\|_{\dot{C}^\alpha(B_R)}\\
&\;+C\d (\D m_0+ \D M_0)\cdot (\|\z_1\|_{\dot{C}^\alpha(B_R)}+\|\z_2\|_{\dot{C}^\alpha(B_R)}+\|\r \pa_\r \z_1\|_{\dot{C}^\alpha(B_R)})\\
&\;+C\|\z_2\|_{\dot{C}^\alpha}(\D m_0+ \D M_0)
+C\|\r \pa_\r \z_1-\r \pa_\r \z_2\|_{\dot{C}^\alpha(B_R)}\\
\leq 
&\;C(\d r)^{-\alpha}(\D m_0+\D M_0).
\end{split}
\label{eqn: Holder difference of two denominators}
\eeq
Setting $h_2 = H_2 = 0$ gives 
\beq
\|\z_1^2+\z_1\r\pa_\r \z_1\|_{\dot{C}^\alpha(B_R)}\leq  C(\d r)^{-\alpha}( m_{0,1}+ M_{0,1}).
\eeq
Thanks to \eqref{eqn: denominator of jacobi is close to 1}, it is not difficult to derive that $\|(\z_1^2+\z_1\r\pa_\r \z_1)^{-1}\|_{\dot{C}^\alpha(B_R)}$ has the same bound, with a different constant $C$.

In addition, by \eqref{eqn: grad zeta},
\beq
\begin{split}
\na (\z_1-\z_2)
= &\;\left[(h_1-h_2)(\om)\cdot \f{1}{r}\eta_\d'\left(\f{\r}{r}\right) +(H_1-H_2)(\om)\cdot \f{1}{R}\eta_\d'\left(\f{\r}{R}\right) \right] e_r\\
&\;+\left[(h_1'-h_2')(\om)\cdot \eta_\d\left(\f{\r}{r}\right)+(H_1'-H_2')(\om)\cdot \eta_\d\left(\f{\r}{R}\right)\right]\r^{-1} e_\theta.
\end{split}
\eeq
So
\beq
\begin{split}
&\;\|\na (\z_1-\z_2)\|_{\dot{C}^\alpha(B_R)}\\
\leq &\; Cr^{-\alpha}\|(h_1-h_2)e_r\|_{\dot{C}^\alpha(\BT)}(\d r)^{-1}
+C\|h_1-h_2\|_{L^\infty}\left\|\f{1}{r}\eta_\d'\left(\f{\r}{r}\right)\right\|_{\dot{C}^\alpha(B_R)}\\
&\;+CR^{-\alpha}\|(H_1-H_2)e_r\|_{\dot{C}^\alpha(\BT)}(\d R)^{-1}+C\|H_1-H_2\|_{L^\infty}\left\|\f{1}{R}\eta_\d'\left(\f{\r}{R}\right)\right\|_{\dot{C}^\alpha(B_R)}\\
&\;+Cr^{-\alpha}\|(h_1'-h_2')e_\th\|_{\dot{C}^\alpha(\BT)}r^{-1}
+C\|h_1'-h_2'\|_{L^\infty}\left\|\f{1}{\r}\eta_\d\left(\f{\r}{r}\right)\right\|_{\dot{C}^\alpha(B_R)}\\
&\;+CR^{-\alpha}\|(H_1'-H_2')e_\th\|_{\dot{C}^\alpha(\BT)} R^{-1}+C\|H_1'-H_2'\|_{L^\infty}\left\|\f{1}{\r}\eta_\d\left(\f{\r}{R}\right)\right\|_{\dot{C}^\alpha(B_R)}\\
%
%
\leq &\;
C\d^{-\alpha}(r^{-1-\alpha}\D m_\alpha +R^{-1-\alpha}\D M_\alpha).
\end{split}
\label{eqn: Holder bound for difference of two gradients}
\eeq
Here we used the fact that $\D m_0 +\D M_0\leq C(\D m_\alpha+\D M_\alpha)$ by interpolation.

To this end, combining \eqref{eqn: denominator of jacobi is close to 1}, \eqref{eqn: grad zeta}, \eqref{eqn: formula for difference of jacobi matrices}, \eqref{eqn: L inf different two denominators}, \eqref{eqn: Holder bound for difference of zetas}, \eqref{eqn: Holder difference of two denominators} and \eqref{eqn: Holder bound for difference of two gradients},
\beq
\begin{split}
&\;\left\|\f{\pa X}{\pa x_{1}}-\f{\pa X}{\pa x_{2}}\right\|_{\dot{C}^\alpha(B_R)} \\
\leq &\; C\|(\zeta^2_1 + \zeta_1 \r\pa_\r \zeta_1)^{-1}\|_{\dot{C}^\alpha(B_R)}(\|\z_1-\z_2\|_{L^\infty}+\|\r\na(\z_1-\z_2)\|_{L^\infty})\\
&\; +C(\|\zeta_1-\z_2\|_{\dot{C}^\alpha(B_R)}+ \|\na(\zeta_1-\z_2))^{\perp}\otimes X^\perp\|_{\dot{C}^\alpha(B_R)})\\
&\; +C\|(\zeta^2_2 + \zeta_2 \r\pa_\r \zeta_2)-(\zeta^2_1 + \zeta_1 \r\pa_\r \zeta_1)\|_{\dot{C}^\alpha(B_R)}\\
&\; +C\|(\zeta^2_2 + \zeta_2 \r\pa_\r \zeta_2)-(\zeta^2_1 + \zeta_1 \r\pa_\r \zeta_1)\|_{L^\infty}\\
&\;\quad \cdot \left[\|(\zeta^2_1 + \zeta_1 \r\pa_\r \zeta_1)^{-1}\|_{\dot{C}^\alpha(B_R)}+\|(\zeta^2_2 + \zeta_2 \r\pa_\r \zeta_2)^{-1}\|_{\dot{C}^\alpha(B_R)}\right.\\
&\;\qquad \left.+\|\z_2\|_{\dot{C}^\alpha(B_R)} + \|(\na\z_2)^{\perp}\otimes X^\perp\|_{\dot{C}^\alpha(B_R)}\right]\\
\leq &\; C(\d r)^{-\alpha}(\D m_\alpha +\D M_\alpha).
\end{split}
\eeq
In the last inequality, we needed the assumption $m_{\alpha,i}+M_{\alpha,i}\ll 1$.

Finally, \eqref{eqn: Holder difference of two coefficient matrices} follows from \eqref{eqn: L inf difference of two jacobi}, \eqref{eqn: Holder difference of two jacobi matrices} and Lemma \ref{lem: L inf estimate of jacobi}.
\end{proof}
\end{lem}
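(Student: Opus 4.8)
The statement to prove is Lemma~\ref{lem: Holder estimate of jacobi}, which collects $L^\infty$- and $\dot{C}^\alpha$-bounds for the difference of the Jacobian matrices $\frac{\pa X}{\pa x_1}-\frac{\pa X}{\pa x_2}$, the associated coefficient matrices, and the divergences $\na_{X_k}\frac{\pa X_k}{\pa x_{i,\cdot}}$, in terms of $\D m_0+\D M_0$ (resp. $\D m_\alpha+\D M_\alpha$). The plan is to carry out a direct but careful computation, exactly parallel to the proof of Lemma~\ref{lem: L inf estimate of jacobi}, exploiting the explicit formula \eqref{eqn: inverse of jacobi matrix} for $\frac{\pa X}{\pa x}$ in terms of $\zeta$ and its radial derivative, together with the explicit expressions \eqref{eqn: estimate of small deformation}--\eqref{eqn: grad of radial derivative of zeta} for $\zeta_i-1$, $\r\pa_\r\zeta_i$, $\na\zeta_i$ and $\na(\r\pa_\r\zeta_i)$, now applied to the difference $\zeta_1-\zeta_2=(h_1-h_2)(\om)\eta_\d(\r/r)+(H_1-H_2)(\om)\eta_\d(\r/R)$.

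First I would write $\frac{\pa X}{\pa x_1}-\frac{\pa X}{\pa x_2}$ as a single algebraic expression, splitting it into a term proportional to $\zeta_1-\zeta_2$ and $\na(\zeta_1-\zeta_2)$, and a term proportional to the difference of denominators $(\zeta_2^2+\zeta_2\r\pa_\r\zeta_2)-(\zeta_1^2+\zeta_1\r\pa_\r\zeta_1)$; this is formula \eqref{eqn: formula for difference of jacobi matrices}. The $L^\infty$-bounds \eqref{eqn: L inf difference of two jacobi}--\eqref{eqn: L inf difference of derivative of jacobi matrices} then follow by estimating each factor: $|\zeta_1-\zeta_2|\lesssim \|h_1-h_2\|_{L^\infty}+\|H_1-H_2\|_{L^\infty}\lesssim \d(\D m_0+\D M_0)$, $\r|\na(\zeta_1-\zeta_2)|\lesssim\d(\D m_0+\D M_0)$ (the radial part gains a $\d$, the angular part uses $\|h_1'-h_2'\|_{L^\infty}\le\D m_0$), and the difference of denominators is bounded as in \eqref{eqn: L inf different two denominators}; I then use \eqref{eqn: denominator of jacobi is close to 1} to control the inverses, and invoke Lemma~\ref{lem: L inf estimate of jacobi} for \eqref{eqn: L inf difference of two coefficient matrices} (since $MN-M'N'=(M-M')N+M'(N-N')$) and for \eqref{eqn: L inf difference of derivative of jacobi matrices} via \eqref{eqn: div of jacobi matrix}. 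Throughout, the supports of $\eta_\d(\r/r)$ and $\eta_\d(\r/R)$ confine all quantities to the two thin annuli, which is where the factors $(\d r)^{-1}$ come from.

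For the Hölder estimates \eqref{eqn: Holder difference of two jacobi matrices}--\eqref{eqn: Holder difference of two coefficient matrices}, the key intermediate steps are: (i) a $\dot{C}^\alpha$-bound on $\zeta_1-\zeta_2$, where I combine the interior Hölder estimate $r^{-\alpha}\|h_1-h_2\|_{\dot{C}^\alpha}$ with the cut-off contribution $\|h_1-h_2\|_{L^\infty}\|\eta_\d(\r/r)\|_{\dot{C}^\alpha}\lesssim\|h_1-h_2\|_{L^\infty}(\d r)^{-\alpha}$, then apply interpolation between $L^\infty$ and $\dot{C}^\alpha$ norms of $h_1-h_2$ to collapse this into $\d^{1-\alpha}r^{-\alpha}\D m_0$ (plus the analogous $H$-term); (ii) a $\dot{C}^\alpha$-bound on $\r\pa_\r(\zeta_1-\zeta_2)$; (iii) a $\dot{C}^\alpha$-bound on $\na(\zeta_1-\zeta_2)$, where the angular component forces the sharper scaling $\d^{-\alpha}r^{-1-\alpha}\D m_\alpha$ (here the $\dot C^\alpha$ norm of $h_i'$ rather than just $h_i$ enters, explaining why $\D m_\alpha$ appears); (iv) a $\dot{C}^\alpha$-bound on the difference of denominators and on the inverse of the denominator, obtained by the product rule $\|fg\|_{\dot C^\alpha}\le\|f\|_{\dot C^\alpha}\|g\|_{L^\infty}+\|f\|_{L^\infty}\|g\|_{\dot C^\alpha}$ and the smallness $m_{\alpha,i}+M_{\alpha,i}\ll1$. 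Feeding (i)--(iv) back into \eqref{eqn: formula for difference of jacobi matrices} and again using the product rule plus Lemma~\ref{lem: L inf estimate of jacobi} gives \eqref{eqn: Holder difference of two jacobi matrices}, and then \eqref{eqn: Holder difference of two coefficient matrices} follows from $MN-M'N'=(M-M')N+M'(N-N')$ together with the $L^\infty$- and $\dot C^\alpha$-bounds on $\frac{\pa X}{\pa x_i}$.

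The main obstacle is bookkeeping rather than conceptual: one must keep track of exactly which norm ($L^\infty$, $\dot C^\alpha$, of $h_1-h_2$ versus $h_1'-h_2'$) and which power of $\d$ and of $r$ appears in each of the many factors, making sure the angular derivatives are the ones that demand $\D m_\alpha$ (so that the statement with only $\D m_\alpha+\D M_\alpha$ on the right is not over-claimed), and using interpolation at exactly the right places so that $\d$ appears with the nonnegative power $1-\alpha$ in the $L^\infty$-type contributions but cancels correctly in the $\dot C^\alpha$ terms to leave the stated $(\d r)^{-\alpha}$. Once the correct bookkeeping is fixed, every individual estimate is a routine consequence of the explicit formulas already recorded in Section~\ref{sec: estimate for pressure in circular geometry} and the elementary Hölder product rule.
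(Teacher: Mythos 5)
Your proposal follows the paper's proof essentially verbatim: you start from the explicit formula \eqref{eqn: inverse of jacobi matrix}, split the difference into the two groups of \eqref{eqn: formula for difference of jacobi matrices}, bound the difference of denominators as in \eqref{eqn: L inf different two denominators}, and then treat the H\"older case by estimating $\|\zeta_1-\zeta_2\|_{\dot C^\alpha}$, $\|\rho\pa_\rho(\zeta_1-\zeta_2)\|_{\dot C^\alpha}$, $\|\nabla(\zeta_1-\zeta_2)\|_{\dot C^\alpha}$ and the inverse denominator, closing with the product rule, interpolation, and Lemma~\ref{lem: L inf estimate of jacobi}. The bookkeeping you flag (where $\D m_\alpha$ versus $\D m_0$ enters, how the $\d^{1-\alpha}$ and $(\d r)^{-\alpha}$ factors arise) is exactly what the paper does, so the argument is correct and not a different route.
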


\begin{lem}\label{lem: C 1 alpha bound for tilde p_2}
Assume $h_2,H_2\in C^{1,\alpha}(\BT)$ with $\alpha<\f14$, satisfying that $m_{\alpha,2}+M_{\alpha,2}\ll 1$.
Then
\beq
\|\tilde{p}_{2}\|_{C^{1,\alpha}(\overline{B_r})}+\| \tilde{p}_{2}\|_{C^{1,\alpha}(\overline{B_R\backslash B_r})}\leq C(\alpha,\mu,\nu, r, R, G).
\eeq

\begin{proof}
By \eqref{eqn: equation for p tilde}, $\tilde{p}_2$ solves
%
\beq
\begin{split}
&\;-\na_{X_k}\left(a\f{\pa X_k}{\pa x_{2,i}}\f{\pa X_j}{\pa x_{2,i}}\na_{X_j} \tilde{p}_{2}\right)= G(\tilde{p}_{2})\chi_{B_r}-\na_{X_k}\f{\pa X_k}{\pa x_{2,i}}\cdot a\f{\pa X_j}{\pa x_{2,i}}\na_{X_j}\tilde{p}_{2}
\end{split}
\label{eqn: equation for p tilde 2}
\eeq
in $B_R$, with $\tilde{p}_2|_{\pa B_R} = 0$.
By putting $h_1= H_1 = 0$ in \eqref{eqn: L inf difference of two coefficient matrices} and \eqref{eqn: Holder difference of two coefficient matrices}, we obtain that
\begin{align}
\left\|\f{\pa X_k}{\pa x_{2,i}}\f{\pa X_j}{\pa x_{2,i}}-Id\right\|_{L^\infty(B_R)}\leq &\;C(m_{0,2} + M_{0,2}),\\
\left\|\f{\pa X_k}{\pa x_{2,i}}\f{\pa X_j}{\pa x_{2,i}}\right\|_{\dot{C}^\alpha(B_R)}\leq &\;C(\d r)^{-\alpha}(m_{\alpha,2} + M_{\alpha,2}).
\end{align}
By assuming $m_{\alpha,2}+M_{\alpha,2}$ to be suitably small (ans thus $m_{0,2}+M_{0,2}$ is small by interpolation), we may have the coefficient matrix satisfy 
\beq
\f12 \min\{\mu,\nu\}Id\leq a\f{\pa X_k}{\pa x_{2,i}}\f{\pa X_j}{\pa x_{2,i}} \leq 2\max\{\mu,\nu\}Id,
\eeq
which is symmetric and piecewise $C^{\alpha}$ in $B_R$.
Therefore, by \cite[Corollary 1.3]{li2000gradient} and Lemma \ref{lem: L inf estimate of jacobi}, for $\alpha<\f14$,
\beq
\begin{split}
&\;\|\tilde{p}_2\|_{C^{1,\alpha}(\overline{B_r})}+\|\tilde{p}_2\|_{C^{1,\alpha}(\overline{B_R\backslash B_r})}\\
\leq &\; C\left(\alpha,\mu,\nu, r, R, \left\|\f{\pa X_k}{\pa x_{2,i}}\f{\pa X_j}{\pa x_{2,i}}\right\|_{C^\alpha(B_R)} \right)\left\|G(\tilde{p}_{2})\chi_{B_r}-\na_{X_k}\f{\pa X_k}{\pa x_{2,i}}\cdot a\f{\pa X_j}{\pa x_{2,i}}\na_{X_j}\tilde{p}_{2}\right\|_{L^\infty}\\
\leq &\; C(\alpha,\mu,\nu, r, R, G) (1+\|\na\tilde{p}_{2}\|_{L^\infty(B_R)})\\
\end{split}
\label{eqn: C 1 alpha estimate for tilde p 2 crude form}
\eeq
We omit the dependence of $C$ on $m_{0,2}+M_{0,2}$ and $m_{\alpha,2}+M_{\alpha,2}$ since they can be bounded by universal constants.
The $\d$-dependence of $C$ is encoded in the $(r,R)$-dependence.
By interpolation inequality, with $\epsilon>0$ to be chosen and $C_{\epsilon}$ depending on $\epsilon$ and $\alpha$,
\beq
\|\na \tilde{p}_2\|_{L^\infty(B_R)}\leq \epsilon \left(\| \tilde{p}_2\|_{C^{1,\alpha}(\overline{B_r})}+\| \tilde{p}_2\|_{C^{1,\alpha}( \overline{B_R \backslash B_r})}\right)+C_\epsilon\|\tilde{p}_2\|_{L^\infty(B_R)}.
\eeq
Taking $\epsilon$ suitably small, we conclude from \eqref{eqn: C 1 alpha estimate for tilde p 2 crude form} that
\beq
\begin{split}
&\;\|\tilde{p}_2\|_{C^{1,\alpha}(\overline{B_r})}+\|\tilde{p}_2\|_{C^{1,\alpha}(\overline{B_R\backslash B_r})}\\
\leq &\; C(\alpha,\mu,\nu, r, R, G) (1+\|\tilde{p}_{2}\|_{L^\infty(B_R)}).
\end{split}
\eeq
Then the desired estimate follows from the fact $p_2\in [0, p_M]$ (see Section \ref{sec: problem formulation}).
\end{proof}
\end{lem}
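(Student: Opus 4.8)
The plan is to prove Lemma \ref{lem: C 1 alpha bound for tilde p_2} by viewing \eqref{eqn: equation for p tilde 2} as a piecewise-$C^\alpha$ elliptic equation in divergence form on the two-phase configuration $B_r \sqcup (B_R \setminus B_r)$ and invoking the Li--Vogelius / Li--Nirenberg piecewise Schauder theory, exactly as cited in \cite{li2000gradient}. First I would record the structure of the equation: the coefficient matrix is $A_{kj} := a\,\frac{\partial X_k}{\partial x_{2,i}}\frac{\partial X_j}{\partial x_{2,i}}$, which is the ``radially symmetric'' matrix $a\,\mathrm{Id}$ perturbed by the deformation tensor, and the right-hand side is $G(\tilde p_2)\chi_{B_r} - \bigl(\nabla_{X_k}\frac{\partial X_k}{\partial x_{2,i}}\bigr) a\,\frac{\partial X_j}{\partial x_{2,i}}\nabla_{X_j}\tilde p_2$.

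The key steps, in order, are: (i) apply Lemma \ref{lem: Holder estimate of jacobi} with $h_1 = H_1 = 0$ to get $\|A_{kj} - a\,\delta_{kj}\|_{L^\infty(B_R)} \le C(m_{0,2}+M_{0,2})$ and $\|A_{kj}\|_{\dot C^\alpha(B_R)} \le C(\delta r)^{-\alpha}(m_{\alpha,2}+M_{\alpha,2})$ --- note $A_{kj}$ is only $C^\alpha$ on each of $B_r$ and $B_R\setminus B_r$ separately, since $a$ jumps across $\partial B_r$, which is precisely the setting of \cite{li2000gradient}; (ii) under the smallness hypothesis $m_{\alpha,2}+M_{\alpha,2}\ll 1$ (hence also $m_{0,2}+M_{0,2}\ll 1$ by interpolation), deduce uniform ellipticity $\tfrac12\min\{\mu,\nu\}\,\mathrm{Id}\le A\le 2\max\{\mu,\nu\}\,\mathrm{Id}$; (iii) invoke \cite[Corollary 1.3]{li2000gradient} (valid for $\alpha<\tfrac14$, which is where that restriction enters) to get the piecewise $C^{1,\alpha}$ bound in terms of the $C^\alpha$-norm of the coefficients and the $L^\infty$-norm of the forcing, using Lemma \ref{lem: L inf estimate of jacobi} to bound $\|\nabla_{X_k}\frac{\partial X_k}{\partial x_{2,i}}\|_{L^\infty}$ and $|G|\le C$; (iv) absorb the $\|\nabla\tilde p_2\|_{L^\infty}$ appearing on the right by the standard interpolation $\|\nabla\tilde p_2\|_{L^\infty}\le \epsilon(\|\tilde p_2\|_{C^{1,\alpha}(\overline{B_r})}+\|\tilde p_2\|_{C^{1,\alpha}(\overline{B_R\setminus B_r})}) + C_\epsilon\|\tilde p_2\|_{L^\infty}$ with $\epsilon$ small; (v) conclude by the maximum-principle bound $\tilde p_2 = p_2\circ x_2 \in [0,p_M]$ recalled in Section \ref{sec: problem formulation}.

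The bookkeeping point worth care is that all dependence of the constants on $\delta$ and on $m_{0,2}+M_{0,2}$, $m_{\alpha,2}+M_{\alpha,2}$ should be suppressed: the smallness quantities are bounded by universal constants under the hypothesis, and the $\delta$-dependence is swallowed into the stated $(r,R)$-dependence since $\delta \sim (R-r)/R$ by \eqref{eqn: condition on delta}. I would state this explicitly so the final constant reads $C(\alpha,\mu,\nu,r,R,G)$ as claimed.

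The main obstacle --- really the only nontrivial point --- is checking that the hypotheses of the piecewise-$C^{1,\alpha}$ estimate of \cite{li2000gradient} are met: the matrix $A$ must be symmetric (clear, since $\frac{\partial X_k}{\partial x_{2,i}}\frac{\partial X_j}{\partial x_{2,i}}$ is a Gram matrix and $a$ is scalar), uniformly elliptic (step (ii)), and piecewise $C^\alpha$ with respect to the two-subdomain decomposition whose interface $\partial B_r$ is smooth (automatic here). Everything else is the interpolation-plus-absorption routine of step (iv) and the $L^\infty$ bound on $\tilde p_2$, both standard; there is no genuinely new estimate to prove beyond what Lemma \ref{lem: Holder estimate of jacobi} and Lemma \ref{lem: L inf estimate of jacobi} already supply.
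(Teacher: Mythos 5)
Your proposal reproduces the paper's argument step for step: specializing Lemma \ref{lem: Holder estimate of jacobi} to $h_1=H_1=0$ for the piecewise $C^\alpha$ coefficient bounds, verifying ellipticity via the smallness assumption, invoking \cite[Corollary 1.3]{li2000gradient} (the source of the $\alpha<\tfrac14$ restriction), absorbing $\|\nabla\tilde p_2\|_{L^\infty}$ by interpolation, and closing with the maximum-principle bound $\tilde p_2\in[0,p_M]$. There is no material difference from the paper's proof.
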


Now we are ready to prove Lemma \ref{lem: C 1 alpha bounds for two different pressures in reference coordinate}.

\begin{proof}[Proof of Lemma \ref{lem: C 1 alpha bounds for two different pressures in reference coordinate}]
In this proof, we shall use $C_*$ to denote universal constants with the dependence $C_* = C_*(\alpha,\mu,\nu, r, R, G)$.
%
Its precise definition may vary from line to line.
\setcounter{step}{0}
\begin{step}[$L^\infty$-bound]
Rewrite \eqref{eqn: equation for difference between two p tilde solutions} as
\beq
\begin{split}
&\;\na_{X_k}\left(a\f{\pa X_k}{\pa x_{1,i}}\f{\pa X_j}{\pa x_{1,i}}\na_{X_j}( \tilde{p}_{1}-\tilde{p}_{2})\right)\\
&\;\quad +\f{G(\tilde{p}_{1})-G(\tilde{p}_{2})}{\tilde{p}_{1}-\tilde{p}_{2}}\chi_{B_r}\cdot (\tilde{p}_{1}-\tilde{p}_{2})-\na_{X_k}\f{\pa X_k}{\pa x_{1,i}}\cdot a\f{\pa X_j}{\pa x_{1,i}}\na_{X_j}(\tilde{p}_{1}-\tilde{p}_{2})\\
=&\;-\na_{X_k}\left[a\left(\f{\pa X_k}{\pa x_{1,i}}\f{\pa X_j}{\pa x_{1,i}}-\f{\pa X_k}{\pa x_{2,i}}\f{\pa X_j}{\pa x_{2,i}}\right)\na_{X_j}\tilde{p}_{2}\right]\\
&\;+a\left[\na_{X_k}\f{\pa X_k}{\pa x_{1,i}}\cdot \f{\pa X_j}{\pa x_{1,i}}-\na_{X_k}\f{\pa X_k}{\pa x_{2,i}}\cdot \f{\pa X_j}{\pa x_{2,i}}\right]\na_{X_j}\tilde{p}_{2}.
\end{split}
\eeq
Arguing as in the proof of Lemma \ref{lem: C 1 alpha bound for tilde p_2}, we may assume the coefficient matrix satisfies
\beq
\f12 \min\{\mu,\nu\}Id\leq a\f{\pa X_k}{\pa x_{1,i}}\f{\pa X_j}{\pa x_{1,i}} \leq 2\max\{\mu,\nu\}Id,
\label{eqn: ellipticity of the equation of difference of two pressures}
\eeq
and it is symmetric and piecewise $C^\alpha$ in $B_R$.
Moreover,
\beq
\f{G(\tilde{p}_{1})-G(\tilde{p}_{2})}{\tilde{p}_{1}-\tilde{p}_{2}}\chi_{B_r}\leq 0,
\eeq
and
\beq
\left|\f{G(\tilde{p}_{1})-G(\tilde{p}_{2})}{\tilde{p}_{1}-\tilde{p}_{2}}\right|+\left|\na_{X_k}\f{\pa X_k}{\pa x_{1,i}}\cdot a\f{\pa X_j}{\pa x_{1,i}}\right|\leq C(\mu,\nu,r,R, G).
\eeq
Recall that $(\tilde{p}_1-\tilde{p}_2 )|_{\pa B_R}= 0$.
By the $L^\infty$-bound of the weak solution \cite[Theorem 8.16]{gilbarg2015elliptic}, together with Lemma \ref{lem: Holder estimate of jacobi} and Lemma \ref{lem: C 1 alpha bound for tilde p_2},
\beq
\begin{split}
&\;\|\tilde{p}_1-\tilde{p}_2\|_{L^\infty(B_R)}\\
\leq &\; C(\mu,\nu,r,R,G)\left\|a\left(\f{\pa X_k}{\pa x_{1,i}}\f{\pa X_j}{\pa x_{1,i}}-\f{\pa X_k}{\pa x_{2,i}}\f{\pa X_j}{\pa x_{2,i}}\right)\na_{X_j}\tilde{p}_{2}\right\|_{L^4(B_R)}\\
&\;+C(\mu,\nu,r,R,G)\left\|a\left[\na_{X_k}\f{\pa X_k}{\pa x_{1,i}}\cdot \f{\pa X_j}{\pa x_{1,i}}-\na_{X_k}\f{\pa X_k}{\pa x_{2,i}}\cdot \f{\pa X_j}{\pa x_{2,i}}\right]\na_{X_j}\tilde{p}_{2}\right\|_{L^2(B_R)}\\
\leq &\; C_*(\D m_0+\D M_0).
\end{split}
\eeq
This proves \eqref{eqn: L inf difference of two pressures}.
\end{step}

\begin{step}[$C^{1,\alpha}$-bound]
This part of the proof is similar to that of Lemma \ref{lem: C 1 alpha bound for tilde p_2}.

In addition to \eqref{eqn: ellipticity of the equation of difference of two pressures}, we know that
\beq
a\left(\f{\pa X_k}{\pa x_{1,i}}\f{\pa X_j}{\pa x_{1,i}}-\f{\pa X_k}{\pa x_{2,i}}\f{\pa X_j}{\pa x_{2,i}}\right)\na_{X_j}\tilde{p}_{2}
\eeq
is piecewise $C^\alpha$ thanks to Lemma \ref{lem: Holder estimate of jacobi} and Lemma \ref{lem: C 1 alpha bound for tilde p_2}.
Applying \cite[Corollary 1.3]{li2000gradient} to \eqref{eqn: equation for difference between two p tilde solutions}, for $\alpha<\f14$,
\beq
\begin{split}
&\;\|\tilde{p}_1-\tilde{p}_2\|_{C^{1,\alpha}(\overline{B_r})}+\|\tilde{p}_1-\tilde{p}_2\|_{C^{1,\alpha}(\overline{B_R\backslash B_r})}\\
\leq &\;C\|G(\tilde{p}_{1})-G(\tilde{p}_{2})\|_{L^\infty(B_r)}+C\left\|\na_{X_k}\f{\pa X_k}{\pa x_{1,i}}\f{\pa X_j}{\pa x_{1,i}}\right\|_{L^\infty(B_R)}\|\na(\tilde{p}_{1}-\tilde{p}_{2})\|_{L^\infty(B_R)}\\
&\;+C\left\|\na_{X_k}\f{\pa X_k}{\pa x_{1,i}}\cdot \f{\pa X_j}{\pa x_{1,i}}-\na_{X_k}\f{\pa X_k}{\pa x_{2,i}}\cdot \f{\pa X_j}{\pa x_{2,i}}\right\|_{L^\infty(B_R)}\|\na\tilde{p}_{2}\|_{L^\infty(B_R)}\\
&\;+C\left\|\left(\f{\pa X_k}{\pa x_{1,i}}\f{\pa X_j}{\pa x_{1,i}}-\f{\pa X_k}{\pa x_{2,i}}\f{\pa X_j}{\pa x_{2,i}}\right)\na_{X_j}\tilde{p}_{2}\right\|_{C^\alpha(\overline{B_r})}\\
&\;+C\left\|\left(\f{\pa X_k}{\pa x_{1,i}}\f{\pa X_j}{\pa x_{1,i}}-\f{\pa X_k}{\pa x_{2,i}}\f{\pa X_j}{\pa x_{2,i}}\right)\na_{X_j}\tilde{p}_{2}\right\|_{C^\alpha(\overline{B_R\backslash B_r})}.
\end{split}
\label{eqn: C 1 alpha difference of two solutions crude form}
\eeq
Here the constants
\beq
C = C\left(\alpha,\mu,\nu, r, R, \left\|\f{\pa X_k}{\pa x_{1,i}}\f{\pa X_j}{\pa x_{1,i}}\right\|_{C^\alpha(B_R)}\right).
\eeq
By \eqref{eqn: L inf difference of two pressures}, Lemma \ref{lem: Holder estimate of jacobi} and Lemma \ref{lem: C 1 alpha bound for tilde p_2}, we simplify \eqref{eqn: C 1 alpha difference of two solutions crude form} to be
\beq
\begin{split}
&\;\|\tilde{p}_1-\tilde{p}_2\|_{C^{1,\alpha}(\overline{B_r})}+\|\tilde{p}_1-\tilde{p}_2\|_{C^{1,\alpha}(\overline{B_R\backslash B_r})}\\
\leq &\;C_*\|\tilde{p}_{1}-\tilde{p}_{2}\|_{L^\infty(B_r)}+C_*(\d r)^{-1}(m_{0,1}+M_{0,1})\|\na(\tilde{p}_{1}-\tilde{p}_{2})\|_{L^\infty(B_R)}\\
&\;+C_*(\d r)^{-1}(\D m_0+\D M_0)\\
&\;+C_*(\D m_0+\D M_0)+C_*(\d r)^{-\alpha}(\D m_\alpha +\D M_\alpha)\\
\leq &\;C_*\|\na(\tilde{p}_{1}-\tilde{p}_{2})\|_{L^\infty(B_R)}+C_*(\D m_\alpha +\D M_\alpha).
\end{split}
\eeq
By interpolation and arguing as in the proof of Lemma \ref{lem: C 1 alpha bound for tilde p_2},
\beq
\begin{split}
&\;\|\tilde{p}_1-\tilde{p}_2\|_{C^{1,\alpha}(\overline{B_r})}+\|\tilde{p}_1-\tilde{p}_2\|_{C^{1,\alpha}(\overline{B_R\backslash B_r})}\\
\leq &\;C_*(\D m_\alpha +\D M_\alpha+\|\tilde{p}_{1}-\tilde{p}_{2}\|_{L^\infty(B_R)}).
\end{split}
\label{eqn: Holder estimate for difference of two pressures crude form}
\eeq
Now by the $L^\infty$-bound \eqref{eqn: L inf difference of two pressures}, we conclude with \eqref{eqn: Holder estimate for difference of two pressures}.
\end{step}
\end{proof}

Lemma \ref{lem: stability of the interface velocities} follows from Lemma \ref{lem: C 1 alpha bounds for two different pressures in reference coordinate} immediately.

\begin{proof}[Proof of Lemma \ref{lem: stability of the interface velocities}]
%
Back in the physical coordinate, by \eqref{eqn: time derivative of f}, 
\beq
\pa_t h_i = -\f{1}{rf_i}\cdot u_i(\g(\th))\cdot \g'_i(\th)^\perp = -\f{\mu((1+h_i)e_r-h'_ie_\th)_j}{r(1+h_i)}\cdot\left.\left[\f{\pa X_k}{\pa x_{i,j}}\cdot \na_{X_k}\tilde{p}_i\right]\right|_{\pa B_r}.
\label{eqn: inner interface velocity from the reference coordinate}
\eeq
Here $\na_{X_k}$ is taken from the inside of $\pa B_r$.
Similarly,
\beq
\pa_t H_i = -\f{\nu((1+H_i)e_r-H'_ie_\th)_j}{R(1+H_i)}\cdot\left.\left[\f{\pa X_k}{\pa x_{i,j}}\cdot \na_{X_k}\tilde{p}_i\right]\right|_{\pa B_R}.
\label{eqn: outer interface velocity from the reference coordinate}
\eeq

By definition \eqref{eqn: def of zeta i}, $\zeta_i = 1+h_i(\th)$ in a neighborhood of $\pa B_r$, while $\zeta_i = 1+H_i(\th)$ near $\pa B_R$.
So \eqref{eqn: grad zeta} reduces to
\beq
\na \z_i=
\begin{cases}
h_i'(\th)r^{-1} e_\theta&\mbox{ on }\pa B_r,\\
H_i'(\th)R^{-1} e_\theta&\mbox{ on }\pa B_R.
\end{cases}
\eeq
Hence, \eqref{eqn: inverse of jacobi matrix} can be simplified as
\beq
\f{\pa X_k}{\pa x_{i,j}} =
\begin{cases}
(1+h_i(\th))^{-2}[(1+h_i(\th)) \d_{kj} - h_i'(\th) e_{r,k}\otimes e_{\th,j}]&\mbox{ on }\pa B_r,\\
(1+H_i(\th))^{-2}[(1+H_i(\th)) \d_{kj} - H_i'(\th) e_{r,k}\otimes e_{\th,j}]&\mbox{ on }\pa B_R.
\end{cases}
\eeq
Now we calculate by \eqref{eqn: inner interface velocity from the reference coordinate} and \eqref{eqn: outer interface velocity from the reference coordinate} that
\begin{align}
\pa_t h_i=&\; -\f{\mu}{r}\left[\f{(1+h_i)^2+(h'_i)^2}{(1+h_i)^3}e_{r}
-\f{h_i'}{(1+h_i)^2}e_{\th}\right]
\na\tilde{p}_i|_{\pa B_r},\label{eqn: simplified formula for h t}\\
\pa_t H_i=&\;-\f{\nu}{R}\cdot\f{(1+H_i)^2+(H'_i)^2}{(1+H_i)^3}\cdot
e_{r}\cdot \na\tilde{p}_i|_{\pa B_R}.
\label{eqn: simplified formula for H t big}
\end{align}
In \eqref{eqn: simplified formula for H t big}, we used the fact that $\tilde{p}_i|_{\pa B_R} = 0$ and thus $\na \tilde{p}_i|_{\pa B_R}$ is in the $e_r$-direction.

To prove \eqref{eqn: C alpha difference of time derivative of h and H}, we start with the trivial bound
\beq
\|(1+h_i(\th))^{-1}\|_{C^\alpha(\BT)}\leq C
\eeq
due to the smallness of $m_{0,i}$, where $C$ is a universal constant.
Then we simply use $\|fg\|_{C^\alpha(\BT)}\leq 3\|f\|_{C^\alpha(\BT)}\|g\|_{C^\alpha(\BT)}$ to derive that
\beq
\begin{split}
&\;\left\|\f{(1+h_1)^2+(h'_1)^2}{(1+h_1)^3}-\f{(1+h_2)^2+(h'_2)^2}{(1+h_2)^3}\right\|_{C^\alpha(\BT)}\\
\leq &\;\left\|\f{1}{1+h_1}-\f{1}{1+h_2}\right\|_{C^\alpha}
+\left\|\f{(h'_1)^2-(h'_2)^2}{(1+h_1)^3}\right\|_{C^\alpha}
+\left\|(h'_2)^2\f{(1+h_1)^3-(1+h_2)^3}{(1+h_1)^3(1+h_2)^3}\right\|_{C^\alpha}\\
\leq &\;C\|h_1-h_2\|_{C^\alpha}+C\|h_1'+h_2'\|_{C^\alpha}\|h_1'-h_2'\|_{C^\alpha}+C\|h_2'\|_{C^\alpha}^2\|h_1-h_2\|_{C^\alpha}\\
\leq &\;C(\alpha,\d, m_{\alpha,1}+ m_{\alpha,2})\D m_{\alpha}.
\end{split}
\eeq
Similarly,
\beq
\left\|\f{h_1'}{(1+h_1)^2}-\f{h_2'}{(1+h_2)^2}\right\|_{C^\alpha(\BT)}\leq C(\alpha,\d,m_{\alpha,1}+ m_{\alpha,2})\D m_{\alpha}.
\eeq
Setting $h_1 = 0$ or $h_2 = 0$ above yields 
\beq
\left\|\f{(1+h_i)^2+(h'_i)^2}{(1+h_i)^3}\right\|_{C^\alpha(\BT)}
+\left\|\f{h_i'}{(1+h_i)^2}\right\|_{C^\alpha(\BT)}\leq C(\alpha,\d,m_{\alpha,i}).
\eeq
Then it is not difficult to derive from \eqref{eqn: simplified formula for h t} that
\beq
\begin{split}
&\;\|\pa_t h_1-\pa_t h_2\|_{C^\alpha(\BT)}\\
\leq &\;C(\mu,r)\cdot C(\alpha,\d, m_{\alpha,1}+m_{\alpha,2})\D m_{\alpha}\cdot \|\na \tilde{p}_1\|_{C^\alpha(\BT)}\\
&\;+C(\mu,r)\cdot C(\d, m_{\alpha,2}) \|\na (\tilde{p}_1-\tilde{p}_2)\|_{C^\alpha(\BT)}\\
\leq &\;C(\alpha,\mu,r,R, m_{\alpha,1}+m_{\alpha,2})(\D m_{\alpha}\|\na \tilde{p}_1\|_{C^\alpha(\overline{B_r})}+ \|\na (\tilde{p}_1-\tilde{p}_2)\|_{C^\alpha(\overline{B_r})}).
\end{split}
\eeq
By Lemma \ref{lem: C 1 alpha bound for tilde p_2} and Lemma \ref{lem: C 1 alpha bounds for two different pressures in reference coordinate},
\beq
\|\pa_t h_1-\pa_t h_2\|_{C^\alpha(\BT)}
\leq  C_*(\D m_\alpha +\D M_\alpha),
\eeq
where $C_* = C_*(\alpha,\mu,\nu, r, R, G)$.
Once again, the dependence of $C_*$ on $m_{\alpha,i}+M_{\alpha,i}$ is omitted since it is assumed to be small.

Estimates for $(\pa_t H_1-\pa_t H_2)$ can be derived from \eqref{eqn: simplified formula for H t big} in a similar manner.
\end{proof}

\section{Proofs of Lemmas \ref{lem: Holder estimate of singular integral normal component difference}-\ref{lem: W 1p difference from Hilbert transform difference}}
\label{sec: proofs of estimates for difference of singular integral operators}
In this section, we prove Lemmas \ref{lem: Holder estimate of singular integral normal component difference}-\ref{lem: W 1p difference from Hilbert transform difference}.

\begin{proof}[Proof of Lemma \ref{lem: Holder estimate of singular integral normal component difference}]
Let $l_i$ be defined as in \eqref{eqn: def of l} corresponding to $h_i$.
By virtue of \eqref{eqn: crude form of singular integral dot with normal},
%
\beq
\begin{split}
&\;2\pi(\g'_1(\th)^\perp\cdot\mathcal{K}_{\g_1}\p-\g'_2(\th)^\perp\cdot\mathcal{K}_{\g_2}\p)\\
= &\;\f{1}{2}\int_\BT \left(\f{1} {1+l_1}-\f{1} {1+l_2}\right)\p(\th+\xi)\,d\xi\\
&\;+\f{h_2(\th)-h_1(\th)}{1+h_2(\th)}\cdot \f{1}{1+h_1(\th)}\int_\BT\f{\D h_1(\th)-\cos\f{\xi}{2}\cdot h'_1(\th)}{2\sin\f{\xi}{2}} \f{\p(\th+\xi)} {1+l_1}\,d\xi\\
&\;+\f{1}{1+h_2(\th)}\int_\BT\f{\D (h_1-h_2)(\th)-\cos\f{\xi}{2}\cdot (h_1-h_2)'(\th)}{2\sin\f{\xi}{2}} \f{\p(\th+\xi)} {1+l_1}\,d\xi\\
&\;+\f{1}{1+h_2(\th)}\int_\BT\f{\D h_2(\th)-\cos\f{\xi}{2}\cdot h_2'(\th)}{2\sin\f{\xi}{2}}\left(\f{\p(\th+\xi)} {1+l_1}-\f{\p(\th+\xi)} {1+l_2}\right)\,d\xi\\
=:&\; J_1+J_2+J_3+J_4.
\end{split}
\eeq
We start with the integrand of $J_1$.
\beq
\begin{split}
&\;\left\|\left(\f{1} {1+l_1}-\f{1} {1+l_2}\right)\p(\th+\xi)\right\|_{\dot{C}_\th^\beta}\\
\leq &\;\left\|\f{1} {1+l_1}-\f{1} {1+l_2}\right\|_{\dot{C}_\th^\beta}\|\p\|_{L^\infty}+\left\|\f{1} {1+l_1}-\f{1} {1+l_2}\right\|_{L^\infty_\th}\|\p\|_{\dot{C}^\beta}\\
\leq &\;C\|l_1-l_2\|_{\dot{C}_\th^\beta}\|\p\|_{L^\infty}+C\|l_1-l_2\|_{L^\infty_\th} (\|l_1\|_{\dot{C}_\th^\beta}+\|l_2\|_{\dot{C}_\th^\beta})\|\p\|_{L^\infty}\\
&\;+C\|l_1-l_2\|_{L^\infty_\th}\|\p\|_{\dot{C}^\beta}.
\end{split}
\label{eqn: Holder estimate for integrand of first term}
\eeq
We derive that
\beq
\begin{split}
&\;\|l_1-l_2\|_{L^\infty_\th}\\
\leq &\;\left\|\f{(\D h_1)^2-(\D h_2)^2}{(1+h_1(\th))(1+h_1(\th+\xi))}\right\|_{L_\th^\infty}\\
&\;+\left\|(\D h_2)^2\left(\f{1}{(1+h_1(\th))(1+h_1(\th+\xi))}-\f{1}{(1+h_2(\th))(1+h_2(\th+\xi))}\right)\right\|_{L^\infty}\\
\leq &\;C(\|h_1'\|_{L^\infty}+\|h_2'\|_{L^\infty})\|h_1-h_2\|_{W^{1,\infty}},
\end{split}
\label{eqn: L inf difference of building blocks from two configurations}
\eeq
and
\beq
\begin{split}
&\;\|l_1-l_2\|_{\dot{C}_\th^\beta}\\
\leq &\;\left\|\f{(\D h_1)^2-(\D h_2)^2}{(1+h_1(\th))(1+h_1(\th+\xi))}\right\|_{\dot{C}_\th^\beta}\\
&\;+\left\|(\D h_2)^2\left(\f{1}{(1+h_1(\th))(1+h_1(\th+\xi))}-\f{1}{(1+h_2(\th))(1+h_2(\th+\xi))}\right)\right\|_{\dot{C}_\th^\beta}\\
\leq &\;C\|h_1'+h_2'\|_{\dot{C}^\b}\|h_1'-h_2'\|_{L^\infty}+C\|h_1'+h_2'\|_{L^\infty}\|h_1'-h_2'\|_{\dot{C}^\b}\\
&\;+C\|h_1'+h_2'\|_{L^\infty}\|h_1'-h_2'\|_{L^\infty}\|h_1\|_{\dot{C}^\b}\\
&\;+C\|h_2'\|_{\dot{C}^\b}\|h_2'\|_{L^\infty}\|h_1-h_2\|_{L^\infty}+C\|h_2'\|_{L^\infty}^2\|h_1-h_2\|_{\dot{C}^\b}\\
&\;+C\|h_2'\|_{L^\infty}^2\|h_1-h_2\|_{L^\infty}(\|h_1\|_{\dot{C}^\b}+\|h_2\|_{\dot{C}^\b})\\
\leq &\;C(\|h_1'\|_{\dot{C}^\b}+\|h_2'\|_{\dot{C}^\b})\|h_1-h_2\|_{C^{1,\b}}.
\end{split}
\label{eqn: Holder difference of building blocks from two configurations}
\eeq
Taking $h_2 = 0$ in the second last step of \eqref{eqn: Holder difference of building blocks from two configurations} yields that
\beq
\|l_i\|_{\dot{C}_\th^\b}
\leq C\|h'_i\|_{\dot{C}^\b}\|h'_i\|_{L^\infty}.
\label{eqn: Holder estimate of a single building blocks}
\eeq
Combining these estimates with \eqref{eqn: Holder estimate for integrand of first term}, we argue as in \eqref{eqn: Holder bound for L_1 crude form} that
\beq
\begin{split}
\|J_1\|_{\dot{C}^\b}
\leq
&\;C\sup_{\xi\in \BT}\left\|\left(\f{1} {1+l_1}-\f{1} {1+l_2}\right)\p(\th+\xi)\right\|_{\dot{C}_\th^\beta}\\
\leq &\;C\|h_1-h_2\|_{C^{1,\b}} (\|h_1'\|_{\dot{C}^\b}+\|h_2'\|_{\dot{C}^\b})\|\p\|_{C^\beta}.
\end{split}
\label{eqn: estimate for J_1}
\eeq

Next, by taking advantage of \eqref{eqn: difference of difference quotient and the derivative} and \eqref{eqn: estimate finite difference of L2+L3},
\beq
\begin{split}
&\;\|J_2\|_{\dot{C}^\b}\\
\leq &\; \left\|\f{h_2-h_1}{1+h_2}\right\|_{\dot{C}^\b}\left\|\f{1}{1+h_1}\int_\BT \f{\D h_1(\th)-\cos\f{\xi}{2}\cdot h'_1(\th)}{2\sin\f{\xi}{2}}\f{\p(\th+\xi)} {1+l_1}\,d\xi\right\|_{L^\infty}\\
&\;+\left\|\f{h_2-h_1}{1+h_2}\right\|_{L^\infty}\left\|\f{1}{1+h_1}\int_\BT \f{\D h_1(\th)-\cos\f{\xi}{2}\cdot h'_1(\th)}{2\sin\f{\xi}{2}}\f{\p(\th+\xi)} {1+l_1}\,d\xi\right\|_{\dot{C}^\b}\\
\leq &\; C(\|h_2-h_1\|_{\dot{C}^\b}+\|h_2-h_1\|_{L^\infty}\|h_2\|_{\dot{C}^\b})\cdot \|h_1'\|_{\dot{C}^\b}\|\p\|_{L^\infty}\\
&\;+C\|h_2-h_1\|_{L^\infty}\cdot \|h_1'\|_{\dot{C}^\b}(\|\p\|_{C^\b}+\|\p\|_{L^\infty}\|h_1'\|_{\dot{C}^\b}\|h_1'\|_{L^\infty}).
\end{split}
\label{eqn: estimate for J_2}
\eeq

Arguing as in \eqref{eqn: sum of L_2 and L_3}-\eqref{eqn: estimate finite difference of L2+L3},
\beq
\|J_3\|_{\dot{C}^\b}
\leq
C\|(h_1-h_2)'\|_{\dot{C}^\b}(\|\p\|_{C^\b}+\|\p\|_{L^\infty}\|h_1'\|_{\dot{C}^\b}\|h_1'\|_{L^\infty}).
\label{eqn: estimate for J_3}
\eeq
In order to apply the same argument to $J_4$, we need the following estimate.
\beq
\begin{split}
&\;\left|\f{\p(\th+\xi)} {1+l_1}-\f{\p(\th+\xi)} {1+l_2}-\f{\p(\th)} {1+\f{ h_1'(\th)^2}{(1+h_1)^2}}+\f{\p(\th)} {1+\f{ h_2'(\th)^2}{(1+h_2)^2}}\right|\\
\leq &\;C|\p(\th+\xi)-\p(\th)||l_1-l_2|\\
&\;+C|\p(\th)|\left|l_1-l_2-\f{ h_1'(\th)^2}{(1+h_1)^2}+\f{ h_2'(\th)^2}{(1+h_2)^2}\right|\\
&\;+C|\p(\th)|\left|\f{ h_1'(\th)^2}{(1+h_1(\th))^2}-\f{ h_2'(\th)^2}{(1+h_2(\th))^2}\right| \left(\left|l_1-\f{ h_1'(\th)^2}{(1+h_1)^2}\right|+\left|l_2-\f{ h_2'(\th)^2}{(1+h_2)^2}\right|\right).
\end{split}
\label{eqn: a preliminary estimate for J_4 crude form}
\eeq
Since
\beq
\begin{split}
&\;\left|l_1-l_2-\f{ h_1'(\th)^2}{(1+h_1)^2}+\f{ h_2'(\th)^2}{(1+h_2)^2}\right|\\
\leq &\;\left|\f{(\D h_1(\th))^2-h_1'(\th)^2}{(1+h_1)(1+h_1(\th+\xi))}-\f{(\D h_2(\th))^2-h_2'(\th)^2}{(1+h_2)(1+h_2(\th+\xi))}\right|\\
&\;+\left|\f{h_1'(\th)^2(h_1(\th+\xi)-h_1(\th))}{(1+h_1)^2(1+h_1(\th+\xi))} -\f{h_2'(\th)^2(h_2(\th+\xi)-h_2(\th))}{(1+h_2)^2(1+h_2(\th+\xi))}\right|\\
\leq &\;C|\xi|^\b\|h_1-h_2\|_{C^{1,\b}}(\|h_1'\|_{\dot{C}^\b} +\|h_2'\|_{\dot{C}^\b}), 
\end{split}
\label{eqn: double difference of l and its value at theta}
\eeq
we apply this and \eqref{eqn: L inf difference of building blocks from two configurations} to \eqref{eqn: a preliminary estimate for J_4 crude form} to conclude that
\beq
\begin{split}
&\;\left|\f{\p(\th+\xi)} {1+l_1}-\f{\p(\th+\xi)} {1+l_2}-\f{\p(\th)} {1+\f{ h_1'(\th)^2}{(1+h_1)^2}}+\f{\p(\th)} {1+\f{ h_2'(\th)^2}{(1+h_2)^2}}\right|\\
\leq &\;C|\xi|^\b\|\p\|_{\dot{C}^\b}\cdot (\|h_1'\|_{L^\infty}+\|h_2'\|_{L^\infty})\|h_1-h_2\|_{W^{1,\infty}}\\
&\;+C\|\p\|_{L^\infty}\cdot |\xi|^\b\|h_1-h_2\|_{C^{1,\b}}(\|h_1'\|_{\dot{C}^\b} +\|h_2'\|_{\dot{C}^\b})\\
&\;+C\|\p\|_{L^\infty}(\|h_1'\|_{L^\infty}+\|h_2'\|_{L^\infty})\|h_1-h_2\|_{W^{1,\infty}}\cdot |\xi|^\b(\|h_1'\|_{L^\infty}\|h_1'\|_{\dot{C}^\b}+\|h_2'\|_{L^\infty}\|h_2'\|_{\dot{C}^\b})\\
\leq &\;C|\xi|^\b\|\p\|_{C^\b} (\|h_1'\|_{\dot{C}^\b}+\|h_2'\|_{\dot{C}^\b})\|h_1-h_2\|_{C^{1,\b}}.
\end{split}
\label{eqn: a preliminary estimate for J_4}
\eeq
Now we proceed as in \eqref{eqn: sum of L_2 and L_3}-\eqref{eqn: estimate finite difference of L2+L3}.
%
\beq
\begin{split}
&\;|J_4(\th+\e)-J_4(\th)|\\
\leq &\;\left|\f{1}{1+h_2(\th+\e)}-\f{1}{1+h_2(\th)}\right|\int_\BT\left|\f{\D h_2(\th+\e)-\cos\f{\xi}{2}\cdot h_2'(\th+\e)}{2\sin\f{\xi}{2}}\right|\\
&\;\qquad \cdot \left|\f{\p(\th+\e+\xi)} {1+l_1(\th+\e,\th+\e+\xi)}-\f{\p(\th+\e+\xi)} {1+l_2(\th+\e,\th+\e+\xi)}\right|\,d\xi\\
&\;+C\int_\BT\left|\f{\D h_2(\th+\e)-\cos\f{\xi}{2}\cdot h_2'(\th+\e)}{2\sin\f{\xi}{2}}\right|\\
&\;\qquad \cdot |\e|^\b \sup_\xi \left\|\f{\p(\th+\xi)} {1+l_1(\th,\th+\xi)}-\f{\p(\th+\xi)} {1+l_2(\th,\th+\xi)}\right\|_{\dot{C}^\b_\th}\,d\xi\\
&\;+C\int_\BT\left|\f{\D h_2(\th+\e)-\D h_2(\th)-\cos\f{\xi}{2}(h_2'(\th+\e)- h_2'(\th))}{2\sin\f{\xi}{2}}\right|\\
&\;\qquad \cdot \left|\f{\p(\th+\xi)} {1+l_1(\th,\th+\xi)}-\f{\p(\th+\xi)} {1+l_2(\th,\th+\xi)}-\f{\p(\th)} {1+\f{ h_1'(\th)^2}{(1+h_1(\th))^2}}+\f{\p(\th)} {1+\f{ h_2'(\th)^2}{(1+h_2(\th))^2}}\right|\,d\xi\\
&\;+C\left|\int_\BT\f{\D h_2(\th+\e)-\D h_2(\th)-\cos\f{\xi}{2}(h_2'(\th+\e)- h_2'(\th))}{2\sin\f{\xi}{2}}\,d\xi\right|\\
&\;\qquad \cdot \left|\f{\p(\th)} {1+\f{ h_1'(\th)^2}{(1+h_1(\th))^2}}-\f{\p(\th)} {1+\f{ h_2'(\th)^2}{(1+h_2(\th))^2}}\right|.
\end{split}
\eeq
By \eqref{eqn: absolute difference of Taylor expansion type error}, \eqref{eqn: Holder estimate using Hilbert transform}, \eqref{eqn: L inf difference of building blocks from two configurations}-\eqref{eqn: estimate for J_1} and \eqref{eqn: a preliminary estimate for J_4},
\beq
\begin{split}
&\;|J_4(\th+\e)-J_4(\th)|\\
%
\leq &\;C\e^\b  \|\p\|_{C^\b} \|h_2'\|_{\dot{C}^\b} (\|h_1'\|_{\dot{C}^\b}+\|h_2'\|_{\dot{C}^\b})\|h_1-h_2\|_{C^{1,\b}}.
\end{split}
\label{eqn: estimate for J_4}
\eeq

Combining \eqref{eqn: estimate for J_1}-\eqref{eqn: estimate for J_3} and \eqref{eqn: estimate for J_4} yield the desired estimate.
\end{proof}

\begin{proof}[Proof of Lemma \ref{lem: W 1p estimate of normal component of singular integral difference}]
Let $C_*$ and $C_\dag$ be the constants in Lemma \ref{lem: Lp bound for multi-term commutator} and Lemma \ref{lem: W 1p estimate for multi-term commutator}, respectively, both of which only depend on $p$.
Without loss of generality, we may assume $C_\dag\geq C_*\geq 1$.
Following \eqref{eqn: crude form of singular integral dot with normal}, we use $L_{k}^{(i)}$ $(k = 0,1,2,3)$ to denote the corresponding quantities defined by $h_i$ $(i = 1,2)$.
$l_i$ are defined as in \eqref{eqn: def of l} by $h_i$.
Thanks to the smallness of $h_i$, we may assume $|l_i|<1$, and that $C_2>0$ is a universal constant such that $\|(1+h_i)^{-1}\|_{L^\infty}\leq C_2$.

We start with bounding $L_1^{(1)}-L_1^{(2)}$.
Taking their $\th$-derivatives, we use \eqref{eqn: L inf difference of building blocks from two configurations} to derive that 
\beq
\begin{split}
&\;\|L_1^{(1)}-L_1^{(2)}\|_{\dot{W}^{1,p}}\\
\leq &\; \f{1}{2}\left\|\int_\BT \left[\f{\f{2\D h_1 \D h_1'}{(1+h_1(\th))(1+h_1(\th+\xi))}-\f{(\D h_1)^2 (h_1'(\th)+h_1'(\th+\xi)+h_1'(\th)h_1(\th+\xi)+h_1(\th)h_1'(\th+\xi))} {(1+h_1(\th))^2(1+h_1(\th+\xi))^2}} {(1+l_1(\th,\th+\xi))^2} \right.\right.\\
&\;\left.\left.\qquad
-\f{\f{2\D h_2 \D h_2'}{(1+h_2(\th))(1+h_2(\th+\xi))}-\f{(\D h_2)^2 (h_2'(\th)+h_2'(\th+\xi)+h_2'(\th)h_2(\th+\xi)+h_2(\th)h_2'(\th+\xi))} {(1+h_2(\th))^2(1+h_2(\th+\xi))^2}} {(1+l_2(\th,\th+\xi))^2} \right] \p(\th+\xi)\,d\xi\right\|_{L^p}
\\
&\; +\f{1}{2} \left\|\int_\BT \left(\f{1} {1+l_1}
-\f{1} {1+l_2} \right) \p'(\th+\xi)\,d\xi\right\|_{L^p}
\\
\leq &\; C(\|h_1'-h_2'\|_{L^\infty}\|h_1''\|_{L^p}+\|h_2'\|_{L^\infty}\|h_1''-h_2''\|_{L^p}+\|h_2'\|_{L^\infty}\|h_2''\|_{L^p}\|h_1-h_2\|_{W^{1,\infty}})\|\p\|_{L^\infty}
\\
&\; +C (\|h_1'\|_{L^\infty}+\|h_2'\|_{L^\infty}) \|h_1-h_2\|_{W^{1,\infty}}\|\p'\|_{L^p}.
\end{split}
\label{eqn: W1p estimate for L1 difference}
\eeq

As in \eqref{eqn: decomposition of L2}, we Taylor expand $(1+l_i)^{-1}$ and rewrite $L_2^{(i)}$ as
\beq
\begin{split}
L_2^{(i)} = &\;\sum_{j = 0}^\infty(-1)^j(1+h_i(\th))^{-(j+1)}
\mathrm{p.v.}\int_\BT(\D h_i)^{2j+1}(1+h_i(\th+\xi))^{-j}\cdot \f{\p(\th+\xi)}{2\sin\f{\xi}{2}}\,d\xi\\
=:&\;\sum_{j=0}^\infty L_{2,j}^{(i)}.
\end{split}
\label{eqn: decomposition of L2i}
\eeq
We derive
\beq
\begin{split}
&\;(-1)^j(L_{2,j}^{(1)}-L_{2,j}^{(2)})\\
= &\;\left[(1+h_1(\th))^{-(j+1)}-(1+h_2(\th))^{-(j+1)}\right]\\
&\;\quad \cdot \mathrm{p.v.}\int_\BT(\D h_1)^{2j+1}(1+h_1(\th+\xi))^{-j}\cdot \f{\p(\th+\xi)}{2\sin\f{\xi}{2}}\,d\xi\\
&\;+(1+h_2(\th))^{-(j+1)}\\
&\;\quad \cdot \mathrm{p.v.}\int_\BT \D (h_1-h_2)\sum_{l = 0}^{2j}(\D h_1)^{l}(\D h_2)^{2j-l} \cdot (1+h_1(\th+\xi))^{-j}\cdot \f{\p(\th+\xi)}{2\sin\f{\xi}{2}}\,d\xi\\
&\;+(1+h_2(\th))^{-(j+1)}\\
&\;\quad \cdot \mathrm{p.v.}\int_\BT (\D h_2)^{2j+1} \left(\f{1}{1+h_1(\th+\xi)}-\f{1}{1+h_2(\th+\xi)}\right)\\
&\;\quad \qquad\qquad \cdot \sum_{l = 0}^{j-1}(1+h_1(\th+\xi))^{-l}(1+h_2(\th+\xi))^{-(j-1-l)}\cdot \f{\p(\th+\xi)}{2\sin\f{\xi}{2}}\,d\xi.
\end{split}
\label{eqn: decomposition of L2 difference}
\eeq
Note that here in this proof, with abuse of notations, we use $l$ as a summation index, which has nothing to do with \eqref{eqn: def of l}.
%
%
%

By Lemma \ref{lem: Lp bound for multi-term commutator}, for $0\leq l\leq k$,
\beq
\begin{split}
&\;\left\|\mathrm{p.v.}\int_\BT \D (h_1-h_2)(\D h_1)^{l}(\D h_2)^{k-l} \cdot \f{\p(\th+\xi)}{2\sin\f{\xi}{2}}\,d\xi\right\|_{L^p}\\
\leq &\;C C_*^k (\|h_1'\|_{L^\infty}+\|h_2'\|_{L^\infty})^k\|h_1'-h_2'\|_{L^\infty}\|\p\|_{L^p}.
\end{split}
\label{eqn: Lp estimate of a typical term in L2j}
\eeq
Letting $k = 2j$ and replacing $\p$ by $(1+h_1)^{-j}\p$,
\beq
\begin{split}
&\;\left\|\mathrm{p.v.}\int_\BT \D (h_1-h_2)(\D h_1)^{l}(\D h_2)^{2j-l} (1+h_1(\th+\xi))^{-j}\cdot \f{\p(\th+\xi)}{2\sin\f{\xi}{2}}\,d\xi\right\|_{L^p}\\
\leq &\;C (C_*^2 C_2 (\|h_1'\|_{L^\infty}+\|h_2'\|_{L^\infty})^2)^j\|h_1'-h_2'\|_{L^\infty}\|\p\|_{L^p}.
\end{split}
\label{eqn: Lp estimate of L2j second term}
\eeq
Further taking $h_2 = 0$ and $l = 2j$, we find
\beq
\begin{split}
&\;\left\|\mathrm{p.v.}\int_\BT (\D h_1)^{2j+1} (1+h_1(\th+\xi))^{-j}\cdot \f{\p(\th+\xi)}{2\sin\f{\xi}{2}}\,d\xi\right\|_{L^p}\\
\leq &\;C (C_*^{2}C_2\|h_1'\|_{L^\infty}^{2})^j\|h_1'\|_{L^\infty}\|\p\|_{L^p}.
\end{split}
\label{eqn: Lp estimate of L2j first term}
\eeq
Similarly, 
\beq
\begin{split}
&\;\left\|\mathrm{p.v.}\int_\BT (\D h_2)^{2j+1} \left(\f{1}{1+h_1(\th+\xi)}-\f{1}{1+h_2(\th+\xi)}\right)\right.\\
&\;\left.\qquad  \cdot (1+h_1(\th+\xi))^{-l}(1+h_2(\th+\xi))^{-(j-1-l)}\cdot \f{\p(\th+\xi)}{2\sin\f{\xi}{2}}\,d\xi\right\|_{L^p}\\
\leq &\;C (C_*^{2}C_2\|h_2'\|_{L^\infty}^{2})^j\|h_2'\|_{L^\infty}\|h_1-h_2\|_{L^\infty}\|\p\|_{L^p}.
\end{split}
\label{eqn: Lp estimate of L2j third term}
\eeq

On the other hand, by Lemma \ref{lem: W 1p estimate for multi-term commutator}, for $0\leq l\leq k$,
\beq
\begin{split}
&\;\left\|\mathrm{p.v.}\int_\BT \D (h_1-h_2)(\D h_1)^{l}(\D h_2)^{k-l} \cdot \f{\p(\th+\xi)}{2\sin\f{\xi}{2}}\,d\xi\right\|_{\dot{W}^{1,p}}\\
\leq &\;(k+2)C_\dag^{k+2}\|\p'\|_{L^p}\|h_1'-h_2'\|_{L^\infty}(\|h_1'\|_{L^\infty}+\|h_2'\|_{L^\infty})^k\\
&\;+(k+2)C_\dag^{k+2}\|\p\|_{L^\infty}\|h_1''-h_2''\|_{L^p}(\|h_1'\|_{L^\infty}+\|h_2'\|_{L^\infty})^{k}\\
&\;+(k+2)C_\dag^{k+2}\|\p\|_{L^\infty}\|h_1'-h_2'\|_{L^\infty} (\|h_1'\|_{L^\infty}+\|h_2'\|_{L^\infty})^{k-1}\cdot \mathds{1}_{\{k>0\}}(\|h_1''\|_{L^p}+\|h_2''\|_{L^p}).
\end{split}
\label{eqn: W1p estimate of L2j typical term}
\eeq
Taking $k = 2j$ and replacing $\p$ by $(1+h_1)^{-j}\p$,
\beq
\begin{split}
&\;\left\|\mathrm{p.v.}\int_\BT \D (h_1-h_2)(\D h_1)^{l}(\D h_2)^{2j-l} (1+h_1(\th+\xi))^{-j}\cdot \f{\p(\th+\xi)}{2\sin\f{\xi}{2}}\,d\xi\right\|_{\dot{W}^{1,p}}\\
%
\leq &\;(2j+2)C_\dag^{2j+2}(jC_2^{j+1}\|h_1'\|_{L^\infty}\|\p\|_{L^p}+C_2^{j}\|\p'\|_{L^p}) \|h_1'-h_2'\|_{L^\infty}(\|h_1'\|_{L^\infty}+\|h_2'\|_{L^\infty})^{2j}\\
&\;+(2j+2)C_\dag^{2j+2}C_2^j\|\p\|_{L^\infty}\|h_1''-h_2''\|_{L^p}(\|h_1'\|_{L^\infty}+\|h_2'\|_{L^\infty})^{2j}\\
&\;+(2j+2)C_\dag^{2j+2}C_2^j\|\p\|_{L^\infty}\|h_1'-h_2'\|_{L^\infty}\\
&\;\qquad\cdot \mathds{1}_{\{j>0\}}(\|h_1'\|_{L^\infty}+\|h_2'\|_{L^\infty})^{2j-1}(\|h_1''\|_{L^p}+\|h_2''\|_{L^p}).
%
\end{split}
\label{eqn: W1p estimate of L2j second term}
\eeq
Further taking $h_2 = 0$ and $l = 2j$,
\beq
\begin{split}
&\;\left\|\mathrm{p.v.}\int_\BT (\D h_1)^{2j+1} (1+h_1(\th+\xi))^{-j}\cdot \f{\p(\th+\xi)}{2\sin\f{\xi}{2}}\,d\xi\right\|_{\dot{W}^{1,p}}\\
\leq &\;C(j+1)(C_\dag^2 C_2 \|h_1'\|_{L^\infty}^2)^j [
(j\|h_1'\|_{L^\infty}\|\p\|_{L^p}+\|\p'\|_{L^p}) \|h_1'\|_{L^\infty}+\|\p\|_{L^\infty}\|h_1''\|_{L^p}].
\end{split}
\label{eqn: W1p estimate of L2j first term}
\eeq
Similarly,
\beq
\begin{split}
&\;\left\|\mathrm{p.v.}\int_\BT (\D h_2)^{2j+1} \left(\f{1}{1+h_1(\th+\xi)}-\f{1}{1+h_2(\th+\xi)}\right)\right.\\
&\;\left.\qquad  \cdot (1+h_1(\th+\xi))^{-l}(1+h_2(\th+\xi))^{-(j-1-l)}\cdot \f{\p(\th+\xi)}{2\sin\f{\xi}{2}}\,d\xi\right\|_{\dot{W}^{1,p}}\\
\leq &\;(2j+2)C_\dag^{2j+2} \|h_2'\|_{L^\infty}^{2j+1}\\
&\;\quad\cdot \left\|\left(\f{1}{1+h_1}-\f{1}{1+h_2}\right) (1+h_1)^{-l}(1+h_2)^{-(j-1-l)}\p\right\|_{\dot{W}^{1,p}}\\
&\;+C(2j+2)C_\dag^{2j+2} \|h_2'\|_{L^\infty}^{2j}\|h_2''\|_{L^p}\cdot C_2^{j+1}\|h_1-h_2\|_{L^\infty}\|\p\|_{L^\infty}\\
\leq &\;C(j+1)(C_\dag^{2}C_2 \|h_2'\|_{L^\infty}^{2})^j\|h_2'\|_{L^\infty}\cdot \left[\|h_1'-h_2'\|_{L^\infty}\|\p\|_{L^p} \right.\\
&\;\left.\qquad +j\|h_1-h_2\|_{L^\infty}(\|h_1'\|_{L^\infty}+\|h_2'\|_{L^\infty})\|\p\|_{L^p}+\|h_1-h_2\|_{L^\infty}\|\p'\|_{L^p}\right]\\
&\;+C(j+1)(C_\dag^{2}C_2 \|h_2'\|_{L^\infty}^{2})^j\|h_2''\|_{L^p}\cdot\|h_1-h_2\|_{L^\infty}\|\p\|_{L^\infty}.
\end{split}
\label{eqn: W1p estimate of L2j third term}
\eeq

Combining these estimates with \eqref{eqn: decomposition of L2 difference}, we use 
the fact $\|fg\|_{\dot{W}^{1,p}}\leq \|f\|_{\dot{W}^{1,\infty}}\|g\|_{L^p}+\|f\|_{L^\infty}\|g\|_{\dot{W}^{1,p}}$ to derive that
\beq
\begin{split}
&\;\|L_{2,j}^{(1)}-L_{2,j}^{(2)}\|_{\dot{W}^{1,p}}\\
\leq &\;
C(j+1)C_2^j(\|h_1'-h_2'\|_{L^\infty}+(j+2)\|h_1-h_2\|_{L^\infty}\|h_2'\|_{L^\infty}) \cdot (C_*^{2}C_2\|h_1'\|_{L^\infty}^{2})^j\|h_1'\|_{L^\infty}\|\p\|_{L^p}\\
&\;+C(j+1)C_2^j\|h_1-h_2\|_{L^\infty}\\
&\;\quad \cdot (j+1)(C_\dag^2 C_2 \|h_1'\|_{L^\infty}^2)^j [
(j\|h_1'\|_{L^\infty}\|\p\|_{L^p}+\|\p'\|_{L^p}) \|h_1'\|_{L^\infty}+\|\p\|_{L^\infty}\|h_1''\|_{L^p}]\\
&\;+C(j+1)C_2^j\|h_2'\|_{L^\infty} \sum_{l = 0}^{2j} (C_*^{2}C_2(\|h_1'\|_{L^\infty}+\|h_2'\|_{L^\infty})^{2})^j\|h_1'-h_2'\|_{L^\infty}\|\p\|_{L^p}\\
&\;+CC_2^j\sum_{l = 0}^{2j}(j+1)(C_\dag^2 C_2 (\|h_1'\|_{L^\infty}+\|h_2'\|_{L^\infty})^2)^{j}\\
&\;\qquad \cdot [
(j\|h_1'\|_{L^\infty}\|\p\|_{L^p}+\|\p'\|_{L^p}) \|h_1'-h_2'\|_{L^\infty}+\|\p\|_{L^\infty}\|h_1''-h_2''\|_{L^p} \\
&\;\qquad\qquad  +\mathds{1}_{\{j>0\}} \|\p\|_{L^\infty}\|h_1'-h_2'\|_{L^\infty}(\|h_1''\|_{L^p}+\|h_2''\|_{L^p})(\|h_1'\|_{L^\infty}+\|h_2'\|_{L^\infty})^{-1}]
\\
&\;+C(j+1)C_2^j\|h_2'\|_{L^\infty}\sum_{l = 0}^{j-1} (C_*^{2}C_2\|h_2'\|_{L^\infty}^{2})^j\|h_2'\|_{L^\infty}\|h_1-h_2\|_{L^\infty}\|\p\|_{L^p}
\\
&\;+CC_2^j \sum_{l = 0}^{j-1}
(j+1)(C_\dag^{2}C_2 \|h_2'\|_{L^\infty}^{2})^j\|h_2'\|_{L^\infty}\cdot \left[\|h_1'-h_2'\|_{L^\infty}\|\p\|_{L^p} \right.\\
&\;\left.\qquad \qquad +j\|h_1-h_2\|_{L^\infty}(\|h_1'\|_{L^\infty}+\|h_2'\|_{L^\infty})\|\p\|_{L^p}+\|h_1-h_2\|_{L^\infty}\|\p'\|_{L^p}\right]\\
&\;\qquad \qquad +(j+1)(C_\dag^{2}C_2 \|h_2'\|_{L^\infty}^{2})^j\|h_2''\|_{L^p}\cdot\|h_1-h_2\|_{L^\infty}\|\p\|_{L^\infty}.
\end{split}
\eeq
Assuming $\|h_i'\|_{L^\infty}\ll 1$, 
\beq
\begin{split}
\|L_{2}^{(1)}-L_{2}^{(2)}\|_{\dot{W}^{1,p}}
\leq &\;\sum_{j = 0}^\infty\|L_{2,j}^{(1)}-L_{2,j}^{(2)}\|_{\dot{W}^{1,p}}\\
\leq &\;C(\|\p'\|_{L^p} \|h_1-h_2\|_{W^{1,\infty}}+\|\p\|_{L^\infty}\|h_1''-h_2''\|_{L^p}) \\
&\;+ C \|\p\|_{L^\infty}\|h_1-h_2\|_{W^{1,\infty}}(\|h_1''\|_{L^p}+\|h_2''\|_{L^p}).
\end{split}
\label{eqn: W1p estimate for L2 difference}
\eeq

We similarly write 
\beq
\begin{split}
L_3^{(i)}=&\;\sum_{j = 0}^\infty h_i'(\th)(-1-h_i(\th))^{-(j+1)}\mathrm{p.v.}\int_\BT (\D h_i)^{2j}(1+h_i(\th+\xi))^{-j}\cdot \f{\p(\th+\xi)}{2\tan\f{\xi}{2}}\,d\xi\\
=:&\;\sum_{j=0}^\infty L_{3,j}^{(i)}
\end{split}
\label{eqn: decomposition of L3i}
\eeq
and
\beq
\begin{split}
&\;(-1)^{j+1}(L_{3,j}^{(1)}-L_{3,j}^{(2)})\\
=&\;\left[h_1'(\th)(1+h_1(\th))^{-(j+1)}-h_2'(\th)(1+h_2(\th))^{-(j+1)}\right] \cdot \mathrm{p.v.}\int_\BT A_i^{j} \cdot \f{\p(\th+\xi)}{2\tan\f{\xi}{2}}\,d\xi\\
&\;+h_2'(\th)(1+h_2(\th))^{-(j+1)}\cdot \mathrm{p.v.}\int_\BT (A_1^j-A_2^j)\cdot
\f{\p(\th+\xi)}{2\tan\f{\xi}{2}}\,d\xi,
\end{split}
\label{eqn: decomposition of difference of L3j}
\eeq
where
\beq
A_i := \f{(\D h_i)^{2}}{1+h_i(\th+\xi)} = (1+h_i(\th))\cdot l_i(\th,\th+\xi).
\label{eqn: def of A_i}
\eeq

To proceed as before, we need $L^\infty$-bounds for the integrals in \eqref{eqn: decomposition of difference of L3j}.
We additionally define
\beq
B_i = \f{ h_i'(\th)^{2}}{1+h_i(\th)}.
\eeq
It is easy to show that $|A_i|,|B_i|\leq C_1^2 C_2\|h_i'\|_{L^\infty}^2$, where $C_1 = \pi/2$ is introduced in the proof of Lemma \ref{lem: Lp bound for multi-term commutator}, and
\beq
|A_i-B_i|\leq 
C\|h_i'\|_{L^\infty}\|h_i'\|_{\dot{C}^\b}|\xi|^\b.
\label{eqn: difference between A_i and B_i}
\eeq
%
%
%
Hence, by the mean value theorem,
\beq
\begin{split}
&\;\left|\mathrm{p.v.}\int_\BT A_i^{j} \cdot \f{\p(\th+\xi)}{2\tan\f{\xi}{2}}\,d\xi\right|\\
=&\;\left|\int_\BT (A_i^j\p(\th+\xi)
-B_i^j \p(\th))\f{1}{2\tan\f{\xi}{2}}\,d\xi\right|\\
\leq &\;C\int_\BT j(C_1^2C_2\|h_i'\|_{L^\infty}^2)^{j-1}\cdot\|h_i'\|_{L^\infty}\|h_i'\|_{\dot{C}^\b} |\xi|^\b\cdot \|\p\|_{L^\infty}|\xi|^{-1}\,d\xi\\
&\;+C\int_\BT (C_1^2C_2\|h_i'\|_{L^\infty}^{2})^j\cdot |\p(\th+\xi)-\p(\th)| |\xi|^{-1}\,d\xi\\
\leq &\;C(C_1^2C_2)^j (j\|h_i'\|_{L^\infty}^{2j-1}\|h_i'\|_{\dot{C}^\b} \|\p\|_{L^\infty}+\|h_i'\|_{L^\infty}^{2j}\|\p\|_{\dot{C}^\b}).
\end{split}
\label{eqn: L inf estimate of L3j first term}
\eeq

We also derive that
\beq
\begin{split}
&\;\left|\mathrm{p.v.}\int_\BT (A_1^j-A_2^j) \f{\p(\th+\xi)}{2\tan\f{\xi}{2}}\,d\xi\right|\\
\leq &\;\int_\BT |A_1^j-A_2^j-B_1^j+B_2^j| \left|\f{\p(\th+\xi)}{2\tan\f{\xi}{2}}\right|\,d\xi+|B_1^j-B_2^j|\left|\int_\BT \f{\p(\th+\xi)-\p(\th)}{2\tan\f{\xi}{2}}\,d\xi\right|.
\end{split}
\label{eqn: L inf bound for difference of a part of L_3}
\eeq
Write
\beq
\begin{split}
&\;A_1^j-A_2^j-B_1^j+B_2^j \\
=&\; (A_1-A_2-B_1+B_2)\sum_{l = 0}^{j-1}A_1^l A_2^{j-1-l}+(B_1-B_2)\sum_{l = 0}^{j-1}(A_1^l A_2^{j-1-l}-B_1^l B_2^{j-1-l}).
\end{split}
\label{eqn: double difference of A^j and B^j}
\eeq
Since
\beq
\begin{split}
&\;A_1-A_2-B_1+B_2\\
=&\; (1+h_1(\th))\left(l_1 - \f{h_1'(\th)^2}{(1+h_1(\th))^2}\right) - (1+h_2(\th))\left(l_2 - \f{h_2'(\th)^2}{(1+h_2(\th))^2}\right)\\
=&\; \f{h_1-h_2}{1+h_1}(A_1-B_1) \\
&\;+ (1+h_2(\th))\left(l_1-l_2 - \f{h_1'(\th)^2}{(1+h_1(\th))^2}+ \f{h_2'(\th)^2}{(1+h_2(\th))^2}\right),
\end{split}
\eeq
%
we use \eqref{eqn: double difference of l and its value at theta} and \eqref{eqn: difference between A_i and B_i} to derive that
\beq
|A_1-A_2-B_1+B_2|\leq 
C|\xi|^\b\|h_1-h_2\|_{C^{1,\b}}(\|h_1'\|_{\dot{C}^\b} +\|h_2'\|_{\dot{C}^\b})\\
%
\eeq
%
Combining this with \eqref{eqn: difference between A_i and B_i} and \eqref{eqn: double difference of A^j and B^j} yields that
\beq
\begin{split}
&\;|A_1^j-A_2^j-B_1^j+B_2^j |\\
\leq &\; |A_1-A_2-B_1+B_2|  \sum_{l = 0}^{j-1}(C_1^2 C_2\|h_1'\|_{L^\infty}^2)^{l}(C_1^2 C_2\|h_2'\|_{L^\infty}^2)^{j-1-l}\\
&\;+C|B_1-B_2|\sum_{l = 0}^{j-1}l(C_1^2 C_2\|h'_1\|_{L^\infty}^2)^{l-1}\cdot \|h_1'\|_{L^\infty}\|h_1'\|_{\dot{C}^\b}|\xi|^\b
\cdot (C_1^2 C_2\|h_2'\|_{L^\infty}^2)^{j-1-l}\\
&\;+C|B_1-B_2|\sum_{l = 0}^{j-1}(C_1^2 C_2\|h'_1\|_{L^\infty}^2)^{l}\cdot (j-1-l) (C_1^2 C_2\|h_2'\|_{L^\infty}^2)^{j-2-l}\cdot \|h_2'\|_{L^\infty}\|h_2'\|_{\dot{C}^\b}|\xi|^\b\\
%
\leq &\; C(C_1^2 C_2)^{j-1} |\xi|^\b\cdot j\|h_1-h_2\|_{C^{1,\b}}(\|h_1'\|_{\dot{C}^\b}+\|h_2'\|_{\dot{C}^\b}) (\|h_1'\|_{L^\infty}^2+\|h_2'\|_{L^\infty}^2)^{j-1}.
\end{split}
\eeq
Applying this to \eqref{eqn: L inf bound for difference of a part of L_3}, we obtain that
\beq
\begin{split}
&\;\left|\mathrm{p.v.}\int_\BT (A_1^j-A_2^j) \f{\p(\th+\xi)}{2\tan\f{\xi}{2}}\,d\xi\right|\\
\leq &\;C(C_1^2 C_2)^{j-1}\cdot j\|h_1-h_2\|_{C^{1,\b}}(\|h_1'\|_{\dot{C}^\b}+\|h_2'\|_{\dot{C}^\b}) (\|h_1'\|_{L^\infty}^2+\|h_2'\|_{L^\infty}^2)^{j-1}\|\p\|_{C^\b}. 
\end{split}
\label{eqn: L inf estimate of difference of L3j first term}
\eeq

Arguing as in \eqref{eqn: Lp estimate of a typical term in L2j}-\eqref{eqn: Lp estimate of L2j third term}, for $j\geq 1$ and $0\leq l\leq 2j-1$,
%
%
\beq
\begin{split}
&\;\left\|\mathrm{p.v.}\int_\BT \D (h_1-h_2)(\D h_1)^{l}(\D h_2)^{2j-1-l} (1+h_1(\th+\xi))^{-j}\cdot \f{\p(\th+\xi)}{2\tan\f{\xi}{2}}\,d\xi\right\|_{L^p}\\
\leq &\;C C_*^{2j-1}C_2^j(\|h_1'\|_{L^\infty}+\|h_2'\|_{L^\infty})^{2j-1}\|h_1'-h_2'\|_{L^\infty}\|\p\|_{L^p},
\end{split}
\label{eqn: Lp estimate of L3j second term}
\eeq
\beq
\left\|\mathrm{p.v.}\int_\BT A_1^{j}\cdot \f{\p(\th+\xi)}{2\tan\f{\xi}{2}}\,d\xi\right\|_{L^p}
\leq C C_*^{2j-1}C_2^j\|h_1'\|_{L^\infty}^{2j}\|\p\|_{L^p},
\label{eqn: Lp estimate of L3j first term}
\eeq
and
\beq
\begin{split}
&\;\left\|\mathrm{p.v.}\int_\BT (\D h_2)^{2j} \left(\f{1}{1+h_1(\th+\xi)}-\f{1}{1+h_2(\th+\xi)}\right)\right.\\
&\;\left.\qquad  \cdot (1+h_1(\th+\xi))^{-l}(1+h_2(\th+\xi))^{-(j-1-l)}\cdot \f{\p(\th+\xi)}{2\sin\f{\xi}{2}}\,d\xi\right\|_{L^p}\\
\leq &\;C C_*^{2j-1}C_2^j\|h_2'\|_{L^\infty}^{2j}\|h_1-h_2\|_{L^\infty}\|\p\|_{L^p}.
\end{split}
\eeq
Hence,
\beq
\begin{split}
&\;\left\|\mathrm{p.v.}\int_\BT (A_1^{j}-A_2^j)\cdot \f{\p(\th+\xi)}{2\tan\f{\xi}{2}}\,d\xi\right\|_{L^p}\\
\leq &\;\sum_{l = 0}^{2j-1}\left\|\mathrm{p.v.}\int_\BT \D(h_1-h_2)(\D h_1)^l (\D h_2)^{2j-1-l}(1+h_1(\th+\xi))^{-j}\cdot \f{\p(\th+\xi)}{2\tan\f{\xi}{2}}\,d\xi\right\|_{L^p}\\
&\;+\sum_{l = 0}^{j-1}\left\|\mathrm{p.v.}\int_\BT (\D h_2)^{2j}\left(\f{1}{1+h_1(\th+\xi)}-\f{1}{1+h_2(\th+\xi)}\right)\right.\\
&\;\left.\qquad \qquad \cdot (1+h_1(\th+\xi))^{-l}(1+h_1(\th+\xi))^{-(j-1-l)}\cdot \f{\p(\th+\xi)}{2\tan\f{\xi}{2}}\,d\xi\right\|_{L^p}\\
\leq &\;C j C_*^{2j-1}C_2^j(\|h_1'\|_{L^\infty}+\|h_2'\|_{L^\infty})^{2j-1}\|h_1-h_2\|_{W^{1,\infty}}\|\p\|_{L^p}. 
\end{split}
\label{eqn: Lp estimate for difference of A1 A2 integrals in L3 term}
\eeq

Similar to \eqref{eqn: W1p estimate of L2j typical term}-\eqref{eqn: W1p estimate of L2j third term},  for $j \geq 1$,
\beq
\begin{split}
&\;\left\|\mathrm{p.v.}\int_\BT \D (h_1-h_2)(\D h_1)^{l}(\D h_2)^{2j-1-l} (1+h_1(\th+\xi))^{-j}\cdot \f{\p(\th+\xi)}{2\tan\f{\xi}{2}}\,d\xi\right\|_{\dot{W}^{1,p}}\\
\leq &\;(2j+1)C_\dag^{2j+1}(jC_2^{j+1}\|h_1'\|_{L^\infty}\|\p\|_{L^p}+C_2^j\|\p'\|_{L^p})\|h_1'-h_2'\|_{L^\infty}(\|h_1'\|_{L^\infty}+\|h_2'\|_{L^\infty})^{2j-1}\\
&\;+(2j+1)C_\dag^{2j+1}C_2^j\|\p\|_{L^\infty}\|h_1''-h_2''\|_{L^p}(\|h_1'\|_{L^\infty}+\|h_2'\|_{L^\infty})^{2j-1}\\
&\;+(2j+1)C_\dag^{2j+1}C_2^j\|\p\|_{L^\infty}\|h_1'-h_2'\|_{L^\infty} (\|h_1'\|_{L^\infty}+\|h_2'\|_{L^\infty})^{2j-2}\cdot(\|h_1''\|_{L^p}+\|h_2''\|_{L^p}),
\end{split}
\label{eqn: W1p estimate of L3j second term}
\eeq
\beq
\begin{split}
&\;\left\|\mathrm{p.v.}\int_\BT A_1^{j}\cdot \f{\p(\th+\xi)}{2\tan\f{\xi}{2}}\,d\xi\right\|_{\dot{W}^{1,p}}\\
\leq &\;(2j+1)C_\dag^{2j+1}(jC_2^{j+1}\|h_1'\|_{L^\infty}\|\p\|_{L^p}+C_2^j\|\p'\|_{L^p}) \|h_1'\|_{L^\infty}^{2j}\\
&\;+C(2j+1)C_\dag^{2j+1}C_2^j\|\p\|_{L^\infty}\|h_1''\|_{L^p}\|h_1'\|_{L^\infty}^{2j-1},
\end{split}
\label{eqn: W1p estimate of L3j first term}
\eeq
and
\beq
\begin{split}
&\;\left\|\mathrm{p.v.}\int_\BT (\D h_2)^{2j} \left(\f{1}{1+h_1(\th+\xi)}-\f{1}{1+h_2(\th+\xi)}\right)\right.\\
&\;\left.\qquad  \cdot (1+h_1(\th+\xi))^{-l}(1+h_2(\th+\xi))^{-(j-1-l)}\cdot \f{\p(\th+\xi)}{2\sin\f{\xi}{2}}\,d\xi\right\|_{\dot{W}^{1,p}}\\
\leq &\;(2j+1)C_\dag^{2j+1}\left\|\left(\f{1}{1+h_1}-\f{1}{1+h_2}\right) (1+h_1)^{-l}(1+h_2)^{-(j-1-l)}\p\right\|_{\dot{W}^{1,p}}\|h_2'\|_{L^\infty}^{2j}\\
&\;+C(2j+1)C_\dag^{2j+1}C_2^{j+1}\|h_1-h_2\|_{L^\infty}\|\p\|_{L^\infty}\|h_2''\|_{L^p}\|h_2'\|_{L^\infty}^{2j-1}\\
\leq &\;C(2j+1)C_\dag^{2j+1}C_2^j\|h_2'\|_{L^\infty}^{2j}\cdot [\|h_1'-h_2'\|_{L^\infty}\|\p\|_{L^p} \\
&\;\qquad +j\|h_1-h_2\|_{L^\infty} (\|h_1'\|_{L^\infty}+\|h_2'\|_{L^\infty})\|\p\|_{L^p} +\|h_1-h_2\|_{L^\infty}\|\p'\|_{L^p}]\\
&\;+C(2j+1)C_\dag^{2j+1}C_2^{j+1}\|h_2'\|_{L^\infty}^{2j-1}\|h_1-h_2\|_{L^\infty}\|\p\|_{L^\infty}\|h_2''\|_{L^p}.
\end{split}
\eeq
Hence,
\beq
\begin{split}
&\;\left\|\mathrm{p.v.}\int_\BT (A_1^{j}-A_2^j)\cdot \f{\p(\th+\xi)}{2\tan\f{\xi}{2}}\,d\xi\right\|_{\dot{W}^{1,p}}\\
\leq &\;\sum_{l = 0}^{2j-1}\left\|\mathrm{p.v.}\int_\BT \D(h_1-h_2)(\D h_1)^l (\D h_2)^{2j-1-l}(1+h_1(\th+\xi))^{-j}\cdot \f{\p(\th+\xi)}{2\tan\f{\xi}{2}}\,d\xi\right\|_{\dot{W}^{1,p}}\\
&\;+\sum_{l = 0}^{j-1}\left\|\mathrm{p.v.}\int_\BT (\D h_2)^{2j}\left(\f{1}{1+h_1(\th+\xi)}-\f{1}{1+h_2(\th+\xi)}\right)\right.\\
&\;\left.\qquad \qquad \cdot (1+h_1(\th+\xi))^{-l}(1+h_2(\th+\xi))^{-(j-1-l)}\cdot \f{\p(\th+\xi)}{2\tan\f{\xi}{2}}\,d\xi\right\|_{\dot{W}^{1,p}}\\
\leq &\;Cj^2(2j+1)C_\dag^{2j+1}C_2^j \|\p\|_{L^p}\|h_1-h_2\|_{W^{1,\infty}}(\|h_1'\|_{L^\infty}+\|h_2'\|_{L^\infty})^{2j}\\
&\;+Cj(2j+1)C_\dag^{2j+1}C_2^j \|\p'\|_{L^p}\|h_1-h_2\|_{W^{1,\infty}}(\|h_1'\|_{L^\infty}+\|h_2'\|_{L^\infty})^{2j-1}\\
&\;+Cj(2j+1)C_\dag^{2j+1}C_2^j\|\p\|_{L^\infty}\|h_1''-h_2''\|_{L^p}(\|h_1'\|_{L^\infty}+\|h_2'\|_{L^\infty})^{2j-1}\\
&\;+Cj(2j+1)C_\dag^{2j+1}C_2^j\|\p\|_{L^\infty}\|h_1-h_2\|_{W^{1,\infty} } (\|h_1'\|_{L^\infty}+\|h_2'\|_{L^\infty})^{2j-2}(\|h_1''\|_{L^p}+\|h_2''\|_{L^p}).
%
\end{split}
\label{eqn: W1p estimate for difference of A1 A2 integrals in L3 term}
\eeq

To this end, by \eqref{eqn: decomposition of difference of L3j}, 
\beq
\begin{split}
&\;\|L_{3,j}^{(1)}-L_{3,j}^{(2)}\|_{\dot{W}^{1,p}}\\
\leq &\;\|h_1''(1+h_1)^{-(j+1)}-h_2''(1+h_2)^{-(j+1)}\|_{L^p} \left\|\mathrm{p.v.}\int_\BT A_1^j \f{\p(\th+\xi)}{2\tan\f{\xi}{2}}\,d\xi\right\|_{L^\infty}\\
&\;+(j+1)\|(h_1')^2(1+h_1)^{-(j+2)}-(h_2')^2(1+h_2)^{-(j+2)}\|_{L^\infty}\left\|\mathrm{p.v.}\int_\BT A_1^j \f{\p(\th+\xi)}{2\tan\f{\xi}{2}}\,d\xi\right\|_{L^p}\\
&\;+\|h_1'(1+h_1)^{-(j+1)}-h_2'(1+h_2)^{-(j+1)}\|_{L^\infty}\left\|\mathrm{p.v.}\int_\BT A_1^j \f{\p(\th+\xi)}{2\tan\f{\xi}{2}}\,d\xi\right\|_{\dot{W}^{1,p}}\\
&\;+\|h_2''(1+h_2)^{-(j+1)}\|_{L^p} \left\|\mathrm{p.v.}\int_\BT (A_1^j-A_2^j)
\f{\p(\th+\xi)}{2\tan\f{\xi}{2}}\,d\xi\right\|_{L^\infty}\\
&\;+(j+1)\|(h_2')^2(1+h_2)^{-(j+2)}\|_{L^\infty}\left\| \mathrm{p.v.}\int_\BT (A_1^j-A_2^j)
\f{\p(\th+\xi)}{2\tan\f{\xi}{2}}\,d\xi\right\|_{L^p}\\
&\;+\|h_2'(1+h_2)^{-(j+1)}\|_{L^\infty} \left\|\mathrm{p.v.}\int_\BT (A_1^j-A_2^j)
\f{\p(\th+\xi)}{2\tan\f{\xi}{2}}\,d\xi\right\|_{\dot{W}^{1,p}}.
\end{split}
\label{eqn: W 1 p norm of the difference of L3j crude form}
\eeq
For $j = 0$, this can be simplified as
\beq
\begin{split}
&\;\|L_{3,0}^{(1)}-L_{3,0}^{(2)}\|_{\dot{W}^{1,p}}\\
%
\leq &\;C(\|h_1''-h_2''\|_{L^p}+\|h_2''\|_{L^p}\|h_1-h_2\|_{L^\infty}) \| \p\|_{\dot{C}^\b}\\
&\;+C\|h_1-h_2\|_{W^{1,\infty}} (\|h_1'\|_{L^\infty}+\|h_2'\|_{L^\infty})
\|\p\|_{L^p}\\
&\;+C\|h_1-h_2\|_{W^{1,\infty}}\|\p'\|_{L^p}.
\end{split}
\label{eqn: differece of L 3 0}
\eeq
For $j\geq 1$, by applying \eqref{eqn: L inf estimate of L3j first term}, \eqref{eqn: L inf estimate of difference of L3j first term}, \eqref{eqn: Lp estimate of L3j first term}, \eqref{eqn: Lp estimate for difference of A1 A2 integrals in L3 term}, \eqref{eqn: W1p estimate of L3j first term} and \eqref{eqn: W1p estimate for difference of A1 A2 integrals in L3 term} to \eqref{eqn: W 1 p norm of the difference of L3j crude form}, we derive that 
\beq
\begin{split}
&\;\|L_{3,j}^{(1)}-L_{3,j}^{(2)}\|_{\dot{W}^{1,p}}\\
\leq &\; C(C_2^{j+1}\|h_1''-h_2''\|_{L^p}+\|h_2''\|_{L^p}(j+1)C_2^{j+2}\|h_1-h_2\|_{L^\infty})\\
&\;\quad \cdot (C_1^2C_2)^j \cdot j\|h_1'\|_{L^\infty}^{2j-1}\|h_1'\|_{\dot{C}^\b} \|\p\|_{C^\b}\\
&\;+C(j+1)(j+2)C_2^{j+2}\|h_1-h_2\|_{W^{1,\infty}}(\|h_1'\|_{L^\infty}+\|h_2'\|_{L^\infty})\\
&\;\quad \cdot C_*^{2j-1}C_2^j\|h_1'\|_{L^\infty}^{2j}\|\p\|_{L^p}\\
&\;+C(j+1)C_2^{j+1}\|h_1-h_2\|_{W^{1,\infty}}\\
&\;\quad \cdot \left[(2j+1)C_\dag^{2j+1}(jC_2^{j+1}\|h_1'\|_{L^\infty}\|\p\|_{L^p}+C_2^j\|\p'\|_{L^p}) \|h_1'\|_{L^\infty}^{2j}\right.\\
&\; \left.\qquad +(2j+1)C_\dag^{2j+1}C_2^j\|\p\|_{L^\infty}\|h_1''\|_{L^p}\|h_1'\|_{L^\infty}^{2j-1}\right]\\
&\;+CC_2^{j+1}\|h_2''\|_{L^p}\\
&\;\quad \cdot (C_1^2 C_2)^{j-1}\cdot j\|h_1-h_2\|_{C^{1,\b}}(\|h_1'\|_{\dot{C}^\b}+\|h_2'\|_{\dot{C}^\b}) (\|h_1'\|_{L^\infty}^2+\|h_2'\|_{L^\infty}^2)^{j-1}\|\p\|_{C^\b}\\
&\;+CC_2^{j+2}(j+1)\|h_2'\|_{L^\infty}^2 \\
&\;\quad \cdot  j C_*^{2j-1}C_2^j(\|h_1'\|_{L^\infty}+\|h_2'\|_{L^\infty})^{2j-1}\|h_1-h_2\|_{W^{1,\infty}}\|\p\|_{L^p}\\
&\;+CC_2^{j+1}\|h_2'\|_{L^\infty}\cdot j(2j+1)C_\dag^{2j+1}C_2^j (\|h_1'\|_{L^\infty}+\|h_2'\|_{L^\infty})^{2j-2}\\
&\;\quad \cdot \left[ j \|\p\|_{L^p}\|h_1-h_2\|_{W^{1,\infty}}(\|h_1'\|_{L^\infty}+\|h_2'\|_{L^\infty})^{2}\right.\\
&\;\qquad + \|\p'\|_{L^p}\|h_1-h_2\|_{W^{1,\infty}}(\|h_1'\|_{L^\infty}+\|h_2'\|_{L^\infty})\\
&\;\qquad +\|\p\|_{L^\infty}\|h_1''-h_2''\|_{L^p}(\|h_1'\|_{L^\infty}+\|h_2'\|_{L^\infty})\\
&\;\left.\qquad +\|\p\|_{L^\infty}\|h_1-h_2\|_{W^{1,\infty} } (\|h_1''\|_{L^p}+\|h_2''\|_{L^p})\right].
%
\end{split}
\eeq
This together with \eqref{eqn: differece of L 3 0} and the smallness of $h_i$ implies
\beq
\begin{split}
&\;\|L_{3}^{(1)}-L_{3}^{(2)}\|_{\dot{W}^{1,p}}\\
\leq &\;\|L_{3,0}^{(1)}-L_{3,0}^{(2)}\|_{\dot{W}^{1,p}}+\sum_{j = 1}^\infty\|L_{3,j}^{(1)}-L_{3,j}^{(2)}\|_{\dot{W}^{1,p}}\\
\leq &\; C \|h_1''-h_2''\|_{L^p} (1+\|h_1'\|_{\dot{C}^\b} +\|h_2'\|_{\dot{C}^\b}) \|\p\|_{C^\b}\\
&\;+C(\|h_1''\|_{L^p}+\|h_2''\|_{L^p} ) \|h_1-h_2\|_{C^{1,\b}}(1+\|h_1'\|_{\dot{C}^\b}+\|h_2'\|_{\dot{C}^\b}) \|\p\|_{C^\b}\\
%
&\;+C\|h_1-h_2\|_{W^{1,\infty}}\|\p'\|_{L^p}.
\end{split}
\label{eqn: W1p estimate for L3 difference}
\eeq

Then the desired estimate follows from \eqref{eqn: W1p estimate for L1 difference}, \eqref{eqn: W1p estimate for L2 difference} and \eqref{eqn: W1p estimate for L3 difference}.
\end{proof}

\begin{proof}[Proof of Lemma \ref{lem: W 1p difference from Hilbert transform difference}]
Following \eqref{eqn: decomposition of singular integral tangent}, we use $\tilde{L}_{k}^{(i)}$ $(k = 1,2,3)$ to denote the corresponding quantities defined by $h_i$ $(i = 1,2)$.

Using \eqref{eqn: relation between L_1 and tilde L_1}, we find that
\beq
\begin{split}
&\;\|\tilde{L}_1^{(1)}-\tilde{L}_1^{(2)}\|_{\dot{W}^{1,p}}\\
\leq &\; \left\|\left(\f{h_1''}{1+h_1} - \f{(h_1')^2}{(1+h_1)^2}- \f{h_2''}{1+h_2} + \f{(h_2')^2}{(1+h_2)^2}\right)
\left(\f12\int_\BT \p\,d\xi+ L_1^{(1)}\right)\right\|_{L^p}
\\
&\;+\left\|\left(\f{h_2''}{1+h_2} - \f{(h_2')^2}{(1+h_2)^2}\right)
(L_1^{(1)}-L_1^{(2)})\right\|_{L^p}
\\
&\;+\left\|\left(\f{h_1'}{1+h_1}-\f{h_2'}{1+h_2}\right)(L_1^{(1)})'\right\|_{L^p}
+\left\|\f{h_2'}{1+h_2}(L_1^{(1)}-L_1^{(2)})'\right\|_{L^p}\\
\leq &\;C(\|h_1''-h_2''\|_{L^p}+\|h_2''\|_{L^p}\|h_1-h_2\|_{L^\infty})
\left(\left|\int_\BT \p\,d\xi\right|+\|L_1^{(1)}\|_{L^\infty}\right)\\
&\;+C\|h_2''\|_{L^p}\|L_1^{(1)}-L_1^{(2)}\|_{L^\infty}\\
&\;+C\|h_1-h_2\|_{W^{1,\infty}}\|L_1^{(1)}\|_{\dot{W}^{1,p}}
+C\|h_2'\|_{L^\infty}\|L_1^{(1)}-L_1^{(2)}\|_{\dot{W}^{1,p}}.
\end{split}
\label{eqn: W1p estimate for tilde L1 difference crude form}
\eeq
%
It is not difficult to show by \eqref{eqn: L inf difference of building blocks from two configurations} that
\beq
\begin{split}
\|L_1^{(1)}-L_1^{(2)}\|_{L^\infty}
\leq &\;C \|\p\|_{L^\infty}\int_\BT \|l_1-l_2\|_{L^\infty_\th}\,d\xi \\
\leq &\;C \|\p\|_{L^\infty}(\|h_1'\|_{L^\infty}+\|h_2'\|_{L^\infty})\|h_1-h_2\|_{W^{1,\infty}}.
\end{split}
\eeq
Taking $h_2 =0$ yields $\|L_1^{(1)}\|_{L^\infty}\leq C \|\p\|_{L^\infty}\|h_1'\|_{L^\infty}$; here we used the fact $m_{0,i}\ll 1$.
Substituting these estimates as well as \eqref{eqn: W1p estimate for L1} and \eqref{eqn: W1p estimate for L1 difference} into \eqref{eqn: W1p estimate for tilde L1 difference crude form}, we obtain that
\beq
\begin{split}
&\;\|\tilde{L}_1^{(1)}-\tilde{L}_1^{(2)}\|_{\dot{W}^{1,p}}\\
\leq &\;C(\|h_1''-h_2''\|_{L^p} +(\|h_1''\|_{L^p}+\|h_2''\|_{L^p})\|h_1-h_2\|_{W^{1,\infty}})\\
&\;\qquad \cdot
\left(\left|\int_\BT \p\,d\xi\right|+\|\p\|_{L^\infty}(\|h_1'\|_{L^\infty}+\|h_2'\|_{L^\infty})\right)\\
&\; +C (\|h_1'\|_{L^\infty}+\|h_2'\|_{L^\infty})^2 \|h_1-h_2\|_{W^{1,\infty}}\|\p'\|_{L^p}.
\end{split}
\label{eqn: W1p estimate for tilde L1 difference}
\eeq

To bound $\tilde{L}_2^{(1)}-\tilde{L}_2^{(2)}$, we are going to make use of the estimates for $L_2^{(1)}-L_2^{(2)}$ in Lemma \ref{lem: W 1p estimate of normal component of singular integral difference}, since $\tilde{L}_2^{(i)}$ coincides with $-h_i'(\th)L_2^{(i)}$ if $\p$ in the definition of $L_2^{(i)}$ is replaced by $\p/(1+h_i)$.
For this purpose, an $L^\infty$-estimate for $L_2^{(1)}-L_2^{(2)}$ is needed.
We start with
\beq
\begin{split}
&\;|L_2^{(1)}-L_2^{(2)}|\\
\leq &\;\left|\mathrm{p.v.}\int_\BT \left(\f{\f{\D h_1-h_1'(\th)}{1+h_1(\th)}}{1+l_1}
-\f{\f{\D h_2-h_2'(\th)}{1+h_2(\th)}}{1+l_2}\right)\f{\p(\th+\xi)}{2\sin\f{\xi}{2}}\,d\xi\right|\\
&\;+\left|\mathrm{p.v.}\int_\BT \left(\f{\f{h_1'(\th)}{1+h_1(\th)}}{1+l_1}
-\f{\f{h_2'(\th)}{1+h_2(\th)}}{1+l_2}\right)\f{\p(\th+\xi)}{2\sin\f{\xi}{2}}\,d\xi\right|.
\end{split}
\label{eqn: L inf bound for difference of L_2 crude form}
\eeq
It is straightforward to bound the first term.
\beq
\begin{split}
&\;\left|\mathrm{p.v.}\int_\BT \left(\f{\f{\D h_1-h_1'(\th)}{1+h_1(\th)}}{1+l_1}
-\f{\f{\D h_2-h_2'(\th)}{1+h_2(\th)}}{1+l_2}\right)\f{\p(\th+\xi)}{2\sin\f{\xi}{2}}\,d\xi\right|\\
\leq &\;C\int_\BT |\xi|^\b( \|h_1'-h_2'\|_{\dot{C}^\b}+\|h_2'\|_{\dot{C}^\b}(\|h_1-h_2\|_{L^\infty}+|l_1-l_2|))\|\p\|_{L^\infty}|\xi|^{-1}\,d\xi\\
\leq &\;C( \|h_1'-h_2'\|_{\dot{C}^\b}+\|h_2'\|_{\dot{C}^\b}\|h_1-h_2\|_{W^{1,\infty}})\|\p\|_{L^\infty}.
\end{split}
\label{eqn: L inf bound of L_2 difference first part}
\eeq
To bound the second term in \eqref{eqn: L inf bound for difference of L_2 crude form}, we first note that \eqref{eqn: L inf estimate of L3j first term} and \eqref{eqn: L inf estimate of difference of L3j first term} still hold if $2\tan\f{\xi}{2}$ in their denominators are replaced by $2\sin\f{\xi}{2}$.
Hence, we argue as in the proof of Lemma \ref{lem: W 1p estimate of normal component of singular integral difference} by Taylor expanding $(1+l_i)^{-1}$  that
\beq
\begin{split}
&\;\left|\mathrm{p.v.}\int_\BT \left(\f{1}{1+l_1}
-\f{1}{1+l_2}\right)\f{\p(\th+\xi)}{2\sin\f{\xi}{2}}\,d\xi\right|\\
\leq &\;\sum_{j = 1}^\infty\left|\mathrm{p.v.}\int_\BT \left(\f{A_1^j}{(1+h_1(\th))^j}-\f{A_2^j}{(1+h_2(\th))^j}\right)\f{\p(\th+\xi)}{2\sin\f{\xi}{2}}\,d\xi\right|\\
\leq &\;\sum_{j = 1}^\infty C_2^j \left|\mathrm{p.v.}\int_\BT (A_1^j-A_2^j) \f{\p(\th+\xi)}{2\sin\f{\xi}{2}}\,d\xi\right|+\left|\f{1}{(1+h_1)^j}-\f{1}{(1+h_2)^j}\right|\left|\mathrm{p.v.}\int_\BT A_2^j \f{\p(\th+\xi)}{2\sin\f{\xi}{2}}\,d\xi\right|\\
\leq &\;C\|\p\|_{C^\b} \|h_1-h_2\|_{C^{1,\b}}(\|h_1'\|_{\dot{C}^\b} +\|h_2'\|_{\dot{C}^\b}).
\end{split}
\eeq
Taking $h_2 = 0$ here yields
\beq
\left|\mathrm{p.v.}\int_\BT \left(\f{1}{1+l_1}
-1\right)\f{\p(\th+\xi)}{2\sin\f{\xi}{2}}\,d\xi\right|
\leq C\|\p\|_{C^\b}\|h_1\|_{C^{1,\b}}\|h_1'\|_{\dot{C}^\b},
\eeq
which further implies
\beq
\left|\mathrm{p.v.}\int_\BT \f{1}{1+l_1}\f{\p(\th+\xi)}{2\sin\f{\xi}{2}}\,d\xi\right|
\leq C\|\p\|_{C^\b}(1+\|h_1\|_{C^{1,\b}})^2.
\eeq
To this end, we may bound the second term in \eqref{eqn: L inf bound for difference of L_2 crude form} as follows
\beq
\begin{split}
&\;\left|\mathrm{p.v.}\int_\BT \left(\f{\f{h_1'(\th)}{1+h_1(\th)}}{1+l_1}
-\f{\f{h_2'(\th)}{1+h_2(\th)}}{1+l_2}\right)\f{\p(\th+\xi)}{2\sin\f{\xi}{2}}\,d\xi\right|\\
\leq &\;\left|\f{h_1'}{1+h_1}-\f{h_2'}{1+h_2}\right| \left|\mathrm{p.v.}\int_\BT \f{1}{1+l_1}\f{\p(\th+\xi)}{2\sin\f{\xi}{2}}\,d\xi\right|\\
&\;+\left|\f{h_2'}{1+h_2}\right| \left|\mathrm{p.v.}\int_\BT \left(\f{1}{1+l_1}
-\f{1}{1+l_2}\right)\f{\p(\th+\xi)}{2\sin\f{\xi}{2}}\,d\xi\right|\\
\leq &\; C\|\p\|_{C^\b} \|h_1-h_2\|_{C^{1,\b}}(1+\|h_1\|_{C^{1,\b} } +\|h_2\|_{C^{1,\b}})^2. 
\end{split}
\eeq
Combining this with \eqref{eqn: L inf bound for difference of L_2 crude form} and \eqref{eqn: L inf bound of L_2 difference first part},
\beq
\|L_2^{(1)}-L_2^{(2)}\|_{L^\infty}
\leq C\|\p\|_{C^\b} \|h_1-h_2\|_{C^{1,\b}}(1+\|h_1\|_{C^{1,\b} } +\|h_2\|_{C^{1,\b}})^2.
\label{eqn: L inf bound for difference of L_2}
\eeq
Setting $h_1 = 0$ (or $h_2 = 0$) provides
\beq
\|L_2^{(i)}\|_{L^\infty}
\leq C\|\p\|_{C^\b} \|h_i\|_{C^{1,\b}}(1+\|h_i\|_{C^{1,\b}})^2.
\label{eqn: L inf bound for L_2}
\eeq

To emphasize the $\p$-dependence of $L_2^{(i)}$, we shall rewrite $L_{2}^{(i)}$ as $L_{2,\p}^{(i)}$.
Since $\tilde{L}_2^{(i)}=-h_i'(\th)L_{2,\p/(1+h_i)}^{(i)}$, we derive with \eqref{eqn: W1p estimate for L2 difference}, \eqref{eqn: L inf bound for difference of L_2} and \eqref{eqn: L inf bound for L_2} that
\beq
\begin{split}
&\;\|\tilde{L}_2^{(1)}-\tilde{L}_2^{(2)}\|_{\dot{W}^{1,p}}\\
\leq &\;\|h_1''-h_2''\|_{L^p}\|L_{2,\p/(1+h_1)}^{(1)}\|_{L^\infty} +\|h_2''\|_{L^p}\|L_{2,\p/(1+h_1)}^{(1)}-L_{2,\p/(1+h_1)}^{(2)}\|_{L^\infty} \\
&\;+\|h_2''\|_{L^p}\|L_{2,\p/(1+h_1)-\p/(1+h_2)}^{(2)}\|_{L^\infty}\\
&\;+\|h_1'-h_2'\|_{L^\infty}\|L_{2,\p/(1+h_1)}^{(1)}\|_{\dot{W}^{1,p}} +\|h_2'\|_{L^\infty}\|L_{2,\p/(1+h_1)}^{(1)}-L_{2,\p/(1+h_1)}^{(2)}\|_{\dot{W}^{1,p}} \\
&\;+\|h_2'\|_{L^\infty}\|L_{2,\p/(1+h_1)-\p/(1+h_2)}^{(2)}\|_{\dot{W}^{1,p}}\\
\leq &\;C\|h_1''-h_2''\|_{L^p}\left\|\f{\p}{1+h_1}\right\|_{C^\b} \|h_1\|_{C^{1,\b}}(1+\|h_1\|_{C^{1,\b}})^2\\
&\;+C\|h_2''\|_{L^p}\left\|\f{\p}{1+h_1}\right\|_{C^\b} \|h_1-h_2\|_{C^{1,\b}}(1+\|h_1\|_{C^{1,\b} } +\|h_2\|_{C^{1,\b}})^2\\
&\;+C\|h_2''\|_{L^p} \left\|\f{\p}{1+h_1}-\f{\p}{1+h_2}\right\|_{C^\b} \|h_2\|_{C^{1,\b}}(1+\|h_2\|_{C^{1,\b}})^2\\
&\;+C\|h_1'-h_2'\|_{L^\infty}
\left(\left\|\f{\p}{1+h_1}\right\|_{\dot{W}^{1,p}} \|h_1\|_{W^{1,\infty}}+\|\p\|_{L^\infty}\|h_1''\|_{L^p}\right)
\\
&\;+C\|h_2'\|_{L^\infty}\left(\left\|\f{\p}{1+h_1}\right\|_{\dot{W}^{1,p} } \|h_1-h_2\|_{W^{1,\infty}}+\|\p\|_{L^\infty}\|h_1''-h_2''\|_{L^p}\right. \\
&\;\qquad + \|\p\|_{L^\infty}\|h_1-h_2\|_{W^{1,\infty}}(\|h_1''\|_{L^p}+\|h_2''\|_{L^p})\bigg)
\\
&\;+C\|h_2'\|_{L^\infty}\left(\left\|\f{\p}{1+h_1}-\f{\p}{1+h_2}\right\|_{\dot{W}^{1,p}} \|h_2\|_{W^{1,\infty}}+\left\|\f{\p}{1+h_1}-\f{\p}{1+h_2}\right\|_{L^\infty}\|h_2''\|_{L^p}\right).
\end{split}
\eeq
This gives
\beq
\begin{split}
&\;\|\tilde{L}_2^{(1)}-\tilde{L}_2^{(2)}\|_{\dot{W}^{1,p}}\\
\leq &\;C\|h_1''-h_2''\|_{L^p}\|\p\|_{C^\b} (\|h_1\|_{C^{1,\b}}+\|h_2\|_{C^{1,\b}})(1+\|h_1\|_{C^{1,\b}}+\|h_2\|_{C^{1,\b}})^2\\
&\;+C(\|h_1''\|_{L^p}+\|h_2''\|_{L^p})\|\p\|_{C^\b} \|h_1-h_2\|_{C^{1,\b}}(1+\|h_1\|_{C^{1,\b} } +\|h_2\|_{C^{1,\b}})^3\\
&\;+C\|h_1-h_2\|_{W^{1,\infty}}
\|\p'\|_{L^p} (\|h_1\|_{W^{1,\infty}}+\|h_2\|_{W^{1,\infty}}).
\end{split}
\label{eqn: W1p estimate for tilde L2 difference}
\eeq

For $\tilde{L}_3^{(i)}$, we rewrite
\beq
\tilde{L}_3^{(i)} =
\sum_{j = 1}^\infty (-1)^{j+1}(1+h_i(\th))^{-j}\mathrm{p.v.}\int_\BT A_i^j \cdot  \f{\p(\th+\xi)}{2\tan\f{\xi}{2}}\,d\xi.
\eeq
Thanks to \eqref{eqn: Lp estimate of L3j first term}, \eqref{eqn: Lp estimate for difference of A1 A2 integrals in L3 term}, \eqref{eqn: W1p estimate of L3j first term} and \eqref{eqn: W1p estimate for difference of A1 A2 integrals in L3 term}, we derive as in the proof of Lemma \ref{lem: W 1p estimate of normal component of singular integral difference} that
\beq
\begin{split}
&\;\|\tilde{L}_{3}^{(1)}-\tilde{L}_{3}^{(2)}\|_{\dot{W}^{1,p}}\\
\leq 
&\;C  \|h_1-h_2\|_{W^{1,\infty} } (\|\p'\|_{L^p}(\|h_1'\|_{L^\infty}+\|h_2'\|_{L^\infty})+\|\p\|_{L^\infty} (\|h_1''\|_{L^p}+\|h_2''\|_{L^p}))\\
&\; +C\|h_1''-h_2''\|_{L^p}\|\p\|_{L^\infty}(\|h_1'\|_{L^\infty}+\|h_2'\|_{L^\infty}).
\end{split}
\label{eqn: W1p estimate for tilde L3 difference}
\eeq

Combining \eqref{eqn: W1p estimate for tilde L1 difference}, \eqref{eqn: W1p estimate for tilde L2 difference} and \eqref{eqn: W1p estimate for tilde L3 difference}, we obtain \eqref{eqn: W1p difference estimate of the singular integral tangent component}.
\end{proof}

\bibliographystyle{unsrt}

\begin{thebibliography}{10}

\bibitem{LLP}
Tommaso Lorenzi, Alexander Lorz, and Beno{\^\i}t Perthame.
\newblock On interfaces between cell populations with different mobilities.
\newblock {\em Kinetic and Related Models}, 10(1):299--311, 2017.

\bibitem{byrne1996modelling}
HM~Byrne and Mark~AJ Chaplain.
\newblock Modelling the role of cell-cell adhesion in the growth and
  development of carcinomas.
\newblock {\em Mathematical and Computer Modelling}, 24(12):1--17, 1996.

\bibitem{kim2018nonlinear}
Inwon Kim and Alp{\'a}r~Rich{\'a}rd M{\'e}sz{\'a}ros.
\newblock On nonlinear cross-diffusion systems: an optimal transport approach.
\newblock {\em Calculus of Variations and Partial Differential Equations},
  57(3):79, 2018.

\bibitem{bubba2019hele}
Federica Bubba, Beno{\^\i}t Perthame, Camille Pouchol, and Markus Schmidtchen.
\newblock {Hele-Shaw} limit for a system of two reaction-(cross-)diffusion
  equations for living tissues.
\newblock {\em arXiv preprint arXiv:1901.01692}, 2019.

\bibitem{gwiazda2019two}
Piotr Gwiazda, Beno{\^\i}t Perthame, and Agnieszka {\'S}wierczewska-Gwiazda.
\newblock A two-species hyperbolic-parabolic model of tissue growth.
\newblock {\em Communications in Partial Differential Equations},
  44(12):1605--1618, 2019.

\bibitem{bertsch1985interacting}
M.~Bertsch, M.~E. Gurtin, D.~Hilhorst, and L.A. Peletier.
\newblock On interacting populations that disperse to avoid crowding:
  preservation of segregation.
\newblock {\em Journal of Mathematical Biology}, 23:1--13, 1985.

\bibitem{bertsch2010free}
Michiel Bertsch, Roberta Dal~Passo, and Masayasu Mimura.
\newblock A free boundary problem arising in a simplified tumour growth model
  of contact inhibition.
\newblock {\em Interfaces and Free Boundaries}, 12(2):235--250, 2010.

\bibitem{carrillo2018splitting}
Jos{\'e}~A Carrillo, Simone Fagioli, Filippo Santambrogio, and Markus
  Schmidtchen.
\newblock Splitting schemes and segregation in reaction cross-diffusion
  systems.
\newblock {\em SIAM Journal on Mathematical Analysis}, 50(5):5695--5718, 2018.

\bibitem{SaffmanTaylor}
Philip~Geoffrey Saffman and Geoffrey~Ingram Taylor.
\newblock The penetration of a fluid into a porous medium or {Hele-Shaw} cell
  containing a more viscous liquid.
\newblock {\em Proceedings of the Royal Society of London. Series A.
  Mathematical and Physical Sciences}, 245(1242):312--329, 1958.

\bibitem{muskat1934two}
Morris Muskat.
\newblock Two fluid systems in porous media. the encroachment of water into an
  oil sand.
\newblock {\em Physics}, 5(9):250--264, 1934.

\bibitem{bear2013dynamics}
Jacob Bear.
\newblock {\em Dynamics of fluids in porous media}.
\newblock Courier Corporation, 2013.

\bibitem{siegel2004global}
Michael Siegel, Russel~E Caflisch, and Sam Howison.
\newblock Global existence, singular solutions, and ill-posedness for the
  {Muskat} problem.
\newblock {\em Communications on Pure and Applied Mathematics},
  57(10):1374--1411, 2004.

\bibitem{matioc2018well}
Bogdan-Vasile Matioc.
\newblock Well-posedness and stability results for some periodic {Muskat}
  problems.
\newblock {\em arXiv preprint arXiv:1804.10403}, 2018.

\bibitem{escher2011generalized}
Joachim Escher, Anca-Voichita Matioc, and Bogdan-Vasile Matioc.
\newblock A generalized {Rayleigh-Taylor} condition for the {Muskat} problem.
\newblock {\em Nonlinearity}, 25(1):73, 2011.

\bibitem{ambrose2004well}
David~M Ambrose.
\newblock Well-posedness of two-phase {Hele-Shaw} flow without surface tension.
\newblock {\em European Journal of Applied Mathematics}, 15(5):597--607, 2004.

\bibitem{cordoba2007contour}
Diego C{\'o}rdoba and Francisco Gancedo.
\newblock Contour dynamics of incompressible {3-D} fluids in a porous medium
  with different densities.
\newblock {\em Communications in Mathematical Physics}, 273(2):445--471, 2007.

\bibitem{constantin2012global}
Peter Constantin, Diego C{\'o}rdoba, Francisco Gancedo, and Robert~M Strain.
\newblock On the global existence for the {Muskat} problem.
\newblock {\em Journal of the European Mathematical Society}, 15(1):201--227,
  2012.

\bibitem{constantin2016muskat}
Peter Constantin, Diego C{\'o}rdoba, Francisco Gancedo, Luis
  Rodr{\'\i}guez-Piazza, and Robert~M Strain.
\newblock On the {Muskat} problem: global in time results in {2D} and {3D}.
\newblock {\em American Journal of Mathematics}, 138(6):1455--1494, 2016.

\bibitem{cordoba2018global}
Diego Cordoba and Omar Lazar.
\newblock Global well-posedness for the {2D} stable {Muskat} problem in
  {$H^{3/2}$}.
\newblock {\em arXiv preprint arXiv:1803.07528}, 2018.

\bibitem{cordoba2011}
Antonio C{\'o}rdoba, Diego C{\'o}rdoba, and Francisco Gancedo.
\newblock Interface evolution: the {Hele-Shaw} and {Muskat} problems.
\newblock {\em Annals of mathematics}, pages 477--542, 2011.

\bibitem{cordoba2013porous}
Antonio C{\'o}rdoba, Diego C{\'o}rdoba, and Francisco Gancedo.
\newblock Porous media: the {Muskat} problem in three dimensions.
\newblock {\em Analysis \& PDE}, 6(2):447--497, 2013.

\bibitem{gancedo2019muskat}
Francisco Gancedo, Eduardo Garcia-Juarez, Neel Patel, and Robert~M Strain.
\newblock On the {Muskat} problem with viscosity jump: Global in time results.
\newblock {\em Advances in Mathematics}, 345:552--597, 2019.

\bibitem{cheng2016well}
CH~Arthur Cheng, Rafael Granero-Belinch{\'o}n, and Steve Shkoller.
\newblock Well-posedness of the {Muskat} problem with {$H^2$} initial data.
\newblock {\em Advances in Mathematics}, 286:32--104, 2016.

\bibitem{richardson1972hele}
Stanley Richardson.
\newblock {Hele Shaw} flows with a free boundary produced by the injection of
  fluid into a narrow channel.
\newblock {\em Journal of Fluid Mechanics}, 56(4):609--618, 1972.

\bibitem{elliott1981variational}
Charles~M Elliott and Vladim{\i}r Janovsk{\`y}.
\newblock A variational inequality approach to {Hele-Shaw} flow with a moving
  boundary.
\newblock {\em Proceedings of the Royal Society of Edinburgh Section A:
  Mathematics}, 88(1-2):93--107, 1981.

\bibitem{dibenedetto1984ill}
Emmanuele DiBenedetto and Avner Friedman.
\newblock The ill-posed {Hele-Shaw} model and the stefan problem for
  supercooled water.
\newblock {\em Transactions of the American Mathematical Society},
  282(1):183--204, 1984.

\bibitem{howison1992complex}
SD~Howison.
\newblock Complex variable methods in {Hele-Shaw} moving boundary problems.
\newblock {\em European Journal of Applied Mathematics}, 3(3):209--224, 1992.

\bibitem{escher1997classical}
Joachim Escher and Gieri Simonett.
\newblock Classical solutions of multidimensional {Hele-Shaw} models.
\newblock {\em SIAM Journal on Mathematical Analysis}, 28(5):1028--1047, 1997.

\bibitem{kim2003uniqueness}
Inwon~C Kim.
\newblock Uniqueness and existence results on the {Hele-Shaw} and the {Stefan}
  problems.
\newblock {\em Archive for Rational Mechanics \& Analysis}, 168(4), 2003.

\bibitem{cheng2012global}
CH~Cheng, Daniel Coutand, and Steve Shkoller.
\newblock Global existence and decay for solutions of the {Hele-Shaw} flow with
  injection.
\newblock {\em arXiv preprint arXiv:1208.6213}, 2012.

\bibitem{majda2002vorticity}
Andrew~J Majda and Andrea~L Bertozzi.
\newblock {\em Vorticity and incompressible flow}, volume~27.
\newblock Cambridge university press, 2002.

\bibitem{triebel2010theory}
H.~Triebel.
\newblock {\em Theory of Function Spaces}.
\newblock Modern Birkh{\"a}user Classics. Springer Basel, 2010.

\bibitem{grafakos2008classical}
Loukas Grafakos.
\newblock {\em Classical {Fourier} analysis}, volume~2.
\newblock Springer, 2008.

\bibitem{caffarelli1980regularity}
Luis~A Caffarelli and Avner Friedman.
\newblock Regularity of the free boundary of a gas flow in an $n$-dimensional
  porous medium.
\newblock {\em Indiana University Mathematics Journal}, 29(3):361--391, 1980.

\bibitem{caffarelli1987lipschitz}
Luis~A Caffarelli, Juan~Luis V{\'a}zquez, and Noem{\i}~Irene Wolanski.
\newblock Lipschitz continuity of solutions and interfaces of the
  $n$-dimensional porous medium equation.
\newblock {\em Indiana University mathematics journal}, 36(2):373--401, 1987.

\bibitem{caffarelli1990c1}
Luis~A Caffarelli and Noemi~I Wolanski.
\newblock ${C}^{1,\alpha}$ regularity of the free boundary for the
  $n$-dimensional porous media equation.
\newblock {\em Communications on pure and applied mathematics}, 43(7):885--902,
  1990.

\bibitem{vazquez2007porous}
Juan~Luis V{\'a}zquez.
\newblock {\em The porous medium equation: mathematical theory}.
\newblock Oxford University Press, 2007.

\bibitem{kim2018porous}
Inwon Kim and Yuming~Paul Zhang.
\newblock Porous medium equation with a drift: Free boundary regularity.
\newblock {\em arXiv preprint arXiv:1812.08770}, 2018.

\bibitem{li2000gradient}
Yan-Yan Li and Michael Vogelius.
\newblock Gradient estimates for solutions to divergence form elliptic
  equations with discontinuous coefficients.
\newblock {\em Archive for Rational Mechanics and Analysis}, 153(2):91--151,
  2000.

\bibitem{gilbarg2015elliptic}
David Gilbarg and Neil~S Trudinger.
\newblock {\em Elliptic partial differential equations of second order}.
\newblock springer, 2015.

\bibitem{temam1984navier}
Roger Temam.
\newblock {\em {Navier-Stokes} equations}, volume~2.
\newblock North-Holland Amsterdam, 1984.

\bibitem{duoandikoetxea2001fourier}
Javier Duoandikoetxea.
\newblock {\em Fourier analysis}, volume~29.
\newblock American Mathematical Soc., 2001.

\bibitem{lamberton1987equations}
Damien Lamberton.
\newblock Equations d'{\'e}volution lin{\'e}aires associ{\'e}es {\`a} des
  semi-groupes de contractions dans les espaces ${L}^p$.
\newblock {\em Journal of Functional Analysis}, 72(2):252--262, 1987.

\bibitem{biccari2018local}
Umberto Biccari, Mahamadi Warma, and Enrique Zuazua.
\newblock Local regularity for fractional heat equations.
\newblock In {\em Recent Advances in PDEs: Analysis, Numerics and Control},
  pages 233--249. Springer, 2018.

\end{thebibliography}

\end{document}